\newtheorem{theo}{Théorème}[section]
\newtheorem{prop}[theo]{Proposition}
\newtheorem{lemma}[theo]{Lemme}
\newtheorem{coro}[theo]{Corollaire}
\theoremstyle{definition}
\newtheorem{defi}[theo]{Définition}
\newtheorem{rem}[theo]{Remarque}
\newtheorem{ex}[theo]{Exemple}
\newtheorem{defiprop}[theo]{Définition/Proposition}
\newcommand{\Z}{\mathbb{Z}}
\newcommand{\N}{\mathbb{N}}
\newcommand{\R}{\mathbb{R}}
\newcommand{\Q}{\mathbb{Q}}
\newcommand{\sep}{\mathrm{sep}}
\newcommand{\ext}{\mathrm{ext}}
\newcommand{\gp}{\mathrm{gp}}
\newcommand{\gaal}{\mathrm{gal}}
\newcommand{\gl}[2]{\mathrm{GL}_{#1}(#2)}
\newcommand{\coindu}[3]{\mathrm{Coind}_{#1}^{#2}(#3)}
\newcommand{\bigcoindu}[3]{\mathrm{Coind}_{#1}^{#2}\left( #3\right)}
\newcommand{\qp}[0]{\mathbb{Q}_p}
\newcommand{\zp}{\mathbb{Z}_p}
\newcommand{\hcal}[1]{\mathcal{#1}}
\newcommand{\hrm}[1]{\mathrm{#1}}
\newcommand{\und}[1]{\underline{#1}}
\newcommand{\cg}{\mathcal{G}}
\newcommand{\ch}{\mathcal{H}}
\newcommand{\rf}{\mathrm{f}}
\newcommand{\Oe}{\mathcal{O}_{\mathcal{E}}}
\newcommand{\Oedhat}{\mathcal{O}_{\widehat{\mathcal{E}_{\Delta}^{\mathrm{nr}}}}}
\newcommand{\Oehat}{\mathcal{O}_{\widehat{\mathcal{E}^{\mathrm{nr}}}}}
\newcommand{\Oekand}[1]{\mathcal{O}_{\mathcal{E}_{K,#1}}}
\newcommand{\Oehatkand}[1]{\mathcal{O}_{\widehat{\mathcal{E}_{K,#1}^{\mathrm{nr}}}}}
\newcommand{\detale}[2]{\hrm{Mod}^{\hrm{\acute{e}t}}\left(#1,#2\right)}
\newcommand{\detaleproj}[2]{\hrm{Mod}_{\hrm{prj}}^{\hrm{\acute{e}t}}\left(#1,#2\right)}
\newcommand{\detaledvproj}[3]{\hrm{Mod}_{#3 \hrm{\text{-}prjdv}}^{\hrm{\acute{e}t}}(#1,#2)}
\newcommand{\dmod}[2]{\hrm{Mod}\left(#1,#2\right)}
\newcommand{\cdetale}[2]{\mathscr{M}\hrm{od}^{\hrm{\acute{e}t}}(#1,#2)}
\newcommand{\cdetaleproj}[2]{\mathscr{M}\hrm{od}_{\hrm{prj}}^{\hrm{\acute{e}t}}\left(#1,#2\right)}
\newcommand{\cdetaledvproj}[3]{\mathscr{M}\hrm{od}_{#3 \hrm{\text{-}prjdv}}^{\hrm{\acute{e}t}}(#1,#2)}
\newcommand{\quot}[2]{\raisebox{.15em}{$#1$}/\raisebox{-.15em}{$#2$}}
\newcommand{\modulo}[1]{\,\,\, \mathrm{mod} \,\, #1}
\newcommand{\lt}{\mathrm{LT}}
\newcommand{\spa}[2]{\mathrm{Spa}\left(#1,#2\right)}
\newcommand{\spd}[2]{\mathrm{Spd}(#1,#2)}
\newcommand{\fet}[1]{\mathrm{F\acute{E}t}\left(#1\right)}
\newcommand{\piun}[2]{\pi_1^{\mathrm{f\acute{e}t}}\left(#1,\overline{#2}\right)}
\newcommand{\colim}[1]{\mathop{\mathrm{colim}}}
\newcommand{\spe}{\mathrm{sp\acute{e}}}
\newcommand{\plec}{\mathrm{plec}}
\newcommand{\tgplec}{\mathrm{T}\mathcal{G}_{K,\plec}}
\newcommand{\tplec}{\mathrm{T}_{K,\plec}}
\newcommand{\glec}{\mathrm{glec}}
\newcommand{\tgglec}{\mathrm{T}\mathcal{G}_{K,\glec}}
\newcommand{\tglec}{\mathrm{T}_{K,\glec}}
\newcommand{\tggloc}{\mathrm{T}\mathcal{G}'_{K,\glec}}
\newcommand{\tgloc}{\mathrm{T}'_{K,\glec}}
\newcommand{\gal}[2]{\mathrm{Gal}\left(#1|#2\right)}
\newcommand{\tsim}{\text{\textasciitilde}}
\title{\textsc{Multivariable Lubin-Tate or plectic Fontaine's equivalences for a $p$-adic local field}}
\author{Nataniel Marquis}
\begin{document}

	\begin{titlepage}
		\parindent=0pt
		\vspace{50ex}
		
		
		\hrulefill
		\begin{center}\bfseries \Huge \textbf{\'Equivalences de Fontaine multivariables Lubin-Tate et plectiques pour un corps local $p$-adique}
		\end{center}
		%
		\hrulefill

		\vspace*{0.5cm}
		
		\renewcommand*{\thefootnote}{\fnsymbol{footnote}}
		
		\begin{center}\Large
			par Nataniel Marquis\footnote[2]{\underline{n.marquis@uni-muenster.de}. Universität Münster, Mathematisches Institut, Orléans-Ring 12, 48149 Münster, Germany..}
		\end{center}
		
		\vspace{1.5 cm}
		\textit{\textbf{Abstract.}} \textemdash \,\,  Let $\Delta$ be a finite set. We adapt the techniques of Carter-Kedlaya-Z\'abr\'adi to obtain a multivariable Fontaine equivalence which relates continuous finite dimensional $\mathbb{F}_q$-representations of $\prod_{\alpha \in \Delta} \cg_{\mathbb{F}_q(\!(X)\!)}$ to multivariable $\varphi$-modules over a $\mathbb{F}_q$-algebra which is a domain. Building on this, we construct a multivariable Lubin-Tate period ring and deduce a multivariable Lubin-Tate Fontaine equivalence for continuous finite type \linebreak$\mathcal{O}_K$-representations of $\prod_{\alpha \in \Delta} \cg_K$, where $K|\qp$ is a finite extension. We also obtain a plectic Fontaine equivalence and two equivalences for the subgroup $\cg_{K,\glec}$ of the plectic Galois group.

\vspace{0.75 cm}
\textit{\textbf{Résumé.}} \textemdash \,\, Soit $\Delta$ un ensemble fini. Nous adaptons les méthodes de Carter-Kedlaya-Z\'abr\'adi pour obtenir une équivalence de Fontaine multivariable pour les représentations continues de $\prod_{\alpha \in \Delta} \cg_{\mathbb{F}_q(\!(X)\!)}$ de dimension finie sur $\mathbb{F}_q$ qui les fait correspondre à des $\varphi$-modules multivariables sur une $\mathbb{F}_q$-algèbre intègre. Nous construisons un anneau de périodes multivariable Lubin-Tate. À partir du résultat précédent, nous en déduisons une équivalence de Fontaine multivariable Lubin-Tate pour les représentations continues de $\prod_{\alpha \in \Delta} \cg_K$ de type fini sur $\mathcal{O}_K$, où $K|\qp$ est une extension finie. Nous en déduisons également une équivalence de Fontaine plectique et deux équivalences pour le sous-groupe $\cg_{K,\glec}$ du groupe de Galois plectique.
		
		
		\vspace*{\stretch{3}}
		
		\begin{flushright}
		\end{flushright}

	\end{titlepage}
	
	\renewcommand*{\thefootnote}{\arabic{footnote}}
	
	\tableofcontents
	
	\hspace{5 cm}

	\section{Introduction}
	
	Soit $p$ un nombre premier et $K$ une extension finie de $\qp$ de corps résiduel fini $k_K$.	L'archétype d'une équivalence de Fontaine est le théorème de J.M.-Fontaine \cite[Th. 3.4.3]{Fontaine_equiv}. Il établit une équivalence de catégories explicite $$\mathbb{D}\, : \, \mathrm{Rep}_{\zp} \cg_{K} \rightleftarrows \cdetale{\varphi^{\N}\times \Gamma_K}{\Oe}\, :\, \mathbb{V}.$$ Ici, la source est la catégorie des $\zp$-représentations de type fini continues de $\cg_{K}:=\gal{\overline{\qp}}{K}$. La catégorie de droite est celle des $(\varphi,\Gamma_K)$-modules étales cyclotomiques (topologiques), des objets qui se sont avérés cruciaux en théorie de Hodge $p$-adique (voir \cite{herr}, \cite{colmez_correspondance} et \cite{emerton2022moduli}).
	
	Ces équivalences ont été généralisées dans deux directions. Bien que l'équivalence de Fontaine originelle considère l'extension cyclotomique $K(\mu_{p^{\infty}})$ pour se ramener à un corps de caractéristique $p$, il est possible de lui substituer extension de Lubin-Tate $K_{\lt,\pi}$. On obtient dans ce cas une équivalence de catégories explicite\footnote{Voir \cite[\S 1.4.1]{phd_fourquaux}, \cite{kr} ou \cite{schneider_tate_mod} pour une exposition détaillée.} $$\mathbb{D}_{\lt} \, : \, \mathrm{Rep}_{\mathcal{O}_K} \cg_K \rightleftarrows \cdetale{\varphi_q^{\N}\times \Gamma_{K,\lt}}{\mathcal{O}_{\mathcal{E}_K}}\, : \, \mathbb{V}_{\lt}$$ entre les $\mathcal{O}_K$-représentations de $\cg_K$ de type fini et continues et les $(\varphi_q,\mathcal{O}_K^{\times})$-modules étales (topologiques). Cette équivalence rend transparente la théorie du corps de classes locale dans la description des $(\varphi_q,\mathcal{O}_K^{\times})$-modules de dimension $1$. De plus, remarquer que $\Gamma_{K,\lt}\cong \begin{psmallmatrix}\mathcal{O}_K^{\times} & 0 \\ 0 & 1 \end{psmallmatrix}\subset\gl{2}{K}$ donne des espoirs pour adapter certaines idées de \cite{colmez_correspondance}. 
	
	Simultanément, de récents travaux sur un foncteur de Colmez pour des groupes réductifs $\qp$-déployés \linebreak(\cite{zabradi_gln}) et sur une compatibilité locale globale pour $\gl{2}{\mathbb{Q}_{p^f}}$ (\cite{breuilandco_phi}) utilisent des catégories de \linebreak$(\varphi,\Gamma)$-modules sur des anneaux multivariables. Parallèlement, \cite{zabradi_equiv} et \cite{zabradi_kedlaya_carter} établissent des équivalences de catégories entre les $\zp$-représentations libres et continues du produit finis $\cg_{K,\Delta}=\prod_{\alpha\in \Delta} \cg_K$ et des catégories de $(\varphi,\Gamma)$-modules cyclotomiques sur des anneaux multivariables.

	Cet article propose dans un premier temps de résoudre une question de l'introduction de \cite{zabradi_kedlaya_carter} : comment combiner les deux directions ci-dessus et obtenir une équivalence de Fontaine multivariable Lubin-Tate ? 
	
	\vspace{0.25 cm}	
		
	Pour énoncer notre équivalence, nous avons besoin des constructions suivantes. Fixons une uniformisante $\pi$ de $\mathcal{O}_K$, un polynôme de Lubin-Tate $\rf$ associé à $\pi$ et une famille $\pi^{\flat}=(\pi_n)_{n\geq 1}$ telle que $\rf(\pi_1)= 0$, $\pi_1\neq 0$ et $\forall n, \,\,\rf(\pi_{n+1})=\pi_n$. Nous utilisons les séries formelles $[\gamma]_{\lt,\rf}\in \mathcal{O}_K \llbracket T \rrbracket$ pour $\gamma \in \mathcal{O}_K$ qui définissent le \linebreak$\mathcal{O}_K$-module formel de la théorie de Lubin-Tate. Soit $\Delta$ un ensemble fini. Nous définissons le monoïde topologique discret $\Phi_{\Delta,q}:=\prod_{\alpha \in \Delta} \varphi_{\alpha,q}^{\N}$ et le groupe topologique $\Gamma_{K,\lt,\Delta}:=\prod_{\alpha \in \Delta} \mathcal{O}_K^{\times}$. Soient $$\Oekand{\Delta}:= \left(\mathcal{O}_K \llbracket X_{\alpha} \, |\, \alpha \in \Delta \rrbracket [X_{\Delta}^{-1}]\right)^{\wedge p}, \,\,\, \text{où} \,\,X_{\Delta}=\prod X_{\alpha}.$$ On peut le munir de trois topologies d'anneaux dont l'une, la topologie adique faible, a pour base de voisinages de $0$ les $$\left(\pi^m\Oekand{\Delta} + X_{\Delta}^n \mathcal{O}_K\llbracket X_{\alpha}\, |\, \alpha \in \Delta \rrbracket\right)_{n,m\geq 0}.$$ Il est également muni d'une action $\mathcal{O}_K$-linéaire du monoïde $(\Phi_{\Delta,q}\times \Gamma_{K,\lt,\Delta})$, continue pour n'importe laquelle des topologies sur $\Oekand{\Delta}$. L'action de $\Phi_{\Delta,q}$ vérifie $$ \forall \varphi=(\varphi_{\alpha,q}^{n_{\alpha}})_{\alpha\in \Delta} \in \Phi_{\Delta,q}, \,\, \forall \beta\in \Delta,\,\,\,\varphi(X_{\beta})=\rf^{\circ n_{\beta}}(X_{\beta}).$$ L'action de $\Gamma_{K,\lt,\Delta}$ vérifie
		
		$$\forall \gamma=(\gamma_{\alpha})_{\alpha \in \Delta} \in \Gamma_{K,\lt,\Delta}, \, \forall \beta \in \Delta, \,\,\, \gamma(X_{\beta})=[\gamma_{\beta}]_{\lt,\rf}(X_{\beta}).$$

	Avec ces notations le premier aboutissement de cet article est le théorème suivant.

	\begin{theo}[Voir Théorème \ref{equiv_lt}]\label{intro_equiv_lt}
		Choisissons l'une des trois topologies. Il existe une équivalence explicite de catégories symétriques monoïdales fermées $$\mathbb{D}_{\Delta,\lt} \, : \, \mathrm{Rep}_{\mathcal{O}_K} \cg_{K,\Delta} \rightleftarrows \cdetaledvproj{\Phi_{\Delta,q}\times \Gamma_{K,\lt,\Delta}}{\Oekand{\Delta}}{\pi}\, : \, \mathbb{V}_{\Delta,\lt},$$ où la catégorie au but est celle\footnote{Pour une définition plus précise, se référer à \cite[Déf. 5.24]{formalisme_marquis}.} des $\Oekand{\Delta}$-modules de type fini $D$ dont le dévissage $\sfrac{\pi^n D}{\pi^{n+1} D}$ est fini projectif sur $\sfrac{\Oekand{\Delta}}{\pi \Oekand{\Delta}}$, munis d'une action semi-linéaire continue de $(\Phi_{\Delta,q}\times \cg_{K,\Delta})$ pour laquelle chacune des images par l'action d'un $\varphi_{\alpha,q}$ engendre encore $D$.
	\end{theo}
	
	J'ai été averti après la première version de cet article que la thèse non-publiée \cite{pupazan} démontre une équivalence similaire. L’auteur y utilise les méthodes de \cite{zabradi_equiv} et ne démontre donc pas les résultats intermédiaires pour les corps de caractéristique $p$ (voir Théorèmes \ref{equiv_perf_mod_p}, \ref{equiv_non_perf_car_p} et \ref{equiv_car_zero}) que cette introduction n'explicite pas. Nous retrouvons son résultat et notre formalisme permet d’obtenir un choix de topologies
	plus varié\footnote{La topologie utilisée dans \cite{pupazan} est l'une des deux autres topologies.}.
	
	\vspace{0.25 cm}
	La preuve commence, comme dans \cite{Fontaine_equiv}, par établir une équivalence de Fontaine pour des représentations modulo $p$ de $\cg_{\widetilde{E},\Delta}$. Ici, le corps perfectoïde $\widetilde{E}$ choisi est $\mathbb{F}_q(\!(X^{p^{-\infty}})\!) \cong \widehat{K_{\lt,\pi}}^{\flat}$ où $q=|k_K|$. La démonstration de ce résultat intermédiaire suit une idée crucial dans \cite{zabradi_kedlaya_carter} : utiliser le lemme de Drinfeld pour les diamants pour construire géométriquement\footnote{Il n'est pas explicite dans \cite{zabradi_kedlaya_carter} que ce soit le même foncteur, mais une preuve se trouve dans la thèse de l'auteur.} le foncteur $\widetilde{\mathbb{V}}_{\Delta}$ afin d'analyser ses propriétés. Nous adaptons leur stratégie en nous assurant de conserver tous nos anneaux de coefficients de $\varphi$-modules et tous nos diamants sur $\mathbb{F}_q$. Grâce au lemme de Drinfeld pour les diamants sur $\mathbb{F}_q$, il suffit de construire à partir d'un $\Phi_{\Delta,q}$-module multivariable un objet en $\mathbb{F}_p$-espaces vectoriels dans une certaine catégorie de diamants $\fet{\prod_{\alpha \in \Delta, \, \mathrm{Spd}(\mathbb{F}_q)} \spd{\widetilde{E}}{\widetilde{E}^{\circ}} \, ||\, \Phi_{\Delta,q}}$ pour en déduire une représentation de $\cg_{\widetilde{E},\Delta}$. Nous obtenons une équivalence de Fontaine multivariable pour $\widetilde{E}$ dont l'anneau de coefficients $\widetilde{E}_{\Delta}$ des $\varphi$-modules correspondants est intègre : il s'agit d'une $\mathbb{F}_q$-algèbre et non d'une $\left(\otimes_{\alpha\in \Delta, \, \mathbb{F}_p} \mathbb{F}_q\right)$-algèbre. Cette intégrité est un premier ajout aux résultats de \cite{zabradi_kedlaya_carter}. À ce stade de l'article, elle permet de contourner certains des arguments les plus délicats dans la preuve de l'essentielle surjectivité de $\widetilde{\mathbb{D}}_{\Delta}$. Plus tard, avoir une structure canonique de $\mathbb{F}_q$-algèbre sera crucial pour l'équivalence multivariable Lubin-Tate. 
	
	Dans un second temps, un choix de topologie adéquat sur $\widetilde{E}_{\Delta}$ permet de déperfectoïdiser l'équivalence précédente pour que nos $\varphi$-modules soient à coefficients dans $E_{\Delta}:= \mathbb{F}_q\llbracket X_{\alpha} \, |\, \alpha \in \Delta\rrbracket [X_{\Delta}^{-1}]$. Dans un troisième temps, l'utilisation du $(\pi,\mu)$-dévissage introduit dans \cite[\S 4]{formalisme_marquis} permet d'obtenir une équivalence entre des $\mathcal{O}_K$-représentations de type fini continues de $\cg_{\widetilde{E},\Delta}$ et une catégorie $\detaledvproj{\Phi_{\Delta,q}}{\Oekand{\Delta}}{\pi}$ de $\Oekand{\Delta}$-modules. Notons également que notre équivalence capture les représentations sur $\zp$ qui ne sont pas nécessairement libres\footnote{Nous renvoyons à \cite{nataniel_calcul_phig} pour des exemples d'extensions de représentations qui ne se décomposent pas comme somme directe de représentations libres sur chaque $\sfrac{\Z}{p^n \Z}$ et sur $\zp$. Ce sont ces représentations dont l'image par une équivalence de Fontaine est caractérisée en utilisant \cite{formalisme_marquis}.} pour des corps de caractéristiques $p$. L'article \cite{zabradi_equiv} obtenait une telle équivalence en utilisant l'action de $\Gamma_{\Delta}$ et \cite{zabradi_kedlaya_carter} ne traite que les représentations libres\footnote{Voir les énoncés de \cite[Th. 4.5, 4.30, 4.31 et 6.15]{zabradi_kedlaya_carter}. Dans la preuve de \cite[Th. 4.39]{zabradi_kedlaya_carter}, il est même explicitement dit que les $(\varphi_{\Delta},\Gamma_{K,\Delta})$-modules considérés sont finis projectifs.}. Nous retrouvons également par coinduction les équivalences de \cite{zabradi_kedlaya_carter}.

	Pour terminer la preuve du théorème comme dans le cas univariable, nous regardons plus attentivement l'anneau de comparaison $\Oedhat$, caché jusque là. Pour obtenir l'équivalence pour le corps $\widetilde{E}$, nous avons utilisé une action de $(\Phi_{\Delta}\times \cg_{\widetilde{E},\Delta})$. Pour obtenir une équivalence Lubin-Tate il reste à étendre l'action précédente en une action de $(\Phi_{\Delta,q}\times \cg_{K,\Delta})$, où l'on a identifié $\cg_{\widetilde{E},\Delta}$ à $\prod_{\alpha \in \Delta} \cg_{K_{\lt,\pi}} \subset \cg_{K,\Delta}$, et que cette action induisent sur $\Oekand{\Delta}$ l'action déjà décrite.
	
	\vspace{0.5 cm}
	
	Ce texte démontre également des équivalences de Fontaine plectique et glectique. Le groupe de Galois plectique, introduit par J. Nekov\'a\v{r} et T. Scholl  dans \cite{nekoscholl_intro} et \cite{nekoschollI}, est défini par $$\cg_{K,\plec}:= \mathrm{Aut}_K\left(K\otimes_{\qp}\overline{\qp}\right).$$ Pour des données de Shimura globales associées à la restriction $\mathrm{Res}_{\Q}^F(H)$ d'un groupe algébrique $H$ sur une extension finie $F|\Q$, ils remarquent que la cohomologie étale est munie d'une action ad hoc de $\cg_{F,\plec}$ (et non seulement d'une action de $\cg_{\Q}$) et conjecturent que cette action peut-être construite fonctoriellement. En choisissant $(\pi,\rf,\pi^{\flat})$ comme dans le cas Lubin-Tate-multivariable, nous obtenons une équivalence de Fontaine pour ce groupe de Galois dans le cas local $p$-adique.
	 
	 \begin{theo}[Voir Théorème \ref{equiv_plec}]\label{intro_theo_plec}
	 	Il existe une équivalence explicite de catégories symétriques monoïdales fermées
	 		$$\mathbb{D}_{\plec,\lt}  \, : \, \mathrm{Rep}_{\mathcal{O}_K} \cg_{K,\plec} \rightleftarrows \cdetaledvproj{T_{K,\plec}}{\Oekand{\plec}}{\pi}\, : \, \mathbb{V}_{\plec,\lt}.$$
	\end{theo}
	
	Dans le théorème qui précède, le monoïde topologique $T_{K,\plec}$ s'écrit $$T_{K,\plec} \cong \left(\Phi_{\mathcal{P},q} \times \Gamma_{K,\lt,\mathcal{P}}\right) \rtimes_{\plec} \mathfrak{S}_{\mathcal{P}},$$ où $\mathcal{P}:=\{\tau\, : \,K\rightarrow \overline{\qp}\}$ et où $\mathfrak{S}_{\mathcal{P}}$ agit sur le facteur de gauche en permutant les copies de $\varphi_q^{\N}$ et $\Gamma_{K,\lt}$. En écrivant comme un produit en couronne $$\cg_{K,\plec} \cong \prod_{\tau\in \mathcal{P}} \cg_K \, \rtimes_{\plec} \mathfrak{S}_{\mathcal{P}},$$ l'anneau $\Oekand{\plec}$ se comprend comme l'anneau multivariable Lubin-Tate à $\mathcal{P}$ variables où l'on a ajouté une action de $\mathfrak{S}_{\mathcal{P}}$ qui permute les variables.
	
	Je me suis également intéressée à un sous-groupe du groupe de Galois plectique, le groupe de Galois glectique, et nous obtenons également dans cet article plusieurs équivalences de Fontaine pour ce dernier. \'Enonçons nos résultats dans le cas où $K|\qp$ est galoisienne. Le groupe glectique $\cg_{K,\glec}$ est le sous-groupe de $\cg_{K,\plec}$ contenant $\cg_{K,\mathcal{P}}$ qui correspond dont l'image dans $\mathfrak{S}_{\mathcal{P}}$ est égale à l'image de $\gal{K}{\qp}$ par l'action de composition à droite sur les plongements. Nous établissons deux équivalences glectiques.
	
	\begin{theo}[Voir Théorèmes \ref{equiv_sglec} et \ref{equiv_glec}]\label{intro_glec}
		Il existe deux équivalences explicites de catégories symétriques monoïdales fermées
		
			$$\mathbb{D}_{s\glec,\lt}  \, : \, s\mathrm{Rep}_{\mathcal{O}_K} \cg_{K,\glec} \rightleftarrows \cdetaledvproj{\tglec}{\Oekand{s\glec}}{\pi}\, : \, \mathbb{V}_{s\glec,\lt}.$$
			
			$$\mathbb{D}_{\glec,\lt}  \, : \, \mathrm{Rep}_{\mathcal{O}_K} \cg_{K,\glec} \rightleftarrows \cdetaledvproj{\tglec}{\Oekand{\glec}}{\pi}\, : \, \mathbb{V}_{\glec,\lt}.$$
	\end{theo}
	
	Ici, la catégorie $s\mathrm{Rep}_{\mathcal{O}_K} \cg_{K,\glec}$ est celle des $\mathcal{O}_K$-représentations semi-linéaires continues de type fini où $\cg_{K,\glec}$ agit sur $\mathcal{O}_K$ via son quotient $\gal{K}{\qp}$. Le monoïde $\tglec$ s'écrit 
	
	$$\tglec \cong \quot{\left(\left(\Phi_{\mathcal{P},q}\times \Gamma_{K,\lt,\mathcal{P}}\right) \rtimes_{\glec} \sfrac{\mathrm{W}_{\qp}^+}{\mathrm{I}_{K}}\right)}{\tsim \sfrac{\mathrm{W}_{K}^+}{\mathrm{I}_{K}}\tsim}$$ où $\mathrm{W}_{\qp}^+$ agit en permutant les copies de $\varphi_q^{\N}$ et $\Gamma_{K,\lt}$ et où le quotient identifie $(\varphi_{\tau,q})_{\tau\in \mathcal{P}}\in\Phi_{\mathcal{P},q}$ au générateur de $\mathrm{W}_{K}^+/\mathrm{I}_K$. Les anneaux $\Oekand{s\glec}$ et $\Oekand{\glec}$ correspondent quant à eux à ajouter sur l'anneau multivariable Lubin-Tate $\Oekand{\mathcal{P}}$ une action de $\mathrm{W}_{\qp}^+/\mathrm{I}_{K}$ qui permute les variables et se souvient des degrés des plongements. Pour $K=\mathbb{Q}_{p^f}$ par exemple, les variables sont indexées par $\llbracket 0, f-1\rrbracket$ et le générateur $\mathrm{Frob} \in \mathrm{W}_{\qp}^+/\mathrm{I}_{\mathbb{Q}_{p^f}}$ agit par $\mathrm{Frob}(X_i)=X_{i+1}$ et $\mathrm{Frob}(X_{f-1})=\rf(X_0)$. Cela entrebaille une minscule porte : obtenir une représentation glectique à partir d'une représentation de $\cg_{K,\mathcal{P}}$ consiste à préciser l'action d'un sous-groupe canonique $\cg_{\qp}\subset\cg_{K,\glec}$, or ce genre d'action abondent dans les constructions provenant de la géométrie.
	
	Pour prouver ces résultats à partir de l'équivalence multivariable Lubin-Tate, nous utilisons \cite{formalisme_marquis} donne une liste de conditions à vérifier pour obtenir une équivalence de Fontaine. Dans les cas plectique ou glectique, elles reviennent à construire le bon anneau de comparaison et le bon monoïde à faire agir, le reste étant déjà démontrées par le cas Lubin-Tate multivariable.
	\vspace{0.5 cm}
	
	La \textit{section 1} propose une introduction aux lois de Lubin-Tate et aux anneaux de l'équivalence de Fontaine Lubin-Tate. Il s'agit principalement de fixer les notations pour la suite. La \textit{section 2} établit l'équivalence de Fontaine multivariable pour des $\mathcal{O}_K$-représentations et groupes de Galois de corps perfectoïdes de caractéristique $p$. Nous commençons d'abord par établir la version perfectoïde en caractéristique $p$ grâce au lemme de Drinfeld, puis à déperfectoïdiser et dévisser. La \textit{section 3} construit les anneaux multivariables Lubin-Tate et établit le Théorème \ref{intro_equiv_lt}. La \textit{section 4} démontre les Théorème \ref{intro_theo_plec} et \ref{intro_glec}. L'\textit{annexe A} effectue une étude détaillée et technique des anneaux $\widetilde{E}_{\Delta}$, qui aurait coupé le rythme de la section 2. L'\textit{annexe B} liste des constructions et résultats sur les monoïdes topologiques dont les équivalences plectique et glectiques ont besoin.

	\vspace{1 cm}
	
	\textbf{\underline{Remerciements :}} les résultats de cet article ont été obtenu durant ma thèse, sous la direction de Pierre Colmez et Antoine Ducros. Je les remercie tous les deux chaleureusement. J'étais alors simultanément membre de l'IMJ-PRG à Sorbonne Université et du DMA à l'École Normale Supérieure-PSL. J'aimerais remercier tout particulièrement Pierre Colmez pour les nombreuses discussions, à la fois sur les grandes directions à prendre, et sur certaines difficultés techniques. Je suis reconnaissant à Benjamin Schraen et Gergely Zábr\'adi d'avoir accepté de rapporter ma thèse. Je remercie enfin toustes celleux dont les discussions informelles sur un coin de tableau, de mail ou d'après-midi ont été de sacrés coups de pouce : Gaëtan Chenevier, Arthur-César Le Bras, Ariane Mézard, Arnaud Vanhaecke, Paul Wang et  Guillaume Pignon--Ywanne.

	\clearpage
	
	\section*{Notations et conventions}
	
	Dans tout cet article, nous fixons un premier $p$ et une clôture algébrique $\overline{\qp}$ de $\qp$. Nous notons $\mathbb{C}_p$ la complétion $p$-adique de $\overline{\qp}$ avec son anneau d'entiers $\mathcal{O}_{\mathbb{C}_p}$ et son idéal maximal $\mathfrak{m}_{\mathbb{C}_p}$. La lettre $q$ désigne toujours un entier de la forme $p^f$. Pour une telle puissance $q$ et un anneau $A$ dans lequel $p=0$, nous appelons $q$-Frobenius l'endomorphisme d'anneaux donné par la puissance $q$-ième. Lorsque $q=p$, nous l'appelons parfois Frobenius absolu. Nous fixons également $\overline{\mathbb{F}_p}$ une clôture algébrique de $\mathbb{F}_p$ et nous appelons $\mathbb{F}_q$ les points fixes du $q$-Frobenius.
	
	Par extension de corps, écrit $l|k$, nous désigner un plongement $k\hookrightarrow l$. Pour un corps $k$ et une clôture algébrique $\overline{k}$ fixée, nous notons $\cg_k:=\gal{\overline{k}}{k}$ le groupe de Galois absolu muni de sa topologie profinie. 
	
Dire que l'action d'un groupe (ou d'un monoïde) topologique $G$ sur un espace topologique $X$  est continue signifie dans ce texte que l'application déduite $G\times X \rightarrow X$ est continue. Ainsi, l'action de $\cg_k$ sur $\overline{k}$ avec la topologie discrète est continue. Pour un monoïde (topologique) $\mathcal{S}$ et un anneau (topologique) $R$ muni d'une action (continue) de $\mathcal{S}$ par morphismes d'anneaux, nous avons défini et étudié dans \cite{formalisme_marquis} la catégorie $\dmod{\mathcal{S}}{R}$ des $R$-modules munis d'une action semi-linéaire de $\mathcal{S}$. Nous utilisons sans rappel la sous-catégorie pleine des modules étales finis projectifs de rang constant $\detaleproj{\mathcal{S}}{R}$. Lorsque l'on spécifie un élément $r$ pour lequel $R$ est $r$-adiquement séparé complet et sans torsion, nous utilisons la variante $\detaledvproj{\mathcal{S}}{R}{r}$ des modules à $r$-dévissage projectif, i.e. tels que tous les $r^n D/ r^{n+1} D$ sont projectifs de rang constant sur $R/r$. Lorsque nos anneaux et monoïdes sont munis de topologies, nous utilisons les variantes $\cdetale{\mathcal{S}}{R}$, $\cdetaleproj{\mathcal{S}}{R}$ et $\cdetaledvproj{\mathcal{S}}{R}{r}$.

	\vspace{1.5cm}
	
	\section{L'équivalence de Fontaine Lubin-Tate}\label{section_intro_lt}
	
	\vspace{0.75cm}
	\subsection{Lois de Lubin-Tate}
	
	Soit $\overline{\qp}|K|\qp$ une extension finie. Nous donnons ici les résultats essentiels de la construction des lois de groupes formels de Lubin-Tate et de l'extension abélienne associée. Pour une exposition plus détaillée culminant par théorie du corps de classes locale, les lecteurs et lectrices pourront se référer à \cite{yoshida}. Nous fixons pour cette section une uniformisante $\pi$ de $K$ et nous notons $q$ le cardinal de son corps résiduel.
	
	\begin{defi}
		Un polynôme de Lubin-Tate est un polynôme (de degré $q$) unitaire $\rf\in \mathcal{O}_K[T]$ tel que $$\mathrm{f}(T) \equiv T^q + \pi T \modulo{(\pi T^2)}.$$ 
	\end{defi}
	
	\begin{ex}
		Pour $K=\qp$ et l'uniformisante $p$, il existe un polynôme de Lubin-Tate bien pratique $(1+T)^p-1$.
		
		Pour $K$ général, une choix naturell consiste à prendre $\rf=T^q+\pi T$.
	\end{ex}
	\vspace{0.25cm}
	
	Nous fixons pour la suite un polynôme de Lubin-Tate $\rf$.
	
	\begin{prop}
		Il existe une unique $T_1 +_{\lt,\rf} T_2 \in \mathcal{O}_K \llbracket T_1, T_2\rrbracket$ telle que $$T_1 +_{\lt}  T_2 \equiv T_1 + T_2 \modulo{(T_1,T_2)^2} \,\,\, \mathrm{et} \,\,\, \rf(T_1 +_{\lt,\rf} T_2)=\rf(T_1) +_{\lt,\rf} \rf(T_2).$$ Elle est associative et commutative au sens où $$(T_1 +_{\lt,\rf} T_2) +_{\lt,\rf} T_3 = T_1 +_{\lt,\rf} (T_2 +_{\lt,\rf} T_3) \,\,\, \text{et}\,\,\, T_1 +_{\lt,\rf} T_2 = T_2 +_{\lt,\rf} T_1.$$ De plus, il existe une unique $i\in \mathcal{O}_K\llbracket T \rrbracket$ telle que $T+_{\lt,\rf} i(T)=0$.
		
		Pour tout $a\in \mathcal{O}_K$, il existe une unique $[a]_{\lt,\rf} \in \mathcal{O}_K\llbracket T \rrbracket$ telle que $$[a]_{\lt,\rf}\equiv aT \modulo{T^2} \,\,\, \text{et} \,\,\, [a]_{\lt,\rf}\circ \rf = \rf \circ [a]_{\lt,\rf}.$$
	\end{prop}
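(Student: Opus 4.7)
The plan is to reduce every assertion of the proposition to a single classical lemma of Lubin-Tate: for every linear form $L(T_1,\ldots,T_n) = \sum_i a_i T_i$ with $a_i \in \mathcal{O}_K$, there exists a unique series $F \in \mathcal{O}_K\llbracket T_1,\ldots,T_n \rrbracket$ satisfying $F \equiv L \pmod{(T_1,\ldots,T_n)^2}$ and the commutation
\[
\rf\bigl(F(T_1,\ldots,T_n)\bigr) = F\bigl(\rf(T_1),\ldots,\rf(T_n)\bigr).
\]
Once this lemma is in place, every clause of the proposition is a direct application of its uniqueness.

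I would prove the lemma by constructing $F$ inductively on total degree. Writing $F = \sum_{m \geq 1} F_m$ with $F_m$ homogeneous of degree $m$ and imposing $F_1 = L$, the condition at stage $m \geq 2$ reduces to an equation
\[
(\pi - \pi^m)\, F_m \,=\, R_m(F_1,\ldots,F_{m-1}),
\]
where $R_m \in \mathcal{O}_K[T_1,\ldots,T_n]$ is determined by the previously built pieces. Since $1 - \pi^{m-1}$ is a unit in $\mathcal{O}_K$, this equation is uniquely solvable in $\mathcal{O}_K[T_1,\ldots,T_n]$ as soon as $R_m$ is divisible by $\pi$. This divisibility is the heart of the argument: reducing the desired identity modulo $\pi$ and using $\rf(T) \equiv T^q \pmod{\pi}$ turns it into $F(T_1,\ldots,T_n)^q \equiv F(T_1^q,\ldots,T_n^q) \pmod{\pi}$, which is tautological in characteristic $p$ since the coefficients of $F$ lie in the subfield $\mathbb{F}_q$ of $\mathcal{O}_K/\pi$. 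The induction therefore goes through, and uniqueness is built into the triangular structure of the system.

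With the lemma in hand, the addition series $T_1 +_{\lt,\rf} T_2$ is defined by taking $n = 2$ and $L = T_1 + T_2$. Associativity follows because $(T_1 +_{\lt,\rf} T_2) +_{\lt,\rf} T_3$ and $T_1 +_{\lt,\rf} (T_2 +_{\lt,\rf} T_3)$ are both solutions of the lemma for $n = 3$ with $L = T_1 + T_2 + T_3$, and commutativity is obtained by comparing $T_1 +_{\lt,\rf} T_2$ and $T_2 +_{\lt,\rf} T_1$ against the same linear form. For the inverse series, I apply the lemma to $L = -T$ to obtain $i \in \mathcal{O}_K\llbracket T \rrbracket$; the identity $T +_{\lt,\rf} i(T) = 0$ then holds because both sides satisfy the one-variable instance of the lemma with $L = 0$. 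Finally, for every $a \in \mathcal{O}_K$, the series $[a]_{\lt,\rf}$ is the unique solution produced by the lemma for $L = aT$, which by construction commutes with $\rf$.

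The main obstacle is precisely the verification that the obstruction $R_m$ at each inductive step lies in $\pi\,\mathcal{O}_K[T_1,\ldots,T_n]$; this is where the definition of a Lubin-Tate polynomial, through the congruence $\rf(T) \equiv T^q \pmod{\pi}$ and the $\mathbb{F}_q$-linearity of Frobenius modulo $\pi$, is used in an essential way. Everything else is either the triangular solution of a linear system or a formal invocation of uniqueness.
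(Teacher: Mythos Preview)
Your argument is the standard Lubin--Tate construction and is correct. The paper does not actually prove this proposition: it is stated in Section~1.1 as a classical result, with a pointer to \cite{yoshida} for a detailed exposition, so there is no ``paper's own proof'' to compare against. Your inductive lemma and the subsequent uniqueness arguments are exactly the approach found in the standard references.
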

	
	Nous avons composé des séries formelles dans $T \mathcal{O}_K\llbracket T \rrbracket$ et/ou $T_1 \mathcal{O}_K\llbracket T_1,T_2 \rrbracket + T_2 \mathcal{O}_K\llbracket T_1,T_2 \rrbracket$. Remarquons dès à présent qu'une telle série s'évalue sur $\mathfrak{m}_{\mathbb{C}_p}$.
	
	\begin{coro}\label{coro_module_formel}
		Les séries construites précédemment vérifient $$[0]_{\lt,\rf}=0,\,\,\, [1]_{\lt,\rf}=T, \,\,\, 		i=[-1]_{\lt,\rf}\,\,\, \text{et}\,\,\, \forall n\geq 1, \,\,\, [\pi^m]_{\lt,\rf}=\rf^{\circ m}$$
		$$\forall a,b \in \mathcal{O}_K, \,\,\, [a+b]_{\lt,\rf}=[a]_{\lt,\rf} +_{\lt,\rf} [b]_{\lt,\rf} $$
		 $$\forall a,b\in \mathcal{O}_K,\,\,\, [ab]_{\lt,\rf}=[a]_{\lt,\rf} \circ [b]_{\lt,\rf}.$$
	\end{coro}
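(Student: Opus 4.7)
La stratégie consiste systématiquement à invoquer l'unicité établie dans la proposition précédente : un élément de $\mathcal{O}_K\llbracket T\rrbracket$ (ou de $\mathcal{O}_K\llbracket T_1,T_2\rrbracket$) caractérisé par son terme linéaire et sa compatibilité à $\rf$ est uniquement déterminé. Je vérifierais donc pour chaque identité que le membre de droite satisfait la même caractérisation que le membre de gauche, puis je conclurais par unicité.

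Je commencerais par les identités les plus directes : la série nulle $0$ et la série $T$ satisfont trivialement la commutation avec $\rf$ et ont pour terme linéaire $0 \cdot T$ et $1\cdot T$ respectivement, d'où $[0]_{\lt,\rf}=0$ et $[1]_{\lt,\rf}=T$. Pour $[\pi^m]_{\lt,\rf}=\rf^{\circ m}$, je noterais que $\rf^{\circ m}$ commute tautologiquement avec $\rf$ et que, puisque $\rf \equiv \pi T \modulo{T^2}$, une récurrence immédiate montre que $\rf^{\circ m} \equiv \pi^m T \modulo{T^2}$ : l'unicité conclut.

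Pour l'additivité $[a+b]_{\lt,\rf}=[a]_{\lt,\rf} +_{\lt,\rf}[b]_{\lt,\rf}$, je vérifierais que le membre de droite, vu comme série en l'unique variable $T$, a pour terme linéaire $(a+b)T$ (puisque $T_1 +_{\lt,\rf} T_2 \equiv T_1+T_2 \modulo{(T_1,T_2)^2}$) et commute avec $\rf$ grâce à la propriété $\rf(T_1 +_{\lt,\rf} T_2)=\rf(T_1) +_{\lt,\rf} \rf(T_2)$ appliquée à $T_1=[a]_{\lt,\rf}$ et $T_2=[b]_{\lt,\rf}$ qui commutent individuellement avec $\rf$. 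Le cas $i=[-1]_{\lt,\rf}$ se déduit ensuite de $T +_{\lt,\rf} [-1]_{\lt,\rf}=[0]_{\lt,\rf}=0$, puis de l'unicité de l'inverse $i$ dans la proposition. Pour la multiplicativité $[ab]_{\lt,\rf}=[a]_{\lt,\rf}\circ [b]_{\lt,\rf}$, le même principe s'applique : le terme linéaire de la composée est $abT$, et la composée de deux séries commutant avec $\rf$ commute encore avec $\rf$, donc l'unicité conclut.

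Il n'y a pas vraiment d'obstacle technique : chaque vérification est une conséquence immédiate des propriétés fonctorielles de $+_{\lt,\rf}$ et de la construction des $[a]_{\lt,\rf}$. Le seul point demandant un minimum d'attention est de s'assurer de la bonne substitution dans les séries formelles (spécialisation d'une série à deux variables en deux séries sans terme constant), ce qui est justifié par la remarque faite juste avant l'énoncé sur les compositions dans $T\mathcal{O}_K\llbracket T\rrbracket$ et $T_1\mathcal{O}_K\llbracket T_1,T_2\rrbracket + T_2\mathcal{O}_K\llbracket T_1,T_2\rrbracket$.
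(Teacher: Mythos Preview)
Ta démonstration est correcte et suit exactement l'approche naturelle attendue : le corollaire n'est d'ailleurs pas démontré explicitement dans l'article, qui le considère comme une conséquence immédiate de l'unicité établie à la proposition précédente. Ton schéma de preuve par unicité (vérifier le terme linéaire et la commutation à $\rf$, puis conclure) est précisément ce qui est sous-entendu.
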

	
	\begin{ex}
		Pour le cas particulier de $K=\mathbb{Q}_p$ et $\rf=(1+T)^p-1$, nous obtenons $$ T_1 +_{\lt,\rf} T_2 = T_1 + T_2 + T_1 T_2, \,\,\, i(T)=\sum_{n\geq 1} (-1)^n T^n $$		$$\text{et} \,\,\, [a]_{\lt,\rf}=(1+T)^a-1:= \sum_{n\geq 1} \binom{a}{n} T^n.$$
	\end{ex}
	
	\begin{defi}
		Pour $m\geq 1$, nous définissons l'extension finie galoisienne $\overline{\qp}|K_{\lt,\rf,m}|K$ comme le corps de décomposition de $\rf^{\circ m}$ sur $K$. Définissons $$K_{\lt,\rf}:=\bigcup_{m\geq 1} K_{\lt,\rf,m}.$$
		
		Définissons $\mu_{\rf,m}$ l'ensemble des racines de $\rf^{\circ m}$ dans $K_{\lt,\rf,m}$ et $\mu_{\rf,m}^{\times}=\mu_{\rf,m} \backslash \mu_{\rf,m-1}$.
	\end{defi}
	
	\begin{theo}
		\begin{enumerate}[itemsep=0mm]
			\item Pour tout $x\in \mu_{\rf,m}^{\times}$, l'extension $K_{\lt,\rf,m}$ est engendrée par $x$.
			
			\item Pour tout $x \in \mu_{\rf,m}^{\times}$ les éléments de $\mu_{\rf,m}^{\times}$ sont précisément les $[a]_{\lt,\rf}(x)$ pour $a\in \mathcal{O}_K^{\times}$.
			
			\item Au regard des points précédents, l'application $$\iota_{\pi} \, : \,\mathcal{O}_K^{\times} \rightarrow \gal{K_{\lt,\rf}}{K}, \,\,\, a\mapsto \left[x\in \bigcup_{m\geq 1}  \mu_{f,m} \mapsto [a]_{\lt,\rf}(x)\right]$$ est correctement définie. C'est un isomorphisme de groupes topologiques.
		\end{enumerate}
	\end{theo}
	
	\begin{defi}
		Nous utilisons le caractère $$\chi_{\lt,\pi} \, : \,\cg_K \rightarrow \mathcal{O}_{K}^{\times}, \,\,\, \sigma \mapsto \iota_{\pi}^{-1}\left(\sigma_{|K_{\lt,\rf}}\right).$$
	\end{defi}

	\vspace{0.75cm}
	\subsection{Corps de normes imparfait dans $\widetilde{A}_K$}
	
		Nous définissons le corps complet de valuation discrète $E:= \mathbb{F}_q(\!(X)\!)$ et fixons une clôture séparable $E^{\sep}$. L'équivalence de Fontaine pour les corps de caractéristique $p$, dont on trouve l'énoncé original dans \cite[Prop. 1.2.6]{Fontaine_equiv}, établit en premier lieu une équivalence pour les représentations sur $\mathcal{O}_K$ de $\cg_E$. Récapitulons comment en déduire l'équivalence de Fontaine version Lubin-Tate.

	\begin{defi}
		Un système de Lubin-Tate associé à $\rf$ est un générateur du $\mathcal{O}_K$-module libre de rang $1$ donné par $$\lim \limits_{\substack{\longleftarrow\\ m\geq 0 \\ a\mapsto \rf(a)}} \mu_{\rf,m}.$$ Concrètement, il s'agit d'une famille $\pi^{\flat}=(\pi_{m})$ telle que $\pi_0=0$, $\pi_1 \in\mu_{\rf,1}^{\times}$ et $\forall m, \,\, \rf(\pi_{m+1})=\pi_m$. Elle s'identifie à un élément de $\mathbb{C}_p^{\flat}$.
		\end{defi}
		
		\begin{prop}\label{identif_theo_galois}
			Le corps $\widehat{K_{\lt,\rf}}$ est perfectoïde. L'élément $\pi^{\flat}$ dans son basculé est une pseudo-uniformisante et le morphisme de corps topologiques $$j\, :\, \mathbb{F}_q(\!(X^{p^{-\infty}})\!) \rightarrow \widehat{K_{\lt,\rf}}^{\flat}, \,\,\, X\mapsto \pi^{\flat}$$ est un isomorphisme.
		\end{prop}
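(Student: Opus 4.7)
Mon plan est de procéder en quatre temps. D'abord je démontrerais que $\widehat{K_{\lt,\rf}}$ est perfectoïde en utilisant la structure de la tour de Lubin-Tate : la théorie classique assure que chaque extension $K_{\lt,\rf,m}|K$ est totalement ramifiée de degré $q^{m-1}(q-1)$ avec $\pi_m$ pour uniformisante, ce qui force le groupe des valeurs de $\widehat{K_{\lt,\rf}}$ à ne pas être discret. De plus, la congruence $\rf(T)\equiv T^q \modulo{\pi}$ combinée à $\rf(\pi_{m+1})=\pi_m$ donne $\pi_{m+1}^q \equiv \pi_m \modulo{\pi}$ ; comme $q=p^f$, chaque $\pi_m$ admet $\pi_{m+1}^{p^{f-1}}$ pour racine $p$-ième modulo $\pi$. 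L'anneau $\mathcal{O}_{K_{\lt,\rf}}/\pi$ étant engendré sur $\mathbb{F}_q=k_K$ par les $\pi_m$ (puisque les extensions $K_{\lt,\rf,m}|K$ sont totalement ramifiées, donc d'anneau d'entiers $\mathcal{O}_K[\pi_m]$), le Frobenius absolu y est surjectif, et donc aussi sur $\mathcal{O}_{\widehat{K_{\lt,\rf}}}/\pi$ par complétion. D'où la perfectoïdicité.

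Ensuite, j'identifierais $\pi^{\flat}$ à un élément de $\mathcal{O}_{\widehat{K_{\lt,\rf}}^{\flat}} = \varprojlim_{x\mapsto x^p}\mathcal{O}_{\widehat{K_{\lt,\rf}}}/\pi$ en intercalant les racines $p^k$-ièmes des $\pi_m$ fournies par les relations précédentes (concrètement, $p^j$-ième racine de $\pi_m$ lue sur $\pi_{m+1}^{p^{f-j}}$ pour $0\leq j\leq f$, puis récurrence). Sa première composante étant la classe non nulle et topologiquement nilpotente de $\pi_1$, $\pi^{\flat}$ est une pseudo-uniformisante ; son relevé canonique $(\pi^{\flat})^{\#}$ coïncide à unité près avec $\pi_1$, ce qui induit l'isomorphisme standard $\mathcal{O}_{\widehat{K_{\lt,\rf}}^{\flat}}/\pi^{\flat} \cong \mathcal{O}_{\widehat{K_{\lt,\rf}}}/\pi_1$.

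Je construirais alors le morphisme $j$ en envoyant $X^{p^{-n}}$ sur l'unique racine $p^n$-ième de $\pi^{\flat}$ dans le corps parfait $\widehat{K_{\lt,\rf}}^{\flat}$ ; après inversion de $X$ et complétion $\pi^{\flat}$-adique, cela fournit bien un morphisme de corps topologiques, automatiquement injectif. L'essentiel consiste à vérifier qu'il est également isométrique, ce qui résulte de $v^{\flat}(\pi^{\flat})=v(\pi_1)$.

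Le point le plus délicat sera la surjectivité. L'image de $j$ contient toutes les puissances $(\pi^{\flat})^{k/p^n}$ pour $k\in \Z$ et $n\geq 0$ ; leurs classes modulo $\pi^{\flat}$ coïncident (via l'isomorphisme ci-dessus et à une unité près) avec les classes des $\pi_m$ dans $\mathcal{O}_{\widehat{K_{\lt,\rf}}}/\pi_1 = \bigcup_m \mathbb{F}_q[\pi_m]/(\pi_m^{q^{m-1}})$. Or ce dernier anneau est engendré sur $\mathbb{F}_q$ par les $\pi_m$ (toujours grâce à la totale ramification), ce qui établit la surjectivité de $j$ modulo $\pi^{\flat}$. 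Un argument standard d'approximation successive, écrivant tout $y\in \mathcal{O}_{\widehat{K_{\lt,\rf}}^{\flat}}$ comme série $\sum_n (\pi^{\flat})^n j(x_n)$ grâce à la complétude $\pi^{\flat}$-adique des deux côtés, étendra cette surjectivité à tout $\mathcal{O}_{\widehat{K_{\lt,\rf}}^{\flat}}$, et donc au niveau des corps après inversion de $\pi^{\flat}$.
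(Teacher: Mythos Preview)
The paper states this proposition without proof, treating it as a known result from Lubin--Tate theory and the theory of perfectoid fields. Your proposed argument is correct in structure and fills in the details appropriately.

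Two minor imprecisions are worth noting. First, when $q=2$ one has $v(\pi_1)=v(\pi)$, so the class of $\pi_1$ modulo $\pi$ vanishes and the ``première composante'' of $\pi^\flat$ is zero; nevertheless $\pi^\flat$ itself is a non-zero topologically nilpotent element of the tilt (its later components are non-zero), and the conclusion that it is a pseudo-uniformisante holds for the right reason, namely $v^\flat(\pi^\flat)=v\big((\pi^\flat)^\sharp\big)=v(\pi_1)>0$. Second, in the surjectivity step you can sharpen ``à une unité près'' to an exact equality: since $\pi_{m+k}^{\,q^k}\equiv\pi_m\bmod\pi$ for all $k\ge 0$ and since $\pi_1$ divides $\pi$ in $\mathcal{O}_{\widehat{K_{\lt,\rf}}}$, the sharp of $(\pi^\flat)^{1/q^{m-1}}$ is congruent to $\pi_m$ modulo $\pi_1$. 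With this in hand the generation argument is immediate, whereas with only ``à une unité près'' one would still owe a short valuation-counting argument to see that $\mathbb{F}_q[u_m\pi_m]$ exhausts $\mathbb{F}_q[\pi_m]/(\pi_m^{q^{m-1}})$.
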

		
		Puisque la théorie de Galois d'un corps ne change pas en le complétant, en prenant des extensions radicielles et en basculant des corps perfectoïdes, les corps $E$ et $K_{\lt}$ ont la même théorie de Galois. Plus précisément, pour une clôture séparable de $\mathbb{F}_q(\!(X^{q^{-\infty}})\!)$ fixées et un choix d'extension à ladite clôture de $j$ à valeurs dans $\mathbb{C}_p^{\flat}$, l'application $E\mapsto K_{\lt,\rf}^{\mathrm{alg}} \cap j(E \mathbb{F}_q(\!(X^{q^{-\infty}})\!))^{\sharp}$ est une bijection entre extensions finies, respecte le caractère galoisien et donne des isomorphismes de groupe de Galois. Nous définissons $\ch_{K,\lt,\rf}:= \cg_{K_{\lt,\rf}} \triangleleft \cg_K$, isomorphe à $\cg_E$, canoniquement si l'on fixe des clôtures séparables et une extension de $j$. Il devient plausible de promouvoir l'équivalence de Fontaine pour $\cg_E$ quitte à définir correctement nos anneaux.

	\begin{defi}\label{def_oe}
		Définissons $$\mathcal{O}_{\mathcal{E}}^+:=\mathcal{O}_K\llbracket X\rrbracket \,\,\, \text{et}\,\,\, \mathcal{O}_{\mathcal{E}}:=\mathcal{O}_{\mathcal{E}}^+\left[X^{-1}\right]^{\wedge p}.$$ 
	\end{defi}
	
	\begin{prop}\label{prop_defi_oe}\label{defi_oe}
		\begin{enumerate}[itemsep=0mm]
			\item Il existe une $\mathcal{O}_{\mathcal{E}}$-algèbre  $\mathcal{O}_{\widehat{\mathcal{E}^{\mathrm{nr}}}}$ qui est $\pi$-adiquement séparée et complète, telle que $\Oehat/\pi\cong E^{\sep}$. Elle est unique à isomorphisme près. 
		\end{enumerate}
		\vspace{0.05cm}
	Nous supposons désormais une telle algèbre fixée.
		
		\begin{enumerate}[itemsep=0mm]
			\setcounter{enumi}{1}
			\item Pour tout $h \in \mathrm{End}_{\mathrm{Ann}}(E^{\sep})$ tel que $h(E)\subseteq E$ et tout $f \in \mathrm{End}_{\mathcal{O}_K}\left(\Oe\right)$ tel que $(f\modulo{\pi})=h_{|E}$, il existe une unique $f_1\in \mathrm{End}_{\mathcal{O}_K} \mathcal{O}_{\widehat{\mathcal{E}^{\mathrm{ur}}}}$ telle que $(f_1)_{|\Oe}=f$ et $(f_1 \modulo{\pi})=h$.
					
			\item Pour toute extension finie $E^{\sep}|F|E$, il existe une unique sous-$\mathcal{O}_{\mathcal{E}}$-algèbre $\mathcal{O}_{\mathcal{F}}$ de $\mathcal{O}_{\widehat{\mathcal{E}^{\mathrm{ur}}}}$, $\pi$-adiquement complète et séparée, de corps résiduel $F$. Nous notons $\mathcal{O}_{\mathcal{F}}^+$ la clôture intégrale de $\mathcal{O}_{\mathcal{E}}^+$ dans $\mathcal{O}_{\mathcal{F}}$.
			
			\item Dans le cadre du deuxième point, si $h$ stabilise $F$ (resp. $F^+$), alors l'extension de $f$ stabilise $\mathcal{O}_{\mathcal{F}}$ (resp. $\mathcal{O}_{\mathcal{F}}^+$).
		\end{enumerate}
	\end{prop}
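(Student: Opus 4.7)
Le cœur de l'énoncé est la construction classique de l'hensélisée stricte complétée de l'anneau de valuation discrète complet $\Oe$ (dont le corps résiduel est $E$), transcrite dans ce langage « corps des normes ». Je commencerais par le point (3), puis (1) et (2) s'en déduisent, et (4) est une conséquence immédiate d'unicité.

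\textbf{Construction de $\mathcal{O}_{\mathcal{F}}$ (point 3).} Pour une extension finie séparable $F|E$, je choisis un élément primitif $\overline{\alpha}\in F$ de polynôme minimal $\overline{P}\in E[T]$ (unitaire, séparable). Je relève $\overline{P}$ en un polynôme unitaire $P\in \mathcal{O}_{\mathcal{E}}[T]$ de même degré. L'anneau $\mathcal{O}_{\mathcal{F}}:=\mathcal{O}_{\mathcal{E}}[T]/(P)$ est alors un $\mathcal{O}_{\mathcal{E}}$-module libre de rang $[F:E]$ et étale sur $\mathcal{O}_{\mathcal{E}}$ puisque $\overline{P}$ est séparable. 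Par le lemme de Hensel pour $\Oe$ complet, $\mathcal{O}_{\mathcal{F}}$ est un anneau de valuation discrète complet de corps résiduel $F$ et d'uniformisante $\pi$ ; deux tels relevés se recollent par Hensel en un unique isomorphisme compatible à l'identité sur les corps résiduels. L'unicité à l'intérieur de $\Oehat$ résultera de cet argument de Hensel appliqué au relèvement de $\overline{\alpha}$ dans $\Oehat$.

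\textbf{Existence et unicité de $\Oehat$ (point 1).} Je pose $\mathcal{O}_{\mathcal{E}^{\mathrm{nr}}}:=\colim_{F\subseteq E^{\sep}} \mathcal{O}_{\mathcal{F}}$ (colimite filtrante sur les extensions finies séparables), qui est une $\mathcal{O}_{\mathcal{E}}$-algèbre ind-étale et un anneau de valuation d'uniformisante $\pi$ de corps résiduel $E^{\sep}$. Son complété $\pi$-adique $\Oehat$ vérifie $\Oehat/\pi=E^{\sep}$ car la complétion ne change pas le quotient modulo $\pi$ (chaque étage est $\pi$-adiquement séparé). Pour l'unicité, soit $R$ une $\Oe$-algèbre $\pi$-adiquement séparée et complète avec $R/\pi\cong E^{\sep}$. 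Appliquant Hensel à $R$ pour chacun des polynômes $P$ ci-dessus, je construis pour chaque $F$ un unique morphisme $\mathcal{O}_{\mathcal{F}}\to R$ au-dessus de $F\hookrightarrow E^{\sep}$. Ces morphismes se recollent en $\mathcal{O}_{\mathcal{E}^{\mathrm{nr}}}\hookrightarrow R$ et s'étendent à $\Oehat\hookrightarrow R$ par complétude $\pi$-adique. L'égalité modulo $\pi$ et le lemme classique sur les algèbres $\pi$-adiquement séparées complètes fournissent la surjectivité.

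\textbf{Relèvement des endomorphismes (point 2).} Soit $h$ et $f$ comme dans l'énoncé. Pour toute extension finie $F\subset E^{\sep}$, je pose $F':=h(E\cdot \overline{\alpha})\subseteq E^{\sep}$, qui est finie séparable, et je relève $h(\overline{\alpha})\in F'\subset \Oehat/\pi$ en l'unique racine $\beta\in \Oehat$ de $f(P)$ (ici $f(P)\in \mathcal{O}_{\mathcal{E}}[T]$ a pour réduction le polynôme minimal de $h(\overline{\alpha})$ tordu par $h|_E$) : Hensel s'applique car $\Oehat$ est $\pi$-adiquement complet et la racine considérée est simple. L'unicité de cette racine définit sans ambiguïté un morphisme $\mathcal{O}_{\mathcal{F}}\to \Oehat$ étendant $f$ et relevant $h|_F$. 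Ces morphismes sont compatibles entre eux par unicité et se passent à la colimite puis au complété $\pi$-adique pour donner $f_1$.

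\textbf{Stabilité (point 4) et obstacle principal.} Si $h(F)\subseteq F$, alors par unicité de Hensel, la racine $\beta$ construite ci-dessus appartient déjà à $\mathcal{O}_{\mathcal{F}}$, donc $f_1(\mathcal{O}_{\mathcal{F}})\subseteq \mathcal{O}_{\mathcal{F}}$. Pour $\mathcal{O}_{\mathcal{F}}^+$, j'utilise qu'elle est la clôture intégrale de $\Oe^+$ dans $\mathcal{O}_{\mathcal{F}}$ et que $f_1$ préserve l'intégralité au-dessus de $\Oe^+$ (en supposant, comme c'est tacite, que $f(\Oe^+)\subseteq \Oe^+$ via l'hypothèse $h(F^+)\subseteq F^+$). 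L'étape la plus délicate est bien la vérification que chaque application de Hensel fournit des morphismes \emph{compatibles} au changement de $F$, autrement dit que la construction est fonctorielle en $F$ ; ceci résulte encore de l'unicité de la racine dans le lemme de Hensel, mais doit être énoncé soigneusement pour ne pas double-compter les choix.
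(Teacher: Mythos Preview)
Your approach is correct and is essentially the concrete unpacking of the paper's proof, which simply invokes the Stacks Project tags 04GK (equivalence between étale algebras over a $\pi$-adically complete ring and over its reduction) and 08HQ (lifting of ring maps through nilpotent thickenings for formally étale algebras). Your explicit Hensel-lemma construction of each $\mathcal{O}_{\mathcal{F}}$, the passage to the filtered colimit and its $\pi$-adic completion, and the uniqueness arguments via simple roots are exactly what these tags encode; the only small point to make explicit is that uniqueness in (1) tacitly requires $\pi$-torsion-freeness (otherwise $E^{\sep}$ itself, with $\pi=0$, would satisfy the stated conditions), which is implicit both in the Stacks references and in your ``lemme classique'' for surjectivity.
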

	\begin{proof}
	Conséquences de \cite[\href{https://stacks.math.columbia. edu/tag/04GK}{Tag 04GK}]{stacks-project} et \cite[\href{https://stacks.math.columbia. edu/tag/08HQ}{Tag 08HQ}]{stacks-project}.
	\end{proof}
		
Nous avons donc choisi un $\cg_E$-anneau topologique $\Oehat$ avec la topologie $\pi$-adique tel que $\Oe=\Oehat^{\cg_E}$.

Soit $\widetilde{A}_K:= \mathrm{W}_{\mathcal{O}_K}(\mathbb{C}_p^{\flat})$ que l'on munit de la topologie faible, i.e. la topologie produit via l'identification $(\mathbb{C}_p^{\flat})^{\N} \cong \mathrm{W}_{\mathcal{O}_K}(\mathbb{C}_p^{\flat}), \,(x_n)\mapsto \sum_{n\geq 0} \pi^n [x_n]$. Il est également muni d'une structure de $\left(\varphi_q^{\N}\times \cg_K\right)$-anneau topologique en relevant $\mathcal{O}_K$-linéairement les actions sur $\mathbb{C}_p^{\flat}$
	
	\begin{prop}\label{construct_oek}
		Il existe un élément\footnote{Notons que l'on peut retrouver $\rf$ à partir de $\pi^{\flat}$ : nous nous épargnons donc des indices inutiles.} $\{\pi^{\flat}\}_{\lt}$ dans $\widetilde{A}_K$, topologiquement nilpotent, tel que :
		
		\begin{enumerate}[label=\alph*),itemsep=0mm]
			\item On a $\{\pi^{\flat}\}_{\lt} \modulo{\pi}=\pi^{\flat}$.
			
			\item L'application $$\mathcal{O}_{\mathcal{E}} \rightarrow \widetilde{A}_K, \,\,\, X\mapsto \{\pi^{\flat}\}_{\lt}$$ est injective, d'image contenue dans $\mathrm{W}_{\mathcal{O}_K}\big(\widehat{K_{\lt,\rf}}^{\flat}\big)$ et stable par $(\varphi_q^{\N}\times \cg_K)$.
			
			\item L'image est invariante par $\ch_{K,\lt,\rf}$ et l'action $\mathcal{O}_K$-linéaire de $(\varphi_q^{\N}\times \mathcal{O}_K^{\times})$ déduite sur $\mathcal{O}_{\mathcal{E}}$ vérifie $$\varphi_q(X)=\rf(X) \,\,\, \text{et}\,\,\, \forall a\in \mathcal{O}_K^{\times}, \,\, a\cdot X=[a]_{\lt,\rf}(X).$$
			
			\item La topologie d'anneau induite sur $\mathcal{O}_{\mathcal{E}}$ est la topologie faible qui a pour base de voisinage de zéro les $\left(p^n \mathcal{O}_{\mathcal{E}} + X^m \mathcal{O}_{\mathcal{E}}^+\right)_{n,m\geq 0}$.
			
			\item L'application précédente s'étend en une injection de $\mathcal{O}_{\widehat{\mathcal{E}^{\mathrm{nr}}}}$ dans $\widetilde{A}_K$. Son image est canonique, stable par $(\varphi_q^{\N}\times \cg_K)$. L'action de $\cg_{K_{\lt}}$ s'identifie,via l'identification des théories de Galois précédemment construite, à l'action de $\cg_E$ sur la hensélisation.
			
			La topologie induite est encore la topologie faible.
		\end{enumerate}
		
	On appelle $\mathcal{O}_{\mathcal{E}_K}$ et $\mathcal{O}_{\widehat{\mathcal{E}_K^{\mathrm{nr}}}}$ les $(\varphi_q^{\N}\times \cg_K)$-anneaux topologiques obtenus ci-dessus.
	\end{prop}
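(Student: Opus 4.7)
La stratégie consiste à construire $\{\pi^{\flat}\}_{\lt}$ comme limite $\pi$-adique d'une suite explicite dans $\widetilde{A}_K$. Pour $n\geq 0$, soit $\pi^{\flat,1/q^n}\in \mathcal{O}_{\mathbb{C}_p^{\flat}}$ l'unique racine $q^n$-ième de $\pi^{\flat}$, et posons
$$\omega_n:= \rf^{\circ n}\big([\pi^{\flat,1/q^n}]\big) \in \mathrm{W}_{\mathcal{O}_K}\big(\mathcal{O}_{\widehat{K_{\lt,\rf}}^{\flat}}\big),$$
où $[\cdot]$ désigne le relèvement de Teichmüller. Comme $\varphi_q$ est $\mathcal{O}_K$-linéaire et envoie $[x]$ sur $[x^q]$, on a l'identité exacte $\varphi_q(\omega_n)=\rf(\omega_{n-1})$. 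Notons $\delta_n:=\omega_{n+1}-\omega_n$ ; un développement de Taylor donne $\varphi_q(\delta_n)=\rf(\omega_n)-\rf(\omega_{n-1})=\delta_{n-1}\rf'(\omega_{n-1})+\delta_{n-1}^2 Q$ pour un certain $Q\in \widetilde{A}_K$. Comme $\rf'\in \pi\mathcal{O}_K[T]$ (car $\rf'(0)=\pi$ et $p\in \pi\mathcal{O}_K$), une récurrence à partir de $\delta_0\in \pi\widetilde{A}_K$ donne $\varphi_q(\delta_n)\in \pi^{n+1}\widetilde{A}_K$. Le basculé $\mathcal{O}_{\mathbb{C}_p^{\flat}}$ étant parfait, $\varphi_q$ est une bijection sur $\widetilde{A}_K$ préservant la filtration $\pi$-adique, donc $\delta_n\in \pi^{n+1}\widetilde{A}_K$. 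La suite $(\omega_n)$ est alors de Cauchy pour la topologie $\pi$-adique, et je pose $\{\pi^{\flat}\}_{\lt}:=\lim_n \omega_n$.

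Les propriétés a) à d) se déduisent de la construction. Le point a) suit de $\omega_n \equiv (\pi^{\flat,1/q^n})^{q^n}=\pi^{\flat}\modulo{\pi}$ (puisque $\rf\equiv T^q\modulo{\pi}$), ce qui entraîne aussi la nilpotence topologique. Le passage à la limite dans $\varphi_q(\omega_n)=\rf(\omega_{n-1})$ donne la relation de Frobenius de c). Pour l'action de $a\in \mathcal{O}_K^{\times}$ via $\iota_{\pi}$, l'élément $y:=\iota_{\pi}(a)(\{\pi^{\flat}\}_{\lt})$ appartient à $\mathrm{W}_{\mathcal{O}_K}(\widehat{K_{\lt,\rf}}^{\flat})$, commute avec $\varphi_q$ et satisfait $\varphi_q(y)=\rf(y)$ ; un argument d'unicité analogue (deux points fixes de cette équation de même réduction modulo $\pi$ sont égaux, par bijectivité de $\varphi_q$ et dévissage $\pi$-adique) permet de l'identifier à $[a]_{\lt,\rf}(\{\pi^{\flat}\}_{\lt})$, les deux ayant même réduction modulo $\pi$ par fonctorialité. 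L'injectivité dans b) suit de la séparation $\pi$-adique et de ce que la réduction modulo $\pi$ du morphisme est l'inclusion tautologique $E\hookrightarrow \mathbb{C}_p^{\flat}$, $X\mapsto \pi^{\flat}$ ; la stabilité par $(\varphi_q^{\N}\times \cg_K)$ et l'invariance par $\ch_{K,\lt,\rf}$ découlent de c). Pour d), la correspondance des bases de voisinages vient de ce que $\pi^{\flat}$ est pseudo-uniformisante.

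Pour e), j'utilise la caractérisation de $\Oehat$ comme unique $\Oe$-algèbre $\pi$-adiquement complète et séparée de corps résiduel $E^{\sep}$ (Proposition \ref{prop_defi_oe}). L'identification $\ch_{K,\lt,\rf}\cong \cg_E$ fournie par la Proposition \ref{identif_theo_galois} permet, pour chaque extension finie séparable $F|E$ correspondant à un sous-groupe $\mathcal{H}_F\subset \cg_K$, de définir dans $\widetilde{A}_K$ le sous-anneau des points fixes par $\mathcal{H}_F$ et de constater qu'il est $\pi$-adiquement complet séparé de corps résiduel $F$. En passant à la limite sur $F$, on obtient l'injection canonique $\Oehat\hookrightarrow \widetilde{A}_K$ avec les stabilités annoncées, la topologie induite restant la topologie faible. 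Le principal obstacle technique est l'estimation récursive $\delta_n\in \pi^{n+1}\widetilde{A}_K$, reposant de manière cruciale sur la bijectivité de $\varphi_q$ sur les vecteurs de Witt d'un anneau parfait, couplée à l'argument d'unicité sous-tendant la vérification de l'action de $\mathcal{O}_K^{\times}$.
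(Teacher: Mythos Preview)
The paper states this proposition without proof: it is treated as a known input from Lubin--Tate $(\varphi,\Gamma)$-module theory (the introduction points to Fourquaux's thesis and Schneider's exposition). So there is no ``paper's proof'' to compare against, and your task was really to supply one.

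Your construction of $\{\pi^{\flat}\}_{\lt}$ as $\lim_n \rf^{\circ n}([\pi^{\flat,1/q^n}])$ is correct and is essentially Colmez's approach. The key estimate $\delta_n\in\pi^{n+1}\widetilde{A}_K$ goes through: writing $\rf(\omega_n)-\rf(\omega_{n-1})=\delta_{n-1}\cdot g(\omega_n,\omega_{n-1})$ with $g(X,Y)=\sum_i \rf_i\sum_{j}X^jY^{i-1-j}$, one checks $g(\omega_n,\omega_{n-1})\in\pi\widetilde{A}_K$ using $\rf_i\in\pi\mathcal{O}_K$ for $i<q$ and $q\equiv 0\pmod p$ for the top term. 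The uniqueness argument (two solutions of $\varphi_q(z)=\rf(z)$ with the same reduction mod $\pi$ coincide) is also valid and cleanly handles the $\mathcal{O}_K^{\times}$-action. Points a)--d) are fine.

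There is, however, a genuine gap in your argument for e). You propose to realise $\mathcal{O}_{\mathcal{F}}$ as the fixed subring $(\widetilde{A}_K)^{\mathcal{H}_F}$ and claim this has residue field $F$. It does not: since $\widetilde{A}_K=\mathrm{W}_{\mathcal{O}_K}(\mathbb{C}_p^{\flat})$, the invariants are $\mathrm{W}_{\mathcal{O}_K}\big((\mathbb{C}_p^{\flat})^{\mathcal{H}_F}\big)$, whose residue ring is the \emph{perfectoid} field $(\mathbb{C}_p^{\flat})^{\mathcal{H}_F}$, i.e.\ the completed perfection $\widetilde{F}$ of $F$, not $F$ itself. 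This ring is far too large to be $\mathcal{O}_{\mathcal{F}}$. The correct route is to use that each $\mathcal{O}_{\mathcal{F}}$ is finite \'etale over $\Oe$: since $\widetilde{A}_K$ is $\pi$-adically complete and the reduction $F\hookrightarrow \mathbb{C}_p^{\flat}$ is already given by the identification of Galois theories, Hensel's lemma (formal \'etaleness) lifts the embedding $\Oe\hookrightarrow\widetilde{A}_K$ uniquely to $\mathcal{O}_{\mathcal{F}}\hookrightarrow\widetilde{A}_K$. Passing to the colimit over $F$ and $\pi$-completing yields $\Oehat\hookrightarrow\widetilde{A}_K$; uniqueness of the lifts then gives the $(\varphi_q^{\N}\times\cg_K)$-equivariance and the compatibility with the $\cg_E$-action for free.
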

	
	\begin{rem}
		Cette exposition fonctionne également dans le cas cyclotomique, ou plus généralement celui d'une extension galoisienne $K_{\infty}|K$ de groupe de Galois localement isomorphe à $\zp$. Ici on pourrait également utiliser la théorie du corps des normes imparfait dans \cite{wintenberger_corps_normes} pour identifier les théories de Galois de $K_{\infty}$ et de $\mathbb{F}_q(\!(X)\!)$.
	\end{rem}

	\vspace{1.5cm}
	
	\section{\'Equivalence de Fontaine multivariable  pour certains corps perfectoïdes de caractéristique $p$}

	Dans cette section, nous commençons par établir une équivalence de Fontaine pour les représentations \linebreak$p$-adiques de produits de groupes de Galois absolus de corps perfectoïdes de caractéristique $p$. Suivant l'idée de \cite{zabradi_kedlaya_carter}, nous commençons par utiliser le lemme de Drinfeld pour les diamants pour passer de modules sur un anneaux perfectoïdes multivariables $\widetilde{E}_{\Delta}$ à des $\widetilde{E}_{\Delta}$-algèbres finies étales puis à des ensembles avec action du produit de groupes souhaité. Nous nous attachons à conserver l'intégrité de nos anneaux. Le cas qui nous intéresse pour les corps locaux $p$-adiques est celui des corps perfectoïdes $\mathbb{F}_q(\!(X^{q^{-\infty}})\!)$ ; nous avons déjà dit qu'ils ont la même théorie de Galois que des extensions de Lie de nos corps locaux $p$-adiques. Nous établirons en un deuxième temps une équivalence de Fontaine imparfaite pour ces corps, d'abord pour des représentations de caractéristiques $p$ puis en dévissant grâce à \cite{formalisme_marquis} pour capturer toutes les représentations $p$-adiques de type fini.
	
	\vspace{0.75cm}
	\subsection{Construction du foncteur $\widetilde{\mathbb{D}}_{\Delta}$ modulo $p$}\label{section_wddelta}
	
	Le formalisme développé dans \cite{formalisme_marquis} souligne que les points délicats d'une équivalence de Fontaine sont de deux natures : la définition correcte de l'anneau de comparaison avec sa topologie et son action de monoïde d'un côté, l'obtention des isomorphismes de comparaison de l'autre. Nous commençons ainsi par définir anneaux adaptés à une équivalence de Fontaine pour des corps perfectoïdes de caractéristique $p$. En caractéristique $p$, les conditions topologiques pour appliquer le formalisme dans \cite{formalisme_marquis} sont aisées : pour les anneaux de la section \ref{section_intro_lt}, si l'action de $\cg_K$ sur le corps de normes imparfait est uniquement continue pour la topologie $X$-adique, celle de $\cg_E$ est continue pour la topologie discrète sur $E^{\sep}$. Malheureusement, nous voulons utiliser ici la théorie des perfectoïdes ce qui nous force à considérer tout de même des topologies plus malines sur nos anneaux.
	
	Fixons pour cette sous-section un corps perfectoïde $\widetilde{E}$ de caractéristique $p$ tel que $\widetilde{E}\cap \mathbb{F}_p^{\mathrm{sep}}$ est fini de cardinal $q$. Nous fixons une structure de $\mathbb{F}_q$-algèbre sur $\widetilde{E}$, une clôture séparable $\widetilde{E}^{\sep}$ et un plongement de $\overline{\mathbb{F}}_p$ dans ladite clôture séparable. Le groupe de Galois absolu $\cg_{\widetilde{E}}$ sera toujours muni de sa topologie profinie. Fixons également $\varpi$ une pseudo-uniformisante de $\widetilde{E}$, et notons $\widetilde{E}^+:= \widetilde{E}^{\circ}$ son anneau d'entiers. Pour toute extension finie $\widetilde{F}|\widetilde{E}$, l'élément $\varpi$ est encore une pseudo-uniformisante et la clôture algébrique de $\mathbb{F}_p$ dans $\widetilde{F}$ est encore finie. Nous notons $q^f$ son cardinal.
	
	L'adjectif \textit{multivariable} implique de se fixer un ensemble fini $\Delta$. Pour chaque $\alpha \in \Delta$, nous considérons un corps perfectoïde $\widetilde{E}_{\alpha}$ muni d'un isomorphisme avec $\widetilde{E}$ et d'une extension de cet isomorphisme à leurs clôtures séparables. Ainsi, nous fixons pour toute extension finie $\widetilde{F}|\widetilde{E}$ une extension isomorphe $\widetilde{E}_{\alpha}|\widetilde{E}_{\alpha}$ et une structure de $\mathbb{F}_{q^f}$-algèbre. De manière générale, pour chaque objet obtenu à partir de $\widetilde{E}$, nous notons avec un indice $\alpha$ l'objet obtenu pour $\widetilde{E}_{\alpha}$ à partir d'un objet choisi pour $\widetilde{E}$ et du choix d'isomorphismes précédents. Nous définissons $$\cg_{\widetilde{E},\Delta} := \prod_{\alpha \in \Delta} \cg_{\widetilde{E}_{\alpha}}.$$
	
	Nous voulons établir une équivalence pour des représentations $\mathbb{F}_r$-linéaires de $\cg_{\widetilde{E},\Delta}$ pour $\mathbb{F}_r\subseteq \mathbb{F}_q$.
	\begin{defi}
		Définissons le monoïde\footnote{Il est effectivement isomorphe à $\N^{\Delta}$ mais nous préférons nommer une base de manière suggestive.}  $\Phi_{\Delta,p}=\prod_{\alpha\in \Delta} \varphi_{\alpha,p}^{\N}$. Il sera toujours muni de la topologie discrète. Pour $b \geq 1$, nous notons $$\varphi_{\alpha,p^b}:=\varphi_{\alpha}^b \,\,\, \text{et} \,\,\, \varphi_{\Delta,p^b}:=( \varphi_{\alpha,p^b})_{\alpha \in \Delta}.$$
		
		Pour $p^a|p^b$, nous définissons $$\Phi_{\Delta,p^b,p^a}=\langle \varphi_{\Delta,p^a}, \, \varphi_{\alpha,p^b} \, |\, \alpha \in \Delta \rangle < \Phi_{\Delta,p}.$$ Pour $a=b$, nous simplifions cette notations en $\Phi_{\Delta,p^b}$.
		
	Nous pouvons plonger les monoïdes simplifiables $\Phi_{\Delta,p^b,p^a}$ dans leurs symétrisés $\Phi_{\Delta,p^b,p^a}^{\gp}$.
	\end{defi} 
	
	Commençons par définir une $\mathbb{F}_q$-algèbre analogue du $E$ de Fontaine dans le cas multivariable perfectoïde. Ici, nous faisons agir le groupe $\Phi_{\Delta,q,r}^{\gp}$ : des $q$-Frobenius sur chaque copie qui encodent les actions des différents $\cg_{\widetilde{E}_{\alpha}}$ et un Frobenius $r$-Frobenius global qui permet de redescendre à des $\mathbb{F}_r$ représentations. La construction entre dans le cadre de \cite[\S 4.1]{zabradi_kedlaya_carter}.
	
	\begin{defi}\label{def_constr_prod}
		Soit $\widetilde{E}^{\sep}|\widetilde{F}|\widetilde{E}$ une extension finie. Dans le produit tensoriel $$\bigotimes_{\alpha\in \Delta,\mathbb{F}_q} \widetilde{F}_{\alpha}^+,$$ appelons encore $\varpi_{\alpha}$ l'image de $\varpi_{\alpha} \in \widetilde{F}_{\alpha}^+$, notons $(\underline{\varpi})=(\varpi_{\alpha} \, |\, \alpha \in \Delta)$ et $\varpi_{\Delta}=\prod_{\alpha \in \Delta} \varpi_{\alpha}$. 
		
		Définissons $$\widetilde{F}_{\Delta,q}^+:=\left(\bigotimes_{\alpha\in \Delta,\mathbb{F}_q} \widetilde{F}_{\alpha}^+ \right)^{\wedge (\underline{\varpi})} \,\,\, \text{et} \,\,\, \widetilde{F}_{\Delta,q}:= \widetilde{F}_{\Delta,q}^+\left[\frac{1}{\varpi_{\Delta}}\right].$$
		
		Trois topologies seront utilisées pour ces deux anneaux\footnote{Les trois topologies sont utiles respectivement pour les considérer comme coefficients de catégories de $\varphi$-modules, comme espace adique dans \S \ref{section_equiv_car_p_parf} ou pour un raisonnement fin dans \S \ref{section_equiv_car_p_imparf}.}. Sur $\widetilde{F}_{\Delta,q}^+$ ces trois topologies sont la topologie discrète, la topologie $\varpi_{\Delta}$-adique et la topologie $(\underline{\varpi})$-adique. Sur $\widetilde{F}_{\Delta,q}$, ce sont la topologie discrète, la topologie d'anneau\footnote{Voir \cite[\S 6.3]{bourbaki_topo} pour les axiomes que doivent vérifier une telle base de voisinages.} ayant pour base de voisinages de $0$ la famille\footnote{Il n'est pas si clair que à ce stade que $\widetilde{F}_{\Delta,q}^+$ s'injecte dans $\widetilde{F}_{\Delta,q}$. L'abus de notations sera vite réparé (voir Proposition \ref{integrite_edplus}).} $\left(\varpi_{\Delta}^n\widetilde{F}_{\Delta,q}^+\right)_{n\geq 0}$ que nous appelons \textit{topologie adique} et la topologie colimite des topologies $(\underline{\varpi})$-adiques via l'écriture $$\widetilde{F}_{\Delta,q}=\colim \limits_{n\geq 0} \frac{1}{\varpi_{\Delta}^n} \widetilde{F}_{\Delta,q}^+$$ que nous appelons \textit{topologie colimite}.
		
		Le produit tensoriel des $\widetilde{F}_{\alpha}^+$ est muni d'une structure de $\Phi_{\Delta,q,r}^{\gp}$-anneau topologique pour la topologie \linebreak$(\underline{\varpi})$-adique, l'élément $\varphi_{\alpha,q}$ agissant par le $q$-Frobenius sur $\widetilde{F}_{\alpha}^+$ et l'identité sur les $\widetilde{F}_{\beta}^+$ et l'élément $\varphi_{\Delta,r}$ agissant par le $r$-Frobenius. En complétant, on obtient une structure de $\Phi_{\Delta,q,r}^{\gp}$-anneau topologique sur $\widetilde{F}_{\Delta,q}^+$ muni de la topologie $(\underline{\varpi})$-adique. Cette action du monoïde est également une structure de $\Phi_{\Delta,q,r}^{\gp}$-anneau topologique sur $\widetilde{F}_{\Delta,q}^+$ muni de la topologie $\varpi_{\Delta}$-adique ou de topologie discrète. Après localisation, l'action du monoïde fournit donc une structure $\Phi_{\Delta,q,r}^{\gp}$-anneau topologique sur $\widetilde{F}_{\Delta,q}$ pour chacune des trois topologies ci-dessus\footnote{C'est plus subtile pour la topologie colimite car les Frobenius ne stabilisent par les termes de la colimite.}.
	\end{defi}
	
	\vspace{0.25 cm}
	Soient $\widetilde{E}^{\sep}|\widetilde{F}|\widetilde{E}$ une extension finie et $q^f$ le cardinal de la clôture séparable de $\mathbb{F}_q$ dans $\widetilde{F}$. Nous utilisons aussi les $\Phi_{\Delta,q^f,r}^{\gp}$-anneaux topologiques $\widetilde{F}_{\Delta}^+$ et $\widetilde{F}_{\Delta}$ obtenus comme à la définition précédente en considérant $\widetilde{F}$ comme extension finie de lui-même, i.e. en faisant les produits tensoriels sur $\mathbb{F}_{q^f}$.	
	
	Puisque nos produits tensoriels ne sont pas sur $\mathbb{F}_p$, nous ne pouvons décomposer le Frobenius absolu et obtenir un $\Phi_{\Delta,p}^{\gp}$-anneau contrairement aux anneaux dans \cite{zabradi_kedlaya_carter}. En revanche, nous prouvons que l'anneau $\widetilde{E}_{\Delta}$ est intègre.
	
	\begin{prop}\label{integrite_edplus}
 		Pour une extension finie $\widetilde{E}^{\sep}|\widetilde{F}|\widetilde{E}$, l'anneau $\widetilde{F}_{\Delta,q}^+$ est parfait, réduit et sans $\widetilde{E}_{\Delta}^+$-torsion. En particulier l'anneau $\widetilde{E}_{\Delta}^+$ est intègre.
 		
 		L'application $\widetilde{F}_{\Delta,q}^+ \rightarrow \widetilde{F}_{\Delta,q}$ est injective.  L'anneau $\widetilde{F}_{\Delta,q}$ est parfait, réduit et sans $\widetilde{E}_{\Delta}$-torsion. En particulier, l'anneau $\widetilde{E}_{\Delta}$ est intègre.
	\end{prop}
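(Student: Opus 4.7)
My plan is to prove the statement in three coordinated stages: first the intrinsic and relative properties of the ``plus''-ring $\widetilde{F}_{\Delta,q}^+$; then, as a special case, the integrity of $\widetilde{E}_\Delta^+$ and the injectivity of the localization map; finally, the properties of $\widetilde{F}_{\Delta,q}$ by passage to the localization.

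For perfectness, I consider the uncompleted tensor product $R := \bigotimes_{\alpha \in \Delta, \mathbb{F}_q} \widetilde{F}_\alpha^+$. Each factor is perfect (integral subring of a perfectoid field of characteristic $p$), and by the Freshman's dream the $q^f$-Frobenius on $R$ acts as the tensor product of the factor Frobenii, hence is bijective; thus $R$ is perfect and, in particular, reduced (any perfect $\mathbb{F}_p$-algebra has injective Frobenius, forbidding nilpotents). For torsion-freeness over $S := \bigotimes_{\alpha \in \Delta, \mathbb{F}_q} \widetilde{E}_\alpha^+$, note that $\widetilde{F}^+$ is free over $\widetilde{E}^+$ (finite extension of complete discrete valuation rings); tensoring over $\mathbb{F}_q$ one factor at a time shows that $R$ is free of rank $[\widetilde{F}:\widetilde{E}]^{|\Delta|}$ over $S$, hence in particular torsion-free. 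The passage to the $(\underline{\varpi})$-adic completion must preserve all three properties: Frobenius extends continuously because $\varphi((\underline{\varpi})) \subseteq (\underline{\varpi})$, and bijectivity as well as absence of new nilpotents can be checked inductively on the quotients $R/(\underline{\varpi})^n$, ultimately reducing to a tensor product of perfect residue fields over $\mathbb{F}_{q^f}$; torsion-freeness passes to the completion because freeness of $R$ over $S$ is preserved by completing along a finitely generated ideal.

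The integrity of $\widetilde{E}_\Delta^+$ follows by specialization $\widetilde{F} = \widetilde{E}$: in that case $\widetilde{F}_{\Delta,q}^+ = \widetilde{E}_\Delta^+$, and torsion-freeness of $\widetilde{E}_\Delta^+$ over itself says exactly that every nonzero element is a non-zerodivisor, i.e.\ $\widetilde{E}_\Delta^+$ is a domain. Since $\varpi_\Delta \neq 0$ in this domain and torsion-freeness of $\widetilde{F}_{\Delta,q}^+$ over $\widetilde{E}_\Delta^+$ makes $\varpi_\Delta$ a non-zerodivisor on $\widetilde{F}_{\Delta,q}^+$, the localization map $\widetilde{F}_{\Delta,q}^+ \hookrightarrow \widetilde{F}_{\Delta,q}$ is injective. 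Perfectness, reducedness, integrity, and torsion-freeness over $\widetilde{E}_\Delta = \widetilde{E}_\Delta^+[\varpi_\Delta^{-1}]$ then all transfer to $\widetilde{F}_{\Delta,q}$ since these properties are preserved by localization. The principal obstacle is precisely the behaviour of the $(\underline{\varpi})$-adic completion on perfectness and reducedness — completions of reduced rings need not be reduced in general, and the inverse Frobenius is not obviously continuous for the $(\underline{\varpi})$-adic topology — which is exactly what justifies the detailed technical study of the rings $\widetilde{E}_\Delta^+$ deferred to Annexe A, and to which a rigorous argument would ultimately appeal.
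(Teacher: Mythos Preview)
Your sketch contains two genuine gaps, both in the passage meant to yield integrity and torsion-freeness.

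First, the claim that $\widetilde{F}^+$ is free over $\widetilde{E}^+$ as a ``finite extension of complete discrete valuation rings'' is false: $\widetilde{E}$ and $\widetilde{F}$ are \emph{perfectoid} fields of characteristic $p$, whose value groups are $p$-divisible and hence never discrete. The ring $\widetilde{E}^+$ is a non-noetherian rank-one valuation ring, and for such rings the integral closure in a finite extension is in general only \emph{almost} finite free (this is exactly the content of \cite[Proposition~5.23]{scholze_perf}, which the paper invokes in the proof of Lemme~\ref{module_induit_multi}). So the freeness step, and with it the passage to the completion ``because freeness is preserved'', collapses.

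Second, even granting freeness of $R$ over $S$, your deduction of integrity is circular. You interpret ``$\widetilde{E}_\Delta^+$ is $\widetilde{E}_\Delta^+$-torsion-free'' in the strong sense (every nonzero element is a non-zerodivisor), which is precisely the statement that $\widetilde{E}_\Delta^+$ is a domain. But a free $S$-module is torsion-free in that strong sense only when $S$ is already a domain; with the weak sense (non-zerodivisors act injectively) the specialization $\widetilde{F}=\widetilde{E}$ is a tautology and yields nothing. Either way, integrity of $\widetilde{E}_\Delta^+$ is not established by your argument and must be proved by an independent route.

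The paper's route (Annexe~A) is quite different. Integrity of $\widetilde{E}_\Delta^+$ --- and of each $\widetilde{F}_\Delta^+$ with the tensor product taken over $\mathbb{F}_{q'}$ --- is obtained by constructing an explicit injection into a multivariable Hahn--Mal'cev series ring $\big(\bigotimes_{\overline{\mathbb{F}_q}} k^{\mathrm{alg}}\big)\llbracket t_\alpha^{\mathbb{R}}\mid \alpha\in\Delta\rrbracket$, using Kaplansky's embedding of maximally complete fields; the target is integral because tensor products over an algebraically closed field of domains are domains. Reducedness and $\widetilde{E}_\Delta^+$-torsion-freeness of $\widetilde{F}_{\Delta,q}^+$ are then read off from a coinduction isomorphism $\widetilde{F}_{\Delta,q}^+\cong \coindu{\Phi_{\Delta,q',r}^{\gp}}{\Phi_{\Delta,q,r}^{\gp}}{\widetilde{F}_\Delta^+}$, which exhibits $\widetilde{F}_{\Delta,q}^+$ as a finite product of copies of the integral domain $\widetilde{F}_\Delta^+$. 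Your treatment of perfectness (bicontinuity of Frobenius for the $(\underline{\varpi})$-adic topology) is essentially correct and would survive; it is the integrity that requires real work.
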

	\begin{proof}
		Reléguée en annexe au Corollaire \ref{sanstors_perf}.
	\end{proof}
	
	\begin{rem}\label{choix_topo_completion}
	Le balancier entre les idéaux $(\underline{\varpi})$ et $(\varpi_{\Delta})$ est une subtilité importante de ces anneaux. Puisque nous commençons par prendre la complétion $(\underline{\varpi})$-adique, il serait naturel d'essayer d'en conserver une trace topologique. Malheureusement, la famille d'idéaux de $\widetilde{E}_{\Delta}^+$ donnée par $(\underline{\varpi})^k$ ne définit par sur $\widetilde{E}_{\Delta}$ une structure d'anneau topologique\footnote{Par exemple, chaque idéal  $\sfrac{(\underline{\varpi})^k}{\varpi_{\Delta}}$ contient un élément $\sfrac{\varpi_{\alpha}^k}{\varpi_{\Delta}}$ ce qui démontre que le produit n'est pas continu au point $(\varpi_{\Delta}^{-1},0)$}. Pour en garder une trace, il faut considérer la topologie colimite, dont nous nous servirons d'ailleurs dans \S \ref{section_equiv_car_p_imparf}. Toutefois, nous voudrons utiliser nos anneaux dans contextes pour lesquels cette topologie n'est pas adéquate : dans \S \ref{section_equiv_car_p_parf}, nous voulons considérer $(\widetilde{E}_{\Delta},\widetilde{E}_{\Delta}^ +)$ comme une paire de Huber. Pour la topologie colimite, une base de voisinage de zéro s'écrit $$\left\{\bigcup_{n\geq 0} \frac{(\underline{\varpi})^{m_n}}{\varpi_{\Delta}^n} \, \bigg|\, (m_n) \in \N^{\N} \,\, \text{telle que}\,\, m_{n+1} \geq m_n + |\Delta|\right\}$$ dont aucun n'est contenu dans $\widetilde{E}_{\Delta}^+$. Nous n'aurions pas une paire de Huber. Le ticket d'entrée dans la théorie des espaces adiques est précisement d'utiliser la topologie adique.
	
	Dans cette sous-section cependant, nous n'aurons besoin que de la topologie discrète puisque les actions de Galois considérées pour notre équivalence de Fontaine sont toutes à stabilisateurs ouverts. En réalité, pour l'équivalence modulo $p$, nous pourrions appliquer les méthodes dans \cite{formalisme_marquis} avec n'importe laquelle des trois topologies ; les conditions de continuité sont vides dans chacun des cas pour les $\Phi_{\Delta,q,r}^{\gp}$-modules finis projectifs sur $\widetilde{E}_{\Delta}$. Cependant, le dévissage de \S \ref{section_equiv_car_zero_imparf} fonctionne bien mieux avec la topologie discrète.
	\end{rem}
	
\begin{rem}
	Nous spécifierons notre étude à des corps perfectoïdes plus agréables dans \S \ref{section_equiv_car_p_imparf} et nous décrirons complètement l'anneau $\widetilde{E}_{\Delta}$ dans ce cas. Les lecteurs et lectrices souhaitant avoir une prise plus concrète sur ces anneaux pourront lire dès à présent la Remarque \ref{ecriture_explicite_anneaux}.
\end{rem}
	
	\vspace{0.25 cm}

	Nous définissons à présent l'analogue de $E^{\sep}$ dans le cadre multivariable perfectoïde .

	\begin{defi}
		Soit $\widetilde{E}^{\sep}|\widetilde{F}|\widetilde{E}$ une extension finie galoisienne. L'action de $\cg_{\widetilde{E}}$ sur $\widetilde{F}^+$ est $\mathbb{F}_q$-linéaire, continue pour les topologies discrète et $\varpi$-adique, commutant au Frobenius. L'action de $\cg_{\widetilde{E},\Delta}$ sur le produit tensoriel des $\widetilde{F}^+_{\alpha}$ facteur par facteur se complète $(\underline{\varpi})$-adiquement en une action sur $\widetilde{F}^+_{\Delta,q}$ continue pour la topologie discrète, commutant à l'action de $\Phi_{\Delta,q,r}^{\gp}$.
		
		Nous obtenons donc une structure de $\left(\Phi_{\Delta,q,r}^{\gp} \times \cg_{\widetilde{E},\Delta}\right)$-anneau topologique sur $\widetilde{F}_{\Delta,q}^+$. En localisant, nous obtenons une structure de $\left(\Phi_{\Delta,q,r}^{\gp} \times \cg_{\widetilde{E},\Delta}\right)$-anneau topologique sur $\widetilde{F}_{\Delta,q}$.
	\end{defi}
	
		\begin{lemma}\label{description_construction_produit}
		Soit $\mathcal{G}\mathrm{al}_{\widetilde{E}}$ la catégorie des sous-extensions finies galoisiennes de $\widetilde{E}$ dans $\widetilde{E}^{\sep}$ avec les inclusions pour morphismes. La construction $$\widetilde{F}\mapsto \widetilde{F}_{\Delta,q}$$ où l'on met la topologie discrète est canoniquement un foncteur de $\mathcal{G}\mathrm{al}_{\widetilde{E}}$ vers les $(\Phi_{\Delta,q,r}^{\gp}\times \cg_{\widetilde{E},\Delta})$-anneaux topologiques. Tous les morphismes impliqués sont injectifs.
		
		La construction $$\widetilde{F}\mapsto \widetilde{F}_{\Delta}$$ est canoniquement un foncteur depuis la catégorie de $\mathcal{G}\mathrm{al}_{\widetilde{E}}$ vers la catégorie des anneaux topologiques. Pour toute tour d'extensions finies galoisiennes $\widetilde{F}'| \widetilde{F}| \widetilde{E}$, le morphisme associé $$\widetilde{F}_{\Delta} \rightarrow \widetilde{F}'_{\Delta}$$ est une injection $\Phi_{\Delta,q^{f'},r}^{\gp}$-équivariante.
	\end{lemma}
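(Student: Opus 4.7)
The plan is to verify each claim constructively, building the morphism from the underlying uncompleted tensor product and tracking injectivity and equivariance through completion and localization.

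For $\widetilde{F} \mapsto \widetilde{F}_{\Delta,q}$: given an inclusion $\widetilde{F} \hookrightarrow \widetilde{F}'$ in $\mathcal{G}\mathrm{al}_{\widetilde{E}}$, the fixed embeddings in $\widetilde{E}^{\sep}$ induce componentwise inclusions $\widetilde{F}^+_\alpha \hookrightarrow (\widetilde{F}')^+_\alpha$ of $\mathbb{F}_q$-vector spaces. Tensoring over the field $\mathbb{F}_q$ preserves injectivity, giving an inclusion of uncompleted tensor products. The $(\underline{\varpi})$-adic completion then preserves injectivity thanks to the separatedness and torsion-freedom granted by Proposition \ref{integrite_edplus} (proved in the appendix), and localizing at the non-zero-divisor $\varpi_\Delta$ preserves injectivity as well. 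Equivariance for $\Phi^{\gp}_{\Delta,q,r}$ is automatic by naturality of Frobenius on $\mathbb{F}_q$-algebras, and for $\cg_{\widetilde{E},\Delta}$ it is inherited componentwise from the equivariance of $\widetilde{F} \hookrightarrow \widetilde{F}'$.

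For $\widetilde{F} \mapsto \widetilde{F}_\Delta$: the base of the tensor product changes from $\mathbb{F}_{q^f}$ to $\mathbb{F}_{q^{f'}}$ with $f \mid f'$. The morphism is constructed as the composition
\[
\textstyle \bigotimes_{\mathbb{F}_{q^f}} \widetilde{F}^+_\alpha \;\hookrightarrow\; \bigl(\bigotimes_{\mathbb{F}_{q^f}} \widetilde{F}^+_\alpha\bigr) \otimes_{\mathbb{F}_{q^f}} \mathbb{F}_{q^{f'}} \;\cong\; \bigotimes_{\mathbb{F}_{q^{f'}}} \widetilde{F}^+_\alpha[\alpha_0] \;\hookrightarrow\; \bigotimes_{\mathbb{F}_{q^{f'}}} (\widetilde{F}')^+_\alpha,
\]
followed by $(\underline{\varpi})$-completion and $\varpi_\Delta$-localization; here $\alpha_0$ denotes a generator of $\mathbb{F}_{q^{f'}}$ over $\mathbb{F}_{q^f}$. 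The first arrow is injective by faithful flatness of $\mathbb{F}_{q^{f'}}$ over $\mathbb{F}_{q^f}$; the central isomorphism follows by induction on $|\Delta|$ from the standard identity $(A \otimes_k B) \otimes_k k' \cong (A \otimes_k k') \otimes_{k'} (B \otimes_k k')$; and the last arrow is injective because each inclusion $\widetilde{F}^+_\alpha[\alpha_0] \hookrightarrow (\widetilde{F}')^+_\alpha$ is an inclusion of $\mathbb{F}_{q^{f'}}$-vector spaces, so tensoring over the field $\mathbb{F}_{q^{f'}}$ preserves injectivity. Completion and localization again preserve injectivity, and $\Phi^{\gp}_{\Delta, q^{f'}, r}$-equivariance is built in since $\varphi_{\alpha, q^{f'}} = \varphi_{\alpha, q^f}^{f'/f}$ acts compatibly on both sides.

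The only nontrivial ingredient is the preservation of injectivity under $(\underline{\varpi})$-adic completion, for which the key input is the integrity, reducedness, and torsion-freedom proved in Proposition \ref{integrite_edplus} and its appendix counterpart (applied with $\widetilde{F}$ as the base perfectoid field when treating $\widetilde{F}^+_\Delta$). Everything else — the base-change rearrangement, faithful flatness of finite field extensions, naturality of Frobenius, and componentwise Galois equivariance — is formal.
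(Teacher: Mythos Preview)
Your approach is essentially the same as the paper's: build the map on uncompleted tensor products, complete, and localize, factoring the second construction through a base-change step $\bigotimes_{\mathbb{F}_{q^f}} \widetilde{F}^+_\alpha \hookrightarrow \bigotimes_{\mathbb{F}_{q^{f'}}} (\widetilde{F}^+_\alpha \otimes_{\mathbb{F}_{q^f}} \mathbb{F}_{q^{f'}})$. The paper phrases this factorization as passing through the intermediate field $\widetilde{E}' = \widetilde{E}\cdot\mathbb{F}_{q^f}$, but the content is identical. One point you leave implicit that the paper makes explicit: the identification of $\widetilde{F}^+_\alpha[\alpha_0]$ with $\widetilde{F}^+_\alpha \otimes_{\mathbb{F}_{q^f}} \mathbb{F}_{q^{f'}}$ (i.e., that the multiplication map into $(\widetilde{F}')^+_\alpha$ is injective) requires knowing that $\widetilde{F}\otimes_{\mathbb{F}_{q^f}} \mathbb{F}_{q^{f'}}$ is a field, which the paper justifies by the regularity of $\widetilde{F}|\mathbb{F}_{q^f}$ (Bourbaki, Alg.~V \S5 Prop.~9); equivalently, the minimal polynomial of $\alpha_0$ over $\mathbb{F}_{q^f}$ stays irreducible over $\widetilde{F}$ since $\widetilde{F}\cap\overline{\mathbb{F}_p}=\mathbb{F}_{q^f}$.
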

	\begin{proof}
		Pour la première construction, on prend le produit tensoriel, complète puis localise les injections $\varphi_{\alpha,p}$-équivariantes $\widetilde{F}_{\alpha}^+ \hookrightarrow \widetilde{F}'^+_{\alpha}$.

		Pour la deuxième construction et $\widetilde{E}\subset \widetilde{F}$, nous écrivons $\widetilde{E}'$ le corps de décomposition de $X^{q^f}-X$. Le morphisme à déconstruire se décompose via $\widetilde{E}'_{\Delta}$. Le morphisme $\widetilde{E}'_{\Delta} \rightarrow \widetilde{F}_{\Delta}$ est simplement donné par la première construction. Pour $\widetilde{E}'|\widetilde{E}$, on sait grâce à \cite[V \S 5, Proposition 9]{bourbaki_alg} que $\widetilde{E}|\mathbb{F}_q$ est régulière : l'anneau $\widetilde{E}\otimes_{\mathbb{F}_q}\mathbb{F}_{q^f}$ est donc un corps et une analyse des dimension montre qu'il est canoniquement isomorphe à $\widetilde{E}'$. On peut donc compléter et localiser l'injection $$\bigotimes_{\alpha \in \Delta,\,\, \mathbb{F}_q} \widetilde{E}^+_{\alpha} \hookrightarrow \left(\bigotimes_{\alpha \in \Delta,\,\, \mathbb{F}_q} \widetilde{E}^+_{\alpha}\right) \otimes_{\mathbb{F}_q} \mathbb{F}_{q^f} \cong \bigotimes_{\alpha \in \Delta,\,\, \mathbb{F}_{q^f}} \widetilde{E}'^+_{\alpha}.$$
	\end{proof}
	
	\begin{defi}
		Définissons le $(\Phi_{\Delta,q,r}^{\gp}\times \cg_{\widetilde{E},\Delta})$-anneau topologique discret $$\widetilde{E}_{\Delta}^{\sep}=\colim \limits_{\widetilde{F}\in \mathcal{G}\mathrm{al}_{\widetilde{E}}} \widetilde{F}_{\Delta,q}.$$
	\end{defi}
	
	\begin{prop}\label{edseptorsion}
		L'anneau $\widetilde{E}_{\Delta}^{\sep}$ est parfait, réduit et sans $\widetilde{E}_{\Delta}$-torsion.
	\end{prop}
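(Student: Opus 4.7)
Le plan consiste à déduire les trois propriétés (perfection, réduction, absence de torsion) directement de la Proposition \ref{integrite_edplus} en exploitant le fait que $\widetilde{E}_{\Delta}^{\sep}$ est une colimite filtrante de morphismes injectifs. Par le Lemme \ref{description_construction_produit}, la colimite définissant $\widetilde{E}_{\Delta}^{\sep}$ est indexée par la catégorie filtrante $\mathcal{G}\mathrm{al}_{\widetilde{E}}$ et tous les morphismes de transition $\widetilde{F}_{\Delta,q}\rightarrow \widetilde{F}'_{\Delta,q}$ sont des injections. Il en résulte, puisque le foncteur $\mathop{\mathrm{colim}}$ filtrant dans la catégorie des $\mathbb{F}_q$-modules préserve les injections, que chaque morphisme canonique $\widetilde{F}_{\Delta,q}\hookrightarrow \widetilde{E}_{\Delta}^{\sep}$ est lui-même une injection.

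Je vérifie alors les trois propriétés à la main. Pour la réduction, si $x\in \widetilde{E}_{\Delta}^{\sep}$ vérifie $x^n=0$, alors comme la colimite est filtrante, $x$ et la relation $x^n=0$ proviennent d'un $\widetilde{F}_{\Delta,q}$ commun. La Proposition \ref{integrite_edplus} y assure $x=0$, donc $x=0$ dans la colimite. Pour la perfection, je prends $x\in \widetilde{E}_{\Delta}^{\sep}$ qui provient d'un certain $\widetilde{F}_{\Delta,q}$. Ce dernier étant parfait, il existe un unique $y\in \widetilde{F}_{\Delta,q}$ tel que $y^p=x$, et l'injectivité des morphismes canoniques garantit que l'image de $y$ dans $\widetilde{E}_{\Delta}^{\sep}$ est l'unique racine $p$-ième de $x$.

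Pour l'absence de $\widetilde{E}_{\Delta}$-torsion, je prends $a\in \widetilde{E}_{\Delta}\setminus \{0\}$ et $x\in \widetilde{E}_{\Delta}^{\sep}$ tels que $ax=0$. Je remonte $x$ à un certain $\widetilde{F}_{\Delta,q}$ et, quitte à agrandir $\widetilde{F}$, j'assure également que $a$ appartient à $\widetilde{E}_{\Delta}\subset \widetilde{F}_{\Delta,q}$ (il y est déjà puisque $\widetilde{E}_{\Delta}$ s'identifie à une sous-algèbre via l'injection canonique). La relation $ax=0$ tient alors dans $\widetilde{F}_{\Delta,q}$ par injectivité, et la Proposition \ref{integrite_edplus} donne $x=0$.

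Il n'y a pas d'obstacle technique sérieux : l'unique point délicat est de bien observer que les morphismes $\widetilde{F}_{\Delta,q} \rightarrow \widetilde{E}_{\Delta}^{\sep}$ restent injectifs (propriété standard des colimites filtrantes de modules) et que, pour remonter des relations $x^n=0$ ou $ax=0$, on peut systématiquement choisir un représentant commun grâce au caractère filtrant de $\mathcal{G}\mathrm{al}_{\widetilde{E}}$.
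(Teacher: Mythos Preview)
Ta preuve est correcte et suit exactement la même approche que celle du papier : ce dernier se contente d'écrire \og Rassembler les résultats de la Proposition \ref{integrite_edplus} et du Lemme \ref{description_construction_produit} \fg, et tu as simplement déplié ce que signifie concrètement ce rassemblement (colimite filtrante d'injections, relèvement des relations à un niveau fini).
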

	\begin{proof}
		Rassembler les résultats de la Proposition \ref{integrite_edplus} et du Lemme \ref{description_construction_produit}.
	\end{proof}
	
	\begin{rem}
		La topologie est toujours discrète ici. Nous ne complétons pas $\varpi_{\Delta}$-adiquement : d'un côté, l'action de $\cg_{\widetilde{E},\Delta}$ n'est pas continue pour la topologie discrète sur le complété, de l'autre, certains groupes de cohomologie continue ne s'annulent pas pour la topologie $\varpi_{\Delta}$-adique.
	\end{rem}
		
	\begin{rem} Nos $\varphi$-modules multivariables vivront sur $\widetilde{E}_{\Delta}$. Si cet anneau de coefficients est intègre, ce n'est pas le cas de l'anneau de comparaison $\widetilde{E}_{\Delta}^{\sep}$. En effet, il existe une injection $$\bigotimes_{\alpha \in \Delta,\,\mathbb{F}_q} \overline{\mathbb{F}_q} \hookrightarrow \widetilde{E}_{\Delta}^{\sep}.$$
	\end{rem}
	
	\vspace{0.25cm}
	
	Nous imitons la construction du foncteur de Fontaine et obtenons un foncteur $$\widetilde{\mathbb{D}}_{\Delta} \, : \, \mathrm{Rep}_{\mathbb{F}_r} \cg_{\widetilde{E},\Delta} \rightarrow \cdetaleproj{\Phi_{\Delta,q,r}^{\gp}}{\widetilde{E}_{\Delta}}.$$ Pour cela, il reste essentiellement à démontrer que la descente galoisienne fonctionne pour des anneaux multivariables.

	\begin{lemma}\label{module_induit_multi}
		Le morphisme canonique
		
		$$\widetilde{E}_{\alpha}^{\sep}\otimes_{\widetilde{F}_{\alpha}}\left(\widetilde{E}_{\beta}^{\sep} \otimes_{\widetilde{F}_{\beta}} \cdots \left(\widetilde{E}_{\delta}^{\sep}\otimes_{\widetilde{F}_{\delta}} \widetilde{F}_{\Delta,q}\right)\right)\rightarrow \widetilde{E}^{\sep}_{\Delta}$$ est un isomorphisme de $\cg_{\widetilde{E},\Delta}$-équivariant.
	\end{lemma}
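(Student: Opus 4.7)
L'équivariance sous $\cg_{\widetilde{E},\Delta}$ étant manifeste par construction, il suffira de prouver qu'il s'agit d'un isomorphisme d'anneaux. Le plan est d'écrire $\widetilde{E}^{\sep}_{\alpha}=\colim\limits_{\widetilde{G}_{\alpha}}\widetilde{G}_{\alpha}$ comme colimite filtrante sur les extensions galoisiennes finies $\widetilde{G}_{\alpha}|\widetilde{F}_{\alpha}$. Le produit tensoriel commutant aux colimites filtrantes, le membre de gauche s'identifie à la colimite, sur les familles $(\widetilde{G}_{\alpha})_{\alpha\in\Delta}$ de telles extensions, des produits tensoriels itérés correspondants. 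Les isomorphismes fixés entre les $\widetilde{E}^{\sep}_{\alpha}$ permettront de dominer toute famille par une famille diagonale associée à une unique extension galoisienne finie $\widetilde{G}|\widetilde{E}$ contenant $\widetilde{F}$, si bien que les familles diagonales sont cofinales. Puisque par définition $\widetilde{E}^{\sep}_{\Delta}=\colim\limits_{\widetilde{G}}\widetilde{G}_{\Delta,q}$, il suffira alors de montrer que pour chaque telle $\widetilde{G}$ le morphisme canonique
$$\widetilde{G}_{\alpha}\otimes_{\widetilde{F}_{\alpha}}\widetilde{G}_{\beta}\otimes_{\widetilde{F}_{\beta}}\cdots\otimes_{\widetilde{F}_{\delta}}\widetilde{F}_{\Delta,q}\longrightarrow\widetilde{G}_{\Delta,q}$$
est un isomorphisme.

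Je procéderais ensuite par récurrence sur $|\Delta|$, en remplaçant à chaque étape une copie de $\widetilde{F}^+_{\alpha}$ par $\widetilde{G}^+_{\alpha}$ dans la construction de la Définition \ref{def_constr_prod}. L'étape inductive exploite que l'extension $\widetilde{G}_{\alpha}|\widetilde{F}_{\alpha}$ est finie séparable au niveau des corps, de sorte que $\widetilde{G}_{\alpha}$ est un $\widetilde{F}_{\alpha}$-espace vectoriel libre de rang $n=[\widetilde{G}:\widetilde{F}]$. Le foncteur $\widetilde{G}_{\alpha}\otimes_{\widetilde{F}_{\alpha}}-$ étant alors exact et compatible aux limites et localisations, on peut lui faire traverser la complétion $(\underline{\varpi})$-adique et l'inversion de $\varpi_{\Delta}$ intervenant dans la construction de $\widetilde{F}_{\Delta,q}$, puis identifier le résultat à la construction analogue effectuée avec $\widetilde{G}^+_{\alpha}$ à la place de $\widetilde{F}^+_{\alpha}$.

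La difficulté principale sera la comparaison, au niveau des anneaux d'entiers, entre $A\otimes_B(B\otimes_{\mathbb{F}_q}C)^{\wedge(\underline{\varpi})}$ et $(A\otimes_{\mathbb{F}_q}C)^{\wedge(\underline{\varpi})}$, pour $A=\widetilde{G}^+_{\alpha}$, $B=\widetilde{F}^+_{\alpha}$ et $C=\bigotimes_{\beta\neq\alpha,\,\mathbb{F}_q}\widetilde{F}^+_{\beta}$. Ces deux anneaux ne coïncident pas en général car $A$ n'est pas nécessairement de présentation finie sur $B$ : la ramification sauvage reste possible en caractéristique $p$ dans nos corps perfectoïdes. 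Toutefois, par le principe de basculement, $A$ est presque finie étale de rang $n$ sur $B$, et l'inversion de $\varpi_{\Delta}=\prod_{\alpha\in\Delta}\varpi_{\alpha}$ efface ce défaut : les deux membres s'identifient après inversion à un $\widetilde{F}_{\Delta,q}$-module libre de rang $n$, d'où la surjectivité. L'injectivité découlera enfin de l'absence de $\varpi_{\Delta}$-torsion garantie par la Proposition \ref{integrite_edplus} appliquée à $\widetilde{G}$.
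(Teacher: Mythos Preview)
Your approach is essentially the same as the paper's: reduce to a single finite extension $\widetilde{G}|\widetilde{F}$ via the colimit, use that $\widetilde{G}^+$ is almost finite projective over $\widetilde{F}^+$ (the paper cites \cite[Proposition~5.23]{scholze_perf} for uniform almost finite freeness), and invert $\varpi_\Delta$ to turn the resulting almost-isomorphism into an actual one. The paper handles all factors at once rather than by induction on $|\Delta|$, but this is purely organizational.

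One point to tighten in your last paragraph: the sentence ``les deux membres s'identifient après inversion à un $\widetilde{F}_{\Delta,q}$-module libre de rang $n$, d'où la surjectivité'' does not stand on its own, since a map between free modules of the same rank need not be surjective. What is needed, and what the paper makes explicit, is that the comparison map itself is an almost-isomorphism at the integral level: pick an almost-basis $(x_k)$ of $\widetilde{G}^+_\alpha$ over $\widetilde{F}^+_\alpha$, base-change the almost-isomorphism $\bigoplus_k \widetilde{F}^+_\alpha x_k \to \widetilde{G}^+_\alpha$ along both constructions, and pass through the $(\underline{\varpi})$-adic completion using that almost-isomorphisms are stable under quotients and limits in the almost category. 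Inverting $\varpi_\Delta$ then kills both the almost-kernel and the almost-cokernel simultaneously, so the separate injectivity argument via Proposition~\ref{integrite_edplus} is unnecessary.
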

	\begin{proof}
		Puisque les injections $\widetilde{F}_{\alpha}^{\sep} \hookrightarrow \widetilde{E}_{\Delta}^{\sep}$ sont $\cg_{\widetilde{E}_{\alpha}}$-équivariantes, l'équivariance est automatique. De plus, quitte à passer à la colimite, on se restreint à prouver une identité similiaire entre $\widetilde{F}'_{\Delta,q}$ et $\widetilde{F}_{\Delta,q}$ pour une extension finie $\widetilde{F}'|\widetilde{F}$.
		
		Nous utilisons, pour toute paire $(A,\varpi)$ formée d'un anneau et d'un élément admettant des racines \linebreak$p^n$-ièmes pour tout $n$, la catégorie $(A,\varpi)\text{-}\mathrm{Mod}$ des presque-$A$-modules par rapport à l'idéal $\mathrm{Rad}(\varpi)$. De la même manière, nous appelons presque-$(A,\varpi)$-isomorphisme un morphisme de $A$-modules qui devient un isomorphisme dans $(A,\varpi)\text{-}\mathrm{Mod}$.
		
		Grâce à \cite[Prop. 5.23]{scholze_perf}, nous savons que $\widetilde{F}'^+$ est un $(\widetilde{F}^+,\varpi)$-module uniformément libre de type fini. Choisissons une famille $(x_k)_{1\leq  k \leq d}$ dans $\widetilde{F}'^+$ telle que $$ \bigoplus_{k=1}^d \widetilde{F}^+ x_k \rightarrow \widetilde{F}'^+$$ est un presque-$(\widetilde{F}^+,\varpi)$-isomorphisme. Le morphisme $$\bigoplus_{(i_{\alpha}) \in \llbracket 1, d\rrbracket^{\Delta}} \left(\bigotimes_{\alpha\in \Delta, \, \mathbb{F}_q} \widetilde{F}_{\alpha}^+\right) \prod_{\alpha\in \Delta} x_{i_\alpha,\alpha} \rightarrow \left(\bigotimes_{\alpha\in \Delta, \, \mathbb{F}_q} \widetilde{F}_{\alpha}'^+\right)$$ est un presque-$\left(\left(\otimes_{\mathbb{F}_q} \widetilde{F}_{\alpha}^+\right),\varpi_{\Delta}\right)$-isomorphisme en le décomposant comme suite de changement de base du presque-isomorphisme précédent. Puisque la structure monoïdale sur $(A,\varpi)\text{-}\mathrm{Mod}$ vient du produit tensoriel sur les $A$-modules (voir \cite[\S 2.2.5]{gabber2002ring}), le morphisme ci-dessus est encore un presque-isomorphisme après quotient par $(\underline{\varpi})^n$. En passant à la limite\footnote{La catégorie $(A,\varpi)\text{-}\mathrm{Mod}$ admet toutes les limite puisque la localisation $A\text{-}\mathrm{Mod}\rightarrow (A,\varpi)\text{-}\mathrm{Mod}$ admet un adjoint à gauche (voir \cite[Coro. 2.2.15]{gabber2002ring}).}, nous en déduisons que
		
		$$ \bigoplus_{(i_{\alpha}) \in \llbracket 1, d\rrbracket^{\Delta}} \widetilde{F}_{\Delta,q}^+ \prod_{\alpha \in \Delta} x_{i_{\alpha,\alpha}} \rightarrow \widetilde{F}'^+_{\Delta,q}$$ est un presque-$(\widetilde{F}^+_{\Delta,q},\varpi_{\Delta})$-isomorphisme. En décomposant à nouveau, le terme de gauche est presque-$(\widetilde{F}^+_{\Delta,q},\varpi_{\Delta})$-isomorphe à $$\widetilde{F}'^+_{\alpha} \otimes_{\widetilde{F}^+_{\alpha}} \left( \widetilde{F}'^+_{\beta} \otimes_{\widetilde{F}^+_{\beta}} \cdots \left(\widetilde{F}'^+_{\delta} \otimes_{\widetilde{F}^+_{\delta}} \widetilde{F}^+_{\Delta,q}\right)\right).$$ En inversant $\varpi_{\Delta}$, nous obtenons l'isomorphisme escompté.
	\end{proof}

	\begin{coro}\label{coro_descente_1}		
		\begin{enumerate}[itemsep=0mm]
			\item Pour toute extension finie $\widetilde{E}^{\sep}|\widetilde{F}|\widetilde{E}$, l'inclusion $\widetilde{F}_{\Delta,q} \subseteq \left(\widetilde{E}_{\Delta}^{\sep}\right)^{\cg_{\widetilde{F},\Delta}}$ est une égalité.
			
			\item 	Pour tout objet $D$ de $\cdetaleproj{\Phi_{\Delta,q,r}^{\gp}\times\cg_{\widetilde{E},\Delta}}{\widetilde{E}_{\Delta}^{\sep}}$, le morphisme de comparaison
			
			$$\widetilde{E}_{\Delta}^{\sep}\otimes_{\widetilde{E}_{\Delta}} \mathrm{Inv}(D) \rightarrow D$$ est un isomorphisme.
		\end{enumerate}
	\end{coro}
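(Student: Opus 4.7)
Je traiterais les deux points séparément en exploitant une descente galoisienne itérée, une variable à la fois.

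Pour le point (1), je décrirais d'abord $(\widetilde{E}_{\Delta}^{\sep})^{\cg_{\widetilde{F},\Delta}}$ comme colimite filtrée des $(\widetilde{F}'_{\Delta,q})^{\cg_{\widetilde{F},\Delta}}$, où $\widetilde{F}'$ parcourt les sous-extensions finies galoisiennes de $\widetilde{E}^{\sep}|\widetilde{E}$ contenant la clôture galoisienne de $\widetilde{F}$ : la topologie est discrète et le Lemme \ref{description_construction_produit} permet cet échange. L'action se factorise alors par $\gal{\widetilde{F}'}{\widetilde{F}}^{\Delta}$ sur chaque $\widetilde{F}'_{\Delta,q}$. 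J'identifierais ensuite, par une variante du Lemme \ref{module_induit_multi} (dont la preuve par presque-mathématiques s'applique à toute extension finie $\widetilde{F}'|\widetilde{F}$), le $\widetilde{F}_{\Delta,q}$-module $\widetilde{F}'_{\Delta,q}$ au produit tensoriel itéré
$$\widetilde{F}'_{\alpha}\otimes_{\widetilde{F}_{\alpha}} \bigl(\widetilde{F}'_{\beta}\otimes_{\widetilde{F}_{\beta}} \cdots \bigl(\widetilde{F}'_{\delta}\otimes_{\widetilde{F}_{\delta}} \widetilde{F}_{\Delta,q}\bigr)\bigr),$$
chaque facteur $\gal{\widetilde{F}'_{\alpha}}{\widetilde{F}_{\alpha}}$ n'agissant que sur sa composante tensorielle. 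L'itération de la descente galoisienne classique pour les extensions finies galoisiennes de corps $\widetilde{F}'_{\alpha}|\widetilde{F}_{\alpha}$ fournit alors exactement $\widetilde{F}_{\Delta,q}$ pour les invariants.

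Pour le point (2), je descendrais $D$ à un niveau fini. Comme $D$ est de type fini sur $\widetilde{E}_{\Delta}^{\sep}$ avec action continue de $\cg_{\widetilde{E},\Delta}$ pour la topologie discrète, une famille finie de générateurs est fixée par un sous-groupe ouvert de la forme $\cg_{\widetilde{F},\Delta}$ pour une extension finie galoisienne $\widetilde{F}|\widetilde{E}$ dans $\widetilde{E}^{\sep}$. Ces générateurs engendrent un $\widetilde{F}_{\Delta,q}$-sous-module $D_{\widetilde{F}}$ stable par $\gal{\widetilde{F}}{\widetilde{E}}^{\Delta}$ tel que $\widetilde{E}_{\Delta}^{\sep}\otimes_{\widetilde{F}_{\Delta,q}} D_{\widetilde{F}} \cong D$. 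J'appliquerais alors la descente galoisienne variable par variable à $D_{\widetilde{F}}$ (en utilisant la décomposition du Lemme \ref{module_induit_multi} pour $\widetilde{F}|\widetilde{E}$) : cela identifiera $\mathrm{Inv}(D)$ à $D_{\widetilde{F}}^{\gal{\widetilde{F}}{\widetilde{E}}^{\Delta}}$ et fournira un isomorphisme $\widetilde{F}_{\Delta,q}\otimes_{\widetilde{E}_{\Delta}} \mathrm{Inv}(D)\cong D_{\widetilde{F}}$. L'extension des scalaires à $\widetilde{E}_{\Delta}^{\sep}$ concluera.

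Le point le plus délicat sera l'itération de la descente galoisienne semi-linéaire : il faut qu'à chaque étape le module intermédiaire reste assez régulier (typiquement fini projectif) pour que la descente sur la variable suivante s'applique. Cela résultera de la fidèle platitude de chacune des extensions finies de corps $\widetilde{F}'_{\alpha}|\widetilde{F}_{\alpha}$, qui préserve les modules finis projectifs, combinée à la descente fidèlement plate de Grothendieck pour les modules munis d'une donnée de descente.
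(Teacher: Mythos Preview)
Your approach and the paper's are the same in spirit: both hinge on Lemma~\ref{module_induit_multi} to exhibit $\widetilde{E}_{\Delta}^{\sep}$ (or $\widetilde{F}'_{\Delta,q}$) as an iterated base change along field extensions, and then invoke Galois/faithfully flat descent. The paper simply packages this into a single citation (Th\'eor\`eme~3.6 of an external note on descent), whereas you unpack the argument variable by variable. Your treatment of point~(1) is fine.

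For point~(2) there is a small gap. Defining $D_{\widetilde{F}}$ as the $\widetilde{F}_{\Delta,q}$-span of a family of $\cg_{\widetilde{F},\Delta}$-fixed generators does not by itself guarantee $\widetilde{E}_{\Delta}^{\sep}\otimes_{\widetilde{F}_{\Delta,q}} D_{\widetilde{F}} \cong D$: you need the \emph{relations} among the generators to be defined over $\widetilde{F}_{\Delta,q}$ as well, and you need the generating set to be stable under $\gal{\widetilde{F}}{\widetilde{E}}^{\Delta}$ (not just pointwise fixed by $\cg_{\widetilde{F},\Delta}$). Both are easily arranged --- $D$ is finitely presented since it is finite projective, so after enlarging $\widetilde{F}$ the presentation matrix lives in $\widetilde{F}_{\Delta,q}$, and one replaces the $e_i$ by their finite $\gal{\widetilde{F}}{\widetilde{E}}^{\Delta}$-orbit --- but this should be said. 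With the presentation at finite level, right-exactness of tensor gives the isomorphism, and descent of projectivity along the faithfully flat map $\widetilde{F}_{\Delta,q}\to\widetilde{E}_{\Delta}^{\sep}$ makes $D_{\widetilde{F}}$ finite projective, so your iteration goes through. Alternatively, you could bypass $D_{\widetilde{F}}$ altogether and iterate the descent for each infinite extension $\widetilde{E}_{\alpha}^{\sep}|\widetilde{E}_{\alpha}$ directly on $D$, using smoothness of the action to reduce to a finite level one variable at a time; this is closer to what the cited theorem does and avoids the bookkeeping.
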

	\begin{proof}
		Grâce à la description ci-dessus, nous pouvons appliquer \cite[Th. 3.6]{note_descente}. La descente fidèlement plate des modules projectifs pour $\widetilde{F}_{\Delta,q} \rightarrow \widetilde{E}_{\Delta}^{\sep}$ (resp. pour $\widetilde{E}_{\Delta} \rightarrow \widetilde{E}_{\Delta}^{\sep}$) établit une équivalence de catégorie entre les $\widetilde{F}_{\Delta,q}$-modules finis projectifs et les $\widetilde{E}_{\Delta}^{\sep}$-modules finis projectifs munis d'une action lisse de $\cg_{\widetilde{F},\Delta}$. Cela implique les deux énoncés.
	\end{proof}
	
	\begin{defiprop}\label{defipropwDdelta}
		Le foncteur $$\widetilde{\mathbb{D}}_{\Delta} \, : \, \mathrm{Rep}_{\mathbb{F}_r} \cg_{\widetilde{E},\Delta} \rightarrow \dmod{\Phi_{\Delta,q,r}^{\gp}}{\widetilde{E}_{\Delta}}, \,\,\, V \mapsto \left(\widetilde{E}^{\sep}_{\Delta} \otimes_{\mathbb{F}_r} V \right)^{\cg_{\widetilde{E},\Delta}}$$ est correctement défini et son image essentielle est incluse dans $\detaleproj{\Phi_{\Delta,q,r}^{\gp}}{\widetilde{E}_{\Delta}}$. Cette dernière catégorie est une sous-catégorie pleine monoïdale fermée et $\widetilde{\mathbb{D}}_{\Delta}$ commute naturellement au produit tensoriel et au $\mathrm{Hom}$ interne.
	\end{defiprop}
	\begin{proof}
		Nous identifions la catégorie de représentations à $\cdetaleproj{\cg_{\widetilde{E},\Delta}}{\mathbb{F}_r}$ et nous décomposons le foncteur $\widetilde{\mathbb{D}}_{\Delta}$ comme suit.
		
		\begin{center}
			\begin{tikzcd}
				\widetilde{\mathbb{D}}_{\Delta} \,: \, &\cdetaleproj{\cg_{\widetilde{E}_{\Delta}}}{\mathbb{F}_r} \ar[r,"\mathrm{triv}"] & \cdetaleproj{\Phi_{\Delta,q,r}^{\gp}\times \cg_{\widetilde{E},\Delta}}{\mathbb{F}_r} \ar[d,"\mathrm{Ex}"] \\
				& \dmod{\Phi_{\Delta,q,r}^{\gp}}{\widetilde{E}_{\Delta}} & \dmod{\Phi_{\Delta,q,r}^{\gp}\times \cg_{\widetilde{E},\Delta}}{\widetilde{E}_{\Delta}^{\sep}} \ar[l,"\mathrm{Inv}"]
			\end{tikzcd}
		\end{center} Pour montrer que $\mathrm{Ex}$ et $\mathrm{Inv}$ préservent les sous-catégories, nous utilisons \cite[Prop. 5.17 et 5.18]{formalisme_marquis} respectivement pour le morphisme de $(\Phi_{\Delta,q,r}^{\gp}\times \cg_{\widetilde{E},\Delta})$-anneaux topologiques discrets $\mathbb{F}_r \rightarrow \widetilde{E}_{\Delta}^{\sep}$ et pour $\widetilde{E}_{\Delta}^{\sep}$ avec comme sous-monoïde $\cg_{\widetilde{E},\Delta}<(\Phi_{\Delta,q,r}^{\gp}\times \cg_{\widetilde{E},\Delta})$. Les conditions de \cite[Prop. 5.17]{formalisme_marquis} sont déjà démontrées lors la construction des anneaux. Nous listons et démontrons les conditions de \cite[Prop. 5.18]{formalisme_marquis}.
		
		\underline{Condition 1 :} le sous-monoïde $\cg_{\widetilde{E},\Delta}$ est distingué et le groupe topologique quotient s'identifie à $\Phi_{\Delta,q,r}^{\gp}$. Tout découle de ce que $(\Phi_{\Delta,q,r}^{\gp}\times \cg_{\widetilde{E},\Delta})$ est un produit direct.
		
		\underline{Condition 2 :} l'anneau topologique $\widetilde{E}_{\Delta}$ s'identifie à $\left(\widetilde{E}_{\Delta}^{\sep}\right)^{\cg_{\widetilde{E},\Delta}}$. L'identification ensembliste est l'objet du premier point du Corollaire \ref{coro_descente_1}. Puisque les anneaux sont discrets, la condition topologique est vide.
		
		\underline{Condition 3 :} l'inclusion $\widetilde{E}_{\Delta} \subset \widetilde{E}_{\Delta}^{\sep}$ est fidèlement plate. Le Lemme \ref{module_induit_multi} démontre que c'est une suite de changement de bases le long de morphismes de corps, a fortiori un morphisme fidèlement plat.
		
		\underline{Condition 4 :} les morphismes de comparaison sont des isomorphismes. C'est l'objet du deuxième point du Corollaire \ref{coro_descente_1}.
		
			Puisque $\Phi_{\Delta,q,r}^{\gp}$ est discret les catégories $\cdetaleproj{\Phi_{\Delta,q,r}^{\gp}}{\widetilde{E}_{\Delta}}$ et $\detaleproj{\Phi_{\Delta,q,r}^{\gp}}{\widetilde{E}_{\Delta}}$ coïncident (voir \cite[Ex. 5.6]{formalisme_marquis}) pour n'importe quelle topologie. 
	\end{proof}

	\vspace{0.75cm}
	
	\subsection{\'Equivalence de Fontaine multivariable perfectoïde modulo $p$ pour certains corps perfectoïdes de caractéristique $p$}\label{section_equiv_car_p_parf}
	
	Pour démontrer une équivalence de Fontaine perfectoïde modulo $p$, il nous reste à montrer que le candidat naturel à être quasi-inverse de $\widetilde{\mathbb{D}}_{\Delta}$ est correctement défini. Pour ce faire, nous aurons besoin de démontrer un autre isomorphisme de comparaison, beaucoup plus délicat. Nous suivons la stratégie dans \cite{zabradi_kedlaya_carter} qui consiste à utiliser le lemme de Drinfeld pour les diamants afin d'obtenir d'une autre manière une $\mathbb{F}_r$-représentation de $\cg_{\widetilde{E},\Delta}$ à partir d'un $\Phi_{\Delta,q,r}^{\gp}$-module sur $\widetilde{E}_{\Delta}$. Dans la preuve de \cite[Prop. 4.20]{zabradi_kedlaya_carter}, les trois auteurs utilisent la commutation de leur foncteur au dual, peu évidente à ce stade de démonstration. Nous préférons utiliser activement l'intégrité des anneaux dans notre version de l'équivalence pour contourner une partie des arguments.
	
	\begin{lemma}\label{inv_frob_multi_perf}
		L'inclusion $\mathbb{F}_r \subset \left(\widetilde{E}_{\Delta}^{\sep}\right)^{\Phi_{\Delta,q,r}^{\gp}}$ est une égalité d'anneaux topologiques.
	\end{lemma}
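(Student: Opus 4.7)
Mon plan est le suivant. L'inclusion $\mathbb{F}_r \subseteq (\widetilde{E}_{\Delta}^{\sep})^{\Phi_{\Delta,q,r}^{\gp}}$ est immédiate : tout élément de $\mathbb{F}_r \subseteq \mathbb{F}_q$ vérifie $x^q = x$ (donc est fixé par chaque $\varphi_{\alpha,q}$) et $x^r = x$ (donc est fixé par $\varphi_{\Delta,r}$). Je consacre donc mes efforts à la réciproque. Étant donné $x \in (\widetilde{E}_{\Delta}^{\sep})^{\Phi_{\Delta,q,r}^{\gp}}$, j'utiliserai d'abord la description de $\widetilde{E}_{\Delta}^{\sep}$ comme colimite filtrée des $\widetilde{F}_{\Delta,q}$ à transitions injectives et $\Phi_{\Delta,q,r}^{\gp}$-équivariantes (Lemme \ref{description_construction_produit}) pour placer $x$ dans un certain $\widetilde{F}_{\Delta,q}$ où il demeure fixé par tout le monoïde.

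L'étape-clé consistera ensuite à reconnaître que la composition $\prod_{\alpha \in \Delta} \varphi_{\alpha,q}$ coïncide avec le $q$-Frobenius absolu $x\mapsto x^q$ sur $\widetilde{F}_{\Delta,q}$. En effet, le produit tensoriel sous-jacent étant pris sur $\mathbb{F}_q$ et $q$ étant une puissance de $p$, on a $\left(\bigotimes_\alpha x_\alpha\right)^q = \bigotimes_\alpha x_\alpha^q = \left(\prod_\alpha \varphi_{\alpha,q}\right)\!\left(\bigotimes_\alpha x_\alpha\right)$ sur un tenseur simple ; l'additivité du Frobenius en caractéristique $p$ étend cette identité au produit tensoriel entier, puis la continuité la propage à la complétion $(\underline{\varpi})$-adique et à la localisation en $\varpi_\Delta$. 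De l'invariance de $x$ par chaque $\varphi_{\alpha,q}$ je déduirai ainsi $x = x^q$.

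Puisque $\widetilde{F}_{\Delta,q}$ est intègre (Proposition \ref{integrite_edplus}), l'équation $T^q - T = 0$ y admet au plus $q$ solutions, qui sont précisément les éléments de $\mathbb{F}_q$ ; donc $x \in \mathbb{F}_q$. L'invariance supplémentaire par $\varphi_{\Delta,r}$, qui agit également comme le $r$-Frobenius absolu par le même argument de distributivité, donnera alors $x^r = x$ dans $\mathbb{F}_q$, d'où $x \in \mathbb{F}_r$. Comme les deux anneaux sont discrets, l'égalité ensembliste ainsi obtenue est automatiquement une égalité d'anneaux topologiques. Le seul écueil à anticiper est que l'intégrité de $\widetilde{F}_{\Delta,q}$ est ici indispensable : sans elle, $T^q = T$ pourrait admettre des solutions parasites issues d'idempotents, ce qui est précisément l'un des intérêts annoncés de la construction du produit tensoriel sur $\mathbb{F}_q$ plutôt que sur $\mathbb{F}_p$.
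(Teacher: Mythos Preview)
Your argument has a genuine gap at the integrality step. Proposition~\ref{integrite_edplus} does \emph{not} assert that $\widetilde{F}_{\Delta,q}$ is integral for a general finite extension $\widetilde{F}|\widetilde{E}$: it only says that $\widetilde{F}_{\Delta,q}$ is reduced and $\widetilde{E}_{\Delta}$-torsion-free, the integrality conclusion being drawn \emph{only} for $\widetilde{E}_{\Delta}$ itself. In fact $\widetilde{F}_{\Delta,q}$ is typically not integral: if the algebraic closure of $\mathbb{F}_p$ in $\widetilde{F}$ is $\mathbb{F}_{q^f}$ with $f>1$, then $\widetilde{F}_{\Delta,q}$ contains $\bigotimes_{\alpha\in\Delta,\,\mathbb{F}_q}\mathbb{F}_{q^f}$, a nontrivial product of fields (this is precisely the content of the remark following Proposition~\ref{edseptorsion}). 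In such a ring the equation $T^q=T$ has far more than $q$ solutions --- every element of that product of copies of $\mathbb{F}_{q^f}$ is one --- so your counting argument collapses. Since an arbitrary $x\in\widetilde{E}_{\Delta}^{\sep}$ need not lie in $\widetilde{E}_{\Delta}$, you cannot sidestep this by taking $\widetilde{F}=\widetilde{E}$.

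The paper circumvents this by first using the coinduction identification $\widetilde{F}_{\Delta,q}\cong\coindu{\Phi_{\Delta,q',r}^{\gp}}{\Phi_{\Delta,q,r}^{\gp}}{\widetilde{F}_{\Delta}}$ of Proposition~\ref{identif_coindu_perf}, which reduces the question to the ring $\widetilde{F}_{\Delta}$ obtained by tensoring over the \emph{larger} field $\mathbb{F}_{q'}$; that ring \emph{is} integral (Corollaire~\ref{edplus_intègre}). On $\widetilde{F}_{\Delta}$ the argument then proceeds via the multiplicative norm $|\cdot|_{\widetilde{F},\Delta}$ coming from the Hahn--Mal'cev embedding (Corollaire~\ref{norm_mult}): invariance under $\varphi_{\Delta,r}$ forces the norm of an invariant $y$ to be $0$ or $1$, whence $y\in\widetilde{F}_{\Delta}^+$ and $y$ is congruent modulo norm~$<1$ to some $y_0\in\mathbb{F}_{q'}$; iterating yields $y\in\mathbb{F}_{q'}$, and finally $\varphi_{\Delta,r}$-invariance inside $\mathbb{F}_{q'}$ gives $y\in\mathbb{F}_r$. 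Your Frobenius idea is sound in spirit, but it must be run on the integral piece $\widetilde{F}_{\Delta}$ after the coinduction reduction, not directly on $\widetilde{F}_{\Delta,q}$.
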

	\begin{proof}
		Puisque les topologies sont discrètes, c'est un énoncé algébrique relégué au Corollaire \ref{inv_phi_carp_perf}.
	\end{proof}
	
	En admettant le Théorème \ref{premier_écrit_theo_coeur} qui occupera la majorité de cette sous-section, nous pouvons démontrer que $\widetilde{\mathbb{D}}_{\Delta}$ est une équivalence de catégories.
	
\begin{defiprop}\label{defipropwVDelta}
	Le foncteur $$\widetilde{\mathbb{V}}_{\Delta} \, : \,   \detaleproj{\Phi_{\Delta,q,r}^{\gp}}{\widetilde{E}_{\Delta}} \rightarrow \dmod{\cg_{\widetilde{E},\Delta}}{\mathbb{F}_r}, \,\,\, D \mapsto \left(\widetilde{E}^{\sep}_{\Delta} \otimes_{\widetilde{E}_{\Delta}} D \right)^{\Phi_{\Delta,q,r}^{\gp}}$$ est correctement défini et son image essentielle est incluse dans $\mathrm{Rep}_{\mathbb{F}_r} \cg_{\widetilde{E},\Delta}$. De plus, le foncteur $\widetilde{\mathbb{V}}_{\Delta}$ commute naturellement au produit tensoriel et au $\mathrm{Hom}$ interne.
\end{defiprop}
\begin{proof}
	Nous décomposons le foncteur $\widetilde{\mathbb{V}}_{\Delta}$ comme suit en se souvenant que la condition de continuité à la source est automatique :
	
	\begin{center}
		\begin{tikzcd}
			\widetilde{\mathbb{V}}_{\Delta} \,: \, &\cdetaleproj{\Phi_{\Delta,q,r}^{\gp}}{\widetilde{E}_{\Delta}} \ar[r,"\mathrm{triv}"] & \cdetaleproj{\Phi_{\Delta,q,r}^{\gp}\times \cg_{\widetilde{E},\Delta}}{\widetilde{E}_{\Delta}} \ar[d,"\mathrm{Ex}"] \\
			& \dmod{\Phi_{\Delta,q,r}^{\gp}}{\mathbb{F}_r} & \dmod{\Phi_{\Delta,q,r}^{\gp}\times \cg_{\widetilde{E},\Delta}}{\widetilde{E}_{\Delta}^{\sep}} \ar[l,"\mathrm{Inv}"]
		\end{tikzcd}
	\end{center} Puisque $\mathrm{Rep}_{\mathbb{F}_r} \cg_{\widetilde{E},\Delta}$, s'identifie à $\cdetaleproj{\cg_{\widetilde{E},\Delta}}{\mathbb{F}_r}$, il faut prouver que $\mathrm{Ex}$ et $\mathrm{Inv}$ préservent la sous-catégorie considérée. Nous utilisons \cite[Prop. 5.17 et 5.18]{formalisme_marquis} respectivement pour le morphisme de  \linebreak$(\Phi_{\Delta,q,r}^{\gp}\times \cg_{\widetilde{E},\Delta})$-anneaux discrets $\widetilde{E}_{\Delta} \rightarrow \widetilde{E}_{\Delta}^{\sep}$ et pour $\widetilde{E}_{\Delta}^{\sep}$ avec $\Phi_{\Delta,q,r}^{\gp}<(\Phi_{\Delta,q,r}^{\gp}\times \cg_{\widetilde{E},\Delta})$. Les conditions pour $\mathrm{Ex}$ sont encore une fois démontrées au fil de la construction des anneaux. La liste des conditions à démontrer pour $\mathrm{Inv}$ est similaire à la Proposition \ref{defipropwDdelta}. La condition 1 est automatique ; la condition 2 est contenue dans le Lemme \ref{inv_frob_multi_perf} ; la condition 3 découle de ce que $\mathbb{F}_r$ est un corps ; la condition 4 est l'objet du Théorème \ref{premier_écrit_theo_coeur}. 
\end{proof}	

\begin{theo}\label{equiv_perf_mod_p}
	Les foncteurs $\widetilde{\mathbb{D}}_{\Delta}$ et $\widetilde{\mathbb{V}}_{\Delta}$ forment une paire de foncteurs quasi-inverses et établissent une équivalence de catégories symétriques monoidales fermées $$\widetilde{\mathbb{D}}_{\Delta} \, : \, \mathrm{Rep}_{\mathbb{F}_r} \cg_{\widetilde{E},\Delta} \rightleftarrows \detaleproj{\Phi_{\Delta,q,r}^{\gp}}{\widetilde{E}_{\Delta}} \, : \, \widetilde{\mathbb{V}}_{\Delta}.$$
\end{theo}
\begin{proof}
	Reste à prouver que les foncteurs sont quasi-inverses l'un de l'autre. Grâce aux isomorphismes de comparaison naturels que nous avons obtenus en étudiant les foncteurs, prouver que $\widetilde{\mathbb{D}}_{\Delta}\circ \widetilde{\mathbb{V}}_{\Delta}$ est isomorphe à l'identité revient à passer aux $\cg_{\widetilde{E},\Delta}$-invariants l'isomorphisme de comparaison naturel $$\widetilde{E}_{\Delta}^{\sep}\otimes_{\mathbb{F}_r} \mathbb{V}_{\Delta}(D) = \widetilde{E}_{\Delta}^{\sep}\otimes_{\mathbb{F}_r} \mathrm{Inv}\left( \widetilde{E}_{\Delta}^{\sep}\otimes_{\widetilde{E}_{\Delta}} D\right) \xrightarrow{\sim} \widetilde{E}_{\Delta}^{\sep}\otimes_{\widetilde{E}_{\Delta}} D$$ et à prouver que $$\forall D\in \detaleproj{\Phi_{\Delta,q,r}^{\gp}}{\widetilde{E}_{\Delta}}, \,\,\, \text{l'application} \,\, D \rightarrow \left(\widetilde{E}_{\Delta}^{\sep} \otimes_{\widetilde{E}_{\Delta}} D\right)^{\cg_{\widetilde{E},\Delta}} \,\, \text{est un isomorphisme}.$$ La Proposition \ref{edseptorsion} permet d'utiliser \cite[Prop. 3.10]{formalisme_marquis} pour conclure. L'autre composition se traite de même.
\end{proof}

\vspace{0.5 cm}

Il reste à démontrer que le morphisme de comparaison est un isomorphisme pour tous les objets de \linebreak$\cdetaleproj{\Phi_{\Delta,q,r}^{\gp}\times \cg_{\widetilde{E},\Delta}}{\widetilde{E}^{\sep}_{\Delta}}$. Grâce au Corollaire \ref{coro_descente_1}, la descente galoisienne établit une équivalence de catégories $$\mathrm{Inv} \, : \, \cdetaleproj{\Phi_{\Delta,q,r}^{\gp}\times \cg_{\widetilde{E},\Delta}}{\widetilde{E}^{\sep}_{\Delta}} \rightarrow \cdetaleproj{\Phi_{\Delta,q,r}^{\gp}}{\widetilde{E}_{\Delta}} \, : \, \mathrm{Ex}.$$ On se ramène donc à prouver le théorème suivant.

\begin{theo}\label{premier_écrit_theo_coeur}[voir Théorème \ref{morph_comparaison_coeur}]
	Pour tout objet $D$ de $\detaleproj{\Phi_{\Delta,q,r}^{\gp}}{\widetilde{E}_{\Delta}}$, le morphisme de comparaison $$\widetilde{E}_{\Delta}^{\mathrm{sep}}\otimes_{\mathbb{F}_r}\widetilde{\mathbb{V}}_{\Delta}(D) \rightarrow \widetilde{E}_{\Delta}^{\mathrm{sep}} \otimes_{\widetilde{E}_{\Delta}} D$$ est un isomorphisme.
\end{theo}

		Suivant l'idée de \cite{zabradi_kedlaya_carter} et adaptant les preuves à notre contexte, nous construisons par une méthode géométrique un foncteur $\mathrm{V}_{\widetilde{E}}$ puis nous l'analysons pour obtenir l'isomorphisme de comparaison. Nous aurons besoin d'utiliser tous les foncteurs $\mathrm{V}_{\widetilde{F}}$ avec $\widetilde{F}|\widetilde{E}$ finie pour analyser $\widetilde{\mathbb{V}}_{\Delta}$. Bien que ce qui suit est rédigé avec $\widetilde{E}$ et $q$, gardons en tête que nous démontrons les résulats également pour chaque $\widetilde{F}$ et $q'$.

	\vspace{0.25cm}
		
		Nous rappelons la notion de catégorie galoisienne que nous pouvons comprendre comme une liste de conditions pour être équivalente à une catégorie de représentations d'un groupe profini. Les lecteurs et lectrices souhaitant un exposé détaillé de la théorie pourront se réferer à \cite[V, \S 4]{SGA1}.
		
		\begin{defi}
			\begin{enumerate}[itemsep=0mm]
				\item Soit $\mathcal{C}$ une catégorie ayant toutes les limites et colimites. On dit qu'un objet $X$ est \linebreak\textit{$\mathcal{C}$-connexe} si pour tout monomorphisme $f\, : \,Y\rightarrow X$, l'objet $Y$ est initial ou le morphisme $f$ est un isomorphisme.
				
				\item Soit $F\ : \, \mathcal{C}\rightarrow \mathcal{D}$ un foncteur entre deux catégories ayant toutes les limites et colimites finies. On dit qu'il est exact s'il envoie les objets initiaux (et finaux) sur des objets initiaux (et finaux), et que pour tout diagrammes $(X\rightarrow Y \leftarrow Z)$ et $(X'\leftarrow Y'\rightarrow Z')$, les morphismes naturels $$F\left( X\times_Y Z\right) \rightarrow F(X)\times_{F(Y)} F(Z) \,\,\, \text{et}\,\,\, F(X')\sqcup_{F(Y')} F(Z') \rightarrow F\left(X'\sqcup_{Y'} Z'\right)$$ sont des isomorphismes.
				
				\item Toujours pour un tel foncteur $F$, on dit que $F$ reflète les isomorphismes si pour tout morphisme $f$ de $\mathcal{C}$, si $F(f)$ est un isomorphisme alors $f$ aussi.
			\end{enumerate}
		\end{defi}
		
		\begin{defi}
			Une \textit{catégorie galoisienne} est un couple $(\mathcal{C},F)$, où $\mathcal{C}$ est une catégorie essentiellement petite et $F \, :\, \mathcal{C}\rightarrow \mathrm{Ens}$ est un foncteur, tel que :
			
			\begin{itemize}[itemsep=0mm]
				\item La catégorie $\mathcal{C}$ a toutes les limites et colimites finies.
				
				\item Tout objet de $\mathcal{C}$ est une union disjointe finie d'objets $\mathcal{C}$-connexes.
				
				\item Le foncteur $F$ est à valeurs dans les ensembles finis.
				
				\item Le foncteur $F$ est exact.
				
				\item Le foncteur $F$ reflète les isomorphismes.
			\end{itemize}
		\end{defi}
		
		\begin{theo}
			Soit $(\mathcal{C},F)$ une catégorie galoisienne. Définissons le groupe fondamental $\pi_1(\mathcal{C},F)$ comme le groupe d'automorphismes du foncteur $F$. Il s'identifie à un sous-groupe du groupe topologique profini $$\prod_{X \in \mathcal{D}} \mathfrak{S}_{F(X)}$$ où $\mathcal{D}$ est une petite sous-catégorie de $\mathcal{C}$ qui lui est équivalente. Pour tout objet $X$, l'action de $\pi_1(\mathcal{C},F)$ sur $F(X)$ est donc à stabilisateurs ouverts et $F$ se promeut en une équivalence de catégories $$F\, : \, \mathcal{C} \rightarrow \pi_1(\mathcal{C},F)\text{-}\mathrm{EnsFinis}$$ où cette dernière catégorie est celles des $\pi_1(\mathcal{C},F)$-ensembles finis à stabilisateurs ouverts.
		\end{theo}

		La géométrie fournit quantité de catégories galoisiennes intéressantes parmi le groupes fondamentaux desquelles nous pouvons trouver $\cg_{\widetilde{E},\Delta}$. Pour cela, nous utilisons la théorie des diamants dont nous donnons ici une exposition adaptée à nos besoins, avec pour référence \cite{scholze_berkeley} et \cite{scholze_diamonds}. La première référence fournit une approche plus accessible et plus vaste aux perfectoïdes, diamants et objets qui en découlent. La deuxième est plus compacte et technique mais nous servira pour avoir des énoncés millimétrés. Les lecteurs et lectrices souhaitant d'abord de familiariser avec la géométrie adique et les perfectoïde pourront se référer à \cite{morrow_adic}.
		
		\begin{defi}
			Une \textit{paire de Tate perfectoïde} est une paire $(A,A^+)$ où $A$ est un anneau topologique, $A^+$ un sous-anneau ouvert borné et intégralement clos et possédant un élément $\pi$ vérifiant que :
			\begin{itemize}[itemsep=0mm]
				\item La topologie sur $A^+$ est la topologie $\pi$-adique et $A^+$ est $\pi$-adiquement complet.
				
				\item Nous avons $p\in \pi^p A^+$.
				
				\item Le morphisme d'anneaux déduit $\sfrac{A^+}{\pi A^+} \rightarrow \sfrac{A^+}{\pi^p A^+}, \,\, x\mapsto x^p$ est un isomorphisme.
			 \end{itemize}
			Un tel élément $\pi$ s'appelle une pseudo-uniformisante. La définition de paire de Tate perfectoïde requiert l'existence d'une pseudo-uniformisante et non d'en choisir une.
			
			À toute paire de Tate perfectoïde, nous associons un espace topologique $\spa{A}{A^+}$ muni de deux faisceux d'anneaux que nous appelons espace affinoïde. L'ensemble sous-jacent sera $$\quot{\left\{\text{Semi-norme multiplicative}\,\, |\cdot|\, : \, A \rightarrow \Gamma\cup\{0\}\,\,\, \substack{\text{à valeurs dans un groupe abélien totalement ordonné,} \\\text{continue pour la topologie de l'ordre sur }\Gamma,  \\ \text{telle que } |A^+|\subseteq \Gamma_{\leq 1}.}\right\}}{\sim}.$$ La topologie a pour base d'ouverts les ouverts rationnels $$U\left(\frac{f_1,\ldots,f_r}{g}\right):=\left\{ |\cdot| \, \big. \text{ telle que }\, \forall i, \,\,\, |f_i|\leq |g|\neq 0\right\} \,\,\, \text{pour}\,\, (f_1,\ldots,f_r)=A.$$ Cet espace est spectral. Les faisceaux d'anneaux $\mathcal{O}_{\spa{A}{A+}}$ et $\mathcal{O}_{\spa{A}{A+}}^+$ sont caractérisés par leurs valeurs sur les ouverts rationnels :
			
			$$\mathcal{O}_{\spa{A}{A+}}\left(U\left(\frac{f1,\ldots,f_r}{g}\right)\right)=\left(A^+\left[\frac{f_1}{g}, \ldots, \frac{f_r}{g}\right]\right)^{\wedge \pi} \left[\frac{1}{\pi}\right], $$
			
			\begin{align*}\mathcal{O}^+_{\spa{A}{A+}}\left(U\left(\frac{f1,\ldots,f_r}{g}\right)\right)&= \text{clôture intégrale de }\, \left(A^+\left[\frac{f_1}{g}, \ldots, \frac{f_r}{g}\right]\right)^{\wedge \pi} \\ & \phantom{=} \text{ dans } \left(A^+\left[\frac{f_1}{g}, \ldots, \frac{f_r}{g}\right]\right)^{\wedge \pi} \left[\frac{1}{\pi}\right].\end{align*}
		\end{defi}

		\begin{defi}
			Un \textit{espace perfectoïde} est un triplet $(X,\mathcal{O}_X, \mathcal{O}_X^+)$ formé d'un espace topologique et de deux faisceaux d'anneaux, localement isomorphe à un espace affinoïde $(\spa{A}{A^+}, \mathcal{O}_{\spa{A}{A+}}, \mathcal{O}_{\spa{A}{A+}}^+)$. On note souvent $X$ un tel triplet par négligence. Les morphismes d'espaces perfectoïdes sont les morphismes d'espaces annelés $f\, : \, (X,\mathcal{O}_X)\rightarrow(Y, \mathcal{O}_Y)$ induit localement par un morphisme continu d'anneaux tel que l'image de $f^{-1}(\mathcal{O}_Y^+)$ soit contenue dans $\mathcal{O}_X^+$.
			
			On dit qu'il est de caractéristique $p$ (resp. de caractéristique mixte) si $\mathcal{O}_X$ a pour valeurs des anneaux de caractéristique $p$ (resp. de caractéristique mixte).
		\end{defi}
		
		\begin{prop}
			La catégorie des paires de Tate perfectoïdes a pour morphismes $(A,A^+)\rightarrow (B,B^+)$ les morphismes d'anneaux continus $f\, : \, A\rightarrow B$ tel que $f(A^+)\subset B^+$. L'association $(A,A^+)\mapsto \spa{A}{A^+}$ fournit un foncteur pleinement fidèle de la catégorie opposée des paires de Tate perfectoïdes vers celle des espaces perfectoïdes.
		\end{prop}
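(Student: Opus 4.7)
The plan is to construct an alternative, geometrically-defined functor $\mathrm{V}_{\widetilde{E}} : \detaleproj{\Phi_{\Delta,q,r}^{\gp}}{\widetilde{E}_{\Delta}} \to \mathrm{Rep}_{\mathbb{F}_r}\cg_{\widetilde{E},\Delta}$ via Drinfeld's lemma for diamonds, to identify it with $\widetilde{\mathbb{V}}_{\Delta}$, and to read off the comparison isomorphism directly from the geometric description. The philosophy is that the $\Phi_{\Delta,q,r}^{\gp}$-action on $D$ is precisely descent data along partial Frobenii on an associated finite étale cover, which Drinfeld's lemma then replaces by a $\cg_{\widetilde{E},\Delta}$-action on a finite set. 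Since $\widetilde{E}_{\Delta}$ is genuinely an $\mathbb{F}_q$-algebra and the relevant diamond $Y := \prod_{\alpha\in\Delta,\,\mathrm{Spd}(\mathbb{F}_q)} \spd{\widetilde{E}}{\widetilde{E}^{\circ}}$ is taken over $\mathrm{Spd}(\mathbb{F}_q)$, this approach keeps $\cg_{\widetilde{E},\Delta}$ as the fundamental group and $\Phi_{\Delta,q,r}^{\gp}$ as the Frobenius monoid without any artificial base change.

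Concretely, to a rank $n$ object $D$ I associate a finite étale $\widetilde{E}_{\Delta}$-algebra $A_D$ whose spectrum represents the functor of $\mathbb{F}_r$-structures on $D$ compatible with the $\Phi$-action; étaleness of $D$ guarantees that $A_D$ is finite étale of degree $r^n$, while the $\Phi_{\Delta,q,r}^{\gp}$-action induces commuting partial $q$-Frobenii and a global $r$-Frobenius on $A_D$. Passing to diamonds turns $\mathrm{Spec}(A_D)$ into a finite étale cover $X_D \to Y$ equipped with compatible partial Frobenii. The multivariable Drinfeld lemma for diamonds alluded to in the introduction provides an equivalence between the category of such decorated finite étale covers and the category of finite $\cg_{\widetilde{E},\Delta}$-sets; from this I define $\mathrm{V}_{\widetilde{E}}(D)$ as the associated $\mathbb{F}_r$-representation. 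The identification $\mathrm{V}_{\widetilde{E}}(D) \cong \widetilde{\mathbb{V}}_{\Delta}(D)$ then follows by describing the $\cg_{\widetilde{E},\Delta}$-set as the set of $\Phi_{\Delta,q,r}^{\gp}$-equivariant $\widetilde{E}_{\Delta}$-algebra homomorphisms $A_D \to \widetilde{E}_{\Delta}^{\sep}$, which matches the $\Phi$-invariants in $\widetilde{E}_{\Delta}^{\sep}\otimes_{\widetilde{E}_{\Delta}} D$.

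Once this is in place, the comparison isomorphism is essentially a book-keeping step. By construction $A_D \otimes_{\widetilde{E}_{\Delta}} \widetilde{E}_{\Delta}^{\sep}$ splits completely as a product of copies of $\widetilde{E}_{\Delta}^{\sep}$ indexed by $\mathrm{V}_{\widetilde{E}}(D)$, each copy corresponding to an element of $\mathrm{V}_{\widetilde{E}}(D)$ viewed inside $\widetilde{E}_{\Delta}^{\sep}\otimes_{\widetilde{E}_{\Delta}} D$, and this complete splitting exhibits $\mathrm{V}_{\widetilde{E}}(D)$ as an $\widetilde{E}_{\Delta}^{\sep}$-basis of $\widetilde{E}_{\Delta}^{\sep}\otimes_{\widetilde{E}_{\Delta}} D$, giving the claimed isomorphism. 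The hard part will be the execution of the two geometric steps: on the one hand, the functorial construction of $A_D$ with its compatible partial Frobenii; on the other hand, the careful invocation of Drinfeld's lemma in its precise multivariable form, with base $\mathrm{Spd}(\mathbb{F}_q)$ and an extra $r$-Frobenius on top of the $\Delta$-many partial $q$-Frobenii. Here the integrality of $\widetilde{E}_{\Delta}$ provided by Proposition \ref{integrite_edplus} is decisive: it keeps $\mathrm{Spec}(\widetilde{E}_{\Delta})$ connected so that ranks are preserved under the equivalence, and lets us bypass the delicate dualization argument of \cite[Proposition 4.20]{zabradi_kedlaya_carter} that the paper explicitly promises to circumvent.
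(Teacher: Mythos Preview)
Your proposal does not address the stated proposition at all. The statement under consideration is the foundational fact that the functor $(A,A^+)\mapsto \spa{A}{A^+}$ is fully faithful from the opposite category of perfectoid Tate pairs to perfectoid spaces; this is a standard structural result about adic spaces (it amounts to recovering $(A,A^+)$ as the global sections of $\mathcal{O}$ and $\mathcal{O}^+$ on $\spa{A}{A^+}$), and the paper simply records it without proof as part of its background exposition on diamonds.

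What you have written is instead a sketch of the proof of Th\'eor\`eme~\ref{premier_écrit_theo_coeur} (equivalently Th\'eor\`eme~\ref{morph_comparaison_coeur}), the comparison isomorphism $\widetilde{E}_{\Delta}^{\sep}\otimes_{\mathbb{F}_r}\widetilde{\mathbb{V}}_{\Delta}(D)\xrightarrow{\sim}\widetilde{E}_{\Delta}^{\sep}\otimes_{\widetilde{E}_{\Delta}} D$. Your plan for \emph{that} theorem is broadly in line with the paper's strategy (construct $S_D$, pass to the diamond $Z_D$ over $Y_{\widetilde{E}}$, apply Drinfeld's lemma, use integrality of $\widetilde{E}_\Delta$ to avoid dualization), though the paper executes the final step differently: rather than directly reading off a basis from a splitting of $A_D$ over $\widetilde{E}_\Delta^{\sep}$, it first reduces via Proposition~\ref{coeur_preuve_perf} to the case where $\mathrm{V}_{\widetilde{E}}(D)$ is trivial, then in Th\'eor\`eme~\ref{morph_comparaison_coeur} achieves this trivialization by passing to a finite extension $\widetilde{F}|\widetilde{E}$ and using the coinduction identity $\widetilde{F}_{\Delta,q}\cong\coindu{\Phi_{\Delta,q',r}^{\gp}}{\Phi_{\Delta,q,r}^{\gp}}{\widetilde{F}_\Delta}$. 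In any case, none of this bears on the proposition you were asked to prove.
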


		\begin{rem}
			Pour toute paire de Tate perfectoïde $(K,K^+)$ telle que $K$ est un corps, l'ensemble des éléments bornés $\mathcal{O}_K$ vérifie que $(K,\mathcal{O}_K)$ est une paire de Tate perfectoïde. L'espace $\spa{K}{\mathcal{O}_K}$ n'a qu'un seul point donné par la classe d'équivalence des valuations $\pi$-adiques\footnote{C'est une valuation de rang $1$ non discrète définie par $|x|_{\pi}=\lim_n p^{-\sfrac{\min\{k \in \mathbb{Z} \, |\, x^n\in \pi^k A^+ \}}{n}}$.} pour toute pseudo-uniformisante $\pi$ et $\mathcal{O}_K$ est l'anneau de valuation associé. L'espace $\spa{K}{K^+}$ peut avoir plus de points, mais $\spa{K}{\mathcal{O}_K}$ y est dense.
		\end{rem}

		\begin{defi}[voir Définitions 6.2 et 7.8 dans \cite{scholze_diamonds}]
			Soit $f\,: \, Y\rightarrow X$ un morphisme d'espaces perfectoïdes.
			
			1) Le morphisme $f$ est dit \textit{fini étale} si pour tout ouvert affinoïde $\spa{R}{R^+}=U\subset X$, son image réciproque est affinoïde disons $f^{-1}(U)=\spa{A}{A^+}$ et le morphisme $R\rightarrow A$ est fini étale, identifiant $A^+$ à la clôture intégrale de $R^+$ dans $A$. Il suffit de le vérifier sur un recouvrement ouvert du but (voir \cite[\S 7]{scholze_perf}).
			
			2) Le morphisme $f$ est dit \textit{étale} s'il se factorise localement sur $Y$ comme la composée d'une immersion ouverte et d'un morphisme fini étale.
			
			3) Le morphisme $f$ est \textit{pro-étale} s'il existe un recouvrement de $Y$ en ouverts affinoïdes $V\!=\spa{A}{A^+}\! \subset Y$, tels qu'il existe $f(V)\!\subset U\!=\spa{R}{R^+}$ affinoïde, un petit système cofiltré d'affinoïdes $\big(\spa{A_i}{A_i^+}\big)_I$ et un morphismes de diagrammes formé de morphismes étales $\big(\spa{A_i}{A_i^+}\big)_I\rightarrow \spa{R}{R^+}$ tel que $f_{|V}$ s'identifie à la limite de ce diagramme.
		\end{defi}
		
		\begin{ex}\label{fet_corps}
			Soit $F$ un corps perfectoïde. Les espaces perfectoïdes finis étales sur $\spa{F}{\mathcal{O}_F}$ sont isomorphes aux $$\bigsqcup_{i\in I} \,\spa{F_i}{\mathcal{O}_{F_i}}$$ où $I$ est fini et où les $F_i|F$ sont des extensions finies séparables.
			
			En particulier si $F$ est algébriquement clos, les seuls espaces finis étales sur $\spa{F}{\mathcal{O}_F}$ sont des unions de copies de ce point.
		\end{ex}
		
		\begin{defi}[voir Définition 8.1 dans \cite{scholze_diamonds}]
			Nous définissons la sous-catégorie pleine $\mathrm{Perf}$ des espaces perfectoïdes dont les objets sont les espaces perfectoïdes de caractéristique $p$. Nous la munissons canoniquement de la toplogie pro-étale : les recouvrement de $X$ sont les familles $(f_i \, : \, X_i \rightarrow X)_I$ de morphismes pro-étales tels que pour tout ouvert quasi-compact $U$ de $X$, il existe un sous-ensemble fini $J\subset I$ et des ouverts quasi-compacts $V_j\subset X_j$ vérifiant que $U\subseteq \cup_J\, f_j(V_j)$. La catégorie $\mathrm{Perf}$ munie de cette topologie sera appelée le site pro-étale $\mathrm{Perf}$.
		\end{defi}
		
		Grâce à \cite[Coro. 8.6]{scholze_diamonds}, il se trouve que la topologie sur le site pro-étale $\mathrm{Perf}$ est sous-canonique, au sens où pour tout espace perfectoïde $X$ de caractéristique $p$, le préfaisceau $Z\mapsto \mathrm{Hom}_{\mathrm{Perf}}(Z,X)$ est un faisceau sur le site pro-étale $\mathrm{Perf}$ (voir \cite[\href{https://stacks.math.columbia.edu/tag/00WP}{Tag 00WP}]{stacks-project} and \cite[\href{https://stacks.math.columbia.edu/tag/00WQ}{Tag 00WQ}]{stacks-project}). Nous n'opérons aucune différence de notation entre l'espace perfectoïde et le faisceau associé, d'autant plus que cette identification est pleinement fidèle. Les propriétés générales des topos, en particulier lorsqu'une topologie sous-jacente est sous-canonique, et plus généralement la notion de morphismes représentables de faisceaux peuvent se trouver dans \cite[\href{https://stacks.math.columbia.edu/tag/00UZ}{Tag 00UZ}]{stacks-project} et \cite[\href{https://stacks.math.columbia.edu/tag/0021}{Tag 0021}]{stacks-project}.

		\begin{defi}
			Un \textit{diamant} est un faisceau $X$ sur le site pro-étale $\mathrm{Perf}$ de la forme $\sfrac{X_0}{R}$. Ici $X_0$ est un espace perfectoïde de caractéristique $p$ et $R$ est une relation d'équivalence\footnote{Une relation d'équivalence sur un faisceau $X_0$ est sous-faisceau $R$ de $X_0\times X_0$ qui induit une relation d'équivalence au niveau des sections sur chaque objet de $\mathrm{Perf}$.} sur $X_0$ telle que $R$ est un faisceau représentable et que les compositions $R\rightarrow X_0\times X_0 \rightrightarrows X_0$ sont des morphismes pro-étales d'espaces perfectoïdes.
			
			Les especes perfectoïdes de caractéristiques $p$, vus comme faisceau représentables, forment une sous-catégorie pleine de la catégorie des diamants.
			
			Tout espace perfectoïde de caractéristique $p$ étant muni d'une action du Frobenius absolu, c'est encore le cas de tout diamant. Tout morphisme de diamant est Frobenius-équivariant.
		\end{defi}
		
		\begin{defi}
			Pour tout corps discret $k$ de caractéristique $p$, nous notons abusivement $\mathrm{Spd}(k)$ le faisceau sur le site pro-étale $\mathrm{Perf}$ qui à un affinoïde $\spa{A}{A^+}$ envoie l'ensemble des morphisme d'anneaux $k\rightarrow A$. Ce n'est pas un diamant.
						
			Pour notre puissance $q$ de $p$ et pour tout diamant $X$ sur $\mathrm{Spd}(\mathbb{F}_q)$, le $q$-Frobenius est un automorphisme de diamant sur $\mathrm{Spd}(\mathbb{F}_q)$.
		\end{defi}

		\begin{prop}[Similaire à la Proposition 11.4 dans \cite{scholze_diamonds}]\label{prod_diamants}
		 Soient $X,Y$ deux diamants sur $\mathrm{Spd}(\mathbb{F}_q)$. Le faisceau pro-étale $X\times_{\mathrm{Spd}(\mathbb{F}_q)} Y$ est un diamant.
		\end{prop}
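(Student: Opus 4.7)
Mon plan s'inspire directement de la stratégie de \cite[Proposition 11.4]{scholze_diamonds} et se décompose en trois étapes : réduction aux espaces perfectoïdes, construction du produit d'espaces perfectoïdes sur $\mathrm{Spd}(\mathbb{F}_q)$, puis contrôle de la relation d'équivalence.

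D'abord, j'écris $X=X_0/R_X$ et $Y=Y_0/R_Y$ où $X_0,Y_0$ sont des espaces perfectoïdes de caractéristique $p$ et $R_X\subseteq X_0\times X_0$, $R_Y\subseteq Y_0\times Y_0$ sont des relations d'équivalence représentables par des espaces perfectoïdes avec des projections pro-étales. L'identité universelle des colimites dans un topos entraîne que $X\times_{\mathrm{Spd}(\mathbb{F}_q)} Y$ est canoniquement le quotient faisceautique de $X_0\times_{\mathrm{Spd}(\mathbb{F}_q)} Y_0$ par la relation d'équivalence produit, où le produit est pris au sens des faisceaux. Il suffit donc de montrer d'une part que $Z:=X_0\times_{\mathrm{Spd}(\mathbb{F}_q)} Y_0$ est représentable par un espace perfectoïde de caractéristique $p$, et d'autre part que la relation d'équivalence induite est représentable par un espace perfectoïde pro-étale au-dessus de $Z\times Z$.

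Le cœur géométrique de l'argument est la première assertion. Localement, on se ramène à deux affinoïdes perfectoïdes $\mathrm{Spa}(A,A^+)$ et $\mathrm{Spa}(B,B^+)$ de caractéristique $p$, et je dois exhiber une structure de paire de Tate perfectoïde sur le produit tensoriel complété $(A\widehat{\otimes}_{\mathbb{F}_q} B, A^+\widehat{\otimes}_{\mathbb{F}_q} B^+)$ pour une topologie appropriée (voisinages engendrés par $\varpi_A\otimes 1$ et $1\otimes \varpi_B$ où les $\varpi$ sont des pseudo-uniformisantes). La clé est que, en caractéristique $p$, les anneaux perfectoïdes sont précisément les anneaux de Tate uniformes parfaits, et le produit tensoriel (non complété) de $\mathbb{F}_q$-algèbres parfaites reste parfait puisque $\mathbb{F}_q$ est parfait ; la complétion préserve le fait d'être parfait car le Frobenius y est un homéomorphisme. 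L'universalité du produit tensoriel complété dans la catégorie des paires de Tate garantit alors que $\mathrm{Spa}(A\widehat{\otimes}_{\mathbb{F}_q} B, (A^+\widehat{\otimes}_{\mathbb{F}_q} B^+)^{\mathrm{int}})$ représente $\mathrm{Spa}(A,A^+)\times_{\mathrm{Spd}(\mathbb{F}_q)}\mathrm{Spa}(B,B^+)$ au niveau des foncteurs de points sur $\mathrm{Perf}$. Le recollement local assure ensuite que $Z$ existe globalement comme espace perfectoïde.

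La deuxième assertion se ramène à une stabilité par changement de base : les projections de $R_X\times_{\mathrm{Spd}(\mathbb{F}_q)} R_Y$ sur $Z\times Z$ s'obtiennent comme produit fibré de morphismes pro-étales au-dessus de $\mathrm{Spd}(\mathbb{F}_q)$, et la classe des morphismes pro-étales d'espaces perfectoïdes est stable par composition et changement de base (\cite[Proposition 6.4]{scholze_diamonds}) ; en utilisant à nouveau que le produit d'espaces perfectoïdes sur $\mathrm{Spd}(\mathbb{F}_q)$ existe, on obtient la représentabilité et la pro-étalité voulues. Le principal obstacle est donc bien l'étape intermédiaire : vérifier que la complétion du produit tensoriel de deux algèbres perfectoïdes de caractéristique $p$ au-dessus de $\mathbb{F}_q$ discret fournit encore une algèbre perfectoïde, et que cette construction est compatible aux recouvrements rationnels permettant le recollement.
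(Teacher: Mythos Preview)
The paper does not give a self-contained proof of this proposition; it simply refers to \cite[Proposition 11.4]{scholze_diamonds}. Your overall architecture---present the product as the quotient of $X_0\times_{\mathrm{Spd}(\mathbb F_q)} Y_0$ by the product pro-\'etale equivalence relation---is the correct one and matches Scholze's approach.

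There is, however, a genuine gap in your construction of $X_0\times_{\mathrm{Spd}(\mathbb{F}_q)} Y_0$ for affinoid perfectoids. You assert that the completed tensor product $(A\widehat\otimes_{\mathbb F_q} B,\, A^+\widehat\otimes_{\mathbb F_q} B^+)$, with the topology ``generated by $\varpi_A\otimes 1$ and $1\otimes\varpi_B$'', is a Tate perfectoid pair representing the product. This fails on two counts. First, with the $(\varpi_A,\varpi_B)$-adic topology on $A^+\widehat\otimes B^+$, inverting $\varpi_A\varpi_B$ does not yield a Huber ring: multiplication by $(\varpi_A\varpi_B)^{-1}$ is not continuous, since $\varpi_A^{n}$ is never in $\varpi_B\cdot(A^+\widehat\otimes B^+)$ regardless of $n$. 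The paper makes exactly this point later, in Remarque~\ref{choix_topo_completion}. Second, if instead you put the $\varpi_A\varpi_B$-adic topology on $A^+\widehat\otimes B^+$, you do obtain a perfectoid Huber pair, but its adic spectrum is strictly larger than the fibre product: it contains valuations where $|\varpi_A|$ is not cofinal for $0$. The genuine fibre product is only the open subspace where every $\varpi_\alpha$ is topologically nilpotent---a rising union of rational opens, not itself affinoid. This is precisely the content of Lemme~\ref{X_et_Y}, where $\prod_\alpha\spd{\widetilde E_\alpha}{\widetilde E_\alpha^+}$ is identified with the open $Y_{\widetilde E}=\bigcup_m U_m\subsetneq \spa{\widetilde E_\Delta}{\widetilde E_\Delta^+}$.

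The fix is standard once you see it: form the auxiliary affinoid using the product pseudo-uniformizer $\varpi_A\varpi_B$, then pass to the open locus where each factor's pseudo-uniformizer is individually topologically nilpotent. That locus is covered by rational affinoid perfectoids and carries the correct universal property. With this correction in place, the rest of your argument (stability of pro-\'etale morphisms under base change and composition for the equivalence relation) goes through.
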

		
		Pour une introduction plus complète que les définitions qui suivront, nous renvoyons les lecteurs et lectrices à \cite[\S 11]{scholze_diamonds}.
		
		\begin{defi}[voir Définitions 10.1 et 10.7 dans \cite{scholze_diamonds}]
			Un morphisme de diamants $X'\rightarrow X$ est appelé \textit{fini étale} (resp. \textit{immersion ouverte}) lorsque, pour tout espace perfectoïde $X$ et tout morphisme de faisceaux $Y\rightarrow X$, le produit fibré $Y\times_X X'$ est représentable et le morphisme d'espaces perfectoïdes $Y\times_X X' \rightarrow Y$ est fini étale (resp. est une immersion ouverte).
			
			Dans le cas d'une immersion ouverte, nous disons par abus de langage que $X'$ est un ouvert de $X$.
		\end{defi}
		
		\begin{prop}[voir Proposition 10.4 dans \cite{scholze_diamonds}]
			Soient $f\, : \, Y \rightarrow Y_1$, $g\, : \, Y_1\rightarrow Y_2$ et $h\, : \, X\rightarrow Y$ des morphismes de diamants.
			
			1) Si $f$ et $g$ sont finis étales alors $g\circ f$ aussi.
			
			2) Si $g\circ f$ et $g$ sont finis étales, alors $f$ aussi.
			
			3) Si $f$ est fini étale, alors le morphisme $Y \times_{Y_1} X\rightarrow X$ est fini étale.
		\end{prop}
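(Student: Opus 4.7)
Le plan est de déduire les trois assertions de la stabilité correspondante pour les morphismes finis étales d'espaces perfectoïdes, en exploitant la définition universelle par changement de base. Rappelons qu'un morphisme de diamants $f\, :\, Y\rightarrow Y_1$ est fini étale précisément lorsque, pour tout espace perfectoïde $Z$ muni d'un morphisme $Z\rightarrow Y_1$, le produit fibré $Z\times_{Y_1} Y$ est représentable par un espace perfectoïde et $Z\times_{Y_1} Y \rightarrow Z$ est fini étale. L'outil fondamental sera donc la stabilité des morphismes finis étales d'espaces perfectoïdes par composition, par changement de base, et la \emph{cancellation} (un morphisme entre deux espaces perfectoïdes finis étales sur une même base est lui-même fini étale), résultats classiques puisés dans \cite{scholze_perf} et dont la description concrète est illustrée par l'Exemple \ref{fet_corps}.

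Le point 3) sera immédiat : pour un espace perfectoïde $Z$ muni d'un morphisme $Z\rightarrow X$, la composée $Z\rightarrow X\rightarrow Y\rightarrow Y_1$ et l'hypothèse sur $f$ donnent que $Z\times_{Y_1} Y\rightarrow Z$ est un morphisme fini étale d'espaces perfectoïdes, et l'identité de faisceaux $Z\times_X (Y\times_{Y_1} X)\cong Z\times_{Y_1} Y$ permettra de conclure.

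Le point 1) s'obtiendra en deux changements de base successifs. Pour $Z\rightarrow Y_2$ avec $Z$ perfectoïde, je formerai d'abord $Z_1:=Z\times_{Y_2} Y_1$, qui sera représentable perfectoïde et fini étale sur $Z$ puisque $g$ l'est, puis $Z_1\times_{Y_1} Y$, qui sera à son tour représentable perfectoïde et fini étale sur $Z_1$ puisque $f$ l'est. L'associativité des produits fibrés identifie ce dernier à $Z\times_{Y_2} Y$, et la composition des morphismes finis étales d'espaces perfectoïdes concluera.

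Le point 2) sera la partie la plus délicate. Partant de $Z\rightarrow Y_1$ avec $Z$ perfectoïde, je considérerai la composée $Z\rightarrow Y_2$ : les hypothèses donneront que $Z_1:=Z\times_{Y_2} Y_1\rightarrow Z$ et $Z\times_{Y_2} Y\rightarrow Z$ sont deux morphismes finis étales d'espaces perfectoïdes. Le morphisme $Z\rightarrow Y_1$, combiné à l'identité de $Z$, fournira une section $s\, :\, Z\rightarrow Z_1$, et la projection $Z\times_{Y_2} Y\rightarrow Z_1$ sera alors un morphisme d'espaces perfectoïdes entre deux objets finis étales au-dessus de $Z$, donc fini étale par cancellation. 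Le changement de base par $s$ identifiera $Z\times_{Y_1} Y$ à $Z\times_{Z_1}(Z\times_{Y_2} Y)$, donc représentable perfectoïde et fini étale sur $Z$. La principale difficulté sera précisément cette étape de cancellation, qu'il faudra justifier soigneusement à partir de l'étude explicite des revêtements finis étales d'espaces perfectoïdes ; une fois ce fait acquis, le reste de la démonstration ne demande que de la manipulation formelle de produits fibrés.
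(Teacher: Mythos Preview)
Ta démonstration est correcte et suit l'approche standard : réduction au cas des espaces perfectoïdes via la définition universelle, puis utilisation de la stabilité des morphismes finis étales d'espaces perfectoïdes par composition, changement de base et cancellation. L'article lui-même ne donne aucune preuve et se contente de renvoyer à \cite[Proposition 10.4]{scholze_diamonds} ; ton argument est précisément celui qu'on y trouve, développé en détail.
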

		
		\begin{defiprop}
			\begin{enumerate}[itemsep=0mm]
			\item Un \textit{point géométrique} d'un diamant $X$ est un morphisme de faisceaux sur $\mathrm{Perf}$ $$\overline{x}\,:\, \spa{K}{\mathcal{O}_K} \rightarrow X$$ où $K$ est un corps perfectoïde algébriquement clos.
			
			\item Un point d'un diamant $X$ est un morphisme de faisceaux sur $\mathrm{Perf}$ $$x\, : \, \spa{K}{K^+} \rightarrow X$$ où $K$ est un corps perfectoïde et $K^+$ est un sous-anneau ouvert et borné. Deux points $\overline{x}_1$ et $\overline{x}_2$ sont dits équivalents s'il existe un corps perfectoïde $K$ et un diagramme commutatif
			
			\begin{center}
				\begin{tikzcd}
					& \spa{K_1}{\mathcal{O}_{K_1}} \ar[dr,"x_1"]  &\\
					\spa{K}{\mathcal{O}_K} \ar[ur] \ar[dr] & & X \\
					& \spa{K_2}{\mathcal{O}_{K_2}} \ar[ur,"x_2"']
				\end{tikzcd}
			\end{center} où les flèches de gauche sont surjectives\footnote{Sans cette condition, tout point serait dans la classe d'équivalence d'un point géométrique.}

			\item Soit $X$ un diamant. L'ensemble des points à isomorphisme près est noté $|X|$. Si $X$ est un espace perfectoïde, l'ensemble $|X|$ s'identifie à l'espace topologique sous-jacent et le munir de cette topologie. Si $X=\sfrac{X_0}{R}$ est un écriture comme quotient d'un espace perfectoïde, la topologie quotient sur $|X|$ induite par $|X_0|$ ne dépend pas de l'écriture. Nous appelons espace sous-jacent au diamant, encore noté $X$, l'ensemble $|X|$ muni de cette topologie. 
			
			\item Les ouverts de $X$ correspondent exactement aux ouverts de l'espace topologique $|X|$.
			
			\item Un diamant $X$ est dit connexe s'il n'admet pas d'écriture comme union disjointe de deux ouverts non vides. Cela équivaut à ce que $|X|$ soit connexe.
			
			\end{enumerate}
		\end{defiprop}

		\begin{defi}
			Soit $X$ un diamant. La catégorie $\fet{X}$ a pour objets les morphismes de diamants finis étales $Y\rightarrow X$ et pour morphismes les morphismes de $X$-diamants (qui sont automatiquement finis étales).
			
			Soit $\overline{x}$ un point géométrique de $X$. Nous définissons le foncteur fibre $$\mathrm{Fib}_{\overline{x}} \, : \, \fet{X} \rightarrow \mathrm{Ens}, \,\,\, [Y\rightarrow X] \mapsto \left| Y\times_X \overline{x}\right|.$$
		\end{defi}
		
		\begin{theo}\label{fet_diamant_galoisienne}
			Soit $X$ un diamant connexe et $\overline{x}$ un point géométrique. Le couple $(\fet{X},\mathrm{Fib}_{\overline{x}})$ est une catégorie galoisienne dont on notera $\piun{X}{x}$ le groupe fondamental. Les diamants $\fet{X}$-connexes sont les diamants connexes.
		\end{theo}
		
	\begin{ex}\label{piun_corps}
		Soit $F$ un corps perfectoïde. Soit $\overline{x}$ un point géométrique de $\spd{F}{\mathcal{O}_F}$ donné par le choix d'une clôture algébrique $F^{\mathrm{alg}}$ et le morphisme $$\overline{x}\, : \, \spd{\widehat{F^{\mathrm{alg}}}}{\mathcal{O}_{\widehat{F^{\mathrm{alg}}}}} \rightarrow \spd{F}{\mathcal{O}_F}.$$  
		
		L'exemple \ref{fet_corps} décrit les diamants $X$ dans $\fet{\spd{F}{\mathcal{O}_F}}$. Le foncteur $\mathrm{Fib}_{\overline{x}}$ est isomorphe au foncteur qui envoie $\spd{F_i}{\mathcal{O}_{F_i}}$, où $F_i|F$ est finie, sur l'ensemble de plongements de $F_i$ dans $\widehat{F^{\mathrm{alg}}}$. L'action du groupe de Galois $\cg_F$ par post-composition sur ces plongements fournit des endotransformations du foncteur fibre. Elle identifie $\piun{\spd{F}{\mathcal{O}_F}}{x}$ à $\cg_F$.
	\end{ex}
		
		Nous souhaitons ajouter l'action d'un groupe localement profini.
		
		\begin{defi}
			Soit $\Gamma$ un groupe localement profini.
			
			\begin{enumerate}[itemsep=0mm]
				\item Nous définissons $\underline{\Gamma}$ le faisceau en groupes sur $\mathrm{Perf}$ donné par $\underline{\Gamma}(X)=\mathscr{C}(|X|,\Gamma)$.
				
				\item Une \textit{action de $\Gamma$ sur un diamant $X$} est un morphisme de faisceau sur $\mathrm{Perf}$ $$\underline{\Gamma}\times X\rightarrow X$$ qui fait commuter les diagrammes d'actions de groupes.
				
				\item Un diamant $X$ avec action de $\Gamma$ est dit $\Gamma$-connexe s'il n'est pas l'union de deux ouverts non vides stables par $\Gamma$.
			\end{enumerate}
		\end{defi}
		
		\begin{rem}
			Une telle action de $\Gamma$ sur $X$ induit une action continue de $\Gamma$ sur $|X|$. Les ouverts $\Gamma$-stables de $X$ correspondent bijectivement aux ouverts $\Gamma$-stables de $|X|$.
		\end{rem}
		
		\begin{defi}
			Soit $\Gamma$ un groupe localement profini et $X$ un diamant muni d'une action de $\Gamma$. La catégorie $\fet{X|\Gamma}$ a pour objets les morphismes finis étales $\Gamma$-équivariants $Y\rightarrow X$ où $Y$ est un diamant avec action de $\Gamma$. Les morphismes sont les morphismes de $X$-diamants $\Gamma$-équivariants.
			
			Soit $\overline{x}$ un point géométrique de $X$, nous définissons le foncteur fibre $$\mathrm{Fib}_{\overline{x}} \, : \, \fet{X|\Gamma} \rightarrow \mathrm{Ens}, \,\,\, [Y\rightarrow X] \mapsto \left| Y\times_X \overline{x}\right|.$$
		\end{defi}
		
		\begin{theo}\label{existence_piun}
			Soit $X$ un diamant muni d'une action de $\Gamma$ qui le rend $\Gamma$-connexe. On suppose également que l'action est libre, i.e. que $\forall Z$, l'action de $\Gamma \subset \mathscr{C}(|Z|,\Gamma)$ sur $X(Z)$ est libre. Alors, le couple $(\fet{X|\Gamma},\mathrm{Fib}_{\overline{x}})$ est une catégorie galoisienne dont on note $\piun{X|\Gamma}{x}$ le groupe fondamental.
			
			Les objets $\fet{X|\Gamma}$-connexes sont les diamants $\Gamma$-connexes.
		\end{theo}
		\begin{proof}
			Le théorème se déduit du Théorème \ref{fet_diamant_galoisienne} en construisant le diamant $\sfrac{X}{\underline{\Gamma}}$ et en prouvant que le couple $(\fet{X|\Gamma},\mathrm{Fib}_{\overline{x}})$ est équivalent à la catégorie galoisienne $(\fet{\sfrac{X}{\underline{\Gamma}}},\mathrm{Fib}_{\overline{x}})$.
		\end{proof}

	\begin{prop}\label{changement_base_piun}
		Soit $\Gamma_1 \rightarrow \Gamma_2$ un morphisme continu de groupes localement profinis. Soit $Y$ un diamant avec action de $\Gamma_1$ et $X$ un diamant avec action de $\Gamma_2$, qui le munit en particulier $X$ d'une action de $\Gamma_1$. Supposons que les deux diamants vérifient les conditions du théorème \ref{existence_piun}. Soit $Y\rightarrow X$ un morphisme de diamants \linebreak$\Gamma_1$-équivariant. Soit $\overline{y}$ un point géométrique de $Y$. Le foncteur suivant est correctement défini. 
		
		\begin{center}
			\begin{tikzcd}
				\fet{X|\Gamma_2} \ar[r] & \fet{Y|\Gamma_1} \\
				Z \ar[r,maps to] & Z\times_X Y \\
				f\ar[r, maps to] & f\times_X \mathrm{Id}_Y
			\end{tikzcd}
		\end{center}
		
		Il existe un morphisme de groupes profinis
		
		$$\piun{Y|\Gamma_1}{y} \rightarrow \piun{X|\Gamma_2}{y}$$ qui rend le diagramme suivant commutatif
		
		\begin{center}
			\begin{tikzcd}
				\fet{X|\Gamma_2} \ar[r,"\sim"',"\mathrm{Fib}_{\overline{y}}"] \ar[d] & \piun{X|\Gamma_2}{y}\text{-}\mathrm{EnsFinis} \ar[d]  \\
				\fet{Y|\Gamma_1} \ar[r,"\sim","\mathrm{Fib}_{\overline{y}}"'] & \piun{Y|\Gamma_1}{y}\text{-}\mathrm{EnsFinis}    \end{tikzcd}
		\end{center} où le foncteur de droite est obtenu grâce au morphisme entre groupes fondamentaux ci-dessus.
	\end{prop}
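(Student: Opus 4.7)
Le plan est de procéder en deux temps : d'abord établir la bonne définition du foncteur de changement de base, puis exploiter la caractérisation des foncteurs fibre par leur groupe d'automorphismes pour en déduire le morphisme entre groupes fondamentaux et la commutativité du diagramme.

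Pour la bonne définition, soit $(Z\rightarrow X)$ un objet de $\fet{X|\Gamma_2}$. Il faut vérifier trois points pour $Z\times_X Y$. Premièrement, c'est un diamant, ce qui découle de la stabilité des diamants par produit fibré (qu'on peut obtenir à partir de la Proposition \ref{prod_diamants} en réalisant $Z\times_X Y$ comme égalisateur d'un sous-diamant du produit $Z\times_{\spd{\mathbb{F}_p}{\cdot}} Y$). Deuxièmement, le morphisme $Z\times_X Y \rightarrow Y$ est fini étale, par le point 3) de la stabilité des morphismes finis étales. Troisièmement, $Z\times_X Y$ hérite d'une action diagonale de $\Gamma_1$ : le groupe $\Gamma_1$ agit sur $Z$ via $\Gamma_1\rightarrow \Gamma_2$ (et cette action est $\Gamma_1$-équivariante vers $X$ par restriction de la $\Gamma_2$-équivariance de $Z\rightarrow X$) et sur $Y$ directement ($Y\rightarrow X$ étant $\Gamma_1$-équivariant par hypothèse) ; la propriété universelle du produit fibré définit donc une unique action $\Gamma_1$-équivariante sur $Z\times_X Y$ compatible avec la projection vers $Y$. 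La fonctorialité sur les morphismes découle quant à elle directement de la propriété universelle du produit fibré.

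Pour le morphisme entre groupes fondamentaux, on utilise l'observation centrale suivante : le point géométrique $\overline{y}$ de $Y$ se compose avec $Y\rightarrow X$ pour donner un point géométrique de $X$, que l'on notera abusivement $\overline{y}$ également. La propriété universelle du produit fibré fournit alors, pour tout $(Z\rightarrow X)\in \fet{X|\Gamma_2}$, un isomorphisme canonique $(Z\times_X Y) \times_Y \overline{y} \cong Z\times_X \overline{y}$, naturel en $Z$. Il s'en déduit un isomorphisme naturel entre les deux foncteurs $\fet{X|\Gamma_2} \rightarrow \mathrm{EnsFinis}$ donnés par $\mathrm{Fib}_{\overline{y}}$ à la source $X$ d'une part, et la composée du changement de base suivi de $\mathrm{Fib}_{\overline{y}}$ à la source $Y$ d'autre part. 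Un tel isomorphisme de foncteurs, appliqué aux automorphismes, induit un morphisme de groupes profinis $\piun{Y|\Gamma_1}{y} \rightarrow \piun{X|\Gamma_2}{y}$, et la commutativité du diagramme d'équivalences est alors immédiate par construction.

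Je n'anticipe pas d'obstacle majeur : la difficulté potentielle est principalement vérificative et réside dans la cohérence entre les différents actions et morphismes (à savoir que la $\Gamma_1$-équivariance du morphisme composé $Z\rightarrow X$ et la $\Gamma_1$-équivariance de $Y\rightarrow X$ induisent bien, par propriété universelle, une action $\Gamma_1$-équivariante sur $Z\times_X Y$). Une subtilité notationnelle à gérer avec soin est l'identification entre $\overline{y}$ en tant que point géométrique de $Y$ et sa projection en tant que point géométrique de $X$, mais elle ne présente pas de difficulté conceptuelle réelle.
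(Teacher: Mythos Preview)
Your proposal is correct and follows essentially the same approach as the paper: the key identification $(Z\times_X Y)\times_Y \overline{y}\cong Z\times_X \overline{y}$ natural in $Z$ is precisely what the paper uses to transport an automorphism of $\mathrm{Fib}_{\overline{y}}$ on $\fet{Y|\Gamma_1}$ to one on $\fet{X|\Gamma_2}$. The only difference is that you spell out the well-definedness of the base change functor in more detail, whereas the paper dismisses it with ``ne pose aucun problème''.
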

	\begin{proof}
		La définition du foncteur ne pose aucun problème. Pour ce qui est du morphisme entre groupes, soit $T$ un automorphisme de $\mathrm{Fib}_{\overline{y}} \, : \,\fet{Y|\Gamma_1} \rightarrow \mathrm{EnsFinis}$. Pour tout objet $Z$ de $\fet{X|\Gamma_2}$, l'ensemble $Z\times_X \overline{y}$ peut s'identifie à $\big(Z \times_X Y\big) \times_Y \overline{y}$ sur lequel agit la bijection $T(Z\times_X Y)$. Cette famille de bijections étant naturelle en $Z$, elle fournit un automorphisme du foncteur $\fet{X|\Gamma_2} \rightarrow \mathrm{EnsFinis}$, soit un élément de $\piun{X|\Gamma_2}{y}$. Nous vérifions qu'il s'agit d'un morphisme de groupes topologiques et qu'il fait commuter le diagramme.
	\end{proof}

	Après cette introduction assez générale, nous allons nous replacer dans un contexte plus propice à notre équivalence. Ressaisissons-nous de notre puissance $q$ du premier $p$ et l'ensemble fini $\Delta$.
	
	\begin{prop}\label{constr_action_phi_prod}
		Soient $(X_{\alpha})_{\alpha\in \Delta}$ des diamants sur $\mathrm{Spd}(\mathbb{F}_q)$. Le diamant donné par $$\prod_{\alpha\in \Delta, \, \mathrm{Spd}(\mathbb{F}_q)} X_{\alpha} $$ possède une unique action du groupe $\Phi_{\Delta,q}^{\mathrm{gp}}$ telle que l'action de $\varphi_{\beta,q}$ est un morphisme de $\prod_{\alpha \neq \beta} X_{\alpha}$-diamants $\varphi_{\Delta,q}$ est celle du $q$-Frobenius.
	\end{prop}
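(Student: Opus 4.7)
La preuve est essentiellement formelle et repose sur la propriété universelle du produit fibré de diamants (Proposition \ref{prod_diamants}) et sur la fonctorialité du Frobenius. Je définirais chaque $\varphi_{\beta,q}$ comme l'unique endomorphisme du produit fibré dont les projections vérifient $p_{\alpha}\circ \varphi_{\beta,q} = p_{\alpha}$ pour $\alpha \neq \beta$ et $p_{\beta}\circ \varphi_{\beta,q} = \mathrm{Frob}_{q,X_{\beta}}\circ p_{\beta}$. Ce candidat est effectivement un morphisme de $\mathrm{Spd}(\mathbb{F}_q)$-diamants car le $q$-Frobenius agit trivialement sur $\mathrm{Spd}(\mathbb{F}_q)$ (puisque les éléments de $\mathbb{F}_q$ sont fixés par la puissance $q$-ième), de sorte que le morphisme structurel $X_{\beta}\rightarrow \mathrm{Spd}(\mathbb{F}_q)$ est automatiquement équivariant pour le Frobenius. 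Par construction, $\varphi_{\beta,q}$ est un morphisme de $\prod_{\alpha\neq \beta} X_{\alpha}$-diamants ; comme le $q$-Frobenius est un automorphisme de tout diamant de caractéristique $p$ (le fait que les anneaux perfectoïdes en caractéristique $p$ sont parfaits se propage aux diamants), chaque $\varphi_{\beta,q}$ est un automorphisme.

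Pour obtenir une action du groupe $\Phi_{\Delta,q}^{\gp}$, il reste à établir la commutation des $\varphi_{\beta,q}$ entre eux. Pour $\beta \neq \gamma$, les compositions $\varphi_{\beta,q}\circ \varphi_{\gamma,q}$ et $\varphi_{\gamma,q}\circ \varphi_{\beta,q}$ ont mêmes projections sur chaque facteur $X_{\delta}$ : pour $\delta \notin \{\beta,\gamma\}$ on obtient $p_{\delta}$ dans les deux cas ; pour $\delta\in \{\beta,\gamma\}$ on obtient $\mathrm{Frob}_{q,X_{\delta}}\circ p_{\delta}$ dans les deux cas. La propriété universelle donne l'égalité voulue. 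L'identification de $\varphi_{\Delta,q}$ au $q$-Frobenius du produit est également immédiate : la fonctorialité du Frobenius entraîne que $p_{\gamma}\circ \mathrm{Frob}_{q}$ coïncide avec $\mathrm{Frob}_{q,X_{\gamma}}\circ p_{\gamma}$ pour chaque $\gamma$, et cette dernière est par définition égale à $p_{\gamma}\circ \varphi_{\Delta,q}$.

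Pour l'unicité, supposons donnée une autre famille $(\psi_{\beta,q})$ satisfaisant les deux conditions. La première condition impose $p_{\alpha}\circ \psi_{\beta,q}=p_{\alpha}$ pour $\alpha\neq \beta$. En projetant la relation $\prod_{\beta} \psi_{\beta,q}=\mathrm{Frob}_q$ sur le facteur $X_{\gamma}$ et en éliminant à l'aide de la première condition les projections $p_{\gamma}\circ \psi_{\beta,q}$ pour $\beta \neq \gamma$, on obtient $p_{\gamma}\circ \psi_{\gamma,q}=\mathrm{Frob}_{q,X_{\gamma}}\circ p_{\gamma}$. Toutes les projections de $\psi_{\gamma,q}$ étant ainsi déterminées, la propriété universelle conclut $\psi_{\gamma,q}=\varphi_{\gamma,q}$. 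Cette preuve ne présente aucun obstacle réel : toute la difficulté conceptuelle est enfouie dans la Proposition \ref{prod_diamants} qui assure l'existence du produit fibré comme diamant.
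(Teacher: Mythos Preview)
Ta démonstration est correcte et complète. L'article énonce cette proposition sans preuve, la considérant comme une conséquence formelle immédiate de la propriété universelle du produit fibré et de la fonctorialité du Frobenius ; ton argument déroule exactement ces détails de routine, y compris l'unicité où tu utilises implicitement la commutativité des $\psi_{\beta,q}$ (garantie par l'hypothèse que l'on a une action du groupe abélien $\Phi_{\Delta,q}^{\gp}$) pour placer $\psi_{\gamma,q}$ en dernière position avant de simplifier.
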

	
	\begin{defi}
		Soit $X$ un diamant sur $\mathrm{Spd}(\mathbb{F}_q)$ muni d'une action de $\Phi_{\Delta,q}^{\gp}$ telle que l'action de $\varphi_{\Delta,q}$ est le $q$-Frobenius. Nous appelons $\fet{X \, ||\, \Phi_{\Delta,q}}$ la sous-catégorie pleine de $\fet{X|\Phi_{\Delta,q}^{\gp}}$ dont les objets sont les diamants tels que l'action de $\varphi_{\Delta,q}$ est le $q$-Frobenius.
	\end{defi}

	\begin{theo}[Lemme de Drinfeld]\label{lemme_drinfeld}
		Soient $(X_{\alpha})_{\alpha\in \Delta}$ des diamants connexes et localement spatiaux sur $\mathrm{Spd}(\mathbb{F}_q)$ pour lesquels l'action de $\varphi_q^{\Z}$ est libre. Soit également $\overline{x}$ un point géométrique du diamant $\prod_{\alpha\in \Delta, \, \mathrm{Spd}(\mathbb{F}_q)} X_{\alpha}$. La paire $$\left(\fet{\prod_{\alpha\in \Delta, \, \mathrm{Spd}(\mathbb{F}_q)} X_{\alpha} \, \bigg|\bigg|\,\Phi_{\Delta,q}},\mathrm{Fib}_{\overline{x}}\right)$$ est une catégorie galoisienne.
			
		De plus, la famille de morphismes $\varphi_{\alpha,q}$-équivariants $$ \prod_{\alpha\in \Delta, \, \mathrm{Spd}(\mathbb{F}_q)} X_{\alpha} \longrightarrow X_{\alpha}$$ construit un morphisme $$\piun{\prod_{\alpha\in \Delta, \, \mathrm{Spd}(\mathbb{F}_q)} X_{\alpha} \, \bigg|\bigg|\, \Phi_{\Delta,q}}{x} \longrightarrow \prod_{\alpha \in \Delta} \piun{X_{\alpha}}{x}$$ qui est un isomorphisme de groupes profinis.
		\end{theo}
		\begin{proof}
		Voir la thèse de l'auteur, l'hypothèse de liberté est probablement superflue.
		\end{proof}

		\begin{prop}\label{morph_grp_fond_drinfeld}
			Soit $\widetilde{E}$ un corps perfectoïde de caractéristique $p$, extension de $\mathbb{F}_q$. Soit $\widetilde{F}|\widetilde{E}$ une extension finie, extension de $\mathbb{F}_{q'}$ pour $\mathbb{F}_{q'}|\mathbb{F}_q$. Soit $\Delta$ un ensemble fini. Il existe un foncteur canonique $$\fet{\prod_{\alpha \in \Delta,\,\mathrm{Spd}(\mathbb{F}_{q'})} \spd{\widetilde{F}_{\alpha}}{\widetilde{F}_{\alpha}^+} \,\bigg|\bigg|\, \Phi_{\Delta,q'}} \rightarrow \fet{\prod_{\alpha \in \Delta,\,\mathrm{Spd}(\mathbb{F}_{q})} \spd{\widetilde{E}_{\alpha}}{\widetilde{E}_{\alpha}^+} \,\bigg|\bigg|\, \Phi_{\Delta,q}}.$$
			 Soit $\overline{x} \rightarrow \prod_{\alpha \in \Delta, \,\mathrm{Spd}(\mathbb{F}_{q'})} \spd{\widetilde{F}_{\alpha}}{\widetilde{F}_{\alpha}^+}$ un point géométrique. Le morphisme $\cg_{\widetilde{F},\Delta}\rightarrow \cg_{\widetilde{E},\Delta}$ déduit de $$ \piun{\prod_{\alpha \in \Delta,\mathrm{Spd}(\mathbb{F}_{q'})} \spd{\widetilde{F}_{\alpha}}{\widetilde{F}_{\alpha}^+} \,\bigg|\bigg|\, \Phi_{\Delta,q'}}{x} \rightarrow \piun{\prod_{\alpha \in \Delta,\mathrm{Spd}(\mathbb{F}_{q})} \spd{\widetilde{E}_{\alpha}}{\widetilde{E}_{\alpha}^+} \,\bigg|\bigg|\, \Phi_{\Delta,q}}{x}$$ grâce au lemme de Drinfeld correspond à l'inclusion.
		\end{prop}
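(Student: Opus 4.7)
Le plan est d'appliquer la Proposition \ref{changement_base_piun} avec $\Gamma_1 := \Phi_{\Delta,q'}^{\gp}$, $\Gamma_2 := \Phi_{\Delta,q}^{\gp}$ et l'inclusion naturelle $\Gamma_1 \hookrightarrow \Gamma_2$ donnée par $\varphi_{\alpha,q'} \mapsto \varphi_{\alpha,q}^{[\mathbb{F}_{q'}:\mathbb{F}_q]}$. Le morphisme de diamants entre le produit sur $\mathrm{Spd}(\mathbb{F}_{q'})$ et celui sur $\mathrm{Spd}(\mathbb{F}_q)$ s'obtient en composant le changement de base le long de $\mathrm{Spd}(\mathbb{F}_{q'}) \to \mathrm{Spd}(\mathbb{F}_q)$ avec les morphismes finis étales $\spd{\widetilde{F}_{\alpha}}{\widetilde{F}_{\alpha}^+} \to \spd{\widetilde{E}_{\alpha}}{\widetilde{E}_{\alpha}^+}$ sur chaque facteur. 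Ce morphisme est visiblement $\Gamma_1$-équivariant puisque chaque $\varphi_{\alpha,q'}$ agit comme $q'$-Frobenius sur le $\alpha$-ième facteur des deux côtés. La Proposition \ref{changement_base_piun} fournit ainsi canoniquement à la fois le foncteur attendu et le morphisme de groupes fondamentaux ; la condition diagonale définissant les sous-catégories pleines $\fet{-\, ||\, \Phi_{\Delta,-}}$ est préservée puisque $\varphi_{\Delta,q'} = \varphi_{\Delta,q}^{[\mathbb{F}_{q'}:\mathbb{F}_q]}$.

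Pour identifier le morphisme induit avec l'inclusion $\cg_{\widetilde{F},\Delta} \hookrightarrow \cg_{\widetilde{E},\Delta}$, je me ramènerais au cas $|\Delta|=1$ facteur par facteur. Pour chaque $\beta \in \Delta$, la projection sur le $\beta$-ième facteur est un morphisme équivariant qui, via une nouvelle application de la Proposition \ref{changement_base_piun} (appliquée aux inclusions de sous-monoïdes de $\Phi_{\Delta,q}^{\gp}$ engendrés par $\varphi_{\beta,q}$), s'insère dans un diagramme commutatif
\begin{center}
\begin{tikzcd}
\cg_{\widetilde{F},\Delta} \ar[r] \ar[d] & \cg_{\widetilde{F}_{\beta}} \ar[d] \\
\cg_{\widetilde{E},\Delta} \ar[r] & \cg_{\widetilde{E}_{\beta}}
\end{tikzcd}
\end{center}
où, par le Théorème \ref{lemme_drinfeld}, les flèches horizontales correspondent aux projections produits. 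Par le caractère produit des isomorphismes de Drinfeld, il suffit alors de vérifier que chaque flèche verticale de droite est l'inclusion. Cela résulte directement de l'Exemple \ref{piun_corps} : les foncteurs fibres s'identifient aux ensembles de plongements des extensions finies dans la clôture algébrique complétée fixée au point géométrique, et la flèche $\spd{\widetilde{F}_{\beta}}{\widetilde{F}_{\beta}^+} \to \spd{\widetilde{E}_{\beta}}{\widetilde{E}_{\beta}^+}$ correspond à la restriction des plongements, donnant exactement $\cg_{\widetilde{F}_{\beta}} \hookrightarrow \cg_{\widetilde{E}_{\beta}}$.

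L'obstacle principal n'est pas d'ordre conceptuel mais de rigueur diagrammatique : il faut fixer les points géométriques de manière cohérente. Je partirais d'un point $\overline{x}$ sur le produit « source » du côté $\widetilde{F}$ et j'en déduirais, par les morphismes de diamants considérés, des points géométriques compatibles sur le produit côté $\widetilde{E}$ et sur chaque facteur $\spd{\widetilde{F}_{\beta}}{\widetilde{F}_{\beta}^+}$, $\spd{\widetilde{E}_{\beta}}{\widetilde{E}_{\beta}^+}$. Avec ce système de points cohérent, les fonctorialités fournies par la Proposition \ref{changement_base_piun} et par la preuve du Théorème \ref{lemme_drinfeld} s'enchaînent sans ambigüité pour garantir la commutativité du diagramme ci-dessus et achever l'identification.
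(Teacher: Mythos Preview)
Your proposal is correct and follows essentially the same route as the paper's proof: construct the $\Phi_{\Delta,q'}^{\gp}$-equivariant morphism between the two products, apply Proposition~\ref{changement_base_piun} to get the functor and the map on fundamental groups, then project to each factor $\beta$ to obtain the commutative square whose horizontal arrows are the product projections (by the Drinfeld lemma) and whose right vertical arrow is the Galois inclusion $\cg_{\widetilde{F}_{\beta}} \hookrightarrow \cg_{\widetilde{E}_{\beta}}$ (by Example~\ref{piun_corps}). Your additional remarks on the coherent choice of geometric points and on preservation of the diagonal-Frobenius condition are useful clarifications but do not depart from the paper's argument.
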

		\begin{proof}
			Par propriétés universelles, nous construisons un morphisme $\Phi_{\Delta,q'}^{\gp}$-équivariant $$ \prod_{\alpha \in \Delta,\mathrm{Spd}(\mathbb{F}_{q'})} \spd{\widetilde{F}_{\alpha}}{\widetilde{F}_{\alpha}^+} \longrightarrow \prod_{\alpha \in \Delta,\mathrm{Spd}(\mathbb{F}_{q})} \spd{\widetilde{E}_{\alpha}}{\widetilde{E}_{\alpha}^+}.$$ Le tiré en arrière par ce morphisme fournit le foncteur annoncé entre catégories galoisiennes.
			
			Pour tout $\beta\in \Delta$, la construction de notre morphisme canonique fournit un diagramme commutatif :
			\begin{center}
				\begin{tikzcd}
					\prod_{\alpha \in \Delta,\mathrm{Spd}(\mathbb{F}_{q'})} \spd{\widetilde{F}_{\alpha}}{\widetilde{F}_{\alpha}^+} \ar[r] \ar[d] & \spd{\widetilde{F}_{\beta}}{\widetilde{F}_{\beta}^+} \ar[d] \\
					\prod_{\alpha \in \Delta,\mathrm{Spd}(\mathbb{F}_{q})} \spd{\widetilde{E}_{\alpha}}{\widetilde{E}_{\alpha}^+} \ar[r] &  \spd{\widetilde{E}_{\beta}}{\widetilde{E}_{\beta}^+}
				\end{tikzcd}
			\end{center} Dans le diagramme obtenu en passant aux groupes fondamentaux, l'énoncé précis du Théorème \ref{lemme_drinfeld} affirme que les morphismes horizontaux s'identifient aux projections $$\cg_{\widetilde{F},\Delta}\rightarrow \cg_{\widetilde{F}_{\beta}} \,\,\, \,\mathrm{et}\,\,\,\, \cg_{\widetilde{E},\Delta}\rightarrow \cg_{\widetilde{E}_{\beta}}.$$ Or, en se rappelant comment on identifie $\cg_{\widetilde{E}_{\beta}}$ aux automorphismes du foncteurs fibres sur $\fet{\spd{\widetilde{E}_{\beta}}{\widetilde{E}_{\beta}^+}}$, les morphismes verticaux de droite correspondent aux inclusions $\cg_{\widetilde{F}_{\beta}}\subset \cg_{\widetilde{E}_{\beta}}$ ce qui conclut.
		\end{proof}

		\begin{lemma}\label{faisceau_constants}
			Soit $\Gamma$ un groupe localement profini, $X$ un diamant muni d'une action libre de $\underline{\Gamma}$ qui le rend \linebreak$\underline{\Gamma}$-connexe et $\overline{x}$ un point géométrique de $X$. Notons $\mathbf{T}$ l'équivalence $\fet{X|\underline{\Gamma}} \cong \piun{X|\underline{\Gamma}}{x}\text{-}\mathrm{Ens}$.
			
			\begin{enumerate}[itemsep=0mm]
				\item Il y a un isomorphisme naturel entre foncteurs de $\piun{X|\underline{\Gamma}}{x}\text{-}\mathrm{Ens}$ dans $\mathrm{Ens}$ $$H^0_{\Gamma} \xRightarrow{\sim} \mathrm{Orb} \circ \mathbf{T}$$ où $H^0_{\Gamma}$ est le foncteur des composantes $\Gamma$-connexes et $\mathrm{Orb}$ est le foncteur des orbites.
				
				\item Soit $k$ un corps fini et $V$ un $k$-espace vectoriel. Soit $Z$ un objet en $k$-espaces vectoriels de $\fet{X|\underline{\Gamma}}$ qui représente le \textit{faisceau constant} associé à $V$ $$\fet{X|\underline{\Gamma}} \rightarrow k\text{-}\mathrm{EspacesVect}, \,\,\, Y \mapsto V^{H^0_{\Gamma}(Y)}.$$ Alors $\mathbf{T}(Z)$ isomorphe au $k$-espace vectoriel $V$ avec action triviale de $\piun{X|\underline{\Gamma}}{x}$.
			\end{enumerate}
		\end{lemma}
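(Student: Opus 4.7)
Mon plan consiste à exploiter systématiquement l'équivalence de catégories $\mathbf{T}$ et la structure de catégorie galoisienne fournie par le Théorème \ref{existence_piun}. Notons pour alléger $G:=\piun{X|\underline{\Gamma}}{x}$.

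Pour le premier point, je commencerais par rappeler que, par le Théorème \ref{existence_piun}, tout objet $Y$ de $\fet{X|\underline{\Gamma}}$ se décompose de manière essentiellement unique en une union disjointe finie de ses composantes $\Gamma$-connexes, $Y = \bigsqcup_i Y_i$. Ce même théorème identifie les objets $\fet{X|\underline{\Gamma}}$-connexes aux diamants $\Gamma$-connexes ; par l'équivalence $\mathbf{T}$, ceux-ci correspondent donc aux $G$-ensembles transitifs, c'est-à-dire formés d'une unique orbite. Puisque $\mathbf{T}$, en tant qu'équivalence de catégories galoisiennes, préserve les coproduits finis, l'écriture $\mathbf{T}(Y) = \bigsqcup_i \mathbf{T}(Y_i)$ est précisément la décomposition de $\mathbf{T}(Y)$ en $G$-orbites. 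On obtient ainsi la bijection annoncée $H^0_{\Gamma}(Y) \simeq \mathrm{Orb}(\mathbf{T}(Y))$, dont la naturalité en $Y$ découle immédiatement de la fonctorialité des deux membres.

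Pour le second point, je procéderais via le lemme de Yoneda dans $G\text{-}\mathrm{EnsFinis}$. L'hypothèse de représentabilité donne, pour tout $Y\in \fet{X|\underline{\Gamma}}$,
$$\mathrm{Hom}_{\fet{X|\underline{\Gamma}}}(Y, Z) = V^{H^0_{\Gamma}(Y)},$$
identification $k$-linéaire et naturelle en $Y$. L'équivalence $\mathbf{T}$ réécrit le membre de gauche comme $\mathrm{Hom}_G(\mathbf{T}(Y), \mathbf{T}(Z))$, et le premier point permet de remplacer $V^{H^0_{\Gamma}(Y)}$ par $V^{\mathrm{Orb}(\mathbf{T}(Y))}$. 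D'autre part, pour tout $G$-ensemble fini $S$, un calcul immédiat donne $\mathrm{Hom}_G(S, V) = V^{\mathrm{Orb}(S)}$ lorsque $V$ est muni de l'action triviale (les morphismes $G$-équivariants à valeurs dans un objet trivial sont exactement les applications constantes sur chaque orbite). Les foncteurs $\mathrm{Hom}_G(-, \mathbf{T}(Z))$ et $\mathrm{Hom}_G(-, V_{\mathrm{triv}})$ étant naturellement isomorphes sur $G\text{-}\mathrm{EnsFinis}$, le lemme de Yoneda fournit un isomorphisme $\mathbf{T}(Z) \simeq V$ de $G$-ensembles, l'action étant triviale. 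La structure de $k$-espace vectoriel se transporte automatiquement par fonctorialité et naturalité.

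L'étape la plus subtile à détailler serait la correspondance entre objets $\fet{X|\underline{\Gamma}}$-connexes et $G$-ensembles transitifs : elle est encapsulée dans la théorie générale des catégories galoisiennes et a déjà été invoquée dans le Théorème \ref{existence_piun}, mais son utilisation explicite mérite d'être rappelée. Une fois ce point acquis, le reste n'est qu'un assemblage formel de Yoneda et d'équivalences déjà établies.
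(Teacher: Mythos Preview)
Your proposal is correct and follows essentially the same approach as the paper's proof: the first point is handled via the general theory of Galois categories (decomposition into connected objects, which correspond to transitive $G$-sets under $\mathbf{T}$), and the second point is a Yoneda argument identifying the functors represented by $\mathbf{T}(Z)$ and by $V$ with trivial $G$-action. Your write-up is in fact more explicit than the paper's, which simply invokes ``la théorie générale des catégories galoisiennes'' for the first point.
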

		\begin{proof}
			\begin{enumerate}[itemsep=0mm]
				\item Conséquence de la théorie générale des catégories galoisiennes et de la description des objets connexes dans $\fet{X|\underline{\Gamma}}$.
				
				\item Le $k$-espace vectoriel $V$ avec action triviale de $\piun{X|\underline{\Gamma}}{x}$ représente le foncteur $$\piun{X|\underline{\Gamma}}{x} \text{-}\mathrm{Ens} \rightarrow k\text{-}\mathrm{EspacesVect}, \,\,\, A \mapsto \mathrm{Hom}_{\piun{X|\underline{\Gamma}}{x} \text{-}\mathrm{Ens}}\left(A, V\right)=V^{\mathrm{Orb}(A)}.$$ Or, $\mathbf{T}(Z)$ représente par hypothèse le foncteur $$\piun{X|\underline{\Gamma}}{x} \text{-}\mathrm{Ens} \rightarrow k\text{-}\mathrm{EspacesVect}, \,\,\, A \mapsto \mathrm{Hom}_{\fet{X|\underline{\Gamma}}}\left(\mathbf{T}^{-1}(A), Z\right)=V^{H^0_{\Gamma}\circ \mathbf{T}^{-1}(A)}.$$ D'après le premier point, les deux foncteurs sont isomorphes ce qui conclut.
			\end{enumerate}
		\end{proof}

	\vspace{0.25 cm}
	
	Nous sommes prêts à construire le foncteur $\mathrm{V}_{\widetilde{E}}$ qui nous permettra de mieux comprendre appréhender l'isomorphisme de comparaison.
	
		\begin{prop}
		Soit $D$ un objet de $\detaleproj{\Phi_{\Delta,q,r}}{\widetilde{E}_{\Delta}}$, le foncteur $$\mathfrak{I}_D \, : \,\widetilde{E}_{\Delta}\text{-}\mathrm{Alg} \rightarrow \mathrm{Ens}, \,\,\, T \mapsto \big( T\otimes_{\widetilde{E}_{\Delta}} D\big)^{\varphi_{\Delta,r}=\mathrm{Id}},$$ où $\varphi_{\Delta,r}$ agit sur $T$ comme de $r$-Frobenius, est représentable par une $\widetilde{E}_{\Delta}$-algèbre finie étale que l'on nomme $S_D$.
	\end{prop}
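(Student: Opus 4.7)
L'approche consiste à représenter explicitement le foncteur $\mathfrak{I}_D$ par l'algèbre quotient d'un anneau de polynômes sur $\widetilde{E}_{\Delta}$ par les équations d'invariance, puis à vérifier qu'elle est finie étale en exploitant le fait que $r$ est une puissance de $p$ et que $\widetilde{E}_{\Delta}$ est de caractéristique $p$. Comme $D$ est fini projectif sur $\widetilde{E}_{\Delta}$, on se ramène d'abord par recollement Zariski au cas où $D$ est libre de rang $n$. Fixons alors une base $(e_i)_{1\leq i \leq n}$ de $D$ et écrivons $\varphi_{\Delta,r}(e_j)=\sum_i A_{ij} e_i$ avec $A=(A_{ij})\in \gl{n}{\widetilde{E}_{\Delta}}$, l'inversibilité étant garantie par l'étaléité. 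Pour une $\widetilde{E}_{\Delta}$-algèbre $T$ et $s=\sum_i t_i\otimes e_i$, on a $\varphi_{\Delta,r}(s)=\sum_i \left(\sum_j A_{ij} t_j^r\right) \otimes e_i$, de sorte que la condition $\varphi_{\Delta,r}(s)=s$ équivaut au système $t_i = \sum_j A_{ij} t_j^r$ pour tout $i$. Mon candidat naturel est donc
\[S_D := \widetilde{E}_{\Delta}[T_1,\ldots, T_n] / \langle T_i - \textstyle\sum_j A_{ij} T_j^r \,:\, i = 1, \ldots, n \rangle.\]

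La vérification de l'étaléité procède par le critère jacobien. La matrice jacobienne du système en les variables $T_i$ s'écrit $\mathrm{Id}_n - r\cdot A \cdot \mathrm{diag}(T_1^{r-1}, \ldots, T_n^{r-1})$. Comme $r$ est une puissance de $p$ et que l'on travaille en caractéristique $p$, l'entier $r$ est nul dans $\widetilde{E}_{\Delta}$ et la jacobienne se réduit à $\mathrm{Id}_n$, inversible partout. Pour la finitude comme module, l'inversibilité de $A$ permet de réécrire les relations sous la forme $T_j^r = \sum_i (A^{-1})_{ji} T_i$ dans $S_D$. Une récurrence sur le degré total montre alors que tout monôme $T_1^{m_1}\cdots T_n^{m_n}$ se ramène à une combinaison $\widetilde{E}_{\Delta}$-linéaire de monômes à exposants dans $\llbracket 0, r-1\rrbracket$ : dès qu'un $m_j\geq r$, on substitue $T_j^r$ par sa valeur linéaire, ce qui fait strictement décroître le degré total car $r\geq 2$. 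Ainsi, $S_D$ est engendré comme $\widetilde{E}_{\Delta}$-module par les $r^n$ monômes à exposants $< r$, ce qui, combiné à l'étaléité, donne le caractère fini étale du morphisme $\widetilde{E}_{\Delta}\rightarrow S_D$.

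L'étape la plus délicate me semble être le recollement pour $D$ seulement fini projectif : il faut s'assurer que les algèbres $S_D^U$ construites au-dessus d'un recouvrement Zariski trivialisant $D$ se recollent canoniquement en un objet global, et que les propriétés de finitude et d'étaléité passent au recollé. Le premier point découle du caractère intrinsèque du foncteur $\mathfrak{I}_D$ (qui force l'unicité à isomorphisme canonique près, via le lemme de Yoneda appliqué aux anneaux locaux), et le second du caractère Zariski-local de \enquote{finie} et \enquote{étale}. Il sera sans doute aussi utile, dans la suite, d'observer que les actions résiduelles de $\Phi_{\Delta,q,r}^{\gp}/\langle \varphi_{\Delta,r}\rangle$ et de $\cg_{\widetilde{E},\Delta}$ se transportent canoniquement à $S_D$ par fonctorialité, mais ces structures supplémentaires dépassent le cadre strict de l'énoncé à démontrer.
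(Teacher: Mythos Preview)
Your proof is correct and follows essentially the same approach as the paper: reduce to the free case by Zariski gluing, write the invariance condition in coordinates as a system of polynomial equations, and verify the finite étale property via the Jacobian criterion in characteristic $p$. The only cosmetic difference is that the paper first inverts the matrix to present the relations as $T_j^r=\sum_i b_{i,j}T_i$ (so the Jacobian equals $-B$ everywhere and finiteness is immediate from the form of the relations), whereas you keep the relations as $T_i=\sum_j A_{ij}T_j^r$ (so the Jacobian is $\mathrm{Id}_n$ and you invert $A$ afterwards to deduce finiteness); the two presentations define the same quotient algebra.
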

	\begin{proof}
		\underline{Étape 1 :} soit $R$ une $\mathbb{F}_r$-algèbre et $\varphi_r$ le $r$-Frobenius. Soit $D$ un objet de $\detale{\varphi_r^{\mathbb{N}}}{R}$ qui est libre de rang fini comme $R$-module. Alors le foncteur $$R\text{-}\mathrm{Alg} \rightarrow \mathrm{Ens}, \,\,\, T \mapsto \big(T\otimes_R D\big)^{\varphi_r=\mathrm{Id}}$$ est représentable par une $R$-algèbre finie étale. Fixons $\mathcal{B}=(e_i)_{1\leq i\leq d}$ une base de $D$ et $A$ la matrice de $\varphi_{r,D}$ dans la base $\mathcal{B}$, autrement dit $$\forall (x_i)\in R^d,\,\, \varphi_{r,D}\left(\sum_{1\leq i \leq d} x_i e_i\right)=\sum_{1\leq i,j\leq d} x_i^r a_{i,j} e_j.$$ Pour toute $R$-algèbre $T$, le produit tensoriel $T\otimes_R D$ est un $T$-module libre de base $\mathcal{B}$ et $\sum_i t_i e_i \in \left(T\otimes_R D\right)^{\varphi_r=\mathrm{Id}}$ si et seulement si
		$$\sum_j t_j e_j  = \sum_j \left(\sum_i t_i^r a_{i,j}\right) e_j.$$ Ceci équivaut à $$\forall j, \,\,\, t_j=\sum_i t_i^r a_{i,j}.$$ Comme $D$ est étale, la matrice $A$ est inversible d'inverse $B$ et on obtient $$\sum_j t_j e_j \in\left( T\otimes_R D\right)^{\varphi_r=\mathrm{Id}} \,\, \Leftrightarrow \,\, \forall j, \,\,\, t_j^r=\sum_i t_i b_{i,j}.$$ Le foncteur est donc représenté par la $R$-algèbre $\sfrac{R[T_i]}{(T_j^r-\sum_i b_{i,j} T_i)_{1\leq j \leq d}}$. La jacobienne de la famille de polynômes définissant le quotient vaut $B$ en tout point d'annulation ce qui prouve que la $R$-algèbre est finie étale. Par Yoneda, cette $R$-algèbre ne dépend pas de $\mathcal{B}$ à isomorphisme près.

		\underline{Étape 2 :} en général, puisque $D$ est fini projectif, le Lemme \cite[\href{https://stacks.math.columbia.edu/tag/00NX}{Tag 00NX}]{stacks-project} affirme qu'il est localement libre de rang fini. Sur tout ouvert affine $\mathrm{V}(z)$ tel que $D[z^{-1}]$ est libre, nous construisons $S_z$ avec la première étape. Yoneda fournit une immersion ouverte $f_{y,z} \, : \, \mathrm{Spec}(S_y) \rightarrow \mathrm{Spec}(S_z)$ pour toute inclusion $\mathrm{V}(y) \subseteq \mathrm{V}(z)$. D'après \cite[\S 5.2.2.2]{poly_ducros}, on peut recoller les $\mathrm{Spec}(S_z)$ le long des $f_{y,z}$. On vérifie que le schéma sur $\mathrm{Spec}(\widetilde{E}_{\Delta})$ obtenu est le spectre de l'algèbre finie étale escomptée.
	\end{proof}	
	
	Nous démontrons au Corollaire \ref{cestperfectoid} que la paire $(\widetilde{E}_{\Delta},\widetilde{E}_{\Delta}^+)$ est une paire de Huber perfectoïde. Comme l'anneau $S_D$ est une $\widetilde{E}_{\Delta}$-algèbre finie étale, nous pouvons le voir comme un diamant muni d'un morphisme fini étale $$\spd{S_D}{S_D^+}\rightarrow \spd{\widetilde{E}_{\Delta}}{\widetilde{E}_{\Delta}^+}.$$
	
	\begin{prop}\label{propstructurefetphiq}
		Le diamant $\spd{S_D}{S_D^+}$ est naturellement un objet de $\fet{\spd{\widetilde{E}_{\Delta}}{\widetilde{E}_{\Delta}^+}\, ||\,\Phi_{\Delta,q}}$.
	\end{prop}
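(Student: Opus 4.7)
Le plan consiste à exploiter la représentabilité de $\mathfrak{I}_D$ pour transporter fonctoriellement l'action de $\Phi_{\Delta,q}^{\gp}$ depuis $D$ vers $S_D$, puis à traduire cette action au niveau du diamant. Premièrement, pour chaque $\alpha\in \Delta$, l'application $\varphi_{\alpha,q,D}\, :\, D\rightarrow D$ est $\varphi_{\alpha,q}$-semi-linéaire et étale (ces propriétés sont contenues dans l'appartenance de $D$ à $\detaleproj{\Phi_{\Delta,q,r}^{\gp}}{\widetilde{E}_{\Delta}}$), donc sa linéarisation $D \otimes_{\widetilde{E}_{\Delta},\varphi_{\alpha,q}}\widetilde{E}_{\Delta} \xrightarrow{\sim} D$ est un isomorphisme. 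Puisque $\varphi_{\alpha,q,D}$ commute à $\varphi_{\Delta,r,D}$, l'isomorphisme obtenu après changement de base à une $\widetilde{E}_{\Delta}$-algèbre $T$ respecte les $\varphi_{\Delta,r}$-invariants, fournissant une bijection naturelle $\mathfrak{I}_D(T_{\varphi_{\alpha,q}})\xrightarrow{\sim} \mathfrak{I}_D(T)$, où $T_{\varphi_{\alpha,q}}$ désigne $T$ munie de la structure précomposée par $\varphi_{\alpha,q}$. Par Yoneda appliqué à l'inverse, cela détermine un endomorphisme d'anneaux $\varphi_{\alpha,q,S_D}\, :\, S_D\rightarrow S_D$ qui est $\varphi_{\alpha,q}$-semi-linéaire, et qui est de surcroît bijectif.

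Deuxièmement, la naturalité transmet la commutativité des $\varphi_{\alpha,q,D}$ sur $D$ à celle des $\varphi_{\alpha,q,S_D}$ sur $S_D$, fournissant une action de groupe de $\Phi_{\Delta,q}^{\gp}$ (grâce à la bijectivité) sur $S_D$. Il reste à vérifier que $\prod_{\alpha} \varphi_{\alpha,q,S_D}$ est le $q$-Frobenius. Le même procédé appliqué à $\varphi_{\Delta,r,D}$ construit une action $\varphi_{\Delta,r,S_D}$ sur $S_D$, et l'inspection de la description locale $S_D=\widetilde{E}_{\Delta}[T_i]/(T_j^r-\sum_i b_{i,j} T_i)$ donnée à la proposition précédente montre que l'invariance de l'élément universel $\sum_i T_i\otimes e_i$ sous $\varphi_{\Delta,r}$ impose $\varphi_{\Delta,r,S_D}(T_i)=T_i^r$, identifiant $\varphi_{\Delta,r,S_D}$ au $r$-Frobenius. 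Puisque l'identité $\prod_{\alpha}\varphi_{\alpha,q,D}=\varphi_{\Delta,r,D}^{b/a}$ (où $r^{b/a}=q$) est vérifiée sur $D$ et se transporte par naturalité, on obtient $\prod_{\alpha}\varphi_{\alpha,q,S_D}=\varphi_{\Delta,r,S_D}^{b/a}$, ce qui est la $(b/a)$-ième itérée du $r$-Frobenius, c'est-à-dire le $q$-Frobenius.

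Enfin, chaque $\varphi_{\alpha,q,S_D}$ préserve $S_D^+$ (clôture intégrale fonctoriellement stable) et induit un automorphisme de la paire de Huber $(S_D,S_D^+)$, puis par fonctorialité de $\mathrm{Spd}$ un automorphisme du diamant $\spd{S_D}{S_D^+}$. Ceci définit une action de $\underline{\Phi_{\Delta,q}^{\gp}}$ pour laquelle le morphisme fini étale $\spd{S_D}{S_D^+}\rightarrow \spd{\widetilde{E}_{\Delta}}{\widetilde{E}_{\Delta}^+}$ donné par la proposition précédente est équivariant, et telle que $\varphi_{\Delta,q}$ agit comme le $q$-Frobenius. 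Cela fait de $\spd{S_D}{S_D^+}$ un objet de $\fet{\spd{\widetilde{E}_{\Delta}}{\widetilde{E}_{\Delta}^+}\,||\, \Phi_{\Delta,q}}$. L'obstacle principal attendu est l'étape d'identification de $\varphi_{\Delta,r,S_D}$ avec le $r$-Frobenius, qui nécessite de manipuler soigneusement en parallèle la définition fonctorielle abstraite via Yoneda et la description explicite de $S_D$ sur un recouvrement trivialisant de $\mathrm{Spec}(\widetilde{E}_{\Delta})$.
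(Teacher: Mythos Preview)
Your approach is correct and follows the same overall strategy as the paper: transport the $\Phi_{\Delta,q}$-action from $D$ to $S_D$ via the representability of $\mathfrak{I}_D$ and Yoneda. The difference lies in the bookkeeping. You invoke the étaleness of each $\varphi_{\alpha,q,D}$ at the outset to obtain the $\varphi_{\alpha,q,S_D}$ directly as bijections, and then verify separately that $\varphi_{\Delta,r,S_D}$ coincides with the $r$-Frobenius by unwinding the universal element in the explicit presentation $\widetilde{E}_{\Delta}[T_i]/(T_j^r-\sum_i b_{i,j}T_i)$. The paper instead constructs $\widetilde{E}_{\Delta}$-linear maps $\psi_{\alpha,D}\colon S_D\to \varphi_{\alpha,q}^*S_D$ without using étaleness at all, shows they compose correctly, and then proves in one functorial stroke that the diagonal $\psi_{\Delta,D}$ satisfies $\varphi_q^*\circ\psi_{\Delta,D}=\mathrm{Id}_{S_D}$, simply because every element of $\mathfrak{I}_D(T)$ is $\varphi_{\Delta,r}$-invariant hence $\varphi_{\Delta,q}$-invariant; invertibility of each $\psi_{\alpha,D}$ and the Frobenius compatibility follow simultaneously.

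What each buys: your route makes the invertibility transparent from the start but pays for it with the local computation you flag as the delicate step (and indeed it requires care to match the Yoneda definition with the generators $T_i$). The paper's route avoids any explicit coordinates and never appeals to the étaleness of the partial Frobenii, only to the perfectness of $S_D$; this is slightly more robust and sidesteps exactly the obstacle you anticipate.
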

	\begin{proof}
		Le sous-anneau $S_D^+$ est par définition la clôture intégrale de $\widetilde{E}_{\Delta}^+$. Puisque $\widetilde{E}_{\Delta}^+$ est un sous-$\Phi_{\Delta,q,r}^{\gp}$-\linebreak-anneau de $\widetilde{E}_{\Delta}$, toute structure de $\Phi_{\Delta,q,r}^{\gp}$-anneau sur $S_D$ compatible à la structure de $\widetilde{E}_{\Delta}$-algèbre se restreint-\linebreak-corestreint en une structure sur $S_D^+$ et fournit une action sur le diamant. Pour obtenir un objet de \linebreak$\fet{\spd{\widetilde{E}_{\Delta}}{\widetilde{E}_{\Delta}^+} \, \big|\big|\, \Phi_{\Delta,q}}$, il reste donc à construire une action telle que $\varphi_{\Delta,q}$ coïncide avec le $q$-Frobenius.
		
		Construisons l'action de $\Phi_{\Delta,q}$. Soit $\alpha \in \Delta$ et $T$ une $\widetilde{E}_{\Delta}$-algèbre. Le morphisme $$ T\otimes_{\widetilde{E}_{\Delta}} D \rightarrow \varphi_{\alpha,q}^*T\otimes_{\widetilde{E}_{\Delta}} D, \,\,\, t\otimes d\mapsto (1\otimes t)\otimes \varphi_{\alpha,q,D}(d)$$ est $\varphi_{\Delta,r}$-équivariant et naturel en $T$, ce qui fournit une application naturelle en $T$ $$\big(T\otimes_{\widetilde{E}_{\Delta}} D\big)^{\varphi_{\Delta,r}=\mathrm{Id}}\rightarrow \big(\varphi_{\alpha,q}^*T \otimes_{\widetilde{E}_{\Delta}} D\big)^{\varphi_{\Delta,r}=\mathrm{Id}}$$ qui se réinteprète en une transformation naturelle $$\Psi_{\alpha} \, : \, \mathrm{Hom}_{\widetilde{E}_{\Delta}\text{-}\mathrm{Alg}}(S_D,-)\Rightarrow \mathrm{Hom}_{\widetilde{E}_{\Delta}\text{-}\mathrm{Alg}}(S_D,\varphi_{\alpha,q}^*-).$$		
		Définissons $\psi_{\alpha,D}:=\Psi_{\alpha}(S_D)(\mathrm{Id}_{S_D})$. 
		
		Pour n'importe quel $\varphi_{1}\in \Phi_{\Delta}$, nous construisons de même $\Psi_1$ et $\psi_{1,D}\, : \, S_D\rightarrow \varphi_1^* S_D$ à partir de l'action de $\varphi_1$. Commençons par prouver que les $\psi_{-,D}$ se comportent bien vis-à-vis de la composition. Pour tout morphisme de $\widetilde{E}_{\Delta}$-algèbres $g \, : \, S_D \rightarrow T$, le diagramme suivant commute :
		
		\begin{center}
			\begin{tikzcd}
				\mathrm{Hom}_{\widetilde{E}_{\Delta}\text{-}\mathrm{Alg}}(S_D,S_D) \ar[r,"\Psi_{\alpha}(S_D)"] \ar[d,"-\,\circ \,g"] & \mathrm{Hom}_{\widetilde{E}_{\Delta}\text{-}\mathrm{Alg}}(S_D,\varphi_{\alpha,q}^*S_D) \ar[d,"-\,\circ\, \varphi_{\alpha,q}^*g"] \\
				\mathrm{Hom}_{\widetilde{E}_{\Delta}\text{-}\mathrm{Alg}}(S_D,T) \ar[r,"\Psi_{\alpha}(T)"] & \mathrm{Hom}_{\widetilde{E}_{\Delta}\text{-}\mathrm{Alg}}(S_D,\varphi_{\alpha,q}^*T)
			\end{tikzcd}
		\end{center} et en suivant l'image de $\mathrm{Id}_{S_D}$, on trouve que $\Psi_{\alpha}(T)(g)=\varphi_{\alpha,q}^*g \circ \psi_{\alpha,D}$. En particulier, \begin{equation*}\label{eqn:eq1}\varphi_{\alpha,q}^*\psi_{\beta,D} \circ \psi_{\alpha,D}=\Psi_{\alpha}(\varphi_{\beta,q}^* S)(\psi_{\beta,D})=\big[\Psi_{\alpha}(\varphi^*_{\beta,q'}S_D)\circ \Psi_{\beta}(S_D)\big](\mathrm{Id}_{S_D}) \tag{*1}\end{equation*} Appelons $i_T$ l'isomorphisme naturel en $T$ de $(\varphi_{\alpha,q}\varphi_{\beta,q})^*T$ à $\varphi_{\alpha,q}^*(\varphi_{\beta,q}^*T)$. Grâce à un diagramme similaire, il fournit une égalité les foncteurs $(f\mapsto i_{-} \circ f)\circ \Psi_{\alpha\beta}$, où $\Psi_{\alpha\beta}$ est associé à $\varphi_{\alpha,q}\varphi_{\beta,q}$ et $\Psi_{\alpha}(\varphi_{\beta,q}^*-) \circ \Psi_{\beta}$. Appliqué à l'équation (\ref{eqn:eq1}), cela donne $$i_{S_D} \circ \psi_{\alpha\beta,D}=\varphi_{\alpha,q'}^*\psi_{\beta,D} \circ \psi_{\alpha,D}.$$ Nous pouvons généraliser ce résultat à $\varphi_1,\varphi_2\in \Phi_{\Delta,q}$. En particulier, nous avons $$i\circ\psi_{\Delta,D}=\varphi_{\alpha_1,q'}^*\psi_{\alpha_2 \ldots \alpha_{[\Delta|},D} \circ \cdots \circ \psi_{\alpha_{|\Delta|},D}$$ où $i \, :\, \varphi_{\Delta,q}^*S_D \xrightarrow{\sim} \varphi_{\alpha_1,q}^*\cdots \varphi_{\alpha_{|\Delta|},q}^* S_D$ et où $\psi_{\Delta,D}$ est construit à partir du Frobenius relatif $\varphi_{\Delta,q}$. Ainsi, démontrer que les $\psi_{\alpha,D}$ sont inversibles se réduit à démontrer que $\psi_{\Delta,D}$ est inversible.
		
		Soit $\Psi$ la transformation naturelle obtenu à partir du Frobenius relatif. Pour tout $\widetilde{E}_{\Delta}$-algèbre $T$, nous considérons le linéarisé du Frobenius relatif $$\varphi_{q,T}^* \, : \, \varphi_{q}^*T \rightarrow T, \,\,\, x\otimes t \mapsto x t^{q}.$$ Nous obtenons le diagramme commutatif suivant :
		
		\begin{center}
			\begin{tikzcd}
				\big(T\otimes_{\widetilde{E}_{\Delta}}D\big)^{\varphi_{\Delta,r}=\mathrm{Id}} \ar[rrr] \ar[d] & & & \big(\varphi_{q}^*T\otimes_{\widetilde{E}_{\Delta}}D\big)^{\varphi_{\Delta,r}=\mathrm{Id}} \ar[r,"\mathfrak{I}_D(\varphi_{q}^*)"] \ar[d] & \big(T\otimes_{\widetilde{E}_{\Delta}}D\big)^{\varphi_{\Delta,r}=\mathrm{Id}} \ar[d] \\
				\mathrm{Hom}_{\widetilde{E}_{\Delta}\text{-}\mathrm{Alg}} (S_D,T) \ar[rrr,"\Psi(T)=\varphi_{q}^*(-)\circ \psi_{\Delta,D}"] & & & \mathrm{Hom}_{\widetilde{E}_{\Delta}\text{-}\mathrm{Alg}} (S_D,\varphi_{q}^*T) \ar[r,"\varphi_{q}^* \circ \cdot"] & \mathrm{Hom}_{\widetilde{E}_{\Delta}\text{-}\mathrm{Alg}} (S_D,T)
			\end{tikzcd}
		\end{center} Si l'on considère $\sum t_i \otimes d_i$ en haut à gauche, son image en haut à droite est exactement $$\sum t_i^{q}\otimes \varphi_{\Delta,q,D}(d_i)=\varphi_{\Delta,q,T\otimes D} (\sum t_i \otimes d_i)=\sum t_i\otimes d_i.$$ Il en découle que la composée du bas est l'identité. En appliquant à $T=S_D$ et à $\mathrm{Id}_{S_D}$, il vient que \linebreak$\varphi_{q}^*\circ\psi_{\Delta,D} =\mathrm{Id}_{S_D}$, autrement dit que $\psi_{\Delta,D}$ est simplement l'inverse du Frobenius absolu sur $S_D$. 
		
		Nous venons de finir de prouver que les $\psi_{\alpha,D}$ sont inversibles. Le fait qu'ils soient $\mathcal{E}_{\Delta}$-linéaires, les relations qu'ils vérifient et l'expression de $\psi_{\Delta,D}$ prouvent précisément que leurs inverses fournissent une structure d'objet de $\fet{\spd{\widetilde{E}_{\Delta}}{\widetilde{E}_{\Delta}^+}\, ||\,\Phi_{\Delta,q}}$.
	\end{proof}
	
	Notre construction atterrit désormais dans une catégorie davantage adaptée au lemme de Drinfeld. En revanche, nous vivons toujours sur $\spd{\widetilde{E}_{\Delta}}{\widetilde{E}_{\Delta}^+}$ et non sur $X_{\widetilde{E}}=\prod_{\alpha \in \Delta,\,\mathrm{Spd}(\mathbb{F}_{q})} \spd{\widetilde{E}_{\alpha}}{\widetilde{E}_{\alpha}^+}$. Ce sont deux espaces distincts : philosophiquement, certaines valuations autorisent que tous les $\varpi_{\alpha}$ ne sont pas toplogiquement nilpotents sur le premier espace, là où le deuxième les force à être tous topologiquement nilpotents. Nous donnons un énoncé précis.
	
	\begin{lemma}\label{X_et_Y}
		Définissons $U_m:=\{|\cdot|\in \spa{\widetilde{E}_{\Delta}}{\widetilde{E}_{\Delta}^+} \,|\, \forall \alpha,\beta, \, |\varpi_{\alpha}|^m\leq |\varpi_{\beta}|\}$. C'est un ouvert rationnel\footnote{En effet, puisque tous les $\varpi_{\beta}$ sont inversibles dans $\widetilde{F}_{\Delta}$ les conditions $|\varpi_{\beta}|\neq 0$ peuvent être sous-entendues. De plus, les $\varpi_{\alpha}^m$ engendrent un idéal ouvert de $\widetilde{F}_{\Delta}^+$, de type fini.} de $\spa{\widetilde{E}_{\Delta}}{\widetilde{E}_{\Delta}^+}$.  
		
		Appelons $Y_{\widetilde{E}}$ l'espace perfectoïde réunion des ouverts $U_m$ de $\spa{\widetilde{E}_{\Delta}}{\widetilde{E}_{\Delta}^+}$, et le diamant associé. 
		
		Les diamants $Y_{\widetilde{E}}$ et $X_{\widetilde{E}}$ sont canoniquement isomorphes. L'action de $\Phi_{\Delta,q}^{\gp}$ sur $\spa{\widetilde{E}_{\Delta}}{\widetilde{E}_{\Delta}^+}$se restreint-\linebreak-corestreint sur $Y_{\widetilde{E}}$ et correspond à l'action sur $X_{\widetilde{E}}$ construite à la Proposition \ref{constr_action_phi_prod}.
	\end{lemma}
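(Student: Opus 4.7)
Le plan est de confirmer d'abord la rationalité de $U_m$, puis d'identifier $Y_{\widetilde{E}}$ et $X_{\widetilde{E}}$ sur leurs foncteurs de points. Tout d'abord, chaque $U_m$ s'écrit $\bigcap_{\alpha, \beta} U(\varpi_\alpha^m/\varpi_\beta)$ et est donc une intersection finie d'ouverts rationnels : les $\varpi_\beta$ étant inversibles dans $\widetilde{E}_\Delta$, les conditions $|\varpi_\beta| \neq 0$ sont automatiques, et l'idéal engendré par les $\{\varpi_\alpha^m\}_\alpha$ contient $\varpi_\Delta^m = \prod_\alpha \varpi_\alpha^m$, donc est ouvert. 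En particulier, $Y_{\widetilde{E}}$ est un ouvert perfectoïde de l'affinoïde perfectoïde $\spa{\widetilde{E}_\Delta}{\widetilde{E}_\Delta^+}$ (Corollaire \ref{cestperfectoid}), définissant un diamant.

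Je construis ensuite pour tout perfectoïde affinoïde $T = \spa{A}{A^+}$ de caractéristique $p$ au-dessus de $\mathrm{Spd}(\mathbb{F}_q)$ une bijection naturelle entre $X_{\widetilde{E}}(T)$ et $Y_{\widetilde{E}}(T)$. Un $T$-point de $X_{\widetilde{E}}$ est un tuple de morphismes continus de paires de Huber $f_\alpha : (\widetilde{E}_\alpha, \widetilde{E}_\alpha^+) \to (A, A^+)$, envoyant les $\varpi_\alpha$ sur des inversibles topologiquement nilpotents $x_\alpha$ de $A$. Par propriété universelle du produit tensoriel sur $\mathbb{F}_q$, puis complétion $(\underline{\varpi})$-adique (les $x_\alpha$ étant topologiquement nilpotents dans le $\varpi_T$-adiquement complet $A^+$), puis localisation en $\prod x_\alpha \in A^\times$, on assemble ce tuple en un morphisme continu $\widetilde{E}_\Delta \to A$ envoyant $\widetilde{E}_\Delta^+$ dans $A^+$. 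Réciproquement, composer un morphisme continu $(\widetilde{E}_\Delta, \widetilde{E}_\Delta^+) \to (A, A^+)$ avec les inclusions canoniques $\widetilde{E}_\alpha \hookrightarrow \widetilde{E}_\Delta$ produit le tuple cherché.

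L'obstacle principal est la factorisation par un certain $U_m$ dans les deux directions. Dans le sens direct, pour deux inversibles topologiquement nilpotents $x, y \in A$, on écrit $1/y \in \varpi_T^{-N} A^+$ et $x^k \in \varpi_T A^+$ pour certains $k, N$, d'où $x^{kN} \in y A^+$ et l'inégalité ponctuelle $|x|^{kN} \leq |y|$ sur $T$ ; en prenant le maximum sur les paires $(\alpha, \beta)$ en nombre fini, on obtient un $m$ uniforme et la factorisation par $U_m$ pour $T$ quasi-compact. Dans l'autre sens, l'inégalité $|\varpi_\alpha|^{m|\Delta|} \leq |\varpi_\Delta|$ déduite sur $U_m$ des définitions combinée à la nilpotence topologique de $\varpi_\Delta$ dans $A$ entraîne la nilpotence topologique de chaque $\varpi_\alpha$ individuellement. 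Pour un $T$ non quasi-compact, on recolle par un recouvrement affinoïde, les deux constructions étant fonctorielles en $T$.

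Enfin, l'action de $\varphi_{\alpha,q}$ sur $\spa{\widetilde{E}_\Delta}{\widetilde{E}_\Delta^+}$ tire en arrière les valuations par $x\mapsto x^q$ sur le facteur $\alpha$, envoyant donc $|\varpi_\alpha|$ sur $|\varpi_\alpha|^q$ et préservant les autres $|\varpi_\beta|$ ; une vérification directe case par case donne $\varphi_{\alpha,q}(U_m) \subset U_{qm}$, si bien que l'action se restreint-corestreint à $Y_{\widetilde{E}}$. Via la bijection de foncteurs de points construite ci-dessus, ce tiré en arrière correspond exactement à précomposer $f_\alpha$ par le $q$-Frobenius de $\widetilde{E}_\alpha$ en laissant les autres $f_\beta$ inchangés, soit l'action décrite à la Proposition \ref{constr_action_phi_prod}.
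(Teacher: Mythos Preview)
Your proof is correct and follows essentially the same strategy as the paper's: both identify $X_{\widetilde{E}}$ and $Y_{\widetilde{E}}$ via their functors of points on affinoid perfectoids, assembling a tuple $(f_\alpha)$ into a map from $\widetilde{E}_\Delta$ by tensor-complete-localize, and conversely recovering the $f_\alpha$ by composing with the factor inclusions. The only organizational difference is that the paper first characterizes $|Y_{\widetilde{E}}|$ as the locus where every $|\varpi_\alpha|^n \to 0$ and builds the projections $g_\alpha \colon Y_{\widetilde{E}} \to \spd{\widetilde{E}_\alpha}{\widetilde{E}_\alpha^+}$ once and for all on each $U_m$, whereas you argue the factorization through some $U_m$ directly via the explicit bound $x^{kN} \in yA^+$; both arguments are equivalent.

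One small wording caution: when you write ``composer \dots avec les inclusions canoniques $\widetilde{E}_\alpha \hookrightarrow \widetilde{E}_\Delta$'', note that this inclusion is \emph{not} continuous for the adic topology on $\widetilde{E}_\Delta$ (since $\varpi_\alpha$ is not topologically nilpotent there); the paper flags this explicitly. Your next paragraph does supply the missing continuity of the composite $\widetilde{E}_\alpha \to A$ by showing each $\varpi_\alpha$ lands in $A^{\circ\circ}$, so the argument is complete---just be aware that the intermediate map is only a ring map, not a map of Huber pairs.
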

	\begin{proof}
		Prouvons d'abord que les points de $Y_{\widetilde{E}}$ sont précisément les valuations $|\cdot |\in \spa{\widetilde{E}_{\Delta}}{\widetilde{E}_{\Delta}^+}$ telles que $\forall \alpha, \, |\varpi_{\alpha}|^n\rightarrow 0$. Soit $|\cdot|\in U_m$. Puisque $\varpi_{\Delta}\in \widetilde{F}_{\Delta}^{\circ \circ}$, nous avons $|\varpi_{\Delta}|^n \rightarrow 0$. Puisque $|\cdot|\in U_m$, nous obtenons $$\forall \alpha,\,\,\, |\varpi_{\alpha}|^{m |\Delta|}\leq |\varpi_{\Delta}|$$ d'où $|\varpi_{\alpha}|^n \rightarrow 0$. Réciproquement, supposons que $\forall \alpha, \,\, |\varpi_{\alpha}|^n\rightarrow 0$. Pour tout $(\alpha,\beta)$, on a $|\varpi_{\alpha}|^m\leq |\varpi_{\beta}|$ pour $m\gg 0$. Pour $m$ assez grand, elles sont toutes vérifiées et $|\cdot|\in U_m$.
		
		Pour prouver que les deux diamants coïncident, commençons par trouver une famille de morphismes \linebreak$Y_{\widetilde{E}} \rightarrow \spd{\widetilde{E}_{\alpha}}{\widetilde{E}_{\alpha}^+}$. Posons jusqu'à la fin de la sous-section $(\widetilde{E}_{\Delta,m},\widetilde{E}_{\Delta,m}^+)$ la paire de Huber correspondant à l'ouvert rationnel $U_m$. Soit $m\geq 1$ et $\alpha \in \Delta$. Nous venons de prouver que pour tout $|\cdot|\in U_m$, nous avons $|\varpi_{\alpha}|^n\rightarrow 0$, ce qui implique que $\varpi_{\alpha}\in \widetilde{E}_{\Delta,m}^{\circ\circ}$ puis que l'application canonique $$\widetilde{E}_{\alpha} \rightarrow \widetilde{E}_{\Delta} \rightarrow \widetilde{E}_{\Delta,m}$$ est continue\footnote{N'oublions pas que $\varpi_{\alpha}$ est une pseudo-uniformisante pour $\widetilde{E}_{\alpha}$. Attention au fait que la première application de cette composée n'est pas continue puisque $\varpi_{\alpha}$ n'est pas topologiquement nilpotente dans $\widetilde{E}_{\Delta}$.}. L'image de $\widetilde{E}_{\alpha}^+$ est contenue dans $\widetilde{E}_{\Delta}^+$ puis dans $\widetilde{E}_{\Delta,m}^+$. Nous avons construit un morphisme $U_m \rightarrow \spa{\widetilde{E}_{\alpha}}{\widetilde{E}_{\alpha}^+}$. En passant à la colimite, nous obtenons un morphisme $g_{\alpha} \, : \, Y_{\widetilde{E}} \rightarrow \spd{\widetilde{E}_{\alpha}}{\widetilde{E}_{\alpha}^+}$.

		Nous prouvons l'isomorphisme en construisant une naturelle au niveau des $\spa{A}{A^+}$-points des faisceaux sur $\mathrm{Perf}$ $X_{\widetilde{E}}$ et $Y_{\widetilde{E}}$. À $f\in \mathrm{Hom}_{\mathrm{Perf}}(\spa{A}{A^+},Y_{\widetilde{E}})$, on associe la famille des $(g_{\alpha} \circ f)_{\alpha} \in X_{\widetilde{E}}(\spa{A}{A^+})$. Réciproquement, pour une famille $(f_{\alpha})_{\alpha}$, l'image de chaque $\varpi_{\alpha}$ appartient à $A^{\circ \circ}$ ce qui permet de compléter $(\underline{\varpi})$-adiquement et localiser $\otimes f_{\alpha}$ en un morphisme $f \, : \,\spa{A}{A^+} \rightarrow \spa{\widetilde{E}_{\Delta}}{\widetilde{E}_{\Delta}^+}$. Or,
		
		$$\forall |\cdot| \in \spa{A}{A^+}, \,\alpha \in \Delta, \,\,\, f(|\cdot|)\big(\varpi_{\alpha}\big)^n= |f_{\alpha}(\varpi_{\alpha})|^n \rightarrow 0$$ puisque $f_{\alpha}(\varpi_{\alpha})\in A^{\circ\circ}$. Ainsi, $f$ se factorise par $Y_{\widetilde{E}}$. Nous laissons les lecteurs et lectrices vérifier que les deux applications tout juste définies sont inverses l'une de l'autre.
		\medskip
		
		L'action sur $\spa{\widetilde{E}_{\Delta}}{\widetilde{E}_{\Delta}^+}$ se retreint-corestreint à $Y_{\widetilde{E}}$ puisque $(\varphi_{\alpha,q}^{n_{\alpha}})_{\alpha}(U_m) \subset U_{m+ q \max |n_{\alpha}-n_{\beta}|}$. L'action de $\varphi_{\alpha,q}$ s'identifie à celle sur $X_{\widetilde{E}}$ : cette dernière est caractérisée par la commutation des diagrammes
		\begin{center}
			\begin{tikzcd}
				\spd{\widetilde{E}_{\beta}}{\widetilde{E}_{\beta}^+} \ar[rrr,"\varphi_{q} \,\,  \mathrm{si} \,\, \beta=\alpha \,\,\mathrm{et} \,\, \mathrm{Id} \,\, \mathrm{sinon}"] & & & \spd{\widetilde{E}_{\beta}}{\widetilde{E}_{\beta}^+} \\
				X_{\widetilde{E}} \ar[rrr,"\varphi_{\alpha,q}"] \ar[u] & & & X_{\widetilde{E}} \ar[u]
			\end{tikzcd}
		\end{center}
	\end{proof}
	
	\begin{rem}
		Le lemme précédent retrouve, dans le cas de $X_{\widetilde{E}}$, qu'un produit d'affinoïdes est un diamant union strictement croissante d'affinoïdes perfectoïdes.
	\end{rem}
	
		\begin{lemma}\label{lemme_géométrique}
		Considérons les morphismes d'anneaux
		
		$$\widetilde{E}_{\Delta}\rightarrow H^0\bigg(Y_{\widetilde{E}}, \mathcal{O}_{\spa{\widetilde{E}_{\Delta}}{\widetilde{E}_{\Delta}^+}}\bigg)$$ et $$\widetilde{E}_{\Delta}^+\rightarrow H^0\bigg(Y_{\widetilde{E}}, \mathcal{O}^+_{\spa{\widetilde{E}_{\Delta}}{\widetilde{E}_{\Delta}^+}}\bigg).$$
		
		\noindent Le deuxième est injectif et $\varpi_{\Delta}$-presque surjectif. En particulier, le premier morphisme est injectif.
	\end{lemma}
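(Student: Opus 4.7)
\medskip

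\textbf{Plan de preuve.} Je commencerais par expliciter les deux faisceaux sur les ouverts rationnels $U_m$ qui recouvrent $Y_{\widetilde{E}}$. Par la définition des rationnels d'une paire de Huber Tate perfectoïde (conséquence du Corollaire \ref{cestperfectoid}, obtenu plus loin), on a
$$\mathcal{O}\!\left(U_m\right)=\left(\widetilde{E}_{\Delta}^+\!\left[\tfrac{\varpi_{\alpha}^m}{\varpi_{\beta}}\,\big|\, \alpha,\beta\in\Delta\right]\right)^{\wedge\varpi_{\Delta}}\!\!\left[\tfrac{1}{\varpi_{\Delta}}\right]$$
et $\mathcal{O}^+(U_m)$ est la clôture intégrale de $\bigl(\widetilde{E}_{\Delta}^+[\varpi_{\alpha}^m/\varpi_{\beta}]\bigr)^{\wedge\varpi_{\Delta}}$ dans $\mathcal{O}(U_m)$, presque égale à celle-ci par perfectoïdité. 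Puisque $Y_{\widetilde{E}}=\bigcup U_m$ et les faisceaux structuraux sont des faisceaux, on a $H^0(Y_{\widetilde{E}},\mathcal{O}^+)=\varprojlim_m \mathcal{O}^+(U_m)$ et idem pour $\mathcal{O}$.

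Pour l'injectivité du second morphisme, il suffit de voir que pour chaque $m$, $\widetilde{E}_{\Delta}^+\rightarrow \mathcal{O}^+(U_m)$ est déjà injective. Je factoriserais cette flèche comme
$$\widetilde{E}_{\Delta}^+\hookrightarrow \widetilde{E}_{\Delta}^+\!\left[\tfrac{\varpi_{\alpha}^m}{\varpi_{\beta}}\right]\hookrightarrow \left(\widetilde{E}_{\Delta}^+\!\left[\tfrac{\varpi_{\alpha}^m}{\varpi_{\beta}}\right]\right)^{\wedge\varpi_{\Delta}}\!\hookrightarrow \mathcal{O}^+(U_m).$$
La première flèche est injective parce que les $\varpi_{\alpha}^m/\varpi_{\beta}$ sont des éléments de la localisation $\widetilde{E}_{\Delta}[\varpi_{\beta}^{-1}]$ du domaine intègre $\widetilde{E}_{\Delta}$ (Proposition \ref{integrite_edplus}). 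La deuxième est injective dès lors que l'anneau intermédiaire est $\varpi_{\Delta}$-adiquement séparé ; je le déduirais de la séparation $(\underline{\varpi})$-adique de $\widetilde{E}_{\Delta}^+$ en observant que $\varpi_{\Delta}^n\in (\underline{\varpi})^{n|\Delta|}$ (et donc $\bigcap_n \varpi_{\Delta}^n R\subset \bigcap_n(\underline{\varpi})^n R=0$ pour $R=\widetilde{E}_{\Delta}^+[\varpi_{\alpha}^m/\varpi_{\beta}]$, en multipliant les relateurs par une puissance adaptée de $\varpi_{\beta}$ pour redescendre dans $\widetilde{E}_{\Delta}^+$). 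La dernière flèche est injective car la clôture intégrale est une extension.

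Le point véritablement délicat est la presque-surjectivité. Soit $s=(s_m)_m\in \varprojlim_m \mathcal{O}^+(U_m)$. Grâce à la presque-égalité de $\mathcal{O}^+(U_m)$ et de la complétion $\varpi_{\Delta}$-adique de $\widetilde{E}_{\Delta}^+[\varpi_{\alpha}^m/\varpi_{\beta}]$, chaque $s_m$ peut presque s'écrire comme une série $\varpi_{\Delta}$-adiquement convergente en les monômes $\prod_{\alpha,\beta}(\varpi_{\alpha}^m/\varpi_{\beta})^{i_{\alpha,\beta}}$ à coefficients dans $\widetilde{E}_{\Delta}^+$. L'idée clef est que, en multipliant $s_m$ par $\varpi_{\Delta}^N$ pour $N$ assez grand, on neutralise les dénominateurs $\varpi_{\beta}$ apparaissant dans les termes de petit indice total, tandis que les termes de grand indice total sont petits dans $\mathcal{O}^+(U_m)$ car la série converge $\varpi_{\Delta}$-adiquement. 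La compatibilité $s_{m+1}\mapsto s_m$ sous la restriction $U_m\subset U_{m+1}$ permet de contrôler uniformément le comportement lorsque $m\to\infty$ et d'en déduire que pour tout $\varepsilon\in \mathrm{Rad}(\varpi_{\Delta})$ (par exemple $\varepsilon=\varpi_{\Delta}^{1/p^k}$, qui existe par perfection) l'élément $\varepsilon\cdot s$ appartient à l'image de $\widetilde{E}_{\Delta}^+$. C'est ici que la structure presque est essentielle et c'est l'étape qui demande le plus de travail.

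Pour l'injectivité du premier morphisme, elle en découle : tout $x/\varpi_{\Delta}^n\in \widetilde{E}_{\Delta}$ s'envoyant sur $0$ dans $H^0(Y_{\widetilde{E}},\mathcal{O})$ donne un élément $x\in \widetilde{E}_{\Delta}^+$ qui s'annule dans $H^0(Y_{\widetilde{E}},\mathcal{O}^+)$, donc nul par la première partie ; on utilise en route que $\varpi_{\Delta}$ est régulier dans $H^0(Y_{\widetilde{E}},\mathcal{O}^+)$, conséquence directe de la presque-surjectivité puisque $\varpi_{\Delta}$ l'est déjà dans $\widetilde{E}_{\Delta}^+$ intègre. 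L'obstacle principal est l'analyse détaillée des séries à l'étape de presque-surjectivité.
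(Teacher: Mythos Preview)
The paper does not prove this lemma independently: it simply cites \cite[Proposition 4.27]{zabradi_kedlaya_carter}. So there is no detailed argument in the paper to compare your sketch against.

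Your plan for injectivity is sound and would go through with minor cleaning. One small simplification: for the deduction of injectivity of the first map from the second, you do not need regularity of $\varpi_{\Delta}$ in $H^0(Y_{\widetilde{E}},\mathcal{O}^+)$ via almost surjectivity. It suffices that $\mathcal{O}^+\hookrightarrow\mathcal{O}$ is an inclusion of sheaves, hence $H^0(Y_{\widetilde{E}},\mathcal{O}^+)\hookrightarrow H^0(Y_{\widetilde{E}},\mathcal{O})$ is injective; if $x/\varpi_{\Delta}^n$ dies in $H^0(Y_{\widetilde{E}},\mathcal{O})$ then so does $x$, hence $x$ dies in $H^0(Y_{\widetilde{E}},\mathcal{O}^+)$, hence $x=0$.

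The real content is the $\varpi_{\Delta}$-almost surjectivity, and here your proposal remains a plan rather than a proof. You correctly identify that one must control uniformly in $m$ the denominators appearing in a compatible system $(s_m)$, but the sentence ``La compatibilité $s_{m+1}\mapsto s_m$ \dots\ permet de contrôler uniformément le comportement lorsque $m\to\infty$'' is exactly the step that needs a concrete mechanism, and you have not supplied one. In the argument of Carter--Kedlaya--Z\'abr\'adi this is done by a careful induction on $|\Delta|$ together with explicit estimates on the $\mathcal{O}^+$-sheaf over the rational subsets; the uniformity does not fall out of the restriction maps alone. As written, your sketch does not close this gap, so if you want a self-contained proof you should either reproduce the inductive estimate from \cite[Proposition 4.27]{zabradi_kedlaya_carter} or cite it, as the paper does.
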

	\begin{proof}
		Voir \cite[Prop. 4.27]{zabradi_kedlaya_carter}.
	\end{proof}

	\begin{rem}
		Les deux derniers lemmes terminent de justifier notre définition de $\widetilde{E}_{\Delta}$ et sa topologie.  Le Lemme \ref{X_et_Y} fonctionnerait pour la complétion $\varpi_{\Delta}$-adique du produit tensoriel à la place de $\widetilde{E}_{\Delta}^+$. Cela signifie que notre espace $X_{\widetilde{E}}$ possède deux épaissements affinoïdes, le plus petit étant $\mathrm{Spa}(\widetilde{E}_{\Delta},\widetilde{E}_{\Delta}^+)$. Les points supplémentaires de l'autre épaississement correspondent par exemple aux valuations $\varpi_{\alpha}$-adiques. Il semblerait raisonnable de les considérer mais ce second épaississement est un peu trop gros et le Lemme \ref{lemme_géométrique} ne fonctionnerait plus. Il est donc important de compléter $(\underline{\varpi})$-adiquement dans la définition de $\widetilde{E}_{\Delta}$.
		
		En revanche, comme nous l'avions déjà évoqué à la Remarque \ref{choix_topo_completion}, équiper $\widetilde{E}_{\Delta}^+$ de sa topologie $\varpi_{\Delta}$-adique est primordial pour lui associer un espace perfectoïde.
	\end{rem}
	
	\begin{defi}
		Grâce à la Proposition \ref{propstructurefetphiq}, le produit fibré $$Z_D:=Y_{\widetilde{E}} \times_{\spd{\widetilde{E}_{\Delta}}{\widetilde{E}_{\Delta}^+}} \spd{S_D}{S_D^+}$$ est un objet de $\fet{Y_{\widetilde{E}}\,||\,\Phi_{\Delta,q}}$. Soit $\overline{y}$ un point géométrique de $Y_{\widetilde{E}}$. Le lemme de Drinfeld équipe l'ensemble fini $\mathrm{V}_{\widetilde{E}}(D) := \big| Z_D \times_{Y_{\widetilde{E}}} \overline{y}\big|$ d'une action de $\cg_{\widetilde{E},\Delta}$.
	\end{defi}
	
	\begin{lemma}
		Le diamant $Z_D$ possède une structure canonique d'objet en $\mathbb{F}_r$-espaces vectoriels dans \linebreak$\fet{X_{\widetilde{E}}\,||\,\Phi_{\Delta,q'}}$. L'ensemble $\mathrm{V}_{\widetilde{E}}(D)$ appartient canoniquement à $\mathrm{Rep}_{\mathbb{F}_r} \cg_{\widetilde{E},\Delta}$. 
	\end{lemma}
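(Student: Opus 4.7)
Le plan est de déduire les deux assertions de la factorisation naturelle du foncteur $\mathfrak{I}_D$ par les $\mathbb{F}_r$-espaces vectoriels, puis de l'exactitude du foncteur fibre.

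Pour la première assertion, je remarquerais d'abord que pour toute $\widetilde{E}_{\Delta}$-algèbre $T$, le produit $T \otimes_{\widetilde{E}_{\Delta}} D$ est canoniquement un $\mathbb{F}_r$-module et que $\varphi_{\Delta,r}$ y agit $\mathbb{F}_r$-linéairement (puisque le $r$-Frobenius fixe $\mathbb{F}_r$). Le foncteur $\mathfrak{I}_D$ se factorise donc par les $\mathbb{F}_r$-espaces vectoriels. Le lemme de Yoneda appliqué dans la catégorie opposée des $\widetilde{E}_{\Delta}$-algèbres dote alors le représentant $S_D$ d'une structure de co-$\mathbb{F}_r$-espace vectoriel, dont les axiomes s'héritent par pull-back de ceux satisfaits dans la catégorie cible.

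Ensuite, j'observerais que $\mathrm{Spd}$ est fonctoriel et transforme les coproduits d'algèbres finies étales en produits fibrés de diamants au-dessus de $\spd{\widetilde{E}_{\Delta}}{\widetilde{E}_{\Delta}^+}$. La structure de co-$\mathbb{F}_r$-espace vectoriel sur $S_D$ se transporte ainsi en une structure de $\mathbb{F}_r$-espace vectoriel sur $\spd{S_D}{S_D^+}$ dans la catégorie des $\spd{\widetilde{E}_{\Delta}}{\widetilde{E}_{\Delta}^+}$-diamants finis étales. L'action de $\Phi_{\Delta,q}$ construite à la Proposition~\ref{propstructurefetphiq} provient d'opérations $\widetilde{E}_{\Delta}$-linéaires commutant à $\varphi_{\Delta,r}$, et préserve par conséquent les morphismes structuraux obtenus par Yoneda. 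Après changement de base le long de l'isomorphisme $Y_{\widetilde{E}} \cong X_{\widetilde{E}}$ du Lemme~\ref{X_et_Y}, il vient que $Z_D$ est un objet en $\mathbb{F}_r$-espaces vectoriels de $\fet{X_{\widetilde{E}} \, || \, \Phi_{\Delta,q}}$.

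Pour la seconde assertion, j'invoquerais que, d'après le Théorème~\ref{lemme_drinfeld}, le foncteur fibre $\mathrm{Fib}_{\overline{y}}$ est exact et à valeurs dans les ensembles finis. Puisqu'il préserve les produits fibrés et l'objet final, il envoie un objet en $\mathbb{F}_r$-espaces vectoriels sur un $\mathbb{F}_r$-espace vectoriel, ce qui munit $\mathrm{V}_{\widetilde{E}}(D)$ d'une structure canonique de $\mathbb{F}_r$-espace vectoriel de dimension finie. L'isomorphisme $\cg_{\widetilde{E},\Delta} \cong \piun{X_{\widetilde{E}} \, || \, \Phi_{\Delta,q}}{y}$ étant réalisé par des automorphismes du foncteur $\mathrm{Fib}_{\overline{y}}$, l'action est automatiquement compatible aux morphismes de structure, donc $\mathbb{F}_r$-linéaire ; la finitude de $\mathrm{V}_{\widetilde{E}}(D)$ assure alors que les stabilisateurs sont ouverts. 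Le point principalement délicat me semble être la vérification que $\mathrm{Spd}$ convertit bien les coproduits de $\widetilde{E}_{\Delta}$-algèbres finies étales en produits fibrés de diamants compatiblement à l'action de $\Phi_{\Delta,q}$ ; le reste relève du formalisme des catégories et du lemme de Yoneda.
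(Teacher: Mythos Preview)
Your proposal is correct and follows essentially the same approach as the paper: both arguments observe that $\mathfrak{I}_D$ factors naturally through $\mathbb{F}_r$-vector spaces, transport this structure to $S_D$ via Yoneda, check compatibility with the $\Phi_{\Delta,q}$-action (the paper phrases this as $\mathbb{F}_r \subset \widetilde{E}_{\Delta}^{\Phi_{\Delta,q}}$), restrict to $Y_{\widetilde{E}}\cong X_{\widetilde{E}}$, and then conclude for $\mathrm{V}_{\widetilde{E}}(D)$ via the equivalence of categories furnished by Drinfeld's lemma. Your version is slightly more explicit about why the fibre functor preserves the vector-space structure (exactness, preservation of products and terminal object), where the paper simply invokes the equivalence and leaves this implicit.
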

	\begin{proof}
		Chaque $\big(T\otimes_{\widetilde{F}_{\Delta}} D\big)^{\varphi_{\Delta,r}=\mathrm{Id}}$ est muni d'une structure de $\mathbb{F}_r$-espace vectoriel naturelle en $T$.  Cela munit $S_D$ d'une structure naturelle de $\widetilde{E}_{\Delta}$-algèbre en $\mathbb{F}_r$-espaces vectoriels. Puisque $\mathbb{F}_r \subset \widetilde{E}_{\Delta}^{\Phi_{\Delta,q}}$, l'action de $\Phi_{\Delta,q}$ est $\mathbb{F}_r$-linéaire  ce qui fait de $\spd{S_D,S_D^+}$ un objet de $\fet{\spd{\widetilde{E}_{\Delta}}{\widetilde{E}_{\Delta}^+}\,||\,\Phi_{\Delta,q}}$ en $\mathbb{F}_r$-espaces vectoriels. Nous en déduisons\footnote{Considérer plutôt qu'une structure d'objet en espaces vectoriels est un famille de morphismes avec conditions pour le déduire.} la structure sur $Z_D$ par restriction à $Y_{\widetilde{E}}$.
		
		L'équivalence de catégorie entre $\fet{X_{\widetilde{E}}\,||\,\Phi_{\Delta,q}}$ et les $\cg_{\widetilde{E},\Delta}$-ensembles finis conclut pour $\mathrm{V}_{\widetilde{E}}(D)$.
	\end{proof}

	\begin{prop}\label{coeur_preuve_perf}
		Supposons que $\mathrm{V}_{\widetilde{E}}(D)$ est isomorphe à la représentation triviale $\mathbb{F}_r^d$. Alors $D$ est isomorphe à $\widetilde{E}_{\Delta}^d$ dans $\cdetaleproj{\Phi_{\Delta,q,r}^{\gp}}{\widetilde{E}_{\Delta}}$.
	\end{prop}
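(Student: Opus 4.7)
The plan is to use Drinfeld's lemma to translate the triviality of $\mathrm{V}_{\widetilde{E}}(D)$ into a trivialization of $S_D$ as a finite \'etale $\widetilde{E}_{\Delta}$-algebra, and from this extract a $\widetilde{E}_{\Delta}$-basis of $D$ made of $\Phi_{\Delta,q,r}^{\gp}$-invariant elements. Starting from the hypothesis that $\mathrm{V}_{\widetilde{E}}(D)$ is the trivial $\mathbb{F}_r^d$, Theorem \ref{lemme_drinfeld} combined with Lemma \ref{faisceau_constants} identifies $Z_D$, as $\mathbb{F}_r$-vector space object of $\fet{Y_{\widetilde{E}}\,||\,\Phi_{\Delta,q}}$, with the constant sheaf object associated to $\mathbb{F}_r^d$. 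This provides $r^d$ global $\Phi_{\Delta,q}^{\gp}$-equivariant sections $\sigma_v \, : \, Y_{\widetilde{E}} \to Z_D$, indexed by $v \in \mathbb{F}_r^d$, forming an $\mathbb{F}_r$-vector space under the structure of the constant sheaf. Composing each $\sigma_v$ with the projection $Z_D \to \spd{S_D}{S_D^+}$ gives a $Y_{\widetilde{E}}$-point of $\spd{S_D}{S_D^+}$, hence a $\widetilde{E}_{\Delta}$-algebra morphism $S_D \to \mathrm{H}^0(Y_{\widetilde{E}}, \mathcal{O}_{Y_{\widetilde{E}}})$; by Yoneda and the definition of $S_D$ this corresponds to an element $y_v \in (\mathrm{H}^0(Y_{\widetilde{E}}, \mathcal{O}) \otimes_{\widetilde{E}_{\Delta}} D)^{\varphi_{\Delta,r} = \mathrm{Id}}$, and the $\Phi_{\Delta,q}^{\gp}$-equivariance of $\sigma_v$ (the constant sheaf carrying the trivial $\Phi_{\Delta,q}^{\gp}$-action) ensures in addition that $y_v$ is fixed by the diagonal action of each $\varphi_{\alpha,q}$.

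The central technical step is to descend the $y_v$ from $\mathrm{H}^0(Y_{\widetilde{E}}, \mathcal{O}) \otimes_{\widetilde{E}_{\Delta}} D$ to $D$ itself. One natural approach is to show that $\mathrm{H}^0(Y_{\widetilde{E}}, \mathcal{O})^{\Phi_{\Delta,q}^{\gp}}$ is exactly $\widetilde{E}_{\Delta}$, leveraging the description of $Y_{\widetilde{E}}$ as the growing union of the rational opens $U_m$ of $\spa{\widetilde{E}_{\Delta}}{\widetilde{E}_{\Delta}^+}$ and the injection of Lemma \ref{lemme_géométrique}. A cleaner alternative works directly with the finite \'etale algebra $S_D$: the $r^d$ compatible maps $S_D \to \mathrm{H}^0(Y_{\widetilde{E}}, \mathcal{O})$ arising from the $\Phi_{\Delta,q}^{\gp}$-trivialization should, by descent along the cover $Y_{\widetilde{E}} \to \spd{\widetilde{E}_{\Delta}}{\widetilde{E}_{\Delta}^+}$, produce $r^d$ $\widetilde{E}_{\Delta}$-algebra maps $S_D \to \widetilde{E}_{\Delta}$. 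Either way, since $\varphi_{\alpha,q}$ fixes $1 \in \widetilde{E}_{\Delta}$, the invariance of $y_v$ translates into genuine elements $x_v \in D^{\Phi_{\Delta,q,r}^{\gp}}$ forming a copy of $\mathbb{F}_r^d$.

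Picking the standard basis $e_1, \ldots, e_d$ of $\mathbb{F}_r^d$ then produces $x_1, \ldots, x_d \in D^{\Phi_{\Delta,q,r}^{\gp}}$ and hence a morphism $\phi \, : \, \widetilde{E}_{\Delta}^d \to D$ in $\cdetaleproj{\Phi_{\Delta,q,r}^{\gp}}{\widetilde{E}_{\Delta}}$ sending the standard basis to $(x_i)$. To show $\phi$ is an isomorphism it suffices, by faithful flatness of $\widetilde{E}_{\Delta} \to \widetilde{E}_{\Delta}^{\sep}$ established in Lemma \ref{module_induit_multi}, to check this after base change to $\widetilde{E}_{\Delta}^{\sep}$: the $x_i$ there correspond to the distinguished basis of $\widetilde{E}_{\Delta}^{\sep} \otimes_{\mathbb{F}_r} \mathrm{V}_{\widetilde{E}}(D)$ under the geometric identification, so the base-changed map fits into the comparison isomorphism of the trivial representation. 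The principal obstacle is clearly the descent step: it requires a precise comparison between the geometric structure of $Y_{\widetilde{E}}$ over $\spd{\widetilde{E}_{\Delta}}{\widetilde{E}_{\Delta}^+}$ and the algebraic structure of the finite \'etale $\widetilde{E}_{\Delta}$-algebra $S_D$, and in particular the fact that $\Phi_{\Delta,q}^{\gp}$-equivariant trivializations over $Y_{\widetilde{E}}$ genuinely come from $\widetilde{E}_{\Delta}$-level trivializations.
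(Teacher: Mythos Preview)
Your overall architecture matches the paper's: use Drinfeld's lemma to trivialize $Z_D$, descend this to a trivialization of $S_D$ over $\widetilde{E}_{\Delta}$, extract $\Phi_{\Delta,q,r}$-invariant elements of $D$, and show they form a basis. You are also right that the descent from $Y_{\widetilde{E}}$ to $\spa{\widetilde{E}_{\Delta}}{\widetilde{E}_{\Delta}^+}$ is the technical heart; the paper carries this out by fixing a presentation $S_D \oplus M \cong \widetilde{E}_{\Delta}^N$, showing (via Lemma \ref{lemme_géométrique} and a snake-lemma argument) that an element of $H^0(Y_{\widetilde{E}}, f_*\mathcal{O}_{Z_D})$ lies in $S_D$ iff its coordinates lie in $\widetilde{E}_{\Delta}$, and then using the $\Phi_{\Delta\backslash\{\alpha\},q}$-invariance of a section $x$ to bound its restrictions to translates of a quasi-compact fundamental domain uniformly in $\varpi_{\alpha}^{-k}\varpi_{\Delta}^{-m} \mathcal{O}^+$, forcing the coordinates into $\widetilde{E}_{\Delta}$. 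Your ``descent along the cover $Y_{\widetilde{E}} \to \spd{\widetilde{E}_{\Delta}}{\widetilde{E}_{\Delta}^+}$'' is not available as stated: this is an open immersion, not a cover, so no abstract descent applies and one really has to do this bounding argument.

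There is however a genuine gap in your last paragraph. To conclude that $\phi \, : \, \widetilde{E}_{\Delta}^d \to D$ is an isomorphism you base-change to $\widetilde{E}_{\Delta}^{\sep}$ and invoke ``the comparison isomorphism of the trivial representation''. But what you need is an isomorphism $\widetilde{E}_{\Delta}^{\sep} \otimes_{\mathbb{F}_r} \mathrm{V}_{\widetilde{E}}(D) \xrightarrow{\sim} \widetilde{E}_{\Delta}^{\sep} \otimes_{\widetilde{E}_{\Delta}} D$, and that is precisely the comparison isomorphism for $D$ (Theorem \ref{morph_comparaison_coeur}), whose proof \emph{uses} the present proposition. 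No ``comparison for the trivial representation'' gives you this: at this stage $\mathrm{V}_{\widetilde{E}}(D)$ is a purely geometric object, not yet identified with $\widetilde{\mathbb{V}}_{\Delta}(D)$. The paper avoids this circularity by a direct argument that exploits the integrality of $\widetilde{E}_{\Delta}$: injectivity of $\widetilde{E}_{\Delta} \otimes_{\mathbb{F}_r} D^{\Phi_{\Delta,q,r}} \to D$ is proved by a minimal-counterexample trick over $\mathrm{Frac}(\widetilde{E}_{\Delta})$ (apply $\varphi_{\Delta,r}$ and subtract to shorten a relation), and surjectivity follows by localizing so that $D$ is free of rank $d$, writing the matrix equation $B\,\varphi(A)=A$, taking determinants to get $\det(B)\det(A)^r=\det(A)$, and using integrality plus $\det(A)\neq 0$ to conclude $\det(A)$ is a unit. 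This is exactly the point where working over an integral coefficient ring (rather than the non-integral rings of \cite{zabradi_kedlaya_carter}) pays off.
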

	\begin{proof}
		\underline{Étape 1 :} soit $S_{\mathrm{base}}$ la $\widetilde{E}_{\Delta}$-algèbre finie étale donnée par $\sfrac{\widetilde{E}_{\Delta}[X]}{(X^r-X)}$ avec l'action semi-\linebreak-linéaire de $\Phi_{\Delta,q}$ qui laisse fixe la classe de $X$. On le voit ainsi comme objet de $\fet{\spd{\widetilde{E}_{\Delta}}{\widetilde{E}_{\Delta}^+}\,||\,\Phi_{\Delta,q}}$ sur $\spd{S_{\mathrm{base}}}{S_{\mathrm{base}}^+}$. Nous notons $Z_{\mathrm{base}}$ sa restriction à $Y_{\widetilde{E}}$ qui est encore un objet de $\fet{Y_{\widetilde{E}}\,||\,\Phi_{\Delta,q}}$. Remarquons que $S_{\mathrm{base}}$ est l'algèbre finie étale associée à $\widetilde{E}_{\Delta}\in \cdetaleproj{\Phi_{\Delta,q,r}^{\gp}}{\widetilde{E}_{\Delta}}$. 
		
		Montrons que $\spd{S_{\mathrm{base}}}{S_{\mathrm{base}}^+}$, vu comme faisceau en $\mathbb{F}_r$-espaces vectoriels sur $\fet{\spd{\widetilde{E}_{\Delta}}{\widetilde{E}_{\Delta}^+}\,||\,\Phi_{\Delta,q}}$ est le faisceau constant associé à $\mathbb{F}_r$. Soit $Z$ un objet connexe de $\fet{\spd{\widetilde{E}_{\Delta}}{\widetilde{E}_{\Delta}^+}\,||\,\Phi_{\Delta,q}}$. Un morphisme vers $\spd{S_{\mathrm{base}}}{S_{\mathrm{base}}^+}$ correspond à une section globale $\Phi_{\Delta,q}$-invariante $z$ de $Z$ telle que $\prod_{a \in \mathbb{F}_r} (z-a)=0$. Les sections $(z-a)$ engendrent deux à deux le faisceau d'idéaux structural $\mathcal{O}_Z$ ; leur lieux d'annulation sont donc disjoints. De plus, leurs lieux d'annulations sont des fermés $\Phi_{\Delta,q}$-stables. Par connexité, on en déduit que $z$ appartient à $\mathbb{F}_r$. La décomposition des objets en union disjointe de composantes $\underline{\Gamma}$-connexes montre qu'un morphisme vers $\spd{S_{\mathrm{base}}}{S_{\mathrm{base}}^+}$ correspond à une fonction à valeurs dans $\mathbb{F}_r$ constante sur les composantes $\underline{\Gamma}$-connexes. Par tiré en arrière,  $Z_{\mathrm{base}}$ est encore le faisceau constant associé à $\mathbb{F}_r$ et le Lemme \ref{faisceau_constants} affirme alors que la représentation de $\cg_{\widetilde{E},\Delta}$ obtenue grâce au lemme de Drinfeld est la représentation triviale sur $\mathbb{F}_r$.
		
		\medskip
		
		\underline{Étape 2 :} dans cette étape, nous cherchons à décrire $S_D$. 
		
		L'étape 1 montre que l'objet $\sqcup_d \, Z_{\mathrm{base}}$ dans la catégorie galoisienne $\fet{Y_{\widetilde{E}}\,||\,\Phi_{\Delta,q}}$ à la représentation triviale $\mathbb{F}_r^d$. Ainsi, nous obtenons $Z_D\cong \sqcup_d \, Z_{\mathrm{base}}$ dans cette catégorie galoisienne. Bien que le terme de droite coïncide avec $Z_{\widetilde{F}_{\Delta}^d}$, tout notre travail consiste à remonter cette identification à $S_D$ puis à $D$. Nous venons de calculer $\spd{S_D}{S_D^+}_{| Y_{\widetilde{E}}}$. Nous allons en déduire un calcul sur l'épaississement $\spa{\widetilde{E}_{\Delta}}{\widetilde{E}_{\Delta}^+}$. Il s'agit de la partie la plus technique de la preuve.
		
		\medskip
		Puisque $S_D$ est finie étale, elle est plate et de présentation finie d'où l'existence d'une présentation comme module fini projectif : on se fixe une telle présentation sous la forme d'un isomorphisme de $\widetilde{E}_{\Delta}$-module 
$$S_D \oplus M = \bigoplus_{j\leq N} \widetilde{E}_{\Delta} e_j.$$  En localisant et complétant\footnote{Ne pas oublier que la topologie sur $S_D$ est la topologie initiale sur un module fini projectif.}, on obtient pour tout ouvert affine $V$ de $\spa{\widetilde{E}_{\Delta}}{\widetilde{E}_{\Delta}^+}$ un diagramme commutatif aux lignes exactes

\begin{center}
	\begin{tikzcd}
		0 \ar[r] & S_D \ar[r] \ar[d] & \bigoplus_{j\leq N}  \widetilde{E}_{\Delta} e_j \ar[d] \ar[r] & M \ar[r] \ar[d] &0  \\
		0 \ar[r] & H^0\left(V,f_*\mathcal{O}_{\spa{S_D}{S_D^+}}\right)\ar[r] & \bigoplus_{j \leq N} H^0\left(V,f_*\mathcal{O}_{\spa{\widetilde{E}_{\Delta}}{\widetilde{E}_{\Delta}^+}}\right) e_j \ar[r]&  H^0(V,\widetilde{M}) 
	\end{tikzcd}
\end{center} où $f$ désignera toujours le morphisme structural vers $\spa{\widetilde{E}_{\Delta}}{\widetilde{E}_{\Delta}^+}$. En passant à la limite ce diagramme sur la famille des ouverts $U_m$, on obtient

\begin{center}
	\begin{tikzcd}
		0 \ar[r] & S_D \ar[r] \ar[d] & \bigoplus_{j\leq N}  \widetilde{E}_{\Delta} e_j \ar[d] \ar[r] & M \ar[r] \ar[d] &0  \\
		0 \ar[r] & H^0(Y_{\widetilde{E}},f_*\mathcal{O}_{Z_D}) \ar[r] & \bigoplus_{j \leq N} H^0\left(Y_{\widetilde{E}},\mathcal{O}_{\spa{\widetilde{E}_{\Delta}}{\widetilde{E}_{\Delta}^+}}\right) e_j \ar[r] & H^0(Y_{\widetilde{E}},\widetilde{M})
	\end{tikzcd}
\end{center} Le Lemme \ref{lemme_géométrique} affirmant que le morphisme vertical central est injectif donc celui de gauche aussi. Dans la suite de la preuve, nous identifions les quatre modules du carré de gauche à des sous-modules les uns des autres. En inversant les rôles de $S_D$ et $M$, le morphisme vertical de droite est également injectif. Le lemme du serpent affirme alors qu'un élément de $\lim \limits_{\longleftarrow} S_{D,m}$ appartient à $S_D$ si et seulement si ses coordonnées dans la base $e_j$ appartiennent à $\widetilde{E}_{\Delta}$.

		\medskip
		Soit $x$ une section $\Phi_{\Delta,q}$-invariante de $H^0(Y_{\widetilde{E}},f_*\mathcal{O}_{Z_D})$ et prouvons qu'elle appartient à $S_D$.

Soit $\alpha\in \Delta$. Par définition de $Y_{\widetilde{E}}$, pour tout $|\cdot|\in Y_{\widetilde{E}}$, $\exists n, \,\, \forall \beta, \,\, |\varpi_{\beta}|^{q^n}\leq |\varpi_{\alpha}|$. Autrement dit, $\prod_{\beta\neq\alpha} \varphi_{\beta,q'}^n$ envoie $|\cdot|$ dans l'ouvert rationnel $U=\spa{\widetilde{F}_{\Delta}}{\widetilde{F}_{\Delta}^+}\big(\frac{\varpi_{\alpha}^2,\, \varpi_{\beta} \, |\, \beta\neq \Delta}{\varpi_{\alpha}}\big)$. Ce dernier est quasi-compact et contient donc un domaine fondamental. Sur le domaine $U$, l'écriture $$\varpi_{\alpha}^{|\Delta|}=\varpi_{\Delta} \times \prod_{\beta\neq \alpha} \quot{\varpi_{\beta}}{\varpi_{\alpha}}$$ illustre que $\varpi_{\alpha}$ est une pseudo-uniformisante. Nous fixons alors un entier $k$ tel que $$x_{|U} \in \varpi_{\alpha}^{-k} \sum H^0\left(U,f_* \mathcal{O}^+_{\spa{S_D}{S_D^+}}\right).$$ L'action du Frobenius absolu est triviale sur les points de $Y_{\widetilde{E}}$ ; l'espace $Y_{\widetilde{E}}$ est recouvert par les $\prod_{\beta\neq \alpha} \varphi_{\beta,q}^{n_{\beta}}(U)$. Prenons $\psi \in \Phi_{\Delta \backslash \{\alpha\},q}$. L'invariance de $x$ donne $$x_{|\psi(U)}=(\psi(x))_{|\psi(U)}=\psi(x_{|U})$$ ce qui entraîne que 
		\begin{align*}
	x_{\psi(U)} &\in \psi\bigg( \varpi_{\alpha}^{-k} H^0(U,f_*\mathcal{O}^+_{\spa{S_D}{S_D^+}})\bigg) \\
	& \subseteq \varpi_{\alpha}^{-k} \sum H^0\left(\psi(U),f_*\mathcal{O}^+_{\spa{S_D}{S_D^+}}\right) \\
\end{align*} où le passage à la deuxième ligne utilise le fait que $\varphi_{\alpha,q}$ n'apparaît pas dans $\psi$ et se souvient que $\Phi_{\Delta,q,r}$ agit sur $\spa{S_D}{S_D^+}$ dans la catégorie des $\spa{\widetilde{E}_{\Delta}}{\widetilde{E}_{\Delta}^+}$-espaces perfectoïdes (voir la discussion au début de la Proposition \ref{propstructurefetphiq}). Comme la topologie sur $S_D$ est la topologie initiale, il se trouve que $S_D \cap (\sum_j \widetilde{E}_{\Delta}^+ e_j)$ est ouvert et nous savons également que $S_D^+$ est borné ; nous pouvons fixer $m$ tel que $$S_D^+ \subset \sum_j \varpi_{\Delta}^{-m} \widetilde{E}_{\Delta}^+ e_j.$$  Pour tout ouvert affine $V$ de $\spa{\widetilde{E}_{\Delta}}{\widetilde{E}_{\Delta}^+}$, nous avons encore $$H^0\left(V,f_*\mathcal{O}^+_{\spa{S_D}{S_D^+}}\right) \subset \sum_j \varpi_{\Delta}^{-m} H^0\left(V,\mathcal{O}^+_{\spa{\widetilde{E}_{\Delta}}{\widetilde{E}_{\Delta}^+}}\right) e_j.$$ Ainsi, les coordonnées de $x$ dans la base $(e_j)$ appartiennent à $\varpi_{\alpha}^{-k} \varpi_{\Delta}^{-m} H^0\left(\psi(U),\mathcal{O}^+_{\spa{\widetilde{E}_{\Delta}}{\widetilde{E}_{\Delta}^+}}\right)$ pour tout $\psi(U)$. En recollant, les coordonnées de $x$ appartiennent donc à $$\varpi_{\Delta}^{-(k+m)}H^0\left(Y_{\widetilde{E}},\mathcal{O}^+_{\spa{\widetilde{E}_{\Delta}}{\widetilde{E}_{\Delta}^+}}\right)\subset \varpi_{\Delta}^{-(k+m+1)} \widetilde{E}_{\Delta}^+$$ grâce au lemme \ref{lemme_géométrique}. Les coordonnées sont donc dans $\widetilde{E}_{\Delta}$, ce qui conclut.
		
	Nous savons que $$Z_D \cong \bigsqcup_d \, Z_{\mathrm{base}}=Y_{\widetilde{E}} \times_{\spa{\widetilde{E}_{\Delta}}{\widetilde{E}_{\Delta}^+}} \spa{\prod_{1\leq i \leq d} \sfrac{\widetilde{E}_{\Delta}[X_i]}{(X_i^r-X_i)}}{\prod_{1\leq i \leq d} \sfrac{\widetilde{E}_{\Delta}[X_i]}{(X_i^r-X_i)}^+}.$$ Les $X_i$ s'interprètent comme des sections globales $\Phi_{\Delta,q,r}^{\gp}$-invariantes de $Z_D$. Tous les monômes en $X_i$ sont des sections globales et $\Phi_{\Delta,q,r}^{\gp}$-invariantes de $Z_D$  et appartiennent donc à $S_D$. Elles vérifient les mêmes relations ce qui permet de déduire un isomorphisme de $\widetilde{E}_{\Delta}$-algèbres

	$$S_D \cong \prod_{1\leq i \leq d} S_{\mathrm{base}}.$$
		
		\underline{Étape 3 :} construisons une bijection de $\mathrm{V}_{\widetilde{E}}(D)$ dans $D^{\Phi_{\Delta,q,r}}$.
		
		Grâce à l'isomorphisme de l'étape 2, il est possible d'écrire la suite de bijections suivantes :
		\begin{align*}
			\mathrm{V}_{\widetilde{E}}(D)& \cong \mathbb{F}_r^d \\
			& \cong \mathrm{Hom}_{\widetilde{E}_{\Delta}\text{-}\mathrm{Alg}} \bigg(\prod_{1\leq i \leq d} S_{\mathrm{base}},\widetilde{E}_{\Delta}\bigg) \\
			& \cong \mathrm{Hom}_{\widetilde{E}_{\Delta}\text{-}\mathrm{Alg}}(S_D,\widetilde{E}_{\Delta}) \\
			&\cong D^{\varphi_{\Delta,r}=\mathrm{Id}}
		\end{align*} Nous construisons le diagramme commutatif de $\mathbb{F}_r$-espaces vectoriels suivant :
		\begin{center}
			\begin{tikzcd}[column sep=small]
				D^{\varphi_{\Delta,r}=\mathrm{Id}} \ar[r,"\mathbf{1}\otimes \mathrm{Id}_{\widetilde{E}_{\Delta}}\otimes\varphi_{\alpha,q,D}"] \ar[d] &  \left(\varphi_{\alpha,q}^*\widetilde{E}_{\Delta} \otimes_{\widetilde{E}_{\Delta}} D\right)^{\varphi_{\Delta,r}=\mathrm{Id}} \ar[r]\ar[d] & D^{\varphi_{\Delta,r}=\mathrm{Id}} \ar[d] \\
				\mathrm{Hom}_{\widetilde{E}_{\Delta}\text{-}\mathrm{Alg}} \Big(\underset{1\leq i\leq d}{\prod} S_{\mathrm{base}},\widetilde{E}_{\Delta}\Big) \arrow{r}[yshift=-1.5ex,swap]{\varphi_{\alpha,q}^*(\cdot)\circ \psi_{\alpha,q,\mathrm{base}}} & \mathrm{Hom}_{\widetilde{E}_{\Delta}\text{-}\mathrm{Alg}} \left(\underset{1\leq i\leq d}{\prod} S_{\mathrm{base}},\varphi_{\alpha,'}^*\widetilde{E}_{\Delta}\right) \ar[r] & \mathrm{Hom}_{\widetilde{E}_{\Delta}\text{-}\mathrm{Alg}} \left(\underset{1\leq i\leq d}{\prod} S_{\mathrm{base}},\widetilde{E}_{\Delta}\right)
			\end{tikzcd}
		\end{center} Le carré de gauche est issu des définitions à la Proposition \ref{propstructurefetphiq}. Le deuxième carré est donné par Yoneda. L'image $d$ par la ligne supérieure devient $\varphi_{\alpha,q,D}(d)$. L'élément $d$ est envoyé en bas à gauche sur un morphisme $g$ tel que $g(X_i)\in \mathbb{F}_r$. Une application de cette forme est laissé fixe par la ligne inférieure. Ainsi, pour notre module, nous avons $D^{\Phi_{\Delta,q,r}}=D^{\varphi_{\Delta,r}=\mathrm{Id}}$.
		
		\medskip
		
		\underline{Étape 4 :} nous avons construit un isomorphisme $\mathbb{F}_r$-linéaire $\mathbb{F}_r^d \cong D^{\Phi_{\Delta,q,r}}$. En tensorisant par $\widetilde{E}_{\Delta}$, nous obtenons un isomorphisme $\widetilde{E}_{\Delta}$-linéaire $$\widetilde{E}_{\Delta}^d \cong \widetilde{E}_{\Delta}\otimes_{\mathbb{F}_r}D^{\Phi_{\Delta,q,r}}$$ Sa post-composition par $\widetilde{E}_{\Delta}\otimes_{\mathbb{F}_r} D^{\Phi_{\Delta,q,r}} \rightarrow D$ est un morphisme dans $\dmod{\Phi_{\Delta,q,r}}{\widetilde{E}_{\Delta}}$ dont nous voulons prouver qu'il s'agit d'un isomorphisme.
		
		Prouvons que $\widetilde{E}_{\Delta}\otimes_{\mathbb{F}_r} D^{\Phi_{\Delta,q,r}} \rightarrow D$ est injectif. Puisque $\widetilde{E}_{\Delta}$ est intègre, cette injectivité se vérifie après tensorisation par $\mathrm{Frac}(\widetilde{E}_{\Delta})$. Soit alors $\sum_{1\leq i \leq k} x_i\otimes d_i$ dans le noyau de ce dernier morphisme, dont on suppose le nombre de termes non nuls minimal. Quitte à diviser par $x_1$, nous pouvons supposer que $x_1=1$. L'identité $\sum x_i d_i=0$ se transforme par application de $\varphi_{\Delta,r}$ en $d_1+\sum_{2\leq i \leq k} x_i^r d_i=0$. En soustrayant, ceci implique que $\sum_{2\leq i \leq k} (x_i-x_i^r))\otimes d_i$ est également dans le noyau. Par hypothèse de minimalité, tous les $x_i$ valent $x_i^r$. Puisque $\mathrm{Frac}(\widetilde{E}_{\Delta})$ est un corps, ceci implique que tous les $x_i$ sont des éléments de $\mathbb{F}_r$. La relation $\sum_{1\leq i\leq k} x_id_i=0$ originelle s'avère être une relation dans $D^{\Phi_{\Delta,q,r}}$, soit $\sum_{1\leq i \leq k} x_i \otimes d_i=0$.
		
		Le résultat du paragraphe précédent est un morphisme injectif dans $\detaleproj{\Phi_{\Delta,q',r}}{\widetilde{E}_{\Delta}}$ que nous notons $i \, : \, \widetilde{E}_{\Delta}^d \hookrightarrow D$. L'identification de $S_D$ à l'étape $2$ affirme de plus que $d$ est la dimension locale de $D$. Soit $z\in \widetilde{E}_{\Delta}$ tel que $D[z^{-1}]$ est libre de rang $d$. L'action de $\varphi_{\Delta,r}$ s'étend à $\widetilde{E}_{\Delta}[z^{-1}]$. Par extension des scalaires comme en \cite[Prop. 3.3]{formalisme_marquis}, nous obtenons un morphisme injectif dans $\detaleproj{\varphi_{\Delta,r}^{\mathbb{N}}}{\widetilde{E}_{\Delta}[z^{-1}]}$ de la forme $$i_z \, : \, \left(\widetilde{E}_{\Delta}[z^{-1}]\right)^d \hookrightarrow D[z^{-1}].$$ Posons $\mathcal{B}$ une base de $D[z^{-1}]$ et $A\in \mathrm{M}_d(\widetilde{E}_{\Delta}[z^{-1}])$ la matrice de l'image de la base canonique par $i_z$ dans $\mathcal{B}$. Soit également $B$ la matrice de $\varphi_{\Delta,r,D[z^{-1}]}$ dans la base $\mathcal{B}$. Puisque $i_z$ est $\varphi_{\Delta,r}$-équivariante, son image est stable par $\varphi_{\Delta,r,D[z^{-1}]}$ et l'image de la base canonique est formé d'éléments invariants. Matriciellement, cela se traduit par $B\varphi(A)=A$ dans $\mathrm{M}_d(\widetilde{E}_{\Delta})$. En termes de déterminant, cela implique que $\det(B) \det(A)^r=\det(A)$. Puisque $i_z$ est injective, son déterminant est non nul. Par intégrité de $\widetilde{E}_{\Delta}$, on en déduit que $\det(A)$ est inversible dans $\widetilde{F}_{\Delta}$, d'inverse $\det(B) \det(A)^{r-2}$. On en déduit que l'image de $i_z$ engendre $D[z^{-1}]$. Autrement dit que $i_z$ est un isomorphisme. Par conséquent, le morphisme $i$ est un isomorphisme ce qui conclut.
	\end{proof}
	
	Nous démontrons à présent le théorème au centre de toutes les équivalences de cet article. Pour ce faire, nous utilisons la discussion qui précède pour toutes les extensions finies de $\widetilde{E}$.
	
	\begin{theo}\label{morph_comparaison_coeur}
		Pour tout objet $D$ de $\detaleproj{\Phi_{\Delta,q,r}^{\gp}}{\widetilde{E}_{\Delta}}$, le morphisme de comparaison $$\widetilde{E}_{\Delta}^{\mathrm{sep}}\otimes_{\mathbb{F}_r}\widetilde{\mathbb{V}}_{\Delta}(D) \rightarrow \widetilde{E}_{\Delta}^{\mathrm{sep}} \otimes_{\widetilde{E}_{\Delta}} D$$ est un isomorphisme.
	\end{theo}
	
	\begin{proof}
		Pour cette preuve, nous utilisons encore la notation $\mathrm{V}_{\widetilde{E}}$ puisqu'il nous faut absolument garder en mémoire le corps de base sur lequel nous travaillons. Pour toute extension finie $\widetilde{F}|\widetilde{E}$ et $q'$ associé, nous appelons dans cette preuve $\mathrm{Ex}_{\widetilde{F}}$ le foncteur défini en \cite[Déf. 3.2]{formalisme_marquis} pour le groupe $\Phi_{\Delta,q',r}^{\gp}$ et le morphisme d'anneaux $\widetilde{E}_{\Delta}\rightarrow \widetilde{F}_{\Delta}$ du Lemme \ref{description_construction_produit}. Nous appelons également dans cette preuve $\mathrm{Ex}_{\widetilde{F},q}$ l'analogue pour le groupe $\Phi_{\Delta,q,r}^{\gp}$ et le morphisme d'anneaux $\widetilde{E}_{\Delta}\rightarrow \widetilde{F}_{\Delta,q}$.
		
		\underline{Étape 1 :} soit $\widetilde{F}$ une extension finie de $\widetilde{E}$. Nous commençons par montrer que l'extension des scalaires $\mathrm{Ex}_{\widetilde{F}}$ correspond à un foncteur d'oubli. Plus précisément, soit $D$ un objet de $\detaleproj{\Phi_{\Delta,q,r}^{\gp}}{\widetilde{E}_{\Delta}}$. Montrons que $$\mathrm{V}_{\widetilde{F}}\big(\mathrm{Ex}_{\widetilde{F}}(D)\big) \cong \mathrm{V}_{\widetilde{E}}(D)_{|\cg_{\widetilde{F},\Delta}}.$$
		
		Pour toute $\widetilde{F}_{\Delta}$-algèbre $T$, nous avons la suite de bijections naturelles suivante :
		\begin{align*}
			\big( T\otimes_{\widetilde{F}_{\Delta}}\mathrm{Ex}_{\widetilde{F}}(D)\big)^{\varphi_{\Delta,r}=\mathrm{Id}} &\cong \big(T\otimes_{\widetilde{E}_{\Delta}} D\big)^{\varphi_{\Delta,r}=\mathrm{Id}} \\
			&\cong \mathrm{Hom}_{\widetilde{E}_{\Delta}\text{-}\mathrm{Alg}}(S_D,T) \\
			&\cong \mathrm{Hom}_{\widetilde{F}_{\Delta}-\mathrm{Alg}}\big( \widetilde{F}_{\Delta} \otimes_{\widetilde{E}_{\Delta}} S_D,T\big)
		\end{align*} Ainsi, la $\widetilde{F}_{\Delta}$-algèbre finie étale $S_{\mathrm{Ex}_{\widetilde{F}}(D)}$ s'identifie $\widetilde{F}_{\Delta}\otimes_{\widetilde{E}_{\Delta}} S_D$. Cette identification est compatible à l'action de $\Phi_{\Delta,q'}^{\gp}$ et à la structure de $\mathbb{F}_r$-espace vectoriel. En restreignant à $Y_{\widetilde{F}}$, on obtient \begin{align*} Z_{\mathrm{Ex}_{\widetilde{F}}(D)} & = Y_{\widetilde{F}} \times_{Y_{\widetilde{E}}} \bigg(Y_{\widetilde{E}}\times_{\spa{\widetilde{E}_{\Delta}}{\widetilde{E}_{\Delta}^+}} \spa{S_D}{S_D^+}\bigg) \\
		&= Y_{\widetilde{F}} \times_{Y_{\widetilde{E}}} Z_D
		\end{align*} en tant qu'objet en $\mathbb{F}_r$-espaces vectoriels de $\fet{Y_{\widetilde{F}}\,|\, \Phi_{\Delta,q'}}$. La Proposition \ref{changement_base_piun} conclut que $$\mathrm{V}_{\widetilde{F}}(\mathrm{Ex}_{\widetilde{F}}(D))\cong \mathrm{V}_{\widetilde{E}}(D)_{| \cg_{\widetilde{F},\Delta}}.$$
		
		\underline{Étape 2 :} prenons toujours $D$ un objet de $\detaleproj{\Phi_{\Delta,q,r}^{\gp}}{\widetilde{E}_{\Delta}}$ et fixons $d$ son rang. Considérons une extension finie $\widetilde{F}|\widetilde{E}$ telle que l'action de $\cg_{\widetilde{F},\Delta}$ de la $\cg_{\widetilde{E},\Delta}$-représentation lisse $\mathrm{V}_{\widetilde{E}}(D)$ est triviale. D'après la première étape, cela signifie que la représentation $\mathrm{V}_{\widetilde{F}}(\mathrm{Ex}_{\widetilde{F}}(D))$ est triviale. Puisque $\mathrm{Ex}_{\widetilde{F}}$ préserve le rang, la première étape fournit un isomorphisme dans $\cdetaleproj{\Phi_{\Delta,q',r}^{\gp}}{\widetilde{F}_{\Delta}}$  entre $\widetilde{F}_{\Delta}^d$ et $\mathrm{Ex}_{\widetilde{F}}(D)$. 
		
		La Proposition \ref{identif_coindu_perf} démontre que 	$$\widetilde{F}_{\Delta,q} \cong \coindu{\Phi_{\Delta,q',r}^{\mathrm{gp}}}{\Phi_{\Delta,q,r}^{\mathrm{gp}}}{\widetilde{F}_{\Delta}}.$$ En appliquant \cite[Lem. 3.13]{formalisme_marquis} pour $R=\widetilde{F}_{\Delta,q}$, $T=\widetilde{F}_{\Delta}$, $\Phi_{\Delta,q',r}^{\gp}<\Phi_{\Delta,q,r}^{\gp}$ d'indice fini et $i$ la projection sur la coordonnée du neutre, nous obtenons un isomorphisme dans $\dmod{\Phi_{\Delta,q,r}^{\gp}}{\widetilde{F}_{\Delta,q}}$ 
		
		$$\mathrm{Ex}_{\widetilde{F},q}(D) \cong \coindu{\Phi_{\Delta,q',r}^{\gp}}{\Phi_{\Delta,q,r}^{\gp}}{\mathrm{Ex}_{\widetilde{F}}(D)},$$ puis une suite d'isomorphismes dans $\dmod{\Phi_{\Delta,q,r}^{\gp}}{\widetilde{F}_{\Delta,q}}$
		
		$$\mathrm{Ex}_{\widetilde{F},q}(D) \cong \coindu{\Phi_{\Delta,q',r}}{\Phi_{\Delta,q,r}}{\mathrm{Ex}_{\widetilde{F}}(D)} \cong \coindu{\Phi_{\Delta,q',r}}{\Phi_{\Delta,q,r}}{\widetilde{F}_{\Delta}^d} \cong \widetilde{F}_{\Delta,q}^d.$$

	En particulier, en étendant des scalaires à $\widetilde{E}_{\Delta}^{\sep}$, nous obtenons un isomorphisme dans $\dmod{\Phi_{\Delta,q,r}^{\gp}}{\widetilde{E}_{\Delta}^{\sep}}$ entre $\left(\widetilde{E}_{\Delta}^{\sep}\otimes_{\widetilde{E}_{\Delta}} D\right)$ et $(\widetilde{E}_{\Delta}^{\sep})^d$. Le caractère isomorphique du morphisme de comparaison découle\footnote{On démontre un isomorphisme uniquement algébriquement, puisque l'on ne s'est pas fatigués à s'occuper de l'action de $\cg_{\widetilde{E},\Delta}$. Comme le morphisme de comparaison est automatiquement équivariant et ne dépend pas ensemblistement de l'action galoisienne, cela suffit.} du Corollaire \ref{coro_descente_1}.
	\end{proof}

	\vspace{0.75cm}
	\subsection{Construction du foncteur pleinement fidèle $\mathbb{D}_{\Delta}$ modulo $p$}
	
	Dans cette sous-section et la suivante, nous cherchons à déperfectoïdiser notre équivalence. Nous fixons $E$ un corps de caractéristique $p$, de valuation discrète et complet pour cette valuation, tel que la clôture algébrique de $\mathbb{F}_p$ dans son corps résiduel $k$ est de cardinal fini $q$. En fixant $X$ une uniformisante, nous savons que $E$ est isomorphe à $k(\!(X)\!)$. Définissons également $\widetilde{E}$ le complété de la clôture radicielle de $E$. C'est un corps perfectoïde et nous pouvons le décrire par $$\widetilde{E}= \left(\bigcup_{n\geq 0} k\llbracket X^{q^{-n}} \rrbracket \right)^{\wedge X} \left[\frac{1}{X}\right]$$ que l'on nomme traditionnellement $k(\!(X^{q^{-\infty}})\!)$. Les théories de Galois de $E$ et de $\widetilde{E}$ étant identiques, nous cherchons à obtenir une équivalence de Fontaine pour $\mathrm{Rep}_{\mathbb{F}_r} \cg_{E,\Delta}$ à partir de l'équivalence perfectoïde précédente, tout en obtenant des anneaux de coefficients imparfaits plus aisément manipulables du côté des $\varphi$-modules.
	
	Nous nous plaçons dans le contexte de \S \ref{section_wddelta} pour le corps $\widetilde{E}$, avec $X$ comme choix de pseudo-uniformisante. En fixant $\widetilde{E}^{\sep}$, nous fixons aussi une clôture séparable $E^{\sep}$ de $E$. Nous imitons dans la suite les définitions de \S \ref{section_wddelta}, ce qui nous épargne  des preuves identiques.
	
	\begin{defi}\label{def_constr_prod_imparf}
	Soit $E^{\sep}|F|E$ une extension finie. Nous définissons $$F_{\Delta,q}^+:=\left(\bigotimes_{\alpha\in \Delta,\mathbb{F}_q} F_{\alpha}^+ \right)^{\wedge (\underline{X})}$$
		
		$$F_{\Delta,q}:= F_{\Delta,q}^+\left[\frac{1}{X_{\Delta}}\right].$$
		
		Trois topologies seront utilisées pour ces deux anneaux respectivement pour le considérer comme coefficients de catégories de $\varphi$-modules, comme coefficients d'une catégories de $(\varphi,\Gamma)$-modules et pour un raisonnement fin à la sous-section suivante. Sur $F_{\Delta,q}^+$ ces trois topologies sont respectivement la topologie discrète, la topologie \linebreak$X_{\Delta}$-adique et la topologie $(\underline{X})$-adique. Sur $F_{\Delta,q}$, ce sont la topologie discrète, la topologie d'anneau ayant pour base de voisinages de $0$ la famille $\left(X_{\Delta}^n F_{\Delta,q}^+\right)_{n\geq 0}$ que nous appelons \textit{topologie adique} et la topologie colimite des topologies $(\underline{X})$-adiques que nous appelons \textit{topologie colimite}.
		
		Le produit tensoriel des $F_{\alpha}^+$ est muni d'une structure de $\Phi_{\Delta,q,r}$-anneau topologique pour la topologie \linebreak$(\underline{X})$-adique, l'élément $\varphi_{\alpha,q}$ agissant par le $q$-Frobenius arithmétique sur $F_{\alpha}^+$ et l'identité sur les $F_{\beta}^+$ et l'élément $\varphi_{\Delta,r}$ agissant par le $r$-Frobenius arithmétique. En complétant, on obtient une structure de $\Phi_{\Delta,q,r}$-anneau topologique sur $F_{\Delta,q}^+$ pour chacune des trois topologies.  Après localisation, elle fournit une structure $\Phi_{\Delta,q,r}$-anneau topologique sur $F_{\Delta,q}$ pour chacune des trois topologies ci-dessus.
		
		Lorsque $E^{\sep}|F|E$ est finie galoisienne, l'action de $\cg_{E}$ sur $F^+$ est $\mathbb{F}_q$-linéaire, continue pour les topologies discrète et $X$-adique, et commute au Frobenius. L'action de $\cg_{E,\Delta}$ facteur par facteur sur le produit tensoriel des $F^+_{\alpha}$ se complète en une action sur $F^+_{\Delta,q}$ continue en particulier pour les topologies discrètes et adique, et commutant à l'action de $\Phi_{\Delta,q,r}$. Nous obtenons donc deux structure de $\left(\Phi_{\Delta,q,r} \times \cg_{E,\Delta}\right)$-anneau topologique sur $F_{\Delta,q}^+$. En localisant, nous obtenons deux structures de $\left(\Phi_{\Delta,q,r} \times \cg_{E,\Delta}\right)$-anneau topologique sur $F_{\Delta,q}$.
	\end{defi}
	
		\begin{rem}\label{ecriture_explicite_anneaux}
		Nous avions annoncé que dans le cadre de cette sous-section, nous pouvions décrire explicitement les anneaux perfectoïdes et les anneaux imparfaits. Commençons par les anneaux perfectoïdes. Nous décrivons $\widetilde{E}^+$ comme 		
		$$\widetilde{E}^+=\left\{ \sum_{d\in \mathbb{N}[q^{-1}]} a_d X^d \,\, \Bigg|\,\, \substack{(a_d)\in \left(\bigotimes_{\alpha \in \Delta,\,\mathbb{F}_q} k\right)^{\mathbb{N}[q^{-1}]} \,\, \text{telle que} \\ \forall r\in \mathbb{N}[q^{-1}], \,\, \left\{d\leq r\, |\, a_d \neq 0\right\} \,\, \text{est fini.}}\right\}.$$
		
		Nous obtenons que
		
		$$\widetilde{E}_{\Delta}^+=\left\{ \sum_{\underline{d}\in (\mathbb{N}[q^{-1}])^{\Delta}} a_{\underline{d}} \underline{X}^{\underline{d}} \,\, \Bigg|\, \,  \substack{(a_{\underline{d}})\in \left(\bigotimes_{\alpha \in \Delta,\,\mathbb{F}_q} k\right)^{\mathbb{N}[q^{-1}]^{\Delta}} \,\,\text{tel que} \\ \forall r\in \mathbb{N}[q^{-1}], \,\, \left\{\underline{d} \, |\, \sum d_{\alpha}\leq r, \,\, \mathrm{et}\,\, a_{\underline{d}} \neq 0\right\}\,\, \text{est fini}}\right\}.$$ Pour cet anneau perfectoïde, notons que les complétions $(\underline{X})$-adiques et $(X_{\Delta})$-adiques du produit tensoriel ne coïncident même pas algébriquement : à deux variables par exemple, l'élément $\sum_n X_{\alpha}^{q^{-n}} X_{\beta}^{q^n}$ appartient à la complétion $(X_{\alpha},X_{\beta})$-adique mais pas à la complétion $X_{\{\alpha,\beta\}}$-adique.
		
		Pour comprendre de manière graphique les complétions pour deux variables, nous représentons une série par une famille de points dans une grille labellisés par les coefficients correspondants. Par exemple, la représentation du polynôme $$Q=X_{\alpha}^{\sfrac{1}{q}} X_{\beta}-X_{\alpha} X_{\beta}^{2+\sfrac{1}{q}+\sfrac{1}{q^2}}-X_{\alpha}^{1+\sfrac{1}{q^2}} X_{\beta}^{1+\sfrac{1}{q^2}}+X_{\alpha}^{1+\sfrac{1}{q}+\sfrac{1}{q^2}} X_{\beta}^{\sfrac{1}{q^2}}+X_{\alpha}^{2+\sfrac{1}{q}} X_{\beta}^{3+\sfrac{1}{q}}$$ est illustré comme suit.

		\begin{figure}[h!]
			\centering
			
			\includegraphics[scale=0.3]{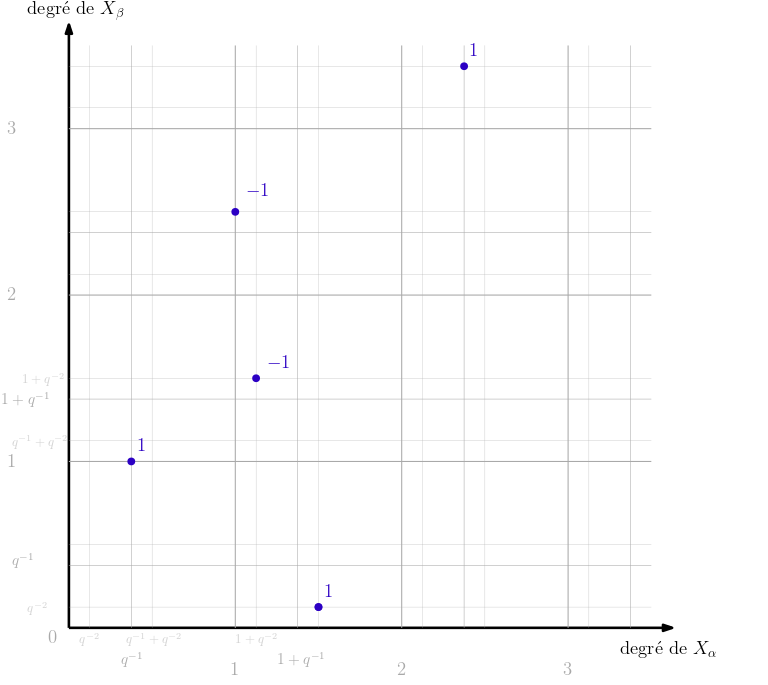} \caption{Représentation du polynôme $Q$ sur la grille pour $q=3$}
		\end{figure}
		
	Un élément de la complétion $(X_{\alpha},X_{\beta})$-adique possède une représentation telle que son intersection avec toutes les zones triangulaires\footnote{Les zones triangulaires correspondent à des quotients par $I_{n,\Delta}$. De manière équivalente, nous pourrions considérer les quotients par $(\underline{X})^{n}$ et obtenir des zones carrées} coïncide avec la représentation d'un élément de $\widetilde{E}_{\alpha}^+\otimes_{\mathbb{F}_q} \widetilde{E}_{\beta}^+$.

	\begin{figure}[h!]
		\centering
		\includegraphics[scale=0.5]{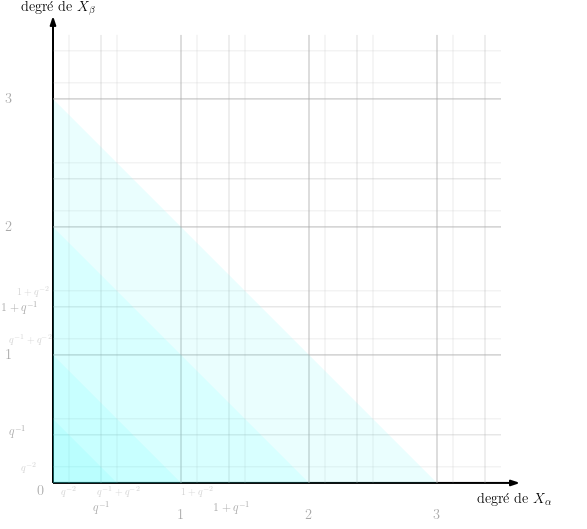}
	\end{figure} \noindent Cela signifie exactement que l'intersection avec toute zone triangulaire ne contient qu'un nombre fini de points.
	
	\clearpage
	
	De manière analogue, un élément de la complétion $X_{\alpha}X_{\beta}$-adique possède une représentation telle que son intersection avec chaque zone en L provient de la représentation d'un élément de $\widetilde{E}_{\alpha}^+\otimes_{\mathbb{F}_q} \widetilde{E}_{\beta}^+$.

	\begin{figure}[h!]
		\centering
		\includegraphics[scale=0.4]{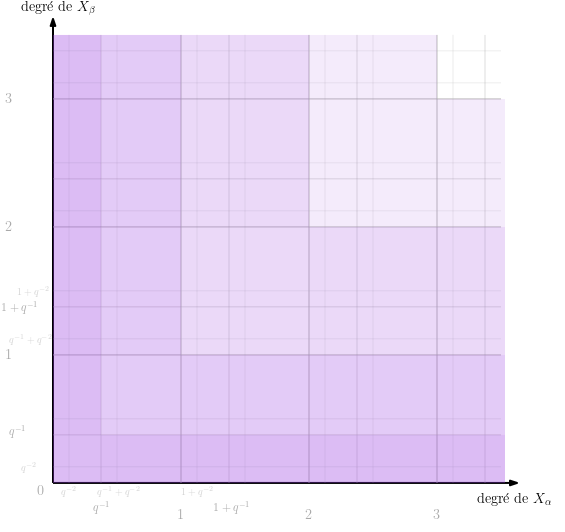}
	\end{figure} \noindent Cela revient à choisir une représentation telle que l'intersection avec toute partie rectangulaire verticale d'une zone en L ne contient qu'un nombre fini d'abscisses, que sur les droites correspondant à ces abscisses la représentation soit d'intersection finie avec tout compact, et la condition analogue pour les parties rectangulaires horizontales et les ordonnées.
	
	Ceci nous permet de comprendre pourquoi $\sum_{n\geq 0} X_{\alpha}^{q^{-n}} X_{\beta}^{q^n}$ appartient à une complétion mais pas à l'autre via sa représentation :

	\begin{figure}[h]
		\centering
		\includegraphics[scale=0.3]{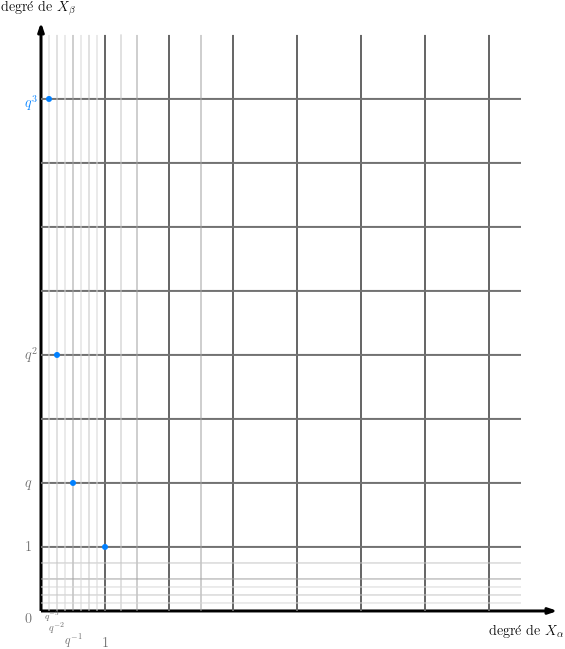}
		\caption{Représentation de $\sum_{n\geq 0} X_{\alpha}^{q^{-n}} X_{\beta}^{q^n}$ pour $q=2$}
	\end{figure}
	
	En inversant $X_{\Delta}$, nous pouvons écrire
	
	$$\widetilde{E}_{\Delta}=\left\{ \sum_{\underline{d}\in (\mathbb{Z}[q^{-1}])^{\Delta}} a_{\underline{d}} \underline{X}^{\underline{d}} \,\, \Bigg|\,\,\substack{(a_{\underline{d}})\in \left(\bigotimes_{\alpha \in \Delta,\,\mathbb{F}_q} k\right)^{\mathbb{Z}[q^{-1}]^{\Delta}} \,\, \text{tel que} \\ \forall r\in \mathbb{N}[q^{-1}], \,\, \left\{\underline{d} \, |\, \sum d_{\alpha}\leq r, \,\, \mathrm{et}\,\, a_{\underline{d}} \neq 0\right\} \,\,\text{est fini et que}\\ \exists N>0, \,\,\, \forall \underline{d},\, \forall \alpha, \,\,\, a_{\underline{d}}\neq 0 \implies  d_{\alpha}\geq -N}\right\}.$$
	
\noindent	De manière similaire, nous obtenons $$E_{\Delta}= \left\{ \sum_{\underline{d}\in \mathbb{Z}^{\Delta}} a_{\underline{d}} \underline{X}^{\underline{d}} \,\, \Bigg|\,\,\substack{(a_{\underline{d}})\in \left(\bigotimes_{\alpha \in \Delta,\,\mathbb{F}_q} k\right)^{\mathbb{Z}^{\Delta}} \,\, \text{tel que} \\  \exists N>0, \,\,\, \forall \underline{d},\, \forall \alpha, \,\,\, a_{\underline{d}}\neq 0 \implies  d_{\alpha}\geq -N}\right\}.$$ Remarquons que dans le cas imparfait, les complétions $(\underline{X})$- et $X_{\Delta}$-adiques coïncident algébriquement puisque la finitude de l'intersection avec chaque zone triangulaire implique la condition sur les zones en L.
	
	Toute extension finie $E^{\sep}|F|E$ s'écrit $l(\!(Y)\!)$ pour une uniformisante $Y$ et une extension finie $l|k$ ce qui fournit une description semblable de $\widetilde{F}_{\Delta,q}$ et $F_{\Delta,q}$ en remplaçant $\underline{X}$ par $\underline{Y}$ et $\left(\bigotimes_{\alpha \in \Delta,\,\mathbb{F}_q} k\right)$ par $\left(\bigotimes_{\alpha\in \Delta,\mathbb{F}_q} l\right)$.
	
	Cette description explicite nous fournit également sans argument technique une famille d'injections \linebreak$F_{\Delta',q} \hookrightarrow F_{\Delta,q}$ pour $\Delta'\subseteq \Delta$.
	\end{rem}
	
	Nous pouvons faire le lien entre les anneaux perfectoïdes et imparfaits.
	
	\begin{lemma}\label{lien_anneaux_e}
		Pour toute extension finie galoisienne $E^{\sep}|F|E$ nous appelons $\widetilde{F}=\widetilde{E} F$ l'extension finie galoisienne de $\widetilde{E}$ associée. 
		
		Il existe une injection de $(\Phi_{\Delta,q,r}\times \cg_{E,\Delta})$-anneaux  $F_{\Delta,q}^+\hookrightarrow \widetilde{F}_{\Delta,q}^+$ naturelle en $F$.
		
		Pour toute famille finie de multi-indices $(\underline{d_i})_{1\leq i \leq n}  \in (\N^{\Delta})^n$, l'image réciproque de l'idéal $(X^{\underline{d_i}})$ est engendrée par les mêmes éléments.
	\end{lemma}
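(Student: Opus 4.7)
The construction proceeds by pure functoriality. The canonical inclusion $F^+ \hookrightarrow \widetilde{F}^+$ is $\mathbb{F}_q$-linear, $\cg_E$-equivariant and commutes with the $q$-Frobenius; tensoring over the field $\mathbb{F}_q$ preserves injectivity. The induced map $\bigotimes_{\alpha,\mathbb{F}_q} F_{\alpha}^+ \hookrightarrow \bigotimes_{\alpha,\mathbb{F}_q} \widetilde{F}_{\alpha}^+$ is thus $(\Phi_{\Delta,q,r} \times \cg_{E,\Delta})$-equivariant and sends $(\underline{X})^n$ into $(\underline{X})^n$; passing to the $(\underline{X})$-adic completions yields the desired morphism $F_{\Delta,q}^+ \to \widetilde{F}_{\Delta,q}^+$, natural in $F$ because each of the three steps (inclusion, tensor product, completion) is functorial in $F$.

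To establish injectivity of the completed map, I would leverage flatness at the level of each tensor factor. Writing $F = l(\!(Y)\!)$, the ring $F^+ = l\llbracket Y\rrbracket$ is a discrete valuation ring, and $\widetilde{F}^+$ is a torsion-free $F^+$-module (being a domain containing $F^+$), hence flat. It is even faithfully flat since $\widetilde{F}^+/(Y) \neq 0$. Tensoring factor by factor over $\mathbb{F}_q$ preserves faithful flatness, so setting $A = \bigotimes_{\alpha,\mathbb{F}_q} F_{\alpha}^+$ and $B = \bigotimes_{\alpha,\mathbb{F}_q} \widetilde{F}_{\alpha}^+$, the ring $B$ is faithfully flat over $A$. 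In particular the quotient map $A/(\underline{X})^n \hookrightarrow B/(\underline{X})^n$ is injective for every $n$, and taking the inverse limit (which preserves injections) yields injectivity of the completed morphism $F_{\Delta,q}^+ \hookrightarrow \widetilde{F}_{\Delta,q}^+$.

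The ideal-preservation statement follows from the same faithful flatness applied at each level. Faithful flatness of $B/(\underline{X})^n$ over $A/(\underline{X})^n$ gives $(X^{\underline{d_i}})B/(\underline{X})^n \cap A/(\underline{X})^n = (X^{\underline{d_i}}) \cdot A/(\underline{X})^n$ for every $n$. Since $(X^{\underline{d_i}})$ is a finitely generated ideal, a successive approximation argument using the $(\underline{X})$-adic completeness of $F_{\Delta,q}^+$ transfers this equality to the completions: a decomposition in $\widetilde{F}_{\Delta,q}^+$ of an element of $F_{\Delta,q}^+$ as $\sum f_i X^{\underline{d_i}}$ can be refined modulo increasing powers of $(\underline{X})$ to produce coefficients in $F_{\Delta,q}^+$ whose partial sums are Cauchy in the $(\underline{X})$-adic topology and converge to the desired $g_i \in F_{\Delta,q}^+$.

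The main obstacle is the passage from factor-wise faithful flatness over the DVR $F^+$ to the corresponding properties of the $(\underline{X})$-adically completed tensor products, a setting that lies outside the Noetherian world where completion would automatically preserve flatness. Reducing everything to the finite-level quotients $A/(\underline{X})^n$ and $B/(\underline{X})^n$---where base change preserves faithful flatness---and then recovering the completed statements by inverse limit and Mittag-Leffler arguments is what circumvents this difficulty.
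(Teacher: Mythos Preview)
Your approach is correct and takes a genuinely different route from the paper. The paper argues directly from the explicit series descriptions of $F_{\Delta,q}^+$ and $\widetilde{F}_{\Delta,q}^+$ in Remarque~\ref{ecriture_explicite_anneaux}: both are identified with spaces of formal series $\sum a_{\underline d}\,Y^{\underline d}$ indexed by $\mathbb N^\Delta$ and $\mathbb N[q^{-1}]^\Delta$ respectively, so the inclusion, its injectivity, and the description of $(X^{\underline{d_i}})$ as the set of series supported on $\bigcup_i(\underline{d_i}+\mathbb N[q^{-1}]^\Delta)$ are all read off by inspection. The functorial construction (tensor product of $F_\alpha^+\hookrightarrow\widetilde F_\alpha^+$, completed) is invoked only afterwards to obtain equivariance and naturality.

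Your faithful-flatness argument is cleaner conceptually and gives injectivity for free. For the ideal statement, however, your last step is not quite complete as written. Knowing $x\in (X^{\underline{d_i}})+(\underline X)^n$ for every $n$ only places $x$ in the closure of $(X^{\underline{d_i}})$; producing Cauchy coefficients $g_i$ requires an Artin--Rees-type bound $(X^{\underline{d_i}})\cap(\underline X)^n\subset (\underline X)^{n-N}\cdot(X^{\underline{d_i}})$ for some uniform $N$. This is not automatic, since the residue field $k$ is only assumed to have finite algebraic closure of $\mathbb F_p$, so $\bigotimes_{\mathbb F_q} l$ and hence $F_{\Delta,q}^+$ need not be Noetherian. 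The bound does hold for monomial ideals via an elementary support argument (choose the decomposition $g_i=\sum_{\underline e:\,i(\underline e)=i}a_{\underline e}Y^{\underline e-\underline{d_i}}$), but that is precisely the paper's direct method. So both routes end up using the same ingredient; the paper simply reaches for it at the outset.
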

	\begin{proof}
	Construire l'application et calculer les images réciproques des idéaux s'obtient à partir de la description de nos anneaux à la Remarque \ref{ecriture_explicite_anneaux}. Nous vérifions que l'application construite coïncide avec le produit tensoriel des injections $F_{\alpha}^+\hookrightarrow \widetilde{F}_{\alpha}^+$, complété et localisé. L'équivariance et la naturalité découlent de cette seconde construction.
	\end{proof}
	
	\begin{coro}\label{integrite_imparf}
		\begin{enumerate}[itemsep=0mm]
			\item Les anneaux $F_{\Delta,q}^+$ sont réduits et sans $E_{\Delta}^+$-torsion. Il en découle que $F_{\Delta,q}^+ \rightarrow F_{\Delta,q}$ est injective, que $F_{\Delta,q}$ est réduit et sans $E_{\Delta}$-torsion. En particulier, l'anneau $E_{\Delta}$ est intègre.
			
			\item Soit $\mathcal{G}\mathrm{al}_{E}$ la catégorie des extensions finies galoisiennes de $E$ dans $E^{\sep}$ avec les inclusions pour morphismes. La construction $$F\mapsto F_{\Delta,q}$$ où l'on met la topologie discrète est canoniquement un foncteur de $\mathcal{G}\mathrm{al}_{E}$ vers les $(\Phi_{\Delta,q,r}\times \cg_{E,\Delta})$-anneaux topologiques. Tous les morphismes déduits sont injectifs. De plus, localiser les injections du Lemme \ref{lien_anneaux_e} fournit une transformation naturelle vers le foncteur du Lemme \ref{description_construction_produit}.
		\end{enumerate}
	\end{coro}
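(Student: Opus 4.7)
Le plan consiste à transférer les propriétés déjà établies pour les anneaux perfectoïdes $\widetilde{F}_{\Delta,q}^+$ à leurs analogues imparfaits $F_{\Delta,q}^+$ via le Lemme \ref{lien_anneaux_e}, qui fournit des injections canoniques et naturelles $F_{\Delta,q}^+ \hookrightarrow \widetilde{F}_{\Delta,q}^+$. La Proposition \ref{integrite_edplus} appliquée à $\widetilde{F}$ garantit que $\widetilde{F}_{\Delta,q}^+$ est réduit et sans $\widetilde{E}_{\Delta}^+$-torsion, ce qui constituera la source de toutes nos propriétés algébriques.

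Pour le premier point, je commencerais par noter que la réduction de $F_{\Delta,q}^+$ découle immédiatement de celle de $\widetilde{F}_{\Delta,q}^+$ puisque tout nilpotent s'injecte sur un nilpotent. L'absence de $E_{\Delta}^+$-torsion utilisera la naturalité du Lemme \ref{lien_anneaux_e} : la composée $E_{\Delta}^+ \hookrightarrow F_{\Delta,q}^+ \hookrightarrow \widetilde{F}_{\Delta,q}^+$ s'identifie à $E_{\Delta}^+ \hookrightarrow \widetilde{E}_{\Delta}^+ \hookrightarrow \widetilde{F}_{\Delta,q}^+$, si bien qu'un élément non nul de $E_{\Delta}^+$ a une image non nulle dans $\widetilde{E}_{\Delta}^+$ et ne peut annuler un élément non nul de $F_{\Delta,q}^+$ sans contredire la Proposition \ref{integrite_edplus}. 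L'injectivité de $F_{\Delta,q}^+\rightarrow F_{\Delta,q}$ en résultera en observant que $X_{\Delta}\in E_{\Delta}^+$ est non diviseur de zéro. Les propriétés analogues sur $F_{\Delta,q}$ suivront par localisation, et l'intégrité de $E_{\Delta}$ s'obtiendra en appliquant le Lemme \ref{lien_anneaux_e} à $F=E$ pour identifier $E_{\Delta}$ à un sous-anneau de l'anneau intègre $\widetilde{E}_{\Delta}$.

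Pour le second point, l'approche sera de reprendre la construction du Lemme \ref{description_construction_produit} en version imparfaite : étant donnée une inclusion $F\subset F'$ dans $\mathcal{G}\mathrm{al}_E$, je construirais le morphisme $F_{\Delta,q}\rightarrow F'_{\Delta,q}$ comme composée d'une inclusion $\bigotimes_{\mathbb{F}_{q^f}} F^+_{\alpha} \hookrightarrow \bigotimes_{\mathbb{F}_{q^{f'}}} F'^+_{\alpha}$, suivie de la complétion $(\underline{X})$-adique et de la localisation en $X_{\Delta}$. L'injectivité des morphismes déduits se démontrera en les factorisant à travers les anneaux perfectoïdes : le diagramme commutatif issu de la naturalité du Lemme \ref{lien_anneaux_e} montre que $F_{\Delta,q}\rightarrow F'_{\Delta,q}\hookrightarrow \widetilde{F}'_{\Delta,q}$ coïncide avec $F_{\Delta,q}\hookrightarrow \widetilde{F}_{\Delta,q}\hookrightarrow \widetilde{F}'_{\Delta,q}$, les injections de droite provenant respectivement du Lemme \ref{lien_anneaux_e} et du Lemme \ref{description_construction_produit}. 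L'équivariance pour $(\Phi_{\Delta,q,r}\times \cg_{E,\Delta})$ et la continuité pour chacune des trois topologies découleront directement des constructions. La transformation naturelle annoncée vers le foncteur perfectoïde sera alors précisément celle obtenue en localisant les injections du Lemme \ref{lien_anneaux_e}, et sa naturalité reformulera la compatibilité des diagrammes ci-dessus.

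Le principal obstacle technique résidera dans le traitement des extensions $F\subset F'$ à corps résiduels distincts $\mathbb{F}_{q^f}\subset \mathbb{F}_{q^{f'}}$, car les produits tensoriels définissant $F_{\Delta,q}$ et $F'_{\Delta,q}$ sont alors sur des bases différentes. Comme dans la preuve du Lemme \ref{description_construction_produit}, il faudra passer par l'intermédiaire du corps de décomposition de $X^{q^{f'}}-X$ sur $F$ et utiliser la régularité de $F|\mathbb{F}_{q^f}$ pour garantir que la tensorisation par $\mathbb{F}_{q^{f'}}$ reste injective ; une fois ce point vérifié, la complétion et la localisation se feront sans difficulté nouvelle, et la fonctorialité (préservation de la composition, de l'identité) se lira sur la construction.
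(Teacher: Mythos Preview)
Votre démarche est correcte et coïncide avec celle (laissée implicite) du papier : transférer les propriétés depuis le cas perfectoïde via les injections du Lemme \ref{lien_anneaux_e}, puis construire la fonctorialité comme au Lemme \ref{description_construction_produit}.

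Un point de lecture à corriger cependant dans votre dernier paragraphe : l'obstacle technique que vous anticipez n'existe pas ici. Par la Définition \ref{def_constr_prod_imparf}, l'anneau $F_{\Delta,q}^+$ est \emph{toujours} le complété du produit tensoriel sur $\mathbb{F}_q$, quelle que soit l'extension $F|E$ ; il n'y a donc aucun changement de base $\mathbb{F}_{q^f}\subset\mathbb{F}_{q^{f'}}$ à gérer. Le morphisme $F_{\Delta,q}\rightarrow F'_{\Delta,q}$ s'obtient simplement en tensorisant les inclusions $F_\alpha^+\hookrightarrow F'^+_\alpha$ au-dessus de $\mathbb{F}_q$, en complétant $(\underline{X})$-adiquement et en localisant, exactement comme dans la première construction du Lemme \ref{description_construction_produit}. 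La difficulté de régularité que vous évoquez n'apparaissait dans ce lemme que pour la \emph{seconde} construction $\widetilde{F}\mapsto\widetilde{F}_\Delta$ (produits tensoriels sur $\mathbb{F}_{q^f}$), qui n'a pas d'analogue imparfait dans ce corollaire. Votre plan se simplifie donc d'autant.
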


	\begin{defi}\label{defi_anneau_sep_imparf}
		Définissons les $(\Phi_{\Delta,q,r}\times \cg_{E,\Delta})$-anneaux $$E_{\Delta}^{\sep,+}=\colim \limits_{F\in \mathcal{G}\mathrm{al}_{E}} F^+_{\Delta,q} \,\,\,\, \text{et}\,\,\,\, E_{\Delta}^{\sep}=\colim \limits_{F\in \mathcal{G}\mathrm{al}_{E}} F_{\Delta,q}.$$ Nous pouvons munir ce dernier de la topologie discrète ou de la topologie adique, ayant pour base de voisinages de $0$ les $(X_{\Delta}^n E_{\Delta}^{\sep,+})_{n\geq 0}$. Les deux fournissent une structure de $(\Phi_{\Delta,q,r}\times \cg_{E,\Delta})$-anneau topologique. Nous n'utilisons que la topologie discrète dans cette sous-section.
	\end{defi}
	
	\begin{lemma}\label{module_induit_multi_imparf}
	Pour toute extension finie $E^{\sep}|F|E$, le morphisme canonique
	
	$$E_{\alpha}^{\sep}\otimes_{F_{\alpha}}\left(E_{\beta}^{\sep} \otimes_{F_{\beta}} \cdots \left(E_{\delta}^{\sep}\otimes_{F_{\delta}} F_{\Delta,q}\right)\right)\rightarrow E^{\sep}_{\Delta}$$ est un isomorphisme de $\cg_{E,\Delta}$-équivariant.
\end{lemma}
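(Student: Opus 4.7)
La preuve suit la même stratégie que celle du Lemme \ref{module_induit_multi}, mais dans le cas imparfait elle est sensiblement plus simple : l'extension $F'^+|F^+$ est finie libre et non plus seulement presque libre, ce qui nous épargne tout recours à l'algèbre presque. L'équivariance du morphisme est automatique puisque, par construction, les injections $F_{\alpha}^{\sep} \hookrightarrow E_{\Delta}^{\sep}$ sont $\cg_{E_{\alpha}}$-équivariantes. Quitte à passer à une colimite filtrée sur les extensions finies galoisiennes $F'|F$ contenues dans $E^{\sep}$, il suffit de démontrer l'énoncé analogue pour une extension finie galoisienne $F'|F$ : à savoir que le morphisme canonique
$$F'_{\alpha}\otimes_{F_{\alpha}}\left(F'_{\beta}\otimes_{F_{\beta}} \cdots \left(F'_{\delta}\otimes_{F_{\delta}} F_{\Delta,q}\right)\right)\rightarrow F'_{\Delta,q}$$
est un isomorphisme.

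Pour cela, je choisirais une base. Comme $F^+$ est un anneau de valuation discrète complet et comme $F'|F$ est une extension finie séparable (puisque $F'\subset E^{\sep}$), l'anneau $F'^+$ est un $F^+$-module libre de rang fini : fixons $(x_k)_{1\leq k \leq d}$ une telle base. Le produit tensoriel sur $\mathbb{F}_q$ (ou $\mathbb{F}_{q^f}$, selon les notations de la Définition \ref{def_constr_prod_imparf}) de la décomposition $F'^+=\bigoplus_k F^+ x_k$ facteur par facteur donne un isomorphisme
$$\bigoplus_{(i_{\alpha})\in \llbracket 1,d\rrbracket^{\Delta}} \left(\bigotimes_{\alpha\in \Delta,\,\mathbb{F}_q} F^+_{\alpha}\right) \prod_{\alpha\in \Delta} x_{i_{\alpha},\alpha} \xrightarrow{\sim} \bigotimes_{\alpha\in \Delta,\,\mathbb{F}_q} F'^+_{\alpha}.$$

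L'étape-clé consiste alors à compléter $(\underline{X})$-adiquement. Le membre de droite devient $F'^+_{\Delta,q}$ par définition. Comme la somme directe de gauche est finie (indexée par $\llbracket 1,d\rrbracket^{\Delta}$), sa complétion $(\underline{X})$-adique coïncide avec la somme directe des complétions, fournissant
$$\bigoplus_{(i_{\alpha})\in \llbracket 1,d\rrbracket^{\Delta}} F^+_{\Delta,q} \prod_{\alpha\in \Delta} x_{i_{\alpha},\alpha} \xrightarrow{\sim} F'^+_{\Delta,q}.$$
En inversant $X_{\Delta}$, puis en reconstituant la somme directe sur $\llbracket 1,d\rrbracket^{\Delta}$ comme un produit tensoriel itéré de sommes directes $\bigoplus_k F_{\alpha} x_k \otimes_{F_{\alpha}} (-)$, nous retrouvons le morphisme annoncé et concluons que c'est un isomorphisme.

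La principale subtilité attendue est la commutation de la complétion $(\underline{X})$-adique à la somme directe finie ; contrairement au cas perfectoïde, elle est ici immédiate puisqu'il s'agit d'une somme directe finie de modules de type fini sur l'anneau $\bigotimes_{\alpha,\mathbb{F}_q} F^+_{\alpha}$. Aucune finesse presque-mathématique n'est requise, et la structure de $F_{\alpha}$-module libre de $F'_{\alpha}$ permet une décomposition bien plus transparente que dans le cas perfectoïde. Notons que l'on pourrait alternativement déduire ce lemme du Lemme \ref{module_induit_multi} via l'injection naturelle du Lemme \ref{lien_anneaux_e}, mais la preuve directe ci-dessus semble plus économique.
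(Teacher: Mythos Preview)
Your proof is correct and follows essentially the same approach as the paper's: reduce to a finite extension $F'|F$, use that $F'^+$ is free of finite rank over the complete DVR $F^+$ to obtain an integral isomorphism after completing the tensor-product decomposition, then invert $X_{\Delta}$. The paper's proof is terser (it simply points back to Lemme~\ref{module_induit_multi} and notes that the almost-mathematics disappear because $F'^+$ is genuinely free over $F^+$), but your write-up spells out exactly the steps that justification entails.
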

\begin{proof}
	Identique au Lemme \ref{module_induit_multi}, les presque mathématiques en moins. En effet, le $F^+$-module $F'^+$ est libre de rang fini pour toute extension finie $F'|F$ ce qui donne même un isomorphisme $$F'^+_{\alpha} \otimes_{F^+_{\alpha}}\left(F'^+_{\beta} \otimes_{F^+_{\beta}} \cdots \left(F'^+_{\delta}\otimes_{F^+_{\delta}} F^+_{\Delta,q}\right)\right)\rightarrow F'^+_{\Delta,q}.$$
\end{proof}

	\begin{coro}\label{coro_descente_1_imparf}
	Nous obtenons que :
	
	\begin{enumerate}[itemsep=0mm]
		\item L'anneau $E_{\Delta}^{\sep}$ est réduit et sans $E_{\Delta}$-torsion.
		
		\item L'inclusion $E_{\Delta} \subseteq \left(E_{\Delta}^{\sep}\right)^{\cg_{E,\Delta}}$ est une égalité.
		
		\item 	Pour tout objet $D$ de $\cdetaleproj{\Phi_{\Delta,q,r}\times\cg_{E,\Delta}}{E_{\Delta}^{\sep}}$, le morphisme de comparaison
		
		$$E_{\Delta}^{\sep}\otimes_{E_{\Delta}} \mathrm{Inv}(D) \rightarrow D$$ est un isomorphisme.
	\end{enumerate}
\end{coro}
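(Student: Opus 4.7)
Le plan est de reproduire fidèlement la preuve du Corollaire \ref{coro_descente_1} en version imparfaite, en substituant le Lemme \ref{module_induit_multi_imparf} au Lemme \ref{module_induit_multi} et la Proposition \ref{integrite_imparf} à la Proposition \ref{integrite_edplus}. Les trois points se traitent séparément mais partagent une même idée directrice : tout se déduit de la structure de $E_{\Delta}^{\sep}$ comme colimite filtrée des $F_{\Delta,q}$ pour $F$ parcourant $\mathcal{G}\mathrm{al}_E$.

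Pour le point $(1)$, on commence par observer que la réduction et l'absence de $E_{\Delta}$-torsion sont stables par colimite filtrée d'anneaux. La Proposition \ref{integrite_imparf} donne ces deux propriétés pour chaque $F_{\Delta,q}$, et la colimite $E_{\Delta}^{\sep}=\colim_{F} F_{\Delta,q}$ hérite donc de la réduction et de l'absence de $E_{\Delta}$-torsion. La colimite étant de plus prise le long d'inclusions (Corollaire \ref{integrite_imparf} point $(2)$), cela justifie aussi toutes les identifications ensemblistes que l'on fera par la suite.

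Pour les points $(2)$ et $(3)$, on applique la descente galoisienne comme dans le cas perfectoïde. Soit $\widetilde{E}^{\sep}|F|E$ une extension finie galoisienne. Le Lemme \ref{module_induit_multi_imparf} décompose l'extension $F_{\Delta,q} \rightarrow E_{\Delta}^{\sep}$ comme une suite de changements de base le long des extensions galoisiennes $F_{\alpha} \rightarrow E_{\alpha}^{\sep}$ (en passant éventuellement à la colimite suivant les extensions finies galoisiennes $F'|F$ intermédiaires). Chacune de ces extensions est fidèlement plate, donc leur composée l'est aussi. Appliquant alors \cite[Théorème 3.6]{note_descente} au morphisme de $(\Phi_{\Delta,q,r}\times \cg_{E,\Delta})$-anneaux topologiques discrets $E_{\Delta} \rightarrow E_{\Delta}^{\sep}$, on obtient une équivalence entre la catégorie des $E_{\Delta}$-modules finis projectifs et celle des $E_{\Delta}^{\sep}$-modules finis projectifs munis d'une action lisse de $\cg_{E,\Delta}$. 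En particulier, prenant pour $M$ le $E_{\Delta}^{\sep}$-module libre de rang $1$ avec action triviale, on récupère $E_{\Delta} = (E_{\Delta}^{\sep})^{\cg_{E,\Delta}}$, ce qui est le point $(2)$. Prenant pour $D$ un objet général de $\cdetaleproj{\Phi_{\Delta,q,r}\times \cg_{E,\Delta}}{E_{\Delta}^{\sep}}$, l'équivalence fournit l'isomorphisme de comparaison du point $(3)$ ; on vérifie que l'action de $\Phi_{\Delta,q,r}$ descend puisqu'elle commute à celle de $\cg_{E,\Delta}$.

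Le seul point à surveiller est l'applicabilité du théorème de descente : il requiert que $F_{\Delta,q} \rightarrow E_{\Delta}^{\sep}$ se décrive réellement comme une Galois cover (au sens colimite filtrée de changements de base fidèlement plats le long d'extensions galoisiennes finies), ce que le Lemme \ref{module_induit_multi_imparf} fournit pour chaque extension finie galoisienne $F'|F$. Puisque les anneaux sont munis de la topologie discrète, les conditions de continuité des actions et des invariants sont automatiques. Aucune subtilité presque-mathématique n'intervient, contrairement au cas perfectoïde, les $F'^+_{\alpha}$ étant libres de rang fini sur $F^+_{\alpha}$.
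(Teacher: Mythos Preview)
Your proposal is correct and follows exactly the paper's own approach: the paper's proof is simply ``Identique à la Proposition \ref{edseptorsion} et au Corollaire \ref{coro_descente_1}'', and you have correctly unwound what this means by substituting Lemme \ref{module_induit_multi_imparf} for Lemme \ref{module_induit_multi} and Corollaire \ref{integrite_imparf} for Proposition \ref{integrite_edplus}, then invoking \cite[Théorème 3.6]{note_descente} for the descent. Your additional remark about the absence of almost-mathematics (since $F'^+_{\alpha}$ is free of finite rank over $F^+_{\alpha}$) is also exactly the simplification the paper flags in its proof of Lemme \ref{module_induit_multi_imparf}.
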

\begin{proof}
	Identique à la Proposition \ref{edseptorsion} et au Corollaire \ref{coro_descente_1}.
\end{proof}

	\begin{defiprop}\label{defipropDdelta}
	Le foncteur $$\mathbb{D}_{\Delta} \, : \, \mathrm{Rep}_{\mathbb{F}_r} \cg_{E,\Delta} \rightarrow \dmod{\Phi_{\Delta,q,r}}{E_{\Delta}}, \,\,\, V \mapsto \left(E^{\sep}_{\Delta} \otimes_{\mathbb{F}_r} V \right)^{\cg_{E,\Delta}}$$ est correctement défini, pleinement fidèle et son image essentielle est incluse dans $\detaleproj{\Phi_{\Delta,q,r}}{E_{\Delta}}$. Cette dernière catégorie est une sous-catégorie pleine monoïdale fermée. Le foncteur $\mathbb{D}_{\Delta}$ commute naturellement au produit tensoriel et au $\mathrm{Hom}$ interne.
\end{defiprop}
\begin{proof}
	Identique à la Proposition \ref{defipropwDdelta}, mise à part la pleine fidélité que nous n'avions alors pas traitée. Soit $V_1,V_2 \in \mathrm{Rep}_{\mathbb{F}_r} \cg_{E,\Delta}$. En utilisant la commuation naturelle de $\mathbb{D}_{\Delta}$ au $\mathrm{Hom}$ interne puis l'isomorphisme de comparaison pour $\underline{\mathrm{Hom}}(V_1,V_2)$, nous obtenons des isomorphismes dans $\dmod{\Phi_{\Delta,q,r}\times \cg_{E,\Delta}}{E_{\Delta}^{\sep}}$ 
	$$E_{\Delta}^{\sep} \otimes_{E_{\Delta}} \underline{\mathrm{Hom}}(\mathbb{D}_{\Delta}(V_1),\mathbb{D}_{\Delta}(V_2)) \xleftarrow{\sim} E_{\Delta}^{\sep} \otimes_{E_{\Delta}}\mathbb{D}_{\Delta}\left( \underline{\mathrm{Hom}}(V_1,V_2)\right) \xrightarrow{\sim} E_{\Delta}^{\sep}\otimes_{\mathbb{F}_r} \underline{\mathrm{Hom}}(V_1,V_2).$$ Calculons les invariants par $(\Phi_{\Delta,q,r}\times \cg_{E,\Delta})$. À gauche en utilisant \cite[Prop. 3.10]{formalisme_marquis} pour $\cg_{E,\Delta}$, la $E_{\Delta}$-algèbre sans torsion\footnote{Nous utilisons les deux premiers points du Corollaire \ref{coro_descente_1_imparf}.} $E_{\Delta}^{\sep}$ et le $E_{\Delta}$-module fini projectif $\underline{\mathrm{Hom}}(\mathbb{D}_{\Delta}(V_1),\mathbb{D}_{\Delta}(V_2))$. À droite, le Corollaire \ref{inv_frob_multi_perf} couplé à l'injection $E_{\Delta}^{\sep}\hookrightarrow \widetilde{E}_{\Delta}^{\sep}$ donne $\mathbb{F}_r=\left(E_{\Delta}^{\sep}\right)^{\Phi_{\Delta,q,r}}$ ; on applique ensuite \cite[Prop. 3.10]{formalisme_marquis} à l'inclusion $\mathbb{F}_r\subset E_{\Delta}^{\sep}$. On obtient
		
		$$\mathrm{Hom}_{\mathrm{Rep}_{\mathbb{F}_r} \cg_{E,\Delta}}(V_1,V_2) \xrightarrow{\sim}\mathrm{Hom}_{\dmod{\Phi_{\Delta,q,r}}{E_{\Delta}}}\left(\mathbb{D}_{\Delta}(V_1),\mathbb{D}_{\Delta}(V_2)\right)$$ et l'on vérifie qu'il s'agit de l'application donnée par fonctorialité\footnote{Cette identification de l'application est déjà contenu dans \cite[Prop. 3.3]{formalisme_marquis}.}.
\end{proof}

	\vspace{1cm}
	\subsection{\'Equivalence de Fontaine multivariable modulo $p$ pour certains corps de caractéristique $p$}\label{section_equiv_car_p_imparf}

	Il reste à démontrer que $\mathbb{D}_{\Delta}$ est essentiellement surjectif et à expliciter son quasi-inverse. À dessein, nous avons commencé par l'équivalence de Fontaine perfectoïde pour pouvoir en déduire des propriétés de $\mathbb{D}_{\Delta}$. 
	
	\begin{defi}
		Appelons $\widetilde{\mathrm{Ex}}\, : \, \detaleproj{\Phi_{\Delta,q,r}}{E_{\Delta}}\rightarrow \detaleproj{\Phi_{\Delta,q,r}}{\widetilde{E}_{\Delta}}$ le foncteur obtenu à partir de l'inclusion de $\Phi_{\Delta,q,r}$-anneaux du Lemme \ref{lien_anneaux_e} en suivant \cite[Prop. 3.3]{formalisme_marquis}.
	\end{defi}

	\smallskip
	
	La stratégie consiste à démontrer que $\widetilde{\mathbb{D}}_{\Delta} \cong \widetilde{\mathrm{Ex}}\circ \mathbb{D}_{\Delta}$ puis que $\widetilde{\mathrm{Ex}}$ est pleinement fidèle. Grâce à la commutation au $\mathrm{Hom}$ interne \cite[Prop. 3.3]{formalisme_marquis}, il suffit de démontrer que $$\forall D\in \detaleproj{\Phi_{\Delta,q,r}}{E_{\Delta}}, \,\,\, D^{\Phi_{\Delta,q,r}}\cong \widetilde{\mathrm{Ex}}(D)^{\Phi_{\Delta,q,r}}.$$
	
	Rappelons brièvement la stratégie de preuve de ce énoncé dans le cas univariable. Soit $d\in (\widetilde{E}\otimes_E D)^{\varphi=\mathrm{Id}}$ que l'on écrit $d=\sum x_i e_i$ pour une $E$-base $(e_i)$ de $D$. Nous munissons $\left(\widetilde{E}\otimes_E D\right)$ de son unique structure de \linebreak$\widetilde{E}$-espace vectoriel topologique pour la topologie adique sur $\widetilde{E}$. Quitte à choisir correctement la base, nous pouvons supposer que $$\left(\bigoplus X\widetilde{E}^+ e_i\right) \subset (\widetilde{E}\otimes_E D)^{++}:=\{d \in \widetilde{E}\otimes_E D \,  |\, \varphi_D^n(d)\xrightarrow[n\rightarrow + \infty]{} 0\}$$ et que $\left(\oplus \, E^+ e_i\right)$ est stable par $\varphi_D$. Pour $N$ assez grand, on peut écrire $\varphi^N(x_i)\in E + X\widetilde{E}^+$. Nous en déduisons que $$d=\varphi^N(d)=\sum \varphi^N(x_i) \varphi^N(e_i)\in D + (\widetilde{E}\otimes_E D)^{++}.$$ En appliquant $\varphi_D^{n}$ pour $n$ arbitrairement grand on en déduit que $d$ est dans l'adhérence de $D$, i.e. dans $D$.
	
	Dans le cas multivariable, la topologie adique sur $\widetilde{E}_{\Delta}$ ne permet pas de décalquer la preuve. En effet, si l'on considère $x=\sum_{n\geq 0} X_{\alpha}^{q^n+q^{-n}}$, aucun $\varphi_{\Delta,q}^N(x)$ n'appartient à $E_{\Delta}+X_{\Delta} \widetilde{E}_{\Delta}^+$. Il faut considérer la topologie colimite, malgré son apparence tarabiscotée.
	
	\begin{lemma}\label{lemme_d_ferme_1}
		Le sous-anneau $E_{\Delta}$ est fermé dans $\widetilde{E}_{\Delta}$ muni de la topologie colimite.
	\end{lemma}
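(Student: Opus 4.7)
Le plan est de ramener la fermeture de $E_{\Delta}$ dans $\widetilde{E}_{\Delta}$ pour la topologie colimite à celle de $E_{\Delta}^+$ dans $\widetilde{E}_{\Delta}^+$ pour la topologie $(\underline{X})$-adique. Par définition de la topologie colimite, un sous-ensemble $F\subset \widetilde{E}_{\Delta}$ est fermé si et seulement si, pour tout $n\geq 0$, l'intersection $F\cap X_{\Delta}^{-n} \widetilde{E}_{\Delta}^+$ est fermée dans $X_{\Delta}^{-n} \widetilde{E}_{\Delta}^+$ muni de sa topologie $(\underline{X})$-adique.

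Je commencerais par calculer cette intersection pour $F=E_{\Delta}$. Appliqué au multi-indice $(n,\ldots,n)$, le Lemme \ref{lien_anneaux_e} (ou une analyse directe à partir de la Remarque \ref{ecriture_explicite_anneaux}) donne $X_{\Delta}^n E_{\Delta} \cap \widetilde{E}_{\Delta}^+ = X_{\Delta}^n E_{\Delta}^+$, soit $E_{\Delta}\cap X_{\Delta}^{-n}\widetilde{E}_{\Delta}^+ = X_{\Delta}^{-n} E_{\Delta}^+$. Comme la multiplication par $X_{\Delta}^n$ réalise un homéomorphisme entre $X_{\Delta}^{-n}\widetilde{E}_{\Delta}^+$ et $\widetilde{E}_{\Delta}^+$ munis de leurs topologies $(\underline{X})$-adiques, il suffit de démontrer que $E_{\Delta}^+$ est fermé dans $\widetilde{E}_{\Delta}^+$ pour ladite topologie.

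Pour ce dernier point, l'argument est que $E_{\Delta}^+$ est un sous-groupe complet de l'espace séparé $\widetilde{E}_{\Delta}^+$, donc fermé. La complétude intrinsèque de $E_{\Delta}^+$ pour sa topologie $(\underline{X})$-adique est immédiate par construction : il s'agit par définition de la complétion $(\underline{X})$-adique du produit tensoriel $\bigotimes_{\alpha\in \Delta, \mathbb{F}_q} E_{\alpha}^+$. Le point clé, qui constitue la principale subtilité, est d'identifier la topologie induite sur $E_{\Delta}^+$ par l'inclusion dans $\widetilde{E}_{\Delta}^+$ à sa propre topologie $(\underline{X})$-adique. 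Cela résulte précisément du Lemme \ref{lien_anneaux_e} appliqué à une famille génératrice de $(\underline{X})^m$, qui garantit $(\underline{X})^m \widetilde{E}_{\Delta}^+ \cap E_{\Delta}^+ = (\underline{X})^m E_{\Delta}^+$ pour tout $m\geq 0$ ; sans ce contrôle fin des images réciproques des idéaux, la topologie induite pourrait être strictement plus grossière et l'argument de complétude échouerait.
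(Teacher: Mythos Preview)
Your overall strategy is correct and the reduction to the integral level is shared with the paper, but there is a slip in the displayed intermediate equality: as written, $X_{\Delta}^n E_{\Delta} \cap \widetilde{E}_{\Delta}^+ = X_{\Delta}^n E_{\Delta}^+$ is false, since $X_{\Delta}$ is invertible in $E_{\Delta}$ and the left-hand side is just $E_{\Delta}\cap \widetilde{E}_{\Delta}^+$. What Lemme~\ref{lien_anneaux_e} actually yields for the single multi-index $(n,\ldots,n)$ is $X_{\Delta}^n \widetilde{E}_{\Delta}^+ \cap E_{\Delta}^+ = X_{\Delta}^n E_{\Delta}^+$; from this one deduces $E_{\Delta}\cap \widetilde{E}_{\Delta}^+=E_{\Delta}^+$ (write any element of $E_{\Delta}$ as $X_{\Delta}^{-m}a$ with $a\in E_{\Delta}^+$ and use the lemma for $m$), hence $E_{\Delta}\cap X_{\Delta}^{-n}\widetilde{E}_{\Delta}^+ = X_{\Delta}^{-n}E_{\Delta}^+$ after multiplying by $X_{\Delta}^{-n}$. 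This is a harmless typo, and your conclusion is the one needed.

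Where your argument genuinely differs from the paper is in proving that $E_{\Delta}^+$ is closed in $\widetilde{E}_{\Delta}^+$ for the $(\underline{X})$-adic topology. The paper argues directly from the explicit description of Remarque~\ref{ecriture_explicite_anneaux}: an element outside $E_{\Delta}^+$ carries a monomial $aX^{\underline{d}}$ with $\underline{d}\notin \mathbb{N}^{\Delta}$, and this monomial survives in any $(\underline{X})$-adic neighbourhood, so the complement is open. Your route is more structural: you invoke Lemme~\ref{lien_anneaux_e} a second time to identify the induced and intrinsic $(\underline{X})$-adic topologies on $E_{\Delta}^+$, observe that $E_{\Delta}^+$ is complete by construction, and conclude via the general fact that a complete subgroup of a separated topological group is closed. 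Both arguments are valid; the paper's is more hands-on and self-contained, while yours isolates the single algebraic input (control of preimages of ideals) and would transport more readily to situations without such an explicit monomial description.
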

	\begin{proof}
		Au niveau entier, utilisons la description de nos anneaux à la Remarque \ref{ecriture_explicite_anneaux}. Soit $x\in \widetilde{E}_{\Delta}^+ \backslash E_{\Delta}^+$. Il existe un monôme $a X^{\underline{d}}$ apparaissant dans $x$ avec $d\in \N[q^{-1}]^{\Delta}\backslash \N^{\Delta}$. Ce terma apparaît encore $(\underline{X})$-adique localement, d'où l'on tire que $E_{\Delta}^+$ est fermé dans $\widetilde{E}_{\Delta}^+$.Avec l'égalité $E_{\Delta} \cap X_{\Delta}^{-n}\widetilde{E}_{\Delta}^+=X_{\Delta}^{-n}E_{\Delta}^+$ on fait le même raisonnement sur chaque terme de la colimite.
	\end{proof}
	
	\begin{lemma}\label{lemm_ferme_mod_top}
	Soient $R\subseteq T$ un sous-anneau fermé d'un anneau topologique $T$ et $D$ un $R$-module fini projectif. Puisque $D$ est plat sur $R$, il est possible de voir $D$ comme un sous-espace du $T$-module fini projectif $\left(T\otimes_R D\right)$. Le sous-espace $D$ est fermé pour la topologie initiale.
	\end{lemma}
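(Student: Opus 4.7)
Le plan est d'utiliser que $D$ est facteur direct d'un module libre pour se ramener à la situation élémentaire d'un produit de copies de $R$ dans un produit de copies de $T$. Puisque $D$ est fini projectif sur $R$, on peut fixer un $R$-module $D'$ et un entier $n$ tels que $D\oplus D'\cong R^n$. En tensorisant par $T$, on obtient une décomposition $\left(T\otimes_R D\right) \oplus \left(T\otimes_R D'\right) \cong T^n$. La topologie initiale sur le $T$-module fini projectif $(T\otimes_R D)$ est précisément celle obtenue comme facteur direct topologique de $T^n$ muni de sa topologie produit, et elle ne dépend pas du choix de $D'$.

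Sous cette identification, je démontrerais l'égalité ensembliste $D= R^n \cap (T\otimes_R D)$ à l'intérieur de $T^n$. En effet, un élément $(x,y) \in (T\otimes_R D) \oplus (T\otimes_R D')$ appartient à $T\otimes_R D$ ssi $y=0$ ; s'il appartient également à $R^n = D\oplus D'$, alors $x\in D$ et $y\in D'$, d'où $(x,0)\in D$. Réciproquement, tout élément de $D$ appartient bien à $R^n$ et à $T\otimes_R D$.

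Il ne reste plus qu'à conclure par des arguments topologiques standards. Puisque $R$ est fermé dans $T$, le sous-espace $R^n$ est fermé dans $T^n$ pour la topologie produit (comme produit fini de fermés). L'intersection $R^n \cap (T\otimes_R D)$ est donc un fermé du sous-espace $T\otimes_R D$ muni de sa topologie induite par $T^n$, qui coïncide avec la topologie initiale mentionnée dans l'énoncé. Ceci termine la preuve. L'argument est essentiellement formel ; il n'y a pas de difficulté particulière à attendre une fois la décomposition en facteur direct fixée, la seule subtilité étant de bien invoquer l'indépendance de la topologie initiale vis-à-vis du choix du complément $D'$.
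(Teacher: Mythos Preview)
Ta preuve est correcte et suit exactement la m\^eme d\'emarche que celle du papier : on fixe une pr\'esentation $D\oplus D'\cong R^n$, on tensorise pour obtenir $(T\otimes_R D)\oplus(T\otimes_R D')\cong T^n$, puis on \'ecrit $D=(T\otimes_R D)\cap R^n$ dans $T^n$ et on conclut par fermeture de $R^n$ dans $T^n$. Tu es simplement un peu plus explicite sur la v\'erification de l'\'egalit\'e ensembliste et sur l'ind\'ependance de la topologie initiale vis-\`a-vis du compl\'ement choisi, ce qui ne change rien au fond.
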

	\begin{proof}
		Fixons une présentation $D\oplus D'=R^k$ de $D$. L'écriture $(T\otimes_R D)\oplus (T\otimes_R D') = T^k$ fournit une présentation de $\left(T\otimes_R D\right)$. Puisque $R\subseteq T$ est fermé, c'est encore le cas de $R^k \subseteq T^k$ avec les topologies produits. La topologie initiale sur $T\otimes_R D$ étant la topologie induite depuis $T^k$, on en déduit que $D=(T\otimes_R D)\cap R^k$ est fermé dans $T\otimes_R D$.
	\end{proof}

	\begin{prop}\label{ext_inv_1}
		Soit $D$ un objet de $\detaleproj{\Phi_{\Delta,q,r}}{E_{\Delta}}$. L'inclusion $$D^{\varphi_{\Delta,r}=\mathrm{Id}}\subseteq \widetilde{\mathrm{Ex}}(D)^{\varphi_{\Delta,r}=\mathrm{Id}}$$ est une égalité.
	\end{prop}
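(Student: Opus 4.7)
The plan is to adapt the classical one-variable argument to the multivariable setting, using the colimit topology on $\widetilde{E}_{\Delta}$ rather than the adic topology (as motivated by the counterexample $x=\sum_n X_{\alpha}^{q^n+q^{-n}}$ discussed just before the proposition). The skeleton is the standard Frobenius-approximation trick: one equips $\widetilde{\mathrm{Ex}}(D)$ with a topology in which $D$ is closed, then shows that iterating $\varphi_{\Delta,r}$ brings $d$ arbitrarily close to $D$, and concludes from $\varphi_{\Delta,r}(d)=d$ that $d\in\overline{D}=D$.

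Concretely, I would first equip $\widetilde{E}_{\Delta}$ with its colimit topology and $\widetilde{\mathrm{Ex}}(D)=\widetilde{E}_{\Delta}\otimes_{E_{\Delta}}D$ with the initial topology coming from any finite projective presentation $D\oplus D'\cong E_{\Delta}^{k}$. Lemma \ref{lemme_d_ferme_1} states that $E_{\Delta}$ is closed in $\widetilde{E}_{\Delta}$, and Lemma \ref{lemm_ferme_mod_top} then gives that $D$ is closed in $\widetilde{\mathrm{Ex}}(D)$. So the problem reduces to showing $d\in\overline{D}$ for any $d\in\widetilde{\mathrm{Ex}}(D)^{\varphi_{\Delta,r}=\mathrm{Id}}$.

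Using that $D$ is étale and finite projective over $E_{\Delta}$, I would choose an $E_{\Delta}^{+}$-lattice $D^{+}\subset D$ stable (in fact étale) under $\varphi_{\Delta,r}$ and generators $e_{1},\ldots,e_{k}$ of $D^{+}$; writing $d=\sum_{i}x_{i}\otimes e_{i}$ with $x_{i}\in\widetilde{E}_{\Delta}$, the goal becomes: for every neighborhood $V$ of $0$ in $\widetilde{E}_{\Delta}$ for the colimit topology, find $N$ and $p_{1},\ldots,p_{k}\in E_{\Delta}$ such that $\varphi_{\Delta,r}^{N}(x_{i})\in p_{i}+V$ for each $i$. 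Since $D^{+}$ is Frobenius-stable, the identity $d=\varphi_{\Delta,r}^{N}(d)=\sum_{i}\varphi_{\Delta,r}^{N}(x_{i})\,\varphi_{\Delta,r}^{N}(e_{i})$ then places $d$ in $D+V''$ for a suitable neighborhood $V''$ controlled by $V$, proving $d\in\overline{D}$.

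The main obstacle is the approximation lemma: given $x\in\widetilde{E}_{\Delta}$, approximating $x$ modulo an arbitrary basic open $\bigcup_{n}\frac{(\underline{X})^{m_{n}}}{X_{\Delta}^{n}}\widetilde{E}_{\Delta}^{+}$ of the colimit topology by an element of $E_{\Delta}$ after sufficiently many applications of $\varphi_{\Delta,r}$. Using the explicit description in Remark \ref{ecriture_explicite_anneaux}, I would write $x=X_{\Delta}^{-n_{0}}y$ with $y\in\widetilde{E}_{\Delta}^{+}$, decompose $y=y_{1}+y_{2}$ where $y_{1}$ is a finite sum of monomials and $y_{2}\in(\underline{X})^{m_{n_{0}}}\widetilde{E}_{\Delta}^{+}$, and take $N$ large enough that $\varphi_{\Delta,r}^{N}=(\cdot)^{r^{N}}$ clears all fractional exponents appearing in $y_{1}$ (so $\varphi_{\Delta,r}^{N}(y_{1})\in E_{\Delta}^{+}$) while $\varphi_{\Delta,r}^{N}(y_{2})\in (\underline{X})^{r^{N}m_{n_{0}}}\widetilde{E}_{\Delta}^{+}$ lands deep in the target neighborhood after multiplication by $X_{\Delta}^{-n_{0}r^{N}}$. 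The delicate point is the balance between the $X_{\Delta}$-denominators growing through the colimit and the $(\underline{X})$-adic decay of $\varphi_{\Delta,r}^{N}(y_{2})$; the decay in the Frobenius direction is exponential whereas the denominator grows only linearly in $N$ for fixed $n_{0}$, which is precisely why the colimit topology succeeds where the adic topology fails.
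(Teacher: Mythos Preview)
Your skeleton matches the paper's: equip $\widetilde{\mathrm{Ex}}(D)$ with the initial topology coming from the colimit topology on $\widetilde{E}_{\Delta}$, invoke Lemmas~\ref{lemme_d_ferme_1} and~\ref{lemm_ferme_mod_top} to get $D$ closed, and use $\varphi_{\Delta,r}$-invariance to show $d\in\overline{D}$. The gap is in your approximation lemma, specifically in the last sentence.

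You assert that ``the denominator grows only linearly in $N$ for fixed $n_0$''. This is false: $\varphi_{\Delta,r}^{N}(X_{\Delta}^{-n_0})=X_{\Delta}^{-n_0 r^{N}}$, so the colimit level of $\varphi_{\Delta,r}^{N}(x)$ is $n_0 r^{N}$, exponential in $N$. Your error term $X_{\Delta}^{-n_0 r^{N}}\varphi_{\Delta,r}^{N}(y_2)$ lies in $X_{\Delta}^{-n_0 r^{N}}(\underline{X})^{r^{N}m_{n_0}}\widetilde{E}_{\Delta}^{+}$, and for this to sit inside $V=\bigcup_{n}X_{\Delta}^{-n}(\underline{X})^{m_n}\widetilde{E}_{\Delta}^{+}$ you would need $r^{N}m_{n_0}\geq m_{n_0 r^{N}}$. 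The only constraint on $(m_n)$ is $m_{n+1}\geq m_n+|\Delta|$; nothing prevents it from growing super-exponentially, so this inequality fails in general. Your scheme of ``for each $V$, choose $N$'' therefore cannot close.

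The paper sidesteps this by organizing the argument differently. It applies $\varphi_{\Delta,r}^{N}$ a \emph{single} time to produce a decomposition $d=d_0+d^{++}$ with $d_0\in D$ and $d^{++}\in\sum_i(\underline{X})\widetilde{E}_{\Delta}^{+}e_i$ --- the error term carries \emph{no} $X_{\Delta}$-denominator. Because the matrix of $\varphi_{\Delta,r,D}$ has been arranged to lie in $E_{\Delta}^{+}$, further iteration preserves this: $\varphi_{\Delta,r}^{n}(d^{++})\in\sum_i(\underline{X})^{r^{n}}\widetilde{E}_{\Delta}^{+}e_i$, which stays at level $0$ of the colimit and tends to $0$ there (only $m_0$ matters, never any $m_{n}$ for $n>0$). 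Then $d=\varphi_{\Delta,r}^{n}(d_0)+\varphi_{\Delta,r}^{n}(d^{++})$ with $\varphi_{\Delta,r}^{n}(d_0)\in D$ exhibits $d$ as a limit of elements of $D$. The single Frobenius push absorbs all the denominator into the $D$-part; every subsequent iterate acts on an integral remainder, so the exponential denominator growth that kills your argument never occurs.
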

	\begin{proof}
		Prenons $(e_i)_{1\leq i \leq d}$ une famille génératrice de $D$ et posons $A=(a_{i,j})\in \mathrm{M}_d(E_{\Delta})$ une matrice de $\varphi_{\Delta,r,D}$ dans la famille $(e_i)$. Quitte à considérer la famille génératrice $(X_{\Delta}^n e_i)$, on  $A$ est à coefficients dans $E_{\Delta}^+$. Nous avons alors par récurrence immédiate  $$\forall N\geq 0, \,\,\,\varphi_{\Delta,r}^N\left( \sum_i (\underline{X}) \widetilde{E}_{\Delta}^+ e_i\right) \subseteq \sum_i (\underline{X})^{r^N}\widetilde{E}_{\Delta}^+ e_i$$ En particulier $$\sum_i (\underline{X}) \widetilde{E}_{\Delta}^+ e_i \subseteq (\widetilde{E}_{\Delta}\otimes_{E_{\Delta}} D)^{++}:=\left\{ d\in\left( \widetilde{E}_{\Delta}\otimes_{E_{\Delta}} D\right)\, \bigg|\, \substack{\varphi_{\Delta,r,\widetilde{\mathrm{Ex}}(D)}^n(d) \xrightarrow[n\rightarrow +\infty]{} 0 \\ \text{pour la topologie initiale déduite de} \\ \text{la topologie colimite sur } \widetilde{E}_{\Delta}}\right\}.$$
		
		Prenons à présent une écriture $d=\sum x_i e_i$ de $d\in (\widetilde{E}_{\Delta}\otimes_{E_{\Delta}} D)^{\varphi_{\Delta,r}=\mathrm{Id}}$. Grâce à la description de la Remarque \ref{ecriture_explicite_anneaux}, nous savons que les monômes des $x_i$ de degré total inférieur à $1$ sont en nombre fini. En particulier, leurs dénominateurs sont des puissances de $p$ divisant un certain $r^N$. Nous en déduisons  $$d=\varphi_{\Delta,r,\widetilde{\mathrm{Ex}}(D)}^N(d)=\sum_i \varphi_{\Delta,r}^N(x_i) \varphi_{\Delta,r,D}^N(e_i) \in \left[D+\sum_i (\underline{X})\widetilde{E}_{\Delta}^+ e_j \right]\subseteq \left[D+ (\widetilde{E}_{\Delta}\otimes_{E_{\Delta}} D)^{++}\right].$$ Écrivons une telle décomposition $d=d_0+d^{++}$. Alors,
		
	$$d	=\lim \limits_{n\rightarrow + \infty} \varphi_{\Delta,r,D}^n(d_0)$$ pour la topologie initiale associée à la topologie colimite sur $\widetilde{E}_{\Delta}$. Les Lemmes \ref{lemme_d_ferme_1} et \ref{lemm_ferme_mod_top} combinés démontrent que $D$ est fermé dans $\left(\widetilde{E}_{\Delta}\otimes_{E_{\Delta}} D\right)$ pour la topologie initiale, d'où $d\in D$.
	\end{proof}
	
	Nous en tirons le corollaire qui nous servira vraiment en prenant les invariants dans la Proposition \ref{ext_inv_1}.
	
	\begin{coro}\label{ext_inv_2}
		Soit $D$ un objet de $\detaleproj{\Phi_{\Delta,q,r}}{E_{\Delta}}$. L'inclusion suivante est une égalité $$D^{\Phi_{\Delta,q,r}}\subseteq \widetilde{\mathrm{Ex}}(D)^{\Phi_{\Delta,q,r}}.$$
	\end{coro}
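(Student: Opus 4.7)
Le plan est de déduire ce corollaire très directement de la Proposition \ref{ext_inv_1}, sans avoir à répéter l'argument topologique. L'observation clé est que le monoïde $\Phi_{\Delta,q,r}$ est engendré par $\varphi_{\Delta,r}$ et par les $\varphi_{\alpha,q}$ pour $\alpha \in \Delta$, de sorte qu'un élément $d \in \widetilde{\mathrm{Ex}}(D)$ est $\Phi_{\Delta,q,r}$-invariant si et seulement s'il est à la fois $\varphi_{\Delta,r}$-invariant et $\varphi_{\alpha,q}$-invariant pour chaque $\alpha$.

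La première étape consiste à remarquer que l'inclusion $\Phi_{\Delta,q,r}$-équivariante $D \hookrightarrow \widetilde{\mathrm{Ex}}(D)$ fournit déjà l'inclusion à démontrer avec $\subseteq$, et nous prouvons la réciproque. Soit donc $d \in \widetilde{\mathrm{Ex}}(D)^{\Phi_{\Delta,q,r}}$. En particulier, $d \in \widetilde{\mathrm{Ex}}(D)^{\varphi_{\Delta,r}=\mathrm{Id}}$ ; la Proposition \ref{ext_inv_1} affirme alors que $d$ appartient à $D^{\varphi_{\Delta,r}=\mathrm{Id}}$, donc à $D$.

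Il reste à constater que $d$, vu comme élément de $D$, est bien $\Phi_{\Delta,q,r}$-invariant : comme l'action de $\Phi_{\Delta,q,r}$ sur $D$ est la restriction de celle sur $\widetilde{\mathrm{Ex}}(D)$ (c'est exactement la fonctorialité de $\widetilde{\mathrm{Ex}}$ rappelée dans \cite[Proposition 3.3 1.]{formalisme_marquis}), les identités $\varphi_{\alpha,q}(d) = d$ vérifiées dans $\widetilde{\mathrm{Ex}}(D)$ sont encore vérifiées dans $D$. Nous obtenons ainsi $d \in D^{\Phi_{\Delta,q,r}}$, ce qui conclut. Il n'y a aucun obstacle réel ici ; tout le travail a été effectué dans la Proposition \ref{ext_inv_1}, et ce corollaire n'en est qu'un reformulation immédiate.
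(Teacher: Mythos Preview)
Your proof is correct and follows exactly the approach of the paper, which simply says that the corollary is obtained ``en prenant les invariants dans la Proposition \ref{ext_inv_1}''. You have merely spelled out in detail what this one-line justification means: an element of $\widetilde{\mathrm{Ex}}(D)^{\Phi_{\Delta,q,r}}$ is in particular $\varphi_{\Delta,r}$-invariant, hence lies in $D$ by Proposition \ref{ext_inv_1}, and then its $\Phi_{\Delta,q,r}$-invariance in $D$ follows from the equivariance of the inclusion.
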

	
	\medskip

	 Nous identifions les deux catégories $\detaleproj{\Phi_{\Delta,q,r}^{\gp}}{\widetilde{E}_{\Delta}}$ et $\detaleproj{\Phi_{\Delta,q,r}}{\widetilde{E}_{\Delta}}$ dans la suite..
	
	\begin{prop}\label{tildDetD}
		1) Les foncteurs $\widetilde{\mathbb{D}}_{\Delta}$ et $\widetilde{\mathrm{Ex}}\circ \mathbb{D}_{\Delta}$ sont isomorphes.
		
		2) Le foncteur $\widetilde{\mathrm{Ex}}$ est pleinement fidèle.
	\end{prop}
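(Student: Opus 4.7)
Pour le point 1, l'approche consiste à construire une transformation naturelle explicite de $\widetilde{\mathrm{Ex}}\circ \mathbb{D}_{\Delta}$ vers $\widetilde{\mathbb{D}}_{\Delta}$ puis à démontrer qu'elle est un isomorphisme par changement de base fidèlement plat. Concrètement, je commencerais par passer à la colimite les injections $F_{\Delta,q}\hookrightarrow \widetilde{F}_{\Delta,q}$ du Lemme \ref{lien_anneaux_e} (Corollaire \ref{integrite_imparf}) pour obtenir une inclusion $E_{\Delta}^{\sep}\hookrightarrow \widetilde{E}_{\Delta}^{\sep}$ compatible aux actions de $\Phi_{\Delta,q,r}$ et équivariante pour l'identification canonique $\cg_{E,\Delta}\cong \cg_{\widetilde{E},\Delta}$ (issue de l'identité des théories de Galois de $E$ et $\widetilde{E}$). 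Cela fournit, pour tout $V\in \mathrm{Rep}_{\mathbb{F}_r} \cg_{E,\Delta}$, une flèche naturelle $\mathbb{D}_{\Delta}(V)\rightarrow \widetilde{\mathbb{D}}_{\Delta}(V)$, qui s'étend par $\widetilde{E}_{\Delta}$-linéarité en un morphisme
$$\eta_V \, : \, \widetilde{\mathrm{Ex}}(\mathbb{D}_{\Delta}(V)) = \widetilde{E}_{\Delta}\otimes_{E_{\Delta}} \mathbb{D}_{\Delta}(V) \longrightarrow \widetilde{\mathbb{D}}_{\Delta}(V)$$
dans $\detaleproj{\Phi_{\Delta,q,r}}{\widetilde{E}_{\Delta}}$.

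Pour établir que $\eta_V$ est un isomorphisme, l'idée est de le tester après extension des scalaires à $\widetilde{E}_{\Delta}^{\sep}$. D'un côté, le morphisme de comparaison du Corollaire \ref{coro_descente_1_imparf} et l'associativité du produit tensoriel donnent $\widetilde{E}_{\Delta}^{\sep}\otimes_{E_{\Delta}} \mathbb{D}_{\Delta}(V)\cong \widetilde{E}_{\Delta}^{\sep}\otimes_{\mathbb{F}_r} V$ ; de l'autre, le morphisme de comparaison perfectoïde (voir la preuve du Théorème \ref{equiv_perf_mod_p} ou le Corollaire \ref{coro_descente_1}) identifie $\widetilde{E}_{\Delta}^{\sep}\otimes_{\widetilde{E}_{\Delta}} \widetilde{\mathbb{D}}_{\Delta}(V)$ à ce même $\widetilde{E}_{\Delta}^{\sep}\otimes_{\mathbb{F}_r} V$. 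En vérifiant la compatibilité de ces deux identifications avec $\eta_V$, on obtient un isomorphisme après tensorisation par $\widetilde{E}_{\Delta}^{\sep}$. La fidèle platitude de $\widetilde{E}_{\Delta}\rightarrow \widetilde{E}_{\Delta}^{\sep}$ (Lemme \ref{module_induit_multi}) conclut.

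Pour le point 2, la stratégie est d'utiliser la commutation au $\mathrm{Hom}$ interne pour réduire la pleine fidélité de $\widetilde{\mathrm{Ex}}$ au Corollaire \ref{ext_inv_2}. Plus précisément, pour $D_1, D_2 \in \detaleproj{\Phi_{\Delta,q,r}}{E_{\Delta}}$, la description habituelle des $\mathrm{Hom}$ dans une catégorie monoïdale fermée donne
$$\mathrm{Hom}(D_1,D_2)=\underline{\mathrm{Hom}}_{E_{\Delta}}(D_1,D_2)^{\Phi_{\Delta,q,r}}$$
et de même pour $\widetilde{\mathrm{Ex}}(D_1),\widetilde{\mathrm{Ex}}(D_2)$. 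La commutation de $\widetilde{\mathrm{Ex}}$ au $\mathrm{Hom}$ interne, conséquence de \cite[Proposition 3.3 3.]{formalisme_marquis}, permet d'identifier $\underline{\mathrm{Hom}}_{\widetilde{E}_{\Delta}}(\widetilde{\mathrm{Ex}}(D_1),\widetilde{\mathrm{Ex}}(D_2))\cong \widetilde{\mathrm{Ex}}(\underline{\mathrm{Hom}}_{E_{\Delta}}(D_1,D_2))$, et le morphisme $\mathrm{Hom}(D_1,D_2)\rightarrow \mathrm{Hom}(\widetilde{\mathrm{Ex}}(D_1),\widetilde{\mathrm{Ex}}(D_2))$ s'identifie alors à l'inclusion $\underline{\mathrm{Hom}}_{E_{\Delta}}(D_1,D_2)^{\Phi_{\Delta,q,r}}\subseteq \widetilde{\mathrm{Ex}}(\underline{\mathrm{Hom}}_{E_{\Delta}}(D_1,D_2))^{\Phi_{\Delta,q,r}}$. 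Or le Corollaire \ref{ext_inv_2} affirme précisément que cette inclusion est une égalité, ce qui donne la pleine fidélité.

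Les deux points sont donc relativement directs une fois le travail technique des sections précédentes accompli ; l'obstacle principal n'est pas la rédaction de cette proposition en elle-même, mais bien la Proposition \ref{ext_inv_1}, qui requiert d'utiliser la topologie colimite pour surmonter le défaut d'approximation de la topologie adique dans le cas multivariable, comme expliqué avant le Lemme \ref{lemme_d_ferme_1}.
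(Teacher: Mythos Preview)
Your proposal is correct and follows essentially the same approach as the paper: for 1) you construct the natural transformation via the inclusion $E_{\Delta}^{\sep}\hookrightarrow \widetilde{E}_{\Delta}^{\sep}$ and test it after base change to $\widetilde{E}_{\Delta}^{\sep}$ using the comparison isomorphisms on both sides and faithful flatness, exactly as the paper does; for 2) you reduce full faithfulness to Corollaire~\ref{ext_inv_2} via the internal $\mathrm{Hom}$, which is again the paper's argument (the paper cites \cite[Proposition 3.5 and Lemme 2.14]{formalisme_marquis} rather than \cite[Proposition 3.3 3.]{formalisme_marquis} for the identification, but the content is the same).
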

	
	\begin{proof}
		1) Soit $V$ un objet de $\mathrm{Rep}_{\mathbb{F}_r} \cg_{E,\Delta}$. Considérons le morphisme d'anneaux \linebreak$(\Phi_{\Delta,q,r}\times \cg_{E,\Delta})$-équivariant $E^{\sep}_{\Delta} \hookrightarrow \widetilde{E}^{\sep}_{\Delta}$ obtenu à partir de la Propositon \ref{lien_anneaux_e}. En tensorisant par $V$ et passant aux invariants par $\cg_{E,\Delta}$, il fournit une application $E_{\Delta}$-linéaire et $\Phi_{\Delta,q,r}$-équivariante $\mathbb{D}_{\Delta}(V)\rightarrow \widetilde{\mathbb{D}}_{\Delta}(V)$, puis comme $\widetilde{\mathbb{D}}_{\Delta}(V)$ est un $\widetilde{E}_{\Delta}$-module, une transformation naturelle $S\, : \, \widetilde{\mathrm{Ex}}\circ \mathbb{D}_{\Delta} \Rightarrow \widetilde{\mathbb{D}}_{\Delta}$.
		
		Puisque $\widetilde{E}_{\Delta}\rightarrow \widetilde{E}^{\mathrm{sep}}_{\Delta}$ est fidèlement plat, nous vérifions qu'il s'agit d'un isomorphisme de foncteur après changement de base. La transformation naturelle entre foncteur au Corollaire \ref{integrite_imparf} fournit l'isomorphisme de gauche du diagramme suivant
		
		\begin{center}
			\begin{tikzcd}
				\widetilde{E}^{\sep}_{\Delta} \otimes_{\widetilde{E}_{\Delta}}\bigg(\widetilde{E}_{\Delta} \otimes_{E_{\Delta}} \mathbb{D}_{\Delta}(V)\bigg) \arrow{d}[anchor=center, rotate=90, yshift=1ex]{\sim} \ar[rr,"\mathrm{Id}\otimes S(V)"] & & \widetilde{E}^{\sep}_{\Delta} \otimes_{\widetilde{E}_{\Delta}} \widetilde{\mathbb{D}}_{\Delta}(V) \ar[d] \\
				\widetilde{E}^{\sep}_{\Delta}\otimes_{E^{\sep}_{\Delta}} \bigg(E^{\sep}_{\Delta}\otimes_{E_{\Delta}} \mathbb{D}_{\Delta}(V)\bigg)  \ar[r] & \widetilde{E}^{\sep}_{\Delta} \otimes_{E^{\sep}_{\Delta}}\bigg(E^{\sep}_{\Delta} \otimes_{\mathbb{F}_r} V\bigg) \ar[r,"\sim"] & \widetilde{E}^{\sep}_{\Delta}\otimes_{\mathbb{F}_r} V 
			\end{tikzcd}
		\end{center} où les flèches non labellisées sont les isomorphismes de comparaison déjà étudiés. Pour que $S(V)$ soit un isomorphisme, il suffit de vérifier la commutation du diagramme .
		\medskip
		
		2) Soient $D_1,D_2$ deux objets de $\detaleproj{\Phi_{\Delta,q,r}}{E_{\Delta}}$. L'application donnée par $\widetilde{\mathrm{Ex}}$ au niveau des morphismes s'obtient  selon \cite[Prop. 3.5]{formalisme_marquis} en prenant les $\Phi_{\Delta,q,r}$-invariants de la composée $$\mathrm{Hom}_{E_{\Delta}}(D_1,D_2)\rightarrow \mathbb{E}\mathrm{x}(\mathrm{Hom}_{E_{\Delta}}(D_1,D_2)) \xrightarrow[\sim]{\iota_{D_1,D_2}} \mathrm{Hom}_{\widetilde{E}_{\Delta}}(\mathbb{E}\mathrm{x}(D_1),\mathbb{E}\mathrm{x}(D_2)),$$ où $\iota_{D_1,D_2}$ est construit dans \cite[Lem. 2.16]{formalisme_marquis}. C'est un isomorphisme puisque $D_1$ est fini projectif. En passant aux invariants par $\Phi_{\Delta,q,r}$, le premier morphisme devient un isomorphisme d'après le Corollaire \ref{ext_inv_2}.	
	\end{proof}
	
	\begin{lemma}\label{inv_frob_imparf}
		L'inclusion $\mathbb{F}_r\subseteq \left(E_{\Delta}^{\sep}\right)^{\Phi_{\Delta,q,r}}$ est une égalité.
	\end{lemma}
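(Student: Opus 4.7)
L'inclusion $\mathbb{F}_r\subseteq (E_{\Delta}^{\sep})^{\Phi_{\Delta,q,r}}$ est immédiate puisque $\mathbb{F}_r \subset \mathbb{F}_q \subseteq k$, et que les actions des $\varphi_{\alpha,q}$ et de $\varphi_{\Delta,r}$ fixent $\mathbb{F}_r$ coordonnée par coordonnée sur le produit tensoriel. Il reste à démontrer l'inclusion réciproque.

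Mon plan est de se ramener au cas perfectoïde déjà traité au Lemme \ref{inv_frob_multi_perf} (équivalent au Corollaire \ref{inv_phi_carp_perf}). Je commencerais par construire l'inclusion naturelle $(\Phi_{\Delta,q,r}\times \cg_{E,\Delta})$-équivariante $E_{\Delta}^{\sep} \hookrightarrow \widetilde{E}_{\Delta}^{\sep}$, obtenue en passant à la colimite dans les injections $F_{\Delta,q}\hookrightarrow \widetilde{F}_{\Delta,q}$ du Lemme \ref{lien_anneaux_e} (avec $\widetilde{F}=\widetilde{E}F$) ; l'équivariance découle de la naturalité de ces injections par rapport aux morphismes d'anneaux, l'action du $q$-Frobenius sur chaque facteur étant préservée par construction. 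Cette injection fournit donc une inclusion entre invariants $$\left(E_{\Delta}^{\sep}\right)^{\Phi_{\Delta,q,r}}\subseteq \left(\widetilde{E}_{\Delta}^{\sep}\right)^{\Phi_{\Delta,q,r}}.$$

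L'étape suivante est l'observation clé que les invariants par le monoïde et par son symétrisé coïncident sur un anneau parfait. Puisque $\widetilde{E}_{\Delta}^{\sep}$ est parfait (Proposition \ref{edseptorsion}), les endomorphismes $\varphi_{\alpha,q}$ et $\varphi_{\Delta,r}$ sont bijectifs sur cet anneau, donc tout élément fixé par le monoïde $\Phi_{\Delta,q,r}$ est automatiquement fixé par son symétrisé $\Phi_{\Delta,q,r}^{\gp}$. Combiné au Lemme \ref{inv_frob_multi_perf} qui affirme que $\left(\widetilde{E}_{\Delta}^{\sep}\right)^{\Phi_{\Delta,q,r}^{\gp}}=\mathbb{F}_r$, nous obtenons $$\left(E_{\Delta}^{\sep}\right)^{\Phi_{\Delta,q,r}}\subseteq \left(\widetilde{E}_{\Delta}^{\sep}\right)^{\Phi_{\Delta,q,r}^{\gp}}=\mathbb{F}_r,$$ ce qui conclut.

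Il n'y a pas ici d'obstacle technique majeur : la difficulté a déjà été concentrée dans le cas perfectoïde. La seule vérification non-triviale est l'équivariance de l'injection $E_{\Delta}^{\sep}\hookrightarrow \widetilde{E}_{\Delta}^{\sep}$, mais elle se réduit à l'équivariance des morphismes du Lemme \ref{lien_anneaux_e}, déjà énoncée, et à la compatibilité avec le passage à la colimite le long des inclusions $F\hookrightarrow \widetilde{F}$.
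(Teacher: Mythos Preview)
Your proof is correct and follows exactly the same approach as the paper's: combine the injection $E_{\Delta}^{\sep}\hookrightarrow \widetilde{E}_{\Delta}^{\sep}$ obtained by passing to the colimit in Lemme~\ref{lien_anneaux_e} with the perfectoid result of Lemme~\ref{inv_frob_multi_perf}. You are even a bit more careful than the paper in making explicit the passage from $\Phi_{\Delta,q,r}$-invariants to $\Phi_{\Delta,q,r}^{\gp}$-invariants via the perfectness of $\widetilde{E}_{\Delta}^{\sep}$, a point the paper leaves implicit.
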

	\begin{proof}
		Combiner le résultat sur $\widetilde{E}_{\Delta}^{\sep}$ au Lemme \ref{inv_frob_multi_perf} et l'injection $E_{\Delta}^{\sep} \hookrightarrow \widetilde{E}_{\Delta}^{\sep}$ obtenue en passant à la limite dans le Lemme \ref{lien_anneaux_e}.
	\end{proof}
	
	\begin{theo}\label{equiv_non_perf_car_p}
		Le foncteur $$\mathbb{D}_{\Delta}\, : \,\mathrm{Rep}_{\mathbb{F}_r} \cg_{E,\Delta} \rightarrow \detaleproj{\Phi_{\Delta,q,r}}{E_{\Delta}}$$ est une équivalence de catégories monoïdales fermées. Un quasi-inverse est donné par $$\mathbb{V}_{\Delta}\, : \, \detaleproj{\Phi_{\Delta,q,r}}{E_{\Delta}} \rightarrow \mathrm{Rep}_{\mathbb{F}_r} \cg_{E,\Delta}, \,\,\, D \mapsto \left(E_{\Delta}^{\sep}\otimes_{E_{\Delta}} D\right)^{\Phi_{\Delta,q,r}}.$$
	\end{theo}	
	\begin{proof}
	 Nous avons vu que $\widetilde{\mathbb{D}}_{\Delta}\cong \widetilde{\mathrm{Ex}}\circ \mathbb{D}_{\Delta}$ en Proposition \ref{tildDetD}, que $\widetilde{\mathbb{D}}_{\Delta}$ est une équivalence au Théorème \ref{equiv_perf_mod_p} et que $\mathbb{D}_{\Delta}$ et $\widetilde{\mathrm{Ex}}$ sont pleinement fidèles aux Propositions \ref{defipropDdelta} et \ref{tildDetD}. Cela suffit à prouver que $\mathbb{D}_{\Delta}$ est une équivalence de catégories.
		
	Soit $V$ une représentation. Nous avons prouvé en Proposition \ref{defipropDdelta} un isomorphisme de comparaison naturel $$E_{\Delta}^{\sep} \otimes_{E_{\Delta}} \mathbb{D}_{\Delta}(V) \xrightarrow{\sim} E_{\Delta}^{\sep} \otimes_{\mathbb{F}_r} V.$$ En décomposant dans une $\mathbb{F}_r$-base de $V$, les $\Phi_{\Delta,q,r}$-invariants du terme de droite redonnent $V$. Passer aux invariants fournit donc un isomorphisme naturel $$\mathbb{V}_{\Delta} \circ \mathbb{D}_{\Delta} \xRightarrow{\sim} \mathrm{Id}.$$\qedhere
	\end{proof}

	\vspace{0.75cm}
	\subsection{Dévissage vers une équivalence de Fontaine multivariable pour certains corps de caractéristique $p$}\label{section_equiv_car_zero_imparf}
	
	Dans cette sous-section nous cherchons à démontrer une équivalence de Fontaine multivariable imparfaite pour des coefficients de caractéristique mixte. Nous fixons encore $E$ un corps de caractéristique $p$, de valuation discrète, complet pour cette valuation, tel que la clôture algébrique de $\mathbb{F}_p$ dans son corps résiduel $k$ est de cardinal fini $q$. Nous fixons également $K$ une extension finie de $\qp$ de corps résiduel de cardinal $r$ tel que $q=r^f$. Soit $L=K\mathbb{Q}_q$. La lettre $\pi$ dénote toujours une uniformisante de $K$, qui est également une uniformisante de $L$. 
	
	Nous fixons un ensemble fini $\Delta$. Puisque $\mathbb{Q}_r=K\cap \mathbb{Q}_p^{\mathrm{nr}}$, il existe un $r$-Frobenius canonique sur $L$. L'action de $\Phi_{\Delta,q,r}$ sur $\mathcal{O}_L$ par son quotient $\sfrac{\Phi_{\Delta,q,r}}{\Phi_{\Delta,q}}\cong\varphi_{\Delta,r}^{\sfrac{\N}{f\N}}$ fournit une structure de $\Phi_{\Delta,q,r}$-anneau topologique pour la topologie $\pi$-adique.
	
	Nous fixons une famille $\mathcal{O}_{\mathcal{E}_{\alpha}}^+$ de $\mathcal{O}_L$-algèbres $\pi$-adiquement séparées et complète, d'anneau résiduel en l'idéal $\pi$ égal à $E_{\alpha}^+$ et munies d'un relèvement semi-linéaire $\phi_{\alpha,r}$ du $r$-Frobenius sur $E_{\alpha}^+$. On notera toujours $X_{\alpha}$ une uniformisante de $E_{\alpha}$ ou un relevé dans $\mathcal{O}_{\mathcal{E}_{\alpha}}$. Exactement comme dans la Proposition \ref{prop_defi_oe} et la Définition \ref{defi_oe}, on choisit des $\cg_{E_{\alpha}}$-anneaux $\mathcal{O}_{\widehat{\mathcal{E}_{\alpha}^{\mathrm{ur}}}}$ ainsi que leurs sous-anneaux $\mathcal{O}_{\mathcal{F}_{\alpha}}$ et $\mathcal{O}_{\mathcal{F}_{\alpha}}^+$ pour chaque extension finie $E^{\sep}|F|E$. En appliquant \cite[\href{https://stacks.math.columbia. edu/tag/08HQ}{Tag 08HQ}]{stacks-project} pour le $r$-Frobenius sur les corps résiduels, on peut étendre l'endomorphisme $\phi_{\alpha,r}$ en un endomorphisme $\mathcal{O}_L$-semi-linéaire de $\mathcal{O}_{\widehat{\mathcal{E}_{\alpha}^{\mathrm{ur}}}}$ qui stabilise chaque $\mathcal{O}_{\mathcal{F}_{\alpha}}$ et commute à l'action galoisienne.

	\begin{defi}\label{defi_oed}
		On note alors $$\mathcal{O}_{\mathcal{F}_{\Delta}}^+ = \left(\bigotimes_{\alpha \in \Delta, \, \mathcal{O}_L} \mathcal{O}_{\mathcal{F}_{\alpha}}^+\right)^{\wedge (\pi, \underline{X})} \,\,\, \text{et} \,\,\, \mathcal{O}_{\mathcal{F}_{\Delta}}=\left(\mathcal{O}_{\mathcal{F}_{\Delta}}^+ \left[\frac{1}{X_{\Delta}}\right]\right)^{\wedge \pi}.$$ Ces anneaux ne dépendent pas du choix de $\pi$ ni de celui des $X_{\alpha}$. Ce sont des $\mathcal{O}_L$-algèbres $\pi$-adiquement séparées et complètes d'anneaux résiduels en $\pi$ $F_{\Delta,q}^+$ et $F_{\Delta,q}$. Nous les munissons de la topologie $\pi$-adique.
		
		
		Le produit tensoriel des $\phi_{\alpha,r}$ tous $\mathcal{O}_L$-semi-linéaires par rapport au $r$-Frobenius, les $\phi_{\alpha,r}^{\circ f}$ et l'action de $\cg_{E,\Delta}$ terme à terme fournissent après complétion et/ou localisation une structure de $\left(\Phi_{\Delta,q,r}\times \cg_{E,\Delta}\right)$-anneaux topologiques sur $\mathcal{O}_{\mathcal{F}_{\Delta}}^+$ et $\mathcal{O}_{\mathcal{F}_{\Delta}}$. Les injections de $\mathcal{O}_L$ sont des morphismes de $\Phi_{\Delta,q,r}$-anneaux topologiques.
	\end{defi}
	
	\begin{defi}\label{defi_oeddeltasep}
		Nous définissons comme au Corollaire \ref{integrite_imparf} deux diagrammes de $(\Phi_{\Delta,q,r}\times \cg_{E,\Delta})$-anneaux dont nous définissons $$\mathcal{O}_{\widehat{\mathcal{E}^{\mathrm{nr}}_{\Delta}}}^+:=\left(\colim \limits_{F\in \mathcal{G}\mathrm{al}_{E}} \mathcal{O}_{\mathcal{F}_{\Delta}}^+ \right)^{\wedge \pi} \,\, \text{et} \,\, \Oedhat:=\mathcal{O}_{\widehat{\mathcal{E}^{\mathrm{nr}}_{\Delta}}}^+\left[\frac{1}{X_{\Delta}}\right].$$ Nous munissons ce dernier anneau de la topologie $\pi$-adique pour cette sous-section.
		
	\end{defi}

	\begin{lemma}\label{coh_phideltaq}
	Nous avons $H^1(\Phi_{\Delta,q,r},E_{\Delta}^{\sep})=\{0\}$.
	\end{lemma}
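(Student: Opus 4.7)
Je propose de démontrer la nullité via un calcul cohomologique direct, en ramenant l'énoncé à la surjectivité de certaines équations d'Artin-Schreier dans $E_{\Delta}^{\sep}$. Le monoïde $\Phi_{\Delta,q,r}$ est commutatif et de type fini, engendré par $\varphi_{\Delta,r}$ et les $\varphi_{\alpha,q}$ pour $\alpha\in\Delta$, soumis à la relation $\varphi_{\Delta,r}^f=\prod_{\alpha\in\Delta}\varphi_{\alpha,q}$. Sa cohomologie avec coefficients dans un $\Phi_{\Delta,q,r}$-module $M$ se calcule ainsi par un complexe de type Koszul, et $H^1$ consiste en les familles $(b, (b_\alpha))\in M^{\Delta\sqcup\{\star\}}$ satisfaisant les conditions de compatibilité $(1-\varphi_{\beta,q})b_\alpha=(1-\varphi_{\alpha,q})b_\beta$, $(1-\varphi_{\Delta,r})b_\alpha=(1-\varphi_{\alpha,q})b$, et la condition provenant de la relation $\varphi_{\Delta,r}^f=\prod_\alpha\varphi_{\alpha,q}$, modulo les cobords $b\mapsto (\varphi_{\Delta,r}-1)y$, $b_\alpha\mapsto (\varphi_{\alpha,q}-1)y$.

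Puisque la cohomologie des monoïdes commute aux colimites filtrées et que $E_{\Delta}^{\sep}=\colim_{F\in\mathcal{G}\mathrm{al}_E}F_{\Delta,q}$, il suffit de prouver que tout $1$-cocycle à coefficients dans $F_{\Delta,q}$ devient un cobord dans $F'_{\Delta,q}$ pour une extension finie galoisienne $F'|F$ convenable. Le point central est alors de montrer que l'équation d'Artin-Schreier $(\varphi_{\Delta,r}-1)y=b$, i.e. $y^r-y=b$, admet une solution dans $E_{\Delta}^{\sep}$ : une fois $y$ trouvé, on peut retrancher le cobord associé, et la relation de cocycle force les $b_\alpha-(\varphi_{\alpha,q}-1)y$ à être fixes sous $\varphi_{\Delta,r}$, ce qui les ramène à un scalaire dans $\mathbb{F}_r$ (Lemme \ref{inv_frob_imparf}) que l'on gère séparément via la relation de cocycle restante.

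Pour obtenir cette surjectivité de $\varphi_{\Delta,r}-1$, je décomposerais $b\in F_{\Delta,q}$ grâce à la description explicite de la Remarque \ref{ecriture_explicite_anneaux} : écrire $b=b^{++}+b^{\mathrm{fin}}$ où $b^{++}\in X_\Delta F_{\Delta,q}^+$ est topologiquement nilpotent sous $\varphi_{\Delta,r}$ et $b^{\mathrm{fin}}$ est une combinaison finie de monômes dans $\bigotimes_{\mathbb{F}_q} F_\alpha$ (éventuellement après multiplication par une puissance négative de $X_\Delta$). Pour la partie $b^{++}$, la série $y^{++}=-\sum_{n\geq 0}\varphi_{\Delta,r}^n(b^{++})$ converge $(\underline{X})$-adiquement car $\varphi_{\Delta,r}^n(b^{++})\in X_\Delta^{r^n}F_{\Delta,q}^+$, et fournit directement une solution dans $F_{\Delta,q}$. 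Pour la partie $b^{\mathrm{fin}}$, qui vit dans une sous-$\mathbb{F}_q$-algèbre de type fini de $F_{\Delta,q}$, l'équation d'Artin-Schreier se résout dans une extension finie séparable obtenue en agrandissant chaque facteur $F_\alpha$ en un $F'_\alpha$, via la théorie classique d'Artin-Schreier sur les corps.

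L'obstacle principal sera de vérifier que cette extension finie séparable résolvant l'équation d'Artin-Schreier pour $b^{\mathrm{fin}}$ est bien de la forme $F'_{\Delta,q}$ pour une extension finie galoisienne $F'|E$, ou au moins qu'on peut s'y ramener : la difficulté est que l'extension engendrée par une racine d'Artin-Schreier sur $F_{\Delta,q}$ n'est pas naturellement un produit tensoriel d'extensions locales. Je contournerais cela en montrant que $b^{\mathrm{fin}}$ peut être encore décomposé selon les variables, et que chaque composante s'écrit comme une combinaison d'éléments de la forme $\bigotimes_{\alpha}x_\alpha$ avec $x_\alpha\in F_\alpha$, pour lesquels on résout l'Artin-Schreier dans $F_\alpha$ directement, puis on recolle via la compatibilité de la construction $F\mapsto F_{\Delta,q}$ avec les inclusions (Corollaire \ref{integrite_imparf}).
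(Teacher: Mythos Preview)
Your approach has two related gaps that prevent it from working as written.

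First, the claimed surjectivity of $\varphi_{\Delta,r}-1$ on $E_{\Delta}^{\sep}$ is not established by your method when $|\Delta|\geq 2$. Your series argument handles $b^{++}$ correctly, but the treatment of $b^{\mathrm{fin}}$ is the problem: the global $r$-Frobenius $\varphi_{\Delta,r}$ acts on \emph{all} variables simultaneously, so the equation $y^r-y=b^{\mathrm{fin}}$ does not decompose into Artin--Schreier equations in the individual factors $F_\alpha$. Concretely, for $b^{\mathrm{fin}}=X_\alpha^{-1}X_\beta^{-1}$ a solution in any $F_{\Delta,q}$ would require either $p$-power roots of the variables (unavailable in the imperfect colimit $E_\Delta^{\sep}$) or terms of unbounded negative total degree. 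Your proposed fix --- writing $b^{\mathrm{fin}}$ as a sum of pure tensors $\bigotimes_\alpha x_\alpha$ and ``solving in each $F_\alpha$'' --- does not produce a solution to the global equation $y^r-y=b^{\mathrm{fin}}$, because raising to the $r$-th power mixes the factors.

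Second, even granting step one, your subsequent reduction fails: you invoke Lemme~\ref{inv_frob_imparf} to conclude $b'_\alpha\in\mathbb{F}_r$, but that lemma computes invariants under the \emph{full monoid} $\Phi_{\Delta,q,r}$, not under $\varphi_{\Delta,r}$ alone. Since $E_\Delta^{\sep}$ is not a domain (it contains $\bigotimes_{\alpha\in\Delta,\,\mathbb{F}_q}\overline{\mathbb{F}_q}$), the equation $x^r=x$ has many solutions beyond $\mathbb{F}_r$, so the $b'_\alpha$ land in a genuinely larger ring and you are not done.

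The paper sidesteps both issues by privileging the \emph{partial} Frobenii $\varphi_{\beta,q}$ rather than $\varphi_{\Delta,r}$. Because $\varphi_{\beta,q}$ acts as the $q$-Frobenius on the $\beta$-factor only, the tensor decomposition of Lemme~\ref{module_induit_multi_imparf} makes the surjectivity of $\varphi_{\beta,q}-1$ immediate (Artin--Schreier in the separably closed field $E_\beta^{\sep}$, then base-change), and the $\varphi_{\beta,q}$-invariants are exactly $E_{\Delta\setminus\{\beta\}}^{\sep}$, setting up a clean induction on $|\Delta|$ via restriction--inflation for $\varphi_{\beta,q}^{\N}<\Phi_{\Delta,q}$. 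The extra generator $\varphi_{\Delta,r}$ is dispatched beforehand by a separate restriction--inflation for $\Phi_{\Delta,q}\triangleleft\Phi_{\Delta,q,r}$, where the finite quotient acts on $\mathbb{F}_q$ and Hilbert~90 applies.
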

	\begin{proof}
		La suite restriction-inflation et le résultat du Leme \ref{inv_frob_multi_perf} pour $r=q$ donnent $$0\rightarrow H^1\left(\quot{\varphi_{\Delta,r}^{\N}}{\varphi_{\Delta,q}^{\N}},\mathbb{F}_q\right) \rightarrow H^1\left(\Phi_{\Delta,q,r},E_{\Delta}^{\sep}\right) \rightarrow H^1\left(\Phi_{\Delta,q},E_{\Delta}^{\sep}\right).$$ La base normale de Hilbert affirme que $\mathbb{F}_q$ est induit comme $\mathbb{F}_r$-module avec action de $\gal{\mathbb{F}_q}{\mathbb{F}_r}=\sfrac{\varphi_{\Delta,r}^{\N}}{\varphi_{\Delta,q}^{\N}}$. Il est acyclique.
		
		Pour analyser le terme de droite, nous avons besoin des deux résultats qui suivent.
		
		\underline{Premier résultat :} soit $\beta \in \Delta$. Il y a une égalité $E_{\Delta\backslash \{\beta\}}^{\sep} = \left(E_{\Delta}^{\sep}\right)^{\varphi_{\beta,q}=\mathrm{Id}}$. Soit $F|E$ une extension finie que nous écrivons $l(\!(Y)\!)$ pour une certaine uniformisante $Y$ et $l|k$. Selon la Remarque \ref{ecriture_explicite_anneaux} nous pouvons décrire $$F_{\Delta,q}= \left\{ \sum_{\underline{d}\in \mathbb{Z}^{\Delta}} a_{\underline{d}} \underline{Y}^{\underline{d}} \,\, \Bigg|\,\,\substack{(a_{\underline{d}})\in \left(\bigotimes_{\alpha \in \Delta,\,\mathbb{F}_q} l \right)^{\mathbb{Z}^{\Delta}} \,\, \text{tel que} \\\exists N>0, \,\,\, \forall \underline{d},\, \forall \alpha, \,\,\, a_{\underline{d}}\neq 0 \implies  d_{\alpha}\geq -N}\right\}.$$ De cette description se déduit l'inclusion $F_{\Delta\backslash\{\beta\},q}\subseteq F_{\Delta,q}^{\varphi_{\beta,q}=\mathrm{Id}}$. Soit $x=\sum a_{\underline{d}} Y^{\underline{d}}$ non nul dans les invariants. Munissons $\Z^{\Delta}$ de l'ordre lexicographique par rapport à un ordre sur $\Delta$ de minimum $\beta$ et considérons \linebreak$\underline{d_{\min}}= \min \{\underline{d} \, |\, a_{\underline{d}}\neq 0\}$. Le coefficient de $(qd_{\min,\beta}, d_{\alpha},\cdots)$ dans $\varphi_{\beta,q}(x)$ vaut $\varphi_{\beta,q}(a_{\underline{d_{\min}}})$. Puisque $\varphi_{\beta,q}$ est injectif sur $\left(\otimes_{\mathbb{F}_q} l\right)$, c'est donc le degré minimal de $\varphi_{\beta,q}(x)$. Il en découle que $d_{\min, \beta}=0$. Appelons $$y=\sum_{\underline{d}\in \Z^{\Delta\backslash\{\beta\}}} a_{(0,\underline{d})} Y^{(0,\underline{d})} \in F_{\Delta\backslash \{\beta\},q}.$$ Il se trouve que $x-y$ est encore invariant mais n'a aucune terme de degré en $\beta$ nul. Nous obtenons donc $x-y=0$ soit $x\in  F_{\Delta\backslash \{beta\},q}$.
					
		\underline{Deuxième résultat :} soit $\beta\in \Delta$. L'endomorphisme de $E_{\Delta \backslash \{\beta\}}$-espaces vectoriels $\left(\varphi_{\alpha,q}-\mathrm{Id}\right)$ est surjectif sur $E_{\Delta}^{\sep}$. Chaque polynôme $T^q-T-x$ étant scindé sur $E_{\beta}^{\sep}$, nous savons que l'endomorphisme est inversible sur $E_{\beta}^{\sep}$. Le Lemme \ref{module_induit_multi_imparf} affirme en particulier que $$E_{\beta}^{\sep}\otimes_{E_{\beta}}\left(E_{\Delta \backslash \{\beta\}}^{\sep} \otimes_{E_{\Delta \backslash\{\beta\}}} E_{\Delta}\right) \cong E_{\Delta}^{\sep}$$ ce qui permet de conclure.
			
		\underline{Utilisation des résultats :} nous choisissons $\beta\in \Delta$ et utilisons la restriction-inflation pour $\varphi_{\beta,q}^{\N}<\Phi_{\Delta,q}$ :
		
		$$0\rightarrow H^1\left(\Phi_{\Delta \backslash \{\beta\},q}, (E_{\Delta}^{\sep})^{\varphi_{\beta,q}=\mathrm{Id}}\right)\rightarrow  H^1\left(\Phi_{\Delta,q},E_{\Delta}^{\sep}\right) \rightarrow H^1\left(\varphi_{\beta,q}^{\N},E_{\Delta}^{\sep}\right).$$ Le premier résultat identifie le terme de gauche à la cohomology de $E_{\Delta\backslash\{\beta\}}^{\sep}$. Par récurrence sur $|\Delta|$, on se ramène donc à démontrer que le terme de droite s'annule. Or, soit $f$ un $1$-cocycle. Les relations de cocycles imposent que $$\forall n\geq 0, \,\,\, f(\varphi_{\beta,q}^n)=\sum_{0\leq i <n} \varphi_{\beta,q}^{\circ \, i}(f(\varphi_{\beta,q})).$$ Ainsi, pour $(\varphi_{\beta,q}-\mathrm{Id})(y)=f(\varphi_{\beta,q})$, qui existe grâce au deuxième résultat, le cocycle $f$ est le cobord associé à $y$.
	\end{proof}
	
	\begin{lemma}\label{Ktheory_ed}
		La $\mathrm{K}$-théorie de $E_{\Delta}$ vérifie $\mathrm{K}_0(E_{\Delta})=\Z$.
	\end{lemma}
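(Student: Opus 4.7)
La stratégie consiste à exploiter l'écriture $E_{\Delta} = E_{\Delta}^+[X_{\Delta}^{-1}]$ et à appliquer la suite exacte de localisation en $K$-théorie.

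D'après la Remarque~\ref{ecriture_explicite_anneaux}, l'anneau $E_{\Delta}^+$ s'identifie à $k_{\Delta}\llbracket X_{\alpha} \mid \alpha \in \Delta \rrbracket$, où $k_{\Delta} := \bigotimes_{\alpha \in \Delta, \mathbb{F}_q} k$ est une $\mathbb{F}_q$-algèbre intègre grâce à l'hypothèse sur $k$ et à \cite[V \S 5, Proposition 9]{bourbaki_alg}. Je traiterais d'abord le cas essentiel où $k_{\Delta}$ est un corps, qui couvre notamment $k = \mathbb{F}_q$ et les applications Lubin-Tate ultérieures. Dans ce cas, $E_{\Delta}^+$ est un anneau local régulier Noethérien, donc tout module projectif de type fini y est libre et $K_0(E_{\Delta}^+) \cong \mathbb{Z}$ par le rang.

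J'appliquerais ensuite la suite exacte de localisation de Quillen, qui pour les anneaux réguliers Noethériens (où $K$-théorie et $G$-théorie coïncident) prend la forme
$$G_0\bigl(V(X_{\Delta})\bigr) \longrightarrow K_0(E_{\Delta}^+) \longrightarrow K_0(E_{\Delta}) \longrightarrow 0.$$
Les générateurs du terme de gauche sont les classes $[\mathcal{O}_Y]$ indexées par les sous-schémas intègres $Y \subset V(X_{\Delta})$. Comme $E_{\Delta}^+$ est intègre et que chaque tel $Y$ est strictement contenu dans $\mathrm{Spec}(E_{\Delta}^+)$, la fibre générique de $\mathcal{O}_Y$ est nulle ; sous l'identification $K_0(E_{\Delta}^+) \cong \mathbb{Z}$ par le rang générique, ces classes s'annulent toutes. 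Il en résulte que la flèche $K_0(E_{\Delta}^+) \twoheadrightarrow K_0(E_{\Delta})$ est un isomorphisme, d'où $K_0(E_{\Delta}) \cong \mathbb{Z}$.

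Pour le cas général où $k_{\Delta}$ est seulement intègre (non nécessairement un corps), je me ramènerais à la situation précédente par un argument de descente fidèlement plate depuis $E_{\Delta}$ vers l'anneau $\mathrm{Frac}(k_{\Delta})\llbracket X_{\alpha} \rrbracket[X_{\Delta}^{-1}]$, où l'argument ci-dessus s'applique. L'obstacle principal résidera dans la vérification soigneuse de la suite exacte de localisation dans notre contexte (notamment l'identification précise des générateurs de $G_0(V(X_{\Delta}))$ via les intersections $V(X_{\alpha_1}, \ldots, X_{\alpha_k})$, dont la suite régulière fournit des résolutions de Koszul libres de caractéristique d'Euler nulle), ainsi que dans le traitement rigoureux de la descente lorsque $k_{\Delta}$ n'est pas un corps.
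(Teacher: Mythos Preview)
Le papier se contente de renvoyer à \cite[Lemma 2.3]{zabradi_equiv}. Votre argument autosuffisant via la suite de localisation de Quillen est donc un ajout bienvenu, et pour le cas où $k_\Delta$ est un corps (donc $E_\Delta^+$ local régulier noethérien) il est correct et vraisemblablement proche de ce que contient la référence : la suite $G_0(V(X_\Delta)) \to K_0(E_\Delta^+) \to K_0(E_\Delta) \to 0$, l'annulation du premier morphisme par calcul du rang générique, et $K_0(E_\Delta^+) = \Z$ par localité donnent bien le résultat.

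En revanche, votre traitement du cas général ($k_\Delta$ seulement intègre) présente une lacune réelle. D'une part, le morphisme $E_\Delta \to \mathrm{Frac}(k_\Delta)\llbracket X_\alpha\rrbracket[X_\Delta^{-1}]$ n'est pas une simple localisation (la formation des séries formelles ne commute pas au passage au corps des fractions), et sa fidèle platitude demanderait justification. D'autre part, et plus fondamentalement, $K_0$ ne satisfait pas la descente fidèlement plate : un anneau de Dedekind de groupe de classes non trivial se plonge fidèlement platement dans un anneau principal sans que leurs $K_0$ coïncident. Il n'y a donc pas de raccourci par descente ici. Si le cas général était vraiment visé, il faudrait un argument direct adapté à $k_\Delta\llbracket X_\alpha\rrbracket$, qui n'est a priori ni local ni noethérien. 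Cela dit, toutes les applications du papier (Lubin-Tate, plectique, glectique) ont $k = \mathbb{F}_q$, donc $k_\Delta = \mathbb{F}_q$ est un corps et votre premier cas suffit ; la référence \cite{zabradi_equiv} travaille d'ailleurs elle aussi avec un corps résiduel fini.
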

	\begin{proof}
		Voir \cite[Lem. 2.3]{zabradi_equiv}.
	\end{proof}

\begin{theo}\label{equiv_car_zero}
	Les foncteurs $$\mathbb{D}_{\Delta}\, : \,\mathrm{Rep}_{\mathcal{O}_K} \cg_{E,\Delta} \rightarrow \dmod{\Phi_{\Delta,q,r}}{\mathcal{O}_{\mathcal{E}_{\Delta}}}, \,\,\, V\mapsto \left(\mathcal{O}_{\widehat{\mathcal{E}_{\Delta}^{\mathrm{ur}}}} \otimes_{\mathcal{O}_K} V\right)^{\cg_{E,\Delta}}$$
	
	$$\mathbb{V}_{\Delta}\, : \, \cdetaledvproj{\Phi_{\Delta,q,r}}{\mathcal{O}_{\mathcal{E}_{\Delta}}}{\pi} \rightarrow \dmod{\cg_{E,\Delta}}{\mathcal{O}_K}, \,\,\, D\mapsto \left( \mathcal{O}_{\widehat{\mathcal{E}_{\Delta}^{\mathrm{ur}}}} \otimes_{\mathcal{O}_{\mathcal{E}_{\Delta}}} D\right)^{\Phi_{\Delta,q,r}},$$ sont correctement définis et sont lax monoïdaux fermés. Leurs images essentielles sont contenues respectivement dans $\cdetaledvproj{\Phi_{\Delta,q,r}}{\mathcal{O}_{\mathcal{E}_{\Delta}}}{\pi}$ et $\mathrm{Rep}_{\mathcal{O}_K} \cg_{E,\Delta}$. Leurs corestrictions forment une paire de foncteurs quasi-inverses.
\end{theo}
\begin{proof}
	Démontrer la définition correcte et les propriétés sur leurs images essentielles se fait en décomposant les deux foncteurs comme aux Propositions \ref{defipropwDdelta} et \ref{defipropwVDelta}. Nous appliquons pour chaque foncteur \cite[Prop. 5.22]{formalisme_marquis} respectivement aux inclusions $\mathcal{O}_K \subset \mathcal{O}_{\widehat{\mathcal{E}_{\Delta}^{\mathrm{ur}}}}$ et $\mathcal{O}_{\mathcal{E}_{\Delta}} \subset \mathcal{O}_{\widehat{\mathcal{E}_{\Delta}^{\mathrm{ur}}}}$, puis \cite[Prop. 5.26]{formalisme_marquis} au \linebreak$(\Phi_{\Delta,q,r}\times \cg_{E,\Delta})$-anneau $\mathcal{O}_{\widehat{\mathcal{E}_{\Delta}^{\mathrm{ur}}}}$ muni de la topologie $\pi$-adique pour seule topologie respectivement pour les sous-monoïdes $\cg_{E,\Delta}$ et $\Phi_{\Delta,q,r}$. Il nous faut pour cela vérifier plusieurs conditions que nous listons. Les deux premières vérifient que nous nous plaçons dans un cadre cohérent avec \cite[\S 4]{formalisme_marquis}. La troisième justifie les utilisations de \cite[Prop. 5.22]{formalisme_marquis}. Les suivantes vérifient les conditions multiples de \cite[Prop. 5.26]{formalisme_marquis}.
	
	\underline{Condition 1 :} les couples $(\pi,\mathcal{O}_K)$, $(\pi,\mathcal{O}_{\mathcal{E}_{\Delta}})$ et $(\pi, \mathcal{O}_{\widehat{\mathcal{E}_{\Delta}^{\mathrm{ur}}}})$ sont des contextes de dévissages (voir \cite[Déf. 4.1]{formalisme_marquis}). 
	
	\noindent Les anneaux sont par construction $\pi$-adiquement séparés et complets. Reste à démontrer qu'ils sont sans $\pi$-torsion. C'est évident pour $\mathcal{O}_K$. Le produit tensoriel des $\mathcal{O}_{\mathcal{E}_{\alpha}}^+$ est sans $\pi$-torsion puisque chacun des termes l'est, a fortiori sont complété et séparé. De même, en passant à la colimite puis en complétant, $\mathcal{O}_{\widehat{\mathcal{E}_{\Delta}^{\mathrm{ur}}}}$ est sans $\pi$-torsion.
	
	\underline{Condition 2 :} pour chacun des trois anneaux, et chaque endomorphisme $f$ dans l'action des monoïdes, les idéaux engendrés par $f(\pi)$ et $\pi$ coïncident. Ici, $\pi$ est même invariant par chaque action.
	
	\underline{Condition 3 :} les inclusions  $\mathcal{O}_K \subset \mathcal{O}_{\widehat{\mathcal{E}_{\Delta}^{\mathrm{ur}}}}$ et $\mathcal{O}_{\mathcal{E}_{\Delta}} \subset \mathcal{O}_{\widehat{\mathcal{E}_{\Delta}^{\mathrm{ur}}}}$ soient $\pi$-adiquement continues et équivariantes. C'est le cas par construction.
	
	\underline{Condition 4 :} les anneaux topologiques $\mathcal{O}_{\widehat{\mathcal{E}_{\Delta}^{\mathrm{ur}}}}^{\cg_{E,\Delta}}$ et $\mathcal{O}_{\widehat{\mathcal{E}_{\Delta}^{\mathrm{ur}}}}^{\Phi_{\Delta,q,r}}$ coïncident respectivement avec avec $\mathcal{O}_{\mathcal{E}_{\Delta}}$ et $\mathcal{O}_K$.
	
	\noindent L'identification des topologies $\pi$-adiques (resp. l'égalité algébrique) s'obtient par dévissage (resp. et complétion) à partir de l'injectivité du morphisme modulo $\pi$ (resp. d'une identification algébrique modulo $\pi$). C'est alors exactement l'injectivité dans le Corollaire \ref{integrite_imparf} ou simplement la définition de la caractéristique (resp. les résultats du deuxième point du Corollaire \ref{coro_descente_1_imparf} et du Lemme \ref{inv_frob_multi_perf}).
		
	\underline{Condition 5 :} les inclusions $E_{\Delta} \subset E_{\Delta}^{\sep}$ et $\mathbb{F}_r \subset E_{\Delta}^{\sep}$ sont fidèlement plates. 
	
	\noindent La deuxième était déjà utilisée dans la Proposition \ref{defipropDdelta} et est conséquence du Lemme \ref{module_induit_multi_imparf}.
	
	\underline{Condition 6 :} l'élément $\pi$ est irréductible dans $\mathcal{O}_{\mathcal{E}_{\Delta}}$ et $\mathcal{O}_K$. 
	
	\noindent Cela équivaut à l'intégrité de $E_{\Delta}$ démontrée au Corollaire \ref{integrite_imparf} et à celle de $\mathbb{F}_r$.
	
	\underline{Condition 7 :} nous avons $\mathrm{K}_0(E_{\Delta})=\Z$ et $\mathrm{K}_0(\mathbb{F}_r)=\Z$. 
	
	\noindent La première est démontrée au Lemme \ref{Ktheory_ed}. De plus $\mathbb{F}_r$ est un corps.
	
	\underline{Condition 8 :} la topologie $\pi$-adique a de bonnes propriétés de $\pi$-dévissage. 
	
	\noindent C'est une remarque nous faisions dans \cite[Ex. 5.27]{formalisme_marquis}.
	
	\underline{Condition 9 :} les groupes de cohomologie de monoïdes $H^1_{\mathrm{cont}}(\cg_{E,\Delta}, E_{\Delta}^{\sep})$ et $H^1_{\mathrm{cont}}(\Phi_{\Delta,q,r},E_{\Delta}^{\sep})$ s'annulent. 
	
	\noindent Le deuxième équivaut à l'énoncé du Lemme \ref{coh_phideltaq} puisque tous les objets en jeu sont discrets. Pour le premier, nous utilisons le résultat du Lemme \ref{module_induit_multi_imparf} pour des extensions finies. Comme nous l'avions déjà évoqué pour démontrer la descente galoisienne, cela implique par base normale de Hilbert que chaque $F_{\Delta,q}$ est un $E_{\Delta}$-module induit pour l'action de $\prod \gal{F_{\alpha}}{E_{\alpha}}$, a fortiori acyclique. Nous en déduisons l'énoncé puisque $$H^1_{\mathrm{cont}}(\cg_{E,\Delta},E_{\Delta}^{\sep}):= \colim \lim_{F\in \mathcal{G}\mathrm{al}_E} H^1\left(\prod_{\alpha \in \Delta} \gal{F_{\alpha}}{E_{\alpha}}, \left(E_{\Delta}^{\sep}\right)^{\cg_{F,\Delta}}\right) =\colim \lim_{F\in \mathcal{G}\mathrm{al}_E} H^1\left(\prod_{\alpha \in \Delta} \gal{F_{\alpha}}{E_{\alpha}}, F_{\Delta,q}\right).$$
	
	\underline{Condition 10 :} les deux morphismes de comparaison sont des isomorphismes pour les objets de \linebreak$\cdetaleproj{\Phi_{\Delta,q,r}\times\cg_{E,\Delta}}{E_{\Delta}^{\sep}}$.
	
	\noindent Le troisième point du Corollaire \ref{coro_descente_1_imparf} prouve déjà l'isomorphisme de comparaison pour les invariants par $\cg_{E,\Delta}$. Comme cela implique que l'extension des scalaires et la prise de $\cg_{E,\Delta}$-invariants fournissent une équivalence de catégorie entre $\cdetaleproj{\Phi_{\Delta,q,r}}{E_{\Delta}}$ et $\cdetaleproj{\Phi_{\Delta,q,r}\times \cg_{E,\Delta}}{E_{\Delta}^{\sep}}$, le Théorème \ref{equiv_non_perf_car_p} et la décomposition des deux foncteurs implique que l'extension des scalaires et la prise de $\Phi_{\Delta,q,r}$-invariants fournissent une équivalence de catégorie entre $\mathrm{Rep}_{\mathcal{O}_K} \cg_{E,\Delta}$ et $\cdetaleproj{\Phi_{\Delta,q,r}\times \cg_{E,\Delta}}{E_{\Delta}^{\sep}}$. Ceci contient en particulier l'autre isomorphisme de comparaison.
		
	Une fois que nos deux foncteurs sont construits, démontrer qu'ils sont quasi-inverses l'un de l'autre se fait comme à la démonstration du Théorème \ref{equiv_perf_mod_p} en passant aux invariants l'isomorphisme de comparaison et en utilisant \cite[Prop. 4.17]{formalisme_marquis}. On utilise le Corollaire \ref{coro_descente_1_imparf} pour dire que $E_{\Delta}^{\sep}$ est sans $E_{\Delta}$-torsion.
\end{proof}
	
	\vspace{0.75cm}
	\subsection{Comment récupérer les équivalences de Carter-Kedlaya-Z\'abr\'adi ?}\label{retrouver_ckz}
	
	Deux versions des premières équivalences de Carter-Kedlaya-Z\'abr\'adi ont été démontrées. Précisément, le Théorème \ref{equiv_perf_mod_p} pour $r=p$ fournit un analogue de \cite[Th. 4.6]{zabradi_kedlaya_carter} où tous les corps sont identiques. 
	
	
	Les versions de ce texte ont plusieurs avantages. Nous avons conservé l'obtention d'une équivalent pour les corps de caractéristique $p$ sans avoir besoin d'utiliser l'action supplémentaire de $\Gamma$, qui est également un avantage de \cite{zabradi_kedlaya_carter} par rapport à \cite{zabradi_equiv}. Enfin, ces équivalences, contrairement à celles de \cite{zabradi_kedlaya_carter}, considèrent des catégories de modules étales sur des anneaux \textit{intègres} ; les sous-sections précédents utilisent d'ailleurs ces questions d'intégrité pour faire fonctionner différemment les preuves\footnote{Voir les preuves de la Proposition \ref{coeur_preuve_perf} puis celle du Théorème \ref{morph_comparaison_coeur} qui s'en sert.}. Cette sous-section est consacrée à retrouver à partir de nos versions sur des anneaux intègres l'un des théorèmes de \cite{zabradi_kedlaya_carter}.
	
	Le lien entre les équivalences de ce texte et celles de \cite{zabradi_kedlaya_carter} repose sur le fait que les anneaux non intègres introduits par Carter-Kedaya-Z\'abr\'adi sont coinduits à partir des anneaux intègres considérés dans ce texte, idée déjà présentes pour se ramener au cas intègre dans le Théorème \ref{morph_comparaison_coeur}. De surcroît, les sous-sections précédentes illustrent que l'essentiel du travail consiste à obtenir une équivalence en modulo $p$ pour des corps perfectoïdes. Le reste consiste à dévisser, passer à la limite, ajouter des actions, en bref tout un tas de joyeuseries qui s'adaptent pour obtenir l'équivalence de votre choix. Pour ne pas surcharger ce texte, on se contente de retrouver une version imparfaite de \cite[Th. 4.6]{zabradi_kedlaya_carter} avec nos méthodes.
	
	Nous continuons de fixer un corps $E$ de caractéristique $p$, de valuation discrète, séparé complet pour cette valuation, et tel que la clôture algébrique de $\mathbb{F}_p$ dans le corps résiduel $k$ est finie. La complétion de sa perfection est notée $\widetilde{E}$.
	
	\begin{defi}
		En imitant les définitions \ref{def_constr_prod_imparf} et \ref{defi_anneau_sep_imparf} avec des produits tensoriels sur $\mathbb{F}_p$, nous définissons pour tout extension finie galoisienne $F|E$ les $\left(\Phi_{\Delta,p}\times \cg_{E,\Delta}\right)$-anneaux topologiques discrets $$F_{\Delta,p}^+:=\left(\bigotimes_{\alpha\in \Delta,\mathbb{F}_p} F_{\alpha}^+ \right)^{\wedge (\underline{X})}, \,\,\, F_{\Delta,p}:=F_{\Delta,p}^+\left[\frac{1}{X_{\Delta}}\right]$$
		 $$E_{\Delta,p}^{\sep}=\colim \limits_{F\in \mathcal{G}\mathrm{al}_{E}} F_{\Delta,p}.$$ 
	\end{defi}

	\begin{prop}\label{identif_coindu_imperf_sanspreuve}
		\begin{enumerate}[itemsep=0mm]
			\item Il existe un isomorphisme de $\Phi_{\Delta,p}$-anneaux topologiques discrets $$E_{\Delta,p}\cong \coindu{\Phi_{\Delta,q,p}}{\Phi_{\Delta,p}}{E_{\Delta}},$$ où la coinduite est munie de la topologie limite.
			
			\item Il existe un isomorphisme de $(\Phi_{\Delta,p} \times \cg_{E,\Delta})$-anneaux topologiques discrets $$E_{\Delta,p}^{\mathrm{sep}}\cong \coindu{\Phi_{\Delta,q,p}\times\cg_{E,\Delta}}{\Phi_{\Delta,p}\times \cg_{E,\Delta}}{E_{\Delta}^{\mathrm{sep}}},$$ où la coinduite est munie de la topologie limite.
		\end{enumerate}
	\end{prop}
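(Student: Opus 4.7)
The plan is to exhibit both isomorphisms via an idempotent decomposition of the $\mathbb{F}_p$-tensor product, using the Galois splitting $\mathbb{F}_q \otimes_{\mathbb{F}_p} \mathbb{F}_q \cong \prod_{\sigma \in \Gamma}\mathbb{F}_q$, where $\Gamma = \mathrm{Gal}(\mathbb{F}_q/\mathbb{F}_p)$, sending $a \otimes b$ to $(a\,\sigma(b))_\sigma$. Fixing $\alpha_0 \in \Delta$ and iterating this identity yields a canonical isomorphism of $\mathbb{F}_p$-algebras $\bigotimes_{\alpha\in\Delta, \mathbb{F}_p} \mathbb{F}_q \cong \prod_{\sigma} \mathbb{F}_q$ with $\sigma$ ranging over $\Gamma^{\Delta\setminus\{\alpha_0\}}$. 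Let $(e_\sigma)$ denote the associated family of orthogonal idempotents. For any finite galoisian $F|E$, the algebras $F^+_\alpha$ being $\mathbb{F}_q$-algebras, multiplication by the $e_\sigma$ decomposes $\bigotimes_{\alpha, \mathbb{F}_p} F^+_\alpha$ into $f^{|\Delta|-1}$ factors, and each factor $e_\sigma \cdot \bigotimes_{\alpha, \mathbb{F}_p} F^+_\alpha$ identifies naturally with the tensor product $\bigotimes_{\alpha, \mathbb{F}_q} F^+_\alpha = F^+_{\Delta,q}$, up to replacing the $\mathbb{F}_q$-structure on each $F^+_\alpha$ ($\alpha \neq \alpha_0$) by its twist by $\sigma_\alpha \in \Gamma$; this twist is undone by the Frobenius-lift on $F^+_\alpha$ acting as Frobenius on the residue field and identity on the chosen uniformiser.

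Next I would match the indexing set $\Gamma^{\Delta\setminus\{\alpha_0\}}$ with $\Phi_{\Delta,p}^{\mathrm{gp}}/\Phi_{\Delta,q,p}^{\mathrm{gp}}$. Concretely, $\Phi_{\Delta,p}^{\mathrm{gp}} = \mathbb{Z}^\Delta$ and the subgroup is generated by $(1,\ldots,1)$ and $f e_\alpha$ for $\alpha \in \Delta$; modding out gives $(\mathbb{Z}/f\mathbb{Z})^{\Delta\setminus\{\alpha_0\}}$ via projection on the coordinates away from $\alpha_0$ (the relation coming from $\varphi_{\Delta,p}$ recovers the missing coordinate). The endomorphism $\varphi_{\beta,p}$ of $\bigotimes_{\alpha, \mathbb{F}_p} \mathbb{F}_q$ acts by Frobenius on the $\beta$-th tensor factor and identity elsewhere; reading this through the Galois decomposition, it permutes the idempotents $e_\sigma$ by precisely the left-translation of $\Phi_{\Delta,p}$ on the cosets $\Phi_{\Delta,p}/\Phi_{\Delta,q,p}$. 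The stabiliser of $e_{\mathrm{Id}}$ is exactly $\Phi_{\Delta,q,p}$, and its residual action on the corresponding summand, after straightening the $\mathbb{F}_q$-structures via the Frobenius lifts described above, coincides with the native action on $F^+_{\Delta,q}$. These checks assemble into a $\Phi_{\Delta,p}$-equivariant algebra isomorphism $\bigotimes_{\alpha, \mathbb{F}_p} F^+_\alpha \cong \coindu{\Phi_{\Delta,q,p}}{\Phi_{\Delta,p}}{F^+_{\Delta,q}}$.

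To obtain the first claim, I would complete $(\underline{X})$-adically and localise at $X_\Delta$. Since the index $f^{|\Delta|-1}$ is finite, the limit topology on the coinduction agrees with the product topology on finitely many copies of $F_{\Delta,q}$, and this construction commutes with the $(\underline{X})$-adic completion and localisation that define $F_{\Delta,p}$ from $\bigotimes_{\alpha, \mathbb{F}_p} F^+_\alpha$. For the second claim, I would pass to the colimit over $\mathcal{G}\mathrm{al}_E$. Because each factor of $\cg_{E,\Delta}$ acts $\mathbb{F}_q$-linearly on its corresponding $F^+_\alpha$, the Galois action commutes with the $e_\sigma$ (which lie in $\bigotimes_{\alpha, \mathbb{F}_p} \mathbb{F}_q$), hence respects the decomposition factorwise. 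The previously constructed isomorphism thus upgrades to a $(\Phi_{\Delta,p} \times \cg_{E,\Delta})$-equivariant identification $E_{\Delta,p}^{\mathrm{sep}} \cong \coindu{\Phi_{\Delta,q,p}\times \cg_{E,\Delta}}{\Phi_{\Delta,p}\times \cg_{E,\Delta}}{E_{\Delta}^{\mathrm{sep}}}$.

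The main obstacle will be the bookkeeping in the second paragraph: verifying precisely that the permutation induced on the family $(e_\sigma)$ by each $\varphi_{\beta,p}$ matches left-translation on $\Phi_{\Delta,p}/\Phi_{\Delta,q,p}$, and that the residual action of $\Phi_{\Delta,q,p}$ on the $\sigma = \mathrm{Id}$ component — once the twisted $\mathbb{F}_q$-structures have been straightened — reproduces the canonical $\Phi_{\Delta,q,p}$-action on $F_{\Delta,q}$ (in particular, that the Frobenius lifts used to straighten the twists do not interfere with the action of $\varphi_{\Delta,p}$, which on each straightened factor should be the $p$-Frobenius on the $\alpha_0$-coordinate composed with the residue-field Frobenius on the other coordinates, i.e. the total $p$-Frobenius on $F_{\Delta,q}$). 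Once this is settled, the remaining steps — completion, localisation, and passage to the colimit — are essentially formal, relying only on the finiteness of the index and on $\mathbb{F}_q$-linearity of the Galois action.
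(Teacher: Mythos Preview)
Your idempotent decomposition via $\bigotimes_{\mathbb{F}_p}\mathbb{F}_q\cong\prod\mathbb{F}_q$ is the same engine the paper uses (this is Lemme~\ref{identif_coindu_fq}, and the chain of isomorphisms in Proposition~\ref{identif_coindu_perf} is built on it). Where you diverge is in how you identify each summand $e_\sigma\cdot\bigotimes_{\mathbb{F}_p}F^+_\alpha$ with $F^+_{\Delta,q}$: you use non-canonical Frobenius \emph{lifts} (coefficient Frobenius, identity on a chosen uniformiser), which are automorphisms; the paper instead uses the maps $\psi_{\mathrm{sp\acute{e}}}(\otimes y_\alpha)=\otimes\psi_\alpha(y_\alpha)$ built from the honest partial Frobenii. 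In the imperfect case these are only injective, so the paper supplies an extra argument --- that every element $f$ of the coinduction automatically satisfies $f(\psi)\in\psi_\beta(F^+_\beta)$ --- to recover bijectivity (this is the computation at the end of the proof of Proposition~\ref{identif_coindu_imperf}).

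There is, however, a genuine gap in your outline, and it is more than bookkeeping. Your two checks --- that $\Phi_{\Delta,p}$ permutes the idempotents by left-translation on cosets, and that the stabiliser $\Phi_{\Delta,q,p}$ of $e_{\mathrm{Id}}$ acts by its native action --- are what characterises a \emph{group} coinduction. But $\Phi_{\Delta,q,p}<\Phi_{\Delta,p}$ are genuine monoids and the $\Phi_{\Delta,q,p}$-action on $F^+_{\Delta,q}$ is not by automorphisms. As the paper's ``indice subtil fini'' framework (Lemme~\ref{indice_subtil_fini_Phi}) makes explicit, the monoid coinduction is a finite limit over the diagram $(\mathcal{R}\nearrow\mathcal{L})_{\min}$ indexed by the \emph{maximal monoid cosets} and their relations, and $|\mathcal{R}_{\min}|>f^{|\Delta|-1}$ in general: already for $|\Delta|=2$, $f=2$ one has $\mathcal{R}_{\min}=\{(0,0),(0,1),(1,0)\}$, three elements against your two group cosets. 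Your Frobenius lifts do produce a clean ring isomorphism $F^+_{\Delta,p}\cong\prod_{f^{|\Delta|-1}}F^+_{\Delta,q}$, but they do not by themselves show that this product, with its transported $\Phi_{\Delta,p}$-action, satisfies the universal property of $\coindu{\Phi_{\Delta,q,p}}{\Phi_{\Delta,p}}{F^+_{\Delta,q}}$. The $S$-equivariant projection onto the $e_{\mathrm{Id}}$-summand does induce a map to the coinduction, and injectivity follows from transitivity of the permutation; but surjectivity --- i.e.\ that the relations in $\mathcal{L}_{\min}$ collapse the a~priori larger limit down to a product of $f^{|\Delta|-1}$ copies --- is precisely the content of the paper's image argument, and your lifts do not circumvent it.
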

	\begin{proof}
		Reléguée à la Proposition \ref{identif_coindu_imperf}, sous une écriture simplifiée.
	\end{proof}

	\begin{defiprop}\label{defipropDdeltackz}
		Le foncteur $$\mathbb{D}_{\Delta,\mathrm{CKZ}} \, : \, \mathrm{Rep}_{\mathbb{F}_p} \cg_{E,\Delta} \rightarrow \dmod{\Phi_{\Delta,p}}{E_{\Delta,p}}, \,\,\, V \mapsto \left(E^{\sep}_{\Delta,p} \otimes_{\mathbb{F}_p} V \right)^{\cg_{E,\Delta}}$$ est correctement défini, pleinement fidèle et son image essentielle est incluse dans $\detaleproj{\Phi_{\Delta,p}}{E_{\Delta,p}}$. Cette dernière catégorie est une sous-catégorie pleine monoïdale fermée et $\mathbb{D}_{\Delta,\mathrm{CKZ}}$ commute naturellement au produit tensoriel et au $\mathrm{Hom}$ interne.
	\end{defiprop}
	\begin{proof}
		Identique à la Propositon \ref{defipropDdelta} en démontrant des analogues des lemmes qui la précèdent pour $E_{\Delta,p}^{\sep}$.
	\end{proof}
	
	Une version imparfaite du Théorème 4.6 de \cite{zabradi_kedlaya_carter}, dans le cas où tous les corps perfectoïdes considérés sont isomorphes à $\widetilde{E}$, se formule comme suit :

	\begin{theo}\label{equiv_ckz}
		Le foncteur $$\mathbb{D}_{\Delta,\mathrm{CKZ}}\, : \,\mathrm{Rep}_{\mathbb{F}_p} \cg_{\widetilde{E},\Delta} \rightarrow \detaleproj{\Phi_{\Delta,p}}{E_{\Delta,p}}$$ est une équivalence de catégories monoïdales fermées. Un quasi-inverse est donné par $$\mathbb{V}_{\Delta,\mathrm{CKZ}}\, : \, \detaleproj{\Phi_{\Delta,p}}{E_{\Delta,p}} \rightarrow \mathrm{Rep}_{\mathbb{F}_p} \cg_{\widetilde{E},\Delta}, \,\,\, D \mapsto \left(E_{\Delta,p}^{\sep}\otimes_{E_{\Delta,p}} D\right)^{\Phi_{\Delta,p}}.$$
	\end{theo}
	\begin{proof}
	Considérons le diagramme suivant :
		
		\begin{center}
			\begin{tikzcd}
				\dmod{\Phi_{\Delta,q,p}}{E_{\Delta}} \ar[rr,"\mathrm{Coindu}_{\Phi_{\Delta,q,p}}^{\Phi_{\Delta,p}}"] & & \dmod{\Phi_{\Delta,p}}{E_{\Delta,p}} \\
				\detaleproj{\Phi_{\Delta,q,p}}{E_{\Delta}} \ar[rr,dotted] \ar[u,phantom,sloped,"\subset"] & & \detaleproj{\Phi_{\Delta,p}}{E_{\Delta,p}} \ar[u,phantom,sloped,"\subset"] \\
				\mathrm{Rep}_{\mathbb{F}_p} \cg_{\widetilde{E},\Delta} \arrow{u}[swap,anchor=center, rotate=90, yshift=-1.5ex]{\sim}{\mathbb{D}_{\Delta}} \ar[urr,hook,"\mathbb{D}_{\Delta,\mathrm{CKZ}}"']
			\end{tikzcd}
		\end{center}
		Le foncteur $\mathbb{D}_{\Delta}$ est une équivalence d'après le Théorème \ref{equiv_non_perf_car_p}. Le premier morphisme horizontal l'identification de $E_{\Delta,p}$ à $\coindu{\Phi_{\Delta,q,p}}{\Phi_{\Delta,p}}{E_{\Delta}}$ comme $\Phi_{\Delta,p}$-anneau à la Proposition \ref{identif_coindu_imperf} et par la construction dans \cite[Prop. 3.11]{formalisme_marquis}. La flèche en pointillé indique que $\detaleproj{\Phi_{\Delta,p}}{E_{\Delta,p}}$ est contenu dans l'image essentielle de la coinduction ; ceci découle du Lemme \ref{indice_subtil_fini_Phi} et de \cite[Prop. 3.19]{formalisme_marquis}.
		
		Prouvons que l'enveloppe de ce diagramme commute. Soit $V$ une représentation dans $\mathrm{Rep}_{\mathbb{F}_p} \cg_{\widetilde{E},\Delta}$, que nous voyons comme objet de $\detaleproj{\Phi_{\Delta,p}\times\cg_{\widetilde{E},\Delta}}{\mathbb{F}_p}$. La première étape de $\mathbb{D}_{\Delta}$ et $\mathbb{D}_{\Delta,\mathrm{CKZ}}$ consiste à étendre les scalaires. Nous appliquons \cite[Lem. 3.18]{formalisme_marquis} au sous-monoïde d'indice subtil fini $\Phi_{\Delta,q,p}<\Phi_{\Delta,p}$, au $\left(\Phi_{\Delta,p}\times \cg_{\widetilde{E},\Delta}\right)$-anneau $\mathbb{F}_p$, au $\left(\Phi_{\Delta,q,p}\times \cg_{\widetilde{E},\Delta}\right)$-anneau $E_{\Delta}^{\sep}$, à l'inclusion $\mathbb{F}_p\hookrightarrow E_{\Delta}^{\sep}$ et à la représentation $V$ ; on obtient un isomorphisme dans $\dmod{\Phi_{\Delta,p}\times \cg_{\widetilde{E},\Delta}}{E_{\Delta,p}^{\sep}}$ $$E_{\Delta,p}^{\mathrm{sep}} \otimes_{\mathbb{F}_p} V \cong \bigcoindu{\Phi_{\Delta,q,p}\times \cg_{\widetilde{E},\Delta}}{\Phi_{\Delta,p}\times \cg_{\widetilde{E},\Delta}}{E_{\Delta}^{\mathrm{sep}}} \otimes_{\mathbb{F}_p} V \cong \bigcoindu{\Phi_{\Delta,q,p}\times \cg_{\widetilde{E},\Delta}}{\Phi_{\Delta,p}\times \cg_{\widetilde{E},\Delta}}{E_{\Delta}^{\mathrm{sep}}\otimes_{\mathbb{F}_p} V}.$$ En passant aux invariants, on obtient un isomorphisme dans $\dmod{\Phi_{\Delta,p}}{E_{\Delta,p}}$ naturel en $V$ $$\mathbb{D}_{\Delta,\mathrm{CKZ}}(V):= \left(E_{\Delta,p}^{\mathrm{sep}} \otimes_{\mathbb{F}_p} V\right)^{\cg_{\widetilde{E},\Delta}} \cong \bigcoindu{\Phi_{\Delta,q,p}}{\Phi_{\Delta,p}}{\left(E_{\Delta}^{\mathrm{sep}}\otimes_{\mathbb{F}_p} V\right)^{\cg_{\widetilde{E},\Delta}}}=\bigcoindu{\Phi_{\Delta,q,p}}{\Phi_{\Delta,p}}{\mathbb{D}_{\Delta}(V)}.$$ C'est exactement la commutativité du diagramme que nous recherchions.
		
		La flèche pointillée démontre alors que $\mathbb{D}_{\Delta,\mathrm{CKZ}}$ est essentiellement surjective. On trouve l'expression d'un quasi-inverse comme pour le Théorème \ref{equiv_non_perf_car_p}.
	\end{proof}

	\begin{rem}
		Nous pourrions établir la version perfectoïde \cite[Th. 4.6]{zabradi_kedlaya_carter} de manière similaire. Malheureusement, l'anneau $\overline{R}$ dans l'article de Carter-Kedlaya-Z\'abr\'adi est un complété de la colimite $\widetilde{E}_{\Delta,p}^{\sep}$ que nos stratégies nous poussent à définir. Nous préférons ainsi éluder une telle preuve pour nous épargner un énoncé de décomplétion des invariants. Il est d'ailleurs à noter une subtilité dans \cite{zabradi_kedlaya_carter} puisque l'espace $\mathrm{Spa}(\overline{R},\overline{R}^+)$ obtenu par construction produit n'est pas pro-étale sur $\mathrm{Spa}(R,R^+)$. Pour appliquer la descente pro-étale, il faudrait remplacer $\overline{R}^+$ par la complétion $\varpi_{\Delta}$-adique de la colimite des constructions produits sur les extensions finies de $E$, et non sa complétion $(\underline{\varpi})$-adique.
	\end{rem}
	
	\vspace{1cm}
	
	\section{\'Equivalence de Fontaine Lubin-Tate multivariable }\label{partie_equiv_lt}
	
	Soit $\overline{\qp}|K|\qp$ une extension finie dont le corps résiduel est de cardinal $q$. Soit $\Delta$ un ensemble fini. Dans cette section, nous obtenons une équivalence de Fontaine Lubin-Tate multivariable laissée ouverte dans \cite{zabradi_kedlaya_carter}. Pour préparer l'équivalence glectique semi-linéaire, nous autorisons un choix de loi de Lubin-Tate différent pour chaque $\alpha \in \Delta$. Pour chaque $\alpha \in \Delta$, nous nous donnons une uniformisante $\pi_{\alpha}$ de $K$, un polynôme de Lubin-Tate $\rf_{\alpha}$ associé à $\pi_{\alpha}$ et un système de Lubin-Tate $\pi^{\flat}_{\alpha}$ associé à $\rf_{\alpha}$.

	\begin{defi}\label{defi_topo_oed}
		Considérons la situation de \S \ref{section_equiv_car_zero_imparf} pour $E=\mathbb{F}_q(\!(X)\!)$, pour l'extension $K=L$, l'ensemble fini $\Delta$, une famille $(E_{\alpha})$ de corps valués isomorphes à $E$ et la famille de $\mathcal{O}_K$-algèbres $\pi$-adiquement séparées et complètes d'anneau résiduel $E_{\alpha}^+$ en $\pi$ données comme à la Définition \ref{defi_oe} par $$\mathcal{O}_{\mathcal{E}_{\alpha}}=\left(\mathcal{O}_K \llbracket X_{\alpha}\rrbracket \left[\frac{1}{X_{\alpha}}\right]\right)^{\wedge \pi},$$ munies du relèvement $\mathcal{O}_K$-linéaire du $q$-Frobenius vérifiant $\phi_{\alpha,q}(X_{\alpha})=\rf_{\alpha}(X_{\alpha})$. Dans cette sous-section, nous équippons les anneaux des Définitions \ref{defi_oed} et \ref{defi_oeddeltasep} de topologies plus variées.
		
		Nous munissons $\hcal{O}_{\hcal{E}_{\Delta}}^+$ de trois topologies : la topologie $\pi$-adique, la topologie $(\pi,X_{\Delta})$-adique et la topologie $(\pi,\underline{X})$-adique.
		
		Nous munissons $\hcal{O}_{\hcal{E}_{\Delta}}$ de trois topologies d'anneau
		
		Premièrement, la topologie appelée \textit{topologie adique faible} ayant pour base de voisinages de $0$ les sous-groupes $\left(\pi^n \hcal{O}_{\hcal{E}_{\Delta}} + X_{\Delta}^m \hcal{O}_{\hcal{E}_{\Delta}}^+\right)_{n,m\geq 0}$.
		
		La deuxième topologie considérée est celle obtenue en écrivant $$\hcal{O}_{\hcal{E}_{\Delta}}= \lim \limits_{n\geq 0} \,\,\colim \limits_{m\geq 0}\, \quot{X_{\Delta}^{-m}\hcal{O}_{\hcal{E}_{\Delta}}^+}{\pi^n X_{\Delta}^{-m} \hcal{O}_{\hcal{E}_{\Delta}}^+}$$ et en munissant chaque $X_{\Delta}^{-m} \hcal{O}_{\hcal{E}_{\Delta}}^+$ de la topologie initiale venant de la topologie $(\pi,\und{X})$-adique sur $\hcal{O}_{\hcal{E}_{\Delta}}^+$.  Nous l'appelons la \textit{topologie colimite}.
		
		La dernière topologie, que nous nommons \textit{topologie faible}, a pour base de voisinages de zéro les \linebreak$\left(\pi^n \hcal{O}_{\hcal{E}_{\Delta}} +\sum_{\und{d} \in \Z^{\Delta},  \Sigma \,\und{d} \geq m}\,\, \und{X}^{\und{d}} \hcal{O}_{\hcal{E}_{\Delta}}^+\right)_{n,m\geq 0}$. Il s'agit de la topologie de \cite[Lem. 2.15]{pupazan}.
		
		Nous définissons les mêmes topologies pour les $\hcal{O}_{\hcal{F}_{\Delta}}$ et pour $\Oedhat$.
	\end{defi}
	
	\begin{defi}\label{defi_oedk}
		 Pour toute extension finie galoisienne $E^{\sep}|F|E$, considérons les $(\varphi_{\alpha,q}^{\N}\!\times\! \cg_{E_{\alpha}})$-anneaux topologiques $\mathcal{O}_{\mathcal{F}_{\alpha}}^+$ de la Définition \ref{defi_oed}. Les plongements dans $\widetilde{A}_K$ construits à la Proposition \ref{construct_oek} à partir\footnote{Et de choix de clôtures séparables de chaque $\mathbb{F}_q(\!(X_{\alpha}^{1/p^{\infty}})\!)$ et d'isomorphismes avec $\bigcup_{[K'\!:\!K_{\lt}]<\infty} (K' \widehat{K_{\lt}})^{\flat}$. Puisque nous avions déjà choisi une identification des clôtures séparables de $\widetilde{E}_{\alpha}$ et $\widetilde{E}$, cela revient à faire un unique choix pour $E$.} des $\pi_{\alpha}^{\flat}$ les munissent d'une structure de $(\varphi_{\alpha,q}^{\N}\!\times \!\cg_K)$-anneaux topologiques pour la topologie faible que l'on appelle $\mathcal{O}_{\mathcal{F}_{K,\alpha}}^+$. L'action de $(\Phi_{\Delta,q}\!\times \!\cg_{K,\Delta})$ terme à terme sur le produit tensoriel des $\mathcal{O}_{\mathcal{F}_{K,\alpha}}^+$ est $(\pi,\und{X})$-adiquement continue. Elle se complète et localise sur $\mathcal{O}_{\mathcal{F}_{\Delta}}$ en une structure de  $(\Phi_{\Delta,q}\!\times \!\cg_{K,\Delta})$-anneau topologique pour les topologies adique faible, colimite et faible que nous appelons $\mathcal{O}_{\mathcal{F}_{K,\Delta}}$.
		 
		 Les injections de la Définition \ref{defi_oeddeltasep} sont $\cg_{K,\Delta}$-équivariantes. Cela permet de passer à la colimite et compléter en une structure de $(\Phi_{\Delta,q}\!\times\! \cg_{K,\Delta})$-anneau sur $\mathcal{O}_{\widehat{\mathcal{E}_{\Delta}^{\mathrm{nr}}}}$ que l'on nomme $\Oehatkand{\Delta}$. C'est une structure de \linebreak$(\Phi_{\Delta,q}\times \cg_{K,\Delta})$-anneau topologique pour chacune des trois topologies ci-dessus. Comme dans le paragraphe qui précède la Définition \ref{defi_oe}, après choix des $\pi_{\alpha}^{\flat}$ et d'une extension de $j$, le groupe $\cg_{E,\Delta}$ s'identifie canoniquement à un sous-groupe de $\ch_{K,\lt,\Delta}$ de $\cg_{K,\Delta}$. Via cette identification, le $(\Phi_{\Delta,q}\!\times\! \cg_{E,\Delta})$-anneau sous-jacent à $\Oehatkand{\Delta}$ est $\mathcal{O}_{\widehat{\mathcal{E}_{\Delta}^{\mathrm{nr}}}}$. 
		 
		 Nous notons également $\Gamma_{K,\lt,\Delta}:= \prod_{\alpha \in \Delta} \mathcal{O}_K^{\times}$  et l'identifions canoniquement à $\prod_{\alpha\in \Delta} \gal{K_{\lt,\rf_{\alpha}}}{K}$ et $\cg_{K,\Delta}/\ch_{K,\lt,\Delta}$.
	\end{defi}

	\begin{rem}
		Nous pointons à nouveau que $(\pi,\und{X})^n\hcal{O}_{\hcal{E}_{\Delta}}^+$ ne fournissent pas une base de voisinages de zéro d'une structure d'anneau topologique sur $\hcal{O}_{\hcal{E}_{\Delta}}$. Les topologies colimites et faibles sont deux bonnes alternatives pour lesquelles chaque $X_{\alpha}$ est topologiquement nilpotent.
	\end{rem}

	\begin{rem}\label{descr_oekand}
		Nous pouvons décrire explicitement le $(\Phi_{\Delta,q}\times \Gamma_{K,\lt,\Delta})$-anneau obtenu. Nous avons $$\Oekand{\Delta}^+\cong \mathcal{O}_K \llbracket X_{\alpha} \, |\, \alpha \in \Delta\rrbracket \, \,\, \text{et}\,\,\,\Oekand{\Delta} \cong \left(\mathcal{O}_K \llbracket X_{\alpha} \, |\, \alpha \in \Delta\rrbracket \left[ \frac{1}{X_{\Delta}}\right] \right)^{\wedge \pi}.$$ L'action de $(\Phi_{\Delta,q}\times \Gamma_{K,\lt,\Delta})$ est donnée par la seule action de $\mathcal{O}_K$-algèbre topologique telle que $$\left( \varphi_{\alpha,q}^{n_{\alpha}}\right)_{\alpha}(X_{\beta})=\rf_{\beta}^{\circ n_{\beta}}(X_{\beta})$$
		
		$$\forall (x_{\alpha})_{\alpha} \in \Gamma_{K,\lt,\Delta}, \,\,\, (x_{\alpha})_{\alpha}(X_{\beta})=[x_{\beta}]_{\lt,\rf_{\beta}}(X_{\beta}).$$
	\end{rem}
	
	Pour obtenir une équivalence de Fontaine multivariable Lubin-Tate, il nous reste à analyser les topologies adique faible, colimite et faible, dont nous n'avions pas l'usage pour les corps de caractéristique $p$. Cela conduit à se replonger dans les topologies adiques sur nos anneaux de caractéristique $p$, également évincées des précédentes équivalences.
	
	\begin{lemma}\label{topo_faibles}
		Pour toute famille $\mathfrak{D}\!\subset\! \Z^{\Delta}$, nous avons les égalités suivantes.	
		\begin{enumerate}[itemsep=0mm]
			\item Dans $E_{\Delta}^{\sep}$, l'égalité $$\left(\sum_{\und{d}\in \mathfrak{D}} \und{X}^{\und{d}} E_{\Delta}^{\sep,+}\right)\cap E_{\Delta}=\sum_{\und{d}\in \mathfrak{D}} \und{X}^{\und{d}} E_{\Delta}^+.$$
			
			\item Pour $n\geq 0$, l'égalité dans $\mathcal{O}_{\widehat{\mathcal{E}_{\Delta}^{\mathrm{nr}}}}$ $$\left(\pi^n \mathcal{O}_{\widehat{\mathcal{E}_{\Delta}^{\mathrm{nr}}}} + \sum_{\und{d}\in \mathfrak{D}} \und{X}^{\und{d}} \mathcal{O}_{\widehat{\mathcal{E}_{\Delta}^{\mathrm{nr}}}}^+\right) \cap \mathcal{O}_{\mathcal{E}_{\Delta}}=\pi^n \mathcal{O}_{\mathcal{E}_{\Delta}} + \sum_{\und{d}\in \mathfrak{D}} \und{X}^{\und{d}} \mathcal{O}_{\mathcal{E}_{\Delta}}^+.$$ 
			
			\item Pour $n\geq 0$ et si pour tout $\und{d} \in \mathfrak{D}$ on a $\sum \und{d}> 0$, l'égalité dans $\mathcal{O}_{\widehat{\mathcal{E}_{\Delta}^{\mathrm{nr}}}}$ $$\left(\pi^n \mathcal{O}_{\widehat{\mathcal{E}_{\Delta}^{\mathrm{nr}}}} + \sum_{\und{d}\in \mathfrak{D}} \und{X}^{\und{d}} \mathcal{O}_{\widehat{\mathcal{E}_{\Delta}^{\mathrm{nr}}}}^+\right) \cap \mathcal{O}_K=\pi^n\mathcal{O}_K.$$ 
		\end{enumerate}
	\end{lemma}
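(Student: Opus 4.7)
Le plan est de démontrer les trois énoncés dans l'ordre, chaque étape utilisant la précédente. Pour (1), l'inclusion $\supseteq$ étant tautologique, il reste à prouver $\subseteq$. Puisque $E_{\Delta}$ est intègre par le Corollaire \ref{integrite_imparf} et que $X_{\Delta}$ est inversible dans $E_{\Delta}$ comme dans $E_{\Delta}^{\sep}$, diviser par $X_{\Delta}^n$ réduit l'énoncé au cas $n=0$, à savoir l'identité $E_{\Delta}^{\sep,+} \cap E_{\Delta} = E_{\Delta}^+$. Écrire $E_{\Delta}^{\sep,+}= \colim\limits_{F\in \mathcal{G}\mathrm{al}_E} F_{\Delta,q}^+$ permet de se ramener à prouver $F_{\Delta,q}^+ \cap E_{\Delta} = E_{\Delta}^+$ pour chaque extension galoisienne finie $F|E$. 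La preuve du Lemme \ref{module_induit_multi_imparf} montre que $F_{\Delta,q}^+$ est un $E_{\Delta}^+$-module libre de rang fini ; en choisissant une base contenant $1$, on développe $x \in F_{\Delta,q}^+ \cap E_{\Delta}$ simultanément dans cette base et dans la base induite de $F_{\Delta,q}$ vu comme $E_{\Delta}$-module, et l'unicité des décompositions force toutes les coordonnées hors de $1$ à être nulles, la coordonnée restante appartenant à $E_{\Delta}^+$.

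Pour (2), je procède par récurrence sur $n$, le cas $n=0$ étant trivial. Au pas d'hérédité, soit une décomposition $x = \pi^n a + X_{\Delta}^m b$ avec $a \in \Oedhat$ et $b \in \Oedhat^+$. La réduction modulo $\pi$ couplée au premier point fournit $c \in E_{\Delta}^+$ tel que $\bar x = X_{\Delta}^m \bar c$, et un relèvement $b' \in \mathcal{O}_{\mathcal{E}_{\Delta}}^+$ de $c$ permet d'écrire $x - X_{\Delta}^m b' = \pi x_1$ dans $\mathcal{O}_{\mathcal{E}_{\Delta}}$. La relation $\pi x_1 = \pi^n a + X_{\Delta}^m (b-b')$ vue modulo $\pi$ dans $E_{\Delta}^{\sep}$ impose $X_{\Delta}^m \overline{b-b'} = 0$, et la régularité de $X_{\Delta}$ sur $E_{\Delta}^{\sep,+}$ (colimite des $F_{\Delta,q}^+$, lesquels s'injectent dans les $F_{\Delta,q}$ où $X_{\Delta}$ est inversible) entraîne $\overline{b-b'} = 0$ dans $E_{\Delta}^{\sep,+}$. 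Puisque le morphisme $\Oedhat^+/\pi \hookrightarrow \Oedhat/\pi$ est injectif (même analyse), on obtient $b - b' \in \pi \Oedhat^+$. La $\pi$-absence de torsion de $\Oedhat$ autorise alors la division par $\pi$ dans $\pi x_1 = \pi^n a + \pi X_{\Delta}^m c'$, ce qui fournit $x_1 \in (\pi^{n-1} \Oedhat + X_{\Delta}^m \Oedhat^+) \cap \mathcal{O}_{\mathcal{E}_{\Delta}}$, et l'hypothèse de récurrence conclut.

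Pour (3), on utilise (2) pour remplacer l'intersection du membre de gauche par $(\pi^n \mathcal{O}_{\mathcal{E}_{\Delta}} + X_{\Delta}^m \mathcal{O}_{\mathcal{E}_{\Delta}}^+) \cap \mathcal{O}_K$. La description explicite de la Remarque \ref{descr_oekand} fournit un isomorphisme $\mathcal{O}_{\mathcal{E}_{\Delta}}/\pi^n \cong (\mathcal{O}_K/\pi^n)\llbracket X_{\alpha} \,|\, \alpha \in \Delta\rrbracket [X_{\Delta}^{-1}]$, sous lequel l'image de $\mathcal{O}_K$ est le sous-anneau des séries constantes. Si $y \in \mathcal{O}_K$ admet une écriture $\bar y = X_{\Delta}^m \bar b$ modulo $\pi^n$ avec $\bar b \in (\mathcal{O}_K/\pi^n)\llbracket X_{\alpha}\rrbracket$, la régularité de $X_{\Delta}$ dans l'anneau des séries formelles permet de remonter l'identité en $y = X_{\Delta}^m \bar b$ dans $(\mathcal{O}_K/\pi^n)\llbracket X_{\alpha}\rrbracket$ ; comme $m \geq 1$, le terme constant du membre de droite est nul, forçant $y \in \pi^n \mathcal{O}_K$.

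La difficulté principale se concentre dans la partie (2), où il faut raffiner une décomposition à coefficients dans les grands anneaux complétés en une décomposition à coefficients dans les sous-anneaux plus petits $\mathcal{O}_{\mathcal{E}_{\Delta}}$ et $\mathcal{O}_{\mathcal{E}_{\Delta}}^+$. La combinaison soigneuse de la régularité de $X_{\Delta}$ modulo $\pi$ sur $E_{\Delta}^{\sep,+}$, de la $\pi$-absence de torsion de $\Oedhat$, et de l'injectivité du morphisme $\Oedhat^+/\pi \hookrightarrow \Oedhat/\pi$ est l'ingrédient technique qui permet à la récurrence sur $n$ de se fermer correctement.
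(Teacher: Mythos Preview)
Your proof is correct and follows the paper's approach almost exactly: for (1) you use the same free-basis argument $F_{\Delta,q}^+ = E_\Delta^+ \oplus \bigoplus_{b\in\mathcal{B}} E_\Delta^+ b$ (the paper reads membership in $X_\Delta^n F_{\Delta,q}^+$ directly on coordinates rather than first dividing by $X_\Delta^n$, but this is cosmetic), and for (2) your induction is the paper's, with the key identity $X_\Delta^m \Oedhat^+ \cap \pi\Oedhat = \pi X_\Delta^m \Oedhat^+$ unpacked into regularity of $X_\Delta$ on $E_\Delta^{\sep,+}$ together with the injection $\Oedhat^+/\pi \hookrightarrow \Oedhat/\pi$. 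For (3) the paper only writes \og raisonnement similaire\fg{} (i.e.\ the same induction on $n$), whereas you take a shortcut via (2) and the explicit power-series description of $\mathcal{O}_{\mathcal{E}_\Delta}/\pi^n$; both routes are fine.
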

	\begin{proof}
		\begin{enumerate}[itemsep=0mm]
			\item On se ramène à le prouver pour chaque $F_{\Delta,q}^+$. Pendant la démonstration du Lemme \ref{module_induit_multi_imparf}, nous avions démontré un isomorphisme de $E_{\Delta}^+$-modules $$F_{\alpha}^{\circ}\otimes_{E_{\alpha}^{\circ}} \left( F_{\beta}^{\circ} \otimes_{E_{\beta}^{\circ}} \cdots \left( F_{\delta}^{\circ}\otimes_{E_{\delta}^{\circ}} E_{\Delta}^+\right)\right) \cong F_{\Delta,q}^+.$$ Puisque chaque $F_{\alpha}^{\circ}$ est un $E_{\alpha}^{\circ}$-module libre de type fini pour lequel $\{1\}$ se complète en une base\footnote{Pour ces deux affirmations, considérer que pour une extension finie non ramifiée ou totalement ramifiée, il existe un élément primitif tel que $F^{\circ}=E^{\circ}[x]$. Toute extension finie se décompose comme extension totalement ramifiée d'une extension non ramifiée.}, il existe une $E_{\Delta}$-base de $F_{\Delta,q}$ de la forme $\{1\}\sqcup\mathcal{B}$ telle que $F_{\Delta,q}^+=E_{\Delta}^+ \oplus \bigoplus_{b\in \mathcal{B}} E_{\Delta}^+ b$. Comme chaque $E_{\Delta} b$ est un sous-$E_{\Delta}$-module, un élément de $F_{\Delta,q}$ appartient à $\sum_{\und{d}\in \mathfrak{D}} \und{X}^{\und{d}} F_{\Delta,q}^+$ si et seulement si toutes ses coordonnées appartiennent à $\sum_{\und{d}\in \mathfrak{D}} \und{X}^{\und{d}} E_{\Delta}^+$. Cela conclut.
			
			\item On établit le résultat par récurrence sur $n$. Si $n\!=\!0$, le résultat est vide. Supposons le résultat vérifie pour un $n\geq 0$ et soit $x$ dans l'intersection pour $(n+1)$. Nous savons que $(x \modulo{\pi})$ appartient à $\big(\sum_{\und{d}\in \mathfrak{D}} \und{X}^{\und{d}} E_{\Delta}^{\sep,+}\big) \cap E_{\Delta}$, donc à $\sum_{\und{d}\in \mathfrak{D}} \und{X}^{\und{d}} E_{\Delta}^+$ par le premier point. Ainsi, il existe une famille $(y_{\und{d}})$ d'éléments presque tous nuls de $\mathcal{O}_{\mathcal{E}_{\Delta}}^+$ telle que $x-\sum_{\und{d}\in \mathfrak{D}} \und{X}^{\und{d}} y_{\und{d}}\! \equiv \! 0 \modulo{\pi}$. 
			
			Grâce au premier point du Corollaire \ref{coro_descente_1_imparf}, $E_{\Delta}^{\sep}$ est sans $X_{\Delta}^m$-torsion ; nous savons aussi que $\mathcal{O}_{\widehat{\mathcal{E}_{\Delta}^{\mathrm{nr}}}}$ est sans $\pi$-torsion (voir la condition 1 dans la démonstration du Théorème \ref{equiv_car_zero}). Il en découle que $$X_{\Delta}^n \mathcal{O}_{\widehat{\mathcal{E}_{\Delta}^{\mathrm{nr}}}}^+ \cap \pi \mathcal{O}_{\widehat{\mathcal{E}_{\Delta}^{\mathrm{nr}}}}=\pi X_{\Delta}^n \mathcal{O}_{\widehat{\mathcal{E}_{\Delta}^{\mathrm{nr}}}}^+.$$
			
			Soit $F|E$ une extension finie d'indice de ramification $e$ et de degré d'inertie $f$. On utilise la description de $\hcal{O}_{\hcal{F}_{\Delta}}^+$ à la Remarque \ref{descr_oekand}. Soit $z\in \sum_{\und{d}\in \mathfrak{D}} \und{X}^{\und{d}} \hcal{O}_{\hcal{F}_{\Delta}}^+$ qui est multiple de $\pi$. Fixons une famille $z_{\und{d}}$ d'éléments presque tous nuls de $\hcal{O}_{\hcal{F}_{\Delta}}^+$ tels que $z\!=\!\sum_{\und{d}\in \mathfrak{D}} \und{X'}^{e\und{d}} z_{\und{d}}$. On choisit $\mathfrak{D}_z$ fini contenant l'ensemble des indices où le coefficient est non nul. \'Ecrivons $z_{\und{d}}\!=\!\sum_{\und{i} \in \N^{\Delta}} \und{X}^{\und{i}} a_{\und{i},\und{d}} \und{X'}^{\und{i}}$ avec $a_{\und{i},\und{d}}\!\in\! \left(\otimes_{\alpha \in \Delta, \hcal{O}_K} \hcal{O}_{K_{q^f}}\right)$ avec seulement un nombre fini de termes non multiples par $\pi^k$. L'hypothèse de divisibilité par $\pi$ affirme que $$\forall \und{j}\in \Z^{\Delta}, \,\, \pi \,\, \text{divise} \,\,\sum_{\und{i}+\und{d}=\und{j}} a_{\und{i},\und{d}}.$$ On se donne un ordre lexicographique sur $\Delta$ qui détermine un ordre total sur $\Z^{\Delta}$. Pour chaque couple $(\und{i},\und{d})$ tel que $a_{\und{i},\und{d}}$ est non nul, on peut choisir $\und{j}(\und{i},\und{d}) \in \mathfrak{D}_z$ minimal tel que $\und{X'}^{(e\und{j})} | \und{X'}^{(\und{i}+e\und{d})}$. Alors $$z =\sum_{\und{j}\in \mathfrak{D}_z} \und{X'}^{(e\und{j})} \left(\sum_{(\und{i},\und{d}) \, \, \text{tel que} \,\, \und{j}(\und{i},\und{d})=\und{j}} a_{\und{i},\und{j}} \und{X'}^{\und{i}+ e(\und{d}-\und{j})}\right).$$ En séparant selon $\und{i}+\und{d}$ on se comprend que chaque somme sur $(\und{i},\und{j})$ est divisible par $\pi$. Nous avons donc prouvé que  $$\left(\sum_{\und{d}\in \mathfrak{D}} \und{X}^{\und{d}} \hcal{O}_{\hcal{F}_{\Delta}}^+ \right)\cap \pi \hcal{O}_{\hcal{F}_{\Delta}}^+ = \sum_{\und{d}\in \mathfrak{D}} \und{X}^{\und{d}} \pi \hcal{O}_{\hcal{F}_{\Delta}}^+.$$
			
			On reprend notre hérédité en choissant une extension finie $F|E$ pour laquelle on peut écrire \linebreak$x\!=\!\pi^{n+1} z + \sum_{\und{d}\in \mathfrak{D}} \und{X}^{\und{d}} z_{\und{d}}$ avec $z_{\und{d}}\in \mathcal{O}_{\mathcal{F}_{\Delta}}^+$ presque tous nuls. Nous obtenons que $$\pi \,\, \text{divise}\,\, \sum_{\und{d}\in \mathfrak{D}} \und{X}^{\und{d}}(z_{\und{d}}-y_{\und{d}})$$ donc que cette somme appartient à $\pi \sum_{\und{d}\in \mathfrak{D}} \und{X}^{\und{d}} \mathcal{O}_{\mathcal{F}_{\Delta}}^+$. Ainsi $$\left(x-\sum_{\und{d}\in \mathfrak{D}} \und{X}^{\und{d}} y_{\und{d}}\right) \in \pi\left(\pi^n \mathcal{O}_{\widehat{\mathcal{E}_{\Delta}^{\mathrm{nr}}}} + \sum_{\und{d}\in \mathfrak{D}} \und{X}^{\und{d}} \mathcal{O}_{\widehat{\mathcal{E}_{\Delta}^{\mathrm{nr}}}}^+\right)$$ et l'hypothèse de récurrence conclut.
			
			\item Raisonnement similaire.
		\end{enumerate}
	\end{proof}
	
	\begin{theo}\label{equiv_lt}
		Les foncteurs \begin{align*} \mathbb{D}_{\Delta,\lt}\, : \,\mathrm{Rep}_{\mathcal{O}_K} \cg_{K,\Delta} &\rightarrow \dmod{\Phi_{\Delta,q}\!\times\! \Gamma_{K,\lt,\Delta}}{\Oekand{\Delta}} \\ V &\mapsto \left(\Oehatkand{\Delta} \otimes_{\mathcal{O}_K} V\right)^{\ch_{K,\lt,\Delta}} \end{align*}
		\begin{align*} \mathbb{V}_{\Delta,\lt}\, : \, \cdetaledvproj{\Phi_{\Delta,q}\!\times\! \Gamma_{K,\lt,\Delta}}{\Oekand{\Delta}}{\pi} &\rightarrow \dmod{\cg_{K,\Delta}}{\mathcal{O}_K} \\ D&\mapsto \left( \Oehatkand{\Delta} \otimes_{\Oekand{\Delta}} D\right)^{\Phi_{\Delta,q}}\end{align*} où les topologies en jeu sont respectivement la topologie $\pi$-adique sur $\mathcal{O}_K$ et l'une des trois topologies adique faible, colimite et faible sur $\Oekand{\Delta}$, sont correctement définis, lax monoïdaux et fermés. Leurs images essentielles sont contenues respectivement dans $\cdetaledvproj{\Phi_{\Delta,q}\!\times\! \Gamma_{K,\lt,\Delta}}{\Oekand{\Delta}}{\pi}$ et $\mathrm{Rep}_{\mathcal{O}_K} \cg_{K,\Delta}$. Leurs corestrictions forment une paire de foncteurs quasi-inverses.
	\end{theo}
	\begin{proof}
		La démonstration est similaire au Théorème \ref{equiv_car_zero} et utilise même des résultats démontrés audit théorème. Ici, nous avons crucialement besoin de la variante des $\mathcal{S}$-modules topologiques sur $R$ à $r$-dévissage projectif introduits dans \cite[\S 4]{formalisme_marquis}. Nous appelons topologie choisie l'une des trois topologies adique faible, colimite et faible que nous choisissons pour le reste de la preuve. Nous décomposons alors le foncteur $\mathbb{D}_{\Delta,\lt}$ comme d'habitude comme $\mathrm{Inv}\circ \mathrm{Ex}\circ \mathrm{triv}$ mais nous voudrions à présent qu'il se corestreigne comme la composée suivante :	
		\begin{center}
			\begin{tikzcd}[row sep=small] \cdetaledvproj{\cg_{K,\Delta}}{\mathcal{O}_K}{\pi} \ar[d,"\mathrm{triv}"] \\ \cdetaledvproj{\Phi_{\Delta,q}\!\times\! \cg_{K,\Delta}}{\mathcal{O}_K}{\pi} \ar[d,"\mathrm{Ex}"] \\
				\cdetaledvproj{\Phi_{\Delta,q}\!\times\! \cg_{K,\Delta},\ch_{K,\lt,\Delta}}{\Oehatkand{\Delta}}{\pi} \ar[d,"\mathrm{Inv}"] \\
				\cdetaledvproj{\Phi_{\Delta,q}\!\times\!\Gamma_{K,\lt,\Delta}}{\Oekand{\Delta}}{\pi}
			\end{tikzcd}
		\end{center} où les topologies sont : la topologie $\pi$-adique sur la première ligne, la topologie choisie pour le premier monoïde et $\pi$-adique pour le sous-monoïde $\ch_{K,\lt,\Delta}$ en bas à droite, la topologie choisie en bas à gauche. De même,  le foncteur $\mathbb{V}_{\Delta,\lt}$ se décompose comme $\mathrm{Inv}\circ \mathrm{Ex}\circ \mathrm{triv}$ et nous voudrions qu'il corestreigne comme la composée suivante :
		
		\begin{center}
			\begin{tikzcd}[row sep=small]
				\cdetaledvproj{\Phi_{\Delta,q}\times\Gamma_{K,\lt,\Delta},\Phi_{\Delta,q}}{\Oekand{\Delta}}{\pi} \ar[d,"\mathrm{triv}"] \\ \cdetaledvproj{\Phi_{\Delta,q}\times \cg_{K,\Delta},\Phi_{\Delta,q}}{\Oekand{\Delta}}{\pi} \ar[d,"\mathrm{Ex}"] \\
				\cdetaledvproj{\Phi_{\Delta,q}\times \cg_{K,\Delta}, \Phi_{\Delta,q}}{\Oehatkand{\Delta}}{\pi} \ar[d,"\mathrm{Inv}"] \\ \cdetaledvproj{\cg_{K,\Delta}}{\mathcal{O}_K}{\pi} 
			\end{tikzcd}
		\end{center} où les topologies sont la topologie choisie pour les premiers monoïdes et $\pi$-adiques pour les seconds. On remarquera que, puisque la topologie sur $\Phi_{\Delta,q}$ est discrète et que son action sur chaque anneau est continue pour les deux topologies, nous avons bien coïncidence des deux sous-catégories $$\cdetaledvproj{\Phi_{\Delta,q}\times\Gamma_{K,\lt,\Delta},\Phi_{\Delta,q}}{\Oekand{\Delta}}{\pi} \,\, \text{et} \,\,\cdetaledvproj{\Phi_{\Delta,q}\times\Gamma_{K,\lt,\Delta}}{\Oekand{\Delta}}{\pi}.$$
		
		Pour justifier la définition correcte, nous devons vérifier une liste de conditions pour utiliser les mêmes résultats de \cite{formalisme_marquis} qu'au Théorème \ref{equiv_car_zero}.
		
		\underline{Conditions 1, 2, 5, 6, 7 et 8 :} ce sont les mêmes conditions puisque les anneaux sous-jacents sont ceux du Théorème \ref{equiv_car_zero}.
		
		\underline{Condition 3 :} les inclusions $\mathcal{O}_K \!\subset\! \Oehatkand{\Delta}$ et $\Oekand{\Delta} \!\subset\! \Oehatkand{\Delta}$ sont continues pour les topologies $\pi$-adiques et faibles. Elles sont $(\Phi_{\Delta,q}\!\times\! \cg_{K,\Delta})$-équivariantes. C'est le cas par construction.
		
		\underline{Condition 4 :} les anneaux topologiques $\Oehatkand{\Delta}^{\ch_{K,\lt,\Delta}}$ et $\Oehatkand{\Delta}^{\Phi_{\Delta,q}}$ coïncident avec $\Oekand{\Delta}$ et $\Phi_{\Delta,q}$ pour les topologies $\pi$-adique et choisie. Seule la coïncidence des topologies choisies n'est pas contenue ad verbatim dans la condition analogue du Théorème \ref{equiv_car_zero}. En considérant la description de leur voisinages de zéro (ou de celle donnée pour la topologie colimite en démontrant la Définition/Proposition \ref{defi_oedk}), on réalise que nous avons même démontré une version forte de l'identification de ces topologies au Lemme \ref{topo_faibles}.
		
		\underline{Condition 9 :} identique à la condition 9 du Théorème \ref{equiv_car_zero}. Nous soulignons cependant que les énoncés cohomologiques requis par \cite[Th. 5.26]{formalisme_marquis} ne concernent que la topologie discrète.
		
		\underline{Condition 10 :} les morphismes de comparaison sont exactement ceux dans $\cdetaleproj{\Phi_{\Delta,q}\!\times\! \cg_{E,\Delta}}{E_{\Delta}^{\sep}}$, sauf qu'ils ont le bon goût d'être aussi $\cg_{K,\Delta}$-équivariants. La condition se déduit donc immédiatement de celle au Théorème \ref{equiv_car_zero}.
	\end{proof}
	
	Nous voyons que la démonstration du théorème à partir de l'équivalence pour les corps de caractéristique $p$ revenait à construire les bons anneaux, choisir les bonnes topologies et utiliser le formalisme général.
	
	\begin{rem}
		En utilisant la même stratégie que \cite[Prop. 2.2]{zabradi_equiv} mais en remplaçant l'utilisation de \cite[Prop. 2.1]{zabradi_gln} par la généralisation de Grosse-Klönne au cas Lubin-Tate dans \cite{grosseklonne}, nous obtenons que \linebreak$\cdetaleproj{\Phi_{\Delta,q}\times \Gamma_{K,\lt,\Delta}}{E_{\Delta}}=\cdetale{\Phi_{\Delta,q}\times \Gamma_{K,\lt,\Delta}}{E_{\Delta}}$. Nous en déduisons que les sous-catégories \linebreak$\cdetaledvproj{\Phi_{\Delta,q}\times \Gamma_{K,\lt,\Delta}}{\Oekand{\Delta}}{\pi}$ et $\cdetale{\Phi_{\Delta,q}\times \Gamma_{K,\lt,\Delta}}{\Oekand{\Delta}}$ coïncident. Il est cependant intéressant de se souvenir de la décomposition des modules à $(\pi,\mu)$-dévissage projectif obtenu en \cite[Coro. A.8]{formalisme_marquis}. En utilisant que la continuité d'une $\hcal{O}_K$-représentation de $\cg_{K,\Delta}$ se teste restreint à $\ch_{K,\lt,\Delta}$, \cite{formalisme_marquis} permet de prouver que l'hypothèse de continuité est superflue.
	\end{rem}
	
	\begin{rem}
		Pour obtenir une version Lubin-Tate multivariable de l'équivalence, il était crucial de pouvoir considérer le produit tensoriel sur $\mathcal{O}_K$ et d'obtenir un anneau dont les invariants par $\Phi_{\Delta,q}$ valent $\mathcal{O}_K$. Avec l'anneau non intègre de \cite{zabradi_kedlaya_carter}, il n'est pas clair de savoir comment relever en caractéristique mixte nos anneaux, et même si nous y parvenions par exemple pour une extension non ramifiée, les invariants par $\Phi_{\Delta,p}$ seraient $\zp$ et non $\mathcal{O}_K$. Le problème est encore plus visible modulo $p$ : il faut trouver un sous-monoïde de $M<\Phi_{\Delta,p}$ par lequel quotienter, qui fasse encore marcher le formalisme d'une équivalence de Fontaine et tel que $E_{\Delta,p}^M=\mathbb{F}_q$. Mais de quel $\mathbb{F}_q$ est-il question ?
	\end{rem}

	\vspace{1.5 cm}
	\section{Variantes pour les groupes de Galois plectiques et glectiques}\label{partie_plectique}
	
	Dans \cite{nekoscholl_intro} et \cite{nekoschollI}, J. Nekov\'a\v{r} et T. Scholl formulent différentes conjectures plectiques. Elles prédisent que, pour un corps de nombres $F$, les représentations obtenues dans la cohomologie de la donnée de Shimura associée à la restriction à $\qp$ d'un groupe algébrique sur $F$, qui viennent naturellement avec une action de $\cg_{\qp}$, devraient hériter en réalité d'une action d'un groupe plus gros $\cg_{F,\plec}$. Nous appelons ce dernier \textit{groupe de Galois plectique}. Cette section établit des équivalences de Fontaine pour le groupe de Galois plectique d'un corps local $p$-adique et pour l'un de ses sous-groupes que nous introduisons : le groupe de Galois glectique. Comme pour déduire l'équivalence multivariable Lubin-Tate de l'équivalence pour les corps de caractéristiques $p$, il reste essentiellement à définir un anneau de comparaison correct dans chacun des cas et un monoïde adéquat agissant dessus.
	
	\vspace{0.75 cm}
	\subsection{Préliminaires sur le groupe de Galois plectique}
	
	Soit $\overline{\qp}|K|\qp$ une extension finie. 
	
	\begin{defi}
		Nous définissons le \textit{groupe de Galois plectique} $$\cg_{K,\plec}:= \mathrm{Aut}_K\left(K\otimes_{\qp}\overline{\qp}\right).$$Nous définissons une structure de groupe topologique en prenant pour base de voisinages du neutre les intersections finies de stabilisateurs.
	\end{defi}
	\vspace{0.25cm}
	
	Nous notons $\mathcal{P}=\{\text{plongements} \,\,\, \tau \, : \, K \rightarrow \overline{\qp}\}$, de cardinal $[K:\qp]$, sur lequel $\cg_{\qp}$ agit par post-composition. Pour tout plongement $\tau$, nous notons $K_{\tau}$ son image. Enfin, pour chaque plongement $\tau$, on choisit une extension $\tau_{\ext} \, : \, \overline{\qp} \xrightarrow[]{\sim} \overline{\qp}$. Il existe un isomorphisme d'anneaux canonique $$K\otimes_{\qp}\overline{\qp} \xrightarrow{\sim} \bigoplus_{\tau \in \mathcal{P}} \,\overline{\qp}, \,\,\,\, x\otimes y \mapsto (\tau(x)y)_{\tau\in \mathcal{P}}.$$ La structure de $K$-algèbre est donnée par $\tau$ sur la composante indexée par $\tau$. Considérons le monomorphisme $$\prod \cg_{K_{\tau}} \rightarrow \cg_{K,\plec}, \,\,\, (g_{\tau}) \mapsto \left[ (y_{\tau})\mapsto (g_{\tau}(y_{\tau}))\right].$$ 
	Le choix des $\tau_{\ext}$ permet de tordre ce monomorphisme en $$\cg_{K,\mathcal{P}} \rightarrow \cg_{K,\plec}, \,\,\, (g_{\tau}) \mapsto \left[(y_{\tau})\mapsto (\tau_{\ext} g_{\tau} \tau_{\ext}^{-1})(y_{\tau})\right].$$ Ce choix détermine également un morphisme $$\mathfrak{S}_{\mathcal{P}} \rightarrow \cg_{K,\plec}, \,\,\, \omega \mapsto \left[(y_{\tau}) \mapsto ((\tau_{\ext} (\omega^{-1}(\tau))_{\ext}^{-1})(y_{\omega^{-1}(\tau)}))\right].$$ Si la formule est absconte, nous pouvons la résumer en disant que la copie indexée par $\tau$ de $\overline{\qp}$ est envoyée sur la copie indexée $\omega(\tau)$, de la seule manière possible obtenue à partir des $\tau_{\ext}$ et qui respecte les structures de $K$-algèbres. 
	
	\begin{lemma}
		L'image de $\cg_{K,\mathcal{P}}$ est distinguée dans $\cg_{K,\plec}$ et a pour complément l'image de $\mathfrak{S}_{\mathcal{P}}$. Cela exhibe le groupe topologique $\cg_{K,\plec}$ comme produit en couronne 
		
		$$\cg_{K,\plec} \cong \cg_{K,\mathcal{P}} \rtimes_{\plec} \mathfrak{S}_{\mathcal{P}},$$ où $\plec(\omega)(g_{\tau})=(g_{\omega^{-1}(\tau)})$.
	\end{lemma}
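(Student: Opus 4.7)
La stratégie consiste à utiliser la décomposition canonique $K\otimes_{\qp}\overline{\qp}\cong \bigoplus_{\tau\in\mathcal{P}} \overline{\qp}$ pour analyser ensemblistement les éléments de $\cg_{K,\plec}$ et vérifier que la description obtenue coïncide, après réindexation par les $\tau_{\ext}$, avec le produit en couronne annoncé.

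D'abord, je remarquerais que pour tout $\phi\in \cg_{K,\plec}$, le morphisme $\phi$ est un automorphisme d'anneau qui doit permuter les idempotents minimaux du produit de corps $\bigoplus_{\tau} \overline{\qp}$. Cela fournit une unique permutation $\omega_{\phi}\in \mathfrak{S}_{\mathcal{P}}$. Pour chaque $\tau$, la restriction-corestriction $\phi_{\tau}\, :\, \overline{\qp}_{\tau}\rightarrow \overline{\qp}_{\omega_{\phi}(\tau)}$ est un isomorphisme d'anneaux (automatiquement $\qp$-linéaire) vérifiant la condition de $K$-compatibilité $\phi_{\tau}\circ \tau = \omega_{\phi}(\tau)$ (la source a une structure de $K$-algèbre via $\tau$, le but via $\omega_{\phi}(\tau)$). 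En utilisant les extensions fixées $\tau_{\ext}$ et $\omega_{\phi}(\tau)_{\ext}$, chaque tel $\phi_{\tau}$ s'écrit de façon unique sous la forme $\omega_{\phi}(\tau)_{\ext}\circ g_{\tau}\circ \tau_{\ext}^{-1}$ avec $g_{\tau}\in \cg_K$, puisque l'identité $(\omega_{\phi}(\tau)_{\ext}^{-1}\circ \phi_{\tau}\circ \tau_{\ext})|_K=\mathrm{Id}_K$ y contraint.

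On obtient ainsi une bijection ensembliste $$\cg_{K,\plec}\xrightarrow{\sim} \cg_{K,\mathcal{P}}\times \mathfrak{S}_{\mathcal{P}}, \,\,\, \phi\mapsto ((g_{\tau})_{\tau},\omega_{\phi})$$ dont la restriction au sous-ensemble $\{\mathrm{Id}\}\times \mathfrak{S}_{\mathcal{P}}$ (resp. $\cg_{K,\mathcal{P}}\times \{\mathrm{Id}\}$) correspond précisément aux deux injections définies plus haut. Ensuite, je calculerais le produit de deux éléments directement à partir de leur expression $\omega(\sigma)_{\ext}\circ g_{\sigma}\circ \sigma_{\ext}^{-1}$ par composante : pour $\phi$ de données $(\omega,(g_{\sigma}))$ et $\psi$ de données $(\omega',(h_{\sigma}))$, la composée a pour permutation $\omega\omega'$ et pour famille $(g_{\omega'(\sigma)}\, h_{\sigma})_{\sigma}$ sur la $\sigma$-composante source. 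Ceci montre simultanément que $\cg_{K,\mathcal{P}}$ est distingué, que $\mathfrak{S}_{\mathcal{P}}$ est un complément, et que la conjugaison de $(h_{\sigma})\in \cg_{K,\mathcal{P}}$ par $\omega\in \mathfrak{S}_{\mathcal{P}}$ donne bien $(h_{\omega^{-1}(\sigma)})_{\sigma}$.

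Le seul point un peu délicat est de maintenir une bonne gestion des compositions et de l'ordre des permutations — en particulier de vérifier, lors du calcul de l'action de conjugaison, que la formule donnée ($\plec(\omega)(g_{\tau})=(g_{\omega^{-1}(\tau)})$) est bien celle qui apparaît naturellement et non sa version à permutation inversée. Pour la continuité de l'isomorphisme pour les topologies définies, il suffit d'observer que les stabilisateurs d'éléments de $K\otimes_{\qp}\overline{\qp}$ d'un côté correspondent, via l'isomorphisme construit, à des intersections de sous-groupes ouverts issus des deux facteurs, les voisinages fondamentaux du neutre se correspondant dans les deux topologies.
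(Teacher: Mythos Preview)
Le papier énonce ce lemme sans démonstration, le considérant comme une description standard du groupe de Galois plectique. Ta preuve est correcte et remplit précisément ce que le papier laisse implicite : l'analyse des idempotents minimaux de $\bigoplus_{\tau}\overline{\qp}$ pour produire la permutation, puis le redressement par les $\tau_{\ext}$ pour identifier les automorphismes composante par composante à des éléments de $\cg_K$, est exactement l'argument attendu.

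Une remarque de convention : ta bijection explicite $\phi\mapsto((g_\sigma)_\sigma,\omega_\phi)$, indexée par la composante source, correspond à l'écriture $\phi=[\text{image de }\omega]\circ[\text{image de }(g_\sigma)]$, c'est-à-dire au paramétrage $(\omega,g)\leftrightarrow \omega g$. Le papier, via sa définition du produit semi-direct $M\rtimes_\lambda N$ dans l'annexe, utilise plutôt le paramétrage $(g,\omega)\leftrightarrow g\omega$. Ta loi de composition $(g_{\omega'(\sigma)}h_\sigma)_\sigma$ est donc bien cohérente avec $\plec(\omega)(g_\tau)=(g_{\omega^{-1}(\tau)})$, mais sous l'autre convention de décomposition ; ton calcul de la conjugaison $\omega(h_\sigma)\omega^{-1}=(h_{\omega^{-1}(\sigma)})$ le confirme directement et c'est ce qui compte pour établir l'isomorphisme annoncé. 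Il serait peut-être plus limpide d'indexer par la composante but, ce qui donne directement la loi $(m,n)(m',n')=(m\,\plec(n)(m'),nn')$ du papier, mais ta version est équivalente.
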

	
	Cette description est la plus concise en termes de notations. Elle repose en revanche drastiquement sur le choix des $\tau_{\ext}$ et l'isomorphisme n'est donc pas canonique. L'image de $\prod \cg_{K_{\tau}}$ est quant à elle canonique, de même que la projection sur $\mathfrak{S}_{\mathcal{P}}$. De plus, agir sur le terme de droite de $K\otimes_{\qp}\overline{\qp}$ produit un plongement canonique de $\cg_{\qp}\hookrightarrow \cg_{K,\plec}$.
	
	\begin{defi}
		Nous appelons \textit{groupe de Galois glectique}, et notons $\cg_{K,\glec}$, le sous-groupe de $\cg_{K,\plec}$ engendré par $\prod \cg_{K_{\tau}}$ et par $\cg_{\qp}$.
	\end{defi}
	
	Appelons $K_{\gaal}$ l'extension composée des $K_{\tau}$. C'est une extension galoisienne de $\qp$ et le noyau de l'action de $\cg_{\qp}$ sur $\mathcal{P}$ est exactement $\cg_{K_{\gaal}}$. Nous en déduisons un monomorphisme de groupes $\mathrm{Per}\, : \, \gal{K_{\gaal}}{\qp} \rightarrow \mathfrak{S}_{\mathcal{P}}$. Dans toute la section \ref{partie_plectique}, on notera aussi $f_{\gaal}$ le degré d'inertie de $K_{\gaal}|K$.
	
	\begin{prop}\label{descrip_gkglec}
	La projection de $\cg_{K,\glec}$ sur $\mathfrak{S}_{\mathcal{P}}$ correspond l'image de $\mathrm{Per}$. Le groupe $\cg_{K,\glec}$ est isomorphe à $\cg_{K,\mathcal{P}}\rtimes_{\plec \, \circ \, \mathrm{Per}} \gal{K_{\gaal}}{\qp}$, canoniquement au choix des $\tau_{\ext}$ près.
		
	En prenant les notations de la Définition \ref{defi_psd_quot}, le groupe $\cg_{K,\glec}$ est canoniquement isomorphe à $$\quot{\left(\prod_{\tau \in \mathcal{P}}\cg_{K_{\tau}}\rtimes_{\glec} \cg_{\qp}\right)}{\tsim \cg_{K_{\gaal}}\tsim}$$ où $\glec(\sigma)(g_{\tau})=(\sigma g_{\sigma^{-1}\tau} \sigma^{-1})$ et où $\cg_{K_{\gaal}}$ est plongé diagonalement dans $\prod \cg_{K_{\tau}}$.
	\end{prop}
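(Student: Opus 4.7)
The plan is to determine $\cg_{K,\glec}$ by tracking its image and its intersection with $\cg_{K,\mathcal{P}}$ inside the semidirect decomposition $\cg_{K,\plec}\cong \cg_{K,\mathcal{P}}\rtimes_{\plec}\mathfrak{S}_{\mathcal{P}}$ associated with the choice of $(\tau_{\ext})_{\tau}$, and then to rephrase the result in the canonical form.

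First I would determine the image of $\cg_{K,\glec}$ in $\mathfrak{S}_{\mathcal{P}}$. The generators $\prod_\tau \cg_{K_\tau}$ project to the neutral element because they lie in the canonical copy $\prod_\tau \cg_{K_\tau}\subseteq\cg_{K,\plec}$, which coincides set-theoretically with $\cg_{K,\mathcal{P}}$ (the twist by $\tau_\ext$ only reshuffles the parametrisation). On the other hand, unravelling the formula for the embedding $\cg_{\qp}\hookrightarrow \cg_{K,\plec}$ (action on the right factor of $K\otimes_{\qp}\overline{\qp}$), the projection of $\sigma\in\cg_{\qp}$ is precisely $\mathrm{Per}(\overline{\sigma})$ where $\overline{\sigma}\in \gal{K_{\gaal}}{\qp}$. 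Thus the image of $\cg_{K,\glec}$ is the subgroup generated, which equals $\mathrm{Im}(\mathrm{Per})\cong \gal{K_{\gaal}}{\qp}$ (the isomorphism is automatic from the definition of $K_{\gaal}$ as the common composite of the $K_\tau$).

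Next I would identify the kernel of this projection restricted to $\cg_{K,\glec}$. It obviously contains $\cg_{K,\mathcal{P}}$; conversely, any element of $\cg_{K,\glec}\cap \cg_{K,\mathcal{P}}$ sits in the projection kernel by construction, so it suffices to check the reverse inclusion. Writing any generator as a product of elements of $\prod_\tau\cg_{K_\tau}$ and of $\cg_{\qp}$, collecting permutations to the right using the semidirect product rule yields an element $(g_\tau,\mathrm{Per}(\overline{\sigma}))$; if it is in the kernel, then $\overline{\sigma}=1$, i.e.\ $\sigma\in \cg_{K_{\gaal}}$, which already lies in $\cg_{K,\mathcal{P}}$ as its image in $\cg_{K,\plec}$ acts on $K\otimes\overline{\qp}$ diagonally through $\sigma$ on each factor (since $\sigma$ stabilises every $K_\tau$). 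This yields the short exact sequence
\[1\longrightarrow \cg_{K,\mathcal{P}}\longrightarrow \cg_{K,\glec}\longrightarrow \gal{K_{\gaal}}{\qp}\longrightarrow 1,\]
and restricting the section $\mathfrak{S}_{\mathcal{P}}\hookrightarrow\cg_{K,\plec}$ to $\mathrm{Im}(\mathrm{Per})$ provides a splitting. One then reads off the conjugation action: $\plec\circ\mathrm{Per}$ acts on $\cg_{K,\mathcal{P}}=\prod_\tau\cg_K$ by permuting copies according to $\mathrm{Per}$, giving the first stated isomorphism. The dependence on $\tau_{\ext}$ enters only through the identification $\prod_\tau\cg_{K_\tau}\cong \cg_{K,\mathcal{P}}$, hence the ``canonique au choix des $\tau_{\ext}$ près''.

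For the canonical description, I would build the comparison map
\[\Psi\,:\,\prod_{\tau\in\mathcal{P}}\cg_{K_\tau}\rtimes_{\glec}\cg_{\qp}\;\longrightarrow\;\cg_{K,\glec}\]
sending $((g_\tau),\sigma)$ to the composite action on $K\otimes_{\qp}\overline{\qp}\cong\bigoplus_\tau\overline{\qp}$, where each $g_\tau$ acts on the $\tau$-coordinate and $\sigma$ acts diagonally. The semidirect structure $\glec(\sigma)(g_\tau)=(\sigma g_{\sigma^{-1}\tau}\sigma^{-1})$ is exactly the relation needed for this assignment to be a homomorphism, as can be checked by direct computation on $K\otimes\overline{\qp}$. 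Surjectivity follows from the generation property, and the kernel consists of those pairs $((g_\tau),\sigma)$ for which, after simplification, $\sigma\in\cg_{K_{\gaal}}$ and $g_\tau=\sigma^{-1}$ for all $\tau$. This is precisely the diagonal copy of $\cg_{K_{\gaal}}$ (with opposite sign as encoded in the pseudo-quotient notation of Définition \ref{defi_psd_quot}), and one verifies it is normal. The main care-requiring step is the verification that passing to this pseudo-quotient removes the dependence on $(\tau_{\ext})$: different choices of $\tau_\ext$ alter $\Psi$ by an inner automorphism coming from a diagonal element of $\cg_{K_\gaal}$, which becomes trivial after the quotient, yielding the canonicity claimed.
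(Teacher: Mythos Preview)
Your proposal is correct and follows essentially the same route as the paper. Both arguments first compute the projection of $\cg_{\qp}$ to $\mathfrak{S}_{\mathcal{P}}$ (the paper does this by an explicit calculation on elements of $K\otimes_{\qp}\overline{\qp}$ lying in the $\tau$-factor, you by unravelling the formula), then deduce the semidirect description from the fact that $\prod_\tau\cg_{K_\tau}$ is normal; for the canonical form, the paper constructs the map from the pseudo-quotient by verifying the three conditions of D\'efinition~\ref{defi_psd_quot} and leaves the isomorphism to the reader, whereas you build $\Psi$ directly and sketch the kernel computation---these are the same argument phrased slightly differently, and your extra remark on canonicity (independence of $\tau_{\ext}$ after the quotient) is a welcome addition the paper omits.
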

	\begin{proof}
		Puisque l'image de $\prod \cg_{K_{\tau}}$ est distinguée dans $\cg_{K,\plec}$, les sous-groupes contenant $\prod \cg_{K_{\tau}}$ correspondent aux sous-groupes du quotient $\sfrac{\cg_{K,\plec}}{\prod \cg_{K_{\tau}}} \cong \mathfrak{S}_{\mathcal{P}}$. Soit $\sigma \in \cg_{\qp}$ et $\tau \in \mathcal{P}$. On considère $\sum_i x_i \otimes y_i$ appartenant au facteur $\tau$. Cela signifie que $$\forall \tau'\neq \tau, \, \sum_i \tau'(x_i)y_i=0.$$ Alors $\sigma(\sum x_i \otimes y_i)=\sum x_i \otimes \sigma(y_i)$ vérifie $$\forall \tau' \neq \sigma \tau, \,\,  \sum \tau'(x_i) \sigma(y_i) = \sigma(\sum (\sigma^{-1}\tau')(x_i) y_i))=0.$$ Donc l'image de $\sigma$ dans $\mathfrak{S}_{\mathcal{P}}$ est donnée par son action sur $\mathcal{P}$.
			
		Nous avons déjà construit deux morphismes de $\prod \cg_{K_{\tau}}$ et $\cg_{\qp}$ vers $\cg_{K,\glec}$. Pour montrer qu'ils induisent un morphisme depuis le quotient du produit semi-direct, nous vérifions les conditions de la Définition \ref{defi_psd_quot}. La première est aisée. Pour la deuxième, on a que $K_{\gaal}|\qp$ galoisienne implique $\cg_{K_{\gaal}}\triangleleft \cg_{\qp}$. Pour la dernière, soit $\sigma \in \cg_{\qp}$, $\chi \in \cg_{K_{\gaal}}$ et $(y_{\tau})_{\tau}\in \oplus \overline{\qp}$ :
	$$
			\left[\glec(\sigma)(\chi)\right](y_{\tau})_{\tau}=(\sigma \chi \sigma^{-1})(y_{\tau})_{\tau} = \left((\sigma \chi \sigma^{-1})(y_{\tau})\right)_{\tau}= \left[(\sigma)(\chi)(\sigma^{-1})\right](y_{\tau})_{\tau}.$$
		
Nous laissons les lecteurs et lectrices vérifier qu'il s'agit d'un isomorphisme.
	\end{proof}

	\vspace{1cm}
	\subsection{\'Equivalence de Fontaine plectique}

	Pour commencer, nous établissons une équivalence pour des représentations du groupe de Galois plectique. Comme pour l'équivalence multivariable, il suffit d'étendre les actions en jeu dans le Théorème \ref{equiv_car_zero}. Dans cette sous-section, toutes les constructions de monoïdes sont automatiquement munies de topologies. Pour un petit bestiaire de constructions et de lemmes sur les monoïdes topologiques, se référer à l'Annexe \ref{annexe_monoides}.
	
	\medskip
	
	Soit $\overline{\qp}|K|\qp$ une extension finie\footnote{Nous fixons ainsi un plongement de $K$ dans $\overline{\qp}$. Il sera parfois utile pour intuiter les formules correctes de se dire que $K$ vit à part dans une autre clôture algébrique, contrairement aux $K_{\tau}$ qui vivent dans $\overline{\qp}$.}. Nous nous fixons une uniformisante $\pi$ de $K$, une loi de Lubin-Tate $\mathrm{f}$ associée à $\pi$ et un système de Lubin-Tate $\pi^{\flat}$.
	
	Nous commençons dans le cadre de la section \ref{partie_equiv_lt} avec $\Delta=\mathcal{P}$, le choix de $(\pi,\mathrm{f},\pi^{\flat})$ pour chaque plongement, le choix d'un $\mathcal{O}_{\widehat{\mathcal{E}^{\mathrm{nr}}_{\tau}}}$ pour chaque plongement et d'une extension aux clôtures séparables de l'isomorphisme $$\mathrm{j}_{\tau}\, : \,\mathbb{F}_q(\!(X_{\tau}^{q^{-\infty}})\!) \rightarrow \widehat{K_{\lt,\mathrm{f}}}^{\flat}, \,\,\, X_{\tau}\mapsto \pi^{\flat}.$$ Ces données déterminent comme à la Proposition \ref{construct_oek} une structure de $(\varphi_q^{\N}\times \cg_K)$-anneau topologique sur $\mathcal{O}_{\widehat{\mathcal{E}^{\mathrm{nr}}_{\tau}}}$ et d'un plongement équivariant $i_{\tau}$ dans $\mathrm{W}_{\mathcal{O}_K}(\mathbb{C}_p^{\flat})$ dont l'image de dépend pas de $\tau$. Pour tout couple $(\tau_1,\tau_2)$, nous notons $i_{\tau_2,\tau_1}\, : \, \mathcal{O}_{\widehat{\mathcal{E}^{\mathrm{nr}}_{\tau_1}}} \rightarrow \mathcal{O}_{\widehat{\mathcal{E}^{\mathrm{nr}}_{\tau_2}}}$ l'isomorphisme de $(\varphi_q^{\N}\times \cg_K)$-anneaux topologiques donné par $i_{\tau_2}^{-1}  i_{\tau_1}$. Pour toute extension galoisienne $E^{\sep}|F|E$, le morphisme $i_{\tau_2,\tau_1}$ envoie $\mathcal{O}_{\mathcal{F}_{\tau_1}}$ sur  $\mathcal{O}_{\mathcal{F}_{\tau_2}}$. De plus, ces isomorphismes sont $\mathcal{O}_K$-linéaires et vérifient \begin{equation*}\label{eqn:eq2}
		\forall \tau_1, \tau_2, \tau_3, \,\,\, i_{\tau_3,\tau_1}=i_{\tau_3,\tau_2}  i_{\tau_2,\tau_1}. \tag{*2} 	\end{equation*}
		
Dans cette sous-section, nous ne travaillons qu'avec la topologie adique faible, mais les deux autres conviendraient également.
	
	\begin{defi}
		Soit $\omega \in \mathfrak{S}_{\mathcal{P}}$. Pour chaque extension finie $E^{\sep}|F|E$, on définit sur $\otimes_{\tau\in \mathcal{P},\, \mathcal{O}_K} \mathcal{O}_{\mathcal{F}_{\tau}}^+$ l'endomorphisme $$\omega \cdot_{\plec} \left(\otimes\, y_{\tau}\right)= \otimes\, i_{\tau,\omega^{-1}(\tau)}(y_{\omega^{-1}(\tau)}).$$ Puisque chaque $i_{\tau_2,\tau_1}$ est continu pour la topologie adique faible, cet endomorphisme est continu pour la topologie $(\pi,\underline{X})$-adique, se complète, passe à la colimite et se complète $\pi$-adiquement en un endomorphisme $\omega \cdot_{\plec}\text{-}\,$ sur $\Oehatkand{\mathcal{P}}$.
	\end{defi}
	
	\begin{defi}
		Nous appelons $\tgplec$ le monoïde topologique défini par $$\tgplec:= \left(\Phi_{\mathcal{P},q}\times \cg_{K,\mathcal{P}}\right) \rtimes_{\plec} \mathfrak{S}_{\mathcal{P}}$$ où le produit semi-direct est donné par $$\forall \omega\in \mathfrak{S}_{\mathcal{P}}, \,\, \, \plec(\omega)\left(\prod \varphi_{\tau,q}^{n_{\tau}}, g_{\tau}\right)= \left(\prod \varphi_{\tau,q}^{n_{\omega^{-1}(\tau)}}, g_{\omega^{-1}(\tau)}\right)$$ avec la topologie usuelle sur $\left(\Phi_{\mathcal{P},q}\times \cg_{K,\mathcal{P}}\right)$ et la topologie discrète sur $\mathfrak{S}_{\mathcal{P}}$.
	\end{defi}
	
	\begin{defiprop}
		La définition précédente fournit une action continue pour la topologie adique faible de $\mathfrak{S}_{\mathcal{P}}$. Combinée à l'action de $(\Phi_{\mathcal{P},q}\times \cg_{K,\mathcal{P}})$ sur $\Oehatkand{\plec}$, nous obtenons une action de $\tgplec$ continue pour la topologie adique faible. 
		
		Définissons $\Oehatkand{\plec}$ le $\tgplec$-anneau topologique obtenu.
	\end{defiprop}
	\begin{proof}
		D'après le deuxième point des Propositions \ref{psd_quotient_desc_1} et \ref{prop_prdsdirect}, il suffit de vérifier des relations sur les actions. C'est le cas sur les produits tensoriels en utilisant à la fois la relation (\ref{eqn:eq2}) et le fait que les $i_{\tau_2,\tau_1}$ sont des morphismes $(\varphi_q^{\N}\times \cg_K)$-équivariants. Les relations sont conservées en complétant et passant à la limite.
	\end{proof}
	
	\begin{prop}
		Le sous-monoïde $\Phi_{\mathcal{P},q}<\tgplec$ est distingué et le quotient $\sfrac{\tgplec}{\Phi_{\mathcal{P},q}}$ s'identifie à $\cg_{K,\plec}$ comme monoïde topologique, canoniquement au vu du choix des $\tau_{\ext}$. De plus, l'inclusion $$\mathcal{O}_K \subseteq \mathcal{O}_{\widehat{\mathcal{E}_{K,\plec}}}^{\Phi_{\mathcal{P},q}}$$ est une égalité.
	\end{prop}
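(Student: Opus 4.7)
Le plan se décompose en trois étapes, chacune plus de la comptabilité que de la véritable mathématique nouvelle, puisque l'essentiel du travail a déjà été fait dans la preuve du Théorème \ref{equiv_lt} et dans la description de $\cg_{K,\plec}$ comme produit en couronne.

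Pour la distinction, il suffit d'écrire $\tgplec=(\Phi_{\mathcal{P},q}\times \cg_{K,\mathcal{P}})\rtimes_{\plec}\mathfrak{S}_{\mathcal{P}}$ et de décrire la conjugaison de $\Phi_{\mathcal{P},q}$ par les générateurs. Les éléments de $\cg_{K,\mathcal{P}}$ commutent avec $\Phi_{\mathcal{P},q}$ puisqu'ils apparaissent dans un produit \emph{direct}. L'action de $\omega\in \mathfrak{S}_{\mathcal{P}}$ sur la première composante est par définition $\plec(\omega)(\prod \varphi_{\tau,q}^{n_{\tau}},1)=(\prod \varphi_{\tau,q}^{n_{\omega^{-1}(\tau)}},1)$, qui est un automorphisme de $\Phi_{\mathcal{P},q}$. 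Donc $\Phi_{\mathcal{P},q}$ est bien distingué dans $\tgplec$.

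Pour l'identification du quotient, le passage à $\tgplec/\Phi_{\mathcal{P},q}$ donne tautologiquement $\cg_{K,\mathcal{P}}\rtimes_{\plec} \mathfrak{S}_{\mathcal{P}}$, où l'action induite de $\mathfrak{S}_{\mathcal{P}}$ sur $\cg_{K,\mathcal{P}}$ permute les copies de $\cg_K$. La description au Lemme \ref{descrip_gkglec} couplée à la discussion qui le précède donne, au choix des $\tau_{\ext}$ près, un isomorphisme $\cg_{K,\plec}\cong\cg_{K,\mathcal{P}}\rtimes_{\plec}\mathfrak{S}_{\mathcal{P}}$ avec exactement cette action de permutation des copies. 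On vérifie que cet isomorphisme est un homéomorphisme : la topologie à gauche est engendrée par les stabilisateurs d'éléments de $\oplus \overline{\qp}$, or un tel stabilisateur est l'image d'un sous-ensemble ouvert du produit semi-direct obtenu en prenant le stabilisateur du support dans $\mathfrak{S}_{\mathcal{P}}$ fois un produit de sous-groupes ouverts dans chaque $\cg_{K_{\tau}}$. Les topologies coïncident donc et nous obtenons un isomorphisme de monoïdes topologiques.

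Pour l'égalité $\mathcal{O}_K=\Oehatkand{\plec}^{\Phi_{\mathcal{P},q}}$, il suffit de remarquer que l'anneau topologique sous-jacent à $\Oehatkand{\plec}$ est exactement $\Oehatkand{\mathcal{P}}$ (la structure plectique n'ajoute qu'une action supplémentaire de $\mathfrak{S}_{\mathcal{P}}$) et que l'action de $\Phi_{\mathcal{P},q}$ est celle déjà étudiée dans le cadre Lubin-Tate. La condition 4 de la preuve du Théorème \ref{equiv_lt} établit précisément que $\Oehatkand{\mathcal{P}}^{\Phi_{\mathcal{P},q}}=\mathcal{O}_K$, à la fois algébriquement et au niveau des topologies faibles, ce qui conclut. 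Le seul obstacle potentiel est la vérification topologique de l'identification du quotient, pour laquelle on peut invoquer le formalisme de l'Annexe \ref{annexe_monoides} mentionnée dans la preuve précédente.
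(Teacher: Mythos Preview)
Your proposal is correct and follows essentially the same approach as the paper's proof: distinction via the action of $\mathfrak{S}_{\mathcal{P}}$ by automorphisms on $\Phi_{\mathcal{P},q}$, identification of the quotient via the wreath-product description of $\cg_{K,\plec}$, and the invariants via condition~4 of the main equivalence theorem. Two minor points: the paper cites condition~4 of Théorème~\ref{equiv_car_zero} rather than Théorème~\ref{equiv_lt} (the latter merely refers back to the former), and your reference to ``Lemme~\ref{descrip_gkglec}'' points to the glectique Proposition rather than the unnumbered lemma on $\cg_{K,\plec}$ just before it---your parenthetical ``la discussion qui le précède'' saves you, but the citation itself is off.
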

	\begin{proof}
		Pour le caractère distingué, remarquer que les endomorphismes de $(\Phi_{\mathcal{P},q}\times \cg_{K,\mathcal{P}})$ définissant le produit semi-direct avec $\mathfrak{S}_{\mathcal{P}}$ induisent des endomorphismes surjectifs de $\Phi_{\Delta,q}$. L'identification de $\cg_{K,\plec}$ au sous-monoïde ouvert et distingué $\cg_{K,\mathcal{P}}\rtimes_{\plec} \mathfrak{S}_{\mathcal{P}}$, qui est un complément de $\Phi_{\Delta,q}$ conclut pour l'identification du quotient. Quant aux invariants, il s'agit exactement de la condition 4 dans la preuve du Théorème \ref{equiv_car_zero}.
	\end{proof}

	Nous définissons de manière ad hoc l'anneau plectique puis décrivons un peu mieux nos constructions.
	
	\begin{defi}
		Le sous-monoïde $\ch_{K,\lt,\mathcal{P}}<\tgplec$ est distingué. Nous définissons le monoïde topologique $$\tplec:= \sfrac{\tgplec}{\ch_{K,\lt,\mathcal{P}}}.$$ Nous définissons aussi le $\tplec$-anneau topologique $$\Oekand{\plec}:= \Oehatkand{\plec}^{\ch_{K,\lt,\mathcal{P}}}.$$
	\end{defi}
	
	\begin{prop}\label{identif_tgplec}
		Le monoïde topologique $\tplec$ s'identifie canoniquement à $$\left(\Phi_{\mathcal{P},q}\times \Gamma_{K,\lt,\mathcal{P}}\right) \rtimes_{\plec} \mathfrak{S}_{\mathcal{P}}$$ où $$\forall \omega \in \mathfrak{S}_{\mathcal{P}}, \,\,\,\plec(\omega)\left(\prod \varphi_{\tau,q}^{n_{\tau}}, x_{\tau}\right)=\left(\prod \varphi_{\tau,q}^{n_{\omega^{-1}(\tau)}}, x_{\omega^{-1}(\tau)}\right).$$
		
		Le $\left(\Phi_{\mathcal{P},q}\times \Gamma_{K,\lt,\mathcal{P}}\right)$-anneau topologique sous-jacent à $\Oekand{\plec}$ coïncide avec $\Oekand{\mathcal{P}}$ construit à la section \ref{partie_equiv_lt}. En reprenant sa description à la remarque \ref{descr_oekand}, l'action de $\omega \in \mathfrak{S}_{\mathcal{P}}$ est le seul morphisme de $\mathcal{O}_K$-algèbre topologique vérifiant $\forall \tau \in \mathcal{P}, \,\,\, \omega \cdot X_{\tau}= X_{\omega(\tau)}$.
	\end{prop}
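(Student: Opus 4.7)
Pour démontrer la première partie de la proposition, je commence par vérifier que $\ch_{K,\lt,\mathcal{P}}$ est effectivement distingué dans $\tgplec$, ce qui permet de former le quotient sans utiliser la construction pseudo-quotient de l'annexe B. Pour chaque $\tau$, le sous-groupe $\ch_{K,\lt,\rf}$ est distingué dans $\cg_K$ puisque l'extension $K_{\lt,\rf}|K$ est galoisienne ; il en découle que $\ch_{K,\lt,\mathcal{P}}$ est distingué dans $\cg_{K,\mathcal{P}}$, et \textit{a fortiori} dans $\left(\Phi_{\mathcal{P},q}\times \cg_{K,\mathcal{P}}\right)$ puisque $\Phi_{\mathcal{P},q}$ commute avec $\cg_{K,\mathcal{P}}$. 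Enfin, l'action $\plec$ de $\mathfrak{S}_{\mathcal{P}}$ permute les facteurs du produit $\prod_\tau \cg_K$, ce qui préserve $\ch_{K,\lt,\mathcal{P}}$ en tant que sous-ensemble.

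Passant au quotient, le sous-monoïde distingué $\ch_{K,\lt,\mathcal{P}} \subset \cg_{K,\mathcal{P}}$ produit $\sfrac{\cg_{K,\mathcal{P}}}{\ch_{K,\lt,\mathcal{P}}}\cong \Gamma_{K,\lt,\mathcal{P}}$ et le produit semi-direct avec $\mathfrak{S}_{\mathcal{P}}$ descend en un produit semi-direct identique, puisque l'action de permutation est compatible avec le passage au quotient composante par composante. On obtient ainsi l'identification annoncée $\tplec \cong \left(\Phi_{\mathcal{P},q}\times \Gamma_{K,\lt,\mathcal{P}}\right) \rtimes_{\plec} \mathfrak{S}_{\mathcal{P}}$, avec la formule explicite pour $\plec(\omega)$ se déduisant directement de celle sur $\tgplec$.

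Pour la seconde partie, ignorons d'abord l'action de $\mathfrak{S}_{\mathcal{P}}$. Le $(\Phi_{\mathcal{P},q}\times \cg_{K,\mathcal{P}})$-anneau sous-jacent à $\Oehatkand{\plec}$ est par construction le même que $\Oehatkand{\mathcal{P}}$ du chapitre \ref{partie_equiv_lt}. En prenant les $\ch_{K,\lt,\mathcal{P}}$-invariants, on retrouve $\Oekand{\mathcal{P}}$ par la construction du chapitre \ref{partie_equiv_lt} (voir la Définition \ref{defi_oedk} et la Remarque \ref{descr_oekand}).

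L'étape principale — qui constitue la seule subtilité — consiste à calculer l'action de $\mathfrak{S}_{\mathcal{P}}$ sur les variables $X_\tau$. Rappelons que par la Proposition \ref{construct_oek}, le plongement $i_\tau \, : \, \mathcal{O}_{\widehat{\mathcal{E}^{\mathrm{nr}}_{\tau}}}\hookrightarrow \mathrm{W}_{\mathcal{O}_K}(\mathbb{C}_p^\flat)$ envoie $X_\tau$ sur $\{\pi^\flat\}_{\lt}$, élément canonique ne dépendant pas de $\tau$. Ainsi $i_{\tau_2,\tau_1}=i_{\tau_2}^{-1}\circ i_{\tau_1}$ envoie $X_{\tau_1}$ sur $X_{\tau_2}$. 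Considérant alors un pur-tenseur $1 \otimes \cdots \otimes X_\tau \otimes \cdots \otimes 1$ placé en position $\tau$ dans $\otimes_{\tau'\in \mathcal{P},\, \mathcal{O}_K} \mathcal{O}_{\mathcal{F}_{\tau'}}^+$, la formule définissant $\omega\cdot_{\plec}$ le transporte sur le tenseur dont la seule entrée non triviale, en position $\omega(\tau)$, vaut $i_{\omega(\tau),\tau}(X_\tau)=X_{\omega(\tau)}$. Comme l'anneau $\Oekand{\mathcal{P}}$ de la remarque \ref{descr_oekand} identifie justement $X_\tau$ à ce tenseur, on obtient bien $\omega\cdot X_\tau = X_{\omega(\tau)}$. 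Puisque $\omega$ est un morphisme de $\mathcal{O}_K$-algèbre topologique et puisque $\Oekand{\mathcal{P}}$ est engendré topologiquement par les $X_\tau$ au-dessus de $\mathcal{O}_K$, cela caractérise entièrement l'action.
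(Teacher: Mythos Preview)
Your proof is correct and follows essentially the same approach as the paper's. The paper invokes the general quotient formula from the appendix (Proposition~\ref{identités_quotients}) and condition~4 of the proof of Theorem~\ref{equiv_car_zero}, whereas you verify normality of $\ch_{K,\lt,\mathcal{P}}$ and the identification of invariants by hand; for the action of $\mathfrak{S}_{\mathcal{P}}$ on the variables, both proofs rest on the same key observation $i_{\tau_2,\tau_1}(X_{\tau_1})=X_{\tau_2}$ coming from $i_\tau(X_\tau)=\{\pi^\flat\}_{\lt}$.
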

	\begin{proof}
		On utilise le troisième point de la Proposition \ref{prop_prdsdirect} pour la description du quotient et la condition 4 dans la démonstration du Théorème \ref{equiv_car_zero} pour l'identification de $\Oekand{\plec}$. Pour écrire l'action, considérons que par définition chaque $X_{\tau}$ est envoyé par $i_{\tau}$ sur $\{\pi^{\flat}\}_{\lt} \in \widetilde{A}_K$. Ainsi, $i_{\tau_2,\tau_1}(X_{\tau_1})=X_{\tau_2}$ et l'écriture (\ref{eqn:eq2}) conclut.
	\end{proof}
	
	Une fois les anneaux construits correctement, nous obtenons gratuitement l'équivalence de Fontaine plectique.
	
	\begin{theo}\label{equiv_plec}
			Les foncteurs $$\mathbb{D}_{\plec,\lt}\, : \,\mathrm{Rep}_{\mathcal{O}_K} \cg_{K,\plec} \rightarrow \dmod{\tplec}{\Oekand{\plec}}, \,\,\, V\mapsto \left(\Oehatkand{\plec} \otimes_{\mathcal{O}_K} V\right)^{\ch_{K,\lt,\mathcal{P}}}$$
	
	$$\mathbb{V}_{\plec,\lt}\, : \, \cdetaledvproj{\tplec}{\Oekand{\plec}}{\pi} \rightarrow \dmod{\cg_{K,\plec}}{\mathcal{O}_K}, \,\,\, D\mapsto \left( \Oehatkand{\plec} \otimes_{\Oekand{\plec}} D\right)^{\Phi_{\Delta,q}},$$ où les topologies en jeu sont respectivement la topologie $\pi$-adique sur $\mathcal{O}_K$ et la topologie faible sur $\Oekand{\plec}$, sont correctement définis, lax monoïdaux et fermés. Leurs images essentielles sont contenues respectivement dans $\cdetaledvproj{\tplec}{\Oekand{\plec}}{\pi}$ et $\mathrm{Rep}_{\mathcal{O}_K} \cg_{K,\plec}$. Leurs corestrictions forment une paire de foncteurs quasi-inverses.
	\end{theo}
	\begin{proof}
		La stratégie est la même qu'au Théorème \ref{equiv_lt} et les conditions à vérifier découlent des constructions ci-dessus et des conditions vérifiées pour le Théorème \ref{equiv_lt}.
	\end{proof}
	
	\begin{rem}
		Avec des stratégies similiaires, on aurait une équivalence entre les fibrés vectoriels équivariants sur $Z_{\lt}$ comme dans \cite{breuilandco_phi} et les $\mathbb{F}_q$-représentations continues de dimension finie de $\cg_{K,\plec}$. Nous escomptons y revenir dans un travail ultérieur.
	\end{rem}

	\vspace{0.75cm}
	\subsection{\'Equivalence de Fontaine glectique semi-linéaire pour des extensions galoisiennes de $\qp$}

	Pour $K|\qp$ galoisienne, une philosophie adjacente se détache en regardant un peu mieux $\widetilde{A}_K$. Nous écrivons $\widetilde{A}_K=\mathcal{O}_K \otimes_{\mathbb{Z}_q} \mathrm{W}(\mathbb{C}_p^{\flat})$ et considérons l'action $\mathcal{O}_K$-semi-linéaire de $\sigma \in\mathrm{W}_{\qp}^+$ par $\sigma\otimes \varphi^{\deg(\sigma)}$. L'image du plongement de $\Oe$ est stable par l'action de $\mathrm{W}_K^+$ mais pas nécessairement par celle de $\mathrm{W}_{\qp}^+$. En choisissant pour chaque plongement $\tau$ une extension $\tau_{\ext}\in \mathrm{W}_{\qp}^+$ de degré minimal, nous pouvons considérer le morphisme $$\mathcal{O}_{\mathcal{E}_{\mathcal{P}}}^+ \rightarrow \widetilde{A}_K, \,\,\, X_{\tau}\mapsto \tau_{\ext} \cdot\{\pi^{\flat}\}_{\lt}.$$ Son image est la plus petite sous-$\mathcal{O}_K$-algèbre fermée de $\widetilde{A}_K$ contenant l'un des $\{(\pi^{\flat})\}_{\lt}$ et stable par $\mathrm{W}_{\qp}^+$. Le morphisme est injectif\footnote{Pour $K=\mathbb{Q}_q$ et $\pi=p$, le résultat de L. Berger \cite[Coro. 3.7]{berger_inj_morph} implique en particulier l'injectivité de notre morphisme. On peut généraliser cette technique à une extension finie quelconque.} ce qui s'interprète comme l'apparition de $|\mathcal{P}|$ variables cachées. Cela suggère également que nous devrions voir apparaître sur notre anneau multivariable $\mathcal{O}_{\mathcal{E}_{\mathcal{P}}}$ une action additionnelle de $\mathrm{W}_{\qp}^+$ et peut-être une équivalence de Fontaine reliant les $(\varphi,\Gamma)$-modules sur cet anneau à une certaine catégorie de représentations. Cette sous-section répond à ces attentes.
	
	Soit $K|\qp$ une sous-extensions finie de $\overline{\qp}$. Nous fixons encore un choix d'extension $\tau_{\ext}$ de chaque plongement à $\overline{\qp}$, que l'on suppose appartenir à $\mathrm{W}_{\qp}^+$ et de degré minimal.
	
		\begin{defiprop}
		Nous avons $$\forall \tau\in \mathcal{P}, \,\forall\sigma \in \cg_{\qp}, \,\, \exists ! \,g_{\sigma,\tau}\in \cg_K,\,\,\,(\sigma\tau)_{\ext} g_{\sigma,\tau}=\sigma \tau_{\ext}.$$
		
		Si $\sigma \in \mathrm{W}_{\qp}^+$, alors $g_{\sigma,\tau} \in \mathrm{W}_K^+$. Son degré absolu est noté $f d_{\sigma,\tau}$ avec $d_{\sigma,\tau}\in \mathbb{N}$ et ne dépend pas du choix des $\tau_{\ext}$ puisqu'ils sont de degré positif minimal.
	\end{defiprop}
	\begin{proof}
		L'existence provient de ce que $\sigma \tau_{\ext}$ coïncide avec $(\sigma \tau)_{\ext}$ sur $K$. Ainsi, l'élément $(\sigma\tau)_{\ext}^{-1} \sigma \tau_{\ext}$ fixe $K$. Si $\sigma \in \mathrm{W}_{\qp}^+$, le degré de $\sigma \tau_{\ext}$ est positif. Or, c'est aussi une extension de $\sigma \tau$, d'où la le choix des extensions tire que $\deg(\sigma \tau_{\ext}) \geq \deg\left((\sigma \tau)_{\ext}\right)$.
	\end{proof}
	
	\begin{rem}
		La définition précédente capture donc l'information des degrés dans $\mathrm{W}_{\qp}^+$ que le choix des $\tau_{\ext}$ oublie. Cela servira entre autre à garantir que l'action sur l'anneau de Fontaine plectique d'une extensions du $p$-Frobenius dans $\cg_{\qp}$ est une racine du $q$-Frobenius.
	\end{rem}
	
	\begin{lemma}
		Les $g_{\sigma,\tau}$ vérifient la relation
		
		$$\forall \sigma_1, \sigma_2 \in \cg_{\qp}, \, \forall \tau \in \mathcal{P}, \,\,\, g_{\sigma_2\sigma_1,\tau}=g_{\sigma_2,\sigma_1 \tau} g_{\sigma_1,\tau}.$$
		
		A fortiori, les $d_{\sigma,\tau}$ vérifient la relation
		
		$$\forall \sigma_1, \sigma_2 \in \cg_{\qp}, \, \forall \tau \in \mathcal{P}, \,\,\, d_{\sigma_2\sigma_1,\tau}=d_{\sigma_2,\sigma_1 \tau} + d_{\sigma_1,\tau}.$$
	\end{lemma}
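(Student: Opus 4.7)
Le plan est de procéder par calcul direct en exploitant l'unicité dans la DéfiProp précédente. Je caractériserai $g_{\sigma_2\sigma_1,\tau}$ comme l'unique élément de $\cg_K$ satisfaisant $(\sigma_2\sigma_1\tau)_{\ext} g_{\sigma_2\sigma_1,\tau} = \sigma_2\sigma_1 \tau_{\ext}$, et il suffira donc de vérifier que le produit $g_{\sigma_2,\sigma_1\tau} g_{\sigma_1,\tau}$ satisfait cette même relation.

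La première étape consistera à écrire la définition pour $g_{\sigma_1,\tau}$, à savoir $(\sigma_1\tau)_{\ext} g_{\sigma_1,\tau} = \sigma_1 \tau_{\ext}$, puis à composer par $\sigma_2$ à gauche afin d'obtenir $\sigma_2 (\sigma_1\tau)_{\ext} g_{\sigma_1,\tau} = \sigma_2\sigma_1 \tau_{\ext}$. La deuxième étape utilise la définition pour $g_{\sigma_2,\sigma_1\tau}$, à savoir $(\sigma_2\sigma_1\tau)_{\ext} g_{\sigma_2,\sigma_1\tau} = \sigma_2 (\sigma_1\tau)_{\ext}$, que l'on composera à droite par $g_{\sigma_1,\tau}$. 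En mettant bout à bout ces deux identités, on obtient la chaîne
\begin{align*}
(\sigma_2\sigma_1\tau)_{\ext}\, g_{\sigma_2,\sigma_1\tau}\, g_{\sigma_1,\tau} &= \sigma_2 (\sigma_1\tau)_{\ext}\, g_{\sigma_1,\tau} \\
&= \sigma_2 \sigma_1 \tau_{\ext},
\end{align*}
et l'unicité dans la DéfiProp fournira la première identité.

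Pour la relation additive sur les $d_{\sigma,\tau}$, il suffira d'appliquer le morphisme de monoïdes $\deg : \mathrm{W}_K^+ \to \N$ à l'égalité tout juste établie. En se souvenant que $\deg(g_{\sigma,\tau}) = f d_{\sigma,\tau}$ par définition, on divisera par $f$ pour conclure. Cet énoncé étant une conséquence essentiellement formelle de l'unicité, je n'attends aucun obstacle technique notable ; le seul point à vérifier en passant est que le produit $g_{\sigma_2,\sigma_1\tau} g_{\sigma_1,\tau}$ reste bien dans $\mathrm{W}_K^+$ lorsque $\sigma_1,\sigma_2\in \mathrm{W}_{\qp}^+$, ce qui découle immédiatement du fait que $\mathrm{W}_K^+$ est stable par produit.
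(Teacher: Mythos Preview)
Ta démonstration est correcte et suit exactement la même approche que celle du papier : on vérifie que $g_{\sigma_2,\sigma_1\tau} g_{\sigma_1,\tau}$ satisfait la relation caractérisant $g_{\sigma_2\sigma_1,\tau}$ via la chaîne $(\sigma_2\sigma_1\tau)_{\ext}\, g_{\sigma_2,\sigma_1\tau}\, g_{\sigma_1,\tau} = \sigma_2(\sigma_1\tau)_{\ext}\, g_{\sigma_1,\tau} = \sigma_2\sigma_1\tau_{\ext}$, puis on conclut par unicité. Le papier présente ce calcul de façon plus condensée et ne détaille pas le passage aux degrés, mais ta rédaction est plus explicite sans rien ajouter de superflu.
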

	\begin{proof}
		On calcule
		\begin{align*}
			(\sigma_2 \sigma_1 \tau)_{\ext}g_{\sigma_2,\sigma_1 \tau} g_{\sigma_1,\tau} &= (\sigma_2(\sigma_1 \tau))_{\ext} g_{\sigma_2,\sigma_1 \tau} g_{\sigma_1,\tau} \\
			&=\sigma_2 (\sigma_1 \tau)_{\ext} g_{\sigma_1,\tau} \\
			&= \sigma_2 \sigma_1 \tau_{\ext}
		\end{align*}
	\end{proof}

	Par la suite, le groupe de Weil $\mathrm{W}_{L}$ d'un corps local $L$ sera muni de la topologie d'union disjointe $\cup_{n\in \Z} \mathrm{Frob}^{\circ n} \cdot \mathrm{I}_L$ et sa version monoïdale $\mathrm{W}_L^+$ de même. Nous appelons \textit{topologie localement profinie} cette topologie.
	\vspace{0.5cm}

	\begin{defiprop}
		Nous appelons $\tgglec$ le monoïde topologique défini en utilisant \ref{defi_psd_quot} par $$\tgglec:= \quot{\left(\left(\Phi_{\mathcal{P},q}\times \cg_{K,\mathcal{P}}\right) \rtimes_{\glec} \sfrac{\mathrm{W}_{\qp}^+}{\mathrm{I}_{K_{\gaal}}}\right)}{\tsim \sfrac{\mathrm{W}_{K_{\gaal}}^+}{\mathrm{I}_{K_{\gaal}}}\tsim}$$ où le produit semi-direct est donné par $$\forall \sigma\in \mathrm{W}_{\qp}^+, \,\, \, \glec(\sigma)\left(\prod \varphi_{\tau,q}^{n_{\tau}}, g_{\tau}\right)= \left(\prod \varphi_{\tau,q}^{n_{\omega^{-1}(\tau)}}, g_{\omega^{-1}(\tau)}\right)$$ et avec le morphisme $\kappa_{\glec}\, : \,\sfrac{\mathrm{W}_{K_{\gaal}}^+}{\mathrm{I}_{K_{\gaal}}}\rightarrow \left(\Phi_{\mathcal{P},q}\times \cg_{K,\mathcal{P}}\right)$ qui envoie le Frobenius arithmétique sur $\varphi_{\mathcal{P},q}^{f_{\gaal}}$, avec la topologie usuelle sur $\left(\Phi_{\mathcal{P},q}\times \cg_{K,\mathcal{P}}\right)$ et la topologie localement profinie sur le groupe de Weil.
	\end{defiprop}
	\begin{proof}
		Pour les mêmes raisons que précédemment, le noyau de $\glec$ contient $\mathrm{I}_{K_{\gaal}}$. Il faut ensuite vérifier les trois conditions de la Définition \ref{defi_psd_quot}. Le Frobenius arithmétique est dans le noyau de $\plec$ et donne donc la même "action de conjugaison" que $\varphi_{\mathcal{P},q}^{f_{\gaal}}$. Puisque $K_{\gaal}|\qp$ est galoisienne $\mathrm{W}_{K_{\gaal}}\triangleleft \mathrm{W}_{\qp}$ d'où nous déduisons que $\sfrac{\mathrm{W}_{K_{\gaal}}^+}{\mathrm{I}_{K_{\gaal}}}\triangleleft \sfrac{\mathrm{W}_{\qp}^+}{\mathrm{I}_{K_{\gaal}}}$. Ce dernier sous-groupe est dans le centre, de même que son image dans $\left(\Phi_{\mathcal{P},q} \times \cg_{K,\mathcal{P}}\right)$. Cela démontre la troisième condition.
	\end{proof}

	\begin{lemma}
		La restriction de la projection sur produit semi-direct vers $\tgglec$ se restreint en des injections depuis $\left(\Phi_{\mathcal{P},q}\times \cg_{K,\mathcal{P}}\right)$ et $\sfrac{\mathrm{W}_{\qp}^+}{\mathrm{I}_{K_{\gaal}}}$. Ce sont des homéomorphismes sur leur image.
	\end{lemma}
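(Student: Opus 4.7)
The approach rests on the centrality of $H := \sfrac{\mathrm{W}_{K_{\gaal}}^+}{\mathrm{I}_{K_{\gaal}}}$ in both its embeddings --- established in the proposition defining $\tgglec$ --- together with the fact that $H\cong\mathbb{N}$ is discrete, cyclic, and embeds in a group, and that the two factors $M := \Phi_{\mathcal{P},q}\times\cg_{K,\mathcal{P}}$ and $N := \sfrac{\mathrm{W}_{\qp}^+}{\mathrm{I}_{K_{\gaal}}}$ are both cancellative (each embedding as a submonoid of a topological group).

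Writing $G := M \rtimes_{\glec} N$ and $p\colon G \to \tgglec$ for the quotient, an induction on the length of a zig-zag of atomic substitutions $(A\,\kappa_{\glec}(h), S) \sim (A, Sh)$ --- whose compatibility with left and right multiplication is precisely the content of centrality --- yields the explicit description
\[
(a,s)\sim(a',s') \iff \exists\,\ell, m\in H,\ a\,\kappa_{\glec}(\ell) = a'\,\kappa_{\glec}(m)\ \text{and}\ s\,m = s'\,\ell.
\]
Injectivity of both restrictions follows at once. For the left factor, $s = s' = 1$ forces $\ell = m$, and then $a = a'$ by cancellativity of $M$. For the right factor, $a = a' = 1$ forces $\kappa_{\glec}(\ell) = \kappa_{\glec}(m)$; since $\kappa_{\glec}$ sends the generator of $H$ to $\varphi_{\mathcal{P},q}^{f_{\gaal}}$ (an element of infinite order in $\Phi_{\mathcal{P},q}$), one has $\ell = m$ and then $s = s'$ by cancellativity of $N$.

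Both embeddings are continuous by the universal property of the quotient topology, so only the inverse continuity remains. For the left factor, given an open $U\subseteq M$, the congruence description yields
\[
p^{-1}\bigl(p(U\times\{1\})\bigr) \;=\; \bigsqcup_{h\in H} \bigl\{(a,h)\ :\ a\,\kappa_{\glec}(h)\in U\bigr\},
\]
which is open in $G$ because each slice is the preimage of $U$ under the continuous right-multiplication by $\kappa_{\glec}(h)$ in $M$. Hence $p(U\times\{1\})$ is open in $\tgglec$, and its intersection with the image of the left embedding is the latter's image of $U$ by injectivity. A symmetric computation for an open $V\subseteq N$ gives
\[
p^{-1}\bigl(p(\{1\}\times V)\bigr) \;=\; \bigsqcup_{h\in H} \bigl\{(\kappa_{\glec}(h), s)\ :\ s\,h \in V\bigr\},
\]
open for analogous reasons, concluding the result for the right embedding.

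The main obstacle will be verifying the two explicit formulas for the preimages above. Because $H$ has no inverses, the congruence description enforces a non-negativity condition on the ``$H$-component'' and thereby produces the disjoint-union structure indexed by $h\in H$; this depends crucially both on centrality --- so that $\kappa_{\glec}(h)$ and $h$ can be pushed freely through each factor --- and on cancellativity of $M$ and $N$, needed to extract $h$ uniquely from the resulting equations. Without these, the equivalence classes would acquire stray $\glec$-conjugation factors and the openness argument would collapse.
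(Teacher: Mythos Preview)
Your explicit description of the congruence and the injectivity arguments are correct, and they match the paper's approach (which packages the same computation into Proposition~B.8, noting that $\glec(\mathrm{W}_{\qp}^+)$ acts by automorphisms). Your openness argument for the left factor is also fine.

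There is, however, a genuine gap in the openness argument for the right factor. Your formula
\[
p^{-1}\bigl(p(\{1_M\}\times V)\bigr)\;=\;\bigsqcup_{h\in H}\{\kappa_{\glec}(h)\}\times\{s:sh\in V\}
\]
is correct as a set, but it is \emph{not} open in $G=M\times N$: writing $M=\Phi_{\mathcal P,q}\times\cg_{K,\mathcal P}$, one has $\kappa_{\glec}(h)=(\varphi_{\mathcal P,q}^{hf_{\gaal}},1)$, so $\{\kappa_{\glec}(h)\}=\{\varphi_{\mathcal P,q}^{hf_{\gaal}}\}\times\{1\}$, and the singleton $\{1\}$ is not open in the profinite group $\cg_{K,\mathcal P}$. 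The ``analogous reasons'' you invoke only take care of the $N$-slice, not the $M$-slice.

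The paper repairs exactly this point by thickening $\{1_M\}$ to the open subgroup $\{1_\Phi\}\times\cg_{K,\mathcal P}$. The set $\{1_\Phi\}\times\cg_{K,\mathcal P}\times\{1_N\}\subset G$ is open (since $\Phi_{\mathcal P,q}$ and $N$ are discrete) and \emph{saturated}: with your congruence description, $((1,g_0),1)\sim(a,s')$ forces $h_1=h_2$ (the $\Phi$-component of $(1,g_0)\kappa(h_1)$ is $\kappa(h_1)$, forcing the $\Phi$-component of $a$ to vanish and $h_1=h_2$) and then $s'=1$. Its image in $\tgglec$ is therefore open and meets the image of $N$ only at the identity, so $\{1\}$ is open in the induced topology on $N$. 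More generally, the saturation of $\{1_\Phi\}\times\cg_{K,\mathcal P}\times\{s_0\}$ is $\bigsqcup_h \{\kappa_{\glec}(h)\}\times\cg_{K,\mathcal P}\times\{s':s'h=s_0\}$, which \emph{is} open (now the $M$-slice is $\{\varphi_{\mathcal P,q}^{hf_{\gaal}}\}\times\cg_{K,\mathcal P}$), and the same computation shows its image meets the image of $N$ exactly in $\{s_0\}$. This gives the discreteness of the induced topology on $N$ and hence the homeomorphism onto the image.
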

	\begin{proof}
		Ajoutons que $\glec(\mathrm{W}_{\qp}^+)$ est constituée d'automorphismes pour appliquer la Proposition \ref{psd_quotient_desc_2}. Cela permet notamment de prouver que les éléments de $\sfrac{\mathrm{W}_{\qp}^+}{\mathrm{I}_{K_{\gaal}}}$ sont dans des classes disjointes. De plus, le sous-groupe $\cg_{K,\mathcal{P}}$ est ouvert dans le produit semi-direct et stable par la relation définissant le quotient. Nous en déduisons que la topologie induite sur $\sfrac{\mathrm{W}_{\qp}^+}{\mathrm{I}_{K_{\gaal}}}$ par l'injection est effectivement discrète. L'autre plongement se démontre de manière analogue.
	\end{proof}
	
	\begin{prop}
		Le sous-monoïde $\Phi_{\mathcal{P},q}<\tgglec$ est distingué et le quotient $\sfrac{\tgglec}{\Phi_{\mathcal{P},q}}$ s'identifie à $\cg_{K,\glec}$ comme monoïde topologique, canoniquement au choix des $\tau_{\ext}$ près.
	\end{prop}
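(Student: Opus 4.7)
Le plan se divise en trois temps. D'abord, je vérifierais la normalité de $\Phi_{\mathcal{P},q}$ dans $\tgglec$. Le sous-monoïde $\Phi_{\mathcal{P},q}$ se relève canoniquement dans le produit semi-direct $\left(\Phi_{\mathcal{P},q}\times \cg_{K,\mathcal{P}}\right) \rtimes_{\glec} \sfrac{\mathrm{W}_{\qp}^+}{\mathrm{I}_{K_{\gaal}}}$ via le plongement du premier facteur direct. Il commute trivialement avec $\cg_{K,\mathcal{P}}$ en tant que facteur direct, et l'action $\glec$ de $\sfrac{\mathrm{W}_{\qp}^+}{\mathrm{I}_{K_{\gaal}}}$ permute ses éléments en le préservant globalement. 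Ces propriétés de stabilité passent au quotient $\tsim$ grâce aux constructions de l'Annexe B, d'où la normalité dans $\tgglec$.

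Dans un deuxième temps, je calculerais explicitement le quotient. L'observation cruciale est que l'image du morphisme $\kappa_{\glec}$ est entièrement contenue dans le facteur $\Phi_{\mathcal{P},q}$, puisque le Frobenius arithmétique est envoyé sur $\varphi_{\mathcal{P},q}^{f_{\gaal}}$. Après projection modulo $\Phi_{\mathcal{P},q}$, le morphisme $\kappa_{\glec}$ devient trivial, et la relation d'identification $\tsim \kappa_{\glec}(h) \tsim h \tsim$ définissant le quotient $\tsim$ se réduit à imposer la trivialité de l'image de $\sfrac{\mathrm{W}_{K_{\gaal}}^+}{\mathrm{I}_{K_{\gaal}}}$ dans $\sfrac{\mathrm{W}_{\qp}^+}{\mathrm{I}_{K_{\gaal}}}$, c'est-à-dire à un quotient de groupes au sens usuel. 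Puisque $K_{\gaal}|\qp$ est une extension finie galoisienne, la suite exacte de restriction fournit un homéomorphisme $\sfrac{\mathrm{W}_{\qp}^+/\mathrm{I}_{K_{\gaal}}}{\mathrm{W}_{K_{\gaal}}^+/\mathrm{I}_{K_{\gaal}}} \cong \gal{K_{\gaal}}{\qp}$, d'où un isomorphisme de monoïdes topologiques
$$\sfrac{\tgglec}{\Phi_{\mathcal{P},q}} \cong \cg_{K,\mathcal{P}} \rtimes_{\glec} \gal{K_{\gaal}}{\qp}.$$

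Enfin, l'action déduite de $\sigma \in \sfrac{\mathrm{W}_{\qp}^+}{\mathrm{I}_{K_{\gaal}}}$ sur $\cg_{K,\mathcal{P}}$ ne dépend que de $\sigma|_{K_{\gaal}}$ et correspond, via la construction même de $\glec$, à la permutation $\mathrm{Per}(\sigma|_{K_{\gaal}})$ des facteurs indexés par $\mathcal{P}$. Elle coïncide donc avec l'action $\plec \circ \mathrm{Per}$ décrite à la Proposition \ref{descrip_gkglec}, ce qui fournit l'isomorphisme escompté avec $\cg_{K,\glec}$, canonique au choix des $\tau_{\ext}$ près. La principale difficulté sera de vérifier méticuleusement que la construction des quotients $\tsim$ commute avec le quotient par le sous-monoïde normal $\Phi_{\mathcal{P},q}$, en tant qu'identification de monoïdes \emph{topologiques} : une fois acquise la compatibilité algébrique via l'annulation de $\kappa_{\glec}$ modulo $\Phi_{\mathcal{P},q}$, il restera à confronter la topologie quotient à la topologie localement profinie sur $\gal{K_{\gaal}}{\qp}$, ce qui devrait découler des propriétés formelles établies dans l'Annexe B et du caractère discret et fini de $\gal{K_{\gaal}}{\qp}$.
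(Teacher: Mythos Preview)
Votre démonstration est correcte et suit essentiellement la même voie que celle de l'article : normalité de $\Phi_{\mathcal{P},q}$ dans le produit semi-direct via la stabilité sous l'action $\glec$, passage au quotient $\tsim$ grâce aux résultats de l'Annexe~B, puis exploitation du fait que $\kappa_{\glec}$ est à valeurs dans $\Phi_{\mathcal{P},q}$ pour réduire la relation $\tsim$ à un simple quotient de $\sfrac{\mathrm{W}_{\qp}^+}{\mathrm{I}_{K_{\gaal}}}$ par $\sfrac{\mathrm{W}_{K_{\gaal}}^+}{\mathrm{I}_{K_{\gaal}}}$, d'où l'identification à $\cg_{K,\mathcal{P}}\rtimes \gal{K_{\gaal}}{\qp}\cong \cg_{K,\glec}$ via la Proposition~\ref{descrip_gkglec}. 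L'article invoque explicitement les trois points de la Proposition~\ref{identités_quotients} là où vous décrivez verbalement les mêmes mécanismes ; le souci topologique que vous soulevez est lui aussi absorbé par cette proposition.
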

	\begin{proof}
		En utilisant le premier résultat de la Proposition \ref{identités_quotients} grâce au fait que $\mathrm{W}_{\qp}^+$ agit par automorphismes sur $\Phi_{\mathcal{P},q}$, on obtient $\Phi_{\mathcal{P},q} \triangleleft \left[\left(\Phi_{\mathcal{P},q} \times \cg_{K,\mathcal{P}}\right)\rtimes_{\glec} \sfrac{\mathrm{W}_{\qp}^+}{\mathrm{I}_{K_{\gaal}}}\right]$. On utilise le deuxième résultat de la même proposition pour conclure que son image via le plongement dans $\tgglec$ est distinguée. Enfin, le troisième point de ladite proposition permet d'identifier le quotient. Puisque $\kappa_{\glec}$ prend ses valeurs dans $\Phi_{\mathcal{P},q}$ nous obtenons que $$\quot{\tgglec}{\Phi_{\mathcal{P},q}} \cong \cg_{K,\mathcal{P}} \rtimes_{\overline{\glec}} \quot{\mathrm{W}_{\qp}^+}{\mathrm{W}_{K_{\gaal}}^+}$$ où $\overline{\glec}(\sigma)(g_{\tau})=(g_{\sigma^{-1}\tau})$. En identifiant $\sfrac{\mathrm{W}_{\qp}^+}{\mathrm{W}_{K_{\gaal}}^+}$ à $\gal{K_{\gaal}}{\qp}$, nous retombons sur la description de $\cg_{K,\glec}$ donné par le choix des $\tau_{\ext}$ à la Proposition \ref{descrip_gkglec}.
	\end{proof}
	
	\begin{defi}
		Le sous-monoïde $\ch_{K,\lt,\mathcal{P}}<\tgglec$ est distingué pour les mêmes raisons qu'à la proposition précédente. Nous définissons le monoïde topologique $$\tglec:= \sfrac{\tgglec}{\ch_{K,\lt,\mathcal{P}}}.$$ 
	\end{defi}
	
	\begin{prop}		
		Le monoïde topologique $\tglec$ s'identifie canonique à $$\quot{\left(\left(\Phi_{\mathcal{P},q}\times \Gamma_{K,\lt,\mathcal{P}}\right) \rtimes_{\glec} \sfrac{\mathrm{W}_{\qp}^+}{\mathrm{I}_{K_{\gaal}}}\right)}{\tsim \sfrac{\mathrm{W}_{K_{\gaal}}^+}{\mathrm{I}_{K_{\gaal}}}\tsim}$$ où $$\forall \sigma \in \mathrm{W}_{\qp}^+, \,\,\,\glec(\sigma)\left(\prod \varphi_{\tau,q}^{n_{\tau}}, x_{\tau}\right)=\left(\prod \varphi_{\tau,q}^{n_{\sigma^{-1}\tau}}, x_{\omega^{-1}(\tau)}\right)$$ et où l'on identifie le Frobenius arithmétique à $\varphi_{\mathcal{P},q}^{f_{\gaal}}$.
	\end{prop}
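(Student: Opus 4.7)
Le plan consiste à faire commuter les deux quotients successifs en utilisant que $\ch_{K,\lt,\mathcal{P}}$ est stable par toutes les actions en jeu, puis à invoquer les résultats généraux de l'annexe B sur les quotients de produits semi-directs. Je commencerais par vérifier que $\ch_{K,\lt,\mathcal{P}} = \prod_{\tau\in \mathcal{P}} \ch_{K_{\tau},\lt,\rf}$ est bien distingué dans le produit semi-direct $\left(\Phi_{\mathcal{P},q}\times \cg_{K,\mathcal{P}}\right)\rtimes_{\glec} \sfrac{\mathrm{W}_{\qp}^+}{\mathrm{I}_{K_{\gaal}}}$ : la trivialité de l'action de $\Phi_{\mathcal{P},q}$ sur $\cg_{K,\mathcal{P}}$ est claire, et l'action de $\mathrm{W}_{\qp}^+$ par permutation des indices préserve le sous-groupe puisque la même extension de Lubin-Tate $K_{\lt,\rf}|K$ est choisie pour chaque copie. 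Je vérifierais de plus que $\ch_{K,\lt,\mathcal{P}}$ est d'intersection triviale avec l'image de $\kappa_{\glec}\big(\sfrac{\mathrm{W}_{K_{\gaal}}^+}{\mathrm{I}_{K_{\gaal}}}\big)$ puisque cette dernière vit dans $\Phi_{\mathcal{P},q}$, ce qui permet de voir l'image de $\ch_{K,\lt,\mathcal{P}}$ dans $\tgglec$ comme un sous-monoïde distingué par la Proposition \ref{psd_quotient_desc_2}.

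Ensuite j'utiliserais un analogue du premier point de la Proposition \ref{identités_quotients} pour intervertir le quotient par $\ch_{K,\lt,\mathcal{P}}$ et la construction de produit semi-direct. Concrètement, puisque $\ch_{K,\lt,\mathcal{P}}$ est stable par $\glec$, l'action descend en une action de $\sfrac{\mathrm{W}_{\qp}^+}{\mathrm{I}_{K_{\gaal}}}$ sur $\Gamma_{K,\lt,\mathcal{P}} \cong \cg_{K,\mathcal{P}}/\ch_{K,\lt,\mathcal{P}}$ par permutation des indices, et l'on obtient un isomorphisme canonique
\[
\left(\left(\Phi_{\mathcal{P},q}\times \cg_{K,\mathcal{P}}\right) \rtimes_{\glec} \sfrac{\mathrm{W}_{\qp}^+}{\mathrm{I}_{K_{\gaal}}}\right)\Big/\ch_{K,\lt,\mathcal{P}} \;\cong\; \left(\Phi_{\mathcal{P},q}\times \Gamma_{K,\lt,\mathcal{P}}\right)\rtimes_{\glec}\sfrac{\mathrm{W}_{\qp}^+}{\mathrm{I}_{K_{\gaal}}}.
\]
Il reste à voir que l'identification \og$\tsim$\fg{} par $\sfrac{\mathrm{W}_{K_{\gaal}}^+}{\mathrm{I}_{K_{\gaal}}}$ passe au quotient : comme $\kappa_{\glec}$ est à valeurs dans $\Phi_{\mathcal{P},q}$, l'image de $\sfrac{\mathrm{W}_{K_{\gaal}}^+}{\mathrm{I}_{K_{\gaal}}}$ dans $\Phi_{\mathcal{P},q}\times \cg_{K,\mathcal{P}}$ évite entièrement le facteur $\cg_{K,\mathcal{P}}$, donc a fortiori $\ch_{K,\lt,\mathcal{P}}$. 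La commutation des deux quotients résulte alors d'un lemme élémentaire du style \og second théorème d'isomorphisme monoïdal\fg{} de l'annexe B.

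Enfin, je vérifierais deux points concrets. D'abord que la formule de l'action descendue est bien celle annoncée : en calculant sur des représentants, $\glec(\sigma)(\prod \varphi_{\tau,q}^{n_{\tau}}, g_{\tau}) = (\prod \varphi_{\tau,q}^{n_{\sigma^{-1}\tau}}, g_{\sigma^{-1}(\tau)})$ passe au quotient en remplaçant $g_{\sigma^{-1}(\tau)}$ par sa classe $x_{\sigma^{-1}(\tau)}\in \mathcal{O}_K^{\times}$. Ensuite que l'image du Frobenius arithmétique de $\mathrm{W}_{K_{\gaal}}^+/\mathrm{I}_{K_{\gaal}}$ dans $\Phi_{\mathcal{P},q}$, qui est $\varphi_{\mathcal{P},q}^{f_{\gaal}}$ par définition de $\kappa_{\glec}$, est bien identifiée au générateur correspondant.

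Le point technique le plus délicat sera vérifier que les topologies quotients coïncident : il faut s'assurer que l'action descendue de $\mathrm{W}_{\qp}^+$ sur $\Gamma_{K,\lt,\mathcal{P}}$ reste continue, et que la topologie finale sur le double quotient est indépendante de l'ordre des deux opérations. Ceci devrait être immédiat grâce aux outils de l'annexe B pour les produits semi-directs et quotients de monoïdes topologiques, puisque toutes les projections canoniques sont ouvertes, mais c'est ici qu'il faut rester attentif, le groupe $\mathrm{W}_{\qp}^+$ étant localement profini et non profini.
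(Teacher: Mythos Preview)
Ta démarche est correcte et coïncide avec celle du papier, qui renvoie simplement à la preuve de la Proposition~\ref{identif_tgplec} : on commute le quotient par $\ch_{K,\lt,\mathcal{P}}$ avec la construction $\rtimes_{\glec}$ et avec l'identification $\tsim\sfrac{\mathrm{W}_{K_{\gaal}}^+}{\mathrm{I}_{K_{\gaal}}}\tsim$ à l'aide des résultats de l'annexe~B. La seule nuance est que tu invoques le \emph{premier} point de la Proposition~\ref{identités_quotients} puis traites séparément le passage de $\tsim I\tsim$ au quotient, alors que c'est précisément le \emph{troisième} point de cette proposition qui encapsule en une fois toute la manœuvre (avec ici $M_0=\{1\}\times\ch_{K,\lt,\mathcal{P}}$, d'où $\kappa_{\glec}^{-1}(M_0)=\{1\}$ puisque $\kappa_{\glec}$ est à valeurs dans $\Phi_{\mathcal{P},q}\times\{1\}$) ; invoquer ce point rend immédiatement superflue ta discussion sur la commutation des deux quotients et sur les topologies.
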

	\begin{proof}
		Similaire à celle de la Proposition \ref{identif_tgplec}.
	\end{proof}

	\vspace{0.5 cm}
	Supposons jusqu'à la fin de cette sous-section que $K|\qp$ est galoisienne. Nous identifions canoniquement $\mathcal{P}$ à $\gal{K}{\qp}$ dans cette sous-section. Dans ce cas, nous avons $K_{\gaal}=K$ et $f_{\gaal}=f$.
	
	\begin{defi}
		Le quotient $\sfrac{\cg_{K,\glec}}{\prod_{\gal{K}{\qp}} \cg_{K}}$ s'identifiant à $\gal{K}{\qp}$, l'action de ce dernier sur $\mathcal{O}_K$ le munit d'une structure de $\cg_{K,\glec}$-anneau topologique que l'on note $\mathcal{O}_{K,\mathrm{sl}}$.
		
		Nous notons $\mathrm{sRep}_{\mathcal{O}_K} \cg_{K,\plec}$ la catégorie des représentations\footnote{Elle s'écrit $\cdetaledvproj{\cg_{K,\plec}}{\mathcal{O}_{K,\mathrm{sl}}}{\pi}$ dans le langage de \cite{formalisme_marquis}.} semi-linéaires continues de $\cg_{K,\plec}$ de type fini sur $\mathcal{O}_K$.
	\end{defi}
	
	Nous considérons toujours un choix de $(\pi,\rf,\pi^{\flat})$ pour $K$. Nous commençons à nouveau dans le cadre de la section \ref{partie_equiv_lt} avec $\Delta=\mathcal{P}$, le choix de $(\tau(\pi), \tau(\rf), \tau_{\ext}(\pi^{\flat}))$ pour le plongement $\tau$  et un choix d'extensions de $$\mathrm{j}_{\tau} \, : \, \mathbb{F}_q(\!(X_{\tau}^{q^{-\infty}})\!) \rightarrow \widehat{K_{\lt,\tau(\rf)}}^{\flat}$$ aux clôtures séparables. Comme à la section \ref{partie_equiv_lt}, cela détermine une structure de $(\Phi_{\mathcal{P},q}\times \cg_{K,\mathcal{P}})$-anneau topologique sur $\Oehatkand{\mathcal{P}}$ et une identification de $\cg_{E,\mathcal{P}}$ au sous-groupe $\ch_{K,\lt,\mathcal{P}}=\prod \cg_{K_{\lt,\tau(\rf)}} < \cg_{K,\mathcal{P}}$.
	
	Chaque $\tau_{\ext}$ induit un endomorphisme $\tau$-semi-linéaire de $K$-algèbres de $\mathbb{C}_p$ qui envoie $K_{\lt,\rf}$ sur $K_{\lt,\tau(\rf)}$. En basculant, on obtient  $\tau_{\ext}^{\flat}$ un endomorphisme de $\mathbb{F}_q$-algèbres $\phi^{\deg \tau}$-semi-linéaire\footnote{Ici $\phi$ est le Frobenius absolu et $\deg \tau$ est bien défini modulo $f$, ce qui définit correctement $\phi^{\deg \tau}$ sur $\mathbb{F}_q$.} de $\mathbb{C}_p^{\flat}$ qui envoie $\widehat{K_{\lt,\rf}}^{\flat}$ sur $\widehat{K_{\lt,\tau(\rf)}}^{\flat}$ et $\pi^{\flat}$ sur $\tau_{\ext}(\pi^{\flat})$. Cet endomorphisme est $\varphi_q^{\N}$-équivariant. En revanche, il n'est pas $\cg_K$-équivariant : nous obtenons que \begin{equation*}\label{eqn:relationpasequiv} \forall g\in \cg_K, \,  \forall x \,\,\, \tau_{\ext}^{\flat}(g\cdot x)=(\tau_{\ext} g \tau_{\ext}^{-1}) \cdot(\tau_{\ext}^{\flat}(x)). \tag{*3}\end{equation*} En passant aux vecteurs de Witt, on obtient un diagramme commutatif
	\begin{center}
		\begin{tikzcd}
			\Oe \ar[d,"X\mapsto X_{\tau}"'] \ar[rrr,hook,"X\mapsto \lbrace\pi^{\flat}\rbrace_{\lt,\rf}"] & & & \mathrm{W}_{\mathcal{O}_K}\left(\widehat{K_{\lt,\rf}}^{\flat}\right) \ar[d] \ar[r,hook] & \mathrm{W}_{\mathcal{O}_K}\left(\mathbb{C}_p^{\flat}\right) \ar[d,"\mathrm{W}_{\mathcal{O}_K}(\tau_{\ext}^{\flat})"] \\ 
			\mathcal{O}_{\mathcal{E}_{\tau}} \ar[rrr,hook,"X_{\tau}\mapsto \lbrace\tau_{\ext}(\pi^{\flat})\rbrace_{\lt,\tau(\rf)}"] & & & \mathrm{W}_{\mathcal{O}_K}\left(\widehat{K_{\lt,\tau(\rf)}}^{\flat}\right) \ar[r,hook] & \mathrm{W}_{\mathcal{O}_K}\left(\mathbb{C}_p^{\flat}\right)
		\end{tikzcd}
	\end{center} où toutes les flèches verticales sont des morphismes $\tau$-semi-linéaires de $\mathcal{O}_K$-algèbres. Or, $\Oehat$ est la complétion de la hensélisation stricte de $\Oe$. Le diagramme précédent montre alors que $\mathrm{W}_{\mathcal{O}_K}(\tau_{\ext}^{\flat})$ induit un isorphisme entre les $\varphi_{q}^{\N}$-anneaux topologiques $\Oehat$ et $\mathcal{O}_{\widehat{\mathcal{E}_{\tau}^{\mathrm{nr}}}}$ construits comme en Proposition \ref{prop_defi_oe}, respectivement pour $(\pi,\rf,\pi^{\flat})$ et $(\tau(\pi),\tau(\rf),\tau_{\ext}(\pi^{\flat}))$. Les actions de $\cg_K$ vérifient encore l'équation (\ref{eqn:relationpasequiv}). Ce morphisme de $\mathcal{O}_K$-algèbres est également $\tau$-semi-linéaire. Définissons $\mathrm{j}_{\tau_2,\tau_1} \, : \, \mathcal{O}_{\widehat{\mathcal{E}_{\tau_1}^{\mathrm{nr}}}} \xrightarrow{\sim} \mathcal{O}_{\widehat{\mathcal{E}_{\tau_2}^{\mathrm{nr}}}}$ par $\mathrm{j}_{\tau_2,\tau_1}:=\mathrm{W}_{\mathcal{O}_K}(\tau_{2,\ext}^{\flat})\circ \mathrm{W}_{\mathcal{O}_K}(\tau_{1,\ext}^{\flat})^{-1}$. C'est un isomorphisme de $\mathcal{O}_K$-algèbres topologiques $(\tau_2 \tau_1^{-1})$-semi-linéaire,  $\varphi_q^{\N}$-équivariant et tel que \begin{equation*}\label{eqn:eq4}\forall g\in \cg_K, \,\forall x, \,\,\, \mathrm{j}_{\tau_2,\tau_1}(g\cdot x)=\left(\tau_{2,\ext}\tau_{1,\ext}^{-1} \,g\, \tau_{1,\ext} \tau_{2,\ext}^{-1}\right)\cdot (\mathrm{j}_{\tau_2,\tau_1}(x)). \tag{*4}\end{equation*} Ils vérifient que $$\forall \tau_1,\tau_2,\tau_3, \,\,\, \mathrm{j}_{\tau_3,\tau_1}=\mathrm{j}_{\tau_3,\tau_2}\circ \mathrm{j}_{\tau_2,\tau_1}.$$

	\begin{defiprop}
		Soit $\sigma \in \mathrm{W}_{\qp}^+$. Pour chaque extension finie $E^{\sep}|F|E$, on définit sur $\otimes_{\tau\in \mathcal{P},\, \mathcal{O}_K} \mathcal{O}_{\mathcal{F}_{\tau}}^+$ l'endomorphisme $$\sigma \cdot_{s\glec} \left(\otimes\, y_{\tau}\right)= \otimes\, \varphi_{\tau,q}^{d_{\sigma,\sigma^{-1}\tau}}\left(\mathrm{j}_{\tau,\sigma^{-1}\tau}(y_{\sigma^{-1}\tau})\right).$$ Bien que les $\mathrm{j}_{\tau,\sigma^{-1}\tau}$ ne soient pas $\mathcal{O}_K$-linéaires, ils sont tous $\sigma_{|K}$-semi-linéaires ce qui rend correcte la définition. Ces endomorphismes se complètent $(\pi,\underline{X})$-adiquement, passent à la colimite et se complètent $\pi$-adiquement en une action de $\mathrm{W}_{\qp}^+$ sur $\Oehatkand{\mathcal{P}}$. Son noyau est précisément $\mathrm{I}_K$.
	\end{defiprop}
	\begin{proof}
		Nous montrons en détail, pour cette fois-ci uniquement, qu'il s'agit d'un action sur le produit tensoriel. Soient $\sigma_1,\sigma_2 \in \mathrm{W}_{\qp}^+$ et $\otimes \, y_{\tau}$ appartenant au produit tensoriel. Alors,
		
		\begin{align*}
			(\sigma_2\sigma_1)\cdot_{s\glec}(\otimes \, y_{\tau}) & = \otimes \, \varphi_{\tau,q}^{d_{\sigma_2\sigma_1,\sigma_1^{-1} \sigma_2^{-1}\tau}}(\mathrm{j}_{\tau, \sigma_1^{-1} \sigma_2^{-1}\tau}(y_{\sigma_1^{-1} \sigma_2^{-1}\tau})) \\
			&= \otimes \, \varphi_{\tau,q}^{d_{\sigma_2, \sigma_2^{-1}\tau} +d_{\sigma_1, \sigma_1^{-1} \sigma_2^{-1}\tau}}\left(\left[\mathrm{j}_{\tau, \sigma_2^{-1}\tau} \circ \mathrm{j}_{\sigma_2^{-1}\tau,\sigma_1^{-1} \sigma_2^{-1}\tau} \right](y_{\sigma_1^{-1} \sigma_2^{-1}\tau})\right)) \\
			&=\otimes \, \varphi_{\tau,q}^{d_{\sigma_2, \sigma_2^{-1}\tau}}\left(\mathrm{j}_{\tau, \sigma_2^{-1}\tau} \left(\varphi_{\sigma_2^{-1}\tau,q}^{d_{\sigma_1, \sigma_1^{-1} \sigma_2^{-1}\tau}}\left( \mathrm{j}_{\sigma_2^{-1}\tau,\sigma_1^{-1} \sigma_2^{-1}\tau}\left(y_{\sigma_1^{-1} \sigma_2^{-1}\tau}\right)\right)\right)\right) \\
			&= \sigma_2 \cdot_{s\glec} \left( \otimes \, \varphi_{\tau,q}^{d_{\sigma_1,\sigma_1^{-1}\tau}}\left(\mathrm{j}_{\tau,\sigma_1^{-1}\tau}(y_{\sigma_1^{-1}\tau})\right)\right) \\
			&=\sigma_2\cdot_{s\glec} \left(\sigma_1 \cdot_{s\glec}(\otimes \, y_{\tau})\right) 
		\end{align*} où le passage à la deuxième ligne utilise les relation sur les $d$ et les $\mathrm{j}$ et où celui à la troisième utilise la \linebreak$\varphi_q^{\N}$-équivariance de $\mathrm{j}_{\tau_2,\tau_1}$.

		Prouvons à présent que le noyau sur les produits tensoriels est précisément $\mathrm{I}_{K}$. Soit $\sigma$ dans le noyau. En regardant l'action sur les $X_{\tau}$, on obtient que $\sigma$ agit trivialement sur $\mathcal{P}$, autrement dit que $\sigma \in \mathrm{W}_{K}^+$. Dans ce cas $(\sigma \tau)_{\ext}=\tau_{\ext}$ et on obtient que $g_{\sigma,\tau}=\tau_{\ext}^{-1} \sigma \tau_{\ext}$, en particulier que son degré absolu vaut $\deg \sigma$. Pour que chaque $d_{\sigma,\tau}$ soit nul, il faut donc que $\deg \sigma=0$.
		
		Puisque $\sfrac{\mathrm{W}_{\qp}^+}{\mathrm{I}_K}$ est discret, la continuité de l'action est automatique dès lors que l'on peut compléter.
	\end{proof}

	\begin{defiprop}
	 Considérons l'action de $(\Phi_{\mathcal{P},q} \times \cg_{K,\mathcal{P}})$ sur $\Oehatkand{\mathcal{P}}$ obtenue en tordant l'action définie à la section \ref{partie_equiv_lt} par l'automorphisme $$(\Phi_{\mathcal{P},q} \times \cg_{K,\mathcal{P}}) \xrightarrow{\sim} (\Phi_{\mathcal{P},q} \times \cg_{K,\mathcal{P}}), \,\,\, \left(\prod \varphi_{\tau,q}^{n_{\tau}},g_{\tau}\right) \rightarrow \left(\prod \varphi_{\tau,q}^{n_{\tau}}, \tau_{\ext} g_{\tau} \tau_{\ext}^{-1}\right).$$ Considérons également l'action $\cdot_{s\glec}$ de $\mathrm{W}_{\qp}^+$. Ensemble, elles fournissent une structure de $\tgglec$-anneau topologique pour la topologie faible que l'on note $\Oehatkand{s\glec}$. 
	 
	 Petite remarque, l'action est $\mathcal{O}_K$-semi-linéaire, au sens où $\mathcal{O}_{K,\mathrm{sl}} \rightarrow \Oehatkand{s\glec}$ est $\tgglec$-équivariant.
	\end{defiprop}
	\begin{proof}
		D'après les propriétés universelles des Proposition \ref{prop_prdsdirect} et \ref{psd_quotient_desc_1}, il faut vérifier deux conditions de compatiblité des actions. La compatibilité à $s\glec$ utilise que les $\mathrm{j}_{\tau_2,\tau_1}$ sont $\varphi_q^{\N}$-équivariants et vérifient (\ref{eqn:eq4}). Pour le passage au quotient, nous avons déjà énoncé que le Frobenius arithmétique $\mathrm{Frob}$ dans $\mathrm{W}_{K_{\gaal}}^+$ agit trivialement sur $\mathcal{P}$ et que $\forall \tau, \,\, d_{\mathrm{Frob},\tau}=f$. Ceci dit précisément que $\mathrm{Frob}$ agit comme $\varphi_{\mathcal{P},q}^{f}$.
	\end{proof}
	
	L'action de $\tgglec$ sur $\Oehatkand{s\glec}$ est fidèle mais nous ne l'utiliserons pas. Nous nous intéressons aux invariants par les deux sous-monoïdes des équivalences de Fontaines multivariables.
		
	\begin{prop}
		L'inclusion $\mathcal{O}_{K,\mathrm{sl}} \subset \Oehatkand{s\glec}^{\Phi_{\mathcal{P},q}}$ est une égalité de de $\cg_{K,\glec}$-anneaux topologiques.
	\end{prop}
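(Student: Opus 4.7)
The plan is to deduce the statement almost immediately from the Lubin–Tate multivariable equivalence (Condition 4 in the proof of Theorem \ref{equiv_lt}), and then to identify the residual $\cg_{K,\glec}$-action on $\mathcal{O}_K$ with the $\gal{K}{\qp}$-action. The crucial observation is that the $\Phi_{\mathcal{P},q}$-action on $\Oehatkand{s\glec}$ is \emph{not} affected by the twisting used to define $\Oehatkand{s\glec}$: only the $\cg_{K,\mathcal{P}}$-part of the multivariable action is conjugated by the $\tau_{\ext}$, and the new action $\cdot_{s\glec}$ is only used for the factor $\sfrac{\mathrm{W}_{\qp}^+}{\mathrm{I}_K}$. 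Thus the underlying $\Phi_{\mathcal{P},q}$-ring of $\Oehatkand{s\glec}$ coincides with that of $\Oehatkand{\mathcal{P}}$, and Condition 4 in the proof of Theorem \ref{equiv_lt} gives the set-theoretic equality $\Oehatkand{s\glec}^{\Phi_{\mathcal{P},q}}=\mathcal{O}_K$. For the topological side, Lemma \ref{topo_faibles} (point 3) shows that the topology induced by the faible topology on $\mathcal{O}_K$ is exactly the $\pi$-adic topology, which is precisely the one used on $\mathcal{O}_{K,\mathrm{sl}}$.

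Next I would identify the induced action of the quotient $\sfrac{\tgglec}{\Phi_{\mathcal{P},q}}\cong \cg_{K,\glec}$ on $\mathcal{O}_K$. Using the decomposition of $\cg_{K,\glec}$ as $\cg_{K,\mathcal{P}}\rtimes_{\glec\circ \mathrm{Per}} \gal{K}{\qp}$, it suffices to compute separately the action of $\cg_{K,\mathcal{P}}$ and of a system of lifts of $\gal{K}{\qp}$. For the $\cg_{K,\mathcal{P}}$-part, even after twisting by $g_{\tau}\mapsto \tau_{\ext}g_{\tau}\tau_{\ext}^{-1}$ the action remains $\mathcal{O}_K$-linear, hence trivial on the sub-$\mathcal{O}_K$-algebra $\mathcal{O}_K\subset \Oehatkand{s\glec}$. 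For a lift $\sigma\in \mathrm{W}_{\qp}^+$ of an element of $\gal{K}{\qp}$, the definition of $\cdot_{s\glec}$ combines $\mathcal{O}_K$-linear operators $\varphi_{\tau,q}^{d_{\sigma,\sigma^{-1}\tau}}$ and the maps $\mathrm{j}_{\tau,\sigma^{-1}\tau}$ which are $\sigma_{|K}$-semi-linear; hence $\sigma$ acts on the diagonally embedded $\mathcal{O}_K$ by $\sigma_{|K}$, which is exactly the action defining $\mathcal{O}_{K,\mathrm{sl}}$.

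The only slightly delicate point will be to verify that these two computations are compatible with the relation defining the semidirect product quotient, i.e. that an element of $\mathrm{W}_{K}^+$ seen through both the $\cg_{K,\mathcal{P}}$-inclusion (via $\kappa_{\glec}$ and the twist by $\tau_{\ext}$) and through $\mathrm{W}_{\qp}^+$ act identically on $\mathcal{O}_K$; this reduces to checking that $\mathrm{W}_K^+$ acts $\mathcal{O}_K$-linearly, which follows from the fact that $d_{\sigma,\tau}=\deg\sigma$ for $\sigma\in \mathrm{W}_K^+$ (observed during the construction of $\cdot_{s\glec}$) combined with the fact that $\mathrm{j}_{\tau,\tau}=\mathrm{Id}$. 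Once this compatibility is established, we obtain a $\cg_{K,\glec}$-equivariant isomorphism of topological rings $\mathcal{O}_{K,\mathrm{sl}}\xrightarrow{\sim}\Oehatkand{s\glec}^{\Phi_{\mathcal{P},q}}$, which is the desired statement.
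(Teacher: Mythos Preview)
Your proof is correct and follows essentially the same route as the paper: Condition~4 (from Théorème~\ref{equiv_car_zero} via Théorème~\ref{equiv_lt}) gives the underlying equality of topological rings, and one then identifies the residual action on $\mathcal{O}_K$. The only difference is that you recompute the $\cg_{K,\glec}$-action by hand on $\cg_{K,\mathcal{P}}$ and on $\mathrm{W}_{\qp}^+$, whereas the paper simply invokes the ``petite remarque'' of the preceding Définition/Proposition, namely that $\mathcal{O}_{K,\mathrm{sl}}\rightarrow \Oehatkand{s\glec}$ is already a morphism of $\tgglec$-anneaux; this makes your compatibility check for $\mathrm{W}_K^+$ redundant, since the well-definedness of the $\tgglec$-action has been established earlier and automatically descends to the quotient by $\Phi_{\mathcal{P},q}$.
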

	\begin{proof}
		Combiner la condition 4 dans la preuve du Théorème \ref{equiv_car_zero} et la semi-$\mathcal{O}_K$-linéarité de l'action.
	\end{proof}

	\begin{defi}
		Nous définissons le $\tglec$-anneau topologique $$\Oekand{s\glec}:= \Oehatkand{s\glec}^{\ch_{K,\lt,\mathcal{P}}}.$$
	\end{defi}
	
	\begin{prop}
		Le $\left(\Phi_{\mathcal{P},q}\times \Gamma_{K,\lt,\mathcal{P}}\right)$-anneau topologique sous-jacent à $\Oekand{\plec}$ coïncide avec $\Oekand{\mathcal{P}}$ construit à la section \ref{partie_equiv_lt}. En reprenant sa description à la remarque \ref{descr_oekand}, l'action continue et semi-linéaire de $\tgglec$ est caractérisée par $$\forall \tau, \,\,\, \varphi_{\tau,q}(X_{\tau})=\tau(\rf)(X_{\tau}),$$
		$$ \forall x=(x_{\tau})\in \Gamma_{K,\lt,\mathcal{P}}, \,\, \forall \tau', \,\,\, x \cdot X_{\tau'}=[x_{\tau'}]_{\lt,\tau'(\rf)}(X_{\tau'})$$ $$\text{et }\,\, \forall \sigma \in \mathrm{W}_{\qp}^+, \,\forall \tau, \,\,\, [\sigma]\cdot X_{\tau}= (\sigma\tau)(\rf)^{d_{\sigma,\tau}}\left(X_{\sigma\tau}\right).$$
	\end{prop}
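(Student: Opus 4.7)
The plan is to prove the proposition by carefully unpacking the definitions built up in the preceding pages.

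First, I would identify the underlying $(\Phi_{\mathcal{P},q}\times \Gamma_{K,\lt,\mathcal{P}})$-anneau topologique of $\Oekand{s\glec}$ with $\Oekand{\mathcal{P}}$ from Chapter \ref{partie_equiv_lt}. This is almost a verbatim copy of Proposition \ref{identif_tgplec}: the $\Phi_{\mathcal{P},q}$-action on $\Oehatkand{s\glec}$ was defined to coincide with the one on $\Oehatkand{\mathcal{P}}$ (the twist involved only the $\cg_{K,\mathcal{P}}$ factor), and although the $\cg_{K,\mathcal{P}}$-action is twisted by $(g_\tau)\mapsto (\tau_{\ext} g_\tau \tau_{\ext}^{-1})$, this conjugation sends $\cg_{K_{\lt,\rf}}$ to $\cg_{K_{\lt,\tau(\rf)}}$, which is precisely the $\tau$-th factor of $\ch_{K,\lt,\mathcal{P}}$ in the present setup; hence the subgroup $\ch_{K,\lt,\mathcal{P}}$ is preserved and the induced action on the quotient $\Gamma_{K,\lt,\mathcal{P}}$ is the tautological one. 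This yields the identification of rings and, as corollaries, the first two displayed formulas.

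Next, I would compute the third formula by direct unpacking. For a fixed plongement $\tau$, the element $X_\tau\in\Oehatkand{s\glec}$ is the image of the generator of the $\tau$-th tensor factor. By the very definition of $\cdot_{s\glec}$,
\[
\sigma\cdot_{s\glec} X_\tau = \varphi_{\sigma\tau,q}^{d_{\sigma,\tau}}\bigl(\mathrm{j}_{\sigma\tau,\tau}(X_\tau)\bigr).
\]
The key step is the identity $\mathrm{j}_{\sigma\tau,\tau}(X_\tau) = X_{\sigma\tau}$. This follows from the definition $\mathrm{j}_{\tau_2,\tau_1}=\mathrm{W}_{\mathcal{O}_K}(\tau_{2,\ext}^\flat)\circ \mathrm{W}_{\mathcal{O}_K}(\tau_{1,\ext}^\flat)^{-1}$ together with the commutative diagram recalled just before that definition: the map $\mathrm{W}_{\mathcal{O}_K}(\tau_{\ext}^\flat)^{-1}$ sends $X_\tau$ to the original variable $X$ attached to $(\pi,\rf,\pi^\flat)$, and then $\mathrm{W}_{\mathcal{O}_K}((\sigma\tau)_{\ext}^\flat)$ sends $X$ to $X_{\sigma\tau}$. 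Substituting and using the first bullet $\varphi_{\sigma\tau,q}(X_{\sigma\tau}) = (\sigma\tau)(\rf)(X_{\sigma\tau})$, iteration gives
\[
\sigma\cdot_{s\glec} X_\tau = (\sigma\tau)(\rf)^{\circ d_{\sigma,\tau}}(X_{\sigma\tau}),
\]
which is the claimed formula.

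The main subtlety, though more bookkeeping than obstacle, is tracking the $\sigma|_K$-semi-linearity: the $\mathrm{j}_{\tau_2,\tau_1}$ are not $\mathcal{O}_K$-linear but $(\tau_2\tau_1^{-1})$-semi-linear, and one must check that this fits consistently across all the tensor factors when $\sigma\cdot_{s\glec}$ is extended to the completed product. This works because in the tensor-factor formula $\otimes\, \varphi_{\tau,q}^{d_{\sigma,\sigma^{-1}\tau}}(\mathrm{j}_{\tau,\sigma^{-1}\tau}(y_{\sigma^{-1}\tau}))$, every $\mathrm{j}_{\tau,\sigma^{-1}\tau}$ is $\sigma|_K$-semi-linear with the \emph{same} twist, so the global map is well-defined $\sigma|_K$-semi-linearly on $\mathcal{O}_K\otimes_{\mathcal{O}_K}\bigl(\bigotimes_{\mathcal{O}_K}\cdots\bigr)$ and passes to the completion. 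Continuity and the fact that the action factors through $\sfrac{\mathrm{W}_{\qp}^+}{\mathrm{I}_K}$ are already established in the defining proposition, so nothing further is needed; the proof reduces to the three verifications above.
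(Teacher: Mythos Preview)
Your proposal is correct and follows essentially the same approach as the paper: the paper's proof is the single line ``Similaire à la Proposition \ref{identif_tgplec} en utilisant que $\mathrm{j}_{\tau_2,\tau_1}(X_{\tau_1})=X_{\tau_2}$,'' and you have spelled out precisely these two ingredients --- the reduction to Proposition \ref{identif_tgplec} for the underlying $(\Phi_{\mathcal{P},q}\times\Gamma_{K,\lt,\mathcal{P}})$-structure, and the identity $\mathrm{j}_{\sigma\tau,\tau}(X_\tau)=X_{\sigma\tau}$ (with its justification via the commutative diagram defining $\mathrm{j}_{\tau_2,\tau_1}$) to handle the $\mathrm{W}_{\qp}^+$-action.
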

	\begin{proof}
	Similaire à la Proposition \ref{identif_tgplec} en utilisant que $\mathrm{j}_{\tau_2,\tau_1}(X_{\tau_1})=X_{\tau_2}$.
	\end{proof}
	
	Une fois les anneaux construits correctement, nous obtenons gratuitement l'équivalence de Fontaine glectique semi-linéaire.
	
	\begin{theo}\label{equiv_sglec}
		Les foncteurs $$\mathbb{D}_{s\glec,\lt}\, : \,s\mathrm{Rep}_{\mathcal{O}_K} \cg_{K,\glec} \rightarrow \dmod{\tglec}{\Oekand{s\glec}}, \,\,\, V\mapsto \left(\Oehatkand{s\glec} \otimes_{\mathcal{O}_K} V\right)^{\ch_{K,\lt,\mathcal{P}}}$$
		
		$$\mathbb{V}_{s\glec,\lt}\, : \, \cdetaledvproj{\tglec}{\Oekand{s\glec}}{\pi} \rightarrow \dmod{\cg_{K,\glec}}{\mathcal{O}_K}, \,\,\, D\mapsto \left( \Oehatkand{s\glec} \otimes_{\Oekand{s\glec}} D\right)^{\Phi_{\mathcal{P},q}},$$ où les topologies en jeu sont respectivement la topologie $\pi$-adique sur $\mathcal{O}_K$ et la topologie adique faible sur $\Oekand{s\glec}$, sont correctement définis, lax monoïdaux et fermés. Leurs images essentielles sont contenues respectivement dans $\cdetaledvproj{\tglec}{\Oekand{s\glec}}{\pi}$ et $s\mathrm{Rep}_{\mathcal{O}_K} \cg_{K,\glec}$. Leurs corestrictions forment une paire de foncteurs quasi-inverses.
	\end{theo}
	
	\begin{rem}
		Nous avons ici une action du sous-monoïde $\sfrac{\mathrm{W}_{\qp}^+}{\mathrm{I}_K}$ qui est une extension en générale non scindée de $\gal{K}{\qp}$ par $\varphi_q^{\N}$.
		
		Dans le cas où $K$ n'est pas galoisienne, il existe encore des variables cachées dans $\mathrm{W}_{\mathcal{O}_{K_{\gaal}}}(\mathbb{C}_p^{\flat})$. Nous espérons une équivalence de Fontaine semi-linéaire glectique pour $s\mathrm{Rep}_{\mathcal{O}_{K_{\gaal}}} \cg_{K,\glec}$ en considérant des anneaux du type $$\bigotimes_{\tau \in \mathcal{P}, \, \mathcal{O}_{K_{\gaal}}} \mathcal{O}_{K_{\gaal}}\otimes_{\mathcal{O}_{K_{\tau}}} \mathcal{O}_{\mathcal{E}_{K_{\tau}}}.$$ Il faudrait refaire une série de définitions et de lemmes concernant les invariants, ce que nous laissons de côté dans ce présent article.
	\end{rem}
	
	\vspace{0.75cm}
	
	\subsection{\'Equivalence de Fontaine glectique}

	Dans le cas général d'un corps local $p$-adique, nous pouvons cependant construire une (voire trois) équivalences pour les représentations linéaires de $\cg_{K,\glec}$. Toutes les preuves absentes sont similaires à celles du cas semi-linéaire.

	\begin{defiprop}
		Soit $\sigma \in \mathrm{W}_{\qp}^+$. Pour chaque extension finie $E^{\sep}|F|E$ on définit sur $\otimes_{\tau\in \mathcal{P},\, \mathcal{O}_K} \mathcal{O}_{\mathcal{F}_{\tau}}^+$ l'endomorphisme $$\sigma \cdot_{\glec} \left(\otimes\, y_{\tau}\right)= \otimes\, \varphi_{\tau,q}^{d_{\sigma,\sigma^{-1}\tau}}\left(i_{\tau,\sigma^{-1}\tau}(y_{\sigma^{-1}\tau})\right).$$ Ces endomorphismes se complètent $(\pi,\underline{X})$-adiquement, passent à la colimite et se complètent $\pi$-adiquement en une action de $\mathrm{W}_{\qp}^+$ sur $\Oehatkand{\mathcal{P}}$. Son noyau est précisément $\mathrm{I}_{K_{\gaal}}$.
	\end{defiprop}

	\begin{defiprop}
		L'action de $(\Phi_{\mathcal{P},q}\times \cg_{K,\mathcal{P}})$ sur $\Oehatkand{\mathcal{P}}$  et l'action de $\glec$ de $\sfrac{\mathrm{W}_{\qp}^+}{\mathrm{I}_{K_{\gaal}}}$ sont continues pour la topologie adique faible fournissent une structure de $\tgglec$-anneau topologique que l'on note $\Oehatkand{\glec}$.
	\end{defiprop}

	\begin{prop}
	 L'inclusion $$\mathcal{O}_K \subset \mathcal{O}_{\widehat{\mathcal{E}_{K,\glec}}}^{\Phi_{\mathcal{P},q}}$$ est une égalité.
	\end{prop}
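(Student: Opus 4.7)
Le plan est de se ramener directement à un calcul d'invariants déjà effectué, en observant que le passage de $\Oehatkand{\mathcal{P}}$ à $\Oehatkand{\glec}$ n'affecte pas l'action du sous-monoïde $\Phi_{\mathcal{P},q}$.

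Premièrement, je rappellerais que l'anneau sous-jacent à $\Oehatkand{\glec}$ coïncide avec celui sous-jacent à $\Oehatkand{\mathcal{P}}$ introduit dans la Définition \ref{defi_oedk}, seule la structure de monoïde agissant changeant. L'action de $\tgglec$ est construite à partir de trois ingrédients : l'action naturelle de $\Phi_{\mathcal{P},q}$ héritée du chapitre \ref{partie_equiv_lt}, une action de $\cg_{K,\mathcal{P}}$ tordue par l'automorphisme $g_{\tau}\mapsto \tau_{\ext}g_{\tau}\tau_{\ext}^{-1}$, et l'action $\cdot_{\glec}$ de $\sfrac{\mathrm{W}_{\qp}^+}{\mathrm{I}_{K_{\gaal}}}$.

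Deuxièmement, j'observerais que le tordage intervenant dans la construction de l'action de $(\Phi_{\mathcal{P},q}\times \cg_{K,\mathcal{P}})$ est trivial sur le facteur $\Phi_{\mathcal{P},q}$ : l'automorphisme écrit plus haut envoie $\prod \varphi_{\tau,q}^{n_{\tau}}$ sur lui-même. Ainsi, la restriction au sous-monoïde $\Phi_{\mathcal{P},q}<\tgglec$ de l'action sur $\Oehatkand{\glec}$ coïncide exactement avec l'action de $\Phi_{\mathcal{P},q}$ sur $\Oehatkand{\mathcal{P}}$ du chapitre \ref{partie_equiv_lt}. L'action additionnelle de $\sfrac{\mathrm{W}_{\qp}^+}{\mathrm{I}_{K_{\gaal}}}$ n'intervient pas dans le calcul des $\Phi_{\mathcal{P},q}$-invariants.

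Troisièmement, j'invoquerais la condition 4 vérifiée dans la démonstration du Théorème \ref{equiv_car_zero} appliquée pour $r=q$ et $\Delta=\mathcal{P}$, qui affirme précisément que $\Oehatkand{\mathcal{P}}^{\Phi_{\mathcal{P},q}}=\mathcal{O}_K$ comme $\mathcal{O}_K$-algèbres topologiques (la coïncidence topologique étant automatique puisque la topologie $\pi$-adique sur $\mathcal{O}_K$ est induite par la topologie faible sur $\Oehatkand{\mathcal{P}}$, cf.\ le Lemme \ref{topo_faibles}). L'égalité de l'énoncé s'ensuit immédiatement. La difficulté principale est essentiellement inexistante : le seul point à vérifier est que la troncature sur le facteur $\Phi_{\mathcal{P},q}$ du tordage est effectivement triviale, ce qui est une lecture directe de la définition. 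Cette preuve mime d'ailleurs celle du cas plectique déjà traité plus haut.
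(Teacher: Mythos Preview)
Votre preuve est correcte et suit exactement l'approche (implicite) de l'article, qui renvoie au cas semi-linéaire et, in fine, à la condition 4 de la preuve du Théorème \ref{equiv_car_zero}. Un point de description à rectifier toutefois : dans la construction de $\Oehatkand{\glec}$ (contrairement à $\Oehatkand{s\glec}$), l'action de $(\Phi_{\mathcal{P},q}\times \cg_{K,\mathcal{P}})$ n'est \emph{pas} tordue --- c'est directement celle du chapitre \ref{partie_equiv_lt}. Votre observation selon laquelle le tordage serait trivial sur $\Phi_{\mathcal{P},q}$ est donc superflue ici (elle serait pertinente pour $\Oehatkand{s\glec}$), mais cela ne change rien à la validité de l'argument puisque votre conclusion --- l'action de $\Phi_{\mathcal{P},q}$ sur $\Oehatkand{\glec}$ coïncide avec celle sur $\Oehatkand{\mathcal{P}}$ --- reste vraie, et même plus immédiatement.
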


	\begin{defiprop}
		Nous définissons le $\tglec$-anneau topologique $$\Oekand{\glec}:= \Oehatkand{\glec}^{\ch_{K,\lt,\mathcal{P}}}.$$
		
		Le $\left(\Phi_{\mathcal{P},q}\times \Gamma_{K,\lt,\mathcal{P}}\right)$-anneau topologique sous-jacent à $\Oekand{\glec}$ coïncide avec $\Oekand{\mathcal{P}}$ construit à la section \ref{partie_equiv_lt}. En reprenant sa description à la remarque \ref{descr_oekand}, l'action continue et semi-linéaire de $\tgglec$ est caractérisée par $$\forall \tau, \,\,\, \varphi_{\tau,q}(X_{\tau})=\rf(X_{\tau}),$$
		$$\forall \tau', \, \forall x=(x_{\tau})_{\tau}\in \Gamma_{K,\lt,\mathcal{P}}, \,\,\, x \cdot X_{\tau'}=[x_{\tau'}]_{\lt,\rf}(X_{\tau'})$$ $$\text{et }\,\, \forall \sigma \in \mathrm{W}_{\qp}^+, \,\forall \tau, \,\,\, [\sigma]\cdot X_{\tau}= \rf^{d_{\sigma,\tau}}\left(X_{\sigma\tau}\right).$$
	\end{defiprop}

	\begin{theo}\label{equiv_glec}
		Les foncteurs $$\mathbb{D}_{\glec,\lt}\, : \,\mathrm{Rep}_{\mathcal{O}_K} \cg_{K,\glec} \rightarrow \dmod{\tglec}{\Oekand{\glec}}, \,\,\, V\mapsto \left(\Oehatkand{\glec} \otimes_{\mathcal{O}_K} V\right)^{\ch_{K,\lt,\mathcal{P}}}$$
		
		$$\mathbb{V}_{\glec,\lt}\, : \, \cdetaledvproj{\tglec}{\Oekand{\glec}}{\pi} \rightarrow \dmod{\cg_{K,\glec}}{\mathcal{O}_K}, \,\,\, D\mapsto \left( \Oehatkand{\glec} \otimes_{\Oekand{\glec}} D\right)^{\Phi_{\Delta,q}},$$ où les topologies en jeu sont respectivement la topologie $\pi$-adique sur $\mathcal{O}_K$ et la topologie adique faible sur $\Oekand{\glec}$, sont correctement définis, lax monoïdaux et fermés. Leurs images essentielles sont contenues respectivement dans $\cdetaledvproj{\tglec}{\Oekand{\glec}}{\pi}$ et $\mathrm{Rep}_{\mathcal{O}_K} \cg_{K,\glec}$. Leurs corestrictions forment une paire de foncteurs quasi-inverses.
	\end{theo}
	
	Le monoïde $\tglec$ est intéressant car ses deux sous-monoïdes $\Phi_{\mathcal{P},q}$ et $\sfrac{\mathrm{W}_{\qp}^+}{\mathrm{I}_{K_{\gaal}}}$ contiennent $\varphi_{\mathcal{P},q}^{f_{\gaal}\N}$ comme sous-monoïde distingué sans que la suite exacte déduite soit scindée. Dans le cas non ramifié où $K=\mathbb{Q}_q$, on a simplement rajouté à $(\Phi_{\mathcal{P},q}\times \prod \mathbb{Z}_q^{\times})$ une racine $f$-ième de $\varphi_{\mathcal{P},q}$. 
	
	\begin{rem} Toujours dans le cas $K=\mathbb{Q}_q$, nous pouvons faire un lien entre notre $\tglec$-anneau et l'anneau de coefficients $A$ dans \cite{breuilandco_phi}. Choisissons $\rf=T^q+\pi T$ et voyons $(\varphi_p^{\N}\times \mathbb{F}_q^{\times})$ comme sous-monoïde de $\tglec$ en identifiant $\varphi_p^{\N}$ à $\sfrac{\mathrm{W}_{\qp}^+}{\mathrm{I}_{\mathbb{Q}_q}}$ et en plongeant $\mathbb{F}_q^{\times}$ dans $\prod_{0\leq i<f} \mathbb{Z}_q^{\times}$ par les représentants de Teichmüller $x\mapsto ([x])_i$. Alors, le $(\varphi_p^{\N}\times \mathbb{F}_q^{\times})$-anneau $\sfrac{\Oekand{\glec}^+}{\pi\Oekand{\glec}^+}$ s'identifie à un sous-$(\varphi_p^{\N}\times \mathbb{F}_q^{\times})$-anneau de $A$ en envoyant $X_{\mathrm{Frob}^i}$ sur $Y_{f-1_i}^{p^i}$. Cela ouvre des perspectives quant au lien entre représentations galoisiennes et $(\varphi,\Gamma)$-modules glectiques.
	\end{rem}

	\begin{rem}
		Nous esquissons une autre équivalence aux saveurs glectiques dont la preuve suivrait trait pour trait celle qui précède. Nous changeons légèrement l'action du monoïde $\mathrm{W}_{\qp}^+$ sur $\Oehatkand{\mathcal{P}}$ en décrétant que $$\sigma(\otimes \, y_{\tau})=\otimes \left(\varphi_{\tau,q}^{d_{\sigma,\sigma^{-1}\tau}} \circ g_{\sigma,\sigma^{-1}\tau}\circ i_{\tau,\sigma^{-1}\tau}\right)(y_{\sigma^{-1}}).$$ Autrement dit, au lieu de faire tourner les plongements en se souvenant des problèmes de degrés, on se souvient en plus de toute l'information l'élément $g_{\sigma,\tau}$. En identifiant l'action de $\cg_{K,\mathcal{P}}$ à une action de $\prod_{\mathcal{P}} \cg_{K_{\tau}}$ par conjugaison par chacun des $\tau_{\ext}$, les deux actions s'assemblent en action continue du monoïde $$\tggloc:=\quot{\left(\left(\Phi_{\mathcal{P},q}\times \prod_{\mathcal{P}} \cg_{K_{\tau}}\right) \rtimes_{\glec'} \mathrm{W}_{\qp}^+\right)}{\tsim \mathrm{W}_{K_{\gaal}}^+ \tsim}$$  avec $$\glec'(\sigma)\left(\prod \varphi_{\tau,q}^{n_{\tau}},g_{\tau}\right)=\left(\prod \varphi_{\tau,q}^{n_{\sigma^{-1}\tau}}, \sigma \, g_{\sigma^{-1}\tau}\, \sigma^{-1}\right)$$ et l'identification via le morphisme $\kappa_{\glec}'(\sigma)=\left(\varphi_{\mathcal{P},q}^{\sfrac{\deg \sigma}{f}}, \sigma, \ldots, \sigma \right)$. Autrement dit, nous faisons tourner les $\cg_{K_{\tau}}$ par la conjugaison par $\mathrm{W}_{\qp}^+$ et nous identifions ceux de $\mathrm{W}_{K_{\gaal}}^+$ à une puissance convenable du Frobenius $\varphi_{\mathcal{P},q}$ et à la diagonale dans $\prod \cg_{K_{\tau}}$. Nous pouvons à nouveau vérifier que les morphismes $(\Phi_{\mathcal{P},q}\times \prod \cg_{K_{\tau}})$ et $\mathrm{W}_{\qp}^+$ sont des plongements de monoïdes topologiques. Nous notons $\Oehatkand{\glec'}$ le $\tggloc$-anneau topologique obtenu.
		
		Le quotient par $\Phi_{\mathcal{P},q}$ s'identifie à $\sfrac{\left((\prod_{\mathcal{P}}\cg_{K_{\tau}})\rtimes \mathrm{W}_{\qp}^+\right)}{\tsim \mathrm{W}_{K_{\gaal}}^+\tsim}$. Après quotient, $\mathrm{W}_{\qp}^+$ s'injecte encore, mais la topologie induite est désormais celle induite par $\cg_{\qp}$. Grâce à cette remarque, on peut établir que le morphisme $\sfrac{\tggloc}{\Phi_{\mathcal{P},q}}\rightarrow\cg_{K,\glec}$ induit par $\mathrm{W}_{\qp}^+\subset \cg_{\qp}$ est un isomorphisme de monoïdes topologiques.
		
		Le quotient par $\tgloc:=\sfrac{\tggloc}{\ch_{K,\lt,\mathcal{P}}}$ s'identifie à $$\quot{\left(\left(\Phi_{\mathcal{P},q}\times \prod_{\mathcal{P}} \mathcal{O}_{K_{\tau}}^{\times}\right) \rtimes_{\glec'} \quot{\mathrm{W}_{\qp}^+}{\cg_{K_{\mathcal{P}}^{\mathrm{ab}}}}\right)}{\tsim \sfrac{\mathrm{W}_{K_{\gaal}}^+}{\cg_{K_{\mathcal{P}}^{\mathrm{ab}}}}\tsim}$$ où nous avons $$\glec'(\sigma)\left(\prod \varphi_{\tau,q}^{n_{\tau}},x_{\tau}\right)=\left(\prod \varphi_{\tau,q}^{n_{\sigma^{-1}\tau}}, \sigma(x_{\sigma^{-1}\tau})\right)$$ et $$\kappa_{\glec'}(\sigma\cg_{K_{\mathcal{P}}^{\mathrm{ab}}})= (\varphi_{\mathcal{P},q}^{\sfrac{\deg\sigma}{f}}, \mathrm{Art}_{K_{\tau}}^{-1}(\sigma)).$$ Ici, $K_{\mathcal{P}}^{\mathrm{ab}}$ est l'extension composée des $K_{\tau}^{\mathrm{ab}}$ qui contient donc $K^{\mathrm{nr}}$ et correspond au sous-groupe $$\{x \in \mathcal{O}_{K_{\gaal}}^{\times} \, |\, \forall \tau, \,\,\, \mathrm{N}_{K_{\gaal}|K_{\tau}}(x)=1\} \triangleleft \mathcal{O}_{K_{\gaal}}^{\times}$$ par la théorie du corps de classes locale.
		
		En définissant $\Oekand{\glec'}= \Oehatkand{\glec'}^{\ch_{K,\lt,\mathcal{P}}}$, nous obtenons à nouveau une équivalence de catégories $$\mathrm{Rep}_{\mathcal{O}_K} \cg_{K,\glec} \rightleftarrows \cdetaledvproj{\tgloc}{\Oekand{\glec'}}{\pi}.$$ De plus, l'action de $\sigma\in \mathrm{W}_{\qp}^+$ sur $\Oekand{\glec'}$ est l'unique morphisme de $\mathcal{O}_K$-algèbres topologiques tel que $$\forall \tau \in \mathcal{P}, \,\,\, \sigma \cdot X_{\tau}=[ \tau_{\ext}^{-1}\left(\mathrm{Art}_{K_{\tau}}^{-1}(\sigma)\right)]_{\lt,\rf}(X_{\sigma\tau}).$$ Notons que $\tau_{\ext}^{-1}\left(\mathrm{Art}_{K_{\tau}}^{-1}(\sigma)\right)=\mathrm{Art}_K^{-1}(\tau_{\ext}^{-1} \sigma \tau_{\ext})$.
	\end{rem}

	\vspace{1.5cm}
	\appendix
	\section{\'Etude détaillée des anneaux $\widetilde{E}_{\Delta}$, $E_{\Delta,p}^{\sep}$, etc}\label{annexe_algèbre}

	\vspace{0.75cm}
	\subsection{Plongement dans les anneaux de séries de Hahn-Mal'cev multivariables}

	La plupart des propriétés des anneaux $\widetilde{F}_{\Delta,q}$ sont déduites de propriétés des anneaux de séries de Hahn-Mal'cev multivariables que nous introduisons ici. Dans le cas univariable, l'étude des corps maximalement valués dans \cite{kaplansky} démontre que les anneaux d'entiers de notre corps perfectoïde $\widetilde{E}$ et de ses extensions se plongent dans un anneau de séries formelles généralisées $k^{\mathrm{alg}}\llbracket t^{\R} \rrbracket$. Nous introduisons un analogue multivariable, démontrons un plongement similaire et l'étudions de manière fine pour en déduire les propriétés de $\widetilde{F}_{\Delta,q}$.

		\begin{defi}
		Soit $A$ un anneau et $(\Gamma_{\alpha})_{\alpha \in \Delta}$ une famille finie de groupes abéliens totalement ordonnés. Définissons \textit{l'anneau des séries de Hahn-Mal'cev multivariables} associé par $$A\left\llbracket t_{\alpha}^{\Gamma_{\alpha}} \, |\, \alpha \in \Delta \right\rrbracket = \left\{ \sum_{\underline{\gamma} \in \prod_{\alpha \in \Delta} \Gamma_{\alpha,\geq 0}} a_{\underline{\gamma}} t^{\underline{\gamma}} \,\,\Bigg|\,\,\substack{\forall \underline{\delta} \in \prod_{\alpha \in \Delta} \Gamma_{\alpha,\geq 0},\, \forall \alpha \in \Delta, \,\, \text{l'ensemble} \\ \left\{ \gamma_{\alpha}\leq \delta_{\alpha} \, |\, \exists \underline{\gamma'}\in \prod_{\beta\neq \alpha} \Gamma_{\beta,\geq 0,\leq \delta_{\beta}}, \,\,\, a_{(\gamma_{\alpha},\underline{\gamma'})}\neq 0 \right\} \\ \text{est}\,\,\, \text{bien} \,\, \text{ordonn\'e}} \right\}.$$ 
		\end{defi}
		
		\begin{prop}
			Les applications $$A\rightarrow A\left\llbracket t_{\alpha}^{\Gamma_{\alpha}} \, |\, \alpha \in \Delta \right\rrbracket, \,\,\, a\mapsto a t^{\underline{0}},$$
			
			$$\left( \sum a_{\underline{\gamma}} t^{\underline{\gamma}} \right) + \left( \sum b_{\underline{\gamma}} t^{\underline{\gamma}} \right) := \left( \sum \left(a_{\underline{\gamma}}+b_{\underline{\gamma}} \right) t^{\underline{\gamma}} \right)$$
			
			$$\text{et} \,\,\, \left( \sum a_{\underline{\gamma}} t^{\underline{\gamma}} \right) \times \left( \sum b_{\underline{\gamma}} t^{\underline{\gamma}} \right) := \left( \sum_{\underline{\gamma}\in \prod \Gamma_{\alpha,\geq 0}} \left(\sum_{\substack{\underline{\gamma_1}, \underline{\gamma_2}\in \prod \Gamma_{\alpha,\geq 0}\\ \underline{\gamma_1}+\underline{\gamma_2}=\underline{\gamma}}} a_{\underline{\gamma_1}} b_{\underline{\gamma_2}}\right) t^{\underline{\gamma}} \right)$$ munissent $A\left\llbracket t_{\alpha}^{\Gamma_{\alpha}} \, |\, \alpha \in \Delta \right\rrbracket$ d'une structure de $A$-algèbre. Elle est séparée et complète par rapport à la famille d'idéaux $(t_{\alpha}^{\gamma_{\alpha}} \,|\, \alpha \in \Delta)$ pour $\underline{\gamma}\in \prod_{\alpha \in \Delta} \Gamma_{\alpha,\geq 0}$. Elle est intègre (resp. réduite) dès que $A$ est intègre (resp. réduit).
		\end{prop}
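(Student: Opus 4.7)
La démonstration se décompose en quatre étapes. Le point central, que je considère comme l'obstacle principal, est la bonne définition de la multiplication : établir que chaque coefficient du produit est une somme finie et que la condition de bon ordre par axe dans chaque boîte est préservée. Les axiomes d'anneau, la séparation--complétude, et l'intégrité ou la réduction en découleront par des arguments standards modulo cette première étape.

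Pour la finitude du coefficient $c_{\underline{\gamma}}$ du produit, je raisonne par l'absurde. Les paires $(\underline{\gamma_1}, \underline{\gamma_2})$ sommant à $\underline{\gamma}$ avec $a_{\underline{\gamma_1}} \neq 0$ et $b_{\underline{\gamma_2}} \neq 0$ appartiennent toutes à la boîte $[0, \underline{\gamma}]$. S'il en existait une infinité de distinctes, par l'hypothèse de bon ordre par axe dans cette boîte et par extraction diagonale sur les $|\Delta|$ axes (toute suite dans un ensemble bien ordonné admet une sous-suite non-décroissante), je me ramènerais à $(\underline{\gamma_1}^{(n)})$ non-décroissante composante par composante ; alors $(\underline{\gamma_2}^{(n)}) = \underline{\gamma} - (\underline{\gamma_1}^{(n)})$ est non-croissante composante par composante dans un ensemble bien ordonné, donc stationnaire, d'où $(\underline{\gamma_1}^{(n)})$ aussi, contredisant la distinction supposée. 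La condition de bon ordre par axe du produit en découle : la projection sur un axe $\alpha$ du support du produit dans une boîte est contenue dans la somme des projections correspondantes des supports des facteurs, et la somme de deux parties bien ordonnées d'un groupe abélien totalement ordonné est elle-même bien ordonnée par un argument d'extraction analogue.

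Les axiomes d'anneau se vérifient coefficient par coefficient, toute manipulation de sommes étant licite grâce à la finitude établie. La séparation pour la famille des idéaux $(t_\alpha^{\gamma_\alpha} \mid \alpha \in \Delta)$ est immédiate : un élément non nul avec coefficient non nul en $\underline{\gamma_0}$ n'appartient pas à l'idéal associé à un $\underline{\gamma}$ satisfaisant $\gamma_\alpha > \gamma_{0,\alpha}$ pour tout $\alpha$. La complétude résulte de ce qu'une suite de Cauchy stabilise ses coefficients sur chaque boîte bornée, la série limite héritant de la condition de bon ordre grâce à cette stabilisation locale.

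Pour l'intégrité et la réduction, j'adapte la technique classique du lex-minimum en exploitant que l'ordre lexicographique sur $\prod_\alpha \Gamma_\alpha$ en fait un groupe abélien totalement ordonné compatible avec l'addition. Pour $x, y$ non nuls, je fixe $\underline{\gamma_x} \in \mathrm{supp}(x)$ et $\underline{\gamma_y} \in \mathrm{supp}(y)$, et je considère $\underline{\gamma_1^*}$ (resp. $\underline{\gamma_2^*}$) le lex-minimum de $\mathrm{supp}(x) \cap [0, \underline{\gamma_x}]$ (resp. $\mathrm{supp}(y) \cap [0, \underline{\gamma_y}]$), dont l'existence est garantie par l'inclusion dans un produit fini d'ensembles bien ordonnés, lui-même lex-bien ordonné. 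La contribution de la paire $(\underline{\gamma_1^*}, \underline{\gamma_2^*})$ au coefficient du produit en $\underline{\gamma_1^*} + \underline{\gamma_2^*}$ vaut $a_{\underline{\gamma_1^*}} b_{\underline{\gamma_2^*}} \neq 0$ (intégrité de $A$). La difficulté technique à traiter est d'écarter les éventuelles contributions provenant de paires $(\underline{\eta_1}, \underline{\eta_2})$ où $\underline{\eta_i}$ est incomparable à $\underline{\gamma_i^*}$ pour l'ordre composante par composante ou sort de la boîte initiale ; cela demande un choix judicieux des représentants (typiquement des éléments componentwise-minimaux, dont l'existence se démontre par descente finie dans la boîte $[0, \underline{\gamma_x}]$ grâce au bon ordre par axe) et une analyse fine de la compatibilité entre l'ordre lex et la structure de boîte. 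Pour la réduction, le même raisonnement appliqué à $x^n$ produit $a_{\underline{\gamma_1^*}}^n$ comme contribution au coefficient en $n \underline{\gamma_1^*}$, non nulle si $A$ est réduit.
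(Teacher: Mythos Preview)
L'article laisse cette preuve aux lecteurs ; j'évalue donc ta proposition pour elle-même. Ton traitement de la bonne définition de la multiplication (finitude des coefficients par extraction diagonale, stabilité du bon ordre des projections via somme de parties bien ordonnées) est correct, de même que les esquisses pour les axiomes d'anneau, la séparation et la complétude.

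La lacune réelle est l'intégrité, que tu reconnais toi-même. Le lex-minimum de $\mathrm{supp}(x)\cap[0,\underline{\gamma_x}]$ ne contrôle pas les décompositions $\underline{\eta_1}+\underline{\eta_2}=\underline{\gamma_1^*}+\underline{\gamma_2^*}$ avec $\underline{\eta_1}\notin[0,\underline{\gamma_x}]$, et ta piste des éléments componentwise-minimaux ne suffit pas : pour $x=\sum_{n\ge 1} t_1^{1/n}t_2^{n}$ (qui satisfait bien la condition de boîte), chaque point du support est componentwise-minimal, il y en a une infinité deux à deux incomparables, et $\mathrm{supp}(x)$ n'admet aucun lex-minimum global. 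Je ne vois pas comment l'\og analyse fine \fg{} annoncée se mènerait pour des $\Gamma_\alpha$ quelconques.

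Pour le cas $\Gamma_\alpha=\Gamma\subset\mathbb{R}$ (le seul utilisé dans l'article, et objet de la Proposition~\ref{expression_val} qui suit), voici un argument complet qui évite l'écueil. L'ensemble $\{\sum_\alpha\gamma_\alpha : \underline{\gamma}\in\mathrm{supp}(x)\}$ est bien ordonné, car $\sum_\alpha\gamma_\alpha\le r$ force $\underline{\gamma}\in[0,(r,\dots,r)]$, boîte où l'hypothèse s'applique ; soit $d(x)$ son minimum. L'ensemble $S_x=\{\underline{\gamma}\in\mathrm{supp}(x):\sum_\alpha\gamma_\alpha=d(x)\}$ est une antichaîne pour l'ordre par composantes dans un produit fini de bien-ordonnés, donc \emph{fini}. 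Toute décomposition $\underline{\eta_1}+\underline{\eta_2}$ d'un point de degré total $d(x)+d(y)$ avec $\underline{\eta_i}$ dans les supports vérifie nécessairement $\underline{\eta_1}\in S_x$ et $\underline{\eta_2}\in S_y$ ; prendre les lex-minima de ces ensembles finis fournit alors l'unique contribution $a_{\underline{\gamma_1^*}}b_{\underline{\gamma_2^*}}\neq 0$ au coefficient correspondant de $xy$. Le même raisonnement appliqué à $x^n$ montre que son coefficient en $n\underline{\gamma_1^*}$ vaut $a_{\underline{\gamma_1^*}}^n$, non nul si $A$ est réduit.
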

		\begin{proof}
	Laissée aux lecteurs et lectrices.
		\end{proof}

	\begin{rem}
		\begin{enumerate}[itemsep=0mm]
		\item De manière informelle, il s'agit de l'anneau des séries multivariables telle que pour tout multi-indice $\underline{\gamma}$, l'ensemble d'indices de la réduction modulo $(t_{\alpha}^{\gamma_{\alpha}}\, |\, \alpha \in \Delta)$ de notre série a toutes ses projections bien ordonnées. 
		
		\item Cet anneau est en général légèrement plus gros que la complétion de l'anneau $$\bigotimes_{A, \, \alpha \in \Delta} A\llbracket t_{\alpha}^{\Gamma_{\alpha}} \rrbracket$$ par rapport aux idéaux $(t_{\alpha}^{\gamma_{\alpha}}\, |\, \alpha \in \Delta)$. Par exemple, nous avons $$\sum_{n\geq 1} (t_1 t_2)^{1-\frac{1}{n}} \in A \llbracket t_1^{\mathbb{Q}}, \, t_2^{\mathbb{Q}}\rrbracket$$ sans qu'il n'appartienne à la complétion. Il est toutefois moins aisé de décrire la complétion, a fortiori de la manipuler. 
		
		\item Si nous imposions uniquement que les projections des indices apparaissant soient bien ordonnées, nous aurions un anneau non-complet par rapport aux idéauw $(t_{\alpha}^{\gamma_{\alpha}}\, |\, \alpha \in \Delta)$. Il serait inadapté pour plonger $\widetilde{E}_{\Delta}^+$.
		
		\item La définition précédente coïncide avec les séries de Hahn-Mal'cev (voir \cite{hahnmalcev}) lorsque $|\Delta|=1$.
		\end{enumerate}
	\end{rem}
	
	Nous nous concentrons sur le cas qui nous servira où tous les groupes $\Gamma_{\alpha}$ sont égaux à un même sous-groupe $\Gamma$ de $\R$. 
	
	\begin{defi}
			Soit $A$ un anneau, $\Delta$ un ensemble fini et $\Gamma$ un sous-groupe de $\R$. Pour $\gamma \in \Gamma$ et $\underline{c} \in \mathbb{N}_{\geq 1}^{\Delta}$, définissons l'idéal $J_{\gamma,\Delta,\underline{c}}$ de $A\left\llbracket t_{\alpha}^{\Gamma} \, |\, \alpha \in \Delta \right\rrbracket$ par $$J_{\gamma,\Delta,\underline{c}}:=\left( t^{\underline{\delta}} \, \,\bigg|\,\, \underline{\delta}\in \Gamma_{\geq 0}^{\Delta}, \,\,\, \sum_{\alpha \in \Delta} c_{\alpha} \delta_{\alpha} \geq \gamma\right).$$ 
	
	Lorsque $\underline{c}=(1)_{\alpha \in \Delta}$, nous notons $J_{\gamma,\Delta}$ l'idéal obtenu.
	\end{defi}
	
	\begin{defiprop}\label{expression_val}
		Dans le cadre de la définition précédente, soit $\underline{c} \in \N_{\geq 1}^{\Delta}$. Alors, $$\forall x=\sum_{\underline{\gamma} \in \Gamma_{\geq 0}^{\Delta}} a_{\underline{\gamma}}t^{\underline{\gamma}} \in A\llbracket t_{\alpha}^{\Gamma} \, |\, \alpha \in \Delta \rrbracket, \,\,\,\left\{ \sum c_{\alpha} \gamma_{\alpha} \, |\, \underline{\gamma} \,\, \text{tel que} \,\, a_{\underline{\gamma}}\neq 0\right\} \,\, \text{est bien ordonné}$$ et la formule $$|x|_{\Delta,\underline{c}}:=e^{-\max \left\{ \gamma \, |\, x\in J_{\gamma,\Delta,\underline{c}}\right\}}$$ définit une norme sous-multiplicative. Lorsque $A$ est intègre, la norme $|\cdot|_{\Delta,\underline{c}}$ est multiplicative.
	\end{defiprop}
	\begin{proof}
	Dans cet article, nous laissons ces vérifications aux lecteurs et lectrices.
	\end{proof}
	
	\begin{coro}\label{norme_alpha}
		Dans le cadre précédent, soit $\beta\in \Delta$. Si $A$ est intègre, la norme $t_{\beta}$-adique est multiplicative sur $A\llbracket t_{\alpha}^{\Gamma}\, |\, \alpha \in \Delta\rrbracket$.
	\end{coro}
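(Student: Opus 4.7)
Mon plan pour démontrer le Corollaire \ref{norme_alpha} est de décomposer chaque élément en sa partie de plus basse $t_{\beta}$-valuation et le reste, et de se ramener à l'intégrité de l'anneau de séries de Hahn-Mal'cev sur $\Delta\setminus\{\beta\}$, qui découle du résultat de multiplicativité précédent (Définition/Proposition \ref{expression_val}).

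Avant tout, je vérifierais soigneusement que la $t_{\beta}$-valuation $v_{\beta}(x):=\min\{\gamma_{\beta} \, |\, a_{\underline{\gamma}}\neq 0\}$ est bien définie pour $x=\sum a_{\underline{\gamma}} t^{\underline{\gamma}}$ non nul. La condition d'appartenance à l'anneau de Hahn-Mal'cev implique que, pour tout $\delta_{\beta}\in \Gamma$, l'ensemble $\{\gamma_{\beta}\leq \delta_{\beta} \, |\, \gamma_{\beta}\in \mathrm{proj}_{\beta}(\mathrm{supp}(x))\}$ est bien ordonné (en prenant $\delta_{\alpha}\rightarrow \infty$ pour $\alpha\neq\beta$). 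Puisque cette projection est incluse dans $\Gamma_{\geq 0}$, elle admet un minimum.

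Pour la multiplicativité, soient $x,y$ non nuls de $t_{\beta}$-valuations $a$ et $b$. J'écris $x=x_0+x_+$ et $y=y_0+y_+$ en séparant les termes d'exposant en $t_{\beta}$ égal à $a$ (resp. $b$) et strictement supérieur. Les termes $x_0 y_+, x_+ y_0, x_+ y_+$ ont tous une $t_{\beta}$-valuation strictement supérieure à $a+b$, donc $v_{\beta}(xy)=v_{\beta}(x_0 y_0)$ dès que $x_0 y_0$ est non nul. Or $x_0=t_{\beta}^a \widetilde{x_0}$ et $y_0=t_{\beta}^b \widetilde{y_0}$ où $\widetilde{x_0},\widetilde{y_0}$ sont des éléments non nuls de $A\llbracket t_{\alpha}^{\Gamma} \, |\, \alpha\in \Delta\setminus\{\beta\}\rrbracket$ (obtenus en projetant le support, qui est automatiquement admissible dans l'anneau à $|\Delta|-1$ variables).

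Il suffit donc de savoir que $A\llbracket t_{\alpha}^{\Gamma} \, |\, \alpha\in \Delta\setminus\{\beta\}\rrbracket$ est intègre. Si $\Delta\setminus\{\beta\}=\emptyset$, c'est simplement l'hypothèse sur $A$ ; sinon, la Définition/Proposition \ref{expression_val} appliquée à $\Delta\setminus\{\beta\}$ et au choix $\underline{c}=(1)_{\alpha\in \Delta\setminus\{\beta\}}\in \N_{\geq 1}^{\Delta\setminus\{\beta\}}$ fournit une norme multiplicative sur cet anneau, ce qui garantit son intégrité. La principale subtilité de la preuve — au-delà des vérifications formelles d'admissibilité des supports tronqués — est d'avoir bien isolé la contribution dominante $x_0 y_0$, puisque la condition de Hahn-Mal'cev multivariable interdit en général les décompositions naïves comme séries en $t_{\beta}$ à coefficients dans l'anneau réduit, mais la troncation par $t_{\beta}$-degré minimal reste licite.
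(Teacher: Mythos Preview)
Ta stratégie de décomposition $x=x_0+x_+$ par $t_\beta$-exposant minimal ne fonctionne pas telle quelle, car ce minimum n'existe pas toujours. Ton argument pour son existence est fautif : la condition de Hahn--Mal'cev multivariable ne garantit le bon ordre de la projection sur la coordonnée $\beta$ que lorsque les \emph{autres} coordonnées sont bornées par un $\underline{\delta}$ fixé ; « faire tendre $\delta_\alpha\to\infty$ » ne préserve pas le bon ordre. Contre-exemple explicite pour $\Delta=\{1,2\}$, $\beta=1$, $\Gamma=\mathbb{Q}$ : l'élément
\[
x=\sum_{k\geq 1} t_1^{1/k}\,t_2^{k}
\]
appartient bien à $A\llbracket t_1^{\mathbb{Q}},t_2^{\mathbb{Q}}\rrbracket$ (pour tout $\underline{\delta}$, la contrainte $k\leq \delta_2$ rend la projection finie), mais $\mathrm{proj}_1(\mathrm{supp}(x))=\{1/k:k\geq 1\}$ n'a pas de minimum. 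La valuation $t_\beta$-adique est donc un \emph{infimum}, pas un minimum, et ta décomposition $x=x_0+x_+$ s'effondre.

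La preuve de l'article contourne entièrement cette difficulté : elle considère la famille de normes pondérées $|\cdot|_{\Delta,\underline{c_n}}$ de la Proposition~\ref{expression_val} avec $c_{n,\beta}=n$ et $c_{n,\alpha}=1$ pour $\alpha\neq\beta$, qui sont toutes multiplicatives puisque $A$ est intègre. On vérifie que pour $x\neq 0$,
\[
\frac{1}{n}\bigl(-\log|x|_{\Delta,\underline{c_n}}\bigr)=\min_{\underline{\gamma}\in\mathrm{supp}(x)}\Bigl(\gamma_\beta+\tfrac{1}{n}\textstyle\sum_{\alpha\neq\beta}\gamma_\alpha\Bigr)\xrightarrow[n\to\infty]{}\inf_{\underline{\gamma}\in\mathrm{supp}(x)}\gamma_\beta,
\]
la borne inférieure venant de $\gamma_\alpha\geq 0$ et la borne supérieure en évaluant en un $\underline{\gamma}$ fixé. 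Ainsi $|x|_{t_\beta}=\lim_n |x|_{\Delta,\underline{c_n}}^{1/n}$, et une limite de normes multiplicatives est multiplicative. Cette approche par limite absorbe proprement le cas où l'infimum n'est pas atteint, là où ta décomposition en « partie dominante $+$ reste » exige un terme dominant qui peut ne pas exister.
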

	\begin{proof}
		Considérer les $|\cdot|_{\Delta,\underline{c_n}}$ pour $c_{n,\beta}=n$ et $\forall \alpha \neq \beta, \,\, c_{n,\alpha}=1$ puis faire diverger $n$.
	\end{proof}
	
	\begin{lemma}\label{sep_series_formelles}
		Soit $k$ un corps et $A$ une $k$-algèbre noethérienne. Soit $\Delta$ un ensemble fini et $\Gamma$ un groupe abélien totalement ordonné. L'application naturelle $$\iota_0 \, : \,\bigotimes_{\alpha \in \Delta,\, k} A\llbracket t_{\alpha}^{\Gamma} \rrbracket \rightarrow \bigg(\bigotimes_{\alpha \in \Delta, \, k} A\bigg)\llbracket t_{\alpha}^{\Gamma} \, |\, \alpha \in \Delta \rrbracket$$ est injective. 
		
		Pour toute famille finie de multi-indices $(\underline{\gamma_i})_{1\leq i \leq m}$, nous avons un égalité d'idéaux $$\iota_0^{-1}\left((t^{\underline{\gamma_i}} \, |\, 1\leq i \leq m)\right)=(t^{\underline{\gamma_i}}\, |\, 1\leq i \leq m).$$
	\end{lemma}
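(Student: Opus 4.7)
Ma stratégie est d'établir d'abord l'injectivité de $\iota_0$ par un argument d'extraction de coefficients reposant sur la finitude en dimension des $k$-sous-espaces engendrés par les termes d'une somme finie, puis d'en déduire l'énoncé sur les idéaux en décomposant source et but selon un pavage fini de l'espace d'exposants adapté à la famille $(\underline{\gamma_i})$.

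Je traite d'abord l'injectivité. \'Etant donné $x=\sum_{j=1}^m \bigotimes_{\alpha\in\Delta} x_{j,\alpha}$ dans le noyau de $\iota_0$, je considère pour chaque $\alpha$ le $k$-sous-espace vectoriel $W_\alpha\subset A\llbracket t_\alpha^\Gamma \rrbracket$ engendré par $x_{1,\alpha},\dots,x_{m,\alpha}$, qui est de dimension finie; par $k$-platitude, l'élément $x$ appartient à $\bigotimes_\alpha W_\alpha$ qui s'injecte dans $\bigotimes_\alpha A\llbracket t_\alpha^\Gamma \rrbracket$. Les applications $k$-linéaires d'extraction de coefficients $\pi_{\gamma,\alpha} : W_\alpha \to A, \, y\mapsto a_{y,\gamma}$ ont une intersection de noyaux nulle puisqu'une série est déterminée par ses coefficients; la finitude de $\dim_k W_\alpha$ fournit alors un sous-ensemble fini $T_\alpha \subset \Gamma_{\geq 0}$ tel que l'application produit $W_\alpha \hookrightarrow A^{T_\alpha}$ soit injective. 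En tensorisant sur $k$, j'obtiens une injection $\bigotimes_\alpha W_\alpha \hookrightarrow \bigl(\bigotimes_\alpha A\bigr)^{\prod_\alpha T_\alpha}$ qui coïncide avec $x\mapsto(\iota_0(x)_{\underline{\gamma}})_{\underline{\gamma}\in \prod T_\alpha}$, de sorte que l'hypothèse $\iota_0(x)=0$ force $x=0$.

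Pour l'énoncé sur les idéaux, je fixe la famille finie $(\underline{\gamma_i})_{1\leq i\leq m}$ et je forme $I_\alpha=\{0\}\cup\{\gamma_{i,\alpha}\,|\,i\}\cup\{\infty\}$, triée en $c_{0,\alpha}<\cdots<c_{k_\alpha,\alpha}$. Chaque facteur $A\llbracket t_\alpha^\Gamma\rrbracket$ se décompose en somme directe finie de $k$-sous-espaces $B_{j,\alpha}$ constitués des séries supportées dans $[c_{j,\alpha},c_{j+1,\alpha})$, d'où $\bigotimes_\alpha A\llbracket t_\alpha^\Gamma\rrbracket = \bigoplus_{\underline{j}}\bigotimes_\alpha B_{j_\alpha,\alpha}$; de même, le but s'écrit $\bigoplus_{\underline{j}} L_{\underline{j}}$ où $L_{\underline{j}}$ est le sous-espace des séries supportées dans la boîte $\prod_\alpha [c_{j_\alpha,\alpha}, c_{j_\alpha+1,\alpha})$. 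Le choix des points de rupture $I_\alpha$ garantit qu'au sein d'une boîte la comparaison $\delta_\alpha \geq \gamma_{i,\alpha}$ est uniforme en $\underline{\delta}$, puisque si $c_{j_\alpha,\alpha}<\gamma_{i,\alpha}$ alors $\gamma_{i,\alpha}\geq c_{j_\alpha+1,\alpha}>\delta_\alpha$. Je qualifie de \emph{dominée} une boîte pour laquelle il existe $i$ avec $c_{j_\alpha,\alpha}\geq \gamma_{i,\alpha}$ pour tout $\alpha$: ce sont exactement les boîtes contenues dans $\bigcup_i\{\underline{\delta}\geq\underline{\gamma_i}\}$. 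Un calcul direct, en divisant explicitement les éléments à support dans $[\gamma_{i,\alpha},\infty)$ par $t_\alpha^{\gamma_{i,\alpha}}$, établit alors $(t^{\underline{\gamma_i}}\,|\,i)=\bigoplus_{\underline{j}\text{ dominée}}\bigotimes_\alpha B_{j_\alpha,\alpha}$ dans la source et $(t^{\underline{\gamma_i}}\,|\,i)=\bigoplus_{\underline{j}\text{ dominée}}L_{\underline{j}}$ dans le but.

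L'identité d'idéaux se réduit alors à montrer que pour chaque boîte \emph{non dominée} $\underline{j}$, le morphisme induit $\bigotimes_\alpha B_{j_\alpha,\alpha}\to L_{\underline{j}}$ est injectif, situation rigoureusement analogue à l'injectivité initiale restreinte à des séries à support borné: l'argument d'extraction de coefficients s'applique mot pour mot. Le point le plus délicat de la preuve sera la vérification de l'identité $(t^{\underline{\gamma_i}}\,|\,i)=\bigoplus_{\underline{j}\text{ dominée}}\bigotimes_\alpha B_{j_\alpha,\alpha}$ dans la source, qui demande d'articuler avec soin la génération de l'idéal par des tenseurs purs avec la décomposition en boîtes.
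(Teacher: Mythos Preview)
Your proof is correct, and your approach to injectivity is genuinely different from the paper's. The paper argues by induction on $|\Delta|$: it peels off one factor $A\llbracket t_\alpha^\Gamma\rrbracket$ at a time and, for $\sum_i f_i\otimes g_i$ in the kernel, uses noetherianity of $A$ to pick finitely many generators of the $A$-module of relations among the $f_i$, then rewrites the element in terms of these generators to reduce to a smaller tensor product. Your coefficient-extraction argument is more elementary: by passing to the finite-dimensional $k$-spans $W_\alpha$ and using that tensor products over a field preserve injections, you embed $\bigotimes_\alpha W_\alpha$ into a finite product $(\bigotimes_\alpha A)^{\prod T_\alpha}$ through which $\iota_0$ factors. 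In particular your argument does not use that $A$ is noetherian; only that $k$ is a field is needed. For the ideal statement, both proofs use essentially the same box decomposition along the finite grid $\prod_\alpha I_\alpha$ determined by the $\gamma_{i,\alpha}$, splitting into ``dominated'' and ``non-dominated'' pieces; the paper phrases the conclusion as $\iota_0(x_<)=0\Rightarrow x_<=0$ via the already-established injectivity, which is the same reduction you perform box by box. One small simplification available to you: once $\iota_0$ is known to be injective globally, its restriction to each $\bigotimes_\alpha B_{j_\alpha,\alpha}\to L_{\underline{j}}$ is automatically injective, so you need not re-run the coefficient-extraction argument there.
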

	\begin{proof}
		\underline{Énoncé intermédiaire :} soit $\alpha \in \Delta$. Nous commençons par montrer que le morphisme
		$$\bigg(\bigotimes_{\beta\in \Delta \backslash \{\alpha\}, \, k} A \bigg) \llbracket t_{\beta}^{\Gamma} \, |\, \beta \in \Delta\backslash \{\alpha\}\rrbracket \otimes_k A \llbracket t_{\alpha}^{\Gamma} \rrbracket \rightarrow \bigg(\bigotimes_{\beta \in \Delta, \, k} A\bigg)\llbracket t_{\beta}^{\Gamma} \, |\, \beta \in \Delta \rrbracket$$ est injectif. Considérons $\sum_{1\leq i \leq n} f_i\otimes g_i$ dans le noyau. On écrit $g_i=\sum_{\gamma_{\alpha} \in \Gamma_{\geq 0}} a_{i,\gamma_{\alpha}} t_{\alpha}^{\gamma_{\alpha}}$. En identifiant les termes en $t_{\alpha}^{\gamma_{\alpha}}$ au but, nous obtenons $$\forall \gamma_{\alpha}, \,\,\, \sum_{1\leq i \leq n} f_i a_{i,\gamma_{\alpha}}=0 \,\,\, \text{dans}\,\, \bigg(\bigotimes_{\beta \in \Delta,\, k} A\bigg) \llbracket t_{\beta}^{\Gamma} \, |\, \beta \in \Delta \backslash \{\alpha\} \rrbracket.$$  Par noethérianité de $A$, nous choisissons une famille de vecteurs $(e_k)_{1\leq k\leq r}$ qui engendre le noyau  $$A^n\rightarrow \bigg(\bigotimes_{\beta\in \Delta  \, k} A \bigg) \llbracket t_{\beta}^{\Gamma_{\beta}} \, |\, \beta\in \Delta \backslash \{\alpha\} \rrbracket, \,\,\,\, (a_i) \mapsto \sum_i f_i a_i.$$ Nous écrivons ensuite $(a_{i,\gamma_{\alpha}})_{1\leq i \leq n}=\sum_k b_{k,\gamma_{\alpha}}\, e_k$ avec $\forall k, \gamma_{\alpha}, \, b_{k,\gamma_{\alpha}}\in A$. Le support de chaque famille $(b_{k,\gamma_{\alpha}})_{\gamma_{\alpha} \in \Gamma_{\geq 0}}$ peut être choisi contenu dans l'union de tous les supports des $(a_{i,\gamma_{\alpha}})_{\gamma_{\alpha} \in \Gamma_{\geq 0}}$ : ce support vérifie les conditions les séries de Hahn-Mal'cev ce qui donne du sens aux calculs suivants :
		
		\begin{align*}
			\sum_{1\leq i\leq n} f_i \otimes g_i &= \sum_{1\leq i \leq n} f_i \otimes \left(\sum_{\gamma_{\alpha}\in \Gamma_{\alpha,\geq 0}} a_{i,\gamma_{\alpha}} t_{\alpha}^{\gamma_{\alpha}}\right)\\
			&=\sum_{1\leq i \leq n} f_i \otimes \left(\sum_{\gamma_{\alpha} \in \Gamma_{\alpha,\geq 0}} \left(\sum_{1\leq k\leq r} b_{k,\gamma_{\alpha}} e_{k,i}\right) t_{\alpha}^{\gamma_{\alpha}}\right) \\
			&=\sum_{1\leq i\leq n,\, 1\leq k \leq r}  f_i \otimes \left(\sum_{\gamma_{\alpha}\in \Gamma_{\geq 0}} b_{k,\gamma_{\alpha}} t_{\alpha}^{\gamma_{\alpha}} \right) e_{k,i} \\
			&= \sum_{1\leq k \leq r} \left(\sum_{\gamma_{\alpha}\in \Gamma_{\geq 0}} b_{k,\gamma_{\alpha}} t_{\alpha}^{\gamma_{\alpha}} \right) \left(\sum_{1\leq i\leq n} f_i \otimes e_{k,i}\right)	
		\end{align*} et chaque $\sum_i f_i\otimes e_{k,i}$ est dans le noyau de $$\bigg(\bigotimes_{\beta\in \Delta \backslash \{\alpha\},\, k} A\bigg) \llbracket t_{\beta}^{\Gamma_{\beta}} \, |\, \beta\in \Delta \backslash \{\alpha\} \rrbracket \otimes_k A \rightarrow \bigg(\bigotimes_{\beta\in \Delta,\, k} A\bigg) \llbracket t_{\beta}^{\Gamma_{\beta}} \, |\, \beta\in \Delta \backslash \{\alpha\}\rrbracket.$$ Prouver que cette dernière application est injective suffit à conclure. À nouveau, on identifie les coefficients. De plus, le noyau de l'application $$\bigg(\bigotimes_{\beta\in \Delta \backslash \{\alpha\},\, k} A\bigg)^n \rightarrow \left(\bigotimes_{\beta\in \Delta,\, k} A\right), \,\,\,\, (b_i) \mapsto \sum_i e_i \otimes b_i$$ est de type fini sur $\left(\otimes_{\beta\neq\alpha, \,k} A\right)$ puisqu'il s'agit du changement de base de $$k^n \rightarrow A, \,\,\,\, (x_i)\mapsto \sum_i x_i e_i$$ dont le noyau est de type fini sur $k$. On se ramène de cette manière à prouver que $$\left(\bigotimes_{\beta \in \Delta \backslash \{\alpha\}, \, k} A\right) \otimes_k A \rightarrow \left(\bigotimes_{\beta \in \Delta} A\right)$$ est injective, ce qui est tautologique.
		
		\underline{Démonstration de l'injectivité :} la flèche dont nous voulons montrer l'injectivité est une composée de changements de base au-dessus de $k$ du morphisme ci-dessus.
		
		\underline{\'Etude des idéaux :} Soit $(\underline{\gamma_i})_{1\leq i \leq m}$ une famille finie de multi-indices. Appelons $J$ l'idéal engendré par les $t^{\underline{\gamma_i}}$ au but et $J^{\otimes}$ celui engendré à la source. Pour chaque $\alpha\in \Delta$, nous posons $\sigma_{\alpha}\in \mathfrak{S}_m$ telle que $$\gamma_{\sigma_{\alpha}(1),\alpha} \leq \gamma_{\sigma_{\alpha}(2),\alpha} \leq \ldots \leq \gamma_{\sigma_{\alpha}(m),\alpha}.$$ Supposons que $x=\sum_{1\leq j \leq n} \otimes_{\alpha\in \Delta} f_{j,\alpha}\in J$. Nous découpons chaque $f_{j,\alpha}$ en $$f_{j,\alpha}=\sum_{k=1}^m t_{\alpha}^{\gamma_{\sigma_{\alpha}(k),\alpha}} f_{j,\alpha,k}$$ où les monômes en $t_{\alpha}$ de $f_{j,\alpha,k}$ sont de degré strictement inférieur à $(\gamma_{\sigma_{\alpha}(k+1),\alpha}-\gamma_{\sigma_{\alpha}(k),\alpha})$. Il est alors possible d'écrire 
		
		$$x=\sum_{1\leq j \leq n} \sum_{\underline{k}\in \llbracket 1,m\rrbracket^{\Delta}}  \otimes_{\alpha\in \Delta} t_{\alpha}^{\gamma_{\sigma_{\alpha}(k_{\alpha}),\alpha}} f_{j,\alpha,k_{\alpha}}.$$ Appelons $\Lambda$ l'ensemble des uplets $\underline{i}$ tels que $$\exists i, \,\, \forall \alpha, \,\, \gamma_{\sigma_{\alpha}(k_{\alpha}),\alpha}\geq \gamma_{i,\alpha}.$$ Autrement dit, nous sélectionnons les tranches qui appartiendront automatiquement à l'idéal $J^{\otimes}$. Pour chaque uplet $\underline{k}\in \Lambda$, choissons $i_{\underline{k}}$ qui souligne son appartenance à $\Lambda$. Il est alors possible d'écrire 		
		$$x_{\geq} :=\sum_{1\leq j \leq n} \sum_{\underline{k}\in \Lambda}  \otimes_{\alpha\in \Delta} t_{\alpha}^{\gamma_{\sigma_{\alpha}(k_{\alpha}),\alpha}} f_{j,\alpha,k_{\alpha}} = \sum_{1\leq j \leq n} \sum_{1\leq i\leq m} t^{\underline{\gamma_i}} \left(\sum_{\substack{\underline{k} \in \Lambda \\ \text{tel que} \,\, i_{\underline{k}}=i}} \otimes_{\alpha\in \Delta} t_{\alpha}^{\gamma_{\sigma_{\alpha}(k_{\alpha}),\alpha}-\gamma_{i,\alpha}} f_{j,\alpha,k_{\alpha}}\right) \in J^{\otimes}.$$ Par conséquent, l'image par $\iota_0$ de $$x_{<}:=\sum_{1\leq j \leq n} \sum_{\underline{k}\notin \Lambda} \otimes_{\alpha\in \Delta} t_{\alpha}^{\gamma_{\sigma_{\alpha}(k_{\alpha}),\alpha}} f_{j,\alpha,k_{\alpha}}$$ doit également appartenir à $J$. Or, $$\forall \underline{k} \notin \Lambda, \,\,\, \forall i, \,\, \exists \alpha, \,\, \gamma_{\sigma_{\alpha}(k_{\alpha}),\alpha}<\gamma_{i,\alpha}$$ ce qui implique immédiatemment par définition des $\sigma_{\alpha}$ $$\forall \underline{k}\notin \Lambda, \,\, \forall i, \,\, \exists \alpha, \,\, \gamma_{\sigma_{\alpha}(k_{\alpha}+1),\alpha}\leq \gamma_{i,\alpha}$$ Si un monôme $t^{\underline{\gamma}}$ apparaît dans $\iota_0(\sum_j \otimes t_{\alpha}^{\gamma_{\sigma_{\alpha}(k_{\alpha}),\alpha}} f_{j,\alpha,k_{\alpha}})$ nous avons par construction des $f_{j,\alpha,k}$ que $$\forall \alpha, \,\, \gamma_{\alpha}<\gamma_{\sigma_{\alpha}(k_{\alpha}+1),\alpha}.$$ Il en découle que $\not \exists i, \,\, \forall \alpha, \,\,\gamma_{\alpha} \geq \gamma_{i,\alpha}$. Dans l'anneau des séries de Hahn-Mal'cev multivariables, l'appartenance à $J$ se teste sur les monômes qui apparaissent. Il en découle que $\iota_0(x_{<})=0$. Puisque nous avons déjà démontré que $\iota_0$ est injective, il en découle que $x_{<}=0$ soit $x=x_{\geq} \in J^{\otimes}$.
	\end{proof}
	\vspace{0.5 cm}

	Nous reprenons à présent les notations de \S \ref{section_wddelta}. Nous fixons $\widetilde{E}$ un corps perfectoïde de caractéristique $p$, tel que la clôture algébrique de $\mathbb{F}_p$ dans $\widetilde{E}$ est munie d'un isomorphisme avec $\mathbb{F}_q$. Nous fixons également un ensemble fini $\Delta$. Nous étudions tout d'abord l'anneau $\widetilde{E}_{\Delta}$ (et par conséquent tous les $\widetilde{F}_{\Delta}$).
	
		\begin{defi}
		Pour tout $r\in \mathbb{N}[q^{-1}]$, nous définissons l'idéal $I_{r,\Delta}$ de $\widetilde{E}_{\Delta}^+$ par $$
			I_{r,\Delta} :=\left(\underline{\varpi}^{\underline{r}} \, |\, \underline{r}\in (\mathbb{N}[q^{-1}])^{\Delta}, \,\,\, \sum r_{\alpha}=r\right) = \bigcup_{k>>k_r} \varphi_{\Delta,q}^{-k}\left((\underline{\varpi})^{q^k r}\right)$$ où $k_r$ est tel que $q^k r \in \mathbb{N}$.
		
		On définit de même la famille d'idéaux $$I_{r^-,\Delta}:=\bigcap_{\substack{r'\in \N[q^{-1}]\\ \text{tel que} \,\, r'<r}} I_{r',\Delta}.$$
		\end{defi}
	
		\begin{prop}\label{inj_1}
		Il existe une injection d'anneaux $$\hat{\iota}\, : \, \widetilde{E}^+_{\Delta} \hookrightarrow \left(\bigotimes_{\alpha \in \Delta, \, \overline{\mathbb{F}_q}} k^{\mathrm{alg}}\right) \llbracket t_{\alpha}^{\mathbb{R}} \, |\, \alpha \in \Delta\rrbracket.$$
		
		\noindent De plus, nous pouvons choisir cette injection de telle sorte que $$\forall r\in \mathbb{N}[q^{-1}], \,\,\, \hat{\iota}^{-1}(J_{r,\Delta})=I_{r^-,\Delta}.$$ 
	\end{prop}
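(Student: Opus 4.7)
The plan is to reduce to the classical one-variable Kaplansky embedding of maximally valued fields, then assemble the pieces through the multivariable Hahn--Mal'cev injection of Lemma~\ref{sep_series_formelles}, and finally track monomials to obtain the ideal compatibility. First, for each $\alpha\in\Delta$, applying Kaplansky's theorem \cite{kaplansky} to the perfectoid field $\widetilde{E}_\alpha$ (with value group $\Gamma_\alpha\subset\mathbb{R}$ and residue field $k_\alpha$, which satisfies $k_\alpha\cap\overline{\mathbb{F}_q}=\mathbb{F}_q$ because the algebraic closure of $\mathbb{F}_p$ in $\widetilde{E}_\alpha$ is $\mathbb{F}_q$) produces a valuation-preserving embedding of $\mathbb{F}_q$-algebras
$$\iota_\alpha\,:\,\widetilde{E}_\alpha^+\hookrightarrow k_\alpha\llbracket t_\alpha^{\Gamma_\alpha}\rrbracket\subseteq k^{\mathrm{alg}}\llbracket t_\alpha^{\mathbb{R}}\rrbracket,$$
which I would normalize so that $\iota_\alpha(\varpi_\alpha)$ equals $t_\alpha$ up to a unit.

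Next, since the extension $\widetilde{E}_\alpha|\mathbb{F}_q$ is regular, $\iota_\alpha$ extends $\overline{\mathbb{F}_q}$-linearly to an injection $\iota_\alpha'\,:\,\widetilde{E}_\alpha^+\otimes_{\mathbb{F}_q}\overline{\mathbb{F}_q}\hookrightarrow k^{\mathrm{alg}}\llbracket t_\alpha^{\mathbb{R}}\rrbracket$. Tensoring over $\overline{\mathbb{F}_q}$ preserves injectivity, and composing with Lemma~\ref{sep_series_formelles} applied to base field $\overline{\mathbb{F}_q}$ and noetherian algebra $A=k^{\mathrm{alg}}$ yields an injection
$$\bigotimes_{\overline{\mathbb{F}_q},\alpha}\bigl(\widetilde{E}_\alpha^+\otimes_{\mathbb{F}_q}\overline{\mathbb{F}_q}\bigr)\hookrightarrow\Bigl(\bigotimes_{\overline{\mathbb{F}_q},\alpha}k^{\mathrm{alg}}\Bigr)\llbracket t_\alpha^{\mathbb{R}}\,|\,\alpha\in\Delta\rrbracket.$$
Pre-composing with the canonical map $\bigotimes_{\mathbb{F}_q,\alpha}\widetilde{E}_\alpha^+\to\bigotimes_{\overline{\mathbb{F}_q},\alpha}(\widetilde{E}_\alpha^+\otimes_{\mathbb{F}_q}\overline{\mathbb{F}_q})$ --- whose injectivity relies essentially on $k_\alpha\cap\overline{\mathbb{F}_q}=\mathbb{F}_q$ combined with the Galois-algebra decomposition $\mathbb{F}_{q^n}\otimes_{\mathbb{F}_q}\mathbb{F}_{q^n}\cong\prod\mathbb{F}_{q^n}$ at each finite level, which shows that the identifications induced by moving to $\bigotimes_{\overline{\mathbb{F}_q}}$ only quotient coefficients lying outside the image of the $\iota_\alpha$ --- and then completing $(\underline{\varpi})$-adically (using that the target is complete for the family $J_{r,\Delta}$ and that $\iota_\alpha(\varpi_\alpha)\sim t_\alpha$) gives the desired injection $\hat{\iota}\colon\widetilde{E}_\Delta^+\hookrightarrow(\bigotimes_{\overline{\mathbb{F}_q},\alpha}k^{\mathrm{alg}})\llbracket t_\alpha^{\mathbb{R}}\,|\,\alpha\in\Delta\rrbracket$.

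The ideal identity then follows: the second assertion of Lemma~\ref{sep_series_formelles} identifies the preimage of any principal monomial ideal $(t^{\underline{\gamma_i}})$ with the analogous tensor-product ideal. Combined with the normalization $\iota_\alpha(\varpi_\alpha)=t_\alpha\cdot(\text{unit})$ and Proposition~\ref{expression_val} for the norm $|\cdot|_{\Delta,(1)}$, this means an element of $\widetilde{E}_\Delta^+$ lands in $J_{r,\Delta}$ exactly when, for every $r'\in\mathbb{N}[q^{-1}]$ with $r'<r$, it lies in the ideal generated by monomials $\underline{\varpi}^{\underline{\gamma}}$ with $\sum\gamma_\alpha\geq r'$; intersecting over such $r'$ yields $\hat{\iota}^{-1}(J_{r,\Delta})=I_{r^-,\Delta}$.

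The main obstacle will be the injectivity of the base-change step from $\bigotimes_{\mathbb{F}_q}$ to $\bigotimes_{\overline{\mathbb{F}_q}}$ on the coefficient side: it is false for arbitrary $\mathbb{F}_q$-algebras, and the argument hinges on the fact that the image of each $\iota_\alpha$ sits inside $k_\alpha\llbracket t_\alpha^{\Gamma_\alpha}\rrbracket$ where $k_\alpha$ meets $\overline{\mathbb{F}_q}$ only in $\mathbb{F}_q$, so that no nonzero element of $\bigotimes_{\mathbb{F}_q,\alpha}\widetilde{E}_\alpha^+$ can accidentally be killed by the identification of the $\overline{\mathbb{F}_q}$-copies across slots.
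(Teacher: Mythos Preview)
Your plan follows the paper's broad strategy (single-variable Kaplansky embedding, then Lemma~\ref{sep_series_formelles}, then ideal tracking), but your first step has a real gap and you have misidentified where the difficulty lies.

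The gap is in applying Kaplansky directly to $\widetilde{E}_\alpha$ to land in $k_\alpha\llbracket t_\alpha^{\Gamma_\alpha}\rrbracket$. Kaplansky's embedding into the Hahn field with the \emph{same} residue field requires his hypothesis~A, in particular that the residue field admits no Artin--Schreier extension. For the basic example $\widetilde{E}=\mathbb{F}_q(\!(X^{q^{-\infty}})\!)$ the residue field is $\mathbb{F}_q$, which certainly has such extensions, so the hypothesis fails and your claimed $\iota_\alpha$ is not produced by the cited theorem. The paper's order of operations is there precisely to dodge this: it first base-changes to $\overline{\mathbb{F}_q}$ (forming the perfectoid field $\widetilde{E}'=(\widetilde{E}\otimes_{\mathbb{F}_q}\overline{\mathbb{F}_q})^{\wedge\varpi}$), then enlarges to a perfectoid field $\widetilde{F}$ with residue field $k^{\mathrm{alg}}$, and only then invokes Kaplansky, where the hypothesis holds trivially because $k^{\mathrm{alg}}$ is algebraically closed.

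Conversely, what you flag as the ``main obstacle''---injectivity of $\bigotimes_{\mathbb{F}_q,\alpha}\widetilde{E}_\alpha^+\to\bigotimes_{\overline{\mathbb{F}_q},\alpha}(\widetilde{E}_\alpha^+\otimes_{\mathbb{F}_q}\overline{\mathbb{F}_q})$---is trivial: the target is canonically $(\bigotimes_{\mathbb{F}_q,\alpha}\widetilde{E}_\alpha^+)\otimes_{\mathbb{F}_q}\overline{\mathbb{F}_q}$ and the map is $r\mapsto r\otimes 1$, injective by faithful flatness over a field; no condition on residue fields is needed. The actual work, which fills most of the paper's proof and which your sketch elides, is the ideal-preimage computation. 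Lemma~\ref{sep_series_formelles} controls monomial-ideal preimages only for the \emph{last} map $\iota_0$ in the chain; one must still propagate the statement $\iota^{-1}((t^{\underline{k_i}}))=(\varpi^{\underline{k_i}})$ back through the maps $\bigotimes_{\mathbb{F}_q}\widetilde{E}_\alpha^+\hookrightarrow\bigotimes_{\overline{\mathbb{F}_q}}\widetilde{E}'^+_\alpha\hookrightarrow\bigotimes_{\overline{\mathbb{F}_q}}k^{\mathrm{alg}}\llbracket t_\alpha^{\mathbb{R}}\rrbracket$, which the paper does by choosing compatible module decompositions over the power-series subrings $A_n,A'_n,B_n$ and reading off ideal membership coordinatewise. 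Only after that, and after checking the statement survives $(\underline{\varpi})$-adic completion, can one write $J_{r,\Delta}$ as an intersection of such open monomial ideals over $r'<r$ and deduce $\hat\iota^{-1}(J_{r,\Delta})=I_{r^-,\Delta}$.
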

	\begin{proof}
		Grâce à \cite[V \S 17, Proposition 9]{bourbaki_alg}, l'extension $\widetilde{E}|\mathbb{F}_q$ est régulière. Il en découle que l'anneau $\widetilde{E}\otimes_{\mathbb{F}_q} \overline{\mathbb{F}_q}$ est intègre. Sa complétion $\varpi$-adique est encore un corps perfectoïde, de pseudo-uniformisante $\varpi$ et de corps résiduel muni d'un plongement de $\overline{\mathbb{F}_q}$ ; nous le notons $\widetilde{E}'$. Il est lui-même plongé dans un corps perfectoïde\footnote{Considérer par exemple la complétion de sa clôture algébrique.} $\widetilde{F}$ de pseudo-uniformisante $\varpi$ et de corps résiduel $k^{\mathrm{alg}}$. Nous obtenons grâce à \cite[Coro. du Théorème 8]{kaplansky} que $\widetilde{F}$ est analytiquement isomorphe à un sous-corps de $k^{\mathrm{alg}}(\!(t^{\mathbb{R}})\!)$, avec $\varpi$ envoyé sur $t$ : notre corps est perfectoïde de corps résiduel algébriquement clos et de groupe de valeurs contenu dans $\mathbb{R}$ ce qui permet de vérifier sans problème les hypothèses du théorème de Kaplansky. De plus, en examinant attentivement la preuve de \cite[Lem. 13]{kaplansky}, nous pouvons garantir que chaque $\varpi^{q^{-n}}$ est envoyé sur $t^{q^{-n}}$. Il existe ainsi une suite d'injections d'anneaux $$\widetilde{E}\otimes_{\mathbb{F}_q} \overline{\mathbb{F}_q} \hookrightarrow \widetilde{E}^{'+}\hookrightarrow \widetilde{F}^+ \hookrightarrow k^{\mathrm{alg}}\llbracket t^{\mathbb{R}}\rrbracket$$ qui envoie $\varpi^{q^{-n}}$ sur $t^{q^{-n}}$. Nous obtenons une suite d'injections
		\begin{align*}
			\bigotimes_{\alpha \in \Delta, \, \mathbb{F}_q} \widetilde{E}_{\alpha}^+  & \hookrightarrow \left(\bigotimes_{\alpha \in \Delta, \, \mathbb{F}_q} \widetilde{E}_{\alpha}^+\right) \otimes_{\mathbb{F}_q} \overline{\mathbb{F}_q} \cong \bigotimes_{\alpha \in \Delta, \, \overline{\mathbb{F}_q}} \left(\widetilde{E}_{\alpha}^+ \otimes_{\mathbb{F}_q} \overline{\mathbb{F}_q} \right)
			\hookrightarrow \bigotimes_{\alpha \in \Delta, \,\overline{\mathbb{F}_q}} \widetilde{E}^{'+}_{\alpha} \\
			&\hookrightarrow \bigotimes_{\alpha \in \Delta, \, \overline{\mathbb{F}_q}} k^{\mathrm{alg}} \llbracket t_{\alpha}^{\mathbb{R}} \rrbracket \\
			&\hookrightarrow \left(\bigotimes_{\alpha \in \Delta, \, \overline{\mathbb{F}_q}} k^{\mathrm{alg}}\right) \llbracket t_{\alpha}^{\mathbb{R}} \, |\, \alpha \in \Delta \rrbracket
		\end{align*} où la dernière provient du lemme \ref{sep_series_formelles}. Appelons $\iota$ la composée. 
		
		Commençons par considérer une famille finie $(\underline{k_i})_{1\leq i \leq m}$ de multi-indices à valeurs dans $\mathbb{N}[q^{-1}]$ et $I^{\otimes}$ (resp. $I'^{\otimes}$, resp. $J^{\otimes}$, resp. $J$) l'idéal engendré par la famille $(\varpi^{\underline{k_i}})_{1\leq i \leq m}$ (resp. $(\varpi^{\underline{k_i}})_{1\leq i \leq m}$, resp. $(t^{\underline{k_i}})_{1\leq i \leq m}$, resp. $(t^{\underline{k_i}})_{1\leq i \leq m}$) à la source du morphisme $\iota$ (resp. chaque étape de sa décomposition ci-dessus par ordre croissant). Nous démontrons que $\iota^{-1}(J)=I^{\otimes}$.	
		
	

			Nous savons déjà grâce au Lemme \ref{sep_series_formelles} réciproque de $J$ par le dernier morphisme est $J^{\otimes}$. Pour passer de $J^{\otimes}$ à $I^{\otimes}$, fixons $n$ tel que $\forall i,\alpha, \,\,k_{i,\alpha} \in q^{-n}\mathbb{N}$. Pour démontrer que l'image réciproque de $J^{\otimes}$ est $I^{\otimes}$, nous commençons par une remarque. Prenons une famille $(e_i)_{i\in \mathcal{I}}$ dans $\widetilde{E}^+$ telle que la famille des réductions est une $\mathbb{F}_q$-base  de $\sfrac{\widetilde{E}^+}{\varpi^{q^{-n}}}$. Puisque $\widetilde{E}^+$ est $\varpi^{q^{-n}}$-adiquement séparé et complet, tout élément s'écrit de manière unique comme $$\sum_{k\geq 0,\, i \in \mathcal{I}} a_{k,i} e_i \varpi^{\sfrac{k}{q^n}}$$ pour une famille presque nulle $((a_{k,i})_{k\geq 0})_{i\in\mathcal{I}}$  d'éléments de $\mathbb{F}_q$. Autrement dit, nous avons l'égalité algébrique $$\widetilde{E}^+=\bigoplus_{i\in \mathcal{I}} \mathbb{F}_q\llbracket \varpi^{q^{-n}}\rrbracket e_i.$$
		Pour tout multi-indice $\underline{i} \in \mathcal{I}^{\Delta}$, nous appelons $e_{\underline{i}}=\otimes e_{i_{\alpha}}$ de tel sorte que $$\bigotimes_{\alpha \in \Delta,\,\, \mathbb{F}_q} \widetilde{E}_{\alpha}^+= \bigoplus_{\underline{i} \in \mathcal{I}^{\Delta}} \left(\bigotimes_{\alpha \in \Delta,\,\, \mathbb{F}_q} \mathbb{F}_q \llbracket \varpi_{\alpha}^{q^{-n}}\rrbracket\right) e_{\underline{i}}.$$ Appelons $$A_n := \bigotimes_{\alpha \in \Delta, \, \mathbb{F}_q} \mathbb{F}_q\llbracket \varpi_{\alpha}^{q^{-n}} \rrbracket, \,\, A'_n := \bigotimes_{\alpha \in \Delta, \, \overline{\mathbb{F}_q}} \overline{\mathbb{F}_q}\llbracket \varpi_{\alpha}^{q^{-n}} \rrbracket \,\, \text{et} \,\, B_n:=\bigotimes_{\alpha \in \Delta, \, \overline{\mathbb{F}_q}} k^{\mathrm{alg}}\llbracket t_{\alpha}^{q^{-n}} \rrbracket.$$  
		Puisque $\sfrac{\widetilde{E}^+}{\varpi^{q^{-n}}}\otimes_{\mathbb{F}_q} \overline{\mathbb{F}_q} \cong \sfrac{\widetilde{E}'^+}{\varpi^{q^{-n}}} \hookrightarrow \sfrac{k^{\mathrm{alg}}\llbracket t^{\R}\rrbracket}{t^{q^{-n}}}$, nous pouvons étendre la famille $(e_i)_{i\in \mathcal{I}}$ en une famille indexée par $\mathcal{J}\supset \mathcal{I}$ telle que $(f_j)_{j\in\mathcal{J}}$ soit une $\overline{\mathbb{F}_q}$-base de $\sfrac{k^{\mathrm{alg}}\llbracket t^{\R}\rrbracket}{t^{q^{-n}}}$ et que l'image de $e_i$ coïncide avec $f_i$. La famille $(e_i)_{i\in \mathcal{I}}$ s'identifie aussi à une $\overline{\mathbb{F}_q}$-base de $\sfrac{\widetilde{E}'^+}{\varpi^{q^{-n}}}$. Nous obtenons donc trois décompositions comme $A_n$-module (resp. $A'_n$- ou $B_n$-module) $$\bigotimes_{\alpha \in \Delta,\,\, \mathbb{F}_q} \widetilde{E}_{\alpha}^+= \bigoplus_{\underline{i} \in \mathcal{I}^{\Delta}} A_n e_{\underline{i}},\,\, \bigotimes_{\alpha \in \Delta,\,\, \overline{\mathbb{F}_q}} \widetilde{E}_{\alpha}'^+= \bigoplus_{\underline{i} \in \mathcal{I}^{\Delta}} A'_n e_{\underline{i}} \,\, \text{et} \,\, \bigotimes_{\alpha \in \Delta,\,\, \overline{\mathbb{F}_q}} k^{\mathrm{alg}}\llbracket t_{\alpha}^{\R}\rrbracket = \bigoplus_{\underline{j} \in \mathcal{J}^{\Delta}} B_n f_{\underline{j}}.$$ Les idéaux $I^{\otimes}$, $I'^{\otimes}$ et $J$ sont engendrés par des éléments qui appartiennent respectivement à $A_n$, $A'_n$ et $B_n$. Nous appelons $I^{\otimes,\mathrm{res}}$ l'idéal engendré dans $A_n$, $I'^{\otimes,\mathrm{res}}$ dans $A'_n$ et $J^{\otimes,\mathrm{res}}$ dans $B_n$. Un élément appartient à $I^{\otimes}$ (resp. $I'^{\otimes}$, resp. $J^{\otimes}$) si et seulement si\footnote{Pour remontrer, faire attention que seulement un nombre fini de coordonnées sont non nulles et l'idéal considéré est de type fini.} toutes ses coordonnées appartiennent à $I^{\otimes,\mathrm{res}}$ (resp. $I'^{\otimes,\mathrm{res}}$, resp. $J^{\otimes,\mathrm{res}}$). 
		
		Nous allons exécuter des manipulations lourdes en notations, qui reviennent au fond à considérer les coordonnées dans une décomposition sur la famille des $e_{\underline{i}} \varpi^{\sfrac{\underline{k}}{q^n}}$. Toutefois, en se contentant de décomposer ainsi, on perdrait la trace de l'appartenance des coordonnées au produit tensoriel des séries formelles à l'intérieur des séries multivariables. Cette justification sur le caractère hideux de ce qui suit terminée, prenons le morphisme $$\bigotimes_{\alpha \in \Delta,\,\, \overline{\mathbb{F}_q}} \widetilde{E}'^+_{\alpha} \hookrightarrow \bigotimes_{\alpha \in \Delta,\,\, \overline{\mathbb{F}_q}} k^{\mathrm{alg}}\llbracket t_{\alpha}^{\mathbb{R}}\rrbracket .$$ Nous obtenons donc un diagramme commutatif comme suit 
		
		\begin{center}
			\begin{tikzcd}
				\bigotimes_{\alpha \in \Delta,\,\, \overline{\mathbb{F}_q}} \widetilde{E}'^+_{\alpha} \ar[r,"="] \ar[d,hook] &   \bigoplus_{\underline{i} \in \mathcal{I}'^{\Delta}} \left(\bigotimes_{\alpha \in \Delta,\,\, \overline{\mathbb{F}_q}} \overline{\mathbb{F}_q} \llbracket \varpi_{\alpha}^{q^{-n}}\rrbracket\right) e_{\underline{i}}  \ar[d,"\varpi^{q^{-n}}\mapsto t^{q^{-n}} \,\, \mathrm{et} \,\, \mathcal{I}'\subset \mathcal{J}"]\\
				
				\bigotimes_{\alpha \in \Delta,\,\, \overline{\mathbb{F}_q}} k^{\mathrm{alg}}\llbracket t_{\alpha}^{\mathbb{R}}\rrbracket \ar[r,"="]  & \bigoplus_{\underline{j} \in \mathcal{J}^{\Delta}} \left(\bigotimes_{\alpha \in \Delta,\,\, \overline{\mathbb{F}_q}} \overline{\mathbb{F}_q} \llbracket t_{\alpha}^{q^{-n}}\rrbracket\right) f_{\underline{j}}
			\end{tikzcd}
		\end{center} L'idéal $J^{\otimes}$ est exactement les éléments de coordonnées dans $J^{\otimes,\mathrm{res}}$, qui s'identifie à $I'^{\otimes,\mathrm{res}}$ via $\varpi\mapsto t$. L'image réciproque de $J^{\otimes}$ est consitutée des éléments de coordonnées dans $I'^{\otimes,\mathrm{res}}$, i.e. les éléments de $I'^{\otimes}$.
		
		Regardons le morphisme $$\bigotimes_{\alpha \in \Delta,\,\, \mathbb{F}_q} \widetilde{E}^+_{\alpha} \hookrightarrow \bigotimes_{\alpha \in \Delta,\,\, \overline{\mathbb{F}_q}} \widetilde{E}'^+_{\alpha}$$ Puisque $\widetilde{E}'^+$ est le complété de $\widetilde{E}^+ \otimes_{\mathbb{F}_q} \overline{\mathbb{F}_q}$, il se trouve qu'une famille $(e_i)_{i\in \mathcal{I}}$ pour $\widetilde{E}^+$ et $\mathbb{F}_q$ fournit également une famille pour $\widetilde{E}'^+$ et $\overline{\mathbb{F}_q}$. Nous obtenons ainsi un diagramme commutatif
		
		\begin{center}
			\begin{tikzcd}
				\bigotimes_{\alpha \in \Delta,\,\, \mathbb{F}_q} \widetilde{E}^+_{\alpha} \ar[r,"="] \ar[d,hook] &   \bigoplus_{\underline{i} \in \mathcal{I}^{\Delta}} \left(\bigotimes_{\alpha \in \Delta,\,\, \mathbb{F}_q} \mathbb{F}_q \llbracket \varpi_{\alpha}^{q^{-n}}\rrbracket\right) e_{\underline{i}}  \ar[d,"\otimes \overline{\mathbb{F}_q}"]\\
				
				\bigotimes_{\alpha \in \Delta,\,\, \overline{\mathbb{F}_q}} \widetilde{E}'^+_{\alpha} \ar[r,"="]  & \bigoplus_{\underline{j} \in \mathcal{I}^{\Delta}} \left(\bigotimes_{\alpha \in \Delta,\,\, \overline{\mathbb{F}_q}} \overline{\mathbb{F}_q} \llbracket t_{\alpha}^{q^{-n}}\rrbracket\right) e_{\underline{i}}
			\end{tikzcd}
		\end{center} Comme les appartenances à $I^{\otimes}$ et $I'^{\otimes}$ se voient coordonnée par coordonnée, il suffit de démontrer que l'image réciproque de $I'^{\otimes,\mathrm{res}}$ par la tensorisation par $\overline{\mathbb{F}_q}$ est $I^{\otimes,\mathrm{res}}$. Prenons $\{1\} \sqcup \mathcal{B}$ une $\mathbb{F}_q$-base de $\overline{\mathbb{F}_q}$. Il est possible d'écrire $$\bigotimes_{\alpha \in \Delta,\,\, \overline{\mathbb{F}_q}} \overline{\mathbb{F}_q} \llbracket t_{\alpha}^{q^{-n}}\rrbracket = \left(\bigotimes_{\alpha \in \Delta,\,\, \mathbb{F}_q} \mathbb{F}_q \llbracket t_{\alpha}^{q^{-n}}\rrbracket\right) \oplus \left[\bigoplus_{b\in \mathcal{B}} \left(\bigotimes_{\alpha \in \Delta,\,\, \mathbb{F}_q} \mathbb{F}_q \llbracket t_{\alpha}^{q^{-n}}\rrbracket\right) b\right].$$ À nouveau, l'appartenance à $I'^{\otimes,\mathrm{res}}$ se lit sur les coordonnées : il faut qu'elles appartiennent à $I^{\otimes,\mathrm{res}}$. Cela conclut.

		\medskip
		
		Nous avons démontré par circonvolutions et tourbillons de notations que $\iota^{-1}(J)=I^{\otimes}$ pour toute famille finie de multi-indices. En prenant l'union, on obtient le résultat pour des familles quelconques de multi-indices.
		
		\medskip
		En particulier, pour tout entier $k\geq 1$, l'image réciproque de l'idéal $(\underline{t})^k$ est l'idéal $(\underline{\varpi})^k$. L'anneau $\widetilde{E}^+_{\Delta}$ est la complétion de $\otimes_{\mathbb{F}_q} \widetilde{E}^+_{\alpha}$ par rapport aux idéaux $(\underline{\varpi})^k$ pour $k\geq 1$. Dire que l'image réciproque de $(\underline{t})^k$ est$(\underline{\varpi})^k$ implique exactement que $\iota$ se complète en un morphisme d'anneaux $\hat{\iota}$ $$\widetilde{E}^+_{\Delta}=\lim \limits_{\substack{\longleftarrow \\ k}} \quot{\left(\bigotimes_{\alpha \in \Delta, \,\,\mathbb{F}_q} \widetilde{E}^+_{\alpha}\right)}{(\underline{\varpi})^k} \hookrightarrow  \lim \limits_{\substack{\longleftarrow \\ k}} \quot{\left(\bigotimes_{\alpha \in \Delta, \, \overline{\mathbb{F}_q}} k^{\mathrm{alg}}\right) \llbracket t_{\alpha}^{\mathbb{R}} \, |\, \alpha \in \Delta \rrbracket}{(\underline{t})^k} =\left(\bigotimes_{\alpha \in \Delta, \, \overline{\mathbb{F}_q}} k^{\mathrm{alg}}\right) \llbracket t_{\alpha}^{\mathbb{R}} \, |\, \alpha \in \Delta \rrbracket$$

		Revenons aux énoncés sur les idéaux. Nous commençons par raffiner notre propriété sur les images inverses en remplaçant $\iota$ par $\hat{\iota}$. Considérons une famille de multi-indice $(\underline{k_i})_{i\in \mathfrak{I}}$ telle que $$\exists n, \, \forall \alpha \in \Delta, \, \exists k_{\alpha}\leq n, \,\, (0,\ldots, 0, k_{\alpha}, 0,\ldots, 0) \in \{\underline{k_i}\, |\, i \in \mathfrak{I}\}.$$ Cette condition sur les indices équivaut à ce que l'idéal $(t^{\underline{k_i}} \, |\, i\in \mathfrak{I})$ est ouvert pour la topologie $(\underline{t})$-adique dans les séries de Hahn-Mal'cev multivariables. Prouver que $\hat{\iota}^{-1}((t^{\underline{k_i}} \, |\, i\in \mathfrak{I}))=(\varpi^{\underline{k_i}} \, |\, i\in \mathfrak{I})$ dans $\widetilde{E}^+_{\Delta}$ se restreint alors à le prouver pour $\otimes_{\alpha,\Delta,\, \mathbb{F}_q} \widetilde{E}^+_{\alpha}$.

		Cette remarque va nous permettre de conclure quant à l'image réciproque de $J_{r,\Delta}$. A première vue, nous voudrions que $\hat{\iota}^{-1}(J_{r,\Delta})=I_{r,\Delta}$ mais $J_{r,\Delta}$ n'est pas engendré par $\{t^{\underline{s}}\, |\, \underline{s}\in \mathbb{N}[q^{-1}]^{\Delta} \,\, \text{tel que} \,\, \sum s_{\alpha}=r\}$. En réalité, $$J_{r,\Delta}=\bigcap_{\substack{r'\in \mathbb{N}[q^{-1}] \\ \mathrm{tel} \,\, \mathrm{que} \,\, r'<r}} \left( t^{\underline{s}} \, \big|\, \underline{s} \in \mathbb{N}[q^{-1}]^{\Delta} \,\, \mathrm{tel} \,\, \mathrm{que} \,\, \sum s_{\alpha}=r'\right).$$ Le calcul des images réciproques des idéaux ouverts passent à l'intersection en $\hat{\iota}^{-1}(J_{r,\Delta})=I_{r^-,\Delta}$.
	\end{proof}

	\begin{coro}\label{edplus_intègre}
		L'anneau $\widetilde{E}_{\Delta}^+$ est intègre.
	\end{coro}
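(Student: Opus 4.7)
The plan is to deduce the integrality of $\widetilde{E}_{\Delta}^+$ from the injection $\hat{\iota}$ constructed in Proposition \ref{inj_1} together with the structural properties of multivariable Hahn-Mal'cev series rings. More precisely, I would use that $\hat{\iota}$ embeds $\widetilde{E}_{\Delta}^+$ into $R:=\bigl(\bigotimes_{\alpha\in\Delta,\,\overline{\mathbb{F}_q}} k^{\mathrm{alg}}\bigr)\llbracket t_\alpha^{\mathbb{R}}\,|\,\alpha\in\Delta\rrbracket$, so that it suffices to show that $R$ is an integral domain.

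For this, the first step is to verify that the coefficient ring $A:=\bigotimes_{\alpha\in\Delta,\,\overline{\mathbb{F}_q}} k^{\mathrm{alg}}$ is a domain. Since $\overline{\mathbb{F}_q}$ is algebraically closed, every field extension $k^{\mathrm{alg}}|\overline{\mathbb{F}_q}$ is a regular (equivalently geometrically integral) extension in the sense of \cite[V §17]{bourbaki_alg}. Tensor products of regular extensions over a field remain regular extensions of that field, so by induction on $|\Delta|$ the iterated tensor product $A$ is a domain (in fact an integral domain containing $\overline{\mathbb{F}_q}$ in such a way that $\overline{\mathbb{F}_q}$ is algebraically closed in its fraction field, which is what keeps the induction running).

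The second step is to leverage the multiplicative norm $|\cdot|_{\Delta,\underline{c}}$ from the Proposition/Définition \ref{expression_val}, which was shown to be multiplicative precisely when the coefficient ring is a domain. So for $A$ integral, if $x,y\in R$ satisfy $xy=0$ then $|x|_{\Delta,\underline{c}}\cdot |y|_{\Delta,\underline{c}}=0$, forcing one factor to be zero, hence $x=0$ or $y=0$. This shows $R$ is an integral domain, and injectivity of $\hat{\iota}$ then transports integrality back to $\widetilde{E}_{\Delta}^+$.

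The main obstacle, in my view, is the verification that the iterated tensor product $A$ is a domain; everything else follows more or less mechanically from the results already at our disposal. In particular, one must be careful that after tensoring two copies of $k^{\mathrm{alg}}$ over $\overline{\mathbb{F}_q}$ one no longer has a field, so the induction step requires the more subtle statement that $B\otimes_{\overline{\mathbb{F}_q}} k^{\mathrm{alg}}$ stays integral whenever $B$ is an integral $\overline{\mathbb{F}_q}$-algebra in which $\overline{\mathbb{F}_q}$ is algebraically closed in $\mathrm{Frac}(B)$. This is exactly the content of the regularity criterion in \cite[V §17]{bourbaki_alg}, and since the base is algebraically closed the hypothesis is automatic at each step.
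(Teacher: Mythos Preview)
Your proof is correct and follows essentially the same approach as the paper: both argue that $A=\bigotimes_{\alpha\in\Delta,\,\overline{\mathbb{F}_q}} k^{\mathrm{alg}}$ is integral via regularity of extensions over an algebraically closed field, deduce that the Hahn--Mal'cev series ring $R$ is integral, and conclude by the injectivity of $\hat{\iota}$. The only cosmetic difference is that the paper invokes directly the general statement (left to the reader earlier) that $A\llbracket t_\alpha^{\Gamma}\,|\,\alpha\in\Delta\rrbracket$ is integral whenever $A$ is, whereas you supply the argument via the multiplicative norm of Proposition~\ref{expression_val}; this is the same underlying mechanism.
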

	\begin{proof}
		Puisque $\overline{\mathbb{F}_q}$ est parfait et algébriquement clos, nous savons grâce à \cite[V \S 5, Prop. 9]{bourbaki_alg} que toutes ses extensions sont régulières, en particulier $\left(\otimes_{\alpha\in \Delta,\,\overline{\mathbb{F}_q}} k^{\mathrm{alg}}\right)$ est intègre. Nous déduisons que l'anneau de séries de Hahn-Mal'cev multivariable est intègre, puis que $\widetilde{E}_{\Delta}^+$ est intègre par injectivité de $\hat{\iota}$.
	\end{proof}
	
	\begin{coro}\label{norm_mult}
	\begin{enumerate}[itemsep=0mm]
			\item La formule $$|x|_{\widetilde{E},\Delta}=e^{-\sup \left\{r\in \mathbb{N}[q^{-1}] \, |\,\, x\in I_{r^-,\Delta}\right\}}$$ fournit une norme multiplicative sur $\widetilde{E}_{\Delta}^+$.
			
			\item Nous avons
			
			$$\forall \underline{k} \in (\mathbb{N}[q^{-1}])^{\Delta}, \,\,\,\hat{\iota}^{-1}\left((t^{\underline{k}})\right)=(\varpi^{\underline{k}}).$$
		\end{enumerate}
	\end{coro}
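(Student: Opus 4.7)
Pour le point 1, la stratégie est d'identifier la formule proposée au tiré en arrière par $\hat{\iota}$ de la norme multiplicative $|\cdot|_{\Delta,(1)}$ sur l'anneau de Hahn-Mal'cev multivariable, définie à la Definition/Proposition \ref{expression_val}. La formule $\hat{\iota}^{-1}(J_{r,\Delta}) = I_{r^-,\Delta}$ établie dans la Proposition \ref{inj_1} montre, pour $x \in \widetilde{E}_{\Delta}^+$ non nul, que $x \in I_{r^-,\Delta}$ si et seulement si $\hat{\iota}(x) \in J_{r,\Delta}$. La densité de $\mathbb{N}[q^{-1}]$ dans $\mathbb{R}$ et la propriété d'atteinte du max côté Hahn-Mal'cev (conséquence du bon ordre des supports) garantissent que $\sup \{ r \in \mathbb{N}[q^{-1}] \mid x \in I_{r^-,\Delta}\} = \max \{ \gamma \in \mathbb{R} \mid \hat{\iota}(x) \in J_{\gamma,\Delta}\}$. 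La multiplicativité se ramène alors à celle de la norme au but : l'anneau des coefficients $\bigotimes_{\alpha \in \Delta, \overline{\mathbb{F}_q}} k^{\mathrm{alg}}$ est intègre par régularité de toute extension d'un corps algébriquement clos (même argument qu'au Corollaire \ref{edplus_intègre}), donc la Definition/Proposition \ref{expression_val} s'applique. L'injectivité de $\hat{\iota}$ conclut.

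Pour le point 2, l'inclusion $(\varpi^{\underline{k}}) \subseteq \hat{\iota}^{-1}((t^{\underline{k}}))$ est immédiate puisque $\hat{\iota}(\varpi^{\underline{k}}) = t^{\underline{k}}$. Pour l'autre inclusion, on commence par la décomposition au but : une analyse directe du support dans Hahn-Mal'cev donne $(t^{\underline{k}}) = \bigcap_n \big( (t^{\underline{k}}) + \sum_{\alpha \in \Delta} (t_{\alpha}^n) \big)$, car un monôme $t^{\underline{\delta}}$ hors de $(t^{\underline{k}})$ admet une coordonnée $\delta_{\alpha}$ strictement inférieure à $k_{\alpha}$ et donc n'appartient pas à $\sum_{\alpha}(t_{\alpha}^n)$ pour $n$ assez grand. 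Pour chaque $n$, la famille $\{\underline{k}\} \cup \{n \cdot e_{\alpha} \mid \alpha \in \Delta\}$ contient un indice le long de chaque axe : on est exactement dans la situation du raffinement final de la preuve de la Proposition \ref{inj_1}, qui fournit $\hat{\iota}^{-1}\big((t^{\underline{k}}) + \sum_{\alpha} (t_{\alpha}^n)\big) = (\varpi^{\underline{k}}) + \sum_{\alpha} (\varpi_{\alpha}^n)$. La commutation de $\hat{\iota}^{-1}$ aux intersections entraîne $\hat{\iota}^{-1}((t^{\underline{k}})) = \bigcap_n \big( (\varpi^{\underline{k}}) + \sum_{\alpha} (\varpi_{\alpha}^n) \big)$.

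Il reste à démontrer que cette intersection vaut $(\varpi^{\underline{k}})$, ce qui est le cœur technique de la preuve. Soit $x$ dans l'intersection ; on écrit pour chaque $n$ une décomposition $x = \varpi^{\underline{k}} y_n + \sum_{\alpha} \varpi_{\alpha}^n z_{n,\alpha}$ avec $y_n, z_{n,\alpha} \in \widetilde{E}_{\Delta}^+$. On obtient alors $\varpi^{\underline{k}}(y_{n+1} - y_n) \in (\underline{\varpi})^n$. Appliquant $\hat{\iota}$, on a $t^{\underline{k}} \cdot \hat{\iota}(y_{n+1} - y_n) \in (\underline{t})^n$. Le point clé est le lemme de division suivant dans l'anneau de Hahn-Mal'cev : si $t^{\underline{k}} y \in (\underline{t})^n$, alors $y \in (\underline{t})^{n - M}$ pour la constante $M := \lceil \sum_{\alpha} k_{\alpha} \rceil + |\Delta|$ ; la preuve se fait coordonnée par coordonnée sur un monôme $t^{\underline{\delta}}$ du support de $y$, en remarquant que $\underline{\delta} + \underline{k}$ domine un multi-indice $\underline{j} \in \mathbb{N}^{\Delta}$ de somme $n$, et en prenant les parties entières inférieures pour construire un $\underline{l} \in \mathbb{N}^{\Delta}$ tel que $\underline{\delta} \geq \underline{l}$ et $\sum l_{\alpha} \geq n - M$. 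La Proposition \ref{inj_1} fournissant $\hat{\iota}^{-1}((\underline{t})^m) = (\underline{\varpi})^m$, on déduit $y_{n+1} - y_n \in (\underline{\varpi})^{n-M}$, donc $(y_n)$ est de Cauchy pour la topologie $(\underline{\varpi})$-adique sur $\widetilde{E}_{\Delta}^+$. La complétude de ce dernier donne $y := \lim y_n$, et un passage à la limite dans $x - \varpi^{\underline{k}} y_n \in (\underline{\varpi})^n$ fournit $x = \varpi^{\underline{k}} y \in (\varpi^{\underline{k}})$.

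Le principal obstacle sera donc la vérification propre de ce lemme de division dans un cadre où les exposants sont fractionnaires : la manipulation des supports demande d'être attentif pour garantir que les inégalités sur les supports de monômes se passent bien à la série entière, ce qui est rendu tautologique par la caractérisation idéal-par-support des ideals monomiaux dans Hahn-Mal'cev. Une fois ce lemme acquis, le reste de la preuve consiste à enchaîner soigneusement les allers-retours entre $\widetilde{E}_{\Delta}^+$ et son plongement via $\hat{\iota}$.
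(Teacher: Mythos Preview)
Ta preuve est correcte et suit la même architecture que celle de l'article pour les deux points : pour le point 1, identification au tiré en arrière de $|\cdot|_{\Delta,(1)}$ via la Proposition \ref{inj_1} ; pour le point 2, écrire $(t^{\underline{k}})$ comme intersection d'idéaux ouverts, tirer en arrière terme à terme par la Proposition \ref{inj_1}, puis montrer que l'intersection côté $\widetilde{E}_{\Delta}^+$ vaut $(\varpi^{\underline{k}})$ via un argument de suite de Cauchy.

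La différence notable est dans l'exécution du point 2. L'article choisit comme idéaux ouverts les boules $J_{n,\Delta}$ de la norme, écrit $x = \varpi^{\underline{k}} x'_n + y_n$ avec $y_n \in I_{n^-,\Delta}$, et utilise directement la \emph{multiplicativité} de $|\cdot|_{\widetilde{E},\Delta}$ établie au point 1 pour déduire que $(x'_n)$ est de Cauchy pour cette norme, avant de comparer à la topologie $(\underline{\varpi})$-adique via l'inclusion $I_{(n+1)|\Delta|^-,\Delta} \subset (\underline{\varpi})^n$. Toi, tu choisis les idéaux axiaux $\sum_\alpha (t_\alpha^n)$, tu repasses par $\hat{\iota}$ et tu établis un lemme de division monomial dans l'anneau de Hahn--Mal'cev (si $t^{\underline{k}} y \in (\underline{t})^n$ alors $y \in (\underline{t})^{n-M}$) pour obtenir la propriété de Cauchy. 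Ton approche rend le point 2 logiquement indépendant du point 1, au prix d'un lemme supplémentaire et d'allers-retours via $\hat{\iota}$ ; celle de l'article est plus courte parce qu'elle recycle le point 1 comme outil, ce qui est la raison d'être de l'ordre des deux énoncés dans le corollaire.
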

	\begin{proof}
		\begin{enumerate}[itemsep=0mm]
			\item Les propriétés de $|\cdot|_{\widetilde{E},\Delta}$ se déduisent alors de celles de $|\cdot|_{\Delta,\underline{1}}$ à la Proposition \ref{expression_val} et des égalités $\hat{\iota}^{-1}(J_{r,\Delta})=I_{r^-,\Delta}$ à la Proposition \ref{inj_1}.

			\item Il faut démontrer que $\hat{\iota}^{-1}((t^{\underline{k}}))\subseteq(\varpi^{\underline{k}})$. Soit $x$ dans l'image réciproque. Nous avons $$\forall n,\,\,\,\hat{\iota}(x)\in (t^{\underline{k}},J_{n,\Delta}).$$ Ce dernier idéal étant $(\underline{\varpi})$-adiquement ouvert, nous avons démontré en prouvant la Proposition \ref{inj_1} que $\forall n, \,\,\, x \in (\varpi^{\underline{k}},I_{n-,\Delta})$. \'Ecrivons $x=\varpi^{\underline{k}} x'_n+y_n$ avec $y_n\in I_{n-,\Delta}$. Alors, $x=\lim \varpi^{\underline{k}} x'_n$ pour la norme $|\cdot|_{\widetilde{E},\Delta}$. Cette dernière étant multiplication, la suite $x'_n$ est également de Cauchy pour la norme $|\cdot|_{\widetilde{E},\Delta}$. Puisque $$I_{(n+1)|\Delta|-,\Delta} \subset I_{n|\Delta|,\Delta} \subset (\underline{\varpi^n}) \subset (\underline{\varpi})^n,$$ la suite $(x'_n)$ est également $(\underline{\varpi})$-adiquement de Cauchy et possède une limite $x'\in \widetilde{E}_{\Delta}^+$ qui vérifie que $x=\varpi^{\underline{k}} x'$.
		\end{enumerate}
	\end{proof}

	\vspace{0.75cm}
	\subsection{Relations de coinduction perfectoïdes}
	
	Pour comprendre les anneaux $\widetilde{F}_{\Delta,q}$ nous les exprimons comme des coinduites des $\widetilde{F}_{\Delta}$, que nous venons d'analyser. L'intuition nous vient du lemme suivant sur les corps finis.
	
		\begin{lemma}\label{identif_coindu_fq}
		Définissons une structure de $\Phi_{\Delta,q',r}^{\mathrm{gp}}$-anneau sur $\mathbb{F}_{q'}$ en en faisant agir le générateur $\varphi_{\Delta,r}$ du quotient $\sfrac{\Phi_{\Delta,q',r}^{\gp}}{\Phi_{\Delta,q'}^{\gp}}\cong \sfrac{\varphi_{\Delta,r}^{\Z}}{\varphi_{\Delta,q'}^{\Z}}$ comme le $r$-Frobenius. Il existe un isomorphisme de $\Phi_{\Delta,q,r}^{\mathrm{gp}}$-anneaux 
		
		$$\bigotimes_{\alpha \in \Delta,\,\, \mathbb{F}_q} \mathbb{F}_{q'} \cong \coindu{\Phi_{\Delta,q',r}^{\mathrm{gp}}}{\Phi_{\Delta,q,r}^{\mathrm{gp}}}{\mathbb{F}_{q'}}.$$
	\end{lemma}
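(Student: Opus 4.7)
Le plan consiste à écrire le morphisme souhaité via la réciprocité de Frobenius appliquée à la multiplication, puis à en déduire la bijectivité en décrivant les deux membres comme produits de $\mathbb{F}_{q'}$ indexés par un même $G$-ensemble fini.

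Je commence par vérifier que la multiplication $m : \bigotimes_{\alpha\in\Delta,\,\mathbb{F}_q} \mathbb{F}_{q'} \to \mathbb{F}_{q'}$, $\otimes a_\alpha \mapsto \prod a_\alpha$, est $\Phi_{\Delta,q',r}^{\gp}$-équivariante : chaque $\varphi_{\alpha,q'}$ agit trivialement aux deux membres (le $q'$-Frobenius est l'identité sur $\mathbb{F}_{q'}$), tandis que $\varphi_{\Delta,r}$ agit comme le $r$-Frobenius sur chaque facteur tensoriel, donc sur le produit. L'adjonction entre restriction et coinduction fournit alors un morphisme $\Phi_{\Delta,q,r}^{\gp}$-équivariant
$$\iota \, :\, \bigotimes_{\alpha\in\Delta,\,\mathbb{F}_q} \mathbb{F}_{q'} \longrightarrow \coindu{\Phi_{\Delta,q',r}^{\gp}}{\Phi_{\Delta,q,r}^{\gp}}{\mathbb{F}_{q'}}, \qquad \iota(x)(g) = m(g\cdot x),$$
qui est un morphisme d'anneaux car $m$ en est un et chaque $g$ agit par automorphisme d'anneaux sur la source.

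Je compare ensuite les deux membres comme produits de corps. Posant $r = p^a$, $q = p^f$, $q' = p^{f'}$ et $s = f'/f = [\mathbb{F}_{q'}:\mathbb{F}_q]$, l'identification $\Phi_{\Delta,p}^{\gp} \cong \mathbb{Z}^\Delta$ via la base $(\varphi_{\alpha,p})$ donne $G := \Phi_{\Delta,q,r}^{\gp} = f\mathbb{Z}^\Delta + a\mathbb{Z}\cdot\mathbf{1}$ et $H := \Phi_{\Delta,q',r}^{\gp} = f'\mathbb{Z}^\Delta + a\mathbb{Z}\cdot\mathbf{1}$, où $\mathbf{1} = \sum_{\alpha} e_\alpha$. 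Un calcul direct d'indice de réseaux fournit $[G:H] = s^{|\Delta|-1}$, de sorte que le but est, comme $\mathbb{F}_{q'}$-algèbre, isomorphe à $\mathbb{F}_{q'}^{s^{|\Delta|-1}}$. La source est étale sur $\mathbb{F}_q$ et son ensemble de morphismes de $\mathbb{F}_q$-algèbres vers $\mathbb{F}_{q'}$ s'identifie à $\mathrm{Gal}(\mathbb{F}_{q'}/\mathbb{F}_q)^\Delta$, sur lequel $\mathrm{Gal}(\mathbb{F}_{q'}/\mathbb{F}_q)$ agit librement par post-composition ; elle se décompose donc également en $\mathbb{F}_{q'}^{s^{|\Delta|-1}}$.

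Il reste à montrer que $\iota$ est bijective. L'action de $G$ sur les idempotents primitifs de la source est donnée par $g\cdot e_\phi = e_{\phi\circ g^{-1}}$, et en écrivant $g \leftrightarrow (c_\alpha)_\alpha \in \mathbb{Z}^\Delta$, on vérifie par calcul direct que $g$ stabilise l'idempotent $e_m$ associé à la multiplication $m$ si et seulement si les $c_\alpha$ sont tous congrus modulo $f'$ — condition qui caractérise précisément $H$ comme sous-groupe de $G$. Ainsi $G$ agit transitivement sur les $s^{|\Delta|-1}$ idempotents primitifs de la source avec stabilisateur $H$, et $\iota$ réalise une bijection entre idempotents primitifs, donc est un isomorphisme d'anneaux. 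L'obstacle principal sera cette identification du stabilisateur avec $\Phi_{\Delta,q',r}^{\gp}$, là où l'interaction entre le Frobenius global $\varphi_{\Delta,r}$ et les Frobenius individuels $\varphi_{\alpha,q}$ se manifeste concrètement.
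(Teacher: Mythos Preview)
Your map $\iota(x)(g)=m(g\cdot x)$ is exactly the paper's map $x\mapsto[\psi\mapsto\psi_{\mathrm{sp\acute{e}}}(x)]$, since $m(\psi\cdot(\otimes\, x_\alpha))=\prod_\alpha\psi_\alpha(x_\alpha)=\psi_{\mathrm{sp\acute{e}}}(\otimes\, x_\alpha)$; the paper simply writes this map down and leaves the verification that it is an isomorphism of $\Phi_{\Delta,q,r}^{\gp}$-anneaux to the reader. You supply that verification via the index computation $[G:H]=s^{|\Delta|-1}$ and the stabilizer argument for $e_m$, which is correct (one cosmetic point: primitive idempotents of the source are indexed by $\mathrm{Gal}(\mathbb{F}_{q'}/\mathbb{F}_q)$-orbits of homomorphisms rather than by single $\phi$, but this does not affect your use of $e_m$ and its $G$-stabilizer).
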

	\begin{proof}
		Pour tout $\psi\in \Phi_{\Delta,q,r}^{\mathrm{gp}}$, nous définissons $$\psi_{\mathrm{sp\acute{e}}} \, : \, \bigotimes_{\alpha \in \Delta,\,\,\mathbb{F}_q} \mathbb{F}_{q'} \rightarrow \mathbb{F}_{q'}, \,\,\, \otimes\, x_{\alpha} \mapsto \prod \psi_{\alpha}(x_{\alpha})$$ et nous considérons \begin{equation*}\label{eq:times} \bigotimes_{\alpha \in \Delta, \,\,\mathbb{F}_q} \mathbb{F}_{q'} \rightarrow \coindu{\Phi_{\Delta,q',r}^{\mathrm{gp}}}{\Phi_{\Delta,q,r}^{\mathrm{gp}}}{\mathbb{F}_{q'}}, \,\,\, x\mapsto [\psi \mapsto \psi_{\spe}(x)] \tag{*1} \end{equation*} pour lequel nous laissons aux lecteurs et lectrices le soin de vérifier qu'il s'agit d'un isomorphisme de \linebreak$\Phi_{\Delta,q,r}^{\mathrm{gp}}$-anneaux.
	\end{proof}
	
	Gardons en tête les notations de la preuve précédente pour établir un analogue pour nos anneaux multivariables. Nous nous plaçons dans le même contexte qu'à la sous-section précédente.
	
	\begin{prop}\label{identif_coindu_perf}
		Rappelons que $\widetilde{F}_{\Delta}^+$ est muni d'une structure de $\Phi_{\Delta,q',r}^{\mathrm{gp}}$-anneau et $\widetilde{F}_{\Delta,q}^+$ d'une structure de $\Phi_{\Delta,q,r}^{\mathrm{gp}}$-anneau. Il existe un isomorphisme canonique de $\Phi_{\Delta,q,r}^{\mathrm{gp}}$-anneaux
		
		$$\widetilde{F}_{\Delta,q}^+ \cong \coindu{\Phi_{\Delta,q',r}^{\mathrm{gp}}}{\Phi_{\Delta,q,r}^{\mathrm{gp}}}{\widetilde{F}_{\Delta}^+}.$$
		C'est un homéomorphisme pour les topologies discrètes (resp. adiques) sur $\widetilde{F}_{\Delta,q}^+$ et $\widetilde{F}_{\Delta}^+$, et la topologie produit sur la coinduite. Le même résultat est vrai pour $\widetilde{F}_{\Delta,q}$ et $\widetilde{F}_{\Delta}$.
	\end{prop}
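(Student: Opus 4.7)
The plan is to lift the isomorphism of Lemma \ref{identif_coindu_fq} from the field $\mathbb{F}_{q'}$ to $\widetilde{F}^+_\Delta$, then localize to obtain the statement for $\widetilde{F}_{\Delta,q}$. First I would define the specialization maps. For $\psi \in \Phi_{\Delta,q,r}^{\gp}$, writing $\psi = \varphi_{\Delta,r}^m \prod_\alpha \varphi_{\alpha,q}^{n_\alpha}$, set $\psi_\alpha := \varphi_r^m \varphi_q^{n_\alpha} \in \mathrm{End}(\widetilde{F}^+_\alpha)$ and define
$$\psi_\spe \,:\, \bigotimes_{\alpha, \mathbb{F}_q} \widetilde{F}^+_\alpha \longrightarrow \widetilde{F}^+_\Delta, \qquad \otimes\, y_\alpha \longmapsto \otimes_{\mathbb{F}_{q'}} \psi_\alpha(y_\alpha).$$
The well-definedness as an $\mathbb{F}_q$-balanced multilinear map is the same check as in Lemma \ref{identif_coindu_fq}: for $c \in \mathbb{F}_q$, $\psi_\alpha(c) = \varphi_r^m(c)$ independently of $\alpha$, so it may be freely moved across tensor factors of the target $\otimes_{\mathbb{F}_{q'}}$. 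Each $\psi_\alpha$ preserves the $\varpi$-adic filtration on $\widetilde{F}^+_\alpha$, whence $\psi_\spe$ is $(\underline{\varpi})$-adically continuous and extends to $\widetilde{F}^+_{\Delta,q} \to \widetilde{F}^+_\Delta$. The same computation as in the field case shows $(\psi'\psi)_\spe = \psi' \cdot \psi_\spe$ for $\psi' \in \Phi_{\Delta,q',r}^{\gp}$, so the family $\{\psi_\spe\}_\psi$ assembles into a $\Phi_{\Delta,q,r}^{\gp}$-equivariant ring homomorphism
$$\Theta \,:\, \widetilde{F}^+_{\Delta,q} \longrightarrow \coindu{\Phi_{\Delta,q',r}^{\gp}}{\Phi_{\Delta,q,r}^{\gp}}{\widetilde{F}^+_\Delta}, \qquad x \longmapsto [\psi \mapsto \psi_\spe(x)].$$

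To prove that $\Theta$ is an isomorphism, I would reduce to Lemma \ref{identif_coindu_fq} via the commutative square
$$\begin{tikzcd}[column sep=large]
\widetilde{F}^+_{\Delta,q} \ar[r,"\Theta"] \ar[d,"\sim"'] & \coindu{\Phi_{\Delta,q',r}^{\gp}}{\Phi_{\Delta,q,r}^{\gp}}{\widetilde{F}^+_\Delta} \ar[d,"\sim"] \\
\bigl(\bigotimes_{\alpha,\mathbb{F}_q} \mathbb{F}_{q'}\bigr) \otimes_{\mathbb{F}_{q'}} \widetilde{F}^+_\Delta \ar[r,"\Theta_0 \otimes \mathrm{Id}"'] & \coindu{\Phi_{\Delta,q',r}^{\gp}}{\Phi_{\Delta,q,r}^{\gp}}{\mathbb{F}_{q'}} \otimes_{\mathbb{F}_{q'}} \widetilde{F}^+_\Delta
\end{tikzcd}$$
where $\Theta_0$ is the isomorphism of Lemma \ref{identif_coindu_fq}, the right vertical uses that coinduction commutes with tensoring by an $\mathbb{F}_{q'}$-algebra (legitimate because $[\Phi_{\Delta,q,r}^{\gp}:\Phi_{\Delta,q',r}^{\gp}]$ is finite), and the left vertical is obtained as follows: one fixes a slot $\alpha_0 \in \Delta$, views $\bigotimes_{\alpha,\mathbb{F}_q} \mathbb{F}_{q'}$ as an $\mathbb{F}_{q'}$-algebra via this slot, and uses the canonical factorization $\bigotimes_{\alpha,\mathbb{F}_q} \widetilde{F}^+_\alpha \cong \bigl(\bigotimes_{\alpha,\mathbb{F}_q} \mathbb{F}_{q'}\bigr) \otimes_{\mathbb{F}_{q'}} \bigotimes_{\alpha,\mathbb{F}_{q'}} \widetilde{F}^+_\alpha$ of uncompleted tensors, then extends it past the $(\underline{\varpi})$-adic completion using that $\bigotimes_{\alpha,\mathbb{F}_q}\mathbb{F}_{q'}$ is a finite étale $\mathbb{F}_{q'}$-algebra (so completion commutes with this finite free base change). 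The bottom arrow is then an isomorphism by Lemma \ref{identif_coindu_fq}, forcing $\Theta$ to be one as well.

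For topological compatibility, the discrete case is immediate, and in the adic case each $\psi_\spe$ respects the $(\underline{\varpi})$-adic filtrations, so $\Theta$ is bicontinuous with respect to the $(\underline{\varpi})$-adic topology and the product topology on the coinduction. The localized version for $\widetilde{F}_{\Delta,q}$ and $\widetilde{F}_\Delta$ follows by inverting $\varpi_\Delta$ on both sides: the $\Phi_{\Delta,q,r}^{\gp}$-action preserves this multiplicative subset up to associates, and finite coinduction commutes with localization. The main obstacle is the left vertical identification; while straightforward for uncompleted tensors, its compatibility with the $(\underline{\varpi})$-adic completion requires carefully tracking the Frobenius actions across the finite free $\mathbb{F}_{q'}$-base change, and checking that the chosen slot $\alpha_0$ does not affect the final identification up to the coinduction relation.
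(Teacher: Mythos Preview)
Your overall strategy---defining the specialisation maps $\psi_{\spe}$ and assembling them into $\Theta$---matches the paper exactly, and your verification that $\Theta$ is $\Phi_{\Delta,q,r}^{\gp}$-equivariant is fine. The gap is in the commutative square you use to prove $\Theta$ is bijective.

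The left vertical isomorphism you invoke,
\[
\bigotimes_{\alpha,\mathbb{F}_q} \widetilde{F}^+_\alpha \;\cong\; \Bigl(\bigotimes_{\alpha,\mathbb{F}_q} \mathbb{F}_{q'}\Bigr) \otimes_{\mathbb{F}_{q'},\,\text{slot }\alpha_0} \bigotimes_{\alpha,\mathbb{F}_{q'}} \widetilde{F}^+_\alpha,
\]
does not exist as a ``canonical factorisation''. Already for $|\Delta|=2$ one has $A_1\otimes_{\mathbb{F}_q}A_2 \cong \prod_{\sigma\in\gal{\mathbb{F}_{q'}}{\mathbb{F}_q}} (A_1\otimes_{\mathbb{F}_{q'},\sigma}A_2)$, a product of \emph{twisted} tensor products, whereas your right-hand side is $\prod_\sigma(A_1\otimes_{\mathbb{F}_{q'}}A_2)$ with all factors untwisted. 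These become isomorphic only after applying Frobenius powers to the individual factors (legitimate since $\widetilde{F}^+$ is perfect), but then the map is no longer the tautological one and the commutativity of your square is precisely the content of the proposition---you have not reduced anything. The paper avoids this circularity by peeling off one index $\beta\in\Delta$ at a time: at each step the twist $\psi_\beta$ appears explicitly in the identification $\mathbb{F}_{q'}\otimes_{\psi_\beta,\mathbb{F}_{q'}}\widetilde{F}^+_\beta \cong \widetilde{F}^+_\beta$, and iterating yields the map $\otimes\,y_\alpha\mapsto[\psi\mapsto\otimes\,\psi_\alpha(y_\alpha)]$ directly as an isomorphism.

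A second, smaller gap: the ``adic'' topology in the statement is the $\varpi_\Delta$-adic one, not the $(\underline{\varpi})$-adic one you treat. The paper handles this separately, using the integrality of $\widetilde{F}^+_\Delta$ (Corollaire~\ref{edplus_intègre}) to show that $x\in\varpi_\Delta\widetilde{F}^+_{\Delta,q}$ if and only if each $\psi_{\spe}(x)\in\psi_{\spe}(\varpi_\Delta)\widetilde{F}^+_\Delta$, together with the fact that the radicals of $(\varpi_\Delta)$ and $(\psi_{\spe}(\varpi_\Delta))$ agree.
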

	\begin{proof}
		Nous appelons encore $\psi_{\spe}$ le morphisme de $\widetilde{F}_{\Delta,q}$ dans $\widetilde{F}_{\Delta}$ donné par la même formule que dans la démonstration du lemme \ref{identif_coindu_fq}.
		
		Choisissons un système $\mathcal{R}$ de représentants du quotient fini $\sfrac{\Phi_{\Delta,q,r}^{\mathrm{gp}}}{\Phi_{\Delta,q',r}^{\mathrm{gp}}}$. Nous avons un isomorphisme d'anneaux $$\coindu{\Phi_{\Delta,q',r}^{\mathrm{gp}}}{\Phi_{\Delta,q,r}^{\mathrm{gp}}}{\mathbb{F}_{q'}} \xrightarrow{\sim} \prod_{\psi\in \mathcal{R}} \mathbb{F}_q',\,\,\,\, f \mapsto (f(\psi))_{\psi\in \mathcal{R}}.$$ 
		
		Pour n'importe quel $\beta \in \Delta$, nous pouvons établir la suite d'isomorphismes d'anneaux suivante :
		
		\begin{center}
			\begin{align*}
				\left(\bigotimes_{\alpha\in \Delta \backslash \{\beta\}, \mathbb{F}_q} \mathbb{F}_{q'} \right) \otimes_{\mathbb{F}_q} \widetilde{F}_{\beta}^+ & \cong \left(\bigotimes_{\alpha\in \Delta , \mathbb{F}_q} \mathbb{F}_{q'} \right) \otimes_{\mathbb{F}_{q',\beta}} \widetilde{F}_{\beta}^+ \\
				&\cong \left( \prod_{\psi \in \mathcal{R}} \mathbb{F}_{q'}\right) \otimes_{\mathbb{F}_{q',\beta}} \widetilde{F}_{\beta}^+ \\
				& \cong \prod_{\psi\in \mathcal{R}} \left( \mathbb{F}_{q'} \otimes_{\psi_{\beta},\mathbb{F}_{q'}} \widetilde{F}_{\beta}^+\right) \\
				& \cong \prod_{\psi\in \mathcal{R}} \widetilde{F}_{\beta}^+ \\
				& \cong \coindu{\Phi_{\Delta,q',r}^{\mathrm{gp}}}{\Phi_{\Delta,q,r}^{\mathrm{gp}}}{\widetilde{F}_{\beta}^+}
			\end{align*}
		\end{center} où $\mathbb{F}_{q',\beta}$ est une notation pour indiquer que le produit tensoriel est vu comme $\mathbb{F}_{q'}$-algèbre via son facteur $\beta$ et où $\psi_{\beta}$ est la composante de $\psi$ sur $\varphi_{\beta,p}^\N$. Le passage à la troisième ligne correspond simplement à expliciter cette structure d'algèbre sur chaque facteur du produit, et à faire commuter le produit fini au produit tensoriel. Le passage à la quatrième ligne correspond à appliquer au facteur $\psi$ l'isomorphisme $x\otimes y \mapsto x \psi_{\beta}(y)$. Nous vérifions que cet isomorphisme est un isomorphisme de $\Phi_{\Delta,q,r}^{\mathrm{gp}}$-anneau en suivant l'image de $$\chi\bigg(\left(\otimes_{\alpha \neq \beta} x_{\alpha} \right) \otimes y_{\beta}\bigg)=\otimes_{\alpha \neq \beta} \chi_{\alpha}(x_{\alpha})\otimes\chi_{\beta}(y_{\beta})$$ le long des isomorphismes et en vérifiant qu'elle vaut $$\left[\psi \mapsto \prod_{\alpha \neq \beta} (\psi\chi)_{\alpha}(x_{\alpha})\times  (\psi \chi)_{\beta}(y_{\beta})\right].$$
		
		Une itération de ces arguments fournit un isomorphisme de $\Phi_{\Delta,q,r}^{\mathrm{gp}}$-anneaux : 
		
		\begin{equation*}\label{eqn:grosequation}\bigotimes_{\alpha \in \Delta,\mathbb{F}_q} \widetilde{F}_{\alpha}^+ \xrightarrow{\sim} \bigcoindu{\Phi_{\Delta,q',r}^{\mathrm{gp}}}{\Phi_{\Delta,q,r}^{\mathrm{gp}}}{\bigotimes_{\alpha \in \Delta, \mathbb{F}_{q'}} \widetilde{F}_{\alpha}^+}, \,\,\,\, \otimes \, y_{\alpha} \mapsto \left[\psi \mapsto \psi_{\mathrm{sp\acute{e}}}(\otimes y_{\alpha})= \otimes  \psi_{\alpha}(y_{\alpha})\right].\tag{**}\end{equation*} Attention, les arguments de $\psi_{\mathrm{sp\acute{e}}}$ sont des tenseurs sur $\mathbb{F}_q$ mais les valeurs de sortie sont des tenseurs sur $\mathbb{F}_{q'}$.

		Grâce à l'expression explicite du morphisme, chaque $\varpi_{\alpha}$ est envoyé dans la coinduite sur $(\psi_{\alpha}(\varpi_{\alpha}))_{\psi\in\mathcal{R}}$. Les puissances de l'idéal $\left((\psi_{\alpha}(\varpi_{\alpha}))_{\psi\in\mathcal{R}} \,\big|\, \alpha \in \Delta\right)$ et de l'idéal $\left((\varpi_{\alpha})_{\psi\in \mathcal{R}}\,\big|\, \alpha \in \Delta\right)$ étant cofinales les unes dans les autres, l'isomorphisme est un homéomorphisme pour la topologie $(\underline{\varpi})$-adique à la source et et le produit des topologies $(\underline{\varpi})$-adiques au but. L'isomorphisme se complète donc en un isomorphisme de $\Phi_{\Delta,q,r}^{\mathrm{gp}}$-anneaux $$\widetilde{F}_{\Delta,q}^+\cong \coindu{\Phi_{\Delta,q',r}^{\mathrm{gp}}}{\Phi_{\Delta,q,r}^{\mathrm{gp}}}{\widetilde{F}_{\Delta}^+}.$$ C'est évidemment un homéomorphisme pour les topologies discrètes partout.
		
		Par intégrité de $\widetilde{F}_{\Delta}^+$, un élément $x \in \widetilde{F}_{\Delta}^+$ appartient à $\varpi_{\Delta}\widetilde{F}_{\Delta,q}^+$ si et seulement si chaque $\psi_{\mathrm{sp\acute{e}}}(x)$ appartient à $\psi_{\mathrm{sp\acute{e}}}(\varpi_{\Delta}) \widetilde{F}_{\Delta}^+$. En écrivant la coinduite comme un produit fini et en remarquant que tous les $(\psi_{\spe}(\varpi_{\Delta}))$ ont même radical, c'est un homéomorphisme pour la topologie $\varpi_{\Delta}$-adique et le produit des topologies $\varpi_{\Delta}$-adiques. 
			
		En inversant $\varpi_{\Delta}$ et en remarquant que $(\varpi_{\Delta})$ et $(\psi_{\mathrm{sp\acute{e}}}(\varpi_{\Delta}))$ ont même radical dans $\widetilde{F}_{\Delta}$, on obtient l'isomorphisme de $\Phi_{\Delta,q,r}^{\mathrm{gp}}$-anneaux continus pour les topologies adiques.
	\end{proof}
	
	\begin{coro}\label{sanstors_perf}
	L'anneau $\widetilde{F}_{\Delta,q}^+$ est réduit et sans $\widetilde{E}_{\Delta}^+$-torsion. 
	\end{coro}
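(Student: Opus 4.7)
La stratégie repose sur trois ingrédients déjà établis : la décomposition en coinduite de la Proposition \ref{identif_coindu_perf}, qui identifie $\widetilde{F}_{\Delta,q}^+$ à un produit fini de copies de $\widetilde{F}_\Delta^+$ ; l'analogue pour $\widetilde{F}$ de la Proposition \ref{inj_1}, donnant l'intégrité de $\widetilde{F}_\Delta^+$ ; et la presque-liberté de $\widetilde{F}_{\Delta,q}^+$ comme $\widetilde{E}_\Delta^+$-module, via l'argument du Lemme \ref{module_induit_multi}.

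D'abord, j'établirais que $\widetilde{F}_\Delta^+$ est intègre. Puisque $\widetilde{F}$ est un corps perfectoïde dont le sous-corps de constantes est $\mathbb{F}_{q^f}$ et le corps résiduel $l$ est parfait, les arguments de la Proposition \ref{inj_1} et du Corollaire \ref{edplus_intègre} s'appliquent verbatim (en remplaçant $\mathbb{F}_q$, $\overline{\mathbb{F}_q}$ et $k$ par $\mathbb{F}_{q^f}$, $\overline{\mathbb{F}_{q^f}}$ et $l$) : on construit une injection $\widetilde{F}_\Delta^+\hookrightarrow \big(\bigotimes_{\alpha,\overline{\mathbb{F}_{q^f}}} l^{\mathrm{alg}}\big)\llbracket t_\alpha^{\mathbb{R}} \,|\, \alpha\in \Delta\rrbracket$, et l'anneau cible est intègre puisque $l^{\mathrm{alg}}/\overline{\mathbb{F}_{q^f}}$ est régulière (cible algébriquement close). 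La Proposition \ref{identif_coindu_perf} donne ensuite un isomorphisme d'anneaux $\widetilde{F}_{\Delta,q}^+\cong \prod_{\psi\in \mathcal{R}} \widetilde{F}_\Delta^+$, où $\mathcal{R}$ est un système fini de représentants de $\sfrac{\Phi_{\Delta,q,r}^{\gp}}{\Phi_{\Delta,q^f,r}^{\gp}}$. Comme un produit fini d'anneaux intègres est réduit, $\widetilde{F}_{\Delta,q}^+$ est réduit.

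Pour l'absence de $\widetilde{E}_\Delta^+$-torsion, j'appliquerais la preuve du Lemme \ref{module_induit_multi} à l'extension $\widetilde{F}|\widetilde{E}$ (en y prenant $\widetilde{F}:=\widetilde{E}$ et $\widetilde{F}':=\widetilde{F}$) pour obtenir un presque-$(\widetilde{E}_\Delta^+,\varpi_\Delta)$-isomorphisme $\bigoplus_{\underline{i}\in \llbracket 1,d\rrbracket^\Delta} \widetilde{E}_\Delta^+\cdot \prod_\alpha x_{i_\alpha,\alpha}\to \widetilde{F}_{\Delta,q}^+$. Étant donnés $x\in \widetilde{E}_\Delta^+$ non nul et $y\in \widetilde{F}_{\Delta,q}^+$ vérifiant $xy=0$, la presque-liberté permet d'écrire $\varpi_\Delta^n y=\sum_{\underline{i}} a_{\underline{i}}\prod_\alpha x_{i_\alpha,\alpha}$ pour un certain $n\geq 1$ et $a_{\underline{i}}\in\widetilde{E}_\Delta^+$. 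Multiplier par $x$ puis utiliser la presque-indépendance linéaire des $\prod_\alpha x_{i_\alpha,\alpha}$ donne $\varpi_\Delta^m xa_{\underline{i}}=0$ pour tout $\underline{i}$ et un $m\geq 0$. Comme $\widetilde{E}_\Delta^+$ est intègre (Corollaire \ref{edplus_intègre}), il vient $a_{\underline{i}}=0$ pour tout $\underline{i}$, puis $\varpi_\Delta^n y=0$. Or la décomposition $\widetilde{F}_{\Delta,q}^+\cong \prod_\psi \widetilde{F}_\Delta^+$ écrit $\widetilde{F}_{\Delta,q}^+$ comme produit fini d'anneaux intègres dans lesquels $\varpi_\Delta$ est non nul (via le plongement de Hahn-Mal'cev), donc $\varpi_\Delta$ n'y est pas diviseur de zéro et $y=0$.

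Le point le plus délicat sera la manipulation rigoureuse des presque-mathématiques : préciser le sens exact de la \og presque-indépendance linéaire \fg{} des $\prod_\alpha x_{i_\alpha,\alpha}$ (à savoir que toute relation $\sum b_{\underline{i}}\prod_\alpha x_{i_\alpha,\alpha}=0$ dans $\widetilde{F}_{\Delta,q}^+$ implique $\varpi_\Delta^m b_{\underline{i}}=0$), et contrôler les multiples de $\varpi_\Delta$ qui apparaissent à chaque étape. Une alternative éludant les presque-mathématiques consisterait à démontrer directement l'injectivité de chaque composante $\psi_{\mathrm{sp\acute{e}}}:\widetilde{E}_\Delta^+\to \widetilde{F}_\Delta^+$ de la décomposition en coinduite, en exploitant la compatibilité de ces applications avec les normes multiplicatives du Corollaire \ref{norm_mult}, puisque l'intégrité du produit $\prod_\psi \widetilde{F}_\Delta^+$ se ramène à l'injectivité de chaque projection de la structure de $\widetilde{E}_\Delta^+$-algèbre.
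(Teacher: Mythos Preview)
Your argument is correct, and for reducedness it is exactly the paper's: identify $\widetilde{F}_{\Delta,q}^+$ with a finite product of copies of the integral domain $\widetilde{F}_\Delta^+$ via Proposition~\ref{identif_coindu_perf}.

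For the torsion-freeness, however, you take a detour through the almost-mathematics of Lemme~\ref{module_induit_multi}, and then still need the coinduction decomposition at the end to conclude that $\varpi_\Delta$ is a non-zero-divisor. The paper instead uses \emph{only} the coinduction isomorphism, essentially the route you sketch as an ``alternative'' in your last paragraph: under $\widetilde{F}_{\Delta,q}^+\cong\prod_{\psi\in\mathcal{R}}\widetilde{F}_\Delta^+$, the structure map from $\widetilde{E}_\Delta^+$ lands in the $\psi$-component via the composite of the automorphism $\psi$ of $\widetilde{E}_\Delta^+$ (partial Frobenii on a perfect ring) with the injection $\widetilde{E}_\Delta^+\hookrightarrow\widetilde{F}_\Delta^+$ of Lemme~\ref{description_construction_produit}; each component map is therefore injective, and since $\widetilde{F}_\Delta^+$ is an integral domain, each factor is $\widetilde{E}_\Delta^+$-torsion-free, hence so is the product. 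This avoids the almost-freeness entirely and makes the ``point délicat'' you flag disappear. Your route does work, but it proves the result twice: once almost, then exactly via the coinduction you could have used from the start.
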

	\begin{proof}
		La coinduite est un anneau de fonctions à valeurs dans $\widetilde{F}^+_{\Delta}$. Nous avons prouvé que ce dernier est intègre au Corollaire \ref{edplus_intègre} donc la coinduite est réduite.
		
		Pour la torsion, utiliser que le plongement de $\widetilde{E}_{\Delta}^+$ dans la coinduite s'écrit $\prod \psi$, que chaque $\psi$ est injectif et que $\widetilde{F}_{\Delta}^+$ est sans $\widetilde{E}_{\Delta}^+$-torsion.
	\end{proof}
	
	\begin{coro}\label{inv_phi_carp_perf}
		L'inclusion $\mathbb{F}_r\subseteq \widetilde{F}_{\Delta,q}^{\Phi_{\Delta,q,r}^{\gp}}$ est une égalité.
	\end{coro}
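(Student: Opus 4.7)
The plan is to exploit Proposition \ref{identif_coindu_perf}, which provides an isomorphism of $\Phi_{\Delta,q,r}^{\gp}$-algebras $\widetilde{F}_{\Delta,q} \cong \coindu{\Phi_{\Delta,q',r}^{\gp}}{\Phi_{\Delta,q,r}^{\gp}}{\widetilde{F}_{\Delta}}$. The subgroup $\Phi_{\Delta,q',r}^{\gp}$ has finite index in $\Phi_{\Delta,q,r}^{\gp}$ (the quotient is generated by the classes of the $\varphi_{\alpha,q}$, each of order dividing $f$, since $\varphi_{\alpha,q}^f = \varphi_{\alpha,q'} \in \Phi_{\Delta,q',r}^{\gp}$). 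The elementary invariants formula $(\coindu{H}{G}{-})^G \cong (-)^H$---which holds because a $G$-invariant $H$-equivariant map $G \to \widetilde{F}_{\Delta}$ must be constant on left $H$-cosets, hence fully determined by its value at $e$, which lies in $\widetilde{F}_{\Delta}^H$---reduces the statement to the identity $\widetilde{F}_{\Delta}^{\Phi_{\Delta,q',r}^{\gp}} = \mathbb{F}_r$.

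To establish this identity, I would first test invariance under the single element $\varphi_{\Delta,q'} = \prod_{\alpha} \varphi_{\alpha,q'} \in \Phi_{\Delta,q',r}^{\gp}$, which acts on $\widetilde{F}_{\Delta}$ as the $q'$-th power Frobenius. Any nonzero $x \in \widetilde{F}_{\Delta}$ with $x^{q'} = x$ is a root of $T^{q'-1}-1$; since this polynomial splits completely over $\mathbb{F}_{q'}$ and $\widetilde{F}_{\Delta}$ is an integral domain by Proposition \ref{integrite_edplus}, such a root must coincide with one of the $q'-1$ elements of $\mathbb{F}_{q'}^{\times} \subseteq \widetilde{F}_{\Delta}$. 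Hence $\widetilde{F}_{\Delta}^{\varphi_{\Delta,q'}} \subseteq \mathbb{F}_{q'}$. The remaining generator $\varphi_{\Delta,r}$ of $\Phi_{\Delta,q',r}^{\gp}$ acts on $\mathbb{F}_{q'}$ as the $r$-th power map, whose fixed subfield is $\mathbb{F}_r$ (using the tower $\mathbb{F}_r \subseteq \mathbb{F}_q \subseteq \mathbb{F}_{q'}$).

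The reverse inclusion is transparent: every $\lambda \in \mathbb{F}_r \subseteq \mathbb{F}_{q'} \subseteq \widetilde{F}_{\Delta}$ is fixed by each $\varphi_{\alpha,q'}$ (which acts as the $q'$-th power on the $\alpha$-th tensor factor, hence as the identity on $\mathbb{F}_{q'}$) and by $\varphi_{\Delta,r}$ (the $r$-th power, the identity on $\mathbb{F}_r$). The entire weight of the proof is thus carried by Proposition \ref{identif_coindu_perf} together with the integrality of $\widetilde{F}_{\Delta}$; the remaining coinduction-invariants and root-of-unity manipulations present no serious obstacle.
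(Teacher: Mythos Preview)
Your argument is correct, and the coinduction reduction via Proposition \ref{identif_coindu_perf} is exactly the paper's first step. The difference lies in how one shows $\widetilde{F}_{\Delta}^{\Phi_{\Delta,q',r}^{\gp}}=\mathbb{F}_r$. The paper does not use the root-counting argument; instead, from $\varphi_{\Delta,r}(y)=y$ it derives the identity $(\varpi_{\Delta}^n y)^r=\varpi_{\Delta}^{(r-1)n}(\varpi_{\Delta}^n y)$, applies the multiplicative norm $|\cdot|_{\widetilde{F},\Delta}$ of Corollaire \ref{norm_mult} to force $y=0$ or $|y|=1$, and then approximates $y$ by a constant $y_0$ and iterates. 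Your route is more elementary: once one knows $\widetilde{F}_{\Delta}$ is an integral domain (Corollaire \ref{edplus_intègre} applied with $\widetilde{F}$ in place of $\widetilde{E}$), the equation $x^{q'}=x$ (resp.\ $x^r=x$) has at most $q'$ (resp.\ $r$) solutions, all already visible in $\mathbb{F}_{q'}$ (resp.\ $\mathbb{F}_r$). In fact you could shorten further by applying the domain argument directly with $\varphi_{\Delta,r}$, since $\varphi_{\Delta,r}$ acts as the $r$-th power on all of $\widetilde{F}_{\Delta}$; the detour through $\mathbb{F}_{q'}$ is unnecessary. The advantage of your approach is that it bypasses the norm machinery of the appendix entirely; the paper's approach has the merit of illustrating that machinery, which is used elsewhere. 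One minor remark: your parenthetical description of the quotient $\Phi_{\Delta,q,r}^{\gp}/\Phi_{\Delta,q',r}^{\gp}$ is slightly loose (there may be extra relations coming from $\varphi_{\Delta,r}$), but this plays no role, since the Shapiro-type identity $(\coindu{H}{G}{M})^G\cong M^H$ holds for any subgroup $H\le G$, finite index or not.
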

	\begin{proof}
		Grâce à l'identification à une coinduite, on se ramène à prouver que l'inclusion $\mathbb{F}_r\subseteq \widetilde{F}_{\Delta}^{\Phi_{\Delta,q',r}^{\gp}}$ est une égalité.
		
		Soit $y$ un élément invariant. Nous fixons $n$ tel que $\varpi_{\Delta}^n y \in \widetilde{F}_{\Delta}^+$. En appliquant $\varphi_{\Delta,r}$, on trouve $$(\varpi_{\Delta}^n y)^r=\varpi_{\Delta}^{rn} \varphi_{\Delta,r}(y)=\varpi_{\Delta}^{(r-1)n} (\varpi_{\Delta}^n y).$$ En appliquant $|\cdot|_{\widetilde{F},\Delta}$ dont nous avons prouvé au Corollaire \ref{expression_val} qu'elle est multiplicative, nous trouvons \linebreak$|\varpi_{\Delta}^n y|_{\widetilde{F},\Delta}=0$ ou $|\varpi_{\Delta}^n y|_{\widetilde{F},\Delta}=|\varpi_{\Delta}^n|_{\widetilde{F},\Delta}$. En utilisant la séparation de la norme, sa multiplicativité et le fait que $\varpi_{\Delta}^n$ n'est pas diviseur de zéro, on en déduit que $y=0$ ou $y\in \widetilde{F}_{\Delta}^+$ de norme $1$. Il existe donc $y_0 \in \mathbb{F}_{q'}$ tel que $|y-y_0|_{\widetilde{F},\Delta}<1$. En appliquant le même raisonnement à cette différence, on trouve $y=y_0$.
	\end{proof}
	
	\begin{coro}\label{cestperfectoid}
		Si l'on munit $\widetilde{F}_{\Delta,q}$ de topologie adique, la paire $(\widetilde{F}_{\Delta,q},\widetilde{F}_{\Delta,q}^+)$ est une paire de Huber perfectoïde.
	\end{coro}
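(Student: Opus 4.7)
L'approche consiste à vérifier directement les axiomes d'une paire de Tate perfectoïde pour $(\widetilde{F}_{\Delta,q}, \widetilde{F}_{\Delta,q}^+)$ avec $\varpi_\Delta$ comme pseudo-uniformisante. En caractéristique $p$, la condition $p \in \varpi_\Delta^p \widetilde{F}_{\Delta,q}^+$ est vacueuse. Le sous-anneau $\widetilde{F}_{\Delta,q}^+$ est ouvert par construction de la topologie adique, et la topologie induite y est la topologie $\varpi_\Delta$-adique.

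Les conditions élémentaires sont aisées. Le caractère borné est immédiat : pour $U = \varpi_\Delta^n \widetilde{F}_{\Delta,q}^+$, on prend $V = U$ et alors $\widetilde{F}_{\Delta,q}^+ \cdot V \subseteq U$. La complétude $\varpi_\Delta$-adique est plus subtile puisque $\widetilde{F}_{\Delta,q}^+$ a été défini comme complétion $(\underline{\varpi})$-adique (topologie strictement plus fine). Elle découle cependant de l'inclusion $\varpi_\Delta^n \widetilde{F}_{\Delta,q}^+ \subseteq (\underline{\varpi})^{n|\Delta|} \widetilde{F}_{\Delta,q}^+$ : toute suite $\varpi_\Delta$-adiquement de Cauchy est automatiquement $(\underline{\varpi})$-adiquement de Cauchy, donc convergente dans $\widetilde{F}_{\Delta,q}^+$ par construction ; la séparation $\varpi_\Delta$-adique se déduit de même de la séparation $(\underline{\varpi})$-adique.

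L'axiome de Frobenius $x \mapsto x^p \, : \, \widetilde{F}_{\Delta,q}^+ / \varpi_\Delta \xrightarrow{\sim} \widetilde{F}_{\Delta,q}^+ / \varpi_\Delta^p$ découle du caractère parfait de $\widetilde{F}_{\Delta,q}^+$ (Proposition \ref{integrite_edplus}) : le Frobenius absolu $\phi$ y est bijectif, donc la surjectivité modulo $\varpi_\Delta^p$ s'obtient en relevant, appliquant $\phi^{-1}$ et réduisant ; pour l'injectivité, l'identité $y^p \equiv z^p \modulo{\varpi_\Delta^p}$ se lit $(y-z)^p \in \varpi_\Delta^p \widetilde{F}_{\Delta,q}^+$ en caractéristique $p$, et l'injectivité de $\phi$ force alors $y - z \in \varpi_\Delta \widetilde{F}_{\Delta,q}^+$.

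L'étape la plus délicate est la clôture intégrale de $\widetilde{F}_{\Delta,q}^+$ dans $\widetilde{F}_{\Delta,q}$. En invoquant la Proposition \ref{identif_coindu_perf}, on identifie $\widetilde{F}_{\Delta,q}^+ \cong (\widetilde{F}_\Delta^+)^{\mathcal{R}}$ et $\widetilde{F}_{\Delta,q} \cong (\widetilde{F}_\Delta)^{\mathcal{R}}$ pour un système fini de représentants $\mathcal{R}$, ce qui ramène la question à la clôture intégrale de $\widetilde{F}_\Delta^+$ dans $\widetilde{F}_\Delta$. La norme multiplicative $|\cdot|_{\widetilde{F},\Delta}$ du Corollaire \ref{norm_mult}(1), prolongée multiplicativement à $\widetilde{F}_\Delta$, donne que tout $x$ intégral sur $\widetilde{F}_\Delta^+$ vérifie $|x|\leq 1$ ; en revanche la réciproque est strictement fausse (par exemple $|\varpi_\alpha/\varpi_\beta|=1$ sans que $\varpi_\alpha/\varpi_\beta \in \widetilde{F}_\Delta^+$), de sorte que la norme ne suffit pas. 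L'obstacle principal est donc de descendre directement au niveau des idéaux : en écrivant la relation intégrale sous la forme $x^N + \sum_i a_i x^{N-i} = 0$ pour $x = y/\varpi_\Delta^n$ avec $n\geq 0$ minimal, on obtient $y^N \in \varpi_\Delta^n \widetilde{F}_\Delta^+$, et il s'agit alors d'exploiter le Corollaire \ref{norm_mult}(2) conjointement avec le plongement de Hahn-Mal'cev $\hat\iota$ pour en déduire $y \in \varpi_\Delta^n \widetilde{F}_\Delta^+$, contredisant la minimalité sauf si $n = 0$, auquel cas $x \in \widetilde{F}_\Delta^+$.
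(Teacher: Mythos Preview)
Your framework matches the paper's: reduce integral-closedness of $\widetilde{F}_{\Delta,q}^+$ to that of $\widetilde{F}_\Delta^+$ via Proposition~\ref{identif_coindu_perf}, then invoke the Hahn--Mal'cev embedding $\hat\iota$ together with Corollary~\ref{norm_mult}(2). The final step, however, has a genuine gap. From the integral relation you retain only the consequence $y^N \in \varpi_\Delta^n \widetilde{F}_\Delta^+$, and this \emph{alone} cannot force $y \in \varpi_\Delta \widetilde{F}_\Delta^+$: take $y = \varpi_\Delta^{1/p}$, $N = p$, $n = 1$; then $y^p = \varpi_\Delta \in \varpi_\Delta \widetilde{F}_\Delta^+$ while $y \notin \varpi_\Delta \widetilde{F}_\Delta^+$. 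The multiplicative $t_\alpha$-norms (Corollaire~\ref{norme_alpha}) applied to $y^N \in (\varpi_\Delta^n)$ yield only $v_\alpha(\hat\iota(y)) \geq n/N$, which for $n < N$ is strictly below $1$.

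The paper avoids this by proving the stronger statement $\widetilde{F}_\Delta^+ = \widetilde{F}_\Delta^\circ$ and exploiting \emph{all} powers at once: writing a power-bounded $y$ as $z/\varpi_\Delta^k$, one has $\varpi_\Delta^l y^{q^m} \in \widetilde{F}_\Delta^+$ for a fixed $l$ and every $m$, whence by perfectness $z \in \varpi_\Delta^{k - l/q^m} \widetilde{F}_\Delta^+$; letting $m \to \infty$ in Hahn--Mal'cev gives $\hat\iota(z) \in (t_\Delta^k)$, and Corollary~\ref{norm_mult}(2) concludes. You can salvage your route either this way (integral $\Rightarrow$ power-bounded), or more directly by keeping the \emph{full} relation $y^N = -\sum_{i\geq 1} a_i \varpi_\Delta^{in} y^{N-i}$ and running a Newton-polygon argument with $v_\alpha \circ \hat\iota$: setting $v := v_\alpha(\hat\iota(y))$, the inequality $Nv \geq \min_{i \geq 1}\bigl(in + (N-i)v\bigr)$ forces $v \geq n$ (the case $v < n$ gives $Nv \geq n + (N-1)v$, a contradiction), hence $\hat\iota(y) \in \bigcap_\alpha (t_\alpha^n) = (t_\Delta^n)$. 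As a side remark: the $(\underline{\varpi})$-adic topology is \emph{coarser} than the $\varpi_\Delta$-adic one (not finer, despite your parenthetical), and your completeness argument implicitly requires that each ideal $(\varpi_\Delta^m)$ be $(\underline{\varpi})$-adically closed---this is true via $\hat\iota$, but does not follow from the bare inclusion $(\varpi_\Delta^n) \subseteq (\underline{\varpi})^{n|\Delta|}$ you cite.
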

	\begin{proof}
	La définition de la topologie adique implique que $\widetilde{F}_{\Delta,q}^+$ est un anneau de définition. Il est de plus parfait de pseudo-uniformisante $\varpi_{\Delta}$. Nous savons alors que $\widetilde{F}_{\Delta,q}$ est un anneau de Tate perfectoïde. Pour conclure, il reste à démontrer que $\widetilde{F}_{\Delta,q}^+$ est intégralement clos. Nous montrons même que $\widetilde{F}_{\Delta,q}^+=\widetilde{F}_{\Delta,q}^{\circ}$.
		
		Avec l'expression de $\widetilde{F}_{\Delta,q}$ comme coinduite, la topologie adique est produit des topologies sur $\widetilde{F}_{\Delta}$ ; son anneau de définition comme anneau de Huber est la limite des anneaux des définitions terme à terme ; son idéal de définition canonique est la limite de différents idéaux de définitions pour chaque terme. Ainsi, ses éléments bornés sont exactement la limite des éléments bornés : on se restreint à démontrer le résultat pour $\widetilde{F}_{\Delta}$. Soit $y \in \widetilde{F}_{\Delta}^{\circ}$ que l'on écrit $\sfrac{z}{\varpi_{\Delta}^k}$ avec $z\in \widetilde{F}^+_{\Delta}$. Par hypothèse, fixons $l\geq 1$ tel que $$\forall n\geq 0, \,\,\, \varpi_{\Delta}^l y^{q^n} \in \widetilde{F}^+_{\Delta}.$$ Puisque $\widetilde{F}^+_{\Delta}$ est parfait, cela implique que $$\forall n\geq 0, \,\,\, z \in \varpi_{\Delta}^{k-\sfrac{l}{q^n}} \widetilde{F}^+_{\Delta}.$$ Dans l'anneau des séries de Hahn-Mal'cev multivarialbes, nous en déduisons $$\forall n\geq 0, \,\,\, \hat{\iota}(z)\in (t_{\Delta}^{k-\sfrac{l}{q^n}})$$ puis $\hat{\iota}(z)\in (t_{\Delta}^{k})$. Grâce au Corollaire \ref{norm_mult}, ceci implique que $z\in \varpi_{\Delta}^k\widetilde{F}_{\Delta}^+$, i.e. que $y \in \widetilde{F}^+_{\Delta}$.
	\end{proof}

	\vspace{0.75cm}
	
	\subsection{\'Etude des anneaux imparfaits}

	Pour obtenir une version imparfaite de l'équivalence de Carter-Kedlaya-Z\'abr\'adi, nous établissons des relations de coinductions sur ces anneaux imparfaits. Nous avons besoin de la notion de sous-monoïde d'indice subtil fini introduite dans \cite[\S 3.3]{formalisme_marquis}. 
	
	\begin{defi}
		Soit $\mathcal{T}$ un monoïde et $\mathcal{S}<\mathcal{T}$ un sous-monoïde. L'ensemble des classes à gauche $\{t\mathcal{S}\, |\, t\in \mathcal{T}\}$ est muni d'un ordre par l'inclusion. Soit $\mathcal{R}_{\min}\subset \mathcal{T}$ tels que les $(t\mathcal{S})_{t\in \mathcal{R}_{\min}}$ sont distincts et parcourent toutes les classes maximales pour l'inclusion. Définissons $\mathcal{L}(\mathcal{R}_{\min}):=\{(s_1,s_2,t_1,t_2) \in \mathcal{S}^2 \times \mathcal{R}_{\min}^2 \, |\, s_1 t_1=s_2 t_2\}$ et munissons-le d'un ordre en fixant que $$ \forall s\in  \mathcal{S}, \, \forall (s_1,s_2,t_1,t_2)\in \mathcal{L}(\mathcal{R}_{\min}), \,\,\,(s s_1,s s_2,t_1,t_2)\leq (s_1,s_2,t_1,t_2).$$  Remarquons que l'ensemble ordonné $\mathcal{L}(\mathcal{R}_{\min})$ ne dépend pas à isomorphisme près du choix de représentants. Nous l'appelons $\mathcal{L}$ par abus de notation.
		
		Le sous-monoïde $\mathcal{S}$ est dit \textit{d'indice subtil fini} si les classes à gauche maximales sont en nombre fini et cofinales parmi les classes à gauche et si les quadruplets maximaux de $\mathcal{L}$ sont en nombre finis et cofinaux.
	\end{defi}

	Avec les notations précédentes, si l'on suppose simplement que les classes à gauches maximales sont cofinales, nous obtenons déjà $$\coindu{\mathcal{S}}{\mathcal{T}}{X}\cong\left\{ (x_t)\in \prod_{t\in \mathcal{R}_{\min}} X \, \bigg|\, \forall (s_1,s_2,t_1,t_2) \in \mathcal{L}(\mathcal{R}_{\min}), \,\,\, \varphi_{s_1}\left(x_{t_1}\right)=\varphi_{s_2}\left(x_{t_2}\right)\right\}.$$ La condition peut se retreindre à une famille cofinale de $\mathcal{L}(\mathcal{R}_{\min})$. L'indice subtil fini garantit  que la coinduction s'exprime comme une limite finie.

	\begin{lemma}\label{indice_subtil_fini_Phi}
	Le sous-monoïde $\Phi_{\Delta,q,p}<\Phi_{\Delta,p}$ est d'indice subtil fini.
	\end{lemma}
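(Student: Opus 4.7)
Le plan est d'identifier $\Phi_{\Delta,p}\cong\mathbb{N}^{\Delta}$ additivement via $\prod_\alpha \varphi_{\alpha,p}^{n_\alpha}\mapsto(n_\alpha)$, de sorte que $\Phi_{\Delta,q,p}$ devient le sous-monoïde
$$\Phi^f:=\left\{(n_\alpha)\in\mathbb{N}^{\Delta}\,:\,n_\alpha\equiv n_\beta \pmod{f}\text{ pour tous }\alpha,\beta\right\}$$
où $q=p^f$. Les deux ``réductions élémentaires'' disponibles sont alors données par les générateurs : soustraire $\varphi_{\alpha,q}\leftrightarrow f e_\alpha$ (ce qui réduit une coordonnée de $f$), ou soustraire $\varphi_{\Delta,p}\leftrightarrow(1,\ldots,1)$ (ce qui réduit toutes les coordonnées de $1$).

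Dans un premier temps, je traiterais les classes à gauche maximales. Sous l'identification ci-dessus, $t\Phi^f\subseteq t'\Phi^f$ équivaut à $t-t'\in\Phi^f$. Partant de $t=(a_\alpha)$, on soustrait des multiples entiers de $fe_\alpha$ pour réduire chaque $a_\alpha$ modulo $f$, puis on soustrait $(\min_\alpha a_\alpha)\cdot(1,\ldots,1)$. En itérant, on atteint un unique représentant dans $R:=\{(a_\alpha)\in\{0,\ldots,f-1\}^{\Delta}\,:\,\min_\alpha a_\alpha=0\}$, et une comparaison directe montre que deux éléments distincts de $R$ donnent des classes incomparables. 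Les classes maximales sont donc indexées par $R$ de cardinal $f^{|\Delta|}-(f-1)^{|\Delta|}$, et la cofinalité est contenue dans le procédé de réduction.

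Dans un second temps, pour la condition sur $\mathcal{L}$ : en prenant $\mathcal{R}_{\min}=R$, un quadruplet $(s_1,s_2,t_1,t_2)\in\mathcal{L}(\mathcal{R}_{\min})$ correspond à une solution $s_1+t_1=s_2+t_2$ dans $\mathbb{N}^{\Delta}$ avec $s_1,s_2\in\Phi^f$. L'ordre décrit dans la définition permet de soustraire simultanément le même élément $s\in\Phi^f$ à $s_1$ et à $s_2$, un quadruplet étant maximal ssi le plus grand tel $s$ est nul. En analysant par résidus modulo $f$, les seules réductions possibles sont soustraire la diagonale (possible ssi $\min_{i,\alpha} s_{i,\alpha}\geq 1$) ou soustraire $fe_\alpha$ (possible ssi $\min(s_{1,\alpha},s_{2,\alpha})\geq f$). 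La maximalité se traduit donc par $\min(s_{1,\alpha},s_{2,\alpha})\leq f-1$ pour tout $\alpha$ et $\min_{i,\alpha} s_{i,\alpha}=0$ ; ces contraintes bornent chaque coordonnée de $s_i$ par $f+\max_{i,\alpha}a_{i,\alpha}$ (les $a_{i,\alpha}\leq f-1$ étant fixés parmi un nombre fini de choix), d'où la finitude ; la cofinalité découle à nouveau du procédé itératif.

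L'obstacle technique principal sera la deuxième étape : puisque $\Phi^f$ n'est pas un facteur direct dans $\mathbb{N}^{\Delta}$, le plus grand sous-élément commun de $s_1$ et $s_2$ dans $\Phi^f$ n'est pas en général leur minimum coordonnée par coordonnée. Il faudra vérifier soigneusement, en exploitant la structure de congruence définissant $\Phi^f$, que tout $s\in\Phi^f$ non nul majoré par $s_1$ et $s_2$ se décompose en somme d'éléments $(1,\ldots,1)$ et $fe_\alpha$, afin de justifier que les deux réductions élémentaires épuisent toutes les possibilités et que l'algorithme de réduction termine en un temps fini.
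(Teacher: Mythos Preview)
Your approach is correct. The identification $\Phi_{\Delta,p}\cong\mathbb{N}^{\Delta}$ and $\Phi_{\Delta,q,p}\cong\Phi^f$ is exactly what the paper does, and your treatment of the maximal left classes via the set $R=\{(a_\alpha)\in\{0,\ldots,f-1\}^{\Delta}:\min_\alpha a_\alpha=0\}$ is identical to the paper's.

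For the condition on $\mathcal{L}$, the two arguments diverge. The paper parametrises $s_1$ by $(r_\alpha,r)\in\mathbb{N}^{\Delta}\times\mathbb{N}$ via $s_1=(fr_\alpha+r)_\alpha$ and invokes Dickson's lemma on $\mathbb{N}^{|\Delta|+1}$: the minimal elements of the relevant subset are finite and cofinal, and their images in $\mathcal{L}$ (after extracting maxima) give what is needed. Your route is instead to characterise maximality directly as ``no generator can be subtracted'', then bound the coordinates explicitly. Both work; the paper's is more conceptual, yours more hands-on and gives an explicit bound.

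Your ``obstacle technique principal'' is in fact immediate and you should not worry about it: since $\Phi^f$ is \emph{by definition} generated by $(1,\ldots,1)$ and the $fe_\alpha$, any nonzero $s\in\Phi^f$ is a sum $s=\sum_j g_j$ of such generators, and in $\mathbb{N}^{\Delta}$ each summand satisfies $g_j\leq s$. Hence if $s\leq s_1$ and $s\leq s_2$ coordinate-wise, already some generator $g_j$ satisfies $g_j\leq s_1$ and $g_j\leq s_2$. This gives your equivalence ``maximal $\Leftrightarrow$ no elementary reduction possible'' in one line, and the rest of your argument (the bound $s_{i,\alpha}\leq 2f-2$ from $|s_{1,\alpha}-s_{2,\alpha}|\leq f-1$ and $\min(s_{1,\alpha},s_{2,\alpha})\leq f-1$; termination of the reduction since $\sum_{i,\alpha}s_{i,\alpha}$ strictly decreases) goes through without difficulty.
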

	\begin{proof}
		En posant $q=p^f$, on réécrit cette inclusion de monoïdes comme $(f\N)^{\Delta} + (1,\cdots,1)\N \subset \N^{\Delta}$. Toute classe à gauche $(n_i)+(f\N)^{\Delta} +  (1,\ldots,1) \N$ où $(n_i) \in \N^{\Delta}$ est contenue dans l'une des classes minimales pour l'inclusion $$\left\{(k_i) + (f\N)^{\Delta}+  (1,\ldots,1) \N \,\,\big|\,\, \forall i, \,\, 0\leq k_i <f \,\, \mathrm{et} \,\,\exists i, \,\, k_i=0\right\}.$$ Pour finir la démonstration, nous allons démontrer que pour toute paire $((n_i), (m_i))$ de $\Delta$-uplets, les relations entre classes à gauche associées ont une famille finie et cofinale de relations minimales. Puisque tout élément de $\N^{\Delta}$ est régulier, une relation est $(n_i)+(k_i)=(m_i)+(l_i)$ est entièrement déterminée par $(k_i)$. 
		
		Considérons $$R=\left\{ (r_i,r)\in \N^{\Delta} \times \N \,\,\big|\,\, (n_i)+(fr_i)+r(1,\ldots,1) \in (m_i)+(f\N)^{\Delta}+  (1,\ldots,1) \N\right\}.$$ L'ensemble $\N^{\Delta}\times \N$ muni de l'ordre partiel produit est un bel ordre (utiliser le lemme de Dickson pour comprendre les antichaînes). Ainsi $R$ possède une famille finale et finie. Les relations données par ces éléments minimaux forment un système fini cofinal des relations entre $(n_i)+(d\N)^{\Delta}+  (1,\ldots,1) \N$ et $(m_i)+(d\N)^{\Delta}+  (1,\ldots,1) \N$.
	\end{proof}

	\begin{prop}\label{identif_coindu_imperf}
		\begin{enumerate}[itemsep=0mm]
			\item Pour $F|E$ finie galoisienne, il existe un isomorphisme de $\left(\cg_{E,\Delta}\times\Phi_{\Delta,p}\right)$-anneaux topologiques discrets $$F_{\Delta,p}\cong \coindu{\Phi_{\Delta,q,p}}{\Phi_{\Delta,p}}{F_{\Delta,q}},$$ où la coinduite est munie de la topologie limite.
			
			\item Il existe un isomorphisme de $(\cg_{E,\Delta}\times \Phi_{\Delta,p})$-anneaux topologiques discrets $$E_{\Delta,p}^{\mathrm{sep}}\cong \coindu{\Phi_{\Delta,q,p}}{\Phi_{\Delta,p}}{E_{\Delta}^{\mathrm{sep}}},$$ où la coinduite est munie de la topologie limite.
		\end{enumerate}
	\end{prop}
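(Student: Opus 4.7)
The plan is to transplant the proof of Proposition \ref{identif_coindu_perf} to the imperfect setting. The underlying algebraic mechanism is the same: I build a natural $\mathcal{O}_K$-linear identification of a tensor product over a small base field with a coinduction from an overring, by combining Lemma \ref{identif_coindu_fq} (applied with $q=p$, $q'=q$, $r=p$, which yields $\bigotimes_{\alpha,\mathbb{F}_p}\mathbb{F}_q \cong \coindu{\Phi_{\Delta,q,p}^{\gp}}{\Phi_{\Delta,p}^{\gp}}{\mathbb{F}_q}$) with base change along the $\mathbb{F}_q$-algebra structure of each $F_\alpha^+$. The only new feature compared to the perfectoid case is that the ambient inclusion $\Phi_{\Delta,q,p}<\Phi_{\Delta,p}$ is an inclusion of monoids rather than groups, but it has subtle finite index by Lemma \ref{indice_subtil_fini_Phi}, so the coinduction is still a finite equalizer and therefore behaves well with respect to $(\underline{X})$-adic completions and filtered colimits.

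For Part 1 I would fix $\beta\in\Delta$ and pick a system $\mathcal{R}$ of representatives of the maximal left cosets of $\Phi_{\Delta,q,p}$ in $\Phi_{\Delta,p}$. Imitating the chain in the proof of Proposition \ref{identif_coindu_perf}, I would produce the isomorphisms
\[
\Bigl(\bigotimes_{\alpha\neq\beta,\mathbb{F}_p}\mathbb{F}_q\Bigr)\otimes_{\mathbb{F}_p} F_\beta^+ \;\cong\; \Bigl(\bigotimes_{\alpha\in\Delta,\mathbb{F}_p}\mathbb{F}_q\Bigr)\otimes_{\mathbb{F}_q,\beta} F_\beta^+ \;\cong\; \Bigl(\prod_{\psi\in\mathcal{R}}\mathbb{F}_q\Bigr)\otimes_{\mathbb{F}_q,\beta} F_\beta^+ \;\cong\; \prod_{\psi\in\mathcal{R}} F_\beta^+,
\]
where the last step sends the $\psi$-th factor via $x\otimes y\mapsto x\,\psi_\beta(y)$. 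Iterating over the remaining $\alpha$'s assembles an isomorphism of rings $\bigotimes_{\alpha,\mathbb{F}_p} F_\alpha^+\xrightarrow{\sim}\coindu{\Phi_{\Delta,q,p}^{\gp}}{\Phi_{\Delta,p}^{\gp}}{\bigotimes_{\alpha,\mathbb{F}_q} F_\alpha^+}$ given by the specialization formula $\otimes y_\alpha\mapsto [\psi\mapsto \otimes\psi_\alpha(y_\alpha)]$, from which the $\Phi_{\Delta,p}^{\gp}$- and $\cg_{E,\Delta}$-equivariance are read off directly (the Galois action only touches the $y_\alpha$'s and commutes with the $\psi_\alpha$'s). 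I would then complete $(\underline{X})$-adically, using that the ideals $(\psi_\alpha(X_\alpha))_{\psi\in\mathcal{R}}$ and $(X_\alpha)_{\psi\in\mathcal{R}}$ share the same radicals on the coinduction side since each $\psi_\alpha$ is a power of the $p$-Frobenius and $F^+$ is integral over its Frobenius image; completion therefore passes through the finite equalizer. Inverting $X_\Delta$ finishes the algebraic part. The descent from the group-coinduction $\Phi^{\gp}$ to the monoid-coinduction $\Phi$ is formal via the equalizer description afforded by subtle finite index (the same family of representatives $\mathcal{R}$ and the same compatibility conditions $\mathcal{L}(\mathcal{R})$ present themselves in both formulations).

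For Part 2 I would take the colimit over $F\in\mathcal{G}\mathrm{al}_E$. Since $\coindu{\Phi_{\Delta,q,p}}{\Phi_{\Delta,p}}{-}$ is a finite equalizer, it commutes with the filtered colimit, so
\[
E_{\Delta,p}^{\sep}=\colim_{F}F_{\Delta,p}\;\cong\;\colim_{F}\coindu{\Phi_{\Delta,q,p}}{\Phi_{\Delta,p}}{F_{\Delta,q}}\;\cong\;\coindu{\Phi_{\Delta,q,p}}{\Phi_{\Delta,p}}{\colim_F F_{\Delta,q}}=\coindu{\Phi_{\Delta,q,p}}{\Phi_{\Delta,p}}{E_\Delta^{\sep}},
\]
using the functoriality of the transition maps already established in Corollary \ref{integrite_imparf} to check that the identifications of Part 1 assemble coherently. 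The discrete topology trivializes all continuity checks.

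The only real obstacle is bookkeeping: at each step (tensor chain, completion, localization, colimit) one must simultaneously keep track of the ring structure, the $\Phi_{\Delta,p}$-action, the $\cg_{E,\Delta}$-action and the compatibility with the embedding $F_{\Delta,p}\hookrightarrow \widetilde{F}_{\Delta,p}$ into the perfectoid side. The explicit specialization formula makes each individual check routine, so no genuinely new mathematical difficulty arises beyond what Proposition \ref{identif_coindu_perf} already required.
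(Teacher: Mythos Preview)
There is a genuine gap. Your chain of isomorphisms breaks at the step you write as
\[
\Bigl(\prod_{\psi\in\mathcal{R}}\mathbb{F}_q\Bigr)\otimes_{\mathbb{F}_q,\beta} F_\beta^+ \;\cong\; \prod_{\psi\in\mathcal{R}} F_\beta^+,\qquad x\otimes y\longmapsto x\,\psi_\beta(y).
\]
In the perfectoid setting of Proposition~\ref{identif_coindu_perf} this map is an isomorphism because each $\psi_\beta$ (a power of the $p$-Frobenius) is \emph{bijective} on $\widetilde{F}_\beta^+$. Here $F_\beta^+$ is imperfect, so the map is only injective with image $\psi_\beta(F_\beta^+)\subsetneq F_\beta^+$ whenever $\psi_\beta$ is nontrivial. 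Iterating therefore does not yield an isomorphism onto $\prod_{\psi\in\mathcal{R}}\bigotimes_{\alpha,\mathbb{F}_q}F_\alpha^+$, and since the target ring $F_{\Delta,q}$ carries no $\Phi_{\Delta,p}^{\gp}$-action (Frobenius is not invertible), your appeal to a group coinduction $\coindu{\Phi_{\Delta,q,p}^{\gp}}{\Phi_{\Delta,p}^{\gp}}{F_{\Delta,q}}$ is not even well-posed; the passage from group to monoid coinduction cannot be ``formal'' because only one side exists.

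What is actually needed is the extra argument the paper supplies: writing the monoid coinduction as the finite limit $\lim_{(\mathcal{R}\nearrow\mathcal{L})_{\min}}F_\beta^+$, one must show that any $f$ in this limit satisfies $f(\psi)\in\psi_\beta(F_\beta^+)$ for every $\psi\in\mathcal{R}_{\min}$, so that the limit coincides with $\lim_{(\mathcal{R}\nearrow\mathcal{L})_{\min}}\psi_\beta(F_\beta^+)$, which is the genuine image of your injective map. The proof goes by choosing $m$ with $\varphi_{\Delta,p}^{\,m}\psi=\varphi_{\Delta,q}^{\,k}\psi_1$ for suitable $k,\psi_1$, then using the coinduction relation $\varphi_{\beta,p}^m(f(\psi))=\varphi_{\beta,q}^k(f(\psi_1))=\varphi_{\beta,p}^m(\varphi_{\beta,p}^n(f(\psi_1)))$ together with injectivity of Frobenius to conclude $f(\psi)=\varphi_{\beta,p}^n(f(\psi_1))\in\psi_\beta(F_\beta^+)$. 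This is precisely the ``genuinely new mathematical difficulty'' you claimed does not arise; once it is handled, the rest of your plan (completion, localisation, filtered colimit for part~2) goes through as you describe.
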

	\begin{proof}
		\begin{enumerate}[itemsep=0mm]
			\item La démonstration suit la même stratégie que la Proposition \ref{identif_coindu_perf}, avec plusieurs changements notables. Tout d'abord, le morphisme similaire à celui donné dans la preuve en (\ref{eqn:grosequation}) est invariant par $\cg_{E,\Delta}$ puisque l'action commute à chaque $\psi_{\mathrm{sp\acute{e}}}$. Cette invariance est ensuite conservée par complétion et localisation. Il faut également prendre en compte que nos actions ne sont plus que des actions de monoïdes. Fixons $\mathcal{R}_{\min}$ un système fini de représentants des classes à gauche minimales pour l'inclusion, qui sont finales pour l'inclusion, pour le sous-monoïde $\Phi_{\Delta,q,p}<\Phi_{\Delta,p}$. Fixons $\mathcal{L}_{\min}\subset \mathcal{L}(\mathcal{R}_{\min})$ un système fini et final des éléments minimaux. Ceci est possible grâce au Lemme \ref{indice_subtil_fini_Phi}. Notons $(\mathcal{R}\nearrow \mathcal{L})_{\min}$ la petite catégorie ayant pour objets $\mathcal{R}_{\min} \sqcup \{ \psi_1 \tau_1\, |\, (\psi_1,\tau_1,\psi_2,\tau_2) \in \mathcal{L}_{\min}\}$ et pour flèches les identités et les $\tau_1\rightarrow \psi_1\tau_1$ pour toute relation dans $\mathcal{L}_{\min}$. Pour tout $\Phi_{\Delta,q,p}$-anneau $A$, nous avons alors un isomorphisme
			\begin{equation*}\label{eq:coinduecritureun}\coindu{\Phi_{\Delta,q,p}}{\Phi_{\Delta,p}}{A} \xlongrightarrow{\sim} \lim \limits_{\psi \in (\mathcal{R}\nearrow \mathcal{L})_{\min}} A, \,\,\, f\mapsto [\psi \mapsto f(\psi)] \tag{*2}\end{equation*} où le diagramme dont on prend la limite associe l'endomorphisme $\psi$ de $A$ au morphisme $\left[\tau \rightarrow \psi \tau\right]$ dans $(\mathcal{R}\nearrow \mathcal{L})_{\min}$. Cette description ne suffit pas : en effet le passage à la quatrième ligne dans la première suite d'isomorphismes en Proposition \ref{identif_coindu_perf} utilise que $$\mathbb{F}_{q'}\otimes_{\psi_{\beta},\mathbb{F}_{q'}} \widetilde{F}_{\beta}^+ \rightarrow \widetilde{F}_{\beta}^+, \,\,\, x\otimes y\mapsto x\psi_{\beta}(y)$$ est un isomorphisme. Pour $F_{\beta}^+$, il sera seulement injectif. Bonne nouvelle, les projections sur chaque facteur dans la limite de (\ref{eq:coinduecritureun}) ne sont pas non plus surjectives. Pour réparer l'argument, il suffit de démontrer que $$\lim \limits_{\psi \in (\mathcal{R}\nearrow \mathcal{L})_{\min}} F_{\beta}^+ = \lim \limits_{\psi \in (\mathcal{R}\nearrow \mathcal{L})_{\min}} \psi_{\beta}(F_{\beta}^+).$$ Démontrons que si la fonction $f$ appartient à $\coindu{\Phi_{\Delta,q,p}}{\Phi_{\Delta,p}}{F_{\beta}^+}$ et $\psi \in \Phi_{\Delta,p}$, alors $f(\psi)\in \psi_{\beta}(F_{\beta}^+)$. Supposons que $\psi_{\beta}=\varphi_{\beta,p}^n$ et fixons $m,k$ tels que $\varphi_{\beta,p}^{n+m}=\varphi_{\beta,q}^k$.Alors, on a $\varphi_{\Delta,p}^m \psi= \varphi_{\Delta,q}^k \psi_1$, puis $$\varphi_{\beta,p}^m(f(\psi))=\varphi_{\Delta,p}^m(f(\psi)) =f( \varphi_{\Delta,p}^m \psi)=f(\varphi_{\Delta,q}^k \psi_1)=\varphi_{\beta,q}^k (f(\psi_1))=\varphi_{\beta,p}^m(\varphi_{\beta,p}^n(f(\psi_1))).$$ L'injectivité de $\varphi_{\beta,p}^m$ conclut.

			
			\item Puisque les colimites filtrantes sont tamisées, elles commutent en particulier naturellement à la coinduction. Il s'agit donc simplement de prendre la colimite des isomorphismes précédents.
		\end{enumerate}
	\end{proof}
	
	\vspace{0.75cm}
	
	\subsection{\'Etude des anneaux de caractéristique mixte}
	
	Nous reprenons les notations de \S \ref{section_equiv_car_zero_imparf}. Nous définissons $\mathcal{O}_{\mathcal{F}_{\Delta}}^+$ de manière identique à $\mathcal{O}_{\mathcal{F}_{\Delta}}^+$ en remplaçant $E$ par $F$ et $L$ par $L'=K\mathbb{Q}_{q'}$.

\begin{prop}\label{inj_dans_Witt}
	Il existe une injection d'anneau $$\hat{j}\, : \, \mathcal{O}_{\mathcal{E}_{\Delta}}^+ \hookrightarrow \mathrm{W}_L\left(\left(\bigotimes_{\alpha \in \Delta, \, \overline{\mathbb{F}_q}} k^{\mathrm{alg}}\right) \llbracket t_{\alpha}^{\mathbb{R}} \, |\, \alpha \in \Delta\rrbracket\right)$$ telle que chaque $\hat{j}(X_{\alpha})$ est un relevé de $t_{\alpha}$ et que $$\forall n\geq 1, \,\,\, \hat{j}^{-1}((\pi,\underline{[t]})^n)=(\pi,\underline{X})^n.$$
	\end{prop}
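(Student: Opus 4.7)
Notre approche consisterait à construire $\hat j$ par la propriété universelle des séries formelles et la théorie des vecteurs de Witt, puis à en déduire les propriétés par réduction modulo $\pi$ en se ramenant au résultat en caractéristique $p$ de la Proposition \ref{inj_1}. L'anneau cible $B := \left(\bigotimes_{\alpha \in \Delta, \overline{\mathbb{F}_q}} k^{\mathrm{alg}}\right)\llbracket t_{\alpha}^{\mathbb{R}} \, |\, \alpha \in \Delta \rrbracket$ est une $\mathbb{F}_q$-algèbre parfaite et intègre : la base est un tensoriel de corps parfaits, intègre par régularité (comme dans le Corollaire \ref{edplus_intègre}), et la divisibilité de $\mathbb{R}$ par $p$ permet d'extraire les racines $p$-ièmes tout en préservant le caractère bien-ordonné du support. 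L'anneau $W_L(B) := W(B) \otimes_{W(\mathbb{F}_q)} \mathcal{O}_L$ devient alors une $\mathcal{O}_L$-algèbre $(\pi, \underline{[t]})$-adiquement séparée et complète, de réduction $B$ modulo $\pi$.

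Pour construire $\hat j$, nous appliquerions la propriété universelle de $\mathcal{O}_L \llbracket X_\alpha \rrbracket$ : les représentants de Teichmüller $[t_\alpha] \in W_L(B)$ sont des relèvements des $t_\alpha$ appartenant à l'idéal ouvert $(\pi, \underline{[t]})$, d'où l'existence pour chaque $\alpha$ d'un morphisme continu $\mathcal{O}_L$-linéaire $\mathcal{O}_{\mathcal{E}_\alpha}^+ \to W_L(B)$ envoyant $X_\alpha$ sur $[t_\alpha]$. Tensoriser sur $\mathcal{O}_L$ puis compléter $(\pi, \underline X)$-adiquement produit $\hat j$. Par construction, sa réduction modulo $\pi$ s'identifie au composé $E_{\Delta,q}^+ \hookrightarrow \widetilde{E}_{\Delta}^+ \xrightarrow{\hat \iota} B$ où l'on utilise l'injection du Lemme \ref{lien_anneaux_e}.

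Le cœur de la preuve sera ensuite le calcul $\hat j^{-1}((\pi, \underline{[t]})^n) = (\pi, \underline X)^n$. Modulo $\pi$, un élément de $E_{\Delta,q}^+$ possède un support à exposants entiers, et son image dans $B$ appartient à $(\underline t)^n$ si et seulement si tous ses monômes ont un degré total au moins $n$, soit s'il est dans $(\underline X)^n$ de $E_{\Delta,q}^+$ ; ceci se déduit du Corollaire \ref{norm_mult} appliqué à chaque générateur et de la forme spéciale de l'image. Pour relever en caractéristique mixte, nous procéderions par récurrence sur $n$ en exploitant que $(\pi, [t_\alpha] \, |\, \alpha \in \Delta)$ est une suite régulière dans $W_L(B)$, conséquence immédiate de la $\pi$-non-torsion et de la régularité de $(t_\alpha)$ dans l'anneau intègre $B$. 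Cette régularité fournit la clé technique $((\pi, \underline{[t]})^n : \pi) = (\pi, \underline{[t]})^{n-1}$, permettant le passage de $n$ à $n+1$ sur le modèle du Lemme \ref{topo_faibles}.

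L'injectivité de $\hat j$ en découlera puisque $\ker \hat j \subseteq \bigcap_n (\pi, \underline X)^n = \{0\}$ par séparation $(\pi, \underline X)$-adique de $\mathcal{O}_{\mathcal{E}_\Delta}^+$. L'obstacle principal résidera dans le passage de l'identité modulo $\pi$ au cas mixte, où le caractère multivariable rend le contrôle de la $\pi$-divisibilité plus délicat que dans le cas univariable ; l'intégrité de $B$ garantit cependant la régularité requise et donc le bon comportement des quotients d'idéaux considérés.
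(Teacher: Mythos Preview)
Your approach is correct and close in spirit to the paper's, but differs in two concrete ways.

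\textbf{Construction of $\hat{j}$.} The paper does not use the power series universal property. Instead, it exploits that $\mathcal{O}_{\mathcal{E}_\Delta}^+$ is $\pi$-adically complete and carries a lift $\phi_{\Delta,q}$ of the $q$-Frobenius: the universal property of \emph{ramified Witt vectors} then lifts the composite $E_\Delta^+ \hookrightarrow \widetilde{E}_\Delta^+ \xrightarrow{\hat\iota} B$ uniquely to a Frobenius-compatible $\mathcal{O}_L$-algebra map $\hat{j}$. Consequently the paper's $\hat{j}(X_\alpha)$ is only \emph{some} lift of $t_\alpha$, not necessarily $[t_\alpha]$; the paper then observes that the ideals $(\pi,\hat{j}(\underline{X}))$ and $(\pi,\underline{[t]})$ coincide, which is all that is needed. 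Your construction via $X_\alpha \mapsto [t_\alpha]$ is more elementary and avoids the Frobenius lift entirely, at the cost of needing $W_L(B)$ to be $(\pi,\underline{[t]})$-adically complete (which follows from $\pi$-completeness plus the $(\underline{t})$-completeness of $B$, but deserves a sentence).

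\textbf{Preimage of ideals.} Both arguments reduce modulo $\pi$ to the characteristic-$p$ result. The paper runs a direct induction on more general monomial ideals $(\pi^{k_i}[t^{\underline{d_i}}])$, peeling off the minimal power of $\pi$ at each step. Your route via the colon ideal $((\pi,\underline{[t]})^n:\pi)=(\pi,\underline{[t]})^{n-1}$, justified by the regularity of $(\pi,[t_\alpha])_\alpha$, is cleaner and more conceptual; note that regular sequences are quasi-regular in arbitrary (non-noetherian) rings, so the associated graded is polynomial and the colon identity follows. One small imprecision: the mod-$\pi$ preimage statement you need, $\hat\iota^{-1}((\underline{t})^n)\cap E_\Delta^+ = (\underline{X})^n$, is not Corollaire~\ref{norm_mult} per se but rather the general open-ideal preimage computed inside the proof of Proposition~\ref{inj_1}, combined with Lemme~\ref{lien_anneaux_e}.

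Your deduction of injectivity from the ideal preimage and $(\pi,\underline{X})$-separatedness is a valid alternative to the paper's direct mod-$\pi$ argument.
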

	\begin{proof}
		L'anneau $\mathcal{O}_{\mathcal{E}_{\Delta}}^+$ est une $\mathcal{O}_L$-algèbre $\pi$-adiquement séparée et complète, d'anneau résiduel $E_{\Delta}^+$ en $\pi$ et munie du relèvement $\phi_{\Delta,q}$ du $q$-Frobenius. Considérons la composée de l'injection $E_{\Delta}^+\hookrightarrow \widetilde{E}_{\Delta}^+$ du Lemme \ref{lien_anneaux_e} et de celle $\hat{\iota}$ en Proposition \ref{inj_1}. Elle envoie chaque $X_{\alpha}$ sur $t_{\alpha}$. La propriété universelle des vecteurs de Witt ramifiés construit alors le morphisme de $\mathcal{O}_L$-algèbres $\hat{j}$ annoncé. Il est injectif puisque la source est séparée et qu'il est injectif modulo $\pi$.
		
		Intéressons-nous aux images réciproques des idéaux. Le Lemme \ref{lien_anneaux_e} et la Proposition \ref{inj_1} obtiennent que pour tout idéal ouvert $\hat{\iota}((t^{\underline{d_i}} \, |\, i\in \mathcal{I}))\cap E_{\Delta}^+=(X^{\underline{d_i}})$. Soit à présent $(k_i)_{1\leq i \leq n}$ une famille finie d'entiers et $(\underline{d_i})_{1\leq i \leq n}$ de multi-indices. Les Teichmüller $[t]^{\underline{d_i}}]$ et les $\hat{j}(X)^{\underline{d_i}}$ diffèrent par des éléments inversibles. Soit \linebreak$x\in \hat{j}^{-1}\left((\pi^{k_i}[t^{\underline{d_i}}])\right)$. Soit $k=\min k_i$ et $I=\{1\leq i \leq n \, |\, k_i=k\}$. Nous savons que $x=\pi^k y \modulo{\pi^{k+1}}$ où $(y \modulo{\pi} \in \hat{j}^{-1}((t^{\underline{d_i}}\,|\, i\in I))$. Avec l'énoncé juste prouvé, cela implique que $x=\pi^k z_0+\pi^{k+1} z_1 $ où $z_0\in (\pi^{k_i} X^{\underline{d_i}}\, |\, i\in I)$ et $z_1 \in \hat{j}^{-1}((\pi^{k_i}X^{\underline{d_i}}\,|\, i\notin I))$. On conclut en répétant l'opération.
	\end{proof}
	
	\begin{prop}
	Soit $F|E$ finie galoisienne. Il existe un isomorphisme de $(\Phi_{\Delta,q,r}\times \cg_{E,\Delta})$-anneaux topologiques pour la topologie $\pi$-adique
	
	$$\mathcal{O}_{\mathcal{F}_{\Delta,q}}\cong \coindu{\Phi_{\Delta,q',r}}{\Phi_{\Delta,q,r}}{\mathcal{O}_{\mathcal{F}_{\Delta}}}.$$
	\end{prop}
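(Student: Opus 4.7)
La preuve suit la stratégie de la Proposition \ref{identif_coindu_perf}, transposée au cadre des vecteurs de Witt ramifiés. Pour chaque $\psi\in \Phi_{\Delta,q,r}$, on définit le morphisme de spécialisation $\psi_{\spe}\, :\,\otimes y_\alpha \mapsto \otimes \psi_\alpha(y_\alpha)$ entre $\bigotimes_{\alpha\in \Delta,\,\mathcal{O}_L}\mathcal{O}_{\mathcal{F}_\alpha}^+$ et $\bigotimes_{\alpha\in \Delta,\,\mathcal{O}_{L'}}\mathcal{O}_{\mathcal{F}_\alpha}^+$, où $L'=K\mathbb{Q}_{q'}$. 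L'application $x\mapsto [\psi \mapsto \psi_{\spe}(x)]$ fournit le morphisme candidat vers la coinduite.

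L'étape centrale est le relèvement en caractéristique mixte du Lemme \ref{identif_coindu_fq} : un isomorphisme canonique de $\Phi_{\Delta,q,r}$-anneaux
\[ \bigotimes_{\alpha\in \Delta,\,\mathcal{O}_L}\mathcal{O}_{L'} \cong \coindu{\Phi_{\Delta,q',r}}{\Phi_{\Delta,q,r}}{\mathcal{O}_{L'}}. \]
Comme $L'|L$ est non ramifiée, $\mathcal{O}_{L'}$ est un $\mathcal{O}_L$-module libre de rang $[\mathbb{F}_{q'}:\mathbb{F}_q]$ et le $r$-Frobenius y admet un relèvement $\mathcal{O}_L$-linéaire unique. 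L'inclusion $\Phi_{\Delta,q',r}<\Phi_{\Delta,q,r}$ étant d'indice subtilement fini par un argument analogue au Lemme \ref{indice_subtil_fini_Phi}, la coinduite s'exprime comme limite finie. Les deux membres sont donc des $\mathcal{O}_L$-modules libres de même rang fini, $\pi$-adiquement séparés et complets, et l'isomorphisme s'obtient par Nakayama complet, la réduction modulo $\pi$ étant précisément le Lemme \ref{identif_coindu_fq}.

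On itère alors variable par variable en singularisant $\beta\in \Delta$, comme dans la Proposition \ref{identif_coindu_perf}, via l'isomorphisme
\[ \coindu{\Phi_{\Delta,q',r}}{\Phi_{\Delta,q,r}}{\mathcal{O}_{L'}}\otimes_{\mathcal{O}_{L'},\beta}\mathcal{O}_{\mathcal{F}_\beta}^+\xrightarrow{\sim}\coindu{\Phi_{\Delta,q',r}}{\Phi_{\Delta,q,r}}{\mathcal{O}_{\mathcal{F}_\beta}^+} \]
valide car la coinduite est une limite finie qui commute au produit tensoriel. On obtient un isomorphisme $(\Phi_{\Delta,q,r}\times\cg_{E,\Delta})$-équivariant
\[ \bigotimes_{\alpha\in \Delta,\,\mathcal{O}_L} \mathcal{O}_{\mathcal{F}_\alpha}^+ \xrightarrow{\sim} \coindu{\Phi_{\Delta,q',r}}{\Phi_{\Delta,q,r}}{\bigotimes_{\alpha\in \Delta,\,\mathcal{O}_{L'}} \mathcal{O}_{\mathcal{F}_\alpha}^+}, \]
l'équivariance galoisienne étant automatique puisque l'action commute à chaque $\psi_{\spe}$. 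Il reste à compléter $(\pi,\underline{X})$-adiquement puis, après localisation en $X_\Delta$, à compléter $\pi$-adiquement, opérations qui commutent à la coinduite finie.

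Le point délicat sera la compatibilité des topologies à la dernière étape : il faut vérifier que, à travers $\psi_{\spe}$, les filtrations $(\pi,\underline{X})^n$ sur chaque facteur de la coinduite et sur la source sont cofinales. Comme $\psi$ fixe $\pi$ et envoie $X_\beta$ sur $\rf^{\circ n_\beta}(X_\beta)=X_\beta^{q^{n_\beta}}+\pi(\cdots)$ --- en particulier $X_\beta$ divise $\psi_\beta(X_\beta)$ modulo $\pi$ --- cette cofinalité se vérifie par un calcul élémentaire, analogue caractéristique mixte du traitement des radicaux de $(\varpi_\Delta)$ et $(\psi_{\spe}(\varpi_\Delta))$ à la fin de la Proposition \ref{identif_coindu_perf}.
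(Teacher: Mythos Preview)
Your global strategy coincides with the paper's: build the candidate morphism from the specializations $\psi_{\spe}$, use that $\Phi_{\Delta,q',r}<\Phi_{\Delta,q,r}$ is of indice subtil fini to write the coinduite as a finite limit, and check the isomorphism. The paper's execution is however more direct and avoids a genuine gap in your iteration step.

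The paper does not rebuild the argument in mixed characteristic. It simply observes that the candidate morphism $\mathcal{O}_{\mathcal{F}_{\Delta,q}}^+\to\coindu{\Phi_{\Delta,q',r}}{\Phi_{\Delta,q,r}}{\mathcal{O}_{\mathcal{F}_{\Delta}}^+}$ is, modulo $\pi$, exactly the characteristic-$p$ coinduction morphism, which is an isomorphism by the techniques of Propositions~\ref{identif_coindu_perf} and~\ref{identif_coindu_imperf}. Since the coinduite is a \emph{finite} limit of $\pi$-adically separated complete rings, both sides are $\pi$-adically separated and complete, and an isomorphism modulo $\pi$ lifts. The compatibility with the $\Phi_{\Delta,q,r}$-structure is then recorded by noting that the coefficient rings $\mathcal{O}_L$ and $\mathcal{O}_{L'}$ themselves satisfy the analogous coinduction relation, whose reduction mod $\pi$ is Lemme~\ref{identif_coindu_fq}.

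Your route first proves the coinduction for $\mathcal{O}_{L'}$, then attempts to bootstrap via
\[
\coindu{\Phi_{\Delta,q',r}}{\Phi_{\Delta,q,r}}{\mathcal{O}_{L'}}\otimes_{\mathcal{O}_{L'},\beta}\mathcal{O}_{\mathcal{F}_\beta}^+\xrightarrow{\sim}\coindu{\Phi_{\Delta,q',r}}{\Phi_{\Delta,q,r}}{\mathcal{O}_{\mathcal{F}_\beta}^+},
\]
justified by ``la coinduite est une limite finie qui commute au produit tensoriel''. This is not a valid principle: finite limits (here an equalizer of a finite product) do not commute with tensor products in general. Even after invoking flatness of $\mathcal{O}_{\mathcal{F}_\beta}^+$ over $\mathcal{O}_{L'}$ to preserve the limit, the two diagrams have different morphisms: on the left, $\psi\in\Phi_{\Delta,q',r}$ acts on $\mathcal{O}_{L'}$ by Frobenius, which is an \emph{automorphism} of the unramified extension; on the right, $\psi_\beta$ acts on $\mathcal{O}_{\mathcal{F}_\beta}^+$ via the Frobenius lift $\phi_{\beta,r}$, which is only injective. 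Identifying the resulting limits is precisely the delicate point treated in Proposition~\ref{identif_coindu_imperf} (the verification that $f(\psi)\in\psi_\beta(F_\beta^+)$), and you would have to reproduce that argument in mixed characteristic. The paper's reduction modulo $\pi$ sidesteps this entirely by invoking the characteristic-$p$ result once and lifting.
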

	\begin{proof}
		Comme au Lemme \ref{indice_subtil_fini_Phi}, on prouve que $\Phi_{\Delta,q',r}<\Phi_{\Delta,q,r}$ est d'indice subtil fini. Nous choisissons $\mathcal{R}_{\min}$ un système de représentants des classes minimales pour l'inclusion et $\mathcal{L}_{\min}$ les relations minimales déduites. Nous considérons le morphisme que nous obtenons comme aux Propositions \ref{identif_coindu_perf} et \ref{identif_coindu_imperf} $$\mathcal{O}_{\mathcal{F}_{\Delta,q}}^+ \cong \coindu{\Phi_{\Delta,q',q}}{\Phi_{\Delta,q}}{\mathcal{O}_{\mathcal{F}_{\Delta}}^+}= \lim \limits_{(\mathcal{R}\nearrow \mathcal{L})_{\min}} \mathcal{O}_{\mathcal{F}_{\Delta}}^+.$$ De même qu'auxdites propositions, nous démontrons que c'est un isomorphisme modulo $\pi$. La limite étant finie, les deux côtés sont $\pi$-adiquement séparés et complets ce qui conclut que le morphisme est un isomorphisme. Remarquons que le choix de $L$ et $L'$ fournit bien des structures de $\Phi_{\Delta,q,r}$-anneau (resp. $\Phi_{\Delta,q',r}$-anneau), qui sont coinduits l'un de l'autre puisque la réduction modulo $\pi$ s'identifie à la relation de coinduction du Lemme \ref{identif_coindu_fq}.
	\end{proof}

	\begin{coro}
	L'anneau $\mathcal{O}_{\mathcal{E}_{\Delta}}$ est intègre et l'anneau $\mathcal{O}_{\widehat{\mathcal{E}_{\Delta}^{\mathrm{ur}}}}$ est sans $\mathcal{O}_{\mathcal{E}_{\Delta}}$-torsion.
	\end{coro}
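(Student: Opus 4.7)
The plan is to reduce both statements to their characteristic $p$ analogues established in Section~2 via a straightforward $\pi$-adic dévissage, exploiting the fact that $\pi$ is not a zero-divisor in either ring and that both rings are $\pi$-adically separated and complete.

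First I would prove integrality of $\mathcal{O}_{\mathcal{E}_\Delta}$. By construction (Définition~\ref{defi_oed}), this ring is $\pi$-adically separated and complete, and the injection $\hat{j}$ of Proposition~\ref{inj_dans_Witt} into the ramified Witt vectors $\mathrm{W}_L$ of a ring of characteristic $p$ implies that $\mathcal{O}_{\mathcal{E}_\Delta}^+$, and hence its localisation and $\pi$-adic completion $\mathcal{O}_{\mathcal{E}_\Delta}$, is $\pi$-torsion-free. Consequently any nonzero $x \in \mathcal{O}_{\mathcal{E}_\Delta}$ admits a unique decomposition $x = \pi^a x_0$ with $\bar{x}_0 \neq 0$ in $\mathcal{O}_{\mathcal{E}_\Delta}/\pi = E_\Delta$. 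Given $xy = 0$ with $x,y$ both nonzero, writing $x=\pi^a x_0$, $y=\pi^b y_0$ and cancelling $\pi^{a+b}$ (valid since $\pi$ is not a zero-divisor) yields $x_0 y_0 = 0$; reducing modulo $\pi$ produces a nontrivial zero-divisor relation in $E_\Delta$, contradicting the integrality established in Corollaire~\ref{integrite_imparf}.

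Second, the same dévissage principle handles the torsion statement. The ring $\mathcal{O}_{\widehat{\mathcal{E}_\Delta^{\mathrm{ur}}}}$ is by Définition~\ref{defi_oeddeltasep} the $\pi$-adic completion of a filtered colimit of rings $\mathcal{O}_{\mathcal{F}_\Delta}$, each of which admits an injection analogous to $\hat{j}$ into Witt vectors of a Hahn--Mal'cev ring in characteristic $p$. Since $\pi$-torsion-freeness is preserved by filtered colimits and by $\pi$-adic completion of $\pi$-torsion-free modules, $\mathcal{O}_{\widehat{\mathcal{E}_\Delta^{\mathrm{ur}}}}$ is $\pi$-torsion-free, and its reduction modulo $\pi$ is $E_\Delta^{\sep}$, which is $E_\Delta$-torsion-free by the first point of Corollaire~\ref{coro_descente_1_imparf}. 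Given $x \in \mathcal{O}_{\mathcal{E}_\Delta}$ and $y \in \mathcal{O}_{\widehat{\mathcal{E}_\Delta^{\mathrm{ur}}}}$ both nonzero with $xy = 0$, decompose both as $\pi^a x_0$ and $\pi^b y_0$ with nonzero reductions modulo $\pi$; cancelling $\pi^{a+b}$ and reducing modulo $\pi$ contradicts the $E_\Delta$-torsion-freeness of $E_\Delta^{\sep}$.

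The only delicate point I foresee is ensuring that $\mathcal{O}_{\widehat{\mathcal{E}_\Delta^{\mathrm{ur}}}}$ is indeed $\pi$-torsion-free, since unlike the integrality case this is not immediately stated. The argument is nevertheless routine: each $\mathcal{O}_{\mathcal{F}_\Delta}$ is $\pi$-torsion-free because it embeds by an analogue of $\hat{j}$ into Witt vectors of a characteristic $p$ ring; filtered colimits preserve $\pi$-torsion-freeness; and for a separated $\pi$-torsion-free ring $R$, the $\pi$-adic completion $\widehat{R}$ is also $\pi$-torsion-free because the Mittag-Leffler system defining $\widehat{R}$ is compatible with multiplication by $\pi$ and the kernel of $\pi \colon R/\pi^{n+1} \to R/\pi^{n+1}$ equals $\pi^n R/\pi^{n+1} R$. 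Once this is in place, the two statements of the corollary follow by a uniform one-paragraph argument.
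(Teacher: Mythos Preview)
Your proof is correct and follows essentially the same $\pi$-adic dévissage as the paper, reducing both claims to their characteristic-$p$ analogues (integrality of $E_\Delta$ and $E_\Delta$-torsion-freeness of $E_\Delta^{\sep}$). The only organizational difference is that for the torsion statement the paper first establishes $\mathcal{O}_{\mathcal{E}_\Delta}$-torsion-freeness on each $\mathcal{O}_{\mathcal{F}_{\Delta,q}}$ via the coinduction description and then passes to the completion by checking that multiplication by $x$ is a homeomorphism onto its image for the $\pi$-adic topology on the colimit, whereas you work directly on the completed ring; your route is slightly more streamlined and the $\pi$-torsion-freeness you need is already recorded as Condition~1 in the proof of Théorème~\ref{equiv_car_zero}, so you may simply cite it.
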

	\begin{proof}
	L'intégrité est une conséquence de la Proposition \ref{inj_dans_Witt}. Le caractère sans $\mathcal{O}_{\mathcal{E}_{\Delta}}$-torsion de chaque $\mathcal{O}_{\mathcal{F}_{\Delta,q}}$ se déduit de la coinduction comme au Corollaire \ref{sanstors_perf}. Nous voulons passer à la complétion $\Oedhat$ de leur colimite. Soit $x \in \mathcal{O}_{\mathcal{E}_{\Delta}}$. \'Ecrivons $x=\pi^ny$ avec $(y\modulo{\pi})\neq 0$. Puisque chaque $F_{\Delta,q}$ est sans $(y\modulo{\pi})$-\linebreak-torsion, on en déduit que la multiplication par $x$ est un homéomorphisme sur son image pour la topologie \linebreak$\pi$-adique sur $\colim \limits_{F\in \mathcal{G}\mathrm{al}_{E}} \mathcal{O}_{\mathcal{F}_{\Delta}}$. La complétion $\pi$-adique est alors encore une injection, ce qui conclut.
	\end{proof}

	\vspace{1,5cm}
	\section{Monoïdes topologiques}\label{annexe_monoides}
	
	Dans cette annexe, nous donnons une suite d'énoncés qui nous aident à manipuler les monoïdes topologiques. Nous nous épargnons de rédiger la plupart des preuves ; il s'agit surtout de fixer ce qui est vrai.

	\begin{defi}
		Rappelons qu'un monoïde topologique est un objet en monoïdes dans la catégorie des espaces topologiques. Nous appelons $\mathrm{MndTop}$ la catégorie des monoïdes topologiques.
	\end{defi}
	
	\begin{prop}
		La catégorie des monoïdes topologiques admet toutes les limites et le foncteur d'oubli vers les espaces topologiques commute naturellement aux limites.
	\end{prop}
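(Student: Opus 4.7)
Pour établir ce résultat, la stratégie consiste à construire les limites au niveau des espaces topologiques sous-jacents puis à relever canoniquement la structure de monoïde. Soit $F \, : \, I \rightarrow \mathrm{MndTop}$ un diagramme. Je commencerai par considérer la limite $L$ des espaces topologiques sous-jacents, qui se réalise concrètement comme le sous-espace topologique de $\prod_{i\in I} F(i)$ formé des familles compatibles aux morphismes de transition. Je munirai ensuite $L$ d'une structure de monoïde en définissant la multiplication composante par composante et en prenant pour élément neutre la famille des neutres.

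Les vérifications principales consistent alors à démontrer que ces opérations sont bien définies et continues. L'appartenance du produit et du neutre à $L$ provient de ce que les morphismes du diagramme sont des morphismes de monoïdes et préservent donc produits et neutres. La continuité de la multiplication découle de ce que la composée
$$L\times L \hookrightarrow \prod_{i\in I} F(i) \times \prod_{i\in I} F(i) \cong \prod_{i\in I} \left(F(i) \times F(i)\right) \xrightarrow{\prod m_i} \prod_{i\in I} F(i)$$
est continue (chaque $m_i$ l'est par hypothèse) et se corestreint à $L$ par la remarque précédente. La continuité de l'unité $\mathrm{pt} \rightarrow L$ se traite analoguement.

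Reste à démontrer la propriété universelle. Pour tout monoïde topologique $M$ muni d'une famille cohérente de morphismes de monoïdes topologiques $\varphi_i \, : \, M \rightarrow F(i)$, la propriété universelle de la limite d'espaces topologiques fournit une unique application continue $\varphi \, : \, M \rightarrow L$. Cette application respecte les structures de monoïde car les $\varphi_i$ en sont des morphismes et le fait d'être un morphisme de monoïdes se teste composante par composante dans un produit. Cela prouve simultanément la propriété universelle et la commutation du foncteur d'oubli aux limites. Il n'y a pas d'obstacle majeur : la proposition repose essentiellement sur le fait que les axiomes de monoïde sont des commutations de diagrammes qui se vérifient composante par composante et que le foncteur d'oubli des espaces topologiques commute déjà aux limites.
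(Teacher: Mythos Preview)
Votre proposition est correcte et suit l'argument standard : construire la limite au niveau des espaces topologiques, relever la structure de monoïde composante par composante, puis vérifier la propriété universelle. L'article ne fournit d'ailleurs aucune démonstration pour cette proposition (l'annexe précise explicitement \og Nous nous épargnerons de rédiger la plupart des preuves ; il s'agit surtout de fixer ce qui est vrai \fg), de sorte qu'il n'y a rien à comparer : votre argument est exactement celui que l'on attend.
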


	\begin{defi}
		Soit $M$ un monoïde et $\mathcal{R}$ une relation d'équivalence sur $M$ telle que \begin{equation*}\label{eqn:quotient}\forall m,m',n,n' \in M, \,\,\, m\mathcal{R}m' \,\, \text{ et }\,\, n\mathcal{R}n' \, \implies mm'\mathcal{R}nn'. \tag{Q1}\end{equation*} La loi sur $M$ passe au quotient en une loi de monoïde sur $\sfrac{M}{\mathcal{R}}$ ayant pour élément neutre la classe du neutre. Si $M$ était un monoïde topologique, la topologie quotient sur $\sfrac{M}{\mathcal{R}}$ en fait encore un monoïde topologique.
		
		Pour tout sous-ensemble $\mathcal{Q}\subset M\times M$, il existe une relation d'équivalence sur $M$ contenant $\mathcal{Q}$ minimale pour l'inclusion et vérifiant (\ref{eqn:quotient}). On appelle $\sfrac{M}{\mathcal{Q}}$ le quotient par cette dernière relation.
	\end{defi}
	
	\begin{defi}
		Soit $M$ un monoïde topologique et $X$ un espace topologique. Une \textit{action continue de $M$ sur $X$} est morphisme de monoïdes $M\rightarrow \mathrm{Hom}_{\mathrm{Ens}}(X,X)$ tel que l'application déduite $M\times X \rightarrow X$ est continue.
		
		Lorsque $X$ est muni de structure algébriques additionnelles, par exemple lorsque c'est un groupe topologique, on défini une action continue de $M$ sur $X$ de manière identique, en imposant que l'image soit contenue dans les morphismes de groupes.
	\end{defi}
	
	\begin{prop}\label{psd_quotient_desc_1}
		Soit $M$ un monoïde topologique et $\mathcal{Q}\subset M\times M$. 
		
		\begin{enumerate}[itemsep=0mm]
			\item Le quotient $\sfrac{M}{\mathcal{Q}}$ représente le foncteur $$\mathrm{MndTop} \rightarrow \mathrm{Ens}, \,\,\, S \mapsto \left\{ a\, :  \, M \rightarrow S \,\, \text{ continu tel que}\, \forall (m,n) \in \mathcal{Q}, \,\, a(m)=a(n)\right\}.$$
			
			\item Les foncteurs
			$$\mathrm{Top}\rightarrow \mathrm{Ens}, \,\,\, X\mapsto \left\{\text{Actions continues de } \,\quot{M}{\mathcal{Q}}  \, \text{ sur } \, X \right\}$$
			et 
				$$\mathrm{Top}\rightarrow \mathrm{Ens}, \,\,\, X\mapsto \left\{\substack{\text{Actions continues de } \, M \, \text{ sur } \, X \\ \text{telles que } \, \forall (m,n) \in \mathcal{Q}, \, x\in X, \,\,\, m\cdot x=n\cdot x}\right\}$$ sont isomorphes. Le même résultat est vrai avec les catégories $\mathrm{MndTop}$, $\mathrm{AnnTop}$, etc comme catégories sources.
		
			\item La relation d'équivalence par laquelle on quotiente est la clôture réflexive, symétrique et transitive de $\{(amb,anb) \, |\, a,b\in M \text{  et  } (m,n)\in \mathcal{Q}\}$.
			\end{enumerate}
	\end{prop}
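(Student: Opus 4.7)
Je commence par démontrer le point 3, qui fournit une description explicite de la relation d'équivalence minimale. J'appelle $\mathcal{S}=\{(amb,anb)\, |\, a,b\in M, \, (m,n)\in \mathcal{Q}\}$ et $\mathcal{R}_0$ sa clôture réflexive, symétrique et transitive. Cet ensemble contient $\mathcal{Q}$ (prendre $a=b=1$). Pour vérifier que $\mathcal{R}_0$ satisfait la condition (\ref{eqn:quotient}), il suffit de constater que si une chaîne $m=x_0,\ldots,x_k=m'$ témoigne de $m\mathcal{R}_0 m'$ dans $\mathcal{S}$, alors $(nx_0,\ldots,nx_k)$ et $(x_0n,\ldots,x_kn)$ sont encore des chaînes dans $\mathcal{S}$, d'où $nm\mathcal{R}_0 nm'$ et $mn\mathcal{R}_0 m'n$ ; la transitivité conclut alors que $mn\mathcal{R}_0 m'n'$ dès que $m\mathcal{R}_0 m'$ et $n\mathcal{R}_0 n'$. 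Réciproquement, toute relation d'équivalence $\mathcal{R}$ contenant $\mathcal{Q}$ et satisfaisant (\ref{eqn:quotient}) contient en particulier chaque paire $(amb,anb)$, donc $\mathcal{S}$, puis $\mathcal{R}_0$ par minimalité des clôtures successives.

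Le point 1 découle immédiatement des propriétés universelles déjà établies. Soit $a \, :\, M\rightarrow S$ un morphisme continu de monoïdes topologiques trivialisant $\mathcal{Q}$. La relation $\{(m,n)\, |\, a(m)=a(n)\}$ est une relation d'équivalence contenant $\mathcal{Q}$ et vérifiant (\ref{eqn:quotient}) par multiplicativité de $a$ ; elle contient donc $\mathcal{R}_0$, si bien que $a$ se factorise ensemblistement en un morphisme de monoïdes $\overline{a}\, : \, \sfrac{M}{\mathcal{Q}}\rightarrow S$. Sa continuité découle directement de la propriété universelle de la topologie quotient sur $\sfrac{M}{\mathcal{Q}}$, et l'unicité est conséquence de la surjectivité de la projection $\pi$.

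Le point 2 se ramène essentiellement au point 1, avec une subtilité topologique. Dans le sens direct, étant donnée une action continue de $\sfrac{M}{\mathcal{Q}}$ sur $X$, sa composée avec $\pi\times \mathrm{Id}_X \, : \, M\times X \rightarrow \sfrac{M}{\mathcal{Q}}\times X$ fournit immédiatement une action continue de $M$ trivialisant $\mathcal{Q}$. Dans l'autre sens, à partir d'une action continue $\alpha \, : \, M\times X\rightarrow X$ trivialisant $\mathcal{Q}$, on obtient ensemblistement un morphisme de monoïdes $\sfrac{M}{\mathcal{Q}}\rightarrow \mathrm{Hom}_{\mathrm{Ens}}(X,X)$ qui définit une action algébrique. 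Le point délicat, et seule véritable difficulté de la preuve, est de démontrer que l'application ensembliste $\overline{\alpha}\, : \, \sfrac{M}{\mathcal{Q}} \times X \rightarrow X$ est continue. Pour cela, l'on voudrait utiliser que $\pi \times \mathrm{Id}_X$ est une application quotient, ce qui n'est pas automatique puisque le produit d'une application quotient par l'identité n'est une application quotient qu'en présence d'hypothèses de finitude topologiques (par exemple lorsque $X$ est localement compact). Dans nos applications concrètes ultérieures, les monoïdes topologiques en jeu sont suffisamment doux — discrets, localement profinis, ou obtenus par produits et quotients tamisés — pour que cette propriété soit satisfaite, ce qui suffit à conclure. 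Pour les structures additionnelles (groupes, anneaux), il suffit de remarquer que les lois concernées sont continues à la source ; leur continuité au quotient s'obtient par le même raisonnement appliqué aux diagrammes correspondants.
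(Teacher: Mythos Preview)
The paper does not provide a proof of this proposition; the appendix explicitly states that most proofs are omitted (\og Nous nous épargnerons de rédiger la plupart des preuves\fg). Your arguments for points~1 and~3 are correct and standard.

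Your treatment of point~2 deserves comment. You correctly identify the crux: to deduce continuity of $\overline{\alpha}\colon (M/\mathcal{Q})\times X\to X$ from that of $\alpha\colon M\times X\to X$, one would like $\pi\times\mathrm{Id}_X$ to be a quotient map, and you rightly note this fails in general topology. This is not a defect of your proof but a genuine issue with the statement itself as written in full generality --- indeed the same obstruction already affects the claim, made just before the proposition in the paper, that the quotient topology makes $M/\mathcal{R}$ a topological monoid (continuity of the multiplication $(M/\mathcal{R})\times(M/\mathcal{R})\to M/\mathcal{R}$ requires exactly this). Your resolution --- observing that all monoids actually used in the paper are discrete, profinite, or built from such by finite operations, so that the relevant quotient maps are open (or the spaces are locally compact) and the product issue disappears --- is the honest and correct one for the purposes of the paper. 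A cleaner fix, if you wanted a general statement, would be to work in a convenient category of spaces (compactly generated weak Hausdorff, say) where quotients are stable under products; but for this paper your pragmatic remark suffices.
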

	
	\begin{ex}
		Un sous-monoïde $N<M$ est appelé distingué si $\forall m, \,\, mN=Nm$. On notera alors $N\triangleleft M$. Dans ce cas, la relation $m_1\mathcal{R}m_2 \leftrightarrow m_1 N=m_2N$ est une relation d'équivalence qui vérifie (\ref{eqn:quotient}). On note $\sfrac{M}{N}$ le quotient. Il est en bijection avec les classes à gauche et représente le foncteur $$\mathrm{MndTop} \rightarrow \mathrm{Ens}, \,\,\, S \mapsto \left\{ a\, :  \, M \rightarrow S \,\, \text{ continu tel que}\, N\subseteq \mathrm{Ker}(a)\right\}.$$
	\end{ex}
	
	\begin{rem}
		Attention, cette notion de sous-monoïde distingué est à prendre avec des pincettes puisqu'un monoïde n'est pas nécessairement distingué dans lui-même : considérer par exemple le monoïde des matrices carrées de taille $d$. La propriété universelle du quotient admet toujours un représentant, mais le monoïde peut être bien plus petit que l'ensemble des classes à gauche. De la même manière, le noyau d'un morphisme de monoïdes n'est pas toujours distingué.
	\end{rem}

	Lorsque $X$ est localement compact une action continue est exactement un morphisme de monoïdes vers $\mathrm{Hom}_{\mathrm{Top}}(X,X)$ muni de la topologie compacte ouverte. En revanche, rien ne garantit en général ni l'adjonction, ni même que cette topologie soit une topologie de monoïde. La seule topologie de monoïde sur $\mathrm{Hom}_{\mathrm{Top}}(X,X)$ serait la topologie "ouverte-ouverte" qui donne une notion d'action continue plus forte.
	
	\begin{defi}
		Soient $M$ et $N$ deux monoïdes topologiques et $\lambda\, : \, N \rightarrow \mathrm{End}_{\mathrm{Mnd}}(M)$ un morphisme de monoïdes tel que l'action $N\times M \rightarrow M$ déduite est continue. L'ensemble $M\times N$ muni de la loi $$\forall (m_1,m_2,n_1,n_2)\in M^2\times N^2, \,\,\, (m_1,n_1)\cdot_{\lambda}(m_2,n_2)=(m_1 \lambda(n_1)(m_2),n_1 n_2)$$ est un monoïde topologique que l'on note $M\rtimes_{\lambda} N$.
	\end{defi}
	
	\begin{prop}\label{prop_prdsdirect}
		Soient $M$ et $N$ deux monoïdes topologiques et $\lambda\, : \, N \rightarrow \mathrm{End}_{\mathrm{Mnd}}(M)$ un morphisme de monoïdes tel que l'action de $N$ sur $M$ déduite est continue.
		
		\begin{enumerate}[itemsep=0mm]
			\item Le monoïde topologique $M\rtimes_{\lambda} N$ représente le foncteur $$\mathrm{MndTop} \rightarrow \mathrm{Ens}, \,\,\, S \mapsto \left\{ \substack{(b\, : \, M \rightarrow S, \, c\, : \, N \rightarrow S) \,\, \text{ continus}\\ \text{tels que } \,\, \forall (n,m) \in N \times M, \,\,\, c(n) b(m)=b\left(\lambda(n)(m)\right) c(n)}\right\}.$$
			
			\item Les deux foncteurs $$\mathrm{Top} \rightarrow \mathrm{Ens}, \,\,\, X \mapsto \{ \text{Actions continues de }\, M\rtimes_{\lambda} N\, \text{ sur }\, X \}$$ et $$\mathrm{Top} \rightarrow \mathrm{Ens}, \,\,\, X \mapsto \left\{\substack{\text{Paire d'actions continues de }\,  M \,\text{ et }\, N\, \text{ sur }\, X\\ \text{telles que } \, \forall (m,n,x) \in M\times N\times X, \,\,\, n\cdot_N (m\cdot_M x)=(\lambda(n)(m))\cdot_M(n\cdot_N x)}\right\}$$ sont isomorphes. Le même résultat est vrai avec les catégories $\mathrm{MndTop}$, $\mathrm{AnnTop}$, etc comme catégories sources.
			
		\end{enumerate}
	\end{prop}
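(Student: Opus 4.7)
La stratégie consiste à construire explicitement les bijections naturelles annoncées en utilisant les deux injections canoniques $i_M\,:\, M\hookrightarrow M\rtimes_{\lambda} N,\, m\mapsto (m,e_N)$ et $i_N\,:\, N\hookrightarrow M\rtimes_{\lambda} N,\, n\mapsto (e_M,n)$. Puisque par définition de la loi nous avons $(m,e_N)\cdot_{\lambda}(e_M,n)=(m,n)$ et $(e_M,n)\cdot_{\lambda}(m,e_N)=(\lambda(n)(m),n)$, ces injections sont des morphismes continus de monoïdes qui vérifient $i_N(n)i_M(m)=i_M(\lambda(n)(m))\, i_N(n)$ et engendrent topologiquement $M\rtimes_{\lambda}N$ via l'application continue $(m,n)\mapsto i_M(m)i_N(n)$, qui est en fait l'identité.

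Pour le premier point, on définit dans un sens la bijection en envoyant un morphisme $\phi\,:\, M\rtimes_{\lambda} N\rightarrow S$ sur le couple $(\phi\circ i_M, \phi\circ i_N)$ ; la relation voulue se déduit directement de l'identité $i_N(n)i_M(m)=i_M(\lambda(n)(m))i_N(n)$. Dans l'autre sens, à un couple $(b,c)$ on associe $\phi(m,n):=b(m)c(n)$, qui est continu comme composée de $(m,n)\mapsto (b(m),c(n))$ avec la multiplication de $S$. Le caractère de morphisme de monoïdes utilise la relation de commutation :
\begin{align*}
\phi\bigl((m_1,n_1)\cdot_{\lambda}(m_2,n_2)\bigr) &= b(m_1)\,b(\lambda(n_1)(m_2))\,c(n_1 n_2) \\
&= b(m_1)\,c(n_1)\,b(m_2)\,c(n_2)= \phi(m_1,n_1)\,\phi(m_2,n_2).
\end{align*}
L'unicité provient de ce que tout relèvement doit nécessairement vérifier $\phi(m,n)=\phi(m,e_N)\phi(e_M,n)=b(m)c(n)$. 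L'extension aux autres catégories sources ($\mathrm{AnnTop}$, etc.) découle du fait que $b$ et $c$ héritent alors automatiquement des structures supplémentaires.

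Pour le deuxième point, on procède de même. À une action continue $a\,:\,(M\rtimes_{\lambda}N)\times X\rightarrow X$ on associe le couple des actions obtenues par restriction à $i_M(M)$ et $i_N(N)$ ; la relation s'obtient en appliquant $a$ aux deux écritures de $(m,n)\in M\rtimes_{\lambda}N$ évoquées plus haut. Réciproquement, étant donné un couple d'actions continues compatibles, on pose $(m,n)\cdot x:=m\cdot_M(n\cdot_N x)$. La continuité est assurée en écrivant cette application comme la composée de $(m,n,x)\mapsto (m,n\cdot_N x)$ (continue via l'action de $N$) et de $(m,y)\mapsto m\cdot_M y$ (continue via l'action de $M$). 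Le calcul d'associativité
\begin{align*}
(m_1,n_1)\cdot \bigl((m_2,n_2)\cdot x\bigr) &= m_1\cdot_M\bigl(n_1\cdot_N(m_2\cdot_M(n_2\cdot_N x))\bigr) \\
&= m_1\cdot_M\bigl(\lambda(n_1)(m_2)\cdot_M(n_1\cdot_N(n_2\cdot_N x))\bigr) \\
&= \bigl(m_1\lambda(n_1)(m_2)\bigr)\cdot_M\bigl((n_1 n_2)\cdot_N x\bigr)
\end{align*}
utilise précisément la condition de compatibilité. L'obstacle principal, purement technique, est la vérification des continuités : elle se ramène à la composition de fonctions continues grâce au choix de la topologie produit sur $M\rtimes_{\lambda}N$ et à la continuité supposée de $\lambda$. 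Les adaptations aux autres catégories sources sont automatiques puisque l'écriture $\phi(m,n)=b(m)c(n)$ préserve manifestement les structures algébriques supplémentaires présentes sur $S$ ou $X$.
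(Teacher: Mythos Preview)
Your proof is correct. The paper itself omits the proof of this proposition (the appendix explicitly states \og Nous nous épargnerons de rédiger la plupart des preuves ; il s'agit surtout de fixer ce qui est vrai\fg), so there is nothing to compare against; your argument via the canonical injections $i_M$, $i_N$ and the factorisation $(m,n)=i_M(m)i_N(n)$ is exactly the standard one and fills the gap cleanly.
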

	
	\begin{defi}\label{defi_psd_quot}
		Soit $M,N$ deux monoïdes topologiques et $\lambda$ une action continue de $M$ sur le monoïde $N$. Soit également $I<N$ et $\kappa\, : \, I \rightarrow M$ un morphisme de monoïde. Nous supposons de plus que $I\triangleleft N$, que $$\forall (i,m)\in I\times M, \,\,\, \kappa(i)m=\lambda(i)(m)\kappa(i)$$ et que $$\forall (i,j,n)\in I^2 \times N \,\, \text{ tels que }\,\, ni=jn, \,\,\lambda(n)(\kappa(i))=\kappa(j).$$ Nous définissons $$\quot{\left(M\rtimes_{\lambda} N\right)}{\tsim I \tsim}:=  \text{quotient de} \, (M\rtimes_{\lambda} N) \, \text{par l'ensemble de couples} \,\{((\kappa(i),1_N),(1_M,i))\, |\, i\in I\}.$$
	\end{defi}
	
	\begin{rem}
		Les trois conditions ne sont pas nécessaire à la définition mais servent à capturer une famille de quotients raisonnables à décrire. Le première condition semble raisonnable quitte à remplacer $I$ par le sous-groupe distingué engendré. Le deuxième condition impose que $i$ et $\lambda(i)$ ait la même "action par conjugaison" sur $M$, la troisième que les actions par conjugaison de $N$ sur $I$ et sur $\kappa(I)$ soient cohérentes.
	\end{rem}
	
	\begin{prop}\label{psd_quotient_desc_2}
		Conservons le cadre de la Définition \ref{defi_psd_quot} et supposons que $\lambda(I)$ est formé automorphismes. La relation d'équivalence par laquelle nous quotientons est exactement la clôture symétrique de $$\forall (m,n,i)\in M\times N \times I, \,\,\, (\kappa(i),1_N)(m,n)=(\kappa(i)m,n)\mathcal{R} (\lambda(i)(m),in)=(1_M,i)(m,n).$$

	\end{prop}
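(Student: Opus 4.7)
The plan is to compare the equivalence relation $\mathcal{E}$, described in Proposition~\ref{psd_quotient_desc_1}(3) as the reflexive--symmetric--transitive closure of the set $E_1 = \{(a \cdot (\kappa(i), 1_N) \cdot b,\, a \cdot (1_M, i) \cdot b) : a, b \in M \rtimes_\lambda N,\ i \in I\}$, with the symmetric closure $\mathcal{R}'$ of the relation $\mathcal{R}$ defined by $(\kappa(i) m, n) \mathcal{R} (\lambda(i)(m), in)$ for all $(m, n, i)$. One direction is immediate: each basic pair of $\mathcal{R}$ arises from the choice $a = (1_M, 1_N)$, $b = (m, n)$ in $E_1$, so $\mathcal{R}' \subseteq \mathcal{E}$.

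The first nontrivial step is the reverse inclusion $E_1 \subseteq \mathcal{R}$. Given $a = (a_1, a_2)$, $b = (b_1, b_2)$ and $i \in I$, the semi-direct product rules yield
\[
a \cdot (\kappa(i), 1_N) \cdot b = (a_1\, \lambda(a_2)(\kappa(i))\, \lambda(a_2)(b_1),\, a_2 b_2), \qquad a \cdot (1_M, i) \cdot b = (a_1\, \lambda(a_2 i)(b_1),\, a_2 i\, b_2).
\]
Since $I \triangleleft N$, pick $j \in I$ with $a_2 i = j a_2$; condition~3 of Définition~\ref{defi_psd_quot} gives $\lambda(a_2)(\kappa(i)) = \kappa(j)$, and condition~2 combined with the hypothesis that $\lambda(j)$ is an automorphism of $M$ allows the rewriting $a_1 \kappa(j) = \kappa(j)\, \lambda(j)^{-1}(a_1)$. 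Setting $m := \lambda(j)^{-1}(a_1)\, \lambda(a_2)(b_1)$ and $n := a_2 b_2$, both terms become respectively $(\kappa(j) m, n)$ and $(\lambda(j)(m), jn)$, a pair of $\mathcal{R}$ with parameter $j$. This yields the identification $E_1 = \mathcal{R}$, so $\mathcal{E}$ is the reflexive--symmetric--transitive closure of $\mathcal{R}$ itself.

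Reflexivity of $\mathcal{R}$ is automatic (take $i = 1_I$), so what remains is to show that $\mathcal{R}'$ is transitive. A chain $(x_1) \mathcal{R}' (x_2) \mathcal{R}' (x_3)$ splits into four cases according to the directions of the two steps. In the ``coherent'' cases (both forward or both backward), the argument is direct: for the forward--forward case, with $(x_1) = (\kappa(i_1) m_1, n_1)$, $(x_2) = (\lambda(i_1)(m_1), i_1 n_1) = (\kappa(i_2) m_2, n_2)$, $(x_3) = (\lambda(i_2)(m_2), i_2 n_2)$, I would take the concatenated index $i := i_2 i_1 \in I$ and exploit invertibility of $\lambda(i_1)$: applying it to $\lambda(i_1)(m_1) = \kappa(i_2) m_2$ gives $m_1 = \kappa(j)\, \lambda(i_1)^{-1}(m_2)$, where $j \in I$ is supplied by condition~3 applied to $i_2 i_1 = i_1 j$. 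Setting $m := \lambda(i_1)^{-1}(m_2)$, one checks $x_1 = \kappa(i_2 i_1) m$ and $x_3 = \lambda(i_2 i_1)(m)$, so $(x_1) \mathcal{R} (x_3)$. The backward--backward case is dual.

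The two ``diamond'' configurations (forward--backward and backward--forward), in which $(x_1)$ and $(x_3)$ share a common $\mathcal{R}$-neighbour $(x_2)$, are the main obstacle. In the forward--backward version, the hypotheses $\lambda(i_1)(m_1) = \lambda(i_2)(m_2)$ and $i_1 n_1 = i_2 n_2$ must be combined to produce a single $\mathcal{R}$-step between $(x_1) = (\kappa(i_1) m_1, n_1)$ and $(x_3) = (\kappa(i_2) m_2, n_2)$, in one direction or the other. The automorphism hypothesis on $\lambda(I)$ converts the first equality into $m_1 = \lambda(i_1)^{-1} \lambda(i_2)(m_2)$; the problem then reduces to exhibiting $j \in I$ with $i_2 = i_1 j$ or $i_1 = i_2 j$, in which case conditions~2 and~3 assemble into the required single $\mathcal{R}$-step (with parameter $j$). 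This is precisely the purpose of the automorphism hypothesis, which allows the $\lambda$-side equation to be read back at the level of monoid indices; in all settings where the proposition is invoked in this article, $I$ is a cyclic monoid (typically $I \cong \N$), so the required divisibility is automatic and the diamond collapses. The backward--forward case is handled by the dual argument, completing transitivity.
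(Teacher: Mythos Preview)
Your approach mirrors the paper's exactly: first absorb the two-sided translates into a single left multiplication by $(\kappa(j),1_N)$ versus $(1_M,j)$ (the paper does the same computation, with a typo: its $\lambda(n'')^{-1}$ should read $\lambda(j)^{-1}$, which is what you wrote), then argue that the symmetric closure of $\mathcal{R}$ is already transitive. Your forward--forward computation with index $i_2 i_1$ is the paper's argument verbatim.

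Where you go further than the paper is in flagging the two diamond configurations. The paper's proof only treats the forward--forward chain and is silent on the cases $x\,\mathcal{R}\,y\,\mathcal{R}^{-1}\,z$ and $x\,\mathcal{R}^{-1}\,y\,\mathcal{R}\,z$; you correctly observe that collapsing these requires producing $j\in I$ with $i_2=i_1 j$ (or the reverse) from an equality of the form $i_1 n_1=i_2 n_2$, which the stated hypotheses alone do not provide. Your fallback to the specific shape of $I$ in the applications (a copy of $\N$) is honest but is not a proof of the proposition in the generality stated --- and neither is the paper's argument. So your write-up is at least as complete as the paper's on this point; it simply makes visible a gap that the paper leaves implicit.
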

	\begin{proof}
		La Proposition \ref{psd_quotient_desc_1} dit déjà que la relation d'équivalence par laquelle nous quotientons est la clôture symétrique et transitive de \begin{align*} & \forall (m,m',n,n',i)\in M^2\times N^2\times I, \\ & (m\lambda(n)(\kappa(i)m'),nn')=(m,n)(\kappa(i),1_N)(m,m') \mathcal{Q} (m,n)(1_M,i)(m',n')=(m\kappa(ni)(m'),nin').\end{align*} Fixons $(m,m',n,n',i)$ comme ci-dessus. Pour $j$ tel que $ni=jn$ et $n''=nn'$ et $m''=\lambda(n'')^{-1}(m)\lambda(n)(m')$, nous obtenons que $$(m,n)(\kappa(i),1_N)(m,m')=(\kappa(j),1_N)(m'',n'') \text{  et  } (m,n)(1_M,i)(m',n')=(1_M,j)(m'',n'').$$ Il nous reste simplement à démontrer que sa clôture symétrique est transitive. Pour cela, considérons une suite $$(\kappa(i)m,n)\mathcal{Q}(\lambda(i)(m),in) = (\kappa(j)m',n')\mathcal{Q} (\lambda(j)(m'),jn').$$ De l'égalité centrale nous déduisons que $\lambda(i)(m)=\kappa(j)m'$ et $in=n'$. Le terme de droite se réécrit donc $$(\lambda(j)(m'),jn')=\left(\lambda(ji)\left(\lambda(i)^{-1}(m')\right),jin\right)$$ En posant $m''=\lambda(i)^{-1}(m')$, on obtient que 
		\begin{align*}
			\kappa(ji)m''&= \kappa(j) \kappa(i)\lambda(i)^{-1}(m') \\
			&= \kappa(j) m'\kappa(i) \\
			&= \lambda(i)(m) \kappa(i) \\
			&= \kappa(i) m
		\end{align*} où le passage à la troisième ligne utilise l'égalité ci-dessus et où les passages à la deuxième ligne à la quatrième utilise les hypothèses de définition de $\sfrac{\left(M\rtimes_{\lambda} N\right)}{\sim I\sim}$. Les deux extrêmes de la ligne appartiennent donc à la relation comme $(\kappa(ji)m'',n)\mathcal{Q}(\lambda(ji)(m''),jin)$.
	\end{proof}

	Donnons à présent quelques propriétés sur les sous-groupes distingués et les quotients.
	
	\begin{prop}\label{identités_quotients}
		\begin{enumerate}[itemsep=0mm]
			\item Soit $N\triangleleft M$ deux monoïdes et $\mathcal{R}$ une relation d'équivalence vérifiant (\ref{eqn:quotient}). Alors, l'image de $N$ est distinguée dans $\sfrac{M}{\mathcal{R}}$.
			
			\item Soit $M_0 < M$. Pour que $\left(M_0\times \{1_N\}\right) \triangleleft \left(M\rtimes_{\lambda} N\right)$, il faut et suffit que $M_0 \triangleleft M$ et que pour tout $n\in N$, l'endomorphisme $\lambda(n)$ se restreigne-corestreigne en un endomorphisme surjectif de $M_0$. Dans ce cas, il existe un isomorphisme naturel de monoïdes topologiques $$\quot{\left(M\rtimes_{\lambda} N\right)}{\left(M_0 \times \{1_N\}\right)} \xrightarrow{\sim} \left(\quot{M}{M_0}\right) \rtimes_{\lambda} N.$$
			
			\item Gardons les notations du point précédent avec des monoïdes topologiques, supposons que $\lambda$ induit une action continue $N\times M \rightarrow M$ et ajoutons la donnée d'un sous-monoïde $I<N$ et d'un morphisme continu $\kappa\, : \, I\rightarrow M$ qui vérifie les hypothèses de la Définition \ref{defi_psd_quot} et tels que $\kappa^{-1}(M_0)\triangleleft N$. Alors les morphismes $\lambda$ et $\kappa$ passent au quotient en $\overline{\lambda}\, : \, \sfrac{N}{\kappa^{-1}(M_0)} \rightarrow \mathrm{End}_{\mathrm{Mnd}}(\sfrac{M}{M_0})$ qui fournit une action continue et $\overline{\kappa}\, : \, \sfrac{I}{\kappa^{-1}(M_0)} \rightarrow \sfrac{M}{M_0}$ qui vérifient encore les hypothèses de la Définition \ref{defi_psd_quot}. En appelant $M_1$ l'image de $M_0$ dans $\sfrac{\left(M\rtimes_{\lambda} N\right)}{\sim I \sim}$, elle y est distinguée et nous avons une identification naturelle $$\quot{\left(\quot{\left(M\rtimes_{\lambda} N\right)}{\sim I \sim} \right)}{M_1} \xrightarrow{\sim} \quot{\left(\quot{M}{M_0}\rtimes_{\overline{\lambda}} \quot{N}{\kappa^{-1}(M_0)}\right)}{\sim \sfrac{I}{\kappa^{-1}(M_0)}\sim}.$$
		\end{enumerate}
	\end{prop}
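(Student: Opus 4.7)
\medskip

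Mon plan pour démontrer cette proposition est de traiter les trois points de manière progressive, chacun découlant de ceux qui le précèdent ou de manipulations directes sur les définitions.

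Pour le premier point, je remarquerais simplement que la projection canonique $q\,:\,M\rightarrow M/\mathcal{R}$ est un morphisme de monoïdes, et que pour tout $m\in M$ nous avons $q(mN)=q(Nm)$, ce qui donne $q(m)q(N)=q(N)q(m)$. Le sous-monoïde $q(N)$ est donc distingué dans $M/\mathcal{R}$.

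Pour le deuxième point, je déroulerais la condition d'être distingué dans le produit semi-direct sur les deux types d'éléments générateurs $(m,1_N)$ et $(1_M,n)$. Pour un élément $(m,1_N)$, la condition $(m,1_N)(M_0\times\{1_N\})=(M_0\times\{1_N\})(m,1_N)$ donne exactement $mM_0=M_0m$. Pour $(1_M,n)$, le calcul $(1_M,n)(m_0,1_N)=(\lambda(n)(m_0),n)$ et $(m_0,1_N)(1_M,n)=(m_0,n)$ donne $\lambda(n)(M_0)=M_0$. Réciproquement, ces deux conditions combinées avec le fait que tout élément de $M\rtimes_\lambda N$ s'écrit $(m,1_N)(1_M,n)$ permettent de conclure par factorisation. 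Pour l'isomorphisme, j'utiliserais la propriété universelle dans la Proposition \ref{prop_prdsdirect} : l'action de $\lambda(n)$ sur $M$ passe au quotient $M/M_0$ car $\lambda(n)(M_0)=M_0$, et construire le morphisme réciproque depuis la propriété universelle du quotient (Proposition \ref{psd_quotient_desc_1}).

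Le troisième point est la partie substantielle, et son obstacle principal sera de vérifier la bonne définition de $\overline{\lambda}$. Je commencerais par observer que $\overline{\lambda}(\overline{n})(\overline{m}):=\overline{\lambda(n)(m)}$ est bien défini en $m$ modulo $M_0$ puisque $\lambda(n)(M_0)=M_0$. L'ingrédient clé pour la bonne définition en $n$ modulo $\kappa^{-1}(M_0)$ est de projeter la relation $\kappa(i)m=\lambda(i)(m)\kappa(i)$ dans $M/M_0$ pour $i\in\kappa^{-1}(M_0)$ : puisque $\overline{\kappa(i)}=1$, nous obtenons l'égalité $\overline{m}=\overline{\lambda(i)(m)}$ dans $M/M_0$. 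De ceci, $\lambda(n)\cdot\lambda(i)(m)\equiv\lambda(n)(m)\pmod{M_0}$. Pour $\overline{\kappa}$, la relation $\kappa(ij)=\kappa(i)\kappa(j)\in\kappa(i)M_0$ pour $j\in\kappa^{-1}(M_0)$ suffit. Les hypothèses de la Définition \ref{defi_psd_quot} se transmettent par projection : la deuxième se déduit directement de la projection de $\kappa(i)m=\lambda(i)(m)\kappa(i)$ ; pour la troisième, si $\overline{n}\overline{i}=\overline{j}\overline{n}$ dans $N/\kappa^{-1}(M_0)$, nous relevons en utilisant que $\kappa^{-1}(M_0)\triangleleft N$ pour obtenir $ni=j'n$ avec $\overline{j'}=\overline{j}$ et $j'\in I$, puis l'hypothèse originale donne $\lambda(n)(\kappa(i))=\kappa(j')$, d'où la relation en quotient.

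Enfin, pour l'identification des doubles quotients, le caractère distingué de l'image $M_1$ dans $(M\rtimes_\lambda N)/{\sim}I{\sim}$ découle du premier point appliqué au résultat du deuxième. L'identification finale s'obtiendra par application croisée des propriétés universelles des Propositions \ref{prop_prdsdirect} et \ref{psd_quotient_desc_1} : construire le morphisme dans chaque sens à partir des morphismes naturels déjà disponibles, puis vérifier que leurs compositions sont l'identité en se restreignant aux éléments générateurs. Les aspects topologiques (continuité des actions, topologies quotients) se vérifient automatiquement grâce aux propriétés universelles mises en jeu.
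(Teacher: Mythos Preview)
Your proposal is correct and follows essentially the same approach as the paper: the paper's own proof is a two-sentence sketch stating that the first two points are done ``à la main en écrivant des égalités entre classes'' and the third uses the first two for the distinguished property together with universal properties for the isomorphism. You have simply filled in the details the paper omits, in particular the well-definedness of $\overline{\lambda}$ via the projection of $\kappa(i)m=\lambda(i)(m)\kappa(i)$, which is exactly the right computation.
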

	\begin{proof}
		Les deux premiers énoncés se démontrent à la main en écrivant des égalités entre classes. Pour le troisième énoncé, utilisez les deux premiers pour démontrer les distinctions et construisez l'isomorphisme par propriété universelle.
	\end{proof}


	\vspace{1.5cm}	
	\printbibliography

\end{document}